\theoremstyle{plain}
\newtheorem{thm}{Theorem}[section]
\newtheorem{lem}[thm]{Lemma}
\newtheorem{cor}[thm]{Corollary}
\newtheorem{prop}[thm]{Proposition}
\theoremstyle{definition}
\newtheorem{defn}[thm]{Definition}
\newtheorem{rem}[thm]{Remark}
\mathchardef\semic="303B
\newcommand{\dirac}{{D}}
\newcommand{\R}{{\mathbb R}}
\newcommand{\Q}{{\mathbb Q}}
\newcommand{\N}{{\mathbb N}}
\newcommand{\C}{{\mathbb C}}
\newcommand{\Z}{{\mathbb Z}}
\newcommand{\IL}{{\mathbb L}}
\newcommand{\IP}{{\mathbb P}}
\newcommand{\IH}{\mathbb H}
\newcommand{\bS}{{\mathbb S}}
\newcommand{\BBMO}{\mathbb {BMO}}
\newcommand{\BMO}{\mathrm{BMO}}
\newcommand{\mH}{{\mathcal H}}
\newcommand{\mX}{{\mathcal X}}
\newcommand{\mL}{{\mathcal L}}
\newcommand{\mA}{{\mathcal A}}
\newcommand{\mN}{{\mathcal N}}
\newcommand{\mE}{{\mathcal E}}
\newcommand{\mD}{{\mathcal D}}
\newcommand{\mR}{{\mathcal R}}
\newcommand{\mT}{{\mathcal T}}
\newcommand{\mS}{{\mathcal S}}
\newcommand{\mI}{{\mathcal I}}
\DeclareMathOperator{\re}{Re}
\DeclareMathOperator{\im}{Im}
\newcommand{\brac}[1]{\langle #1 \rangle}
\newcommand{\supp}{\text{{\rm supp}}\,}
\newcommand{\dist}{\text{{\rm dist}}\,}
\newcommand{\nul}{\textsf{N}}
\newcommand{\ran}{\textsf{R}}
\newcommand{\dom}{\textsf{D}}
\newcommand{\clos}[1]{\overline{#1}}
\newcommand{\closran}[1]{\overline{\ran(#1)}}
\newcommand{\conj}[1]{\overline{#1}}
\newcommand{\sgn}{\text{{\rm sgn}}}
\newcommand{\barint}{\mbox{$ave \int$}}
\newcommand{\divv}{{\text{{\rm div}}}}
\newcommand{\tdd}[2]{\tfrac{\partial #1}{\partial #2}}
\newcommand{\wt}{\widetilde}
\newcommand{\ta}{{\scriptscriptstyle \parallel}}
\newcommand{\no}{{\scriptscriptstyle\perp}}
\newcommand{\pd}{\partial}
\newcommand{\oA}{{\overline A}}
\newcommand{\uT}{{\underline T}}
\newcommand{\bet}{{\mathcal B}}
\newcommand{\loc}{\text{{\rm loc}}}
\newcommand{\tN}{\widetilde N_*}
\newcommand{\tNs}{\widetilde N_\sharp}
 \newcommand{\tNsa}{\widetilde N_{\sharp,\alpha}}
\newcommand{\bx}{{\bf x}}
\newcommand{\by}{{\bf y}}
\newcommand{\reu}{\mathbb{R}^{1+n}_+}
\newcommand{\ree}{\mathbb{R}^{1+n}}
\newcommand{\abs}[1]{|#1|}
\newcommand{\Norm}[2]{\|#1\|_{#2}}
\newcommand{\pair}[2]{\langle #1,#2 \rangle}
\newcommand{\Bpair}[2]{\Big\langle #1,#2 \Big\rangle}
\newcommand{\modz}{[z]}
\newcommand{\MM}{{\mathbf M}}
\newcommand{\SF}{S}
\newcommand{\Qpsi}[2]{\Q_{#1,#2}}
\newcommand{\Tpsi}[2]{\bS_{#1,#2}}
\def\barint_#1{\mathchoice
            {\mathop{\vrule width 6pt
height 3 pt depth -2.5pt
                    \kern -8.8pt
\intop \kern -4pt}\nolimits_{#1}}%
            {\mathop{\vrule width 5pt height
3 pt depth -2.6pt
                    \kern -6.5pt
\intop \kern -4pt}\nolimits_{#1}}%
            {\mathop{\vrule width 5pt height
3 pt depth -2.6pt
                    \kern -6pt
\intop \kern -4pt}\nolimits_{#1}}%
            {\mathop{\vrule width 5pt height
3 pt depth -2.6pt
          \kern -6pt \intop \kern -4pt}\nolimits_{#1}}}
          \def\bariint_#1{\mathchoice
            {\mathop{\vrule width 10pt
height 3 pt depth -2.5pt
                    \kern -12.8pt
\intop \kern -10pt\intop \kern -4pt}\nolimits_{#1}}%
            {\mathop{\vrule width 9pt height
3 pt depth -2.6pt
                    \kern -10.5pt
\intop \kern -10pt\intop \kern -4pt}\nolimits_{#1}}%
            {\mathop{\vrule width 9pt height
3 pt depth -2.6pt
                    \kern -10pt
\intop \kern -10pt\intop \kern -4pt}\nolimits_{#1}}%
            {\mathop{\vrule width 9pt height
3 pt depth -2.6pt
          \kern -10pt \intop \kern -10pt\intop \kern -4pt}
      \nolimits_{  #1}}}
\renewcommand{\iint}{\int \kern -10pt\int}       
\definecolor{gr}{rgb}   {0.,   0.8,   0. } 
\definecolor{bl}{rgb}   {0.,   0.5,   1. } 
\definecolor{mg}{rgb}   {0.7,  0.,    0.7}
\title[\textit{A priori} estimates via first order systems]{\textit{A priori} estimates for boundary value elliptic problems via first order systems}
\author{Pascal Auscher}
\address{Univ. Paris-Sud, laboratoire de Math\'ematiques, UMR 8628 du CNRS, F-91405 {\sc Orsay}} 
\email{pascal.auscher@math.u-psud.fr}
\author{Sebastian Stahlhut}
\address{Univ. Paris-Sud, laboratoire de Math\'ematiques, UMR 8628 du CNRS, F-91405 {\sc Orsay}} 
\email{sebastian.stahlhut@math.u-psud.fr}
\date{24 juin 2014}
\begin{document}

\subjclass[2010]{35J25, 35J57, 35J46, 35J47, 42B25, 42B30, 42B35, 47D06}

\keywords{First order elliptic systems; Hardy spaces associated to operators;  tent spaces; non-tangential maximal functions; second order elliptic systems;  boundary layer operators;   \textit{a priori} estimates; Dirichlet and Neumann problems; extrapolation}

\begin{abstract} We prove a number of \textit{a priori} estimates for weak solutions of elliptic equations or systems with vertically independent coefficients in the upper-half space. These estimates are designed towards applications to boundary value problems of Dirichlet and Neumann type in various topologies. We work in classes of solutions which include the energy solutions. For those solutions, we use a description using the   first order systems satisfied by   their conormal gradients and the theory of Hardy spaces associated with such systems but the method also allows us to design  solutions which are not necessarily energy solutions. We obtain precise comparisons between square functions, non-tangential maximal functions and norms of boundary trace. The main thesis is that the range of exponents for such results is related to when those Hardy spaces (which could be abstract spaces) are identified to concrete spaces of tempered distributions. We  consider  some adapted  non-tangential sharp functions and prove comparisons with square functions. We obtain  boundedness results for layer potentials, boundary behavior, in particular  strong limits, which is new, and jump relations.  One application is an extrapolation for solvability  ``\`a la  {\v{S}}ne{\u\i}berg''. Another one is stability of solvability in perturbing the coefficients in $L^\infty$ without further assumptions. We stress that our results do not require De Giorgi-Nash assumptions, and we   improve the available ones when we do so. 
\end{abstract}

\maketitle

\tableofcontents

\section{Introduction}

Our main goal in this work is to provide \textit{a priori} estimates for boundary value problems for $t$-independent systems in the upper half space. We will apply this to perturbation theory for  solvability. Of course, this topic has been much studied,  but our methods and results are original in this context.  We obtain new estimates and also design  solutions in many different classes.  A remarkable feature is that we do not require any kind of existence or uniqueness to build such solutions. In fact, the point of the reduction of second order PDEs to  first order systems is that for such systems the aim is to understand the initial value problem, and solving the PDE means inverting a boundary operator to create the initial data for the first order system. The initial value problem looks easier. However, the system has now a big null space and this creates other types of difficulties as we shall see. 

If $E(\Omega)$ is a normed space of $\C$-valued functions on a set  $\Omega$ and $F$ a normed space, then $E(\Omega;F)$ is the space of $F$-valued functions  with $\| |f|_{F}\|_{E(\Omega)}<\infty$. More often, we forget about the underlying $F$ if the context is clear.

We denote points in $\ree$ by boldface letters $\bx,\by,\ldots$ and in coordinates in $\R \times \R^n$ by $(t,x)$ etc. We set $\R^{1+n}_+=(0,\infty)\times \R^n$. 
Consider the system of $m$  equations  given by
\begin{equation}  \label{eq:divform}
  \sum_{i,j=0}^n\sum_{\beta= 1}^m \pd_i\big( A_{i,j}^{\alpha, \beta}(x) \pd_j u^{\beta}(\bx)\big) =0,\qquad \alpha=1,\ldots, m
\end{equation}
in $\R^{1+n}_+$,
where $\pd_0= \tdd{}{t}$ and $\pd_i= \tdd{}{x_{i}}$ if $i=1,\ldots,n$.  For short, we write $Lu=-\divv A \nabla u=0$ to mean \eqref{eq:divform}, where we always assume that the matrix   \begin{equation}   \label{eq:boundedmatrix}
  A(x)=(A_{i,j}^{\alpha,\beta}(x))_{i,j=0,\ldots, n}^{\alpha,\beta= 1,\ldots,m}\in L^\infty(\R^n;\mL(\C^{m(1+n)})),
\end{equation} is bounded and measurable, independent of $t$, and satisfies 
 the strict accretivity condition on  a subspace $\mH$  of $ L^2(\R^n;\C^{m(1+n)})$,  that is,
for some $\lambda>0$ 
\begin{equation}   \label{eq:accrassumption}
   \int_{\R^n} \re (A(x)f(x)\cdot  \conj{f(x)}) \,  dx\ge \lambda 
   \sum_{i=0}^n\sum_{\alpha=1}^m \int_{\R^n} |f_i^\alpha(x)|^2dx, \ \forall  f\in \mH.
\end{equation}
The subspace $\mH$ is formed of those functions $(f_{j}^\alpha)_{j=0, \ldots, n}^{\alpha=1, \ldots, m}$  such that $(f_{j}^\alpha)_{j=1,\ldots,n}$ is curl-free  in $\R^n$ for all $\alpha$.  
The system \eqref{eq:divform} is always considered in the sense of distributions with weak solutions, that is  $H^1_{loc}(\R^{1+n}_{+};\C^m)=W^{1,2}_{loc}(\R^{1+n}_{+};\C^m)$ solutions.

It was proved in \cite{AA1} that weak solutions of $Lu=0$  in the classes $$\mE_{0}=\{u\in \mD';\|\tN(\nabla u )\|_{2}<\infty\}$$ or $$\mE_{-1}=\{u\in \mD';\|\SF(t\nabla u)\|_{2}<\infty\}$$ (where $\tN(f)$ and
$S(f)$ stand for a non-tangential maximal function  and square function: definitions will be given later) have certain semigroup representation in their conormal gradient
$$
\nabla_A u(t,x):=  \begin{bmatrix} \pd_{\nu_A}u(t,x)\\ \nabla_x u(t,x) \end{bmatrix}.$$
More precisely, one has
\begin{equation}
\label{eq:sgrep}
\nabla_A u(t,\, .\,)= S(t) ( \nabla_A u|_{t=0})
\end{equation}
for a certain semigroup $S(t)$ acting on the subspace $\mH$ of $L^2$ in the first case and in the corresponding subspace in $\dot H^{-1}$, where $\dot H^s$ is  the homogeneous Sobolev space of order $s$, in the second case. Actually, the second representation was only expli\-citly  derived in  subsequent works (\cite{AMcM, R2}) provided one defines the conormal gradient at the boundary in this subspace of $\dot H^{-1}$. In \cite{R2}, the semigroup representation was extended to 
weak solutions in the intermediate classes   defined by $\mE_{s}=\{u\in \mD'; \|\SF(t^{-s}\nabla u)\|_{2}<\infty\}$ for $-1<s<0$ and the semigroup representation holds in $\dot H^{s}$. In particular, for $s=-1/2$, the class of weak solutions in $\mE_{{-1/2}}$ is exactly the class of energy solutions used in \cite{AMcM,AM} (other classes were defined in \cite{KR} and used in \cite{HKMP2}). And  the boundary value problems associated to $L$ can always be solved in the energy class.  However, we shall not use this solvability property nor any other one until Section \ref{sec:perturbation}. 

Here,  we intend to study the following problems: \\

\textbf{Problem 1:} For which $p\in (0,\infty)$ do we have
\begin{equation}
\label{eq:NeuReg}
\|\tN(\nabla u )\|_{p}\sim \|\nabla_A u|_{t=0}\|_{X_{p}}\sim \|S(t\partial_{t}\nabla u)\|_{p}
\end{equation}
for solutions of $Lu=0$ such that  $u\in \mE=\cup_{-1\le s\le 0}\, \mE_{s}$?\\

\textbf{Problem 2:} For which $p\in (0,\infty)$, do we have 
\begin{equation}
\label{eq:Dir}
 \|S(t\nabla u)\|_{p}\sim \|\nabla_A u|_{t=0}\|_{\dot W^{-1,p}}
\end{equation}
for solutions of $Lu=0$ such that  $u\in \mE=\cup_{-1\le s\le 0}\, \mE_{s}$? Here, $\dot W^{-1,p}$ is the usual  homogeneous Sobolev space of order -1 on $L^p$: an estimate for  partial derivatives   in $\dot W^{-1,p}$ amounts to a usual $L^p$ estimate.  Moreover, do we have an analog when $p=\infty$, in which case we look for a Carleson measure  estimate of $|t\nabla u|^2$ to the left and $BMO^{-1}$ to the right, and a weighted Carleson measure estimate of $|t\nabla u|^2$ to the left and H\"older spaces $\dot \Lambda^{\alpha-1}$ to the right?\\

Let us comment on Problem 1: here for the problem to make sense, we take $X_{p}=L^p$ if $p>1$ and $X_{p}=H^p$, the Hardy space, for $p\le 1$ and soon  discover the constraint $p>\frac{n}{n+1}$. The equivalence between non-tangential maximal estimates and $X_{p}$ norms is known in the following case: the inequality $\gtrsim$ is a very general fact proved for all weak solutions and $1<p<\infty$   in \cite{KP}  and when $\frac{n}{n+1}< p \le 1$ in \cite{HMiMo},  and their arguments carry over to our situation. The inequality $\lesssim$ was proved in \cite{HKMP2} for $1<p<2+\varepsilon$ and in  \cite{AM} for  $1-\varepsilon <p\le  1$ (and also $1<p<2$ by interpolation) assuming some interior regularity of solutions (the De Giorgi-Nash condition) of $Lu=0$. To our knowledge, \textit{a priori} comparability with  the square function $S(t\partial_{t}\nabla u)$ has not been studied so far, but this is a key feature of our analysis, roughly because the square function norms in \eqref{eq:NeuReg} define spaces that  interpolate while it is not clear for the spaces corresponding to  non-tangential maximal norms in \eqref{eq:NeuReg}. The range of $p$ in Problem 1 allows one to formulate Neumann and regularity problems with $L^p/H^p$ data, originally introduced in \cite{KP}, in a meaningful way.  By this, we mean that the conormal derivative and the tangential gradient at the boundary are in the natural spaces for those problems to have a chance to be solved with such solutions. Outside this range of $p$, there will be no solutions in our classes. 

Let us turn to Problem 2:  that such comparability holds for a range of $p$ containing $[2,\infty]$ and beyond under the De Giorgi-Nash condition on $L$ was already used in \cite{AM}. We provide here the  proof. The inequalities obtained in \cite{HKMP2} contain extra terms and are less precise. The comparability in Problem 2 allows one to formulate the Dirichlet problem with $L^p$ data and even $BMO$ or $\dot \Lambda^\alpha$ data and also a Neumann problem with $\dot W^{-1,p}$ or $BMO^{-1}$ or $\dot \Lambda^{\alpha-1}$ data. Note that we are talking about square functions without mentioning non-tangential maximal estimates on the solutions $u$ which are usually smaller  in $L^p$ sense. A beautiful result in \cite{HKMP1} is the converse inequality for solutions of real elliptic equations. 

We shall study comparability with appropriate non-tangential sharp  functions, namely study when does
$$\|\tN(u-u_{0})\|_{p}\lesssim \|S(t\nabla u)\|_{p}$$
hold. The advantage of this inequality compared to the one with the non-tangential maximal function (which will be studied as well)  is that we may allow $p=\infty$, in which case the right hand side should be replaced by the Carleson measure estimate of $|t\nabla u|^2$, and beyond using adapted versions for H\"older estimates.

The  boundary spaces obtained in Problem 1 for $L$ and in Problem 2 for $L^*$ are usually in duality. This was used in \cite{AM} to give new lights, with sharper results, on the duality principles for elliptic boundary value problems studied first in \cite{KP} and then \cite{DK}, \cite{HKMP2},  and to apply this to extrapolation. 

Our main results are the following (here in dimension $1+n\ge 2$).

\begin{thm} \label{thm:main1}
The range of $p$ in Problem 1 for solutions $u\in \mE$ of $Lu=0$  is an interval $I_{L}$   contained in $(\frac{n}{n+1},\infty)$ and containing $(\frac{2n}{n+2}-\varepsilon, 2+\varepsilon')$  for some $\varepsilon,\varepsilon'>0$. 
Moreover, if $n=1$ then $I_{L}=(\frac{1}{2},\infty)$, if $L$ has constant coefficients then $I_{L}=(\frac{n}{n+1},\infty)$ and if $n\ge 2$ and $L_{\ta}^*$ has the De Giorgi condition then $I_{L}=(1-\varepsilon,2+\varepsilon')$ where $\varepsilon$ is related to the regularity exponent in the De Giorgi condition. 
\end{thm}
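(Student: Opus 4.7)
The proof plan is to reduce Problem~1 entirely to a question about Hardy spaces associated with the first-order bisectorial operator that governs conormal gradients of solutions, and then to invoke the general identification theory for such spaces.

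\textbf{Step 1: First-order reformulation.} Write $L u = 0$ in the Auscher--Axelsson--McIntosh form $\partial_{t} F + DB F = 0$, where $F = \nabla_{A} u$ and $DB$ is the bisectorial operator on $\mH$ constructed from $A$. The semigroup representation \eqref{eq:sgrep} makes the three quantities in \eqref{eq:NeuReg} intrinsic to the boundary data $F_{0} = \nabla_{A} u|_{t=0}$:
\begin{align*}
\|\tN(\nabla u)\|_{p} &\sim \|\tN(e^{-t[DB]} F_{0})\|_{p},\\
\|S(t\partial_{t}\nabla u)\|_{p} &\sim \|S(tDB\, e^{-t[DB]} F_{0})\|_{p}.
\end{align*}
Each of these is by definition a norm on the abstract Hardy space $\mH^{p}_{DB}$ adapted to $DB$ (respectively the non-tangential maximal and the conical square function realisations).

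\textbf{Step 2: Identify Problem~1 with a concrete-vs-abstract Hardy space identification.} The equivalence \eqref{eq:NeuReg} on the class $\mE$ is now equivalent to the statement that the abstract space $\mH^{p}_{DB}$ agrees, modulo the null space $\nul(DB)$, with the concrete space $X_{p}\cap \overline{\ran(DB)}$. By the general theory of Hardy spaces adapted to operators, the set $I_{L}$ of exponents for which this identification holds is stable under complex interpolation, and so is an \emph{interval} in $(0,\infty)$; this gives the first assertion. The upper bound $p > n/(n+1)$ is a direct homogeneity/molecular-decomposition constraint for any first-order operator of Dirac type acting between $H^{p}$-type spaces on $\R^{n}$.

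\textbf{Step 3: The stable range around $p=2$.} At $p=2$ the identification $\mH^{2}_{DB} = \mH$ is precisely the quadratic estimate / Kato square root theorem of Axelsson--Keith--McIntosh, and this is stable under perturbation of the exponent. Thus an interval $(2-\varepsilon,2+\varepsilon')$ lies in $I_{L}$. To reach down to $p$ close to $\tfrac{2n}{n+2}$, I would use the off-diagonal bounds satisfied by the resolvent of $DB$ together with the Sobolev embedding $\dot H^{-1,q}\hookrightarrow L^{2}$ for $q=\tfrac{2n}{n+2}$, which is the natural barrier forced by the first-order nature of $D$ when no interior De Giorgi regularity is assumed. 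Interpolation between this endpoint and the $L^{2}$-neighbourhood gives $(\tfrac{2n}{n+2}-\varepsilon,2+\varepsilon')\subset I_{L}$.

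\textbf{Step 4: The special cases.} For constant coefficients, $DB$ is a Fourier multiplier on $\mH$; all adapted Hardy spaces coincide with classical $H^{p}/L^{p}$ in the entire range $(n/(n+1),\infty)$, giving the stated equality. When $n=1$, there is no internal regularity obstruction and the same conclusion holds. When $n\ge 2$ and $L^{*}_{\ta}$ satisfies De Giorgi, interior Hölder regularity of solutions of the tangential adjoint system improves the lower endpoint to below $1$ with exponent controlled by the De Giorgi regularity exponent; one transfers this by duality (the boundary spaces for $L$ and $L^{*}$ in Problems~1 and~2 being in duality, as noted in the introduction) to push $I_{L}$ down to $(1-\varepsilon,2+\varepsilon')$ but no further, because De Giorgi--Nash only yields an $L^{\infty}$-type Hölder estimate for $u$, not for $\nabla u$.

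The main technical obstacle is the endpoint analysis below $p=1$: one must combine off-diagonal bounds of the resolvent of $DB$ with a molecular/atomic decomposition for the tent space realisation of $\mH^{p}_{DB}$, and show that each molecule is mapped by $e^{-t[DB]}$ into a concrete $H^{p}$ molecule (or element of $L^{p}$). The subtlety is that $DB$ has a large null space, so one can only hope for identification on $\overline{\ran(DB)}$, and the non-tangential maximal realisation is a priori delicate outside $L^{2}$; matching it with the square-function realisation (which is what interpolates cleanly) is the new input compared with \cite{HKMP2, AM}.
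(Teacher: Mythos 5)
Your Steps 1--2, the interpolation argument for the interval property, the $\frac{n}{n+1}$ constraint, and the constant-coefficient and $n=1$ cases do track the paper's architecture (there the two special cases come from showing $p_{-}(DB)=1$, $p_{+}(DB)=\infty$, whence $I_{L}\supset ((p_{-})_{*},p_{+})$). But the load-bearing steps are either asserted without a mechanism or mis-aimed. First, the non-tangential maximal norm is \emph{not} ``by definition'' a realisation of $\IH^{p}_{DB}$: the adapted Hardy space is defined through conical square functions, and the two-sided comparison $\|\tN(e^{-t|DB|}h)\|_{p}\sim\|h\|_{H^{p}}$ is itself one of the main theorems. The upper bound requires molecular estimates for $p\le 1$, tent-space interpolation for $1<p<2$, and already at $p=2$ a decomposition of the Poisson-type semigroup into a rational approximant plus a $\Psi_{2}^{2}$ remainder; the lower bound for $p\le 1$ is not available in general and is obtained either under pointwise accretivity of $B$ or for data in the spectral subspace $\IH^{2,\pm}_{DB}$ (which is what rescues the application, since $\nabla_{A}u|_{t=0}$ lies in the $+$ space), via a Caccioppoli inequality for the semigroup and a Fefferman--Stein grand-maximal argument. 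You flag exactly this point as ``delicate'' but supply no idea for it. Second, your route down to $\frac{2n}{n+2}-\varepsilon$ would fail as stated: ``interpolation between this endpoint and the $L^{2}$-neighbourhood'' presupposes an estimate at the endpoint, which is the whole difficulty, and the embedding $\dot H^{-1,q}\hookrightarrow L^{2}$ is not the operative mechanism. In the paper the sub-$2$ range is $((p_{-}(DB))_{*},2)$: one starts from the \emph{open} interval $(p_{-},p_{+})\ni 2$ on which the $L^{p}$ calculus of $DB$ holds, proves the square-function upper bound there, and then extrapolates one Sobolev step down ($q\mapsto \max(1,q_{*})$) through a weak-type bound based on a Calder\'on--Zygmund decomposition of Sobolev functions (writing $h=Du$, using coercivity of $D$, the local coercivity of $B$, and $L^{q}$--$L^{2}$ off-diagonal decay), with an atomic argument for $p\le 1$; the reverse inequalities come from the molecular inclusion $\IH^{p}_{DB}\subset\IH^{p}_{D}$ for $p<2$ and duality with $B^{*}D$ for $p>2$. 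The ``$-\varepsilon$'' below $\frac{2n}{n+2}$ is there only because $p_{-}<2$ strictly, so $(p_{-})_{*}<\frac{2n}{n+2}$; nothing in your sketch produces it.

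Third, your treatment of the De Giorgi case is circular: the duality between the boundary spaces of Problems 1 and 2 (the isomorphism $\IP:\IH^{p}_{BD}\to\IH^{p}_{D}$ and its consequences) is in this theory a \emph{consequence} of the Hardy-space identification for the first-order operator, i.e.\ of the very statement you want to prove, so it cannot be the vehicle for transferring interior regularity of $L^{*}_{\ta}$ into an enlarged range for $L$. The paper argues directly: for $p_{\ta}<p\le 1$, $p_{\ta}=\frac{n}{n+\alpha(L^{*}_{\ta})}$, every $(\IH^{p}_{D},1)$-atom $a$ satisfies $\|tDB(I+itDB)^{-M}a\|_{T^{p}_{2}}\lesssim 1$, proved by reducing the resolvent of $DB$ to the $t$-dependent second-order operator $L_{t}$ and establishing Morrey--Campanato mapping bounds for iterates of the adjoint matrix operator $\mR_{t}^{*}$ under the De Giorgi condition on $L^{*}_{\ta}$, combined with an exponential-conjugation (Gaffney) argument for decay; no regularity of $L$ itself, no assumption on the coefficients $a,b,c$, and no input from Problem 2 is used. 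Without an argument of this type, the claimed range $(1-\varepsilon,2+\varepsilon')$ in the De Giorgi case is unsubstantiated.
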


Here, $L_{\ta}$ is the tangential part of  operator in $L$,  obtained by deleting in $L$ any term with a $\partial_{t}=\partial_{0}$ derivative in it. As $L$ has $t$-independent coefficients,  $L_{\ta}$ is seen as an operator on $\R^n$ and the De Giorgi condition for $L_{\ta}^*$  is about  the regularity of weak solutions of $L_{\ta}^*u=0$ in $\R^n$.  For example, this holds when $L_{\ta}$ is a scalar real operator, but also when  $1+n=2$ (this is due to Morrey). %and  $1+n=3$ \cite{AAAHK}. 
In that case,   the other coefficients of $L$ are arbitrary. 

\begin{thm}\label{thm:main2}
The range of exponents in Problem 2 for solutions $u\in \mE$ of $L^*u=0$ is ``dual'' to the one in Theorem \ref{thm:main1}. That is,  for $p\in I_{L}$, we obtain \eqref{eq:Dir} for $p'$ if $p>1$, the modification for $\BMO$ if $p=1$ and the modification for $\dot \Lambda^\alpha$ with $\alpha=n(\frac{1}{p}-1)$ if $p<1$.  
\end{thm}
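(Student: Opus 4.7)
The strategy is to reduce both Problem 1 for $L$ and Problem 2 for $L^*$ to statements about the Hardy-space scales associated to the first-order systems governing their conormal gradients, and then to exploit the duality pairing between the two scales.

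First, using the semigroup representation \eqref{eq:sgrep}, a solution $u\in \mE$ of $L^*u=0$ is completely encoded by $F=\nabla_{A^*}u$, which solves an evolution of the form $\partial_t F+T_{L^*}F=0$ for the first-order bisectorial operator $T_{L^*}$ associated to $A^*$. Since the full gradient $\nabla u$ can be recovered from $F$ (via the tangential components of $F$ together with the conormal component, which expresses $\partial_t u$ up to the coefficients from $A^*$), the square function $\|\SF(t\nabla u)\|_p$ matches, up to constants depending only on ellipticity, a tent-space norm of $t\cdot F$ with one weight less than the one appearing in Problem 1. By the tent-space characterization of adapted Hardy spaces from \cite{AA1,AMcM,R2}, this yields, for $p$ in the natural range,
\begin{equation*}
\|\SF(t\nabla u)\|_{p}\sim \|F|_{t=0}\|_{H^{-1,p}_{T_{L^*}}},
\end{equation*}
where $H^{-1,p}_{T_{L^*}}$ denotes the abstract Hardy space of order $-1$ associated to $T_{L^*}$.

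The second step is to invoke the duality between the adapted Hardy scales for $L$ and $L^*$. The sesquilinear pairing between conormal gradients mediated by $A$ identifies the dual of the Hardy space $H^{p}_{T_L}$ appearing implicitly in Theorem \ref{thm:main1} with $H^{-1,p'}_{T_{L^*}}$ when $p>1$, and with the natural analogues when $p\le 1$ via the classical $H^1$--$\BMO$ and $H^p$--$\dot\Lambda^\alpha$ duality. Thus, whenever $p\in I_L$, so that by Theorem \ref{thm:main1} the abstract space $H^{p}_{T_L}$ coincides on $\mH$ with the concrete $L^p$ or $H^p$, the dual space $H^{-1,p'}_{T_{L^*}}$ coincides with the concrete $\dot W^{-1,p'}$ (respectively $\BMO^{-1}$ when $p=1$, respectively $\dot \Lambda^{\alpha-1}$ with $\alpha=n(\tfrac{1}{p}-1)$ when $p<1$). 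Combining with the first step yields \eqref{eq:Dir} for $L^*$ at the dual exponent, as well as its endpoint modifications.

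The main obstacle is the rigorous identification of these abstract Hardy spaces with concrete spaces of tempered distributions under duality. Theorem \ref{thm:main1} provides the identification on the positive scale for $T_L$, but transferring it to the dual scale for $T_{L^*}$ requires verifying that the $A$-pairing extends to molecules on both sides and that the functional calculus of $T_{L^*}$ behaves well against tempered distributions in $\dot W^{-1,p'}$, $\BMO^{-1}$, or $\dot\Lambda^{\alpha-1}$. The endpoint cases $p=1$ and $p<1$ require separate handling because the relevant Hardy-space duality then involves Carleson and weighted Carleson measure characterizations (rather than $L^{p'}$ pairings), and the corresponding tent-space duality must be combined with the functional calculus on the $T_{L^*}$ side to transfer the concrete identification across.
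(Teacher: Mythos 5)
Your overall architecture --- encode the $L^*$-solution by its conormal gradient, view $\|S(t\nabla u)\|_{p'}$ as an order $-1$ adapted Hardy norm for the boundary operator of $L^*$, and transfer the concrete identification of Theorem \ref{thm:main1} across the duality between the two scales --- is the same as the paper's. But there is a genuine gap at the point you treat as routine. After writing the trace as $Dh$, the quantity to be controlled is the tent space norm of $tDe^{-t|\wt BD|}h$ (with $\wt B=NB^*N$), i.e.\ a square function built from a symbol that vanishes only to \emph{first} order at $0$ and in which $B$ has been ``cancelled''. The tent-space characterization of adapted Hardy spaces that you invoke only gives equivalence of quadratic norms for $\psi\in\Psi_{\gamma(p')}(S_{\mu})$, i.e.\ vanishing of order $>\tfrac n2-\tfrac n{p'}$ at $0$ (and $>\tfrac n2+\alpha$ in the $\BMO$/H\"older range), so at the large dual exponents --- precisely the cases $p'$ large, $\BMO$ and $\dot\Lambda^\alpha$ that make Theorem \ref{thm:main2} interesting --- your claimed equivalence between $\|S(t\nabla u)\|_{p'}$ and an order $-1$ Hardy norm of the trace does not follow from the cited characterization, nor from abstract duality alone. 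This is exactly where the paper's work lies: Theorems \ref{thm:goodhpbd} and \ref{thm:supergoodhpbd} compensate the poor vanishing of $e^{-\modz}$ by approximating it on each sector by rational functions in the classes $\mR^1_{\sigma}$, $\mR^2_{\sigma}$, estimate the rational pieces by off-diagonal decay (Lemmas \ref{lem:r1} and \ref{lem:r2}), handle the remainder by allowable $\psi$'s, and use the boundedness of $\IP\chi^{\pm}(BD)$ on $L^{p'}$ or $\dot\Lambda^\alpha$, which is itself extracted from the hypothesis $\IH^q_{DB^*}=\IH^q_{D}$ (Theorem \ref{thm:duality}, Corollary \ref{cor:hpbd0}); for $L^*$ this hypothesis translates into $q\in I_{L}$ because $D\wt B^{*}=N(-DB)N$. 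The passage from $tBDe^{-t|BD|}$ to $tDe^{-t|BD|}$ (Theorem \ref{thm:supergoodhpbd}, via the local coercivity Lemma \ref{lem:localcoerc}) is a separate necessary device, since $B$ need not be invertible in $L^\infty$.

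Two further steps are missing from your sketch. First, the identification $\|\nabla_{A^*}u|_{t=0}\|_{\dot W^{-1,p'}}\sim\|\IP h\|_{p'}$ (and its $\dot\Lambda^{\alpha-1}$ analogue), where $Dh$ is the trace with $h$ in the appropriate spectral subspace, requires Lemma \ref{lem:D} and Lemma \ref{lem:y-1}; it is this, rather than classical $H^p$--$\BMO$ duality applied directly, that converts the abstract dual norm into the concrete one. Second, since $u$ is only assumed to lie in $\mE$, its trace belongs to $\dot\mH^{s}_{D}$ for some $s\in[-1,0]$ and not to $L^2$, so all the estimates proved for $L^2$ data must be extended by the approximation and limiting arguments of Theorem \ref{thm:summary} before they can be applied to the whole class $\mE$.
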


\

\emph{Although we can not define the objects in the context of this introduction,  the main thesis of this work is as follows.  The  exponents  $p$ in the  first theorem  are the exponents for which the   Hardy space  $\IH^p_{DB}$  for the  first order operator $DB$ associated to $L$ (as discovered in   \cite{AAMc}) is identified to $\IH^p_{D}$.  The semigroup $S(t)$ mentioned above coincides with $e^{-t|DB|}$  seen as some kind of Poisson semigroup or Cauchy extension depending on the point of view. Hence, a large part of this work is devoted to say when $\IH^p_{DB}$ and $\IH^p_{D}$ are the same.}

\

A word on the \textit{a priori} class $\mE$  is in order: in fact, we want to work with  a class for which the semigroup representation for the conormal gradient \eqref{eq:sgrep} is valid and this is the only reason for restricting to this class of solutions at this time. To make a parallel (and this case corresponds to the $L=-\Delta$ here), this is like proving such estimates for an harmonic function assuming it is the Poisson integral of an $L^2$ function: such estimates are in the fundamental work of  Fefferman-Stein \cite{FS}.  Removing this \textit{a priori} information  uses specific arguments on harmonic functions (also found in \cite{FS}). Removing that $u\in \mE$ \textit{a priori} will also require specific arguments. This will be the purpose of a forthcoming work by the first author with M. Mourgoglou \cite{AM2}. It will be proved semigroup representation:  every solution of $L$ with  $\|\tN (\nabla u)\|_{p}<\infty$ in the range of $p$ for Theorem \ref{thm:main1}  has the semigroup representation \eqref{eq:sgrep} in an appropriate functional setting; and every solution of $L^*$ with $\|S(t\nabla u)\|_{p}$ or even weighted Carleson control in the ``dual'' range of Theorem \ref{thm:main2} and a weak control at infinity  has the semigroup representation in an appropriate functional setting.

\

We remark that the results obtained here  impact on the boundary layer potentials.  A.~Ros\'en \cite{R1} proposed an abstract definition of boundary layer potentials $\mD_{t}$ and $\mS_{t}$ which turned out to coincide with the ones constructed in \cite{AAAHK} for real equations of their perturbations via the fundamental solutions. These abstract definitions use the first order semi-group $S(t)$ mentioned above, instantly proving the $L^2$ boundedness of $\mD_{t}$ and $\nabla \mS_{t}$, which was a question raised by S. Hofmann \cite{H}. Thus in the interval of $p$ and its dual arising in the two theorems above, we obtain boundedness,  jump relations, non-tangential maximal estimates and square functions estimates. In particular, we obtain strong limits as $t\to 0$, which is new for $p\ne 2$, the case $p=2$ following from a combination of  \cite{AA1} and \cite{R1}.
 It goes without saying that these results are obtained without any kernel information nor fundamental solution: this is far beyond Calder\'on-Zygmund theory and subsumes the results in \cite{HMiMo}.  
\

In the context of Theorem \ref{thm:main2}, we also prove $\|\tN(u-u_{0})\|_{p'}\lesssim \|S(t\nabla u)\|_{p'}$ in the same range  and with modification for $p=1$ and below.  Our non-tangential sharp functions above  can be seen as  a part of non-tangential sharp functions adapted to the first order operators $BD$ 
for which we have the equivalence $\|\tNs(\phi(tBD)h)\|_{p'}\sim \|S(t\nabla u)\|_{p'}$ for an appropriate $h$, where $\tNs(\phi(tBD)h)= \tN(\phi(tBD)h-h)$.  Modified sharp functions, where averages are replaced by the action of more general operators, were introduced by Martell \cite{Mar} and then used by \cite{DY} in developing their $BMO$ theory associated with operators. Some versions were also used by \cite{HM} and \cite{HMMc} in the context of second order operators under divergence form on $\R^n$.  All these versions used $\phi$ such that $\phi(tBD)$ have enough decay in some pointwise or averaged sense. Here we have to consider the Poisson type semigroup  $e^{-t|BD|}$ to get back to solutions of $L$. The difficulty lies in the fact that these operators have small decay  and we overcome this using the depth of Hardy space theory because these operators are bounded there while they may not be  bounded on $L^p$.
 
\

Let us turn to   boundary value problems  for solutions of $Lu=0$ or $L^*u=0$ and formulate four such problems:
\begin{enumerate}
  \item $(D)_{Y}^{L^*}=(R)_{Y^{-1}}^{L^*}$:  $L^*u=0$,  $u|_{t=0}\in Y$, $t\nabla u\in \mT$. 
  \item $(R)_{X}^L$:  $Lu=0$, $\nabla_{x}u|_{t=0}\in X$, $\tN(\nabla u)\in \mN$.
  \item $(N)_{Y^{-1}}^{L^*}$:  $L^*u=0$,  $\partial_{\nu_{A^*}}u|_{t=0}\in \dot Y^{-1}$, $t\nabla u\in \mT$.
   \item $(N)_{X}^L$: $Lu=0$, $\partial_{\nu_{A}}u|_{t=0}\in X$, $\tN(\nabla u)\in \mN$.
\end{enumerate}

Here $X$ is a space $X_{p}$ with $p\in I_{L}$,  $Y$ is the dual space of such an  $X$ (we are ignoring whether functions are scalar or vector-valued; context is imposing it)  and $\dot Y^{-1}= \divv_{x}(Y^n)$ with the quotient topology. Then $\mN=L^p$  for $p\in I_{L} $ and $\mT$ is a tent space $T^{p'}_{2}$ if $p\ge 1 $ and a weighted Carleson measure space $T^\infty_{2,n(\frac{1}{p}-1)}$
if $p<1$.  In each case, we want to solve, possibly uniquely, with control from the data. For example, for $(D)_{Y}^{L^*}$ we want $\|t\nabla u\|_{\mT} \lesssim \|u|_{t=0}\|_{Y}$, etc. The behavior at the boundary is continuity (strong or weak-$*$)  at $t=0$; non-tangential  convergence can occur in some cases but is not part of the convergence at the boundary. 

As in \cite{KP, AM}, we say that a boundary value problem is solvable for the energy class if  the energy solution corresponding to the data (assumed in the proper trace space as well) satisfies the  required control by the data. For energy solutions, we have semi-group representation or, equivalently,  boundary layer representation.  By solvability, we mean existence of a solution for \emph{any} boundary datum, with control. Precise definitions will be recalled in Section \ref{sec:perturbation} where we describe a method to construct solutions and show the following extrapolation theorem. 

\begin{thm}\label{thm:extra}
Consider any of the four boundary value problems with a given  space of boundary data in the list above. If it is solvable for the energy class then it is solvable  in nearby  spaces of  boundary data. 
\end{thm}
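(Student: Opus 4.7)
The plan is to recast each of the four boundary value problems as the invertibility of a bounded linear boundary operator $\Lambda_p$ between two Banach space interpolation scales indexed by a scalar parameter $p$, and then apply a stability-of-invertibility theorem of {\v{S}}ne{\u\i}berg type, using the energy class (corresponding to $p=2$) as the base point where invertibility is assumed.

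To set this up, I would fix one problem, say $(D)_Y^{L^*}$. By the semigroup representation \eqref{eq:sgrep} applied to $L^*$, any weak solution $u\in\mE$ is entirely determined by its conormal gradient trace $\nabla_{A^*}u|_{t=0}$, which lies in the natural Hardy space $\IH^p_{DB^*}$ associated with the first order system. Theorems \ref{thm:main1} and \ref{thm:main2} then supply, for $p$ in the relevant interval, the quantitative estimates needed to build a bounded \emph{trace-to-data} operator $\Lambda_p:\IH^p_{DB^*}\to Y_p$, obtained by reading off the boundary datum ($u|_{t=0}$ in this case) from the conormal gradient trace through the appropriate boundary projection. The three remaining problems yield analogous operators, with $L^*$ replaced by $L$ and the data space $Y_p$ adapted: $L^p$ or $H^p$ for the Neumann and regularity problems, $\dot W^{-1,p}$ for the dual Neumann, together with $\BMO$, $\BMO^{-1}$, or H\"older variants at the endpoints. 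Solvability in the energy class at $p=2$ is, by definition, surjectivity with estimate of $\Lambda_2$, and the semigroup representation automatically provides injectivity in $\mE$, so that $\Lambda_2$ is a Banach space isomorphism.

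The conclusion then follows from {\v{S}}ne{\u\i}berg's theorem. The families $\{Y_p\}$ form a complex interpolation scale by classical Hardy/Lebesgue/Sobolev/$\BMO$/H\"older interpolation, and the abstract Hardy scale $\{\IH^p_{DB^*}\}$ interpolates once identified with the concrete scale $\{\IH^p_D\}$, which is precisely the content of the main thesis emphasized in the introduction. Consequently $\Lambda_p$ is a uniformly bounded family between two interpolation scales, invertible at $p=2$, and {\v{S}}ne{\u\i}berg yields invertibility of $\Lambda_p:\IH^p_{DB^*}\to Y_p$ for $p$ in an open neighborhood of $2$, with a quantitative lower bound on the size of the neighborhood in terms of $\|\Lambda_2^{-1}\|$ and the uniform bound on $\|\Lambda_p\|$. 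Reading the inverse backwards via \eqref{eq:sgrep} produces, for every nearby boundary datum, a solution $u\in\mE$ with the prescribed control, which is the desired solvability.

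The main obstacle, and the reason this is more than a soft interpolation exercise, is verifying that the domain side truly forms an interpolation scale to which {\v{S}}ne{\u\i}berg applies: one needs the identification $\IH^p_{DB^*}=\IH^p_D$ throughout $I_{L^*}$ so that the operator-adapted Hardy spaces inherit interpolation from concrete tempered-distribution spaces. This identification, together with the handling of the endpoint cases $p=1,\infty$ and of the boundary projections extracting $u|_{t=0}$ or $\pd_{\nu_A}u|_{t=0}$ from the conormal gradient trace across the whole scale, is precisely what the earlier sections of the paper are designed to supply, so that the final {\v{S}}ne{\u\i}berg step becomes soft.
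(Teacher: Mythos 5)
Your skeleton---restrict the boundary map to the spectral Hardy space $H^{q,+}_{DB}$ (respectively $\dot Y^{-1,+}_{D\wt B}$), view it as a bounded operator into the data space along an interpolation scale, and extrapolate invertibility by {\v{S}}ne{\u\i}berg (Kalton--Mitrea in the quasi-Banach range)---is indeed the paper's route, and your remark that the identification $\IH^p_{DB}=\IH^p_D$ is what makes the domain side a concrete interpolation family is on target. But two steps you treat as automatic are exactly where the content lies, and as written they fail. First, ``solvable for the energy class'' does not mean invertibility at $p=2$: it means that for data in the \emph{given} space $X$ (which may be $H^p$ for any $p\in I_L$, or $\dot W^{-1,p}$, $\BMO^{-1}$, $\dot\Lambda^{\alpha-1}$, \dots) intersected with the energy trace space $\dot\mH^{\pm 1/2}$, the energy solution obeys the $X$-type estimate. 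The {\v{S}}ne{\u\i}berg base point must therefore be the given exponent, not $2$; with your setup you would only obtain extrapolation around $L^2$ data, which is not the statement of Theorem \ref{thm:extra}.

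Second, the claim that energy-class solvability is ``by definition surjectivity with estimate'' of your $\Lambda$, with injectivity ``automatic from the semigroup representation,'' is a genuine gap. The representation \eqref{eq:sgrep} recovers the solution from the \emph{full} conormal gradient trace, whereas the relevant operator retains only half of it ($h\mapsto h_{\ta}$ or $h\mapsto h_{\no}$ on $H^{q,+}_{DB}$); neither injectivity nor surjectivity of this half-trace map on the whole Hardy space follows formally from estimates that hold only for energy solutions with data in the smaller set $H^q_{\ta}\cap\dot\mH^{-1/2}_{\ta}$. The paper's bridge is Lemma \ref{lem:claim} (and its analogues for the other three problems): uniqueness and existence of energy solutions make the Dirichlet-to-Neumann map $\Gamma_{DN}$ an isomorphism of $\dot\mH^{-1/2}$ spaces; energy-class solvability says precisely that $\Gamma_{DN}$ is bounded for the $X$-norm on that dense subspace; one then extends $(\Gamma_{DN},I)$ by density (which requires showing $H^{q,+}_{DB}\cap\dot\mH^{-1/2,+}_{DB}$ is dense in $H^{q,+}_{DB}$, via a Calder\'on reproducing formula) and identifies the extension as a two-sided inverse of $N_{\ta}$. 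That is where the isomorphism---including the injectivity you take for granted---comes from. After {\v{S}}ne\u{\i}berg one then uses only surjectivity, the open mapping theorem, and the semigroup construction of solutions (Lemma \ref{lem:sol}) to conclude; accordingly the output is solvability, not solvability for the energy class, a distinction your last sentence glosses over.
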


For example, if $X=X_{p}$, one can take $X_{q}$ for $q$ in a neighborhood of $p$. For Neumann and regularity problems, this seems to be new for $p\le 1$ in this generality. See \cite{KalMit} for the case of the Laplacian on Lipschitz domains when $p=1$. Also for $p=1$ and duality,  we get extrapolation for $\BMO$ solvability of the Dirichlet problem.  We note that we only get solvability  in the conclusion. In  the case of real equations  as in \cite{DKP} where such an extrapolation of proved, harmonic measure techniques naturally lead to solvability for the energy class after perturbation. 

We shall also prove a stability result for each boundary value problem with respect to 
perturbations in $L^\infty$ with $t$-independent coefficients of the operator $L$. Such results when $ p\le 1$ are known assuming invertibility of the single layer potential  with De Giorgi-Nash conditions in \cite{HKMP2}, which is not the case here.

 Before we end this introduction, let us mention that most of the work to prove Theorems \ref{thm:main1} and \ref{thm:main2} has not much to do with the elliptic system given by $L$ and their solutions (except under  De Giorgi-Nash conditions). In fact, this is mainly a consequence of inequalities for Hardy spaces associated to first order systems $DB$ or $BD$ on the boundary and the operators $D$ can be much more general than the one arising from the boundary value problems. These type of operators  were introduced in the topic by McIntosh and led to one proof of the $L^2$ boundeness of the Cauchy integral from the solution of the Kato square root problem in one dimension although the original article \cite{CMcM} does not present it this way (see also \cite{KM}, \cite{AMcN1}). An extended higher dimensional setting was introduced in  \cite{AKMc}, and further studied in \cite{HMP1, HMP2, HMc, AS},  where $D$ is a differential first order operator with constant coefficients having some coercivity and $B$ is the operator of pointwise multiplication by an accretive matrix function. But the relation between elliptic  systems \eqref{eq:divform} and boundary operators of the form $DB$  was only established recently in \cite{AAMc}, paving the way  to the representations in \cite{AA1} mentioned above.  The Hardy space theory we need is the one associated to operators with Gaffney-Davies type estimates developed in \cite{AMcR, HM} and followers.  We just mention that our operators are non injective, hence it makes the theory a little more delicate. 
 
  For the first part of the article, we shall review the needed material. Then we turn to the proof of estimates  which will imply Theorems \ref{thm:main1} and \ref{thm:main2} when specializing to solutions of $Lu=0$. A large part of the end of the article is to study the case of operators with De Giorgi-Nash conditions.  The application of our theory to perturbations for solvability of the boundary value problems is  given in the last section.
  
  We shall not attempt to treat intermediate situations for the boundary value problems, that is assuming some fractional order of regularity for the data. This has been recently done in  \cite{BM}
  with data in Besov spaces for elliptic equations $Lu=0$ assuming De Giorgi type conditions for $ L$ and $L^*$. 

\

Acknowledgements: Part of this work appears in  the second author's Doctoral dissertation at Orsay and was presented at the IWOTA conference in Sydney in July 2011. While preparing this manuscript,  D. Frey, A. McIntosh and P. Portal informed us of a different approach to Hardy space coincidence with tent space estimates (Section \ref{sec:Hardy}) for the operator $DB$, obtaining the same range as in Theorem \ref{thm:hpdb}.   We also thank S. Hofmann for letting us know preliminary versions of \cite{HMiMo}.  The  authors were partially supported by the ANR project ``Harmonic analysis at its boundaries'' ANR-12-BS01-0013-01 and they thank the ICMAT for hospitality during the  preparation of this article.

\section{Setup}\label{sec:setup}

\subsection{Boundary function spaces} 
In this memoir, we work on the upper-half space $\reu$ and its boundary identified to $\R^n$. We consider a variety of function spaces defined on the boundary $\R^n$ and  valued in $\C^N$ for some integer $N$. Function or distribution spaces $X(\R^n;\C^N)$ will often be written $X$ if this is not confusing. 
For example, $L^q:=L^q(\R^n; \C^N)$ is the standard Lebesgue space. For $0<q\le 1$,   $H^q$ denotes the Hardy space  in its real version.  It will be sometimes convenient to set $H^q=L^q$ even when $q>1$. 

The dual of $H^q$ for a duality extending the $L^2$ sesquilinear pairing when $q>1$ is thus $H^{q'}$ and is the space $\dot \Lambda^{n(\frac 1 q -1)}$ when $q\le 1$. 
Here, $\dot \Lambda^0$ denotes $\BMO$ for convenience; for $0<s<1$,   $\dot \Lambda^s$ is the H\"older space of those continuous functions with    $|f(x)-f(y)|\le C |x-y|^s$ (equipped with a semi-norm); for $s\ge 1$, we say $f\in \dot \Lambda^s$ if the distributional partial derivatives of $f$ belong to $\dot \Lambda^{s-1}$.

For $q>1$, $W^{1,q}$ is the standard Sobolev space of order 1 on $L^q$ and $\dot W^{1,q}$ denotes its homogeneous version: the space of Schwartz distributions with $\|\nabla f\|_{q}<\infty$ or, equivalently, the closure of $ W^{1,q}$ for $\|\nabla f\|_{q}$. It becomes a Banach space when moding out the constants. For $\frac{n}{n+1}<q\le 1$, we also set  $\dot H^{1,q}$, the space of Schwartz distributions with $\nabla f\in H^q$ (componentwise). 
Again, we sometimes use the notation $\dot H^{1,q}=\dot W^{1,q}$ also when $q>1$ for convenience. 

The dual of $\dot W^{1,q}$ is $\dot W^{-1,q'}:=\divv (L^{q'})^n$  with quotient topology. 
The dual of $\dot H^{1,q}$, $q\le 1$, is $\dot \Lambda^{s-1}:= \divv (\dot \Lambda^{s})^n$ when $s=n(\frac 1 q -1)\in [0,1)$, equipped with the quotient topology.

We shall also use the homogeneous Sobolev spaces $\dot \mH^s$ for $s\in \R$. We mention that for $s\ge 0$, they can be realized within $L^2_{loc}$ and equipped with  a semi-norm. For $s<0$, the homogeneous Sobolev spaces embed in the Schwartz distributions. 

\subsection{Bisectorial operators}
The space of  continuous linear operators between normed  vector spaces  $E,F$ is denoted by $\mL(E,F)$ or $\mL(E)$ if $E=F$. For an unbounded linear  operator $\mA$, its domain is denoted by $\dom(\mA)$, its null space $\nul(\mA)$ and its range $\ran(\mA)$.  The spectrum is denoted by $\sigma(\mA)$. 

An unbounded linear operator $\mathcal{A}$ on a Banach space $\mX$ is called \emph{bisectorial} of angle $\omega\in[0,\pi/2)$ if  it is closed, its spectrum is contained in the closure of $S_{\omega}:=S_{\omega+}\cup S_{\omega-}$,  where $S_{\omega+}:=\{z\in\C;\abs{\arg z}<\omega\}$ and $S_{\omega-}:=-S_{\omega+}$, and one has the resolvent estimate
\begin{equation}\label{eq:bisectorial}
  \Norm{(I+\lambda\mathcal{A})^{-1}}{\mL(\mX)}\leq C_{\mu}\qquad\forall\ \lambda\notin S_{\mu},\quad\forall\ \mu>\omega.
\end{equation}
Assuming  $\mX$ is  reflexive,  this implies that the domain is dense and also the  fact that the null space and the closure of the range split. More precisely, we say that the operator $\mA$  \emph{kernel/range decomposes} if $\mX=\nul(\mA) \oplus \closran \mA$ ($\oplus$ means that the sum is topological).   Bisectoriality in a reflexive space is  stable under taking adjoints. 
  
For any bisectorial operator in a reflexive Banach space, one can define a calculus of bounded operators by  the Cauchy integral formula,
\begin{equation}
\begin{split}
  \psi(\mathcal{A}) &:=\frac{1}{2\pi i}\int_{\partial S_{\nu}}\psi(\lambda)(I-\frac{1}{\lambda}\mathcal{A})^{-1}\frac{d\lambda}{\lambda}, \label{eq:cauchyformula} \\
  \psi &\in \Psi(S_{ \mu}):=\{\phi\in H^{\infty}(S_{\mu}):\phi(z)= O\big(\inf({\abs{z},\abs{z^{-1}}})^{\alpha}\big),\alpha>0\}, 
\end{split}
\end{equation}
with $\mu>\nu>\omega$ and where $H^{\infty}(S_{\mu})$ is the space of bounded holomorphic functions in  $S_{\mu}$. If one can show the estimate $\|\psi(\mA)\|\lesssim C_{\mu}\|\psi\|_{\infty}$
for all $\psi\in \Psi(S_{\mu})$ and all $\mu$ with $\omega<\mu<\frac{\pi}{2}$, then this  allows to  extend the calculus on $\clos{\ran(\mA)}$ to   all $\psi\in H^{\infty}(S_{\mu})$ and  all $\mu$ with $\omega<\mu<\frac{\pi}{2}$ in a consistent way for different values of $\mu$. In that case,  $\mathcal{A}$ is said to have an \emph{$H^{\infty}$-calculus} of angle $\omega$ on $\clos{\ran(\mA)}$, and $b(\mA)$ is defined by a limiting procedure for any $b\in H^\infty(S_{\mu})$. For those $b$ which are also defined at $0$, one extends the $H^\infty$-calculus to $\mX$ by setting $b(\mA)=b(0)I$ on $\nul(\mA)$.  For later use, we shall say that a holomorphic function on $S_{\mu}$ is non-degenerate if it is non identically 0 on each connected component of $S_{\mu}$.

\subsection{The first order operator D}\label{sec:D} We assume  that $D$ is  a first order differential operator on $\R^n$ acting on Schwartz distributions  valued in $\C^N$, whose symbol satisfies the conditions (D0), (D1) and (D2) in \cite{HMc}. Later,  we shall assume that $D$ is self-adjoint on $L^2$ but for what follows in this section, this is not necessary by observing that the three conditions can be shown to be stable under taking the adjoint symbol and operator.  For completeness, we recall the three conditions here although what we will be using are the consequences below. 

First $D$ has the form
\begin{equation*}\tag{D0}\label{eq:D0}
  D=-i\sum_{j=1}^n\hat{D}_j\partial_j,\qquad\hat{D}_j\in\mL(\C^N).
\end{equation*}
 It can also be viewed as the Fourier multiplier operator with symbol $\hat{D}(\xi)=\sum_{j=1}^n\hat{D}_j\xi_j$. The symbol is required to satisfy the following properties:
\begin{equation*}\tag{D1}\label{eq:D1}
  \kappa\abs{\xi}\abs{e}\leq\abs{\hat{D}(\xi)e}\qquad\forall\ \xi\in\R^n,\quad\forall\ e\in\ran(\hat{D}(\xi)),
\end{equation*}
where $\ran(\hat{D}(\xi))$ stands for the range of $\hat{D}(\xi)$, and
\begin{equation*}\tag{D2}\label{eq:D2}
  \sigma(\hat{D}(\xi))\subseteq \clos{S_{\omega}},
\end{equation*}
where $\kappa>0$ and $\omega\in[0,\pi/2)$ are some constants. 

 For  $1<q<\infty$,  this induces the  unbounded operator $D_{q}$ on each $L^q$ with domain $\dom_q(D):= \dom_{L^q}(D)=\{u\in L^q;Du\in L^q\}$ and $D_{q}=D$ on $\dom_{q}(D)$.  We keep using the notation $D$ instead of $D_{q}$ for simplicity.   The following properties have been  shown in \cite{HMP2}, except for the last one shown in \cite{AS}.
\begin{enumerate}
  \item $D$ is a bisectorial operator with $H^\infty$-calculus in $L^q$.
\item $L^q=N_{q}(D)\oplus \clos{\ran_{q}(D)}$, the closure being in the $L^q$ topology.
  \item $N_{q}(D)$ and $\clos{\ran_{q}(D)}$, $1<q<\infty$, are complex interpolation families.
  \item $D$ has the coercivity condition
  $$
  \|\nabla u \|_{q} \lesssim \| Du\|_{q}\quad \mathrm{for\ all\ } u \in \dom_{q}(D)\cap \clos{\ran_{q}(D)} \subset W^{1,q}.$$ Here, we use the notation $\nabla u$ for $\nabla \otimes u$. 
  \item  $\dom_{q}(D)$, $1<q<\infty$, is a  complex interpolation family.
  \end{enumerate}

The results in \cite{HMP2} are obtained by  applying  the Mikhlin multiplier theorem to  the resolvent and also to the projection from $L^q$ on $\clos{\ran_{q}(D)}$ along $N_{q}(D)$ by checking  the   symbol is $C^\infty$ away from 0 and has the appropriate estimates for all its partial derivatives. This projection, which we denote by $\IP$, will play an important role  (it does not depend on $q$) and we have 
$$
\IP(L^q)=\clos{\ran_{q}(D)}.
$$
 This theorem can be shown to apply to the operators $b(D)$ of the bounded holomorphic functional calculus. Moreover, (4) is a consequence of the $L^q$ boundedness of $\nabla D^{-1} \IP$, which again follows from Mikhlin multiplier  theorem.   Even if this is not  done this way in \cite{AS}, one can show the property (5) using  the Mikhlin multiplier theorem as in \cite{HMP2}. 

By standard singular integral theory,    
all operators to which the ($C^\infty$ case of the) Mikhlin multiplier theorem applies extend boundedly  to the Hardy spaces $H^q$, $0<q\le 1$.  In particular, $\IP$ is a bounded projection on $H^q$ so $\IP(H^q)$ is a closed complemented subspace  of $H^q$.

Set  $X_{p}=L^p$ when $1<p<\infty$,  $X_{p}=H^p$ when $p\le 1$ and also $X_{\infty}=BMO$  the space of bounded mean oscillations functions. 

We mention the following consequence:  For  $0<q<\infty$,  each  $\IP(X_{q})$ contains $\IP(\mD_{0})$ as a dense subspace  where  $\mD_{0}$ is the space of $C^\infty$ functions with compact support and all vanishing moments.  Note that  a Fourier transform argument shows $\IP(\mD_{0})\subset \mS$, where $\mS$ is the Schwartz space. Similarly, the same statement holds if $\mD_{0}$ is replaced by the subspace $\mS_{0}$ of $\mS$ of those functions with  compactly supported Fourier transform away from the origin.

As said, all this applies to the adjoint of $D$ (we shall assume $D$ self-adjoint subsequently). Hence, the resolvent of $D$ is bounded on  $X_{p}^*$, the dual space to $X_{p}$ with the estimate \eqref{eq:bisectorial},  and    $\IP$ is a  bounded projection on  $X_{p}^*$.   In particular, $\IP(X_{p}^*)$ is complemented in $X_{p}^*$. Also, using that the $X_{p}$, $0<p\le\infty$, spaces   form a complex interpolation scale, the same holds for  the spaces
$\IP(X_{p})$, $0<p\le\infty$.

\subsection{The operators DB and BD}

We let $D$ as defined above and we assume from now on that $D$ is self-adjoint on $L^2$. We consider an operator $B$ of multiplication by a matrix $B(x)\in \mL(\C^N)$. We assume that as a function, $B\in L^\infty$ and note $\|B\|_{\infty}$ its norm. Thus as a multiplication operator, $B$ is bounded on all $L^q$ spaces with norm equal to $\|B\|_{\infty}$ when $1<q<\infty$. We also assume that 
$B$ is strictly accretive in $\clos{\ran_{2}(D)}$, that is for some $\kappa>0$, 
\begin{equation}
\label{eq:accretivity}
\re \langle u, Bu \rangle \ge \kappa \|u\|_{2}^2, \quad \forall\ u \in \clos{\ran_{2}(D)}. 
\end{equation}
In this case, let 
\begin{equation}
\label{eq:accretivityangle}
   \omega := \sup_{u\in \clos{\ran_{2}(D)}, u\not = 0} |\arg(\langle u, Bu \rangle)|  <\frac \pi 2
\end{equation}
denote the  {\em angle of accretivity} of $B$ on $\clos{\ran_{2}(D)}$.
Note that $B$ may not be invertible on $L^2$. Still for $X$ a subspace of $L^2$, we set
$B^{-1}X= \{u \in L^2\, ; \,  Bu \in X\}$. Note that   $B^*$ is also strictly accretive on $\clos{\ran_{2}(D)}$   with the same lower bound and angle of accretivity. 
\begin{prop}   \label{prop:typeomega} With the above assumptions, we have the following facts.
\begin{itemize}
\item[{\rm (i)}]
The operator $DB$, with domain $B^{-1}\dom_{2}(D)$,  is  bisectorial with angle $\omega$, \textit{i.e.} $\sigma(DB)\subseteq \clos{S_{\omega}}$ and there are resolvent bounds 
$\|(\lambda I - DB)^{-1}\| \lesssim 1/ \dist(\lambda, S_\mu)$ when $\lambda\notin S_\mu$, $\omega <\mu<\pi/2$.
\item[{\rm (ii)}]
The operator $DB$ has range $\ran_{2}(DB)=\ran_{2}(D)$ and null space $\nul_{2}(DB)=B^{-1}\nul_{2}(D)$ such that topologically (but  not necessarily orthogonally) one has
$$
L^2 = \clos{\ran_{2}(DB)} \oplus \nul_{2}(DB).
$$
\item[{\rm (iii)}] 
The restriction of $DB$ to $\clos{\ran_{2}(DB)}$ 
is a closed, injective operator with dense range in
$\clos{\ran_{2}(D)}$.  Moreover, the same statements  on spectrum and resolvents as in (i) hold.

\item[{\rm (iv)}]   Statements similar to (i), (ii) and (iii) hold for $BD$ with $\dom_{2}(BD)=\dom_{2}(D)$, defined as the adjoint of $DB^*$ or equivalently by $BD= B(DB)B^{-1}$ on $\clos{\ran_{2}(BD)}\cap \dom_{2}(D)$ with $\ran_{2}(BD):=B\ran_{2}(D)$, and  $BD=0$ on the null space $\nul_{2}(BD):=\nul_{2}(D)$. 
\end{itemize}
\end{prop}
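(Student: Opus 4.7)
The plan is to reduce all four assertions to the invertibility of the projected multiplication on the closed invariant subspace $\clos{\ran_{2}(D)}$, exploiting self-adjointness of $D$ throughout. Since $D=D^*$, $L^2$ splits orthogonally as $\nul_{2}(D)\oplus\clos{\ran_{2}(D)}$; let $\IP$ be the orthogonal projection onto $\clos{\ran_{2}(D)}$. For $u\in\clos{\ran_{2}(D)}$, the accretivity \eqref{eq:accretivity} gives $\re\langle u,\IP Bu\rangle=\re\langle u,Bu\rangle\geq\kappa\|u\|^2$, so $B_0:=\IP B|_{\clos{\ran_{2}(D)}}$ is a bounded, invertible operator on $\clos{\ran_{2}(D)}$ whose numerical range lies in $\clos{S_{\omega}}$.

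Claim (ii) then follows quickly. The identity $\nul_{2}(DB)=B^{-1}\nul_{2}(D)$ is immediate. For any $v=Du\in\ran_{2}(D)$ with $u\in\dom_{2}(D)\cap\clos{\ran_{2}(D)}$, setting $w:=B_0^{-1}u$ yields $Bw\in u+\nul_{2}(D)\subset\dom_{2}(D)$ and $DBw=v$, so $\ran_{2}(DB)=\ran_{2}(D)$. The topological direct sum $L^2=\clos{\ran_{2}(D)}\oplus B^{-1}\nul_{2}(D)$ is witnessed by the bounded projection $u\mapsto B_0^{-1}\IP Bu$, whose range is $\clos{\ran_{2}(D)}$ and whose kernel is $B^{-1}\nul_{2}(D)$ (triviality of the intersection follows from injectivity of $B_0$).

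For (i), the splitting reduces the problem to proving bisectoriality of $DB_0$ on $\clos{\ran_{2}(D)}$ with angle $\omega$, since $DB$ vanishes on $\nul_{2}(DB)$ and equals $DB_0$ on $\clos{\ran_{2}(D)}$ (because $D(I-\IP)=0$). The key estimate comes from pairing with $B_0 u$: by self-adjointness of $D$ and since $B_0 u\in\dom_{2}(D)\cap\clos{\ran_{2}(D)}$, one has $\langle DB_0 u,B_0 u\rangle=\langle D(B_0 u),B_0 u\rangle\in\R$, so
\[
\im\langle(\lambda I-DB_0)u,B_0 u\rangle=\im(\lambda\langle u,B_0 u\rangle).
\]
Since $\langle u,B_0 u\rangle=\langle u,Bu\rangle\in\clos{S_{\omega}}$, for $\lambda\notin S_{\mu}$ with $\omega<\mu<\pi/2$ elementary sector arithmetic gives $|\im(\lambda\langle u,B_0 u\rangle)|\geq\sin(\mu-\omega)\,\kappa\,|\lambda|\,\|u\|^2$. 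Cauchy--Schwarz on the left-hand side then yields $\|(\lambda I-DB_0)u\|\gtrsim_{\mu}|\lambda|\|u\|\geq\dist(\lambda,S_{\mu})\|u\|$, giving injectivity with closed range. Surjectivity follows from the same estimate applied to the adjoint $(DB_0)^*=B_0^*D$ (using that $B^*$ inherits the accretivity on $\clos{\ran_{2}(D)}$ with identical constants).

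Claim (iii) is then immediate: the restriction of $DB$ to $\clos{\ran_{2}(DB)}=\clos{\ran_{2}(D)}$ is $DB_0$, injective by the direct sum in (ii) and with dense range $\ran_{2}(D)$, and the resolvent estimates pass verbatim to this invariant subspace. For (iv), observe that $B^*$ is accretive on $\clos{\ran_{2}(D)}$ with the same constant and angle, so (i)--(iii) apply to $DB^*$; defining $BD:=(DB^*)^*$ and taking adjoints throughout yields the analogous statements for $BD$, with the intertwining $BD=B(DB)B^{-1}$ on $B\clos{\ran_{2}(D)}\cap\dom_{2}(D)$ being a direct computation from the definitions. The main obstacle I anticipate is producing the sharp angle $\omega$ in the resolvent estimate of (i): the accretivity of $B$ holds only on the subspace $\clos{\ran_{2}(D)}$, so the projection $\IP$ and the self-adjointness of $D$ must be deployed in tandem to reduce the composite estimate for $DB_0$ to the numerical range of $B_0$.
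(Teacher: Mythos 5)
Your argument is correct: the paper itself gives no proof of Proposition \ref{prop:typeomega} but defers to \cite{ADMc}, and your reduction to the bounded invertible accretive operator $B_0=\IP B|_{\clos{\ran_{2}(D)}}$ together with the pairing $\langle(\lambda I-DB_0)u,B_0u\rangle$ and the sector arithmetic is precisely the standard argument behind that reference, using only self-adjointness of $D$ and accretivity of $B$ on $\clos{\ran_{2}(D)}$, exactly as the paper's remark following the proposition indicates. The only spots deserving one extra line in a full write-up are the surjectivity step in (i) (either note that $(DB_0)^*=B_0^*D|_{\clos{\ran_{2}(D)}}$ is similar via $B_0^*$ to $DB_0^*|_{\clos{\ran_{2}(D)}}$, to which your estimate applies, or pair $(\bar\lambda-B_0^*D)v$ directly with $B_0^{-1}v$), and, in (iv), the verification that $(DB^*)^*$ has domain exactly $\dom_{2}(D)$ and action $B\circ D$, which again uses the invertibility of $\IP B^*$ on $\clos{\ran_{2}(D)}$.
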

 
For a proof, see \cite{ADMc}. Note that the accretivity of $B$ is only needed on $\ran_{2}(D)$.  The fact that $D$ is self-adjoint is used  in this statement. In fact, for a self-adjoint operator $D$ on a separable Hilbert space instead of $L^2$ and a bounded operator $B$ which is accretive on     $\clos{\ran_{2}(D)}$, the statement above is valid. 

We come back to the concrete $D$ and $B$ above. We isolate this result as it will play a special role throughout.

\begin{prop}\label{prop:projection}
Consider the orthogonal projection $\IP$ from $L^2$ onto $\clos{\ran_{2}(D)}$. Then 
$\IP$ is an isomorphism between $\clos{\ran_{2}(BD)}$ and $\clos{\ran_{2}(D)}$.
\end{prop}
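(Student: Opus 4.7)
\emph{Setup.} The plan is to write $H_1 := \clos{\ran_2(D)}$ and $H_2 := \clos{\ran_2(BD)}$. By Proposition \ref{prop:typeomega}(iv) and the boundedness of $B$, one has $H_2 = \clos{B H_1}$, so $B H_1$ is dense in $H_2$. Self-adjointness of $D$ also gives $H_1^\perp = \nul_2(D)$. The map $\IP|_{H_2} \colon H_2 \to H_1$ is automatically bounded (with norm at most one) since $\IP$ is an orthogonal projection, so the task reduces to showing that it is bijective with bounded inverse; no open mapping argument will be needed once a two-sided norm estimate is obtained directly.

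\emph{Lax--Milgram.} I would consider the sesquilinear form $a(v,w) := \langle B v, w\rangle$ on the Hilbert space $H_1$. It is bounded by $\|B\|_\infty$, and the accretivity hypothesis \eqref{eq:accretivity}, which holds precisely on $H_1$, yields $\re a(v,v) \geq \kappa \|v\|_2^2$. Given any $f \in H_1$, Lax--Milgram produces a unique $v \in H_1$ satisfying $\langle B v, w\rangle = \langle f, w\rangle$ for every $w \in H_1$; equivalently, $\IP(Bv) = f$, because $\IP(Bv)$ is the unique element of $H_1$ having the same $H_1$-pairings as $Bv$. Setting $u := Bv \in B H_1 \subseteq H_2$ then gives $\IP u = f$, which proves surjectivity. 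Moreover the Lax--Milgram estimate $\|v\|_2 \leq \kappa^{-1}\|f\|_2$, combined with $\|u\|_2 \leq \|B\|_\infty \|v\|_2$, rearranges into the lower bound
\[
 \|\IP u\|_2 \;\geq\; \frac{\kappa}{\|B\|_\infty}\, \|u\|_2, \qquad u \in B H_1.
\]

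\emph{Extension and main obstacle.} This lower bound extends from the dense subspace $B H_1$ to all of $H_2$ by continuity of $\IP$, simultaneously giving injectivity of $\IP|_{H_2}$ and a quantitative bound on its inverse. Together with surjectivity, this is the desired isomorphism statement. The main subtlety to keep in mind is that the accretivity of $B$ is only postulated on $\ran_2(D)$, not on all of $L^2$, so Lax--Milgram must be set up in the Hilbert subspace $H_1$ rather than on the ambient $L^2$; the role of the projection $\IP$ is precisely to convert the variational identity on $H_1$ into the pointwise identity $\IP(Bv) = f$, after which everything follows from standard arguments and a density step.
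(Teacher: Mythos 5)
Your argument is correct, but it takes a genuinely different route from the paper. The paper's proof is purely a two-splittings argument: it invokes Proposition \ref{prop:typeomega} to write $L^2=\clos{\ran_{2}(BD)}\oplus\nul_{2}(BD)$ with $\nul_{2}(BD)=\nul_{2}(D)$, places this alongside the orthogonal splitting $L^2=\clos{\ran_{2}(D)}\oplus\nul_{2}(D)$, and concludes by the classical fact that two complements of the same subspace are isomorphic via the respective projections; the inverse is exhibited concretely as $\IP_{BD}$, the projection onto $\clos{\ran_{2}(BD)}$ along $\nul_{2}(D)$. You instead run Lax--Milgram for the form $\pair{Bv}{w}$ on $\clos{\ran_{2}(D)}$, using accretivity \eqref{eq:accretivity}, and then a density step from $B\,\clos{\ran_{2}(D)}$ to $\clos{\ran_{2}(BD)}$; the only input you take from Proposition \ref{prop:typeomega} is the identity $\ran_{2}(BD)=B\ran_{2}(D)$, not the topological kernel/range splitting for $BD$. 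What your approach buys is self-containedness (no appeal to the bisectoriality-based splitting) and an explicit quantitative constant $\kappa/\|B\|_{\infty}$ for the lower bound $\|\IP u\|_{2}\gtrsim\|u\|_{2}$ on $\clos{\ran_{2}(BD)}$. What the paper's approach buys is brevity given Proposition \ref{prop:typeomega}, and, more importantly, an explicit description of the inverse as the projection $\IP_{BD}$ along $\nul_{2}(D)$, which is reused verbatim in the $L^p$ analogue (Proposition \ref{prop:projectionlp}), where a coercivity/Lax--Milgram argument would not transfer since $L^p$ is not a Hilbert space; there the splitting argument is the natural one.
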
 

\begin{proof}
Using $\nul_{2}(BD)=\nul_{2}(D)$, we have the splittings
$$
L^2 = \clos{\ran_{2}(BD)} \oplus \nul_{2}(D)= \clos{\ran_{2}(D)} \oplus \nul_{2}(D).
$$
It is then a classical fact from operator theory that $\IP: \clos{\ran_{2}(BD)} \to \clos{\ran_{2}(D)}$ is invertible with inverse being $\IP_{BD}:\clos{\ran_{2}(D)} \to \clos{\ran_{2}(BD)}$, where $\IP_{BD}$ is the projection onto $\clos{\ran_{2}(BD)}$ along $\nul(D)$ associated to the first splitting. Indeed, if $h \in \clos{\ran_{2}(D)}$, then $h-\IP_{BD} h\in \nul_{2}(D)$, thus $\IP (h-\IP_{BD} h)=0$. It follows that $h=\IP h= (\IP \circ\IP_{BD})h$. Similarly, we obtain  $h= (\IP_{BD}\circ \IP) h$ for $h\in \clos{\ran_{2}(BD)}$. 
\end{proof}

We also state the following decay estimates. See \cite{elAAM}.

\begin{lem} [$L^2$ off-diagonal decay] \label{lem:odd} Let $T=BD$ or $DB$.
 For every integer $N$ there exists $C_N>0$
such that
\begin{equation} \label{odn}
\|1_{E}\,(I+itT)^{-1} u\|_{2} \le C_N \brac{\dist (E,F)/|t|}^{-N}\|u\|_{2}
\end{equation}
for all $t\ne 0$, 
whenever $E,F \subseteq \R^n$ are closed sets,  $u \in L^2$
is such that $\supp u\subseteq F$.  We have set $\brac x:=1+|x|$ and
$\dist(E,F) :=\inf\{|x-y|\, ; \, x\in E,y\in F\}$.
\end{lem}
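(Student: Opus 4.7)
This is a classical Gaffney–Davies type estimate, and the natural strategy is the cutoff commutator method, iterated to get arbitrary polynomial decay. First observe that the case $\dist(E,F) \le |t|$ is trivial, because Proposition \ref{prop:typeomega} supplies a uniform resolvent bound $\|(I+itT)^{-1}\|_{2\to 2}\le C$, and $\brac{\dist(E,F)/|t|}^{-N} \ge 2^{-N}$ in that regime. So the plan is to fix $v:=(I+itT)^{-1}u$ with $\supp u\subseteq F$, set $d:=\dist(E,F)$, and establish $\|1_E v\|_2 \le C_N (|t|/d)^N \|u\|_2$ when $d\ge |t|$, by induction on $N$.

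The engine of the proof is the following observation. Choose a real Lipschitz cutoff $\eta$ with $0\le \eta\le 1$, $\eta=1$ in a neighborhood of $E$, $\eta=0$ in a neighborhood of $F$. Since $\eta$ is smooth and bounded, $\eta v$ lies in $\dom(T)$ (using that $\dom(DB)=B^{-1}\dom(D)$ and $\dom(BD)=\dom(D)$ are characterized by Sobolev regularity of $v$ or $Bv$, and multiplication by $\eta$ preserves $W^{1,2}$). A direct computation gives the identity
\[
(I+itT)(\eta v) = \eta(I+itT)v + it [T,\eta] v = \eta u + it [T,\eta] v = it [T,\eta] v,
\]
since $\eta u = 0$. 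The crucial point is that $[T,\eta]$ is a \emph{bounded pointwise multiplication operator}: because $D=-i\sum_j \hat D_j\partial_j$ is a first order differential operator and $B$ is multiplication, one has $[DB,\eta]=[D,\eta]B$ and $[BD,\eta]=B[D,\eta]$, with $[D,\eta]$ equal to multiplication by $-i\sum_j \hat D_j(\partial_j\eta)$. Consequently
\[
\|[T,\eta] w\|_2 \le C\|B\|_\infty \|\nabla\eta\|_\infty \|w\|_2.
\]
Combining this with the uniform resolvent bound yields $\|\eta v\|_2 \le C|t|\,\|\nabla\eta\|_\infty \|v\|_2 \le C|t|\,\|\nabla\eta\|_\infty \|u\|_2$. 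Choosing $\eta$ so that $\|\nabla\eta\|_\infty \le 2/d$ (possible because $\dist(E,F)=d$) proves the case $N=1$.

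For the induction step I would not attempt a single clever cutoff but rather iterate the $N=1$ estimate. Assuming the bound known up to $N-1$, I pick $\eta$ equal to $1$ on the $(d/2N)$-neighborhood of $E$, supported in the $(d/N)$-neighborhood of $E$, with $\|\nabla\eta\|_\infty \le CN/d$. Then $M:=[T,\eta]v$ is supported in $\supp\nabla\eta$, a set at distance $\ge d/(2N)$ from $E$ and at distance $\ge d/2$ from $F$. Using $1_E\eta = 1_E$ and the identity above,
\[
\|1_E v\|_2 = |t|\,\|1_E (I+itT)^{-1} M\|_2,
\]
and I apply the induction hypothesis to $(I+itT)^{-1}M$ with the pair $(E,\supp M)$, whose separation is $\ge d/(2N)$. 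This contributes a factor $C_{N-1}\bigl(|t|/(d/(2N))\bigr)^{N-1}$, multiplied by $\|M\|_2 \le (CN/d)\|u\|_2$; the product is $C_N (|t|/d)^N\|u\|_2$, as desired.

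The main obstacle is really conceptual rather than computational: the argument works precisely because $D$ is first order, so that $[T,\eta]$ collapses to a zeroth order (multiplication) operator whose $L^2$ operator norm is controlled by $\|\nabla\eta\|_\infty$. This is also what forces the decay to come out polynomially in $d/|t|$ from the commutator alone, without any further spectral information on $T$. The only technical nuisance is justifying that $\eta v\in\dom(T)$ and that the above manipulations are valid in the operator sense, which is routine given Proposition \ref{prop:typeomega} and the description of $\dom(T)$; no subtler analytic input is needed.
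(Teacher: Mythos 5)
Your proof is correct and is essentially the standard argument: the paper gives no proof of this lemma but cites \cite{elAAM}, where the estimate rests on exactly the same mechanism you use — since $D$ is first order, the commutator $[T,\eta]$ with a Lipschitz cutoff is a bounded multiplication operator of norm $\lesssim \|B\|_\infty\|\nabla\eta\|_\infty$, so $(I+itT)(\eta v)=it[T,\eta]v$ yields a gain of $|t|/\dist(E,F)$ that one iterates (or, equivalently, packages with exponential weights) to reach arbitrary $N$. One minor imprecision: $\dom_{2}(D)$ is $\{w\in L^2 : Dw\in L^2\}$ rather than a Sobolev space, but your conclusion $\eta v\in\dom(T)$ still holds, via the distributional identity $D(\eta w)=\eta Dw+[D,\eta]w$ applied to $w=v$ (for $BD$) or $w=Bv$ (for $DB$), so nothing is lost.
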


\begin{rem}\label{rem:extension} Any operator satisfying such estimates with $N>\frac{n}{2}$ has an extension from $L^\infty$ into $L^2_{loc}$.
\end{rem}

\section{Holomorphic functional calculus} 

\subsection{$L^2$ results}
We begin with recalling the following result  due to \cite{AKMc}. A direct proof is in \cite{elAAM}.

\begin{prop}\label{prop:equi}  If $T=DB$ or $T=BD$, then one has the equivalence
\begin{equation}\label{eq:sfT}
  \int_0^\infty\|tT(I+t^2T^2)^{-1} u \|_{2}^2 \, \frac{dt}t \sim \|u\|_{2}^2, \qquad  \text{for all }\ u\in \clos{\ran_{2}(T)}.
\end{equation}
\end{prop}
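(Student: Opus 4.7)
The plan is to apply McIntosh's functional calculus theorem on Hilbert space, which states that for a bisectorial operator $T$ on $L^2$, the quadratic estimate $\int_0^\infty \|\psi(tT)u\|_2^2 \, \frac{dt}{t} \sim \|u\|_2^2$ on $\clos{\ran_2(T)}$ holds for some (equivalently, every) non-degenerate $\psi \in \Psi(S_\mu)$ with $\omega < \mu < \pi/2$ if and only if $T$ has a bounded $H^\infty$-calculus on $\clos{\ran_2(T)}$. The specific function $\psi(z) := z/(1+z^2)$ lies in $\Psi(S_\mu)$ for every $\mu \in (\omega,\pi/2)$ (it decays like $|z|$ at $0$ and like $|z|^{-1}$ at infinity) and is non-degenerate on both connected components of $S_\mu$. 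Consequently \eqref{eq:sfT} will follow once the bounded $H^\infty$-calculus is established for $T = DB$ and $T = BD$ on the closure of their respective ranges.

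First, I would reduce the case $T = BD$ to the case $T = DB$. By Proposition \ref{prop:typeomega}(iv), $BD = (DB^*)^*$. Since $D$ is self-adjoint and $B^*$ is strictly accretive on $\clos{\ran_2(D)}$ with the same angle as $B$, a bounded $H^\infty$-calculus for $DB^*$ on $\clos{\ran_2(DB^*)}$ transfers, by taking Hilbert space adjoints, to a bounded $H^\infty$-calculus for $BD$ on $\clos{\ran_2(BD)}$. Thus it suffices to treat $T = DB$.

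Second, I would establish the quadratic estimate directly for $T = DB$. Set $\Theta_t := tDB(I + t^2(DB)^2)^{-1}$. The three main ingredients are the $L^2$ off-diagonal decay of the resolvents, hence of $\Theta_t$, from Lemma \ref{lem:odd}; the coercivity $\|\nabla u\|_2 \lesssim \|Du\|_2$ on $\dom_2(D) \cap \clos{\ran_2(D)}$ recalled in Section \ref{sec:D}; and the accretivity \eqref{eq:accretivity} of $B$ on $\clos{\ran_2(D)}$. The standard Axelsson--Keith--McIntosh strategy then proceeds in three steps: restrict to $u \in \clos{\ran_2(DB)}$ and exploit the splitting from Proposition \ref{prop:typeomega}(ii); compare $\Theta_t$ to its \emph{principal part}, namely a dyadic averaging operator composed with the matrix-valued function $\gamma_t(x)$ obtained by applying $\Theta_t$ to constants; and finally reduce the remaining piece to a Carleson measure estimate on $|\gamma_t(x)|^2 \, \frac{dx\,dt}{t}$, proved by a local $T(b)$ argument using test functions of the form $f_Q^w := (I + i\epsilon \ell(Q) DB)^{-1}(w \mathbf{1}_{5Q})$ for $w \in \C^N$ adapted to $B$.

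The main obstacle is the local $T(b)$ step: constructing these test functions and verifying a John--Nirenberg type stopping-time Carleson condition, together with the fact that $B$ is accretive only on $\clos{\ran_2(D)}$ rather than on all of $L^2$. This is exactly where the coercivity of $D$ on its range enters, allowing one to recover the averaged gradient of the test functions from $D$-images, as in the Kato square root proof. The full execution of these steps is carried out in \cite{AKMc}, with a streamlined presentation in \cite{elAAM}; we rely on that work to conclude.
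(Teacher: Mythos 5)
Your proposal is correct and in substance coincides with the paper's treatment: the paper offers no independent argument for Proposition \ref{prop:equi}, simply attributing it to \cite{AKMc} with a direct proof in \cite{elAAM}, which is exactly where you delegate the principal-part/Carleson-measure/local $T(b)$ core. Your added remarks (the reduction of $BD$ to $DB$ via $BD=(DB^*)^*$, or equivalently the similarity $BD=B(DB)B^{-1}$ on the range noted in Proposition \ref{prop:typeomega}, and the McIntosh equivalence allowing the specific $\psi(z)=z/(1+z^2)$) are accurate and consistent with how those references handle the matter.
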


Note that if $u\in \nul_{2}(T)$ then $tT(I+t^2T^2)^{-1} u =0$. Thus by the kernel/range decomposition, we have the inequality $\lesssim$ for all $u\in L^2$.

The next result summarizes the needed consequences of this quadratic estimate. This statement, contrarily to the previous one, is abstract and applies to $T=BD$ or $DB$ on $L^2$.   

\begin{prop}\label{prop:SFimpliesFC} Let  $T$ be an $\omega$-bisectorial operator on  a separable Hilbert space $\mH$ with $0\le \omega<\pi/2$.  Assume that the quadratic estimate
\begin{equation} 
   \int_0^\infty\|tT(I+t^2T^2)^{-1} u \|^2 \, \frac{dt}t \sim \|u\|^2 \   \text{holds for all }\ u\in \clos{\ran(T)}.
\end{equation}Then, the following statements hold.
\begin{itemize}
  \item $T$ has an $H^\infty$-calculus on $\clos{\ran(T)}$,  which can be extended to  $\mH$ by setting $b(T)=b(0)I$ on $\nul(T)$ whenever $b$ is also defined at 0. 
\item For any $\omega<\mu<\pi/2$ and  any  non-degenerate $\psi\in \Psi(S_{\mu})$, the comparison  \begin{equation} \label{eq:psiT}
  \int_0^\infty\|\psi(tT) u \|^2 \, \frac{dt}t \sim \|u\|^2 \  \text{holds for all }\ u\in \clos{\ran(T)}.
\end{equation}  
 \item  $\clos{\ran(T)}$ splits topologically into two spectral subspaces 
\begin{equation}     \label{eq:hardysplit}\clos{\ran(T)}=\mH^+_{T}\oplus \mH^-_{T}
\end{equation}
  with $\mH^\pm_{T}=\chi^\pm(T)(\clos{\ran(T)})$ and $\chi^\pm(T)$ are projections with $\chi^\pm(z)=1$ if $\pm \re z>0$ and $\chi^\pm(z)=0$ if $\pm \re z<0$.  
   \item The operator $\sgn(T)= \chi^+(T)- \chi^-(T)$ is a bounded involution on $\clos{\ran(T)}$.
\item The operator $|T|=sgn(T)T = \sqrt {T^2}$ with $\dom(|T|)=\dom(T)$ is an $\omega$-sectorial operator  with $H^\infty$-calculus on $\mH$ and 
$-|T|$ is the infinitesimal generator of  a bounded analytic semigroup of operators $(e^{-z|T|})_{z\in S_{\frac{\pi}{2}-\omega+}}$ on $\mH$. 
\item For $h\in \dom(T)$, $h\in \mH^\pm_{T}$ if and only if $|T|h=\pm Th.$ As a consequence 
$e^{\mp zT}$ are well-defined operators on $\mH^\pm_{T}$ respectively,  and $e^{-zT}\chi^+(T)$ and $e^{+zT}\chi^-(T)$ are well-defined operators on $\mH$   for $z\in S_{\frac{\pi}{2}-\omega+}$. 
\end{itemize}

Finally, all these properties hold for the adjoint $T^*$ of $T$.  \end{prop}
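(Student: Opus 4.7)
The plan is to reduce the entire statement to the classical McIntosh convergence theorem applied to the bisectorial $T$ restricted to $\closran{T}$. The hypothesis is exactly the quadratic estimate for the non-degenerate function $\psi_0(z)=z(1+z^2)^{-1}\in\Psi(S_\mu)$, and by polarization the same estimate holds (equivalently) for $T^*$ on $\closran{T^*}$. Hence McIntosh's theorem in its bisectorial formulation (see \cite{ADMc,AKMc}) yields that $T|_{\closran{T}}$ has a bounded $H^\infty$-calculus of angle $\omega$. The extension to $\mH$ by $b(T)=b(0)I$ on $\nul(T)$ is consistent with the Cauchy integral definition because on $\nul(T)$ the resolvent acts as $\lambda^{-1}I$ and the integral collapses to $b(0)$.

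Given the $H^\infty$-calculus, the comparison \eqref{eq:psiT} for any non-degenerate $\psi\in\Psi(S_\mu)$ is McIntosh's convergence lemma: choose $\phi\in\Psi(S_\mu)$ with the Calder\'on reproducing identity $\int_0^\infty\phi(tz)\psi(tz)\,\frac{dt}{t}=1$ on $S_\mu$, so that $u=\int_0^\infty\phi(tT)\psi(tT)u\,\frac{dt}{t}$ on $\closran{T}$; a standard Schur-type argument comparing $\psi$ to $\psi_0$ through the hypothesis delivers both directions. The spectral splitting \eqref{eq:hardysplit} is obtained by inserting the characteristic functions $\chi^\pm\in H^\infty(S_\mu)$ into the calculus: from $\chi^++\chi^-=1$ and $\chi^+\chi^-=0$ on the spectrum, the $\chi^\pm(T)$ are complementary bounded projections on $\closran{T}$, and $\sgn(T)=\chi^+(T)-\chi^-(T)$ satisfies $\sgn(T)^2=I_{\closran{T}}$ since $\sgn^2=1$ there.

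For $|T|$: on $\dom(T)$, $|T|=\sgn(T)T$ agrees with $(\sqrt{z^2})(T)$, and since $z\mapsto\sqrt{z^2}$ maps $S_\omega$ into a sector of the right half-plane of the same angle, $|T|$ is $\omega$-sectorial: its resolvent for $\lambda\notin\clos{S_{\omega+}}$ is $f_\lambda(T)$ with $f_\lambda(z)=(\lambda-\sqrt{z^2})^{-1}\in H^\infty(S_\mu)$ satisfying the required bounds. The bounded $H^\infty$-calculus of $|T|$ then follows by composition, $f(|T|)=(f\circ\sqrt{z^2})(T)$, and the bounded analytic semigroup $(e^{-z|T|})_{z\in S_{\pi/2-\omega+}}$ is the standard consequence. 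For the spectral characterization, if $h\in\dom(T)\cap\mH^+_T$ then $\sgn(T)h=h$, and since bounded functions of $T$ commute with $T$ on $\dom(T)$, $|T|h=\sgn(T)Th=T\sgn(T)h=Th$; conversely, if $|T|h=Th$ with $h\in\dom(T)\cap\closran{T}$, then $(I-\sgn(T))Th=2\chi^-(T)Th=0$, and since $T|_{\closran{T}}$ is injective (by the kernel/range decomposition, $Tu=0$ with $u\in\closran{T}$ forces $u\in\nul(T)\cap\closran{T}=\{0\}$), we deduce $\chi^-(T)h=0$, i.e. $h\in\mH^+_T$. The well-definedness of $e^{\mp zT}\chi^\pm(T)$ is then the identification $e^{-zT}\chi^+(T)=e^{-z|T|}\chi^+(T)$ together with its analogue for $\chi^-$.

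Everything transfers to $T^*$, which is again $\omega$-bisectorial and whose quadratic estimate on $\closran{T^*}$ is equivalent to that of $T$ by Hilbert space duality, so the same argument applies verbatim. The main obstacle is honestly just the invocation of McIntosh's theorem in the bisectorial setting; once that is in hand the remaining assertions are abstract functional-calculus manipulations, the only delicate points being the commutation $T\sgn(T)=\sgn(T)T$ on $\dom(T)$ (standard since $\sgn(T)$ leaves $\dom(T)$ invariant and commutes with the resolvents of $T$) and the injectivity of $T$ on $\closran{T}$.
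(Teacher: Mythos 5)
The paper offers no proof of this proposition at all: it is stated as a summary of the standard quadratic-estimate/functional-calculus theory (McIntosh's theorem in its bisectorial form, as in \cite{ADMc,AKMc}), and your argument follows precisely that standard route—quoted theorem for the calculus, Calder\'on reproducing formula and almost-orthogonality for \eqref{eq:psiT}, multiplicativity for the spectral projections, and routine manipulations for $|T|$ and the semigroups—so it is correct in substance and in approach. Two small remarks: the passage from the two-sided estimate for $T$ to the estimate for $T^*$ is not literally ``polarization'' but the usual duality/synthesis-operator argument (or simply part of the quoted theorem), and in the spectral characterization you correctly restrict the converse to $h\in\dom(T)\cap\clos{\ran(T)}$, which is the intended reading of the statement.
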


This result is for later use.

\begin{prop}\label{prop:bdproj}If $b\in H^\infty(S_{\mu})$ and $b$ is defined at 0, then, for all $h\in L^2$, 
$\IP b(BD)\IP h= \IP b(BD)h$. If $\psi\in \Psi(S_{\mu})$, then for all $h\in L^2$,  $\psi(BD)\IP h= \psi(BD)h$.
\end{prop}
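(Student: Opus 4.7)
The plan is to reduce both identities to the decomposition
\[
L^2 = \clos{\ran_2(D)} \oplus \nul_2(D)
\]
combined with the identity $\nul_2(BD) = \nul_2(D)$ from Proposition \ref{prop:typeomega}(iv). Since $D$ is self-adjoint, the orthogonal projection $\IP$ onto $\clos{\ran_2(D)}$ has kernel $\nul_2(D^*) = \nul_2(D) = \nul_2(BD)$. Thus for every $h \in L^2$ the complementary piece $(I - \IP) h$ lies in $\nul_2(BD)$, and this is the one and only structural input we will need.

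For the second identity, I would note that any $\psi \in \Psi(S_\mu)$ is defined at $0$ with $\psi(0) = 0$ by the decay condition built into $\Psi$. Proposition \ref{prop:SFimpliesFC} extends the $H^\infty$-calculus of $BD$ from $\clos{\ran_2(BD)}$ to all of $L^2$ by declaring $\psi(BD) = \psi(0) I$ on $\nul_2(BD)$. Hence $\psi(BD) (I - \IP) h = 0$. Splitting $h = \IP h + (I - \IP) h$ and applying $\psi(BD)$ to both sides immediately gives $\psi(BD) h = \psi(BD) \IP h$.

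For the first identity, I would apply the same extension convention: since $b$ is defined at $0$, one has $b(BD)(I - \IP) h = b(0)(I - \IP) h$, a vector still lying in $\nul_2(BD) = \nul_2(D) = \ker \IP$. Therefore
\[
\IP b(BD) h \;=\; \IP b(BD) \IP h \;+\; b(0)\, \IP(I - \IP) h \;=\; \IP b(BD) \IP h,
\]
because $\IP (I - \IP) = 0$.

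There is essentially no obstacle here; the only conceptual point is the identification $\ker \IP = \nul_2(BD)$, which is why both the self-adjointness of $D$ and Proposition \ref{prop:typeomega}(iv) are invoked. Once this is in place, both statements are direct consequences of how the $H^\infty$-calculus has been extended across the kernel of $BD$.
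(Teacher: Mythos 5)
Your proof is correct and follows essentially the same route as the paper: both arguments split $h$ as $\IP h + (h-\IP h)$, use $\nul_{2}(BD)=\nul_{2}(D)=\ker\IP$ (self-adjointness of $D$), and observe that $b(BD)$ maps $h-\IP h$ to $b(0)(h-\IP h)$, which $\IP$ kills, while $\psi(BD)$ annihilates the null space outright since $\psi$ vanishes at $0$. Nothing further is needed.
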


\begin{proof} Remark that $h-\IP h\in \nul_{2}(D)=\nul_{2}(BD)$. Thus, $b(BD)(h-\IP h)=b(0) (h-\IP h)$. Hence $\IP b(BD)(h-\IP h)= 0$. 
If $b=\psi$ then $\psi(BD)$ annihilates the null space of $BD$, hence $\psi(BD)(h-\IP h)=0$
(This is consistent with the fact that one can set $\psi(0)=0$ by continuity).

\end{proof}

\subsection{$L^p$ results}\label{sec:lpresults}

There has been  a series of works \cite{HMP1,HMP2, Ajiev, HMc, AS}  concerning extension to $L^p$ of the $L^2$ theory. We summarize here the results described in \cite{AS}.  

Let $D$ and $B$ be as before and $1<q<\infty$. Then we have a meaning of $D$ and $B$ as operators on $L^q$, thus of $BD$ and $DB$ as unbounded operators with natural domains $\dom_{q}(D)$ and $B^{-1}\dom_{q}(D)$ respectively.  Introduce the set
of coercivity of $B$ (it also depends on $D$) as $$
\mI(BD)=\{q\in (1,\infty)\, ;\, \|Bu\|_{q} \gtrsim \|u\|_{q} \ \mathrm{for\ all\ } u\in \ran_{q}(D)\}.
$$
By density, we may replace $\ran_{q}(D)$ by its closure. The following observation will be frequently used.

\begin{lem}\label{lem:biso} If $q\in \mI(BD)$ then $B|_{\clos{\ran_{q}(D)}}: \clos{\ran_{q}(D)} \to
\clos{\ran_{q}(BD)}$ is an isomorphism and ${\ran_{q}(BD)}=  B{\ran_{q}(D)}$.  Moreover, 
 $\nul_{q}(BD)=\nul_{q}(D)$. 
\end{lem}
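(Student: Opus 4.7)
The whole lemma is a transfer from the hypothesis ``$B$ is coercive on $\ran_{q}(D)$'' to the three conclusions. The plan is to first upgrade the coercivity from $\ran_q(D)$ to its closure $\clos{\ran_q(D)}$ by a density/continuity argument (using $B\in L^\infty$), and then to deduce everything else by standard bounded-below/closed-range bookkeeping. At no point will the full bisectorial machinery of Proposition~\ref{prop:typeomega} be needed; only the kernel/range decomposition of $D$ on $L^q$ listed in Section~\ref{sec:D}.

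\textbf{Isomorphism.} For any $u\in\clos{\ran_q(D)}$, approximate by a sequence $u_k\in\ran_q(D)$ and pass to the limit in $\|Bu_k\|_q\gtrsim\|u_k\|_q$, using that $B:L^q\to L^q$ is bounded. This yields $\|Bu\|_q\gtrsim\|u\|_q$ for \emph{every} $u\in\clos{\ran_q(D)}$. A bounded-below operator on a complete space is injective with closed range, so $B$ restricted to $\clos{\ran_q(D)}$ is a topological isomorphism onto the closed subspace $B(\clos{\ran_q(D)})$.

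\textbf{Identifying the image with $\clos{\ran_q(BD)}$.} Unpacking the definition, $\dom_q(BD)=\dom_q(D)$ because $B$ is $L^q$-bounded, so $\ran_q(BD)=B\,\ran_q(D)$ by inspection (this is the second equality asserted in the lemma). It remains to check $B(\clos{\ran_q(D)})=\clos{B\,\ran_q(D)}$: the inclusion $\supseteq$ is immediate since the left-hand side is closed (previous step) and contains $B\,\ran_q(D)$; the inclusion $\subseteq$ is the continuity of $B$ applied to approximating sequences.

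\textbf{Kernel equality.} The inclusion $\nul_q(D)\subseteq\nul_q(BD)$ is trivial. Conversely, if $u\in\nul_q(BD)\subseteq\dom_q(D)$, then $Du\in\ran_q(D)$ and $B(Du)=0$; the coercivity on $\ran_q(D)$ forces $Du=0$. The only place where anything can go wrong is the transfer of coercivity to the closure, which is why I isolate it as the first step; once it is in hand, the rest is formal. I therefore expect no serious obstacle in this lemma—it is really a clean statement about what the defining inequality of $\mI(BD)$ buys, and its role in the paper is as a workhorse for subsequent results rather than as a deep step in itself.
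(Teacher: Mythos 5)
Your proof is correct and complete: the coercivity defining $\mI(BD)$ extends to $\clos{\ran_q(D)}$ by density and $L^q$-boundedness of $B$ (as the paper itself remarks just before the lemma), a bounded-below operator has closed range, $\dom_q(BD)=\dom_q(D)$ gives $\ran_q(BD)=B\,\ran_q(D)$ by inspection, and the kernel equality follows from coercivity applied to $Du$. The paper does not write out an argument — it simply cites Proposition 2.1, (2) and (3), of \cite{AS} — and your elementary argument is exactly the expected one, so there is nothing to compare beyond that.
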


\begin{proof} See Proposition 2.1, (2) and (3),  in \cite{AS}.
\end{proof}

\begin{rem}\label{rem:setofcoerc}
It is shown in \cite{HMc, AS} that  the set of coercivity of $B$ is  open. As it  contains $q=2$, let $\mI_{2}$ be the connected component of $\mI(BD)\cap \mI(B^*D)$ that contains 2. Remark that if $B(x)$ is invertible in $L^\infty$ then $B$ is invertible in $\mL(L^q)$ for all $1<q<\infty$ and $\mI_{2}=(1,\infty)$. Otherwise, we do not even know if the set of coercivity of $B$ is connected. 
\end{rem}  

For an interval $I\subset (1,\infty)$, its dual interval is $I'=\{p'; p\in I\}$ where $p'$ is the conjugate exponent to $p$. The following result is taken from \cite{AS} with a cosmetic modification in the statement.

\begin{thm}\label{thm:Lpfc}
There exists  an open interval $I(BD)=(p_{-}(BD),p_{+}(BD))$, maximal in $\mI_{2}$, containing $2$,  with the following dichotomy:          bisectoriality of $BD$ with angle $\omega$, $H^\infty$-calculus with angle $\omega$ in $L^p$,   and    kernel/range decomposition hold for $BD$  in $L^p$ if $p\in I(BD)$ and all fail if $p=p_{\pm}(BD)$ and $p\in \mI_{2}$.  The same property holds for $DB$ with  $I(DB)=I(BD)$.  The same property holds for $B^*D=(DB)^*$ and $DB^*=(BD)^*$ in the dual interval $I(DB^*)=I(B^*D)=(I(BD))'$. Thus we have the relations, 
\begin{equation}
\label{eq:limits}
p_{\pm}(BD)=p_{\pm}(DB), \qquad p_{\pm}(BD)= p_{\mp}(B^*D)'.
\end{equation}
\end{thm}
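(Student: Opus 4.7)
My plan is to bootstrap the $L^2$ theory from the previous propositions into an $L^p$ theory using perturbation and duality, then transfer the results between $BD$, $DB$, $B^*D$, $DB^*$.

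First, I would start at $p=2$, where Proposition \ref{prop:typeomega} gives bisectoriality with angle $\omega$, Proposition \ref{prop:SFimpliesFC} (combined with the quadratic estimate of Proposition \ref{prop:equi}) gives the $H^\infty$-calculus, and the kernel/range decomposition is part of Proposition \ref{prop:typeomega}. I would then define $J(BD)\subset \mI_{2}$ as the set of those $p$ at which all three properties simultaneously hold for $BD$ on $L^p$, and set $I(BD)$ to be the connected component of $J(BD)$ containing $p=2$. The goal reduces to proving that $J(BD)$ is open in $\mI_{2}$ and that the three properties actually coincide on $J(BD)$, so that maximality produces the dichotomy at the endpoints.

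Second, I would handle the interdependencies. For $BD$ versus $DB$: on $\mI_{2}$, Lemma \ref{lem:biso} guarantees that $B$ provides an isomorphism $\clos{\ran_{q}(D)} \to \clos{\ran_{q}(BD)}$, so that the similarity $BD=B(DB)B^{-1}$ on the range  (together with the identities $\nul_{q}(BD)=\nul_{q}(D)$ and $\nul_{q}(DB)=B^{-1}\nul_{q}(D)$) transfers bisectoriality, $H^\infty$-calculus, and the kernel/range decomposition in both directions, yielding $I(BD)=I(DB)$ with identical endpoints. For duality, the identifications $(BD)^*=DB^*$ and $(DB)^*=B^*D$ under the $L^p$--$L^{p'}$ pairing, combined with the fact that all three properties pass to the adjoint in a reflexive Banach space, give $I(B^*D)=I(DB^*)=(I(BD))'$ and in particular the relation $p_{\pm}(BD)=p_{\mp}(B^*D)'$.

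Third, the heart of the matter, and the main expected obstacle, is showing the openness of $J(BD)$ and the equivalence of the three properties on $J(BD)$. I would set up a loop of implications. Openness of the set of $p\in\mI_{2}$ on which the resolvents of $BD$ are uniformly bounded follows from a Sneiberg-type stability argument for bounded invertibility on the complex interpolation scales $\dom_{q}(D)$ and $\clos{\ran_{q}(D)}$ recalled in Section \ref{sec:D}, once coercivity of $B$ has been used to embed everything in the $D$-scale. The upgrade from resolvent bounds to a bounded $H^\infty$-calculus would proceed through quadratic estimates: I would exploit the off-diagonal decay of Lemma \ref{lem:odd} to run Schur and tent-space arguments, plus Hardy-space duality, to transfer the $L^2$ quadratic estimate of Proposition \ref{prop:equi} to $L^p$ in a neighborhood of any $p$ where the functional calculus already holds. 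Finally, the kernel/range decomposition is forced by the existence of a bounded $H^\infty$-calculus since the spectral projections $\chi^\pm(BD)$ (and the projection onto $\clos{\ran_{p}(BD)}$) are then bounded on $L^p$. Closing this loop shows $J(BD)$ is open and that the three properties stand or fall together; by maximality of $I(BD)$ inside $\mI_{2}$, they must all fail at any endpoint $p_{\pm}(BD)$ that still lies in $\mI_{2}$. The delicate point throughout is to ensure that the similarity and duality arguments remain valid up to the endpoint, which is precisely why one restricts the interval to $\mI_{2}$: losing coercivity would break Lemma \ref{lem:biso} and prevent any identification between $BD$ and $DB$.
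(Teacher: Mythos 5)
Note first that the paper does not prove Theorem \ref{thm:Lpfc} at all: it is imported verbatim (``with a cosmetic modification'') from \cite{AS}, so your sketch has to be measured against the argument there. Your overall architecture --- {\v{S}}ne{\u\i}berg extrapolation on the interpolation scales of Section \ref{sec:D}, an upgrade from resolvent bounds to the $H^\infty$-calculus via quadratic estimates and off-diagonal bounds, transfer between $BD$ and $DB$ by the similarity of Lemma \ref{lem:biso}, and duality for $(BD)^*=DB^*$, $(DB)^*=B^*D$ --- is indeed the architecture of the cited proof. But as written it has a genuine gap in the dichotomy at the endpoints. Your ``loop'' is in fact a one-way chain: resolvent bounds (open by {\v{S}}ne{\u\i}berg) $\Rightarrow$ quadratic estimates/$H^\infty$-calculus $\Rightarrow$ kernel/range decomposition. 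Running it shows that bisectoriality and the $H^\infty$-calculus must fail at $p_{\pm}(BD)\in\mI_{2}$, but nothing in your argument goes from the kernel/range decomposition back to either of the other two properties, so you cannot conclude that the decomposition itself fails at the endpoints. The theorem asserts that \emph{all three} fail there, and this is exactly the part the paper exploits later: Proposition \ref{prop:n=1} deduces $p_{-}=1$, $p_{+}=\infty$ solely from the kernel/range decomposition holding for every $p\in(1,\infty)$, which is vacuous unless failure of that decomposition at an interior endpoint is part of the dichotomy. Closing this requires an extra idea, for instance: for $p\in\mI_{2}$, Lemma \ref{lem:biso} identifies $\nul_p(BD)=\nul_p(D)$ and $\clos{\ran_p(BD)}=B\,\clos{\ran_p(D)}$, so the decomposition is equivalent to invertibility of $\IP B$ on the interpolation scale $\clos{\ran_q(D)}$ and is therefore itself an open condition; one must then show that the restriction of $BD$ to the closure of its range retains (R-)bisectoriality and the calculus at (and slightly beyond) the endpoint, so that the decomposition at $p_{\pm}$ would reassemble full bisectoriality there and contradict maximality --- this is where the substantive work in \cite{AS} lies (compare the Remark following Proposition \ref{prop:n=1}, where precisely this restricted bisectoriality is available in a larger range and the kernel/range decomposition is ``the only thing missing'').

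A secondary weakness is the step ``resolvent bounds at $p$ give the $H^\infty$-calculus.'' Mere bisectoriality at a single exponent does not yield square function estimates; the known route is that uniform resolvent bounds on an open interval containing $2$, together with the off-diagonal estimates, give $R$-bisectoriality at the intermediate exponents, and then the main theorem of \cite{HMc} converts $R$-bisectoriality on an open interval into the $H^\infty$-calculus. You never introduce $R$-boundedness, and your formulation (``transfer the $L^2$ quadratic estimate to $L^p$ in a neighborhood of any $p$ where the functional calculus already holds'') is circular as stated. These two points --- the missing endpoint failure of the kernel/range decomposition and the unproved passage from resolvent bounds to the calculus --- are the parts of \cite{AS} that carry the actual content, so the proposal should either supply them or cite them explicitly rather than fold them into a ``loop of implications.''
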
 

If $p_{\pm}(BD)$ is  an endpoint of $\mI_{2}$, then we do not know what happens for $p=p_{\pm}(BD)$ from this theory.  

We remark that the calculi in $L^p$ are consistent for all $p\in I(BD)$. For example, if $T_{p}=BD$ with domain $\dom_{p}(D)$ then $(I+iT_{p})^{-1}u=  (I+iT_{q})^{-1}u$ whenever $u\in L^p\cap L^q$ and $p,q\in I(BD)$. Thus, we do not distinguish them from now on. 

\begin{cor}\label{cor:idb} If $q\in I(BD)=I(DB)$, then ${\ran_{q}(DB)}=  {\ran_{q}(D)}$.    
\end{cor}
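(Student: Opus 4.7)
The plan is as follows. The inclusion $\ran_{q}(DB)\subseteq \ran_{q}(D)$ is immediate from the definitions: every $w\in \dom_{q}(DB)=B^{-1}\dom_{q}(D)$ has $Bw\in \dom_{q}(D)$, whence $DBw=D(Bw)\in \ran_{q}(D)$. So all the work is in the reverse inclusion.

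For the reverse inclusion, the key step will be to upgrade Proposition \ref{prop:projection} to $L^q$ and combine it with Lemma \ref{lem:biso} in order to show that $\IP B$ restricts to an isomorphism of $\clos{\ran_{q}(D)}$ onto itself, where $\IP$ is the Mikhlin multiplier projection onto $\clos{\ran_{q}(D)}$ from Section \ref{sec:D}. Since $q\in I(BD)\subseteq \mI_{2}\subseteq \mI(BD)$, Lemma \ref{lem:biso} already tells us that $B$ is an isomorphism from $\clos{\ran_{q}(D)}$ onto $\clos{\ran_{q}(BD)}$ and that $\nul_{q}(BD)=\nul_{q}(D)$. It remains to verify that $\IP\colon \clos{\ran_{q}(BD)}\to \clos{\ran_{q}(D)}$ is an isomorphism. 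For injectivity: if $h\in \clos{\ran_{q}(BD)}$ and $\IP h=0$, then $h\in \nul_{q}(D)=\nul_{q}(BD)$, and the kernel/range decomposition $L^q=\nul_{q}(BD)\oplus \clos{\ran_{q}(BD)}$ (valid because $q\in I(BD)$, by Theorem \ref{thm:Lpfc}) forces $h=0$. For surjectivity: given $h\in \clos{\ran_{q}(D)}$, split $h=h_{0}+h_{1}$ along the same decomposition with $h_{0}\in \nul_{q}(BD)=\nul_{q}(D)$; then $\IP h_{0}=0$ and $\IP h=h$, so $\IP h_{1}=h$.

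With $\IP B\colon \clos{\ran_{q}(D)}\to \clos{\ran_{q}(D)}$ an isomorphism in hand, I finish as follows. Given $v\in \ran_{q}(D)$, write $v=Du$ with $u\in \dom_{q}(D)$. Using $L^q=\nul_{q}(D)\oplus \clos{\ran_{q}(D)}$ (which holds for all $q\in(1,\infty)$ by Section \ref{sec:D}), split $u=u_{0}+u_{1}$ with $u_{0}\in \nul_{q}(D)$ and $u_{1}\in \dom_{q}(D)\cap \clos{\ran_{q}(D)}$, so $Du_{1}=v$. Pick $w\in \clos{\ran_{q}(D)}$ with $\IP Bw=u_{1}$, i.e.\ $Bw=u_{1}+k$ with $k\in \nul_{q}(D)\subseteq \dom_{q}(D)$. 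Then $Bw\in \dom_{q}(D)$, so $w\in \dom_{q}(DB)$, and $DBw=Du_{1}+Dk=v$, giving $v\in \ran_{q}(DB)$. The main obstacle is the isomorphism statement for $\IP$ on $\clos{\ran_{q}(BD)}$, because that is exactly the place where one genuinely needs the full assumption $q\in I(BD)$ (to get the kernel/range splitting of $BD$ in $L^q$) rather than just $q\in \mI_{2}$; once this is granted, the construction of $w$ is a direct transcription of the $L^{2}$ argument.
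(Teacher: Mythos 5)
Your proof is correct. The inclusion $\ran_{q}(DB)\subseteq\ran_{q}(D)$ is indeed trivial (the paper notes this just after the statement), and your reverse inclusion is sound: the kernel/range decomposition for $BD$ in $L^q$ (from Theorem \ref{thm:Lpfc}), the identity $\nul_{q}(BD)=\nul_{q}(D)$ and the isomorphism $B\colon\clos{\ran_{q}(D)}\to\clos{\ran_{q}(BD)}$ (from Lemma \ref{lem:biso}), together with the fact that $\IP$ acts on $L^q$ as the projection onto $\clos{\ran_{q}(D)}$ along $\nul_{q}(D)$, do give that $\IP\colon\clos{\ran_{q}(BD)}\to\clos{\ran_{q}(D)}$ is bijective, hence that $\IP B$ is surjective on $\clos{\ran_{q}(D)}$; and your lifting step (solve $\IP Bw=u_{1}$, note $Bw-u_{1}\in\nul_{q}(D)\subseteq\dom_{q}(D)$, so $w\in\dom_{q}(DB)$ and $DBw=Du_{1}=v$) correctly upgrades this from closures of ranges to the ranges themselves. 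Where you differ from the paper: the paper's proof is a two-line appeal to external results, namely Proposition 2.1, (4) of \cite{AS}, whose hypotheses are supplied by Theorem \ref{thm:Lpfc} and Corollary 2.3 of \cite{AS}; you instead reconstruct the argument from ingredients internal to Section \ref{sec:lpresults}. Your intermediate isomorphism claim is exactly Proposition \ref{prop:projectionlp}, which the paper proves immediately afterwards by the same splitting argument, and since that proposition does not rely on the corollary there is no circularity. The paper's route buys brevity by delegating to the general theory of \cite{AS}; yours buys a transparent, self-contained mechanism showing that nothing beyond the kernel/range decompositions and the $L^q$-boundedness of $\IP$ is needed.
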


The inclusion ${\ran_{q}(DB)}\subset  {\ran_{q}(D)}$ is always true. The converse is not clear when $q\notin I(BD)$, so we shall use this equality only for $q$ in this range. 

\begin{proof} The above theorem and  Corollary 2.3 in \cite{AS}  give us the assumptions of Proposition 2.1, (4) in \cite{AS}, of which ${\ran_{q}(DB)}=  {\ran_{q}(D)}$ is a consequence. 
\end{proof}

\begin{prop}\label{prop:projectionlp}
Consider the orthogonal projection $\IP$ from $L^2$ onto $\clos{\ran_{2}(D)}$. For $p\in I(BD)$, 
$\IP$ extends to an isomorphism between $\clos{\ran_{p}(BD)}$ and $\clos{\ran_{p}(D)}$ with $\|h\|_{p}\sim \|\IP h\|_{p}$ for all $ h\in \clos{\ran_{p}(BD)}$.
\end{prop}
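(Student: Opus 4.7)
The plan is to mimic the $L^2$ argument in Proposition \ref{prop:projection}, using the two key ingredients available at the $L^p$ level once $p\in I(BD)$: namely, that both $D$ and $BD$ enjoy a topological kernel/range splitting in $L^p$, and that these two splittings share the same null space $\nul_p(D)$.

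First I would collect the necessary facts. From Section \ref{sec:D}, $\IP$ is bounded on $L^p$ for every $1<p<\infty$ (it is a Mikhlin multiplier) and $\IP(L^p)=\clos{\ran_p(D)}$, corresponding to the topological splitting $L^p=\clos{\ran_p(D)}\oplus\nul_p(D)$. Next, for $p\in I(BD)$, Theorem \ref{thm:Lpfc} provides the analogous topological splitting for $BD$,
\begin{equation*}
L^p=\clos{\ran_p(BD)}\oplus\nul_p(BD),
\end{equation*}
and Lemma \ref{lem:biso} identifies $\nul_p(BD)=\nul_p(D)$. Thus both splittings of $L^p$ share the same null-space factor $\nul_p(D)$. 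Let $\IP_{BD}^{(p)}$ denote the projection of $L^p$ onto $\clos{\ran_p(BD)}$ along $\nul_p(D)$; it is bounded by the topological nature of the splitting.

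Now I would run the same algebraic manipulation as in the proof of Proposition \ref{prop:projection}. Given $h\in \clos{\ran_p(D)}$, the element $h-\IP_{BD}^{(p)} h$ lies in $\nul_p(D)$, so applying $\IP$ yields $\IP h = \IP\circ \IP_{BD}^{(p)}\,h$; since $h\in\clos{\ran_p(D)}$, $\IP h=h$, and therefore $\IP\circ \IP_{BD}^{(p)}=I$ on $\clos{\ran_p(D)}$. Symmetrically, for $h\in \clos{\ran_p(BD)}$ one has $h-\IP h\in\nul_p(D)$, hence $\IP_{BD}^{(p)} h=\IP_{BD}^{(p)}\circ\IP \,h$ and so $\IP_{BD}^{(p)}\circ\IP=I$ on $\clos{\ran_p(BD)}$. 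Consequently $\IP:\clos{\ran_p(BD)}\to\clos{\ran_p(D)}$ is a bijection with bounded inverse $\IP_{BD}^{(p)}|_{\clos{\ran_p(D)}}$, and the two-sided bound $\|h\|_p\sim \|\IP h\|_p$ follows from the boundedness of both $\IP$ and $\IP_{BD}^{(p)}$ on $L^p$.

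I do not anticipate a serious obstacle: the only non-trivial inputs are (i) the $L^p$-boundedness of $\IP$, already recorded in Section \ref{sec:D}, and (ii) the topological kernel/range decomposition for $BD$ on $L^p$ for $p\in I(BD)$, which is exactly the content of Theorem \ref{thm:Lpfc} together with Lemma \ref{lem:biso}. The mild subtlety is to observe that the key identification $\nul_p(BD)=\nul_p(D)$ is what makes the common complementary summand work, so that the $L^2$ argument transfers verbatim.
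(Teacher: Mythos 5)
Your proof is correct and follows essentially the same route as the paper: the paper likewise combines $\nul_p(BD)=\nul_p(D)$ (Lemma \ref{lem:biso}) with the kernel/range decompositions of $D$ and $BD$ on $L^p$ for $p\in I(BD)$, and then observes that $\IP$ and the projection onto $\clos{\ran_p(BD)}$ along $\nul_p(D)$ (the $L^p$ extension of $\IP_{BD}$ from Proposition \ref{prop:projection}) are mutually inverse. The only cosmetic difference is that you re-derive the two algebraic identities explicitly, while the paper simply cites the $L^2$ argument.
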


\begin{proof}
Using $\nul_{p}(BD)=\nul_{p}(D)$ from Lemma \ref{lem:biso}, and  the kernel/range decomposition for $D$ and for  $BD$ if $p\in I(BD)$, 
$$
L^p = \clos{\ran_{p}(BD)} \oplus \nul_{p}(D)= \clos{\ran_{p}(D)} \oplus \nul_{p}(D).
$$
The projection onto $\clos{\ran_{p}(D)}$ along $\nul_{p}(D)$ is the extension of $\IP$ to $L^p$. 
The projection from $L^p$ onto $\clos{\ran_{p}(BD)}$ along $\nul_{p}(D)$ is the extension $\IP_{BD}$  defined on $L^2$ in the proof of Proposition \ref{prop:projection}.   Using the same notation for the extensions, it follows  that
 $\IP: \clos{\ran_{p}(BD)} \to \clos{\ran_{p}(D)}$ and $\IP_{BD}:\clos{\ran_{p}(D)} \to \clos{\ran_{p}(BD)}$ are inverses of each other.  \end{proof}

\begin{cor} For all $p\in I(BD)$, the conclusions of Proposition \ref{prop:SFimpliesFC} hold for  $T=BD$ and $DB$ on $L^p$  in place of $\mH$  with the exception of \eqref{eq:psiT} which reads
\begin{equation} \label{eq:psiTLp}
 \bigg\|\bigg( \int_0^\infty |\psi(tT) u|^2  \, \frac{dt}t  \bigg)^{1/2}\bigg\|_{p}\sim \|u\|_{p} \  \text{holds for all }\ u\in \clos{\ran_{p}(T)}
\end{equation}  
for any $\omega<\mu<\pi/2$ and   any non-degenerate   $\psi\in \Psi(S_{\mu})$.   Furthermore, one has $\lesssim$ in general for all $u\in L^p$. 
\end{cor}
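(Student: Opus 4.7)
The plan is to bootstrap from Theorem \ref{thm:Lpfc}: for $p\in I(BD)$ and $T\in\{BD,DB\}$, that theorem already supplies bisectoriality of angle $\omega$ on $L^p$, bounded $H^\infty$-calculus on $\clos{\ran_{p}(T)}$, and the kernel/range decomposition $L^p=\clos{\ran_{p}(T)}\oplus\nul_{p}(T)$. The $H^\infty$-calculus extends to all of $L^p$ by setting $b(T)=b(0)I$ on $\nul_{p}(T)$ whenever $b$ is defined at $0$, just as in the Hilbert space case. So the first bullet of Proposition \ref{prop:SFimpliesFC} is already granted.

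Next I would read off the spectral/semigroup bullets as purely algebraic consequences of the bounded $H^\infty$-calculus. The functions $\chi^\pm$ belong to $H^\infty(S_{\mu})$ for $\omega<\mu<\pi/2$, so $\chi^\pm(T)$ are bounded idempotents on $\clos{\ran_{p}(T)}$ satisfying $\chi^+(T)+\chi^-(T)=I$ there; this yields the topological splitting \eqref{eq:hardysplit} with $\mH^\pm_{T}:=\chi^\pm(T)\clos{\ran_{p}(T)}$ and makes $\sgn(T)=\chi^+(T)-\chi^-(T)$ a bounded involution on $\clos{\ran_{p}(T)}$. Then $|T|=\sgn(T)T=\sqrt{T^2}$ with $\dom(|T|)=\dom_{p}(T)$ is $\omega$-sectorial with bounded $H^\infty$-calculus on $L^p$ (again by composition within the calculus), so $-|T|$ generates a bounded analytic semigroup $(e^{-z|T|})_{z\in S_{\pi/2-\omega+}}$. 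The characterization $h\in\mH^\pm_{T}\cap\dom(T)\iff |T|h=\pm Th$ and the well-definedness of $e^{\mp zT}\chi^\pm(T)$ then follow by inserting $\chi^\pm$ into the calculus exactly as in the proof of Proposition \ref{prop:SFimpliesFC}.

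The substantive point is the vertical square function equivalence \eqref{eq:psiTLp}. Since $T$ has a bounded $H^\infty$-calculus on the closed subspace $\clos{\ran_{p}(T)}$ of $L^p$ with $1<p<\infty$ (a UMD space), the general Cowling--Doust--McIntosh--Yagi / Kalton--Weis principle gives two-sided vertical square function estimates $\bigl\|(\int_0^\infty|\psi(tT)u|^2\,dt/t)^{1/2}\bigr\|_{p}\sim\|u\|_{p}$ for every non-degenerate $\psi\in\Psi(S_{\mu})$ and every $u\in\clos{\ran_{p}(T)}$. To get the upper bound $\lesssim$ for arbitrary $u\in L^p$, write $u=\IP_{T}u+u_{0}$ using the kernel/range decomposition (which is bounded precisely because $p\in I(BD)$); since one may take $\psi(0)=0$, the operator $\psi(tT)$ annihilates $\nul_{p}(T)$, so the square function of $u$ equals that of $\IP_{T}u$, to which the upper bound on $\clos{\ran_{p}(T)}$ applies. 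The dual statement for $T^*$ is handled identically, using that $I(BD)'=I(B^*D)$ from \eqref{eq:limits}. The main obstacle here is the $L^p$ square function equivalence itself: unlike the Hilbert case, which follows from McIntosh's theorem, one must invoke the Banach-space version valid in UMD spaces — but this is legitimate because $p\in I(BD)$ has been restricted exactly to the range where the bounded $H^\infty$-calculus holds.
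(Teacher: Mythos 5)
Your proposal is correct and follows essentially the same route as the paper: take bisectoriality, the $H^\infty$-calculus and the kernel/range decomposition from Theorem \ref{thm:Lpfc}, restrict to $\clos{\ran_{p}(T)}$ where $T$ is injective, invoke the abstract equivalence between bounded $H^\infty$-calculus and vertical square function estimates on $L^p$, and extend the upper bound to all of $L^p$ via the projection since $\psi(tT)$ annihilates $\nul_{p}(T)$. The only difference is attribution: the paper quotes Le Merdy's theorem (via $R$-bisectoriality, which follows from the $H^\infty$-calculus), which is the precise $L^p$ form of the Cowling--Doust--McIntosh--Yagi/Kalton--Weis principle you cite.
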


The last part of the corollary follows from extension of an abstract theorem of Le Merdy \cite[Corollary 2.3]{LeM}, saying that for an injective sectorial operator $T$ on $L^p$, the $H^\infty$-calculus on  $L^p$  is equivalent to the square function estimate \eqref{eq:psiTLp}.  This uses the notion of $R$-sectoriality which we have not defined here but follows from the $H^\infty$-calculus. The extension to injective bisectorial operators is straightforward with the notion of  $R$-bisectoriality. If $T$ is not injective but one has the kernel/range decomposition, then its restriction to $\clos{\ran_{p}(T)}$ is injective, and  the proof of Le Merdy's theorem extends easily also in this case. In our situation, 
for $p\in I(BD)$, $T=BD$ or $DB$ may  not be  injective on $L^p$ but its restriction to $\clos{\ran_{p}(T)}$ is injective as one has the kernel/range decomposition.  One can apply Le Merdy's extended theorem to $T$ on $\clos{\ran_{p}(T)}$ and obtain $H^\infty$-calculus   on $\clos{\ran_{p}(T)}$ (which, for this particular $T$, is  equivalent to the $R$-bisectoriality on $L^p$, see \cite{HMc, AS}), and then extend it to all of $L^p$ as described before.

Note also that by interpolation between Lemma \ref{lem:odd} and the boundedness on $L^p$ of the resolvent for $p\in I(BD)$, one has 

 \begin{lem} [$L^p$ off-diagonal decay] \label{lem:oddLp} Let $T=BD$ or $DB$ and $p\in I(BD)$. 
 For every integer $N$ there exists $C_N>0$
such that
\begin{equation} \label{odnp}
\|1_{E}\,(I+itT)^{-1}1_{F} u\|_{p} \le C_N \brac{\dist (E,F)/|t|}^{-N}\|u\|_{p}
\end{equation}
for all $t\ne 0$, 
whenever $E,F \subseteq \R^n$ are closed sets,  $u \in L^p$
is such that $\supp u\subseteq F$.  
\end{lem}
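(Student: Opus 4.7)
The plan is to interpolate the $L^2$ off-diagonal decay of Lemma \ref{lem:odd} with the uniform $L^p$ boundedness of the resolvent provided by Theorem \ref{thm:Lpfc}, using the Riesz--Thorin theorem. The novelty to watch for is that I must preserve arbitrary polynomial decay rates in $\dist(E,F)/|t|$, not just obtain some fixed rate.

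First, I would record the two endpoint estimates for the sandwiched operator $T_{E,F}:=1_{E}(I+itT)^{-1}1_{F}$. On the one hand, since $p\in I(BD)=I(DB)$, Theorem \ref{thm:Lpfc} gives bisectoriality on $L^p$ together with the resolvent bound
\begin{equation*}
\|T_{E,F}u\|_{p}\le \|(I+itT)^{-1}(1_{F}u)\|_{p}\le C\|u\|_{p},\qquad t\neq 0,
\end{equation*}
with no decay in the geometric separation. On the other hand, Lemma \ref{lem:odd} provides, for every integer $M$, the $L^2$ estimate
\begin{equation*}
\|T_{E,F}u\|_{2}\le C_{M}\brac{\dist(E,F)/|t|}^{-M}\|u\|_{2}.
\end{equation*}

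Next, since $I(BD)$ is an open interval containing both $p$ and $2$, I can pick an auxiliary exponent $p_{0}\in I(BD)$ such that $p$ lies strictly between $2$ and $p_{0}$ (if $p=2$ there is nothing to prove). The Riesz--Thorin interpolation theorem applied to $T_{E,F}$, viewed as a bounded operator on both $L^{p_{0}}$ and $L^{2}$, then yields
\begin{equation*}
\|T_{E,F}u\|_{p}\le C^{1-\theta}\bigl(C_{M}\brac{\dist(E,F)/|t|}^{-M}\bigr)^{\theta}\|u\|_{p},
\end{equation*}
for the unique $\theta\in(0,1)$ satisfying $\tfrac{1}{p}=\tfrac{1-\theta}{p_{0}}+\tfrac{\theta}{2}$. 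Crucially, $\theta$ is determined by the choice of $p_{0}$ and $p$ alone and is independent of $M$. Hence, given any target exponent $N$ in the statement of the lemma, I pick $M$ with $M\theta\ge N$ at the outset and absorb constants into a new $C_{N}$. The same argument applies verbatim to $T=DB$.

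The procedure is essentially routine once $p_{0}$ is chosen; the only point requiring a small remark is precisely the openness of $I(BD)$, which lets me keep $\theta$ bounded away from $0$ and therefore lift arbitrary polynomial decay from $L^{2}$ to $L^{p}$. No genuine obstacle is anticipated.
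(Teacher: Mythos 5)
Your proposal is correct and follows essentially the same route as the paper, which obtains the lemma precisely by interpolating the $L^2$ off-diagonal bounds of Lemma \ref{lem:odd} with the uniform $L^p$ boundedness of the resolvent for $p\in I(BD)$ from Theorem \ref{thm:Lpfc}. Your extra care in choosing $p_{0}\in I(BD)$ with $p$ strictly between $2$ and $p_{0}$, and in picking $M$ with $M\theta\ge N$ beforehand, is exactly the bookkeeping implicit in the paper's one-line argument (which also notes the alternative of rerunning the $L^2$ proof directly in $L^p$, as in \cite{HMc}).
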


Actually, it is observed in  \cite{HMc} that the proof for $p=2$ (Lemma \ref{lem:odd}) goes through, which gives another argument. 

\subsection{The one dimensional case}

\begin{prop}\label{prop:n=1} Assume $D$ and $B$ are as above and  $n=1$.  Assume that $\hat D(\xi)$ is invertible for all $\xi\ne 0$. Then $p_{-}(DB)=1$ and $p_{+}(DB)=\infty$. In particular, $DB$ and $BD$ have bounded holomorphic functional calculi on  $L^p$ spaces for $1<p<\infty$. 
\end{prop}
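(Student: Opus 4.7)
The plan is to reduce, via the one-dimensional setting, to $L^p$-bounds that follow from Coifman-McIntosh-Meyer theory \cite{CMcM}, and then invoke Theorem \ref{thm:Lpfc}.

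First, the hypothesis that $\hat D(\xi)=\hat D_{1}\xi$ is invertible for $\xi\ne 0$ means $\hat D_{1}\in\mL(\C^N)$ is invertible, hence $\ran \hat D(\xi)=\C^N$ for every $\xi\ne 0$. A standard Fourier argument then gives $\clos{\ran_{2}(D)}=L^2$ and $\nul_{2}(D)=\{0\}$, so the orthogonal projection $\IP$ is the identity on $L^2$. The accretivity \eqref{eq:accretivity} consequently holds on all of $L^2$, and testing against $\mathbf 1_{E}e$ for $e\in\C^N$ and small measurable $E$ forces the pointwise accretivity $\re(B(x)e\cdot\overline e)\ge \kappa|e|^2$ for a.e.\ $x$ and every $e$. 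Thus $B(x)^{-1}$ exists a.e.\ with $B^{-1}\in L^\infty$. By Remark \ref{rem:setofcoerc}, $\mI_{2}=(1,\infty)$.

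By Theorem \ref{thm:Lpfc}, it is then enough to establish a bounded $H^\infty$-calculus for $BD$ on $L^p$ for every $p\in(1,\infty)$. Since $n=1$, the resolvent equation $(I+i\lambda BD)u=f$ reduces, via $v=Bu$, to the linear first-order ODE system $v'+(i\lambda\hat D_{1})^{-1}B(\cdot)^{-1}v=(i\lambda\hat D_{1})^{-1}f$ on $\R$. For $\lambda$ outside a bisector $S_{\mu}$ with $\mu>\omega$, the spectral condition on $\hat D_{1}$ together with the pointwise accretivity of $B^{-1}$ yield, by a Gr\"onwall argument, a matrix fundamental solution that decays exponentially off the diagonal, uniformly in $\lambda$. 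The resulting explicit resolvent kernel is of matrix-valued Calder\'on-Zygmund type, so standard Calder\'on-Zygmund theory combined with the $L^2$-boundedness furnished by Proposition \ref{prop:typeomega} yields uniform $L^p$ resolvent bounds for all $1<p<\infty$. The $L^p$-version of the quadratic estimate \eqref{eq:psiTLp} then follows either by representing $\psi(tBD)$ as a Cauchy contour integral of such resolvents and invoking the classical Coifman-McIntosh-Meyer theorem \cite{CMcM}, or, equivalently, by $R$-bisectoriality in $L^p$ together with the extension of Le Merdy's theorem cited after Corollary \ref{cor:idb}.

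Applying Theorem \ref{thm:Lpfc} with $\mI_{2}=(1,\infty)$ and a bounded $H^\infty$-calculus on every $L^p$, $1<p<\infty$, forces the maximal interval $I(BD)$ to equal $(1,\infty)$, hence $p_{-}(BD)=1$ and $p_{+}(BD)=\infty$; the identity $I(DB)=I(BD)$ in that theorem gives the same conclusion for $DB$. The main obstacle lies in the kernel analysis of the previous paragraph, where the matrix structure and the complex spectral parameter $\lambda$ must be tracked simultaneously; however the bisector condition $\lambda\notin S_{\mu}$, $\mu>\omega$, is precisely what produces a Calder\'on-Zygmund kernel with constants depending only on $\mu$.
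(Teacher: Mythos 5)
Your opening paragraph is fine and coincides with the paper's setup: invertibility of $\hat D_{1}$ gives $\nul_{p}(D)=\{0\}$ and $\clos{\ran_{p}(D)}=L^p$, accretivity then holds on all of $L^2$, forces $B^{-1}\in L^\infty$, and yields $\mI_{2}=(1,\infty)$. The gap is the middle step, where everything hinges on the claim that for $\lambda\notin S_{\mu}$ a Gr\"onwall argument produces a matrix fundamental solution with exponential off-diagonal decay, uniformly in $\lambda$, and hence a Calder\'on--Zygmund resolvent kernel. This is not justified, and in the matrix case it is not correct as stated. The coefficient matrix of your ODE is built from $\hat D_{1}^{-1}B(x)^{-1}$; pointwise accretivity of $B(x)$ does not make this matrix accretive (numerical ranges are not preserved under products), so there is no Lyapunov/Gr\"onwall estimate, and pointwise spectral information does not control non-autonomous systems. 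Worse, in the situation of interest (the elliptic system, where $\hat D_{1}$ has eigenvalues $\pm1$) the coefficient matrix has stable and unstable directions simultaneously, so the fundamental solution has exponentially growing modes; what one needs is an exponential dichotomy, uniform in $\lambda$, for a first-order system with merely bounded measurable matrix coefficients, and constructing the dichotomy projections and bounding the matching uniformly is essentially the quadratic-estimate content behind Proposition \ref{prop:equi} (\cite{CMcM}, \cite{AKMc}), not an elementary consequence of it. Only in the scalar commutative case does the kernel admit the explicit exponential formula you have in mind. Finally, even granted such kernel bounds, uniform $L^p$ resolvent bounds alone do not give the $H^\infty$-calculus; you would still need $R$-bisectoriality or $L^p$ square function estimates, which you only gesture at.

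The paper's proof shows none of this is needed, because Theorem \ref{thm:Lpfc} is a dichotomy among three properties and one may verify the cheapest one: it suffices to check the kernel/range decomposition in $L^p$ for every $p\in(1,\infty)=\mI_{2}$, since all three properties fail at an endpoint $p_{\pm}(BD)$ lying in $\mI_{2}$, so no such endpoint exists and $I(BD)=(1,\infty)$. Here the decomposition is trivial: by your first paragraph and Lemma \ref{lem:biso} (using $p\in\mI_{2}$ and $B^{-1}\in L^\infty$), $\nul_{p}(BD)=\nul_{p}(D)=\{0\}$ and $\clos{\ran_{p}(BD)}=\clos{B\ran_{p}(D)}=B\clos{\ran_{p}(D)}=L^p$. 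Thus no resolvent kernel analysis and no $L^p$ quadratic estimates are required; the deep input is already packaged in Theorem \ref{thm:Lpfc}. (A minor slip: the substitution $v=Bu$ in $(I+i\lambda BD)u=f$ actually diagonalizes the $DB$ resolvent rather than the $BD$ one, but this is cosmetic next to the issue above.)
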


\begin{proof}  We fix $1<p<\infty$. By Theorem \ref{thm:Lpfc}, it suffices to show that the kernel/range decomposition  holds on $L^p$ for $BD$.  

First,  as $\hat D(\xi)$ is invertible for all $\xi\ne 0$,  \eqref{eq:D0} implies that  for all $u\in L^p(\R^n;\C^N)$, $Du=-i\hat D_{1} u'$ with $\hat D_{1}$ being an invertible matrix on $\C^N$. Thus, we have that $N_{p}(D)=0$  and $\clos{\ran_{p}(D)}=L^p$, the closure being taken in $L^p$. 
As a consequence, if $B$ is accretive on $\clos{\ran_{2}(D)}=L^2$, it is  invertible in $L^\infty$ by Lebesgue differentiation theorem,  and one has $\mI_{2}=(1,\infty)$.  By Lemma \ref{lem:biso},  we have, since $p\in \mI_{2}$, $\nul_{p}(BD)= \nul_{p}(D)=\{0\}$ and $\clos{\ran_{p}(BD)}=\clos{B\ran_{p}(D)}= B\clos{\ran_{p}(D)}=L^p$.  Thus the kernel/range decomposition holds trivially. 
\end{proof}

\begin{rem}
If one does not assume $\hat D(\xi)$  invertible for all $\xi\ne 0$,  it is not clear whether one has the kernel/range decomposition, even assuming  $B$ invertible  on $L^\infty$. Assume $B$ invertible  on $L^\infty$.  By the results in \cite{HMc} (see \cite{AS}, Lemma 5.2, for the explicit statement),  $BD$ is ($R$-)bisectorial on $\clos{\ran_{p}(BD)}$ when $p\in (1,\infty)\cap (\frac{2}{3}, \infty)=(1,\infty)$. It is trivially bisectorial on $\nul_{p}(BD)$. The only thing missing might be the kernel/range decomposition. 
\end{rem}

\subsection{Constant coefficients}\label{sec:constant}

We come back to arbitrary dimensions. A simple example is when $B$ is a constant  and strictly accretive matrix on $\clos{\ran_{2}(D)}$ with $D$ being still self-adjoint. Then it follows from  \cite[Proposition A.8]{HMc} that the interval of coercivity is all $(1,\infty)$. 

Now $BD$ is  another first order differential operator which satisfies   (D0), (D1) and (D2) of Section  \ref{sec:D} with $\omega$ being the angle of accretivity of $B$. Thus the conclusion is that $BD$ is a bisectorial operator with $H^\infty$-calculus in $L^q$ for all $q\in (1,\infty)$. 

Therefore the theory above tells that $p_{-}(BD)=1$ and $p_{+}(BD)=\infty$.

\subsection{$L^p-L^q$ estimates}\label{sec:lplq}

We summarize here estimates that we will use later. Proofs can be found in \cite{Sta1}. They concern only  the exponents in the interval $I(BD)=I(DB)=(p_{-}, p_{+})$. 

First,  we introduce   subclasses of $H^\infty(S_{\mu})$. For  $\sigma,\tau\ge 0$, let
$$\Psi_{\sigma}^\tau(S_{\mu})= \{\psi\in H^{\infty}(S_{\mu}):\psi(z)=O\big(\inf({\abs{z}^\sigma,\abs{z}^{-\tau}})\big)\},
$$
with convention that $\abs{z}^0=1$. For $\sigma,\tau>0$, $\Psi_{\sigma}^\tau(S_{\mu})\subset \Psi(S_{\mu})$. For $\sigma=0$, we have no vanishing at 0, for $\tau=0$, no decay at $\infty$, and 
$\Psi_{0}^0(S_{\mu})= H^{\infty}(S_{\mu})$. 

\begin{prop} Let $T=BD$ or $DB$. Let $p,q\in I(T)$ with $p\le q$. Let $\psi \in \Psi_{\sigma}^\tau(S_{\mu})$ with $\sigma>0, \tau> \frac{n}{p}-\frac{n}{q}$ and $g\in H^\infty(S_{\mu})$. 
Then for all $t>0$, closed sets $E,F \subset \R^n$ and $u\in L^p$ with support in $F$:
\begin{equation} \label{eq:odnpsipq}
\|1_{E}g(T)\psi(tT)1_{F} u\|_{q} \lesssim \|g\|_{\infty} t^{\frac{n}{q}-\frac{n}{p}}\brac{\dist (E,F)/t}^{-\sigma c}\|u\|_{p}.
\end{equation}
If, furthermore, $g(z)= \varphi(rz)$ with $|\varphi(z)|\le \inf(|z|^M, 1)$ for some $M>0$, then  for all $t\ge r >0$,  closed sets $E,F \subset \R^n$ and $u\in L^p$ with support in $F$
\begin{equation} \label{eq:odnpsiphipq}
\|1_{E}\varphi(rT)\psi(tT)1_{F} u\|_{q} \lesssim t^{\frac{n}{q}-\frac{n}{p}}\brac{\dist (E,F)/r}^{-M c}\|u\|_{p}.
\end{equation}
Here, $c$ is any positive number smaller than $1- (\frac{1}{p}-\frac{1}{q})(\frac{1}{p_{-}}-\frac{1}{p_{+}})^{-1}$ and can be taken equal to 1 when $p=q$. The implicit constants are independent of $t, E, F, r$ and $u$. 
\end{prop}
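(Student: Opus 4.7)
My plan is to combine three ingredients: the $L^p$ off-diagonal decay of Lemma \ref{lem:oddLp}, a Sobolev-type $L^p$--$L^q$ improvement coming from the coercivity built into $D$, and complex interpolation; then to transfer the estimate from iterated resolvents to general $\psi\in\Psi_\sigma^\tau(S_\mu)$ via a Cauchy contour representation.

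The first step is the $L^p$--$L^q$ improvement for iterated resolvents. For $p,q\in I(T)$ with $p\le q$ and an integer $N > n/p - n/q$, I would show
\[
\|(I+isT)^{-N}f\|_q \lesssim s^{n/q-n/p}\|f\|_p, \qquad s>0.
\]
The bounded $H^\infty$-calculus on $\clos{\ran_{p}(T)}$ (available since $p\in I(T)$) gives $\|(sT)^N(I+isT)^{-N}f\|_p\lesssim\|f\|_p$ uniformly in $s$. The coercivity $\|\nabla v\|_p \lesssim \|Dv\|_p$ on $\clos{\ran_{p}(D)}$ from property (4) in Section \ref{sec:D}, combined with the isomorphism $B:\clos{\ran_{p}(D)}\to\clos{\ran_{p}(BD)}$ of Lemma \ref{lem:biso}, translates powers of $T$ into fractional derivatives of $u=(I+isT)^{-N}f$, and Gagliardo--Nirenberg / $W^{N,p}\hookrightarrow L^q$ produces the improvement. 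Iterating Lemma \ref{lem:oddLp} yields the complementary $L^p$--$L^p$ off-diagonal estimate with spatial decay $\brac{\dist(E,F)/s}^{-N}$ for the same resolvent.

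Next I would interpolate. Applying Riesz--Thorin to the family $1_E(I+isT)^{-N}1_F$ between the $L^p$--$L^p$ bound with decay exponent $N$ and the $L^p$--$L^q$ bound with no decay, with $(p,q)$ varying inside $I(T)\times I(T)$, produces intermediate $L^p$--$L^{q_\theta}$ off-diagonal estimates. The maximal admissible spread $1/p-1/q_\theta$ is bounded by $1/p_- - 1/p_+$ since one cannot push either endpoint outside $I(T)$, and linear interpolation of the decay exponent between the two extremes gives the stated strict $c < 1 - (1/p-1/q)(1/p_- - 1/p_+)^{-1}$. To upgrade from $(I+isT)^{-N}$ to $g(T)\psi(tT)$ I represent
\[
g(T)\psi(tT) = c_N \oint_{\partial S_\nu} g(z)\psi(tz)\,(zI-T)^{-N}z^{N-1}\,dz,
\]
valid on $\clos{\ran_{p}(T)}$ for $\omega<\nu<\mu$; note that $g(T)\psi(tT)$ vanishes on $\nul_p(T)$ because $\psi(0)=0$. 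The bound $|g(z)\psi(tz)|\lesssim \|g\|_\infty\min(|tz|^\sigma,|tz|^{-\tau})$ with $\sigma>0$ and $\tau > n/p - n/q$ ensures absolute convergence, and inserting the $L^p$--$L^q$ off-diagonal estimate at each scale $s=1/|z|$ produces the overall scaling $t^{n/q-n/p}$ (from the $|tz|^{-\tau}$ tail, matched precisely by $\tau > n/p - n/q$) and the spatial factor $\brac{\dist(E,F)/t}^{-\sigma c}$ (from $|tz|^\sigma$ near the origin). For the second estimate with $g(z)=\varphi(rz)$, $r\le t$, the extra vanishing $|\varphi(rz)|\le\min(|rz|^M,1)$ lets me run the same contour argument at the smaller scale $r$, yielding $\brac{\dist(E,F)/r}^{-Mc}$.

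The main obstacle is the interpolation step: Riesz--Thorin must be executed in a way that preserves the off-diagonal structure with sharp constants, and the strict inequality in $c$ reflects the fact that one can only interpolate strictly inside the open interval $I(T)$, the functional calculus degenerating at the endpoints $p_\pm$.
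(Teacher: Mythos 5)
Your first step is where the argument breaks down. The global bound $\|(I+isT)^{-N}f\|_q\lesssim s^{n/q-n/p}\|f\|_p$ for $p<q$ is false in the present setting: $(I+isT)^{-N}$ acts as the identity on $\nul(T)$, which is in general a nontrivial infinite-dimensional subspace, and the identity does not improve integrability. The paper states exactly this right after the proposition: the global $L^p$--$L^q$ estimate \eqref{eq:odnpsipqglobal} holds for $\psi\in\Psi_{\sigma}^\tau(S_{\mu})$, but such an estimate is not true for the resolvent if $p<q$ unless $T$ has a trivial null space. Your derivation of the smoothing from the coercivity $\|\nabla v\|_p\lesssim\|Dv\|_p$ only ever controls the component of $(I+isT)^{-N}f$ lying in $\clos{\ran_{p}(D)}$ (after transporting by $B$ when $T=BD$); nothing controls the null-space component, and handling it is precisely the role of the hypothesis $\sigma>0$, i.e.\ the vanishing of $\psi$ at $0$.

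This defect propagates into the later steps. You cannot repair it by projecting: although $g(T)\psi(tT)$ annihilates $\nul_{p}(T)$, the input in the proposition is $1_{F}u$ with $u\in L^p$ arbitrary, and replacing $1_{F}u$ by $\IP_{T}1_{F}u$ destroys the support localization on which the off-diagonal decay rests. Likewise, your Riesz--Thorin step needs as one endpoint an $L^p$--$L^q$ bound for the family being interpolated, and for the bare resolvent family that endpoint is unavailable. A correct argument has to channel the integrability gain through factors that land in the range of $D$ --- for instance writing $\psi(tz)=(tz)^k\tilde\psi(tz)$ and using blocks of the form $sT(I+isT)^{-1}$, each of which gives one Sobolev step of size $1/n$ inside $I(T)$ via the coercivity, composed with the $L^p$ off-diagonal decay of Lemma \ref{lem:oddLp}; the loss encoded in $c<1-(\frac1p-\frac1q)(\frac1{p_{-}}-\frac1{p_{+}})^{-1}$ does indeed arise from interpolating decay against smoothing, so that part of your heuristic is sound, but only once the endpoint inputs are the correct ones. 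Note also that the paper itself does not prove this proposition; it refers to \cite{Sta1}, so there is no internal proof to compare against, but the remark following the statement already rules out your resolvent-based route.
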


Besides the precise values, it is important to notice that  the exponent expressing the decay grows linearly with the order of decay of $\psi$ at 0 in the first estimate and with the order of decay of $\varphi$ at 0 in the second one.  Notice that the first estimate contains in particular global $L^p-L^q$ estimates 
\begin{equation} \label{eq:odnpsipqglobal}
\|\psi(tT) u\|_{q} \lesssim t^{\frac{n}{q}-\frac{n}{p}}\|u\|_{p}
\end{equation}
for all $\psi$ as above. Such an estimate is not true for the resolvent if $p<q$ unless  $T$ has a trivial null space. See \cite{Sta1} for more. 

Here is an extension of Remark \ref{rem:extension}.

\begin{cor}\label{cor:extension} If $\tau>0$, $\sigma>\frac{n}{p}$, $2<p<p_{+}$ 
and $\psi\in \Psi_{\sigma}^\tau(S_{\mu})$, then $\psi(tT)$ has a bounded extension from $L^\infty$ to $L^p_{loc}$. 
\end{cor}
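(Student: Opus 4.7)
My plan is to follow the scheme sketched in Remark~\ref{rem:extension}, but replacing the $L^2$ off-diagonal bound by the $L^p$-$L^p$ case of \eqref{eq:odnpsipq}. Given $f \in L^\infty$ and a ball $B = B(x_0,r)$, I decompose $f = f_0 + \sum_{k \geq 1} f_k$ with $f_0 = f\mathbf{1}_{2B}$ and $f_k = f\mathbf{1}_{2^{k+1}B\setminus 2^kB}$. Each $f_k$ is bounded with compact support, hence lies in $L^p$ with $\|f_k\|_p \lesssim (2^k r)^{n/p}\|f\|_\infty$. Since $2 < p < p_+$, $p \in I(T)$, and Theorem~\ref{thm:Lpfc} furnishes the boundedness of $\psi(tT)$ on $L^p$ via the $H^\infty$-calculus, giving $\|\psi(tT) f_0\|_p \lesssim r^{n/p}\|f\|_\infty$.

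For the tail, I apply \eqref{eq:odnpsipq} with $p=q$ (valid because $\tau > 0$, with $c=1$), $g \equiv 1$, $E = B$ and $F = 2^{k+1}B \setminus 2^k B$, using $\dist(E,F) \gtrsim 2^k r$, to obtain
\begin{equation*}
\|\mathbf{1}_B \psi(tT) f_k\|_p \lesssim \langle 2^k r/t\rangle^{-\sigma}(2^k r)^{n/p}\|f\|_\infty.
\end{equation*}
The condition $\sigma > n/p$ is exactly what is needed to sum this in $k$: in the regime $2^k r \leq t$ the bound collapses to $(2^k r)^{n/p}$, producing a finite geometric sum dominated by $\max(r,t)^{n/p}$, while in the regime $2^k r > t$ the bound is $t^\sigma (2^k r)^{n/p - \sigma}$, which is geometric with ratio $2^{n/p - \sigma} < 1$. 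Hence $\sum_k \mathbf{1}_B \psi(tT) f_k$ converges absolutely in $L^p(B)$, and I define $(\psi(tT) f)|_B$ as this limit.

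Finally, I would verify that the construction does not depend on the choice of $B$, so that it defines a bona fide element of $L^p_{\loc}$. Two constructions based on different balls $B$ and $B'$ only disagree at scales $\geq 2^N r$, and by the same off-diagonal bound the tail contribution is $O\big(\langle 2^N r/t\rangle^{-\sigma}(2^N r)^{n/p}\big)$, which tends to $0$ as $N\to\infty$, again using $\sigma > n/p$. The main, and essentially only, analytical obstacle in the whole argument is this borderline hypothesis $\sigma > n/p$: it is what balances the $L^\infty \hookrightarrow L^p$ embedding growth $(2^k r)^{n/p}$ on a dyadic annulus against the $(2^k r)^{-\sigma}$ off-diagonal decay, and without it the annular sum diverges. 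The condition $\tau > 0$ plays only the subsidiary role of enabling the case $p=q$ in \eqref{eq:odnpsipq}.
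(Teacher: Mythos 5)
Your proof is correct and uses essentially the same argument as the paper: decompose the $L^\infty$ function into dyadic annuli around a ball, apply the $L^p$--$L^p$ case of \eqref{eq:odnpsipq} to the far annuli, and sum geometrically using $\sigma>\frac{n}{p}$. The paper streamlines this by taking the ball of radius exactly $t$, so that $\dist(E,F)/t\sim 2^j$ and the two-regime case distinction (and most of your consistency discussion) disappears, but the mechanism is identical.
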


\begin{proof}
We take $h\in L^\infty$ and $B$ a ball of radius $t$.   Write $h=\sum h_{j}$ where $h_{0}=h1_{2B}$ and 
$h_{j}= h 1_{2^{j+1}B\setminus 2^jB}$. Then $\|\psi(tT)h_{j}\|_{L^p(B)} \lesssim 2^{-j\sigma} \|h_{j}\|_{p}\lesssim 2^{-j(\sigma-\frac{n}{p})} \|h\|_{\infty}$. It remains to sum. 
\end{proof}

\section{Hardy spaces}  

The theory of Hardy spaces   associated to operators allows us  to introduce a  scale of abstract spaces. One goal will be to identify ranges of $p$ for which they agree with subspaces of $L^p$ or $H^p$. 

\subsection{Tent spaces: notation and some review}

 For $0<q< \infty$, $T^{q}_{2}$ is the  tent space   of \cite{CMS}. This is the space of $L^2_{\loc}(\reu)$ functions  $F$ such that 
 $$
 \|F\|_{T^q_{2}} = \|\SF F\|_{q} <\infty
 $$
 with for all $x\in \R^n$,
 \begin{equation}
\label{eq:sfdef}
 (\SF F)(x): = \left( \iint_{t>0, |x-y|<at}  |F(t,y)|^2\, \frac{dtdy}{t^{n+1}}\right)^{1/2}, 
\end{equation}
 where $a>0$ is a fixed number. Two different values $a$ give equivalent $T^q_{2}$ norms.

  For $q=\infty$, $T^\infty_{2}(\reu)$ is defined via Carleson measures  by  $\|F\|_{T^\infty_{2}}<\infty$, where $\|F\|_{T^\infty_{2}}$ is the smallest positive constant $C$ in 
$$
\iint_{T_{x,r}}   |F(t,y)|^2\, \frac{dtdy}{t} \le C^2 |B(x,r)|
$$
for all   open balls $B(x,r)$  in $\R^n$ and $T_{x,r}=(0,r)\times B(x,r)$. 
For $0<\alpha<\infty$,   $T^\infty_{2,\alpha}(\reu)$ is defined by $\|F\|_{T^\infty_{2,\alpha}}<\infty$ where 
$\|F\|_{T^\infty_{2,\alpha}}$ is the smallest positive constant $C$ in 
$$
\iint_{T_{x,r}}   |F(t,y)|^2\, \frac{dtdy}{t} \le C^2 |B(x,r)|^{1+\frac{2\alpha}{n}}
$$
for all   open balls $B(x,r)$  in $\R^n$. For convenience, we set $T^\infty_{2,0}=T^\infty_{2}$.

For $1\le q<\infty$ and $p$ the conjugate exponent to $q$,  $T^{p}_{2}$ is the dual of $ T^q_{2}$  for the duality
$$
(F,G):=\iint_{\reu} F(t,y) \overline{G(t,y)} \, \frac{dtdy}{t}.
$$
For $0<q\le 1$ and $\alpha= n (\frac{1}{q}-1)$, $T^\infty_{2,\alpha}$ is the dual of $T^q_{2}$ for the same duality form. Although not done explicitly there, it suffices to adapt the proof of \cite[Theorem 1]{CMS}.

\subsection{General theory}

We summarize here the theory pionnered in \cite{AMcR, HM} for operators $T$ satisfying $L^2$ off-diagonal estimates of any polynomial order  \eqref{odn} and further developed in \cite{JY, DY, HMMc, HNP, HLMMY, DL, AL}, etc. Here, there is an issue about  homogeneity of the operator and notice that both $DB$ and $BD$ are of order 1.  We stick to this homogeneity. The needed assumptions on  $T$ for what follows is  bisectoriality on $L^2$  with $H^\infty$-calculus on $\clos{\ran_{2}(T)}$ and $L^2$ off-diagonal estimates  \eqref{odn}.
Let $\omega\in [0,\pi/2)$ be the angle of the $H^\infty$-calculus. In what follows, $\mu$ is an arbitrary real number with $\omega<\mu<\pi/2$.

For $\psi\in H^{\infty}(S_{\mu})$, let
$$
\Qpsi \psi T f= (\psi(tT)f)_{t>0}, \quad f\in L^2
$$
and
$$
\Tpsi \psi T F= \int_{0}^\infty \psi(tT)F(t,\cdot)\, \frac{dt}{t}, \quad F\in T^2_{2}.
$$
The second definition is provided one can make sense of the integral. Precisely, for $\psi\in \Psi(S_{\mu})$ (the class is defined in \eqref{eq:cauchyformula}),
the operators $\Qpsi \psi T: L^2\to T^2_{2}$ and $\Tpsi \psi T: T^2_{2}\to L^2$ are bounded as follows from the square function estimates \eqref{eq:psiT}   for $T$ and its adjoint $T^*$.  Indeed, 
$\Tpsi \psi T$ is the  adjoint to $\Qpsi {\psi^*} {T^*}$ where   $\psi^*(z)=\overline{\psi(\bar z)}$.

Recall that for  $\sigma,\tau\ge 0$, 
$$\Psi_{\sigma}^\tau(S_{\mu})= \{\psi\in H^{\infty}(S_{\mu}):\psi(z)=O\big(\inf({\abs{z}^\sigma,\abs{z}^{-\tau}})\big)\}.
$$
So $$\Psi(S_{\mu}) =\bigcup_{\sigma>0, \tau>0}\Psi_{\sigma}^\tau(S_{\mu}).$$
For $0<\gamma$, let 
$$\Psi^{\gamma}(S_{\mu})=\bigcup_{\sigma>0, \tau> \gamma}\Psi_{\sigma}^\tau(S_{\mu}),
$$
$$\Psi_{\gamma}(S_{\mu})=\bigcup_{\sigma>\gamma, \tau>0}\Psi_{\sigma}^\tau(S_{\mu}).
$$
Set $\gamma (p)=| \frac{n}{p}-\frac{n}{2}|$ for $0<p\le \infty$.  If $p\le 1$ and $\alpha=n( \frac{1}{p}-1)$, then $\gamma(p)=\frac{n}{2}+\alpha$.

Consider the  table 
\begin{center}
\begin{table}[htdp]

\begin{tabular}{|c|c|c|c|}
\hline
exponents= & $\mT=$ & $\Psi_{\mT}(S_{\mu})=$ &$\Psi^{\mT}(S_{\mu})=$   \\
\hline
$0<p\le2$ & $T^p_{2}$ &$ \Psi^{\gamma(p)}(S_{\mu})$  & $\Psi_{\gamma(p)}(S_{\mu})$  \\
\hline
$2\le p\le \infty$ &  $T^p_{2}$ & $\Psi_{\gamma(p)}(S_{\mu})$ & $ \Psi^{\gamma(p)}(S_{\mu})$  \\
\hline
$0\le \alpha=n( \frac{1}{p}-1) <\infty$ & $T^\infty_{2,\alpha}$ & $\Psi_{\gamma(p)}(S_{\mu})$ & $ \Psi^{\gamma(p)}(S_{\mu})$ \\
\hline
\end{tabular}
\label{default}
\end{table}
\end{center}

Note that $\Psi^{\gamma(2)}(S_{\mu})=\Psi_{\gamma(2)}(S_{\mu})=\Psi(S_{\mu})$ so the next result is consistent with the $L^2$ theory.

\begin{prop}\label{prop:table}
For any space $\mT$ in the table, $\psi\in \Psi_{\mT}(S_{\mu}), \varphi\in \Psi^{\mT}(S_{\mu})$ and $b\in H^\infty(S_{\mu})$, then $\Qpsi \psi T b(T) \Tpsi \varphi T$ initially defined on $T^2_{2}$,  extends to a bounded operator on $\mT$ by density if $\mT=T^p_{2}$ and by duality if $\mT=T^\infty_{2,\alpha}$.
\end{prop}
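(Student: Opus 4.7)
The overall plan is to handle $\mT = T^p_2$ with $0<p\le 1$ directly by an atomic argument, to derive the cases $\mT=T^p_2$ with $2\le p<\infty$ and $\mT=T^\infty_{2,\alpha}$ by duality, and to fill in the intermediate ranges $1<p<2$ and $2<p<\infty$ by complex interpolation of the tent spaces. The $L^2$ case is Proposition \ref{prop:SFimpliesFC} (together with the boundedness of $\Qpsi\psi T$ and $\Tpsi\varphi T$), so throughout we already have a bounded operator on $T^2_2$, and the task is to obtain endpoint bounds and extend by density/duality.

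For the atomic step, fix $0<p\le 1$ and a $T^p_2$-atom $A$ supported in a Carleson box $T(B)$ over a ball $B=B(x_0,r)$ with $\|A\|_{T^2_2}\le |B|^{1/2-1/p}$. Set
\[
G(s,y)=\int_0^r \big[\psi(sT)\,b(T)\,\varphi(tT) A(t,\cdot)\big](y)\,\frac{dt}{t},
\]
and decompose $\R^n=4B\cup\bigcup_{j\ge 2} C_j$ with $C_j=2^{j+1}B\setminus 2^jB$. On $4B$, Hölder's inequality combined with the $T^2_2$-boundedness gives $\|\SF G\|_{L^p(4B)}\lesssim |B|^{1/p-1/2}\|A\|_{T^2_2}\lesssim 1$. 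On $C_j$, Hölder yields
\[
\|\SF G\|_{L^p(C_j)}\lesssim 2^{j\gamma(p)}|B|^{1/p-1/2}\|\SF G\|_{L^2(C_j)},
\]
so one must win an extra factor $2^{-j(\gamma(p)+\eta)}$ from the $L^2$-norm of the square function on the annulus. This is reduced, via Fubini and dyadic decomposition of the $s$- and $t$-integrals, to a two-parameter $L^2$ off-diagonal estimate for $\psi(sT)\,b(T)\,\varphi(tT)$ applied to data in $B$ and measured on a region at distance $\sim 2^j r$. Writing $\psi,\varphi$ via their Cauchy representations and composing with the resolvent bounds of Lemma \ref{lem:odd} gives bounds of the Schur type
\[
\|1_{C_j}\psi(sT)b(T)\varphi(tT) 1_B\|_{L^2\to L^2}\lesssim \Big(\tfrac{\min(s,t)}{\max(s,t)}\Big)^{\!\eta}\Big(1+\tfrac{2^j r}{s+t}\Big)^{\!-N},
\]
with $\eta>0$ and $N$ as large as allowed by the vanishing/decay orders of $\psi$ and $\varphi$. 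The hypotheses $\psi\in\Psi^{\gamma(p)}(S_\mu)$ and $\varphi\in\Psi_{\gamma(p)}(S_\mu)$ are tuned precisely so that $N$ can be taken strictly larger than $\gamma(p)=n/p-n/2$; combining this decay with Cauchy–Schwarz in the $t$-variable against $\|A\|_{T^2_2}\le |B|^{1/2-1/p}$ yields $\sum_{j\ge 2}\|\SF G\|_{L^p(C_j)}^p\lesssim 1$. Density of finite atomic sums in $T^p_2$ gives the desired bounded extension.

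For the remaining ranges, observe that the formal adjoint of $\Qpsi\psi T\, b(T)\,\Tpsi\varphi T$ with respect to the $T^2_2$-pairing is $\Qpsi{\varphi^*}{T^*}\,b^*(T^*)\,\Tpsi{\psi^*}{T^*}$. The operator $T^*$ satisfies the same hypotheses as $T$ (bisectoriality, $H^\infty$-calculus, and $L^2$ off-diagonal decay), and $\psi^*\in\Psi^{\gamma(p)}$, $\varphi^*\in\Psi_{\gamma(p)}$ whenever $\psi,\varphi$ are. Hence the atomic case applied to the adjoint, together with the tent-space dualities $(T^p_2)^*=T^{p'}_2$ for $1\le p\le 2$ and $(T^p_2)^*=T^\infty_{2,n(1/p-1)}$ for $0<p\le 1$, delivers the stated boundedness on $T^{p'}_2$ ($2\le p'<\infty$) and on $T^\infty_{2,\alpha}$; the correspondence between $\Psi^{\gamma(p)}$ and $\Psi_{\gamma(p)}$ under passage to the adjoint is exactly the switch recorded in the table for $p\ge 2$. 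Finally, since the classes $\Psi^{\gamma(p)}(S_\mu)$ and $\Psi_{\gamma(p)}(S_\mu)$ are monotone in $p$, a fixed admissible pair $(\psi,\varphi)$ satisfies the hypotheses at any two endpoints simultaneously, and the complex interpolation scale for tent spaces ($(T^{p_0}_2,T^{p_1}_2)_\theta=T^p_2$ with $1/p=(1-\theta)/p_0+\theta/p_1$) produces boundedness on all intermediate $T^p_2$.

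The main obstacle is the two-parameter off-diagonal estimate for $\psi(sT)b(T)\varphi(tT)$ with simultaneous gain in the ratio $\min(s,t)/\max(s,t)$ and in the spatial distance. The ratio gain is needed so that the $t$-integration against $A$ converges without losing the annular decay, while the spatial gain of order larger than $\gamma(p)$ is precisely what makes the geometric sum over $j$ summable and explains the exponents appearing in $\Psi^{\gamma(p)}$ and $\Psi_{\gamma(p)}$ in the table.
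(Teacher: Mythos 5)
Your strategy (atoms for $0<p\le 1$, then duality and interpolation) is precisely the alternative route the paper only alludes to --- its own proof is a citation to \cite{HNP} for $1<p<\infty$ and to \cite{HMMc} for the other ranges --- but two steps of your execution fail as written. The first is the key two-parameter estimate. For $x\in C_j$ the cone contains points $(s,y)$ with $s\gtrsim 2^jr$, where your spatial factor $\brac{1+2^jr/(s+t)}^{-N}$ is of size one; on that region the whole gain in $j$ must come from the ratio factor, and with an exponent that is merely ``some $\eta>0$'' you only get $(t/s)^\eta\lesssim 2^{-j\eta}$, which does not beat the H\"older loss $2^{j\gamma(p)}$ (the geometric factor $\min(1,(2^jr/s)^n)$ coming from comparing $|C_j|$ with the cone cross-section only makes the $s$-integral converge; it is scale invariant in $j$ and produces no decay). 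What you actually need, and what is true, is the asymmetric bound $\|\psi(sT)b(T)\varphi(tT)\|_{2\to2}\lesssim (t/s)^{\beta}$ for $t\le s$ with $\beta$ up to $\min(\tau_\psi,\sigma_\varphi)>\gamma(p)$, writing $\psi\in\Psi_{\sigma_\psi}^{\tau_\psi}$ and $\varphi\in\Psi_{\sigma_\varphi}^{\tau_\varphi}$; this is exactly where the hypothesis that $\psi$ decays at infinity to order $>\gamma(p)$ enters. Your closing sentence --- that the spatial gain of order larger than $\gamma(p)$ is what makes the sum over $j$ converge and explains both class conditions --- shows the misattribution: the off-diagonal decay order of $\psi(sT)b(T)\varphi(tT)$ is governed by the orders of vanishing at $0$ (compare \eqref{eq:odnpsipq}), so it can never account for the condition on $\tau_\psi$, and as stated your lemma is too weak to close the sum over $j$.

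Second, the interpolation step does not reach the classes of the table in the range $1<p<2$, hence (by your own duality) neither in $2<p<\infty$. For a target $p\in(1,2)$ the table requires only orders just above $\gamma(p)$, which can be arbitrarily small; such a pair $(\psi,\varphi)$ is admissible at $p$ and at $2$, but at no exponent $p_0\le1$, since that would force orders $>\gamma(p_0)\ge n/2$. The monotonicity of the classes thus goes the wrong way: there is no lower endpoint at which your atomic case applies, and complex interpolation with a fixed pair only proves the statement for $1<p<2$ under the stronger hypothesis $\tau_\psi,\sigma_\varphi>n/2$. (Relatedly, duality from the atomic case alone gives only the $T^\infty_{2,\alpha}$ row, $\alpha\ge0$; boundedness on $T^{p'}_2$ for finite $p'>2$ presupposes the missing $1<p<2$ case.) To obtain the sharp classes you need either a direct argument in the reflexive range as in \cite{HNP}, or a Stein-type analytic family in which the vanishing and decay orders vary with the interpolation parameter, in the spirit of the family $\psi_\alpha$ used in Lemma \ref{lem:sfpinfty} and in \cite{AHM}, rather than plain interpolation with a fixed pair.
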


\begin{proof} One can extract these classes in the range $1<p<\infty$ from \cite{HNP} and in  the other ranges from \cite{HMMc} (replacing $\frac{n}{4}$ adapted to second order operators to $\frac{n}{2}$ here). Actually, there is a possible interpolation method to reobtain directly the results in \cite{HNP} without recoursing to UMD technology, once one knows the results for $0<p\le 1$. See \cite{Sta}. 
\end{proof}

We also recall the Calder\'on reproducing formula in this context (See \cite[Remark 2.1]{AMcR}).  As the proof is not given there, we sketch one possible argument.  

\begin{prop}\label{prop:calderon}
For  any  $\sigma_{1},\tau_{1}\ge 0$ and non-degenerate $\psi \in \Psi_{\sigma_{1}}^{\tau_{1}}(S_{\mu})$ and any $\sigma,\tau>0$, there exists $\varphi\in \Psi_{\sigma}^\tau(S_{\mu})$ such that 
\begin{equation}
\label{eq:Calderon}
\int_{0}^\infty \varphi( t z) \psi( t z) \, \frac{dt}{t}=1   \quad \forall z\in S_{\mu}.
\end{equation}
As a consequence, 
\begin{equation}
\label{eq:CalderonT}
\Tpsi\varphi T \Qpsi \psi T f = f,  \quad \forall f \in \clos{\ran_{2}(T)}.  
\end{equation}
\end{prop}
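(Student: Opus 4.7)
The proof has three ingredients: (i) build a candidate $\phi$ with the desired decay; (ii) observe that $F(z) := \int_0^\infty \phi(tz)\psi(tz)\,\frac{dt}{t}$ is holomorphic on $S_\mu$ and forced to be constant on each connected component $S_{\mu\pm}$; (iii) separately normalize on each component, exploiting that $S_\mu$ is disconnected. For (i), fix an integer $k \geq \max(\sigma,\tau)$ and set $g(z) := z^2/(1+z^2)^2$. Since $\mu < \pi/2$, the points $\pm i$ lie outside $S_\mu$, so $g$ is bounded and holomorphic on $S_\mu$ with $|g(z)| \lesssim \min(|z|^2,|z|^{-2})$. Define $\phi(z) := \overline{\psi(\bar z)}\,g(z)^{k}$; since $S_\mu$ is conjugation-symmetric and $\psi$ is bounded, $\overline{\psi(\bar z)}$ is bounded and holomorphic in $z$, so $\phi \in \Psi_\sigma^\tau(S_\mu)$.

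For (ii), because $\phi\psi \in \Psi_{\sigma+\sigma_1}^{\tau+\tau_1}(S_\mu)$ with strictly positive exponents, the integrand decays uniformly at $0$ and $\infty$ on compacts of $S_\mu$, whence $F$ is holomorphic. Substituting $w = tz$ rewrites $F(z)$ as the integral of $\phi(w)\psi(w)\,\frac{dw}{w}$ along the ray $\gamma_z = \{tz : t > 0\} \subset S_{\mu\pm}$. Cauchy's theorem inside $S_{\mu\pm}$, with the arcs at radii tending to $0$ and $\infty$ killed by the decay, rotates $\gamma_z$ onto the real half-axis, giving the constant values
\[
c_{\pm} = \int_0^\infty |\psi(\pm s)|^2\,g(s)^{k}\,\frac{ds}{s}
\]
on $S_{\mu\pm}$ (using $g(-s) = g(s)$ and $\phi(\pm s)\psi(\pm s) = |\psi(\pm s)|^2\,g(s)^{k}$ for $s > 0$). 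Non-degeneracy of $\psi$ on each component combined with the identity theorem (and $\mu > 0$, so each real half-axis lies in the interior of the respective component) forces $\psi$ to be nonzero on sets of positive measure on both half-axes, so $c_\pm > 0$. For (iii), set
\[
\varphi(z) := \phi(z)\left(\frac{\chi^+(z)}{c_+} + \frac{\chi^-(z)}{c_-}\right),
\]
where $\chi^\pm$ are the (locally constant, hence holomorphic on $S_\mu$) indicator functions of $S_{\mu\pm}$. Then $\varphi \in \Psi_\sigma^\tau(S_\mu)$ and satisfies \eqref{eq:Calderon}.

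For \eqref{eq:CalderonT}, put $h := \varphi\psi \in \Psi(S_\mu)$ and $H_{\epsilon,R}(z) := \int_\epsilon^R h(tz)\,\frac{dt}{t}$. A direct estimate using the $\Psi$-decay of $h$ gives $\|H_{\epsilon,R}\|_{H^\infty(S_\mu)}$ bounded uniformly in $0 < \epsilon < R$, and by \eqref{eq:Calderon} one has $H_{\epsilon,R}(z) \to 1$ pointwise on $S_\mu$ as $\epsilon \to 0$, $R \to \infty$. The convergence lemma of the $H^\infty$-calculus (Proposition \ref{prop:SFimpliesFC}) then yields $H_{\epsilon,R}(T) f \to f$ in $L^2$ for every $f \in \clos{\ran_{2}(T)}$. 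A Fubini in the Dunford representation identifies $H_{\epsilon,R}(T) f$ with the truncated Bochner integral $\int_\epsilon^R \varphi(tT)\psi(tT) f\,\frac{dt}{t}$; since $\Qpsi{\psi}{T} f \in T^2_2$ and $\Tpsi{\varphi}{T} : T^2_2 \to L^2$ is bounded, these truncated integrals converge to $\Tpsi{\varphi}{T}\Qpsi{\psi}{T} f$ in $L^2$. Passing to the limit gives \eqref{eq:CalderonT}.

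The main technical step is the contour deformation in (ii); the choice $k \geq \max(\sigma,\tau)$ simultaneously secures $\phi \in \Psi_\sigma^\tau(S_\mu)$ and enough joint decay of $\phi\psi$ at $0$ and $\infty$ to kill all arc contributions uniformly as $z$ ranges over a compact subset of $S_{\mu\pm}$. The remaining arithmetic with $c_\pm$ is routine once non-degeneracy is transferred to the real half-axes via the identity theorem.
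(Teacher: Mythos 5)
Your proof is correct and follows essentially the same route as the paper: take $\varphi$ of the form $\overline{\psi(\bar z)}$ times an auxiliary holomorphic factor with strong decay at $0$ and $\infty$, normalize separately on each connected component by constants obtained from the positive integrals $\int_0^\infty |\psi(\pm s)|^2(\cdot)\,\frac{ds}{s}$ (positive by non-degeneracy and the identity theorem), and extend the identity from the real half-axes to all of $S_\mu$ by holomorphy. The only differences are cosmetic — you use the rational factor $g(z)^k$ where the paper uses $\theta(z)=e^{-\modz-\modz^{-1}}$ (which lies in every $\Psi_\sigma^\tau$ class at once), you establish constancy of $z\mapsto\int_0^\infty\varphi(tz)\psi(tz)\,\frac{dt}{t}$ by contour rotation rather than by the scaling-invariance/analytic-continuation argument, and you spell out the standard truncation-and-convergence argument for \eqref{eq:CalderonT}, which the paper leaves implicit.
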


\begin{proof} Assume $\psi \in \Psi_{\sigma_{1}}^{\tau_{1}}(S_{\mu})$ with $\sigma_{1},\tau_{1}\ge 0$  and  $\psi$ is non-degenerate.  Let $\theta(z)=e^{-\modz - \modz^{-1}}$ with $\modz=z$ if $\re z>0$ and $\modz=-z$ if $\re z<0$.  Clearly $\theta\in  \cap_{\sigma>0,\tau>0} \Psi_{\sigma}^\tau(S_{\mu})$ and so does
\begin{equation*}
\varphi(z)= \begin{cases}
 c_{+} \overline{\psi(\bar z)}\theta(z)      &\text{for }\  z\in S_{\mu+},\\
  c_{-} \overline{\psi(\bar z)}\theta(-z)    & \text{for }\  z\in S_{\mu-}.
\end{cases}
\end{equation*}
The constants $c_{\pm}$  are chosen such that $\int_{0}^\infty \psi(\pm t) \varphi(\pm t) \, \frac{dt}{t}=1$ (note that the integrals are positive numbers because $\psi$ is non-degenerate, hence $|\psi(\pm t)|>0$ almost everywhere, so that there is such a choice for $c_{\pm}$). 
Next,  \eqref{eq:Calderon} follows by analytic continuation. 
\end{proof}  

\begin{rem} The function $\psi$ can be taken without any decay at 0 and $\infty$: it is enough that the product $\psi\varphi$ has both decay.  

\end{rem}

Let $\mT$ be any of the spaces in the table above and $\psi\in \Psi(S_{\mu})$.  Set
$$\IH^\mT_{\Qpsi \psi T}= \{f\in \clos{\ran_{2}(T)};  \Qpsi \psi T f \in \mT\}
$$
equipped with the (quasi-)norm $\|f\|_{\IH^\mT_{\Qpsi \psi T}}=\|\Qpsi \psi T f\|_{\mT}$
%and $\H^p_{\Qpsi \psi T}$ be its closure with respect to $\| \Qpsi \psi T f\|_{T^p_{2}}$.
and 
$$\IH^\mT_{\Tpsi \psi T}= \{\Tpsi \psi T F ;   F\in \mT \cap T^2_{2}\}
$$
equipped with the (quasi-)norm $\|f\|_{\IH^\mT_{\Tpsi \psi T}}= \inf\{ \|F\|_{\mT}; f=\Tpsi \psi T F, \, \,F \in \mT \cap T^2_{2}\}$.
%and 
%$\H^p_{\Tpsi \psi T}$ be its closure with respect to $\| F\|_{T^p_{2}}$.
We do not need to introduce completions at this point. 

\begin{cor}\label{cor:pre-hardy}  For any $\mT$ in the above table, non-degenerate $\psi\in \Psi_{\mT}(S_{\mu})$ and $ \varphi\in \Psi^{\mT}(S_{\mu})$ , we have 
$$
\IH^\mT_{\Qpsi \psi T}= \IH^\mT_{\Tpsi \varphi T}
$$
with equivalent \text{(quasi-)}\-norms. We set $\IH^\mT_{T}$ this space and call it the pre-Hardy space associated to $(T,\mT)$.  For any $b\in H^{\infty}(S_{\mu})$, this space is preserved by $b(T)$ and $b(T)$ is bounded on it. For $\mT=T^p_{2}$, we simply set $\IH^p_{T}= \IH^{T^p_{2}}_{T}$ and for $\alpha>0$, we set $\IL^{\alpha}_{T}=\IH^{T^\infty_{2,\alpha}}_{T}.$
\end{cor}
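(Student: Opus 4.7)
The plan is to combine the Calder\'on reproducing formula (Proposition~\ref{prop:calderon}) with the boundedness statement of Proposition~\ref{prop:table}; the proof then reduces to careful bookkeeping on decay exponents of the partner functions.

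First, I would show that for all non-degenerate $\psi_{1},\psi_{2}\in\Psi_{\mT}(S_{\mu})$, the quasi-norms $\|\Qpsi{\psi_{1}}{T}\cdot\|_{\mT}$ and $\|\Qpsi{\psi_{2}}{T}\cdot\|_{\mT}$ on $\clos{\ran_{2}(T)}$ are equivalent. Proposition~\ref{prop:calderon} applied to $\psi_{1}$ produces a partner $\tilde\varphi$ satisfying $\Tpsi{\tilde\varphi}{T}\Qpsi{\psi_{1}}{T}=I$ on $\clos{\ran_{2}(T)}$; the freedom to prescribe its decay at $0$ and $\infty$ allows me to take $\tilde\varphi\in\Psi^{\mT}(S_{\mu})$. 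Writing
\[
\Qpsi{\psi_{2}}{T}f=(\Qpsi{\psi_{2}}{T}\,\Tpsi{\tilde\varphi}{T})\,(\Qpsi{\psi_{1}}{T}f),
\]
Proposition~\ref{prop:table} with $b=1$ gives boundedness of the composition $\mT\to\mT$; exchanging $\psi_{1},\psi_{2}$ yields the reverse inequality.

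Next, I would match the $\Qpsi{}{}$- and $\Tpsi{}{}$-characterizations (I tacitly take $\varphi$ non-degenerate as well; otherwise only one inclusion remains). The inclusion $\IH^{\mT}_{\Tpsi{\varphi}{T}}\subseteq\IH^{\mT}_{\Qpsi{\psi}{T}}$ is immediate: for $f=\Tpsi{\varphi}{T}F$ with $F\in\mT\cap T^{2}_{2}$, one has $f\in L^{2}\cap\clos{\ran_{2}(T)}$ (since $\varphi$ vanishes at $0$ and $\infty$) and $\Qpsi{\psi}{T}f=(\Qpsi{\psi}{T}\,\Tpsi{\varphi}{T})F\in\mT$ by Proposition~\ref{prop:table}. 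For the reverse inclusion, take $f\in\clos{\ran_{2}(T)}$ with $\Qpsi{\psi}{T}f\in\mT$ and apply Proposition~\ref{prop:calderon} to $\varphi$, prescribing the decay of the partner $\chi$ so that $\chi\in\Psi_{\mT}(S_{\mu})$ (and $\chi$ remains non-degenerate since $\varphi$ is). Set $F:=\Qpsi{\chi}{T}f\in T^{2}_{2}$; the reproducing identity gives $f=\Tpsi{\varphi}{T}F$, and the first step applied to $(\psi,\chi)$ shows $F\in\mT$ with the required norm control.

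The boundedness of $b(T)$ on $\IH^{\mT}_{T}$ follows from the same mechanism: since $b(T)$ preserves $\clos{\ran_{2}(T)}$, and since $\Qpsi{\psi}{T}$ now characterizes the space, I reuse the partner $\tilde\varphi$ from the first step to write
\[
\Qpsi{\psi}{T}(b(T)f)=(\Qpsi{\psi}{T}\,b(T)\,\Tpsi{\tilde\varphi}{T})\,(\Qpsi{\psi}{T}f),
\]
and apply Proposition~\ref{prop:table} with the actual $b$ (not merely $b=1$). The sole technical point, and the one that requires the most care, is to ensure at each application of Proposition~\ref{prop:calderon} that the partner lies in the prescribed class ($\Psi^{\mT}$ or $\Psi_{\mT}$, depending on the side of the identity); this is precisely what the flexibility of Proposition~\ref{prop:calderon} supplies, since the decay at $0$ and $\infty$ of the partner can be arranged arbitrarily.
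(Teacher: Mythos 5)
Your argument is exactly the intended one: the paper states this as an immediate corollary of Proposition \ref{prop:table} and the Calder\'on reproducing formula of Proposition \ref{prop:calderon}, and your proof is the standard bookkeeping (choose partners with prescribed decay, factor $\Qpsi{\psi_2}{T}$, $\Qpsi{\psi}{T}b(T)$ through $\Tpsi{\tilde\varphi}{T}\Qpsi{\psi_1}{T}=I$, and apply the tent-space boundedness), including the correct observation that non-degeneracy of $\varphi$ is what makes the $\Tpsi{}{}$-characterization coincide with the $\Qpsi{}{}$-one. This matches the paper's route; no gaps.
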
 

Of course, the pre-Hardy space associated to $(T,\mT)$ is not complete as defined. The issue of finding a completion within  a classical space is not an easy one. 

We shall say that $\psi$ is \emph{allowable} for $\IH^\mT_{T}$ if we have the equality  $\IH^\mT_{\Qpsi \psi T}=\IH^\mT_{T}$ with equivalent  (quasi-)norms.  The set of allowable $\psi$ contains  the non-degenerate functions in $\Psi_{\mT}(S_{\mu})$ but could be larger in some cases.
 
As the  $H^\infty$-calculus extends to $\IH_{T}^\mT$, the operators $e^{-s|T|}$ extend to bounded operators on $\IH_{T}^\mT$ with uniform bound in $s>0$ and have the semigroup property. A question is the continuity on $s\ge 0$, which as is well-known reduces to continuity at $s=0$. In the reflexive Banach space case, this can be solved by abstract methods for bisectorial operators (see below). However, this excludes the quasi-Banach case we are also interested in. The following result seems new in the theory (this is not an abstract one as it uses the fact that we work with operators defined on $L^2$ and measure theory)   and includes the reflexive range $p>1$ as well.

\begin{prop}\label{prop:stronglimit} For all $0<p<\infty$ and $h\in \IH^p_{T}$, we have the strong limit
$$
\lim_{s\to 0} \|e^{-s|T|}h-h\|_{\IH^p_{T}}=0.
$$
\end{prop}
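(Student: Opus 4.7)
The strategy is a density plus uniform boundedness argument. The quasi-norm $\|\cdot\|_{\IH^p_T}$ is $\min(p,1)$-subadditive (by the sublinearity of $\SF$ and of the $L^p$ quasi-norm), so combined with the uniform bound $\sup_{s>0}\|e^{-s|T|}\|_{\IH^p_T\to\IH^p_T} \le C$ of Corollary \ref{cor:pre-hardy}, a three-$\varepsilon$ argument reduces the problem to proving strong convergence on a dense subspace of $\IH^p_T$.

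For the dense subspace, I would use the Calderón reproducing formula (Proposition \ref{prop:calderon}): choose non-degenerate $\psi$ and $\varphi$ in $\Psi_{\sigma_0}^{\tau_0}(S_\mu)$ with $\sigma_0, \tau_0$ sufficiently large that both $\varphi$ and $\omega(z):= |z|\varphi(z)$ (holomorphic on each connected component of $S_\mu$) are allowable for $\IH^p_T$, and such that $\int_0^\infty \psi(rz)\varphi(rz)\frac{dr}{r} = 1$ on $S_\mu$. Then every $h \in \IH^p_T$ can be written as $h = \Tpsi{\varphi}{T} G$ with $G \in T^p_2 \cap T^2_2$, and a dominated-convergence argument applied to the square function (cutting off in the $r$-variable) shows that the subspace $D$ of such $h$ for which $G$ is moreover compactly supported in $r\in [\alpha,\beta]$ with $0<\alpha<\beta<\infty$ is dense in $\IH^p_T$.

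For $h = \int_\alpha^\beta \varphi(rT) G(r,\cdot)\frac{dr}{r} \in D$, I would combine the identity $e^{-s|T|}-I = -\int_0^s |T| e^{-v|T|}\, dv$ with the functional-calculus relation $|T|\varphi(rT) = r^{-1}\omega(rT)$ and Fubini (justified by $L^2$-continuity of the integrands on the compact set $[0,s]\times [\alpha,\beta]$) to obtain
\begin{equation*}
(e^{-s|T|}-I)h = -\int_0^s e^{-v|T|}\tilde h \, dv,\qquad\tilde h := \Tpsi{\omega}{T}\!\left(\frac{G(r,\cdot)}{r}\mathbf{1}_{[\alpha,\beta]}(r)\right),
\end{equation*}
where $\tilde h \in \IH^p_T$ with $\|\tilde h\|_{\IH^p_T} \le C_\alpha\|G\|_{T^p_2 \cap T^2_2}$.

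For $p \ge 1$, Minkowski's inequality applied to the $\IH^p_T$-valued Bochner integral combined with the uniform boundedness of $e^{-v|T|}$ gives $\|(e^{-s|T|}-I)h\|_{\IH^p_T} \le Cs\|\tilde h\|_{\IH^p_T}\to 0$. The main obstacle is the quasi-Banach case $p<1$, where the standard Bochner estimate fails. I would handle it at the level of the square function: applying $\psi(rT)$, using Cauchy--Schwarz in $v$ and then the trivial pointwise bound $\int_0^s\alpha(v)^2\,dv \le s\sup_{0<v<s}\alpha(v)^2$, one reduces to
\begin{equation*}
\|(e^{-s|T|}-I)h\|_{\IH^p_T} \le s \bigl\|\sup_{0<v<s}\SF\bigl(\psi(r\cdot)e^{-v|T|}\tilde h\bigr)\bigr\|_{L^p},
\end{equation*}
and it remains to establish the semigroup-maximal tent-space estimate $\|\sup_{v>0}\SF(\psi(r\cdot)e^{-v|T|}\tilde h)\|_{L^p}\lesssim \|\tilde h\|_{\IH^p_T}$, which should follow from the $L^2$ off-diagonal decay of $e^{-v|T|}$ (Lemma \ref{lem:odd}) via a Fefferman--Stein type maximal argument adapted to the tent-space setting. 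Together with the density reduction of the first paragraph, this completes the proof for all $0<p<\infty$.
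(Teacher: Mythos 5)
Your overall architecture (uniform boundedness of $e^{-s|T|}$ on $\IH^p_{T}$ plus a density reduction, then an explicit computation on elements $h=\Tpsi \varphi T G$ with $G$ compactly supported in $r$) is sound, and for $p\ge 1$ the Minkowski/Bochner step does close the argument. But for $p<1$ — which is the whole point of this proposition, since the reflexive case could be handled by abstract semigroup theory — your proof rests on the unproved estimate $\|\sup_{v>0}\SF(\psi(rT)e^{-v|T|}\tilde h)\|_{L^p}\lesssim \|\tilde h\|_{\IH^p_{T}}$, which you only assert ``should follow from $L^2$ off-diagonal decay via a Fefferman--Stein type maximal argument.'' That is a genuine gap: an $L^p$ bound, $p<1$, for a supremum in $v$ taken inside a conical square function is not a routine consequence of off-diagonal bounds; making it rigorous would require either a molecular argument of the same weight as Lemma \ref{lem:upperp<2} together with an interpolation step, or some other substantive input. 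As written, the crux of the quasi-Banach case is left unjustified.

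The gap can be repaired cheaply, and the repair is exactly the observation on which the paper's own (quite different) proof is built. Take $\psi(z)=[z]^N e^{-[z]}$ with $N>\frac{n+1}{2}$ and $N>|\frac np-\frac n2|$, so that $\psi$ is allowable and $\psi(rT)e^{-v|T|}\tilde h= r^N |T|^N e^{-(r+v)|T|}\tilde h$. Writing $\SF(\psi(rT)e^{-v|T|}\tilde h)(x)^2=\iint_{\Gamma(x)} r^{2N-n-1}\,\big||T|^N e^{-(r+v)|T|}\tilde h(y)\big|^2\,dr\,dy$ and using $2N-n-1>0$ together with the inclusion $\Gamma(x)+(v,0)\subset\Gamma(x)$, one gets the pointwise bound $\SF(\psi(rT)e^{-v|T|}\tilde h)(x)\le \SF(\psi(rT)\tilde h)(x)$ uniformly in $v\ge 0$, which is your maximal estimate with constant $1$. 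Note, for comparison, that the paper uses this same domination to avoid density altogether: it bounds $\SF$ applied to $\psi(rT)(e^{-s|T|}h-h)$ by $2\SF(\psi(rT)h)\in L^p$ for every $h\in\IH^p_T$, and then proves the needed almost-everywhere pointwise convergence by truncating the cone (small $r$, large $r$, and the middle piece controlled by $\|e^{-s|T|}h-h\|_2\to 0$ via $L^2$ strong continuity), concluding by dominated convergence. So your route is viable once the maximal lemma is supplied by this cone-shift trick, but the paper's argument is shorter and uniform in $p$, trading your dense-class computation for a dominated-convergence scheme.
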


\begin{proof} 
We choose  $\psi(z)=\modz^Ne^{-\modz}$, with $N>\frac{n+1}{2}$ and $N>|\frac{n}{p}-\frac{n}{2}|$. 
Set $\Gamma(x)$ the cone of $(t,y)$ with $0\le |x-y|<t$, and for $0< \delta \le R<\infty$, $\Gamma_{\delta}(x)$ its truncation for $t\le\delta $, $\Gamma^{R}(x)$ its truncation for $t\ge R $ and $\Gamma^{R}_{\delta }(x)= \Gamma^{R}(x) \setminus \Gamma_{\delta }(x)$. Set  $\Sigma h = S(\psi(tT) h)$ and
$\Sigma h(x)= \big(\int\!\!\!\int_{\Gamma(x)} |\psi(tT)h(y)|^2 \, \frac{dtdy}{t^{n+1}}\big)^{1/2}$  so that 
$\|\Sigma h\|_{L^p}\sim \|h\|_{\IH^p_{T}}$ as $\psi$ is allowable for $\IH^p_{T}$. Let $\Sigma^R h(x)$,  $\Sigma_{\delta }h(x)$ and $\Sigma_{\delta }^Rh(x)$   be defined as $\Sigma h(x)$ with integral on  $\Gamma^R(x)$, $\Gamma_{\delta }(x)$  and 
$\Gamma_{\delta }^R(x)$ respectively. Remark that by the choice of $\psi$, we have 
$$
(\Sigma h)^2(x)= \iint_{\Gamma(x)} t^{2N-n-1}||T|^Ne^{-t|T|}h(y)|^2 \, {dtdy}.
$$
 It easy to see that 
$
\Sigma (e^{-s|T|}h)(x) \le Sh(x)$ for all $s>0$ by using  $2N-n-1>0$ and  observing that the translated cone $\Gamma(x)+(s,0)$ is contained in $\Gamma(x)$.  Thus we have $\Sigma (e^{-s|T|}h-h)(x)\le 2 \Sigma h(x)$ so that by the Lebesgue convergence theorem, it suffices to show that $\Sigma (e^{-s|T|}h-h)(x)$ converges to 0 almost everywhere. Using the same idea, we have
\begin{align*}
   \Sigma (e^{-s|T|}h-h)(x)& \le \Sigma _{\delta }(e^{-s|T|}h-h)(x)+\Sigma_{\delta }^R(e^{-s|T|}h-h)(x)+\Sigma^{R}(e^{-s|T|}h-h)(x)   \\
    &  \le 2 \Sigma _{\delta +s}h(x)+ \Sigma_{\delta }^R(e^{-s|T|}h-h)(x)+ 2\Sigma^{R}(h)(x).  
\end{align*}
Pick $x\in \R^n$ so that $\Sigma h(x)<\infty$ and let $\varepsilon>0$. Then pick $R$ large and $\delta $ small so that $\Sigma^{R}h(x) <\varepsilon$ and $\Sigma_{2\delta }h(x)<\varepsilon$. Hence, for 
$s<\delta $, we have
$$
\Sigma (e^{-s|T|}h-h)(x) \le 4\varepsilon+ \Sigma_{\delta }^R(e^{-s|T|}h-h)(x).
$$
Now, a rough estimate using the $L^2$ boundedness of $\psi(tT)$ yields
$$
\Sigma_{\delta }^R(e^{-s|T|}h-h)^2(x) \le \int_{\delta }^R \frac{dt}{t^{n+1}} \|e^{-s|T|}h-h\|_{2}^2
$$
and, as $h\in  \clos{\ran_{2}(T)}$ and the semigroup is continuous on $L^2$, the proof is complete. 
\end{proof}

For later use, we also have behavior at $\infty$. 

\begin{prop}\label{prop:stronglimitinfty} For all $0<p<\infty$ and $h\in \IH^p_{T}$, we have the strong limit
$$
\lim_{s\to \infty} \|e^{-s|T|}h\|_{\IH^p_{T}}=0.
$$
\end{prop}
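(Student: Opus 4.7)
The plan is to mimic the proof of Proposition \ref{prop:stronglimit}, exchanging the role of the ``small $t$'' truncation with the ``large $t$'' truncation, and relying on dominated convergence at infinity rather than at the origin.

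First I would fix, as before, the allowable function $\psi(z)=\modz^N e^{-\modz}$ with $N>\frac{n+1}{2}$ and $N>|\tfrac{n}{p}-\tfrac{n}{2}|$, so that $\|\Sigma f\|_{L^p}\sim \|f\|_{\IH^p_T}$ for $f\in \IH^p_T$, where $\Sigma f(x)^2= \iint_{\Gamma(x)} t^{2N-n-1}||T|^N e^{-t|T|}f(y)|^2\, dt\, dy$. For $h\in \IH^p_T\subset \clos{\ran_2(T)}$, the semigroup property yields $\psi(tT)e^{-s|T|}h=(t|T|)^N e^{-(t+s)|T|}h$, so that
\begin{equation*}
\Sigma(e^{-s|T|}h)(x)^2= \iint_{\Gamma(x)} t^{2N-n-1}\,\big||T|^N e^{-(t+s)|T|}h(y)\big|^2\, dt\, dy.
\end{equation*}

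Next I would perform the change of variable $\tau = t+s$. The domain of integration becomes $\{(\tau,y);\, \tau>s,\ |x-y|<\tau-s\}$, which is contained in the truncated cone $\Gamma^s(x)=\{(\tau,y)\in \Gamma(x); \tau\ge s\}$, while the weight $(\tau-s)^{2N-n-1}$ is bounded by $\tau^{2N-n-1}$ since $2N-n-1>0$. Therefore
\begin{equation*}
\Sigma(e^{-s|T|}h)(x)^2\le \iint_{\Gamma^s(x)} \tau^{2N-n-1}\big||T|^N e^{-\tau|T|}h(y)\big|^2\, d\tau\, dy= \Sigma^s h(x)^2.
\end{equation*}

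Finally I would argue by dominated convergence. Since $\Sigma h\in L^p$, for almost every $x$ one has $\Sigma h(x)<\infty$, hence $\Sigma^s h(x)\to 0$ as $s\to \infty$; and pointwise $\Sigma^s h\le \Sigma h\in L^p$, so $\|\Sigma^s h\|_{L^p}\to 0$ by dominated convergence (valid for any $0<p<\infty$). Combined with the square function characterization $\|e^{-s|T|}h\|_{\IH^p_T}\sim \|\Sigma(e^{-s|T|}h)\|_{L^p}\le \|\Sigma^s h\|_{L^p}$, this yields the claim. The main obstacle is mostly bookkeeping: one must be careful that the same allowable $\psi$ works for all $0<p<\infty$ by the choice of $N$, and that the domain-shift argument gives an actual domination rather than an equivalence, so the quasi-Banach case $p\le 1$ is covered without any extra ingredient.
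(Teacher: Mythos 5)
Your proof is correct and follows essentially the same route as the paper: the same square function $\Sigma$ built from $\psi(z)=\modz^N e^{-\modz}$, domination via the translated-cone observation, and Lebesgue dominated convergence. Your intermediate bound $\Sigma(e^{-s|T|}h)\le \Sigma^s h$ simply makes explicit the almost-everywhere convergence to $0$ that the paper asserts directly, so the two arguments coincide in substance.
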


\begin{proof} With the same square function $\Sigma$ as above, we have $\Sigma(e^{-s|T|}h) \le  \Sigma h \in L^p$ and $\Sigma(e^{-s|T|}h)\to 0$ almost everywhere when $s\to \infty$. We conclude from the Lebesgue dominated convergence. 
\end{proof}

Let us turn to some duality statements. 

\begin{prop}\label{prop:dualspaces} Let  $\mT=T^p_{2}$, $0<p<\infty$ and $\mT^*$ be its dual space. Let $\psi$ be allowable for $\IH^\mT_{T}$ and $\IH^{\mT^*}_{T^*}$,  (for example, $\psi \in  \Psi_{\gamma(p)}(S_{\mu})\cap \Psi^{\gamma(p)}(S_{\mu})$). For any $G\in \mT^*$, then $J(G):f\mapsto  (  \Qpsi \psi {T}f, G)\in (\IH^\mT_{T})^*$. Conversely,  to any  $\ell\in (\IH^\mT_{T})^*$, there corresponds a $G\in \mT^*$, such that $\ell(f)=J(G)(f)$ for any $f\in \IH^\mT_{T}$. 
\end{prop}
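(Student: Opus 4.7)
The plan is to prove the two directions separately. The forward direction $G\mapsto J(G)\in (\IH^\mT_T)^*$ is essentially immediate: allowability of $\psi$ for $\IH^\mT_T$ gives $\|\Qpsi\psi T f\|_\mT \sim \|f\|_{\IH^\mT_T}$, and pairing against $G\in \mT^*$ via the sesquilinear form $(\cdot,\cdot)$ of \cite{CMS} yields
$$
|J(G)(f)| \;\le\; \|\Qpsi\psi T f\|_{\mT}\,\|G\|_{\mT^*} \;\lesssim\; \|G\|_{\mT^*}\,\|f\|_{\IH^\mT_T}.
$$

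For the converse, fix $\ell\in(\IH^\mT_T)^*$. The strategy is to use the Calder\'on reproducing formula to transfer $\ell$ into a bounded linear functional on a dense subspace of the tent space $\mT$, then invoke tent space duality. By Proposition \ref{prop:calderon}, I would choose a non-degenerate $\varphi\in \Psi^\mT(S_\mu)$, which is possible because one can arrange arbitrarily large vanishing and decay in the Calder\'on partner, such that $\int_0^\infty \varphi(tz)\psi(tz)\,\frac{dt}{t}=1$ on $S_\mu$. By Corollary \ref{cor:pre-hardy}, the operator $\Tpsi\varphi T$ maps $\mT\cap T^2_2$ boundedly into $\IH^\mT_T$ with $\|\Tpsi\varphi T F\|_{\IH^\mT_T}\lesssim \|F\|_\mT$.

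Next, define $L(F):=\ell(\Tpsi\varphi T F)$ for $F\in\mT\cap T^2_2$, so $|L(F)|\lesssim \|\ell\|\,\|F\|_\mT$. The subspace $\mT\cap T^2_2$ is dense in $\mT$: truncating $F\in\mT$ to $F_n:=F\mathbf 1_{\{1/n<t<n,\,|x|<n\}}$ gives $F_n\in T^2_2$ and $SF_n\to SF$ monotonically, so dominated convergence yields $\|F-F_n\|_\mT\to 0$. Therefore $L$ extends by uniform continuity to a bounded linear functional on all of $\mT$, and tent space duality produces $G\in\mT^*$ with $L(F)=(F,G)$ for every $F\in\mT$. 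Finally, the Calder\'on reproducing formula \eqref{eq:CalderonT} gives $\Tpsi\varphi T\Qpsi\psi T f=f$ for $f\in\IH^\mT_T\subset\clos{\ran_{2}(T)}$, whence
$$
\ell(f)=\ell(\Tpsi\varphi T\Qpsi\psi T f)=L(\Qpsi\psi T f)=(\Qpsi\psi T f,G)=J(G)(f).
$$

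The main obstacle is the quasi-Banach range $0<p<1$, where $\mT=T^p_2$ is not locally convex and the familiar Hahn--Banach extension of a functional from the subspace $\Qpsi\psi T(\IH^\mT_T)$ to all of $\mT$ is unavailable. The construction above sidesteps this: $L$ is defined from the outset on the dense subspace $\mT\cap T^2_2$, and extension by uniform continuity is valid in any quasi-Banach space. One must also invoke that tent space duality $(T^p_2)^*= T^\infty_{2,\alpha}$ with $\alpha=n(\tfrac1p-1)$ remains valid down to $0<p\le 1$, which is the adaptation of \cite[Theorem 1]{CMS} already discussed earlier. The additional hypothesis that $\psi$ be allowable for $\IH^{\mT^*}_{T^*}$ is redundant for this argument but makes the resulting identification symmetric in $T$ and $T^*$, which would matter if one later wished to realize $G$ as an element of the Hardy-type space $\IH^{\mT^*}_{T^*}$ rather than merely $\mT^*$.
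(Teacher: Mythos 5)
Your proof is correct and follows essentially the same route as the paper: the forward direction via allowability of $\psi$ plus tent space duality, and the converse by composing $\ell$ with $\Tpsi \varphi T$ for a Calder\'on partner $\varphi$, extending from the dense subspace $\mT\cap T^2_{2}$, invoking tent space duality to obtain $G$, and inserting $F=\Qpsi \psi T f$. Your added remarks (density of $\mT\cap T^2_{2}$, validity in the quasi-Banach range, redundancy of the allowability of $\psi$ for $\IH^{\mT^*}_{T^*}$ in this particular argument) are accurate elaborations of the same argument rather than a different approach.
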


\begin{proof} The proof is quite standard. That $J(G) \in (\IH^\mT_{T})^*$ follows using that $\psi$ is allowable for $\IH^\mT_{T}$, hence $\| \Qpsi \psi {T}f\|_{\mT}\sim \|f\|_{\IH^\mT_{T}}$ and the duality of tent spaces. Conversely, let $\varphi$ associated to $\psi$ as in  Proposition \ref{prop:calderon}. Let $\ell\in (\IH^\mT_{T})^*$, then $\ell\circ \Tpsi{\varphi}{T}$ is defined on $\mT\cap T^2_{2}$ and $|\ell\circ \Tpsi{\varphi}{T} (F)| \lesssim \|F\|_{\mT}$. By density in $\mT$ and duality,  there exists $G\in \mT^*$ such that $|\ell\circ \Tpsi{\varphi}{T} (F)|= (F,G)$ for all $F\in \mT\cap T^2_{2}.$ Inserting $F=\Qpsi \psi {T}f$, we obtain the $(\Qpsi \psi {T}f,G)= \ell\circ \Tpsi{\varphi}{T} (\Qpsi \psi {T}f)=\ell(f)$. 
\end{proof}

It will be easier to work within $\IH^2_{T}=\clos{\ran_{2}(T)} $. This is why we  systematically use pre-Hardy spaces.

\begin{prop}\label{prop:duality}
Let  $\mT=T^p_{2}$, $0<p<\infty$ and $\mT^*$ be its dual space.  Denote by $\pair {\, }{}$ the $L^2$ sesquilinear inner product. Then for any  $f\in \IH^\mT_{T}$,  $g\in \IH^{\mT^*}_{T^*}$ $|\pair f g| \lesssim \|f\|_{\IH^\mT_{T}}\|g\|_{\IH^{\mT^*}_{T^*}}$.  More generally,  for any $f\in \clos{\ran_{2}(T)},  g \in \clos{\ran_{2}(T^*)}$ and any $\psi,\varphi\in \Psi(S_{\mu})$ for which the Calder\'on reproducing formula \eqref{eq:Calderon}  holds, one has
$|\pair f g | \le \| \Qpsi \psi {T}f\|_{\mT} \|\Qpsi {\varphi^*} {T^*} g\|_{\mT^*}.$
Next, for any  $g\in \IH^{\mT^*}_{T^*}$, $ \|g\|_{\IH^{\mT^*}_{T^*}}\sim \sup  \{|\pair f g |;  f\in \mT, \|f\|_{\IH^\mT_{T}}=1\}$. 
When $1<p<\infty$, we can revert the roles of $\mT$ and $\mT^*$, that is,  $\pair {\, }{}$ is a duality for  the pair of spaces $ ( \IH^p_{T}, \IH^{p'}_{T^*})$.  \end{prop}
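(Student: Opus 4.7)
The plan is to pass from the $L^2$ pairing to a tent space pairing via the Calder\'on reproducing formula, and then to use tent space duality. Fix $\psi,\varphi\in\Psi(S_\mu)$ satisfying \eqref{eq:Calderon}, and let $f\in\clos{\ran_2(T)}$, $g\in\clos{\ran_2(T^*)}$. By \eqref{eq:CalderonT}, $f=\Tpsi\varphi T \Qpsi\psi T f$ in $L^2$, so using the adjoint identity $\varphi(tT)^*=\varphi^*(tT^*)$ together with Fubini,
\[
\pair{f}{g}=\int_0^\infty\pair{\varphi(tT)\psi(tT)f}{g}\,\frac{dt}{t}=\int_0^\infty\pair{\psi(tT)f}{\varphi^*(tT^*)g}\,\frac{dt}{t}=(\Qpsi\psi T f,\,\Qpsi{\varphi^*}{T^*}g),
\]
the last expression being the tent space sesquilinear pairing. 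If the tent space norms on the right are infinite there is nothing to prove; otherwise the tent space duality $|(F,G)|\le\|F\|_\mT\|G\|_{\mT^*}$ yields the general bound. This handles the second (more general) inequality.

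For the first inequality, I would pick a non-degenerate $\psi$ allowable for $\IH^\mT_T$, and then invoke Proposition~\ref{prop:calderon} to produce a matching $\varphi\in\Psi_\sigma^\tau(S_\mu)$ with $\sigma,\tau$ large enough that $\varphi^*$ lies in $\Psi_{\mT^*}(S_\mu)\cap\Psi^{\mT^*}(S_\mu)$, hence is allowable for $\IH^{\mT^*}_{T^*}$ by Corollary~\ref{cor:pre-hardy} applied to $T^*$. The general inequality then specialises to $|\pair{f}{g}|\lesssim\|f\|_{\IH^\mT_T}\|g\|_{\IH^{\mT^*}_{T^*}}$ via the two allowable-norm equivalences.

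For the lower bound in the third claim (the upper bound is the first inequality with a supremum), I would instead pick a single non-degenerate $\psi\in\Psi_\sigma^\tau(S_\mu)$ with $\sigma,\tau$ large enough that simultaneously $\psi\in\Psi^\mT(S_\mu)$ (so that $\Tpsi\psi T$ realises the synthesis characterisation of $\IH^\mT_T$ with $\|\Tpsi\psi T F\|_{\IH^\mT_T}\lesssim\|F\|_\mT$) and $\psi^*\in\Psi_{\mT^*}(S_\mu)$ (so that $\|\Qpsi{\psi^*}{T^*}g\|_{\mT^*}\sim\|g\|_{\IH^{\mT^*}_{T^*}}$). Given $g\in\IH^{\mT^*}_{T^*}$, tent space duality furnishes $F\in\mT\cap T^2_2$ with $\|F\|_\mT\le 1$ and $|(F,\Qpsi{\psi^*}{T^*}g)|\gtrsim\|g\|_{\IH^{\mT^*}_{T^*}}$. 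Setting $f:=\Tpsi\psi T F\in\IH^\mT_T$, one has $\|f\|_{\IH^\mT_T}\lesssim 1$ and, by the same Fubini/adjoint manipulation, $\pair{f}{g}=(F,\Qpsi{\psi^*}{T^*}g)$, providing a near-extremiser after normalisation.

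For the last assertion: when $1<p<\infty$ the tent spaces $\mT=T^p_2$ and $\mT^*=T^{p'}_2$ are both reflexive with symmetric analysis/synthesis theory, so the previous steps apply with the roles of $\mT$ and $\mT^*$ interchanged, and $\pair{\,}{}$ becomes a true duality pairing between $\IH^p_T$ and $\IH^{p'}_{T^*}$. The main technical point throughout is the choice of $\psi$ (and, where needed, $\varphi$) with decay/vanishing orders large enough to be allowable simultaneously on both sides of the pairing, a step made routine by the full freedom in $\sigma,\tau$ afforded by Proposition~\ref{prop:calderon}.
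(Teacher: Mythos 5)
Your proof is correct and takes exactly the route the paper intends for this statement (which it leaves as standard, in the spirit of the proof of Proposition \ref{prop:dualspaces}): the reproducing formula \eqref{eq:CalderonT} and the adjointness of $\Tpsi \varphi T$ with $\Qpsi {\varphi^*}{T^*}$ turn $\pair f g$ into the tent space pairing $(\Qpsi \psi T f,\Qpsi{\varphi^*}{T^*}g)$, after which tent space duality and suitable allowable choices of $\psi,\varphi$ (available by Proposition \ref{prop:calderon} and Corollary \ref{cor:pre-hardy}, with the orders at $0$ and $\infty$ free to be taken large) give the upper bounds, the near-extremiser for the lower bound, and the symmetric statement for $1<p<\infty$. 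The only implicit points—absolute convergence of the $t$-integral via the $L^2$ square-function estimates, and that the tent-space duality sup may be realised on $\mT\cap T^2_2$—are routine and you have effectively accounted for them.
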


We  mention as a corollary the usual principle that upper bounds in square functions for allowable $\psi$ imply lower bounds for all $\varphi$  with  the dual operator.

\begin{prop}\label{prop:lowerbounds} Let  $\mT=T^p_{2}$, $1<p<\infty$ so that $\mT^*=T^{p'}_{2}$. Assume that $\pair {\, }{}$ is a duality for  the pair of normed spaces $ (X ,Y )$ with $X\subset \clos{\ran_{2}(T)}$ and  $Y\subset  \clos{\ran_{2}(T^*)}$ and that for any allowable $\psi\in \Psi(S_{\mu})$ for $\IH^\mT_{T}$, we have
$ \| \Qpsi \psi {T}f\|_{\mT} \lesssim \|f\|_{X}$ for all $f\in \clos{\ran_{2}(T)}$. Then for any non-degenerate $\varphi\in \Psi(S_{\mu})$, we have  $\|g\|_{Y}\le \|\Qpsi {\varphi^*} {T^*} g\|_{\mT^*}$ for all $g \in \clos{\ran_{2}(T^*)}$.
\end{prop}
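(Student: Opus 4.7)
The strategy is to pair $\varphi$ with an allowable $\psi$ through the Calder\'on reproducing formula of Proposition \ref{prop:calderon}, then transfer the assumed upper square function bound on $f\mapsto \Qpsi{\psi}{T}f$ into a lower bound on $g\mapsto \Qpsi{\varphi^*}{T^*}g$ using the $L^2$ pairing $\pair{\,}{\,}$ and tent space duality.

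First, given the non-degenerate $\varphi\in\Psi(S_\mu)$, I would invoke Proposition \ref{prop:calderon} with $\varphi$ playing the role of the function ``$\psi$'' there to produce a partner $\psi\in\Psi_\sigma^\tau(S_\mu)$ satisfying
$$\int_0^\infty \psi(tz)\varphi(tz)\,\frac{dt}{t}=1,\qquad z\in S_\mu,$$
with $\sigma,\tau>0$ that can be prescribed freely. Reading the table preceding Proposition \ref{prop:table}, I select $\sigma$ and $\tau$ large enough that $\psi\in\Psi_\mT(S_\mu)$; in particular $\psi$ is allowable for $\IH^\mT_T$, so the standing hypothesis yields $\|\Qpsi{\psi}{T}f\|_{T^p_2}\lesssim \|f\|_X$ for every $f\in\clos{\ran_2(T)}$.

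Second, for $f\in X\subset\clos{\ran_2(T)}$ and $g\in\clos{\ran_2(T^*)}$, the reproducing identity gives $f=\int_0^\infty \varphi(tT)\psi(tT)f\,dt/t$ in $L^2$. Pairing against $g$ and using $\varphi(tT)^*=\varphi^*(tT^*)$,
$$\pair{f}{g}=\int_0^\infty \pair{\psi(tT)f}{\varphi^*(tT^*)g}\,\frac{dt}{t}=(\Qpsi{\psi}{T}f,\,\Qpsi{\varphi^*}{T^*}g),$$
where the rightmost expression is the $T^2_2$ pairing. Since $1<p<\infty$, duality between $T^p_2$ and $T^{p'}_2$ together with the allowability of $\psi$ gives
$$|\pair{f}{g}|\lesssim \|\Qpsi{\psi}{T}f\|_{T^p_2}\,\|\Qpsi{\varphi^*}{T^*}g\|_{T^{p'}_2}\lesssim \|f\|_X\,\|\Qpsi{\varphi^*}{T^*}g\|_{T^{p'}_2}.$$

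Third, since by hypothesis $\pair{\,}{\,}$ realizes the duality between $X$ and $Y$, I take the supremum over $f\in X$ with $\|f\|_X\le 1$ to conclude $\|g\|_Y\lesssim \|\Qpsi{\varphi^*}{T^*}g\|_{T^{p'}_2}$, which is the required inequality. The only delicate point is arranging the Calder\'on partner $\psi$ to satisfy simultaneously the reproducing identity and the allowability condition for $\IH^\mT_T$; this is exactly the flexibility built into Proposition \ref{prop:calderon}, where the partner function can be prescribed to have arbitrary decay at both $0$ and $\infty$, so the constraint $\psi\in\Psi_\mT(S_\mu)$ (which depends on $p$) can always be met for every $p\in(1,\infty)$.
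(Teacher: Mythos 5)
Your proof is correct and follows essentially the same route as the paper: the paper's own argument also takes the given non-degenerate $\varphi$, uses Proposition \ref{prop:calderon} to produce an allowable Calder\'on partner $\psi\in\Psi_{\sigma}^{\tau}(S_{\mu})$, and then applies the pairing inequality $|\pair f g|\le \|\Qpsi \psi {T}f\|_{\mT}\,\|\Qpsi {\varphi^*}{T^*}g\|_{\mT^*}$ (the content of Proposition \ref{prop:duality}, which you re-derive inline) together with the $X$–$Y$ duality. The only difference is presentational: the paper cites Proposition \ref{prop:duality} rather than unfolding the reproducing-formula and tent-space duality step as you do.
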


\begin{proof}
This a consequence of the previous result with the fact that given  a non-degenerate $\varphi$, one can find $\psi$ in any class $\Psi_{\sigma}^\tau(S_{\mu})$, thus one allowable $\psi$  for  $\IH^\mT_{T}$,  for which the Calder\'on reproducing formula \eqref{eq:Calderon} holds. 
\end{proof}

For $0<p\le1$,  we can take advantage of the notion of molecules.  We follow \cite{HMMc}. For a cube (or a ball) $Q\subset \mathbb{R}^{n}$ denote the dyadic annuli by $S_{i}\left(Q\right)$, which is defined by $S_{i}\left(Q\right):=2^{i}Q\backslash 2^{i-1}Q$ for $i=1,2,3,...$ and $S_{0}\left(Q\right):=Q$. Here $\lambda Q$ is the cube with same center as $Q$ and sidelength $\lambda\ell\left(Q\right)$. Let $0<p\leq 1$, $\epsilon>0$ and $M\in \N$. We say that a function $m\in L^2$ is a $\left(\IH^{p}_{T},\epsilon,M\right)$-molecule if there exists a cube $Q\subset \R^{n}$ and a function $b\in  \dom_{2}(T^M)$ such that $T^{M}b=m$ and  
\begin{align}\label{eq:mol}
||\left(\ell\left(Q\right)T\right)^{-k}m||_{L^{2}\left(S_{i}\left(Q\right)\right)}\leq \left(2^{i}\ell\left(Q\right)\right)^{\frac{n}{2}-\frac{n}{p}}2^{-i\epsilon} && i=0,1,2,...; k=0,1,2,...,M.
\end{align}
Remark that  $m\in \ran_{2}(T)$ and also that $m\in L^p$ with $\|m\|_{p}\lesssim 1$ independently of $Q$. 

\begin{defn}
Let  $0<p\leq1$, $\epsilon >0$ and $M\in \N$. For  $f\in \clos{\ran_{2}(T)}$,  $f=\sum_{j}\lambda_{j}m_{j}$  is a molecular $\left(\IH^{p}_{T},\epsilon,M\right)$-representation of $f$ if each $m_{j}$ is an $\left(\IH^{p}_{T},\epsilon,M\right)$-molecule,  $(\lambda_{j})\in \ell^{p}$ and the series converges in $L^2$. We define
\begin{align*}
\IH_{T,mol,M}^{p}:=\left\{f\in\clos{\ran_{2}(T)} ; f \text{ has a molecular $\left(\IH^{p}_{T},\epsilon,M\right)$-representation }\right\}
\end{align*}
with the quasi-norm (it is a  norm only when $p=1$) 
\begin{align*}
||f||_{\IH_{T,mol,M}^{p}}:=\inf\left\{\|(\lambda_{j})\|_{\ell^p}\right\},
\end{align*}
taken over all   molecular  $\left(\IH^{p}_{T},\epsilon,M\right)$-representations $f=\sum_{j=0}^{\infty}\lambda_{j}m_{j}$,  where  $\|(\lambda_{j})\|_{\ell^p}:=\left(\sum_{j=0}^{\infty}|\lambda_{j}|^{p}\right)^{\frac{1}{p}}$. 
\end{defn}

\begin{rem}
Note the continuous inclusion $\IH_{T,mol,M_{1}}^{p} \subset \IH_{T,mol,M_{2}}^{p}$ if $M_{2}\ge M_{1}$. In particular,  $\IH_{T,mol,M}^{p} \subset \IH_{T,mol,1}^{p}$. \end{rem}

\begin{prop}  Let  $0<p\leq1$,  $M\in \N$ with $M> \frac{n}{p}-\frac{n}{2}$.  Then $\IH_{T,mol,M}^{p}= \IH^p_{T}$ with equivalence of quasi-norms. 
\end{prop}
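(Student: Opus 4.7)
The plan is to prove the two inclusions separately. For the inclusion $\IH^p_{T,mol,M}\subset \IH^p_{T}$, I would first show that every $(\IH^p_{T},\epsilon,M)$-molecule $m$ satisfies $\|m\|_{\IH^p_{T}}\lesssim 1$ with constant independent of the cube $Q$. Fix an allowable $\psi\in \Psi_{\mT}(S_{\mu})$ for $\mT=T^p_{2}$ of the form $\psi(z)=z^{2M}\tilde\psi(z)$ with $\tilde\psi\in \Psi(S_{\mu})$ (this is possible because $\gamma(p)<\infty$ and we can add arbitrarily many zeros at $0$). Writing $m=T^M b$, one has $\psi(tT)m= t^{-2M}\psi(tT)\cdot (tT)^{2M}b = t^{-2M}\tilde\psi(tT)(tT)^{M}\cdot (tT)^M b$, which lets one estimate $\Qpsi{\psi}{T}m$ locally on each dyadic annulus $S_i(Q)$ of scale $2^i\ell(Q)$ using the $L^2$ off-diagonal bounds from Lemma \ref{lem:odd} applied to $\psi(tT)$ acting on $(tT)^{-k}m$ on $S_j(Q)$. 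The resulting piece of the square function on the cone over $S_i(Q)$ has $L^2$ norm controlled by $(2^i\ell(Q))^{n/2-n/p}2^{-i\epsilon'}$, and then Hölder gives an $L^p$ bound by $2^{-i\delta}$ for some $\delta>0$ once $M>\frac{n}{p}-\frac{n}{2}$. Summing in $i$ gives $\|\Sigma m\|_p \lesssim 1$. For a molecular representation $f=\sum_j\lambda_j m_j$ converging in $L^2$, quasi-subadditivity of $\|\cdot\|_p^p$ together with the pointwise bound on $\Sigma m_j$ (now applied before taking $L^p$) gives $\|\Sigma f\|_p^p\le \sum_j |\lambda_j|^p \|\Sigma m_j\|_p^p$, hence $\|f\|_{\IH^p_{T}}\lesssim \|(\lambda_j)\|_{\ell^p}$, and taking the infimum proves the inclusion.

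For the converse $\IH^p_{T}\subset\IH^p_{T,mol,M}$, the standard route is to combine the Calder\'on reproducing formula with the tent-space atomic decomposition. Given $f\in \IH^p_{T}\subset \clos{\ran_{2}(T)}$, pick a non-degenerate $\psi\in \Psi_{\mT}(S_{\mu})$ vanishing to order $>M$ at $0$ and a $\varphi\in \Psi^{\mT}(S_{\mu})$ of the form $\varphi(z)=z^{2M}\tilde\varphi(z)$ with $\tilde\varphi\in \Psi(S_{\mu})$ so that the Calder\'on formula holds. Then $f=\Tpsi{\varphi}{T}F$ with $F=\Qpsi{\psi}{T}f\in T^p_{2}\cap T^2_{2}$ and $\|F\|_{T^p_{2}}\sim \|f\|_{\IH^p_{T}}$. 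Decompose $F=\sum_j \lambda_j A_j$ as a tent-space atomic decomposition supported in tents $\widehat{Q_j}$ with $\|(\lambda_j)\|_{\ell^p}\lesssim \|F\|_{T^p_{2}}$, the series converging in $T^2_2$. By the boundedness of $\Tpsi{\varphi}{T}:T^2_2\to L^2$ one gets $f=\sum_j \lambda_j m_j$ in $L^2$ with $m_j=\Tpsi{\varphi}{T}A_j$. I then need to check that $m_j$ is (a fixed multiple of) an $(\IH^p_{T},\epsilon,M)$-molecule attached to $Q_j$. Using $\varphi(z)=z^{2M}\tilde\varphi(z)$, write $m_j = T^M b_j$ with $b_j := \int_0^\infty t^{2M}\tilde\varphi(tT) A_j(t,\cdot)\, \tfrac{dt}{t}$; since $A_j$ is supported in the tent with scale $\ell(Q_j)$, the factor $t^{2M}$ produces the correct scaling, and the $L^2$ decay of $(\ell(Q_j)T)^{-k}m_j$ on $S_i(Q_j)$ for $0\le k\le M$ is obtained from the $L^2$ off-diagonal bounds on $\tilde\varphi(tT)(tT)^{M-k}$ together with the fact that $\|A_j\|_{L^2(\widehat{Q_j},\frac{dtdy}{t})}\lesssim |Q_j|^{1/2-1/p}$. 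This yields \eqref{eq:mol} with some $\epsilon>0$ depending on the off-diagonal decay order.

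The main obstacle is the second direction, more precisely matching the molecular decay estimates \eqref{eq:mol} from the tent-space atomic decomposition: one must choose the holomorphic factorization of $\varphi$ carefully so that $\tilde\varphi(tT)(tT)^{M-k}$ still belongs to the class $\Psi(S_\mu)$ needed to apply off-diagonal decay with a fast enough polynomial rate for all $0\le k\le M$, and so that the resulting sum over the dyadic annuli converges. The hypothesis $M>\frac{n}{p}-\frac{n}{2}=\gamma(p)-\frac{n}{2}+\frac{n}{2}=\gamma(p)$ at $p\le1$ is exactly what is needed to absorb the volume growth $|2^iQ|^{1/2-1/p}=(2^i\ell(Q))^{n/2-n/p}$ against the polynomial off-diagonal decay and against Hölder's inequality when converting $L^2$ estimates into $L^p$ estimates. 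Once the molecular estimates are verified with uniform constants, $\|f\|_{\IH^p_{T,mol,M}}\lesssim \|(\lambda_j)\|_{\ell^p}\lesssim \|f\|_{\IH^p_{T}}$, completing the proof, and equivalence of quasi-norms is obtained by combining the two inequalities.
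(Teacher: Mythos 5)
Your plan follows precisely the route the paper intends: its proof is simply the instruction to adapt the molecular theory of \cite{HM, HMMc}, i.e.\ uniform $\IH^p_{T}$-bounds for a single $\left(\IH^{p}_{T},\epsilon,M\right)$-molecule via the $L^2$ off-diagonal decay together with $p$-quasi-subadditivity for one inclusion, and the Calder\'on reproducing formula combined with the tent-space atomic decomposition, factoring $T^M$ out of $\varphi$, for the other, with $M>\frac{n}{p}-\frac{n}{2}$ absorbing the volume growth exactly as you say. Only your exponent bookkeeping needs correcting (e.g.\ $\psi(tT)T^Mb=t^{-M}\psi(tT)(tT)^Mb$, not $t^{-2M}\psi(tT)(tT)^{2M}b$, and with $\varphi(z)=z^{2M}\tilde\varphi(z)$ one must take $b_j=\int_0^\infty t^{M}(tT)^{M}\tilde\varphi(tT)A_j(t,\cdot)\,\frac{dt}{t}$ so that $T^Mb_j=\Tpsi{\varphi}{T}A_j$), but these are routine fixes that do not affect the argument.
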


\begin{proof} Adapt \cite{HM, HMMc}. 
\end{proof}

\begin{rem}
It would also make sense to consider the atomic versions but at this level of generality, we do not know whether $ \IH^p_{T}$ has  an atomic decomposition. 
\end{rem}

The following corollary is a useful consequence. 

\begin{cor}\label{lem:lowerp<2} For $0<p<2$, then $\IH^p_{T} \subset L^p$ with $\|f\|_{p}\lesssim \|f\|_{\IH^p_{T}}$. 
\end{cor}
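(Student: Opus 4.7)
The plan is to split the two ranges $0<p\le 1$ and $1<p<2$ and treat them by different methods: the molecular characterization handles the first range directly, while the second is handled by a Calder\'on reproducing formula combined with complex interpolation.

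\emph{Case $0<p\le 1$.} The idea is that every molecule is uniformly of size $O(1)$ in $L^p$. Fix $M>\frac{n}{p}-\frac{n}{2}$, so that $\IH^p_{T,mol,M}=\IH^p_T$, and let $m$ be an $(\IH^p_T,\epsilon,M)$-molecule associated to a cube $Q$. On each annulus $S_i(Q)$, H\"older's inequality (using $p\le 2$) together with \eqref{eq:mol} at $k=0$ gives
$$\|m\|_{L^p(S_i(Q))}\le |S_i(Q)|^{\frac{1}{p}-\frac{1}{2}}\|m\|_{L^2(S_i(Q))}\lesssim (2^i\ell(Q))^{\frac{n}{p}-\frac{n}{2}}(2^i\ell(Q))^{\frac{n}{2}-\frac{n}{p}}2^{-i\epsilon}=2^{-i\epsilon}.$$
Summing $p$-th powers in $i$ gives $\|m\|_p\lesssim 1$ uniformly in $Q$. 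Now fix $f\in\IH^p_T$ and a molecular representation $f=\sum_j\lambda_j m_j$ converging in $L^2$ with $\|(\lambda_j)\|_{\ell^p}\lesssim\|f\|_{\IH^p_T}$. Because $p\le 1$, the quasi-norm is $p$-subadditive, so the partial sums are Cauchy in $L^p$ and converge to some limit $g\in L^p$; a common subsequence converges a.e.\ to both the $L^2$-limit $f$ and to $g$, so $g=f$ and
$$\|f\|_p^p\le \sum_j |\lambda_j|^p \|m_j\|_p^p \lesssim \|f\|_{\IH^p_T}^p.$$

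\emph{Case $1<p<2$.} I use the Calder\'on reproducing formula to reduce to a tent-space-to-$L^p$ bound. Pick a non-degenerate, allowable $\psi\in\Psi^{\gamma(p)}(S_\mu)$ for $\IH^p_T$ (non-empty by Corollary \ref{cor:pre-hardy}), and apply Proposition \ref{prop:calderon} to obtain $\varphi\in\Psi_\sigma^\tau(S_\mu)$ with $\sigma,\tau>0$ as large as we wish such that $f=\Tpsi{\varphi}{T}\Qpsi{\psi}{T}f$ for every $f\in\clos{\ran_2(T)}$. Since $\|\Qpsi{\psi}{T}f\|_{T^p_2}\sim\|f\|_{\IH^p_T}$, the inclusion is reduced to the estimate
$$\|\Tpsi{\varphi}{T}F\|_p\lesssim\|F\|_{T^p_2},\qquad F\in T^p_2\cap T^2_2,$$
which I establish at the endpoints $p=1$ and $p=2$ and then interpolate, using complex interpolation of $T^p_2$ \cite{CMS} and of $L^p$. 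The bound $\Tpsi{\varphi}{T}\colon T^2_2\to L^2$ is the standard $L^2$ fact recalled at the start of the section. For the endpoint $p=1$, I use the atomic decomposition of $T^1_2$: a tent atom $A$ supported in a Carleson box $T_{x_0,r}$ with $\|A\|_{T^2_2}\lesssim r^{-n/2}$ is mapped by $\Tpsi{\varphi}{T}$, via the off-diagonal estimates \eqref{odn} of $\varphi(tT)$ and the vanishing $\varphi(z)=O(|z|^\sigma)$ at $0$, to a uniform multiple of an $(\IH^1_T,\epsilon,M)$-molecule with prescribed $\epsilon,M$; the case $p\le 1$ of the corollary then gives $\|\Tpsi{\varphi}{T}A\|_1\lesssim 1$, whence $\Tpsi{\varphi}{T}\colon T^1_2\to L^1$ boundedly by summing an atomic decomposition.

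The routine half is the bound on a single molecule, whose $L^p$ size drops out of the definition. The main technical point is the $p=1$ endpoint in the second case: verifying that $\Tpsi{\varphi}{T}$ sends a tent-atom to a molecule. This needs both off-diagonal spatial decay of $\varphi(tT)$ (to produce the molecular annular decay) and sufficient vanishing of $\varphi$ at $0$ (to write the image in the form $T^M b$ with control on $b$). Proposition \ref{prop:calderon} supplies exactly this freedom, so the only ingredient to check carefully is the molecular bookkeeping for $\Tpsi{\varphi}{T}A$, after which interpolation finishes the argument.
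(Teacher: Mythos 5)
Your proof is correct and follows essentially the same route as the paper: for $0<p\le 1$ the uniform $L^p$ bound on $(\IH^p_T,\epsilon,M)$-molecules together with the molecular decomposition, and for $1<p<2$ the Calder\'on reproducing formula combined with interpolation of $\Tpsi{\varphi}{T}$ between $T^1_2\to L^1$ and $T^2_2\to L^2$. The only difference is cosmetic: at the $p=1$ endpoint you re-derive the tent-atom-to-molecule mapping by hand, whereas the paper just takes $\varphi\in\Psi_{\frac{n}{2}}(S_{\mu})$ so that $\Tpsi{\varphi}{T}$ maps $T^1_2\cap T^2_2$ into $\IH^1_T$ (Proposition \ref{prop:table}, Corollary \ref{cor:pre-hardy}) and applies the already-proved case $p\le 1$, and it also flags the routine verification that the interpolated extension agrees with $\Tpsi{\varphi}{T}$ on $T^p_2\cap T^2_2$, a point you should note as well.
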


\begin{proof}
For $0<p\le 1$, this is a consequence of the fact that any  $\left(\IH^{p}_{T},\epsilon,M\right)$-molecule satisfies $\|m\|_{p}\lesssim 1$ and of the previous proposition. For $1\le p\le 2$, we proceed by interpolation as follows. Fix one $\varphi\in \Psi_{\frac{n}{2}}(S_{\mu})$ and consider the map
$\Tpsi \varphi T$. By Proposition \ref{prop:table}, it is bounded from $T^2_{2}$ to $L^2$ and from $T^1_{2}\cap T^2_{2}$ to $\IH^1_{T}$ with $\|\Tpsi \varphi T F\|_{\IH^1_{T}} \lesssim \|F\|_{T^1_{2}}$ so that  it maps  $T^2_{1}\cap T^2_{2}$ to $L^1$  with
$\|\Tpsi \varphi T F\|_{1} \lesssim \|F\|_{T^2_{1}}$. By interpolation, the bounded extension on  
$T^1_{2}$ is bounded from $T^p_{2}$ into $L^p$. It is a standard duality argument to show that  this extension  agrees with $\Tpsi \varphi T$ on $T^p_{2}\cap T^2_{2}$.
\end{proof}

We finish with non-tangential maximal estimates. Recall the Kenig-Pipher functional
 \begin{equation}
\label{eq:KP}
 \tN(g)(x):= \sup_{t>0}  \bigg(\bariint_{W(t,x)} |g|^2\bigg)^{1/2}, \qquad x\in \R^n,
\end{equation}
with $W(t,x):= (c_0^{-1}t,c_0t)\times B(x,c_1t)$, for some fixed constants $c_0>1$, $c_{1}>0$. 

\begin{lem}\label{lem:upperp<2} For all  $0<p\le 1$,  one has the estimate
\begin{equation}
\label{eq:upperp<2}
  \|\tN(e^{-t|T|}h) \|_{p} \lesssim  \|h\|_{\IH^p_{T}}, \ \forall h\in \clos{\ran_{2}(T)}.
 \end{equation}
 Furthermore, it also holds for $1<p<2$ if it holds at $p=2$. 
\end{lem}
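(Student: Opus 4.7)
For the range $0<p\le 1$, the plan is to exploit the molecular characterisation $\IH^p_T=\IH^p_{T,mol,M}$ with $M>\gammap$ fixed. Since $\tN$ is sublinear and $e^{-t|T|}$ is linear, for any molecular representation $h=\sum_j\lambda_j m_j$ converging in $L^2$ one has the pointwise bound $\tN(e^{-t|T|}h)\le\sum_j|\lambda_j|\tN(e^{-t|T|}m_j)$, so by the $p$-subadditivity of $\|\cdot\|_p^p$ (valid because $p\le 1$),
\[
 \|\tN(e^{-t|T|}h)\|_p^p\le\sum_j|\lambda_j|^p\|\tN(e^{-t|T|}m_j)\|_p^p.
\]
Taking the infimum over representations, it is enough to prove a uniform bound $\|\tN(e^{-t|T|}m)\|_p\lesssim 1$ for every $(\IH^p_T,\epsilon,M)$-molecule $m=T^M b$ associated to a cube $Q$ of sidelength $\ell$, provided $\epsilon$ is chosen sufficiently large (e.g.\ $\epsilon>\gammap$).

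For such a molecule I would split $\R^n=\bigcup_{i\ge 0}S_i(Q)$ and use H\"older's inequality to reduce the $L^p(S_i(Q))$ norm to an $L^2(S_i(Q))$ norm times $|S_i(Q)|^{1/p-1/2}=(2^i\ell)^{n(1/p-1/2)}$. To estimate $\|\tN(e^{-s|T|}m)\|_{L^2(S_i(Q))}$, discretise the parameter $t$ in the definition of $\tN$ into dyadic ranges $t\sim 2^j$, bound the supremum by an $\ell^2$-sum, and use Fubini to rewrite the resulting integral as $\frac{1}{|W(2^j,0)|}\int_{s\sim 2^j}\|e^{-s|T|}m\|_{L^2(A_{i,j})}^2 ds$ where $A_{i,j}$ is the $c_12^{j+1}$-enlargement of $S_i(Q)$. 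Now I would further split $m=\sum_k m1_{S_k(Q)}$ and bound $\|1_{A_{i,j}}e^{-s|T|}(m1_{S_k(Q)})\|_2$ by combining two ingredients: the $L^2$ off-diagonal decay of Lemma~\ref{lem:odd} applied to the resolvent representation of $e^{-s|T|}$, producing a factor $\brac{\mathrm{dist}(A_{i,j},S_k(Q))/s}^{-N}$; and, for $s$ large compared to $\ell$, the identity $e^{-s|T|}m=s^{-M}\psi_M(sT)(\mathrm{sgn}(T))^M b$ with $\psi_M(z)=z^M e^{-[z]}\in\Psi_M^\infty$, yielding the additional decay $(\ell/s)^M$ when coupled with the size bound $\|b\|_{L^2(S_k(Q))}\le\ell^M(2^k\ell)^{n/2-n/p}2^{-k\epsilon}$. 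Summing in $j$ and $k$ and exploiting $M>\gammap$ together with the $2^{-k\epsilon}$ decay, one obtains
\[
 \|\tN(e^{-t|T|}m)\|_{L^2(S_i(Q))}\lesssim 2^{-i\delta}(2^i\ell)^{n/2-n/p}
\]
for some $\delta>0$. Multiplying by $|S_i(Q)|^{1/p-1/2}=(2^i\ell)^{n(1/p-1/2)}=(2^i\ell)^{n/p-n/2}$ gives a clean $2^{-i\delta}$, and summing in $i$ closes the molecular estimate since $p\delta>0$.

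For $1<p<2$, assuming the bound at $p=2$, the plan is real-interpolation. The sublinear map $\Phi:h\mapsto\tN(e^{-t|T|}h)$ is bounded $\IH^1_T\to L^1$ (by the previous step) and $\IH^2_T\to L^2$ (by hypothesis). Using the Calder\'on reproducing formula (Proposition~\ref{prop:calderon}), factor $\Phi$ as $\Phi=\Phi\circ\Tpsi\varphi T\circ\Qpsi\psi T$ for suitable $\psi,\varphi$, and observe that $F\mapsto\tN(e^{-t|T|}\Tpsi\varphi T F)$ is sublinear and bounded $T^q_2\to L^q$ for $q=1,2$ (at $q=1$, use that $\Tpsi\varphi T$ maps $T^1_2\cap T^2_2$ into $\IH^1_T$ with controlled norm by Proposition~\ref{prop:table}). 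Real interpolation of tent spaces then yields boundedness $T^p_2\to L^p$ for $1<p<2$, and composing with $\Qpsi\psi T:\IH^p_T\to T^p_2$ returns the desired estimate $\|\Phi h\|_p\lesssim\|h\|_{\IH^p_T}$.

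The main obstacle I foresee is the presence of the supremum over $t$ in $\tN$, which is not compatible with the linear structure used in off-diagonal bounds. The dyadic discretisation combined with the $\ell^2$-sum trick converts it into a controllable quadratic quantity, but care is needed in tracking the Whitney parameters $c_0,c_1$ when splitting the annuli $S_i(Q)$ relative to the Whitney region $W(2^j,x)$; in particular, one must separately treat the regimes $2^j\ll 2^i\ell$ (pure off-diagonal decay suffices), $2^j\sim 2^i\ell$ (off-diagonal decay of the Whitney region vs.\ $S_k(Q)$), and $2^j\gg 2^i\ell$ (the $(\ell/s)^M$ factor from the $T^M$-cancellation is essential), ensuring that the $M>\gammap$ condition gives summability in all three regimes.
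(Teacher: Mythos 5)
Your proposal follows the paper's proof essentially verbatim: for $0<p\le 1$ you reduce, via $p$-subadditivity, to the uniform molecular bound $\|\tN(e^{-t|T|}m)\|_{p}\lesssim 1$ (which the paper outsources to \cite{HM}, \cite{JY}, \cite{DL}, \cite{Sta}, and your sketch is precisely the standard annular/off-diagonal argument of those references), and for $1<p<2$ you use the same real-interpolation of the sublinear map $F\mapsto\tN(e^{-t|T|}\Tpsi \varphi T F)$ between $T^1_{2}\to L^1$ and $T^2_{2}\to L^2$, composed with $\Qpsi \psi T$, exactly as in the paper. Two cosmetic remarks only: the cancellation identity is simply $e^{-s|T|}m=s^{-M}\psi_{M}(sT)b$ with $\psi_{M}(z)=z^{M}e^{-[z]}$ (no $\sgn(T)^{M}$ factor is needed, since $T^{M}$ commutes with $e^{-s|T|}$), and the molecular estimate goes through for any fixed $\epsilon>0$, so there is no need to take $\epsilon$ large.
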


\begin{proof} For $0<p\le 1$, this comes from  $\|\tN(e^{-t|T|}m) \|_{p} \lesssim 1$  for any $\left(\IH^{p}_{T},\epsilon,M\right)$-molecule with $M\in \N$ for $M$ large  enough depending on $n,p$ (Adapt the proofs in \cite{HM}, \cite{JY} or \cite{DL}. See also \cite{Sta} for an explicit argument.  It is likely that one can prove that the lower bound $M> \frac{n}{p}-\frac{n}{2}$ works but we don't need such a precision). 
This implies the inequality for any $f\in \IH_{T,mol,M}^{p}= \IH^p_{T}$. 

We use interpolation for $1<p<2$ as follows.  Fix $\varphi\in \Psi_{\frac{n}{2}}(S_{\mu})$ and $\psi \in \Psi^{\frac{n}{2}}(S_{\mu})$ such that $\Tpsi \varphi T \Qpsi\psi T =I$ on $\clos{\ran_{2}(T)}$.  From the assumption, the sublinear operator
$V: F\mapsto \tN(e^{-t|T|}\Tpsi \varphi T F)$ is  bounded from $T^2_{2}$ into $L^2$ with $\|VF\|_{2}\lesssim \|F\|_{T^2_{2}}$  and we just proved it is bounded from $T^2_{2}\cap T^1_{2}$ into $L^1$ with $\|VF\|_{1}\lesssim \|F\|_{T^1_{1}}$. Using real interpolation (\cite{CMS}, corrected in \cite{Be}), and density (of $T^1_{2}\cap T^2_{2}$ into $T^p_{2}\cap T^2_{2}$ for the $T^p_{2}$ topology) this implies that  $V$ maps $T^2_{2}\cap T^p_{2}$ into $L^p$ with $\|VF\|_{p}\lesssim \|F\|_{T^p_2}$. Applying this to $F=\Qpsi\psi T h$ when $h\in \clos{\ran_{2}(T)}$, this implies \eqref{eq:upperp<2}.  
\end{proof}

\subsection{Spaces associated to $D$}

We specialize the general theory to the situation where $T=D$. Because $D$ is self-adjoint, we can also consider the $\left(\IH^{p}_{D},M\right)$-atoms, which are those $\left(\IH^{p}_{D},\epsilon, M\right)$-molecules associated to a cube $Q$ and supported in $Q$. The atomic space $\IH^p_{D,ato, M}$ is defined similarly to the molecular one and one has $\IH^p_{D,ato, M}=\IH^p_{D}$ when $0<p\le 1$ and $M>\frac{n}{p}- \frac{n}{2}$. The proof is explicitly done in \cite{AMcMo} for $p=1$ and applies \textit{in extenso} to $p<1$. See also \cite{HLMMY} for the case of second order operators and $p\le 1$. 

We also remark that $\IH^p_{D,ato, 1} = \IH^p_{D,mol, 1}$ with equivalence of norms (This argument is due to A. McIntosh). The inclusion $\subset$ is obvious. In the opposite direction, if $m$ is an $\left(\IH^{p}_{D},\epsilon, 1\right)$-molecule, then one can write $m=Db$ with estimate   \eqref{eq:mol} and  $M=1$. Then we write $b=\sum \chi_{i}b$ with $(\chi_{i})$ a smooth partition of unity associated to the annular set $S_{i}(Q)$: they satisfy $0\le \chi_{i}\le 1$, $\|\nabla \chi_{i}\|_{\infty}\lesssim (2^{i}\ell(Q))^{-1}$ and $\chi_{i}$ supported on $S_{i}(2Q)$. Then, it is easy to show that $a_{i}=2^{i\epsilon} D(\chi_{i}b)$ is up to a dimensional constant an $\left(\IH^{p}_{D},1\right)$-atom  and that the series $m=\sum 2^{-i\epsilon}a_{i}$ converges in $L^2$. 

\begin{thm} \label{thm:hpd}
Let $\frac{n}{n+1}<p\le 1$. Then 
\begin{equation}
\label{eq:hpDp<1}
\IH^p_{D}=\IH^p_{D,mol, 1}=\IH^p_{D,ato, 1}= H^p \cap \clos{\ran_{2}(D)}= H^p\cap \IP (L^2)= \IP(H^p\cap L^2)
\end{equation}
with  
$$\|f\|_{\IH^p_{D}}\sim \|f\|_{\IH^p_{D,mol,1}}\sim \|f\|_{\IH^p_{D,ato,1}}\sim \|f\|_{H^p}, \quad \forall f\in \clos{\ran_{2}(D)}.
$$ 
 Let $1<p<\infty$. Then
 \begin{equation}
\label{eq:hpDp>1 }
\IH^p_{D}= \clos{\ran_{p}(D)} \cap \clos{\ran_{2}(D)}= L^p\cap \clos{\ran_{2}(D)}= L^p \cap \IP(L^2)= \IP (L^p\cap L^2) 
\end{equation} 
with 
$$\|f\|_{\IH^p_{D}}\sim \|f\|_{L^p}, \quad \forall f\in \clos{\ran_{2}(D)}.
$$
 Let $p=\infty$. Then 
  \begin{equation}
\label{eq:hpDBMO}
\BBMO_{D}= \BMO \cap \clos{\ran_{2}(D)}= \BMO \cap \IP(L^2)= \IP (\BMO\cap L^2) 
\end{equation} 
with 
$$\|f\|_{\BBMO_{D}}\sim \|f\|_{\BMO}, \quad \forall f\in \clos{\ran_{2}(D)}.$$
Let $0\le \alpha<1$. Then 
  \begin{equation}
\label{eq:hpDLambda}
\IL^\alpha_{D}=\dot \Lambda^\alpha \cap \clos{\ran_{2}(D)}= \dot \Lambda^\alpha \cap \IP(L^2)= \IP (\dot \Lambda^\alpha \cap L^2) 
\end{equation} 
with 
$$\|f\|_{\IL^\alpha_{D}}\sim \|f\|_{\dot \Lambda^\alpha}, \quad \forall f\in \clos{\ran_{2}(D)}.$$

 \end{thm}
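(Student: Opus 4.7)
The proof decomposes into cases according to the range of $p$ (or $\alpha$), and the common thread is that in every case the three set-theoretic equalities on the right of the stated identities follow from $\clos{\ran_{2}(D)}=\IP(L^2)$, the $L^2$-boundedness of $\IP$, and the fact that $\IP$ extends boundedly to each of $H^p$, $L^p$, $\BMO$, $\dot\Lambda^\alpha$ via the Mikhlin-type bounds on its symbol recorded in Section \ref{sec:D}. So the task reduces to identifying $\IH^{\mathcal T}_{D}$ with $X\cap \clos{\ran_{2}(D)}$ (equivalent norms) for the appropriate classical space $X$.

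\textbf{Case $1<p<\infty$.} The plan is to invoke Theorem \ref{thm:Lpfc} together with \cite{HMP2}, so that $D$ has an $H^\infty$-calculus on $L^p$ with the kernel/range decomposition; the Corollary following Proposition \ref{prop:projectionlp} then supplies $\|\Qpsi\psi D f\|_{T^p_{2}}\sim\|f\|_{L^p}$ for $f\in\clos{\ran_{p}(D)}$ and allowable $\psi$. If $f\in\clos{\ran_{2}(D)}\cap L^p$, the $L^p$-boundedness of $\IP$ combined with $f=\IP f$ in $L^2$ places $f\in\clos{\ran_{p}(D)}$, so $\|f\|_{\IH^p_{D}}\sim\|f\|_{L^p}$. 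The reverse containment $\IH^p_{D}\subset L^p$ with norm control I get from Corollary \ref{lem:lowerp<2} when $p<2$, and, when $p>2$, from pairing $f$ against $g\in\clos{\ran_{2}(D)}\cap L^{p'}$ through Proposition \ref{prop:duality} and the already-proved $p'<2$ case.

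\textbf{Case $n/(n+1)<p\le 1$.} Here the plan is to use the atomic characterization $\IH^p_{D}=\IH^p_{D,ato,1}$ recalled after the theorem. For $\IH^p_{D}\subset H^p$, each $(\IH^p_{D},1)$-atom $a=Db$ supported in a cube $Q$ has $\|a\|_{2}\lesssim|Q|^{1/2-1/p}$ from \eqref{eq:mol}, and for $p>n/(n+1)$ the only required moment is $\int a=0$. I verify the latter by testing against a smooth cutoff $\chi_R\equiv 1$ on $\supp a$ with $|\nabla\chi_R|\lesssim R^{-1}$ and invoking the dyadic decay bound \eqref{eq:mol} on $b$ for $k=1$: the identity $\langle a,\chi_R\rangle=\langle b,D\chi_R\rangle$ forces the right-hand side to vanish as $R\to\infty$. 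Thus $a$ is, up to a dimensional constant, a classical $H^p$-atom. For the reverse inclusion $H^p\cap\IP(L^2)\subset\IH^p_{D}$, I would pick $\psi\in\Psi_{\sigma}^{\tau}(S_\mu)$ with $\sigma,\tau$ large and observe that the convolution kernels $K_t$ of $\psi(tD)$ are smooth, satisfy $\int K_t=0$, and obey $|K_t(x)|+t|\nabla K_t(x)|\lesssim t^{-n}(1+|x|/t)^{-M}$ for arbitrary $M$ (as a consequence of (D0) and the decay of $\psi$). The area integral bound $\|\Qpsi\psi D f\|_{T^p_{2}}\lesssim\|f\|_{H^p}$ then follows from classical Hardy space Littlewood--Paley theory in the range $n/(n+1)<p\le 1$.

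\textbf{Cases $p=\infty$ and $0<\alpha<1$.} For these I would finish by duality. The characterization $\|g\|_{\IH^{\mT^*}_{T^*}}\sim\sup\{|\langle f,g\rangle|:f\in\IH^\mT_{T},\|f\|_{\IH^\mT_{T}}=1\}$ in Proposition \ref{prop:duality}, valid for $\mT=T^p_{2}$ with $0<p<\infty$, combined with the identification $\IH^p_{D}=H^p\cap\clos{\ran_{2}(D)}$ already established for $p=n/(n+\alpha)$, yields $\|f\|_{\IL^\alpha_{D}}\sim\sup\{|\langle g,f\rangle|:g\in H^p\cap\clos{\ran_{2}(D)},\|g\|_{H^p}\lesssim 1\}$ for $f\in\clos{\ran_{2}(D)}$. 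The upper bound $\|f\|_{\IL^\alpha_{D}}\lesssim\|f\|_{\dot\Lambda^\alpha}$ is then immediate from classical Fefferman--Stein/Campanato duality $(H^p)^*=\dot\Lambda^\alpha$ (with $\dot\Lambda^0=\BMO$). For the lower bound, take $g\in H^p\cap L^2$ and write $g=\IP g+(g-\IP g)$: the second term lies in $\nul_{2}(D)$ and is orthogonal in $L^2$ to $f\in\clos{\ran_{2}(D)}$, while $\IP g\in\IH^p_{D}$ has $H^p$-norm controlled by $\|g\|_{H^p}$ by boundedness of $\IP$ on $H^p$; hence $|\langle g,f\rangle|=|\langle\IP g,f\rangle|\lesssim\|g\|_{H^p}\|f\|_{\IL^\alpha_{D}}$, and $\|f\|_{\dot\Lambda^\alpha}\lesssim\|f\|_{\IL^\alpha_{D}}$ follows. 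The main obstacle in the entire proof is the Hardy space area integral bound of Case $2$: although classical for constant-coefficient Calder\'on--Zygmund families, it must be realized inside the allowable-$\psi$ framework of the paper, tracking carefully the Fourier-side smoothness and decay of $\psi(tD)$ derived from (D0)--(D2).
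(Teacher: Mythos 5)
Your overall architecture is sound and, for the two hardest points, genuinely different from the paper's. For the inclusion $\IP(H^p\cap L^2)\subset \IH^p_{D}$ with $p\le 1$, the paper does not analyze the multiplier $\psi(t\hat D(\xi))$ at all: it introduces the auxiliary second-order operator $L=D^2\IP-\Delta(I-\IP)$, whose symbol is smooth away from $0$, homogeneous of degree $2$ and $\sim|\xi|^2$, estimates the kernel of $t^2Le^{-t^2L}$ (decay $t^{-n}(1+|x|/t)^{-n-2}$, one better for derivatives), applies the classical \cite{CMS} theory to get $F(t,\cdot)=t^2Le^{-t^2L}h\in T^p_2\cap T^2_2$, and then uses $L\IP=D^2$ to identify $F=\Qpsi\psi D h$ with $\psi(z)=z^2e^{-z^2}$ and a Calder\'on pair $\varphi$ to recover $h\in\IH^p_D$. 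Your route — direct kernel bounds for $\psi(tD)$, $\psi\in\Psi_\sigma^\tau(S_\mu)$ — is viable but is exactly the delicate multiplier analysis the $L$-trick is designed to bypass; what it buys you is avoiding the auxiliary operator, at the price of having to prove Mikhlin--H\"ormander-type estimates for $\partial_\xi^\alpha\,\psi(t\hat D(\xi))$ away from $\xi=0$ together with the gain $\min((t|\xi|)^\sigma,(t|\xi|)^{-\tau})$, which the paper only sketches for $b(D)$. For $1<p<\infty$ the paper instead gets $\IH^p_D\subset\clos{\ran_p(D)}\cap\clos{\ran_2(D)}$ by truncating the Calder\'on reproducing formula in $t$, and the reverse inclusion by the same $L$-argument; your duality argument for $p>2$ via the $p'<2$ case and Proposition \ref{prop:duality} is a fine alternative. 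For $\BMO$ and $\dot\Lambda^\alpha$ your duality argument matches the paper's in substance.

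Two claims need repair. First, the kernel bound ``$|K_t(x)|+t|\nabla K_t(x)|\lesssim t^{-n}(1+|x|/t)^{-M}$ for arbitrary $M$'' is false: the symbol $\psi(t\hat D(\xi))$ is in general not smooth at $\xi=0$ (e.g.\ $\psi(z)=[z]^Ne^{-[z]}$ restricted to a line through $0$), so the attainable decay is capped by roughly $n+\sigma$, $\sigma$ being the vanishing order of $\psi$ at $0$; since you may take $\sigma$ large this is harmless for $p>\frac{n}{n+1}$, but state it that way, and note that the smoothness and symbol estimates away from $\xi=0$ are themselves a nontrivial input (the paper's remark on applying Mikhlin to $b(D)$), not a direct consequence of (D0) alone. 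Second, in the case $1<p<2$ the corollary you cite yields only the vertical square function estimate \eqref{eq:psiTLp}, while $\|\cdot\|_{\IH^p_D}$ is the conical (tent space) norm, and ``conical $\lesssim$ vertical'' is not automatic below $p=2$; either run your kernel argument there as well, or invoke Theorem \ref{thm:upperqpsip<2} with $B=I$ (where $p_-=1$), which gives $\|\Qpsi\psi D h\|_{T^p_2}\lesssim\|h\|_p$ on the whole range $\frac{n}{n+1}<p<2$. Finally, be careful not to quote $\IH^p_D=\IH^p_{D,ato,1}$ as known beforehand: for $p\le 1$, $M=1$ need not exceed $\frac np-\frac n2$, so only the containment $\IH^p_D=\IH^p_{D,mol,M}\subset\IH^p_{D,mol,1}=\IH^p_{D,ato,1}$ is available a priori, which is all you use; the reverse containment is what closes the loop once $\IP(H^p\cap L^2)\subset\IH^p_D$ is proved.
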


\begin{proof} 
Let us assume first $p\le 1$. As $\clos{\ran_{2}(D)}=\IP(L^2)$ the fourth equality is a trivial. The inclusion $\supset$ of the fifth equality comes from the fact that $\IP$ is bounded on $H^p$, and for the converse, if $h\in H^p\cap \IP(L^2)$ then $h=\IP h\in \IP(H^p\cap L^2)$.
By general theory and the discussion above, $ \IH^p_{D}= \IH^p_{D,mol, n}\subset \IH^p_{D,mol, 1}=\IH^p_{D,ato, 1}$. Now a $\left(\IH^{p}_{D},1\right)$-atom $a=Db$ belongs to $\ran_{2}(D)$ and also to $H^p$ as $p>\frac{n}{n+1}$ and $\int a=\int Db =0$. As convergence of atomic decompositions is in $L^2$, so also in tempered distributions, it follows that  
$\IH^p_{D,ato, 1} \subset \clos{\ran_{2}(D)} \cap H^p$.  It remains to show $\IP(H^p\cap L^2)\subset \IH^p_{D}$. Let $L=D^2\IP -\Delta (I-\IP)$ where $\Delta$ is the ordinary negative self-adjoint Laplacian on $L^2$. Clearly $L$ is self-adjoint on $L^2$, positive, it has a homogeneous of order 2 symbol,   $C^\infty$  away from 0,  with $\hat L(\xi)\sim |\xi|^2$ (in the sense of self-adjoint matrices). 
One can estimate the kernel of the convolution operator $t^2Le^{-t^2L}$  and  find pointwise decay in 
$t^{-n}(1+\frac{|x|}{t})^{-n-2}$.   Similarly for  all its partial derivatives with $-n-3$ replacing $-n-2$.
  By standard theory for the Hardy space as in \cite{CMS},  for $h\in \IP(H^p\cap L^2)$,
$F(t,\cdot)= t^2L e^{-t^2L}h \in T^p_{2}\cap T^2_{2} $, thus for any $\varphi$ such that $\IH^p_{D}=\IH^{T^p_{2}}_{\Tpsi \varphi D}$ we have $\Tpsi \varphi D F\in \IH^p_{D}$. 
Now $ L\IP= D^2\IP=D^2$. Thus, as $h=\IP h$,  $ F(t,\cdot)= t^2D^2 e^{-t^2D^2}h=\psi(tD)h$ with $\psi(z)= z^2e^{-z^2}$. If one chooses $\varphi\in \Psi_{\gamma(p)}(S_{\mu})$ such that \eqref{eq:Calderon} holds then $\Tpsi \varphi D F= \Tpsi \varphi D \Qpsi \psi D h= h$ so that $h\in \IH^p_{D}$.

If $1<p<\infty$, the third equality is trivial, the fourth and the inclusion $\IP(L^p\cap L^2)\subset \IH^p_{D}$ are obtained as above.  By using truncation in $t$  for $T^p_{2}$ functions in a Calder\'on reproducing formula $\Tpsi \varphi D \Qpsi \psi D=I$, we see easily that $\IH^p_{D}\subset \clos{\ran_{p}(D)} \cap \clos{\ran_{2}(D)}$ and obviously $\clos{\ran_{p}(D)} \cap \clos{\ran_{2}(D)} \subset L^p  \cap \clos{\ran_{2}(D)}$. 

The proof for $\BMO$ type spaces is obtained by duality from $p=1$, noticing that the duality form is the same for $\IH^1_{D},\BBMO_{D}$ and $H^1,BMO$, and that $\IP=\IP^*$ is bounded on $H^1$ and $BMO$. 

The proof for $\dot \Lambda^\alpha$ type space is also obtained by duality from the case $p<1$. We omit further details. 
\end{proof}

\subsection{General facts about comparison of $\IH^p_{DB}$ and $\IH^p_{D}$}

Of course, by definition $\IH^2_{DB}=\IH^2_{D}$, thus we look at other values of $p$.

\begin{prop}\label{cor:lowerbounddbp<2} For $\frac{n}{n+1}<p<2$,  we have $ \IH^p_{DB} \subset \IH^p_{D}$ with 
continuous inclusion. More precisely,  the inequality
 $$
 \|h\|_{H^p} \lesssim \|\Qpsi \psi {DB} h\|_{T^p_{2}}, \quad \forall h\in \clos{\ran_{2}(D)},
$$
holds when  $\psi \in \Psi^{\gamma(p)}(S_{\mu})$, where $H^p=L^p$  when $p>1$. 
\end{prop}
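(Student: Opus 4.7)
The strategy is to invoke a Calder\'on reproducing formula for $DB$ together with a bound on the synthesis map $\Tpsi{\varphi}{DB}:T^p_{2}\to H^p$ for a judiciously chosen $\varphi$, where $H^p=L^p$ when $p>1$. Given $\psi\in\Psi^{\gamma(p)}(S_{\mu})$, Proposition \ref{prop:calderon} produces a non-degenerate $\varphi\in\Psi_{\sigma}^{\tau}(S_{\mu})$ with $\sigma,\tau>0$ as large as we wish, satisfying \eqref{eq:Calderon}. Since $\clos{\ran_{2}(DB)}=\clos{\ran_{2}(D)}$ by Corollary \ref{cor:idb}, \eqref{eq:CalderonT} applies to $T=DB$ and gives $h=\Tpsi{\varphi}{DB}F$ with $F:=\Qpsi{\psi}{DB}h\in T^p_{2}$, so it suffices to prove
\begin{equation*}
\|\Tpsi{\varphi}{DB}F\|_{H^p}\lesssim\|F\|_{T^p_{2}},\qquad F\in T^p_{2}\cap T^2_{2}.
\end{equation*}

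For $\frac{n}{n+1}<p\le 1$, I would use the atomic decomposition of $T^p_{2}$ and verify that any $T^p_{2}$-atom $A$ supported in a tent $T(Q)$ with $\|A\|_{T^2_{2}}\le|Q|^{1/2-1/p}$ is mapped by $\Tpsi{\varphi}{DB}$ to a uniform multiple of a classical $H^p$-molecule adapted to $Q$. Two points must be checked for $m:=\Tpsi{\varphi}{DB}A$. First, the \emph{size decay}: the $L^2$ off-diagonal bounds for $\varphi(tDB)$ (consequences of Lemma \ref{lem:odd} via the Cauchy integral representation of $\varphi$), together with Minkowski and a Cauchy-Schwarz in the measure $\frac{dt}{t}$, yield
\begin{equation*}
\|m\|_{L^2(S_i(Q))}\lesssim 2^{-iN}|Q|^{1/2-1/p},\qquad i\ge 2,
\end{equation*}
for any prescribed $N$, while the $L^2\to L^2$ bound of $\Tpsi{\varphi}{DB}$ disposes of $i=0,1$; choosing $N>\gamma(p)=n/p-n/2$ converts this into the required molecular decay $(2^i\ell(Q))^{n/2-n/p}2^{-i\epsilon}$ for some $\epsilon>0$. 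Second, the \emph{vanishing moment}: choosing $\sigma\ge 1$, factor $\varphi(z)=z\tilde\varphi(z)$ with $\tilde\varphi\in\Psi(S_{\mu})$, so $m=Dv$ with $v=B\int_{0}^{\ell(Q)}\tilde\varphi(tDB)A(t,\cdot)\,dt$. The same off-diagonal technique applied to $\tilde\varphi(tDB)$ gives $v\in L^1$, so integrating $m=Dv$ against a smooth cutoff $\phi_R=\phi(\cdot/R)$ with $\phi(0)=1$ and letting $R\to\infty$ yields $\int m=0$; since $p>\frac{n}{n+1}$, this single moment is enough. The $p$-subadditivity of $\|\cdot\|_{H^p}^p$ then sums the atomic decomposition.

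For $1<p<2$, I would interpolate: the argument above provides $\Tpsi{\varphi}{DB}:T^1_{2}\to H^1\subset L^1$, while Proposition \ref{prop:table} gives $\Tpsi{\varphi}{DB}:T^2_{2}\to L^2$. Complex interpolation, using $[H^1,L^2]_\theta=L^p$, yields $\Tpsi{\varphi}{DB}:T^p_{2}\to L^p$.

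The principal obstacle is the vanishing-moment check: the general Hardy-space theory produces only $\IH^p_{DB}$-molecules of the form $(DB)^Mb$, which carry no classical moment condition per se. The point is to exploit $\ran_{2}(DB)=\ran_{2}(D)$ to write $m=Dv$ and to promote the primitive $v$ to $L^1$ via the quantitative off-diagonal decay of $\tilde\varphi(tDB)$; integration by parts against cutoffs then rigorously delivers $\int m=0$. The remainder is standard tent-space and molecule bookkeeping.
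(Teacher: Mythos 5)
Your argument is correct, and it reaches the conclusion by a more self-contained route than the paper. The paper's proof is two lines: for $p\le 1$ it observes that any $\left(\IH^{p}_{DB},\epsilon,M\right)$-molecule $(DB)^Mb$ equals $D\big(B(DB)^{M-1}b\big)$ and hence is an $\left(\IH^{p}_{D},\epsilon,1\right)$-molecule, then quotes Theorem \ref{thm:hpd} (where the passage to classical $H^p$, including the mean-value-zero issue, is absorbed into the atomic reduction via McIntosh's partition-of-unity argument); for $1<p<2$ it quotes the interpolation scheme of Lemma \ref{lem:upperp<2}. You instead bypass the abstract molecular theory of $\IH^p_{DB}$ entirely: you invoke the Calder\'on pair $(\psi,\varphi)$, decompose $\Qpsi\psi{DB}h$ into tent-space atoms, and check directly that $\Tpsi\varphi{DB}$ sends a $T^p_2$-atom to a classical $H^p$-molecule, obtaining the cancellation by factoring $\varphi(z)=z\tilde\varphi(z)$, writing the output as $Dv$ with $v\in L^1$ (via the same off-diagonal estimates \eqref{eq:odnpsipq} that drive the size bounds), and killing the moment by integration against a dilated cutoff. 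This is the same underlying mechanism as the paper's $(DB)^Mb=D(\cdot)$ trick, but implemented explicitly rather than delegated to the adapted molecular/atomic machinery; what you lose in brevity you gain in transparency, since the only external inputs are the off-diagonal bounds, the $L^2$ square-function estimate, and the tent-space atomic decomposition (whose simultaneous $T^p_2$/$T^2_2$ convergence for $F\in T^p_2\cap T^2_2$ you should cite, as it is the one nontrivial piece of the ``bookkeeping''). Your interpolation step for $1<p<2$ is essentially the paper's own argument from Corollary \ref{lem:lowerp<2}, including the consistency-of-extensions remark. One shared caveat: both your use of Proposition \ref{prop:calderon} and the paper's appeal to allowability require $\psi$ non-degenerate on each component of $S_\mu$; the proposition's statement omits this, but it is needed (and harmless), so it is not a gap in your argument relative to the paper's.
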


 \begin{proof} Indeed, if $p\le 1$ it is clear that an $\left(\IH^{p}_{DB},\epsilon,M\right)$-molecule $a=(DB)^Mb$ writes $a= D (B(DB)^{M-1}b)$, hence is an $\left(\IH^{p}_{D},\epsilon,1\right)$-molecule.  We conclude using Theorem \ref{thm:hpd}. For $1<p<2$, we use the interpolation argument of  Lemma \ref{lem:upperp<2}.
\end{proof}

\begin{prop} \label{cor:upperbounddbp>2} For $2<p<\infty$,  we have $ \IH^p_{D} \subset \IH^p_{DB}$ with 
continuous inclusion. More precisely,  the inequality
 $$
 \|\Qpsi \psi {DB} h\|_{T^p_{2}} \lesssim  \|h\|_{p} , \quad \forall h\in \clos{\ran_{2}(D)},
$$
holds when  $\psi \in \Psi_{\gamma(p)}(S_{\mu})$. 
\end{prop}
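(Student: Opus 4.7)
This inequality is the dual of Proposition~\ref{cor:lowerbounddbp<2}, and I would prove it by tent-space duality applied to the $L^2$-adjoint $T^*=B^*D$. Fix $h\in \clos{\ran_{2}(DB)}=\clos{\ran_{2}(D)}$ (equality by Corollary~\ref{cor:idb}) and $\psi\in \Psi_{\gamma(p)}(S_{\mu})$; since $p>2$, $p'\in(1,2)$. By $T^p_{2}$--$T^{p'}_{2}$ duality together with density of $T^{p'}_{2}\cap T^2_{2}$ in $T^{p'}_{2}$,
\[
\|\Qpsi{\psi}{DB}h\|_{T^p_{2}}\;\sim\; \sup_{G}\bigl|\bigl(\Qpsi{\psi}{DB}h,\,G\bigr)\bigr|,
\]
the supremum being taken over $G\in T^{p'}_{2}\cap T^2_{2}$ with $\|G\|_{T^{p'}_{2}}\le 1$.

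Since such $G$ also lies in $T^2_{2}$ and $h\in L^2$, the $L^2$ quadratic estimate for $DB$ and the Cauchy--Schwarz inequality in $t$ justify Fubini and yield
\[
\bigl(\Qpsi{\psi}{DB}h,\,G\bigr)=\int_{0}^{\infty}\langle \psi(tDB)h,\,G(t,\cdot)\rangle\,\frac{dt}{t}=\langle h,\,\Tpsi{\psi^*}{B^*D}G\rangle,
\]
using $\psi(tDB)^*=\psi^*(tB^*D)$. H\"older's inequality bounds this by $\|h\|_{p}\,\|\Tpsi{\psi^*}{B^*D}G\|_{p'}$, so the task reduces to the estimate $\|\Tpsi{\psi^*}{B^*D}G\|_{p'}\lesssim \|G\|_{T^{p'}_{2}}$.

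Since $\gamma(p)=\gamma(p')$, we have $\psi^*\in \Psi_{\gamma(p')}(S_{\mu})$, which by the table preceding Proposition~\ref{prop:table} is precisely the class $\Psi^{\mT}(S_{\mu})$ for $\mT=T^{p'}_{2}$ (recall $p'<2$). Corollary~\ref{cor:pre-hardy} then identifies $\IH^{\mT}_{\Tpsi{\psi^*}{B^*D}}$ with the pre-Hardy space $\IH^{p'}_{B^*D}$, and the very definition of the (quasi-)norm gives
\[
\|\Tpsi{\psi^*}{B^*D}G\|_{\IH^{p'}_{B^*D}}\le \|G\|_{T^{p'}_{2}}.
\]
The operator $B^*D$ fits the abstract framework of Section~4.2 (bisectorial with bounded $H^{\infty}$-calculus on $\clos{\ran_{2}(B^*D)}$ and with $L^2$ off-diagonal decay from Lemma~\ref{lem:odd}), so Corollary~\ref{lem:lowerp<2} applied to $T=B^*D$ yields the continuous embedding $\IH^{p'}_{B^*D}\hookrightarrow L^{p'}$. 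Composing these estimates gives $\|\Tpsi{\psi^*}{B^*D}G\|_{p'}\lesssim 1$, and taking the supremum over $G$ produces $\|\Qpsi{\psi}{DB}h\|_{T^p_{2}}\lesssim \|h\|_{p}$ as required.

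The only real bookkeeping is verifying that the conjugation $\psi\mapsto \psi^*$ combined with passage to the adjoint sends $\psi\in \Psi_{\gamma(p)}$ (the ``analysis'' class for $\mT=T^p_{2}$ with $p>2$) into the ``synthesis'' class $\Psi^{\mT}$ for the dual tent space $\mT=T^{p'}_{2}$; this is immediate from $\gamma(p)=\gamma(p')$ together with the table. The value of this route is that it entirely sidesteps any molecular or atomic description of $\IH^p_{D}$ in the hard range $p>2$, trading it for the easy embedding $\IH^{p'}_{B^*D}\hookrightarrow L^{p'}$ available when $p'<2$.
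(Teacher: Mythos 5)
Your argument is correct, and it takes a genuinely different route from the paper. The paper reduces to $\psi\in\Psi_{\frac n2}(S_{\mu})$ and runs the Carleson-measure argument of Lemma \ref{lem:sfpinfty} (an $L^\infty\cap L^2\to T^\infty_{2}$ bound built from the $L^q$--$L^2$ off-diagonal estimates \eqref{eq:odnpsipq} for some $2<q<p_{+}$, interpolated with the $L^2\to T^2_{2}$ square function bound), and only then restricts to $\IH^p_{D}$ via $\IP$; in particular its proof of this proposition is a forward reference to machinery it needs anyway for exponents $p\ge p_{+}$ (Corollary \ref{cor:psi1}, Proposition \ref{prop:p+>n}). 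You instead dualize: the adjoint identity $(\Qpsi{\psi}{DB}h,G)=\pair{h}{\Tpsi{\psi^*}{B^*D}G}$ reduces everything to the synthesis bound $\|\Tpsi{\psi^*}{B^*D}G\|_{p'}\lesssim\|G\|_{T^{p'}_{2}}$, which you correctly assemble from Proposition \ref{prop:table} (noting $\psi^*\in\Psi_{\gamma(p')}=\Psi^{\mT}(S_{\mu})$ for $\mT=T^{p'}_{2}$, $\gamma(p)=\gamma(p')$) together with the embedding $\IH^{p'}_{B^*D}\subset L^{p'}$ of Corollary \ref{lem:lowerp<2} applied to $T=B^*D$ (molecules for $p'\le1$ plus interpolation); there is no circularity, since none of these ingredients depends on the hard $p<2$ square-function Theorem \ref{thm:upperqpsip<2} or on this proposition. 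What each approach buys: yours avoids the forward reference, treats all $\psi\in\Psi_{\gamma(p)}$ at once without the implicit change-of-$\psi$ reduction, and makes transparent that the $p>2$ upper bound for $DB$ is formally dual to the easy sub-$2$ embedding for $B^*D$ (mirroring how the paper proves its $p>2$ lower bounds via \eqref{eq:upperboundb*d}); the paper's direct method, by contrast, survives beyond $p_{+}$ where this simple duality with a single exponent $p'<2$ is no longer the right tool. Two cosmetic points: the tent-space duality step should be phrased as ``the functional $G\mapsto(\Qpsi{\psi}{DB}h,G)$, bounded on the dense set $T^{p'}_{2}\cap T^2_{2}$, is represented by an element of $T^p_{2}$ which is identified with $\Qpsi{\psi}{DB}h$ by testing against compactly supported $G$'' (this is exactly the paper's own routine, e.g.\ in Section \ref{sec:reg}); and after invoking the norm equivalence of Corollary \ref{cor:pre-hardy} the inequality $\|\Tpsi{\psi^*}{B^*D}G\|_{\IH^{p'}_{B^*D}}\le\|G\|_{T^{p'}_{2}}$ should read $\lesssim$ rather than $\le$.
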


% \begin{proof}  Let $ h\in  \IH^p_{D} \subset \clos{\ran_{2}(D)}$ and $g \in \clos{\ran_{2}(B^*D)}$.
% Using Proposition \ref{prop:lowerbounds}  for $T=B^*D$, 
% $$
% |\pair h g| \le \|h\|_{p}\|g\|_{p'}\lesssim  \|h\|_{p}\|g\|_{\IH^{p'}_{B^*D}}.
% $$
% As $\pair h g$ is a duality for the pair of spaces $(\IH^p_{DB}, \IH^{p'}_{B^*D})$, we conclude that 
% $h\in \IH^p_{DB}$ with norm controlled by $\|h\|_{p}$, which is the $\IH^p_{D}$ norm by  Theorem \ref{thm:hpd}. 
%\end{proof}

\begin{proof} It suffices to prove this for $\psi\in \Psi_{\frac{n}{2}}(S_{\mu})$. The method of Lemma \ref{lem:sfpinfty} below proves in particular for such $\psi$ that 
$$
 \|\Qpsi \psi {DB} h\|_{T^p_{2}} \lesssim  \|h\|_{p} , \quad \forall h\in L^p\cap L^2.
$$
If $h\in \IH^p_{D}$, then $h=\IP h$ and $  \|\IP h\|_{p}\sim  \|h\|_{\IH^p_{D}}$ by Theorem \ref{thm:hpd} and  as $\Qpsi \psi {DB} h= \Qpsi \psi {DB} \IP h$, we obtain
$$
 \|\Qpsi \psi {DB} \IP h\|_{T^p_{2}} \lesssim  \|\IP h\|_{p}\sim  \|h\|_{\IH^p_{D}}.
$$
\end{proof}

We are interested in the equality $\IH^p_{DB}=\IH^p_{D}$. 

\begin{thm}\label{thm:hpdb=hpd}  Let $\frac{n}{n+1}<p< \infty$. Assume that  $\IH^p_{DB}= \IH^p_{D}$ with 
equivalent norms. Then for any $b\in H^\infty(S_{\mu})$, $b(DB)$ is bounded on $\IH^p_{D}$.
Thus, $\|b(DB)h\|_{H^p}\lesssim \|b\|_{\infty}\|h\|_{H^p}$ for any $h\in \IH^p_{D}$ where $H^p=L^p$ if $p>1$. Furthermore, $(e^{-t|DB|})_{t>0}$ is a strongly continuous semigroup  on $\IH^p_{D}$. 
 \end{thm}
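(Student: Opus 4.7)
The proof should be essentially a matter of bookkeeping: I would assemble three ingredients already established in the paper, namely the $H^{\infty}$-calculus on the pre-Hardy space $\IH^{p}_{DB}$ (Corollary \ref{cor:pre-hardy}), the concrete identification $\IH^{p}_{D}=H^{p}\cap \clos{\ran_{2}(D)}$ with norm equivalence (Theorem \ref{thm:hpd}), and the strong-continuity result Proposition \ref{prop:stronglimit}.

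First, for the boundedness of $b(DB)$: I would show that on $\IH^{p}_{DB}$ we have the operator-norm bound $\|b(DB)h\|_{\IH^{p}_{DB}}\lesssim \|b\|_{\infty}\|h\|_{\IH^{p}_{DB}}$. To do this, pick a non-degenerate $\psi\in\Psi_{\mT}(S_{\mu})\cap \Psi^{\mT}(S_{\mu})$ (with $\mT=T^p_2$) so that $\psi$ is allowable for $\IH^{p}_{DB}$ and, by Proposition \ref{prop:calderon}, there exists $\varphi\in\Psi^{\mT}(S_{\mu})$ with $\Tpsi{\varphi}{DB}\Qpsi{\psi}{DB}=I$ on $\clos{\ran_{2}(DB)}$. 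Then for $h\in\IH^{p}_{DB}\subset\clos{\ran_{2}(DB)}$,
$$
\|b(DB)h\|_{\IH^{p}_{DB}}\sim \|\Qpsi{\psi}{DB}b(DB)\Tpsi{\varphi}{DB}\Qpsi{\psi}{DB}h\|_{T^{p}_{2}},
$$
and Proposition \ref{prop:table} bounds $\Qpsi{\psi}{DB}b(DB)\Tpsi{\varphi}{DB}$ on $T^{p}_{2}$ by a constant times $\|b\|_{\infty}$ (this linear dependence is what the construction of the $H^\infty$-calculus via the Cauchy formula gives; it can also be read off the proofs in the references of Proposition \ref{prop:table}). This yields $\|b(DB)h\|_{\IH^{p}_{DB}}\lesssim \|b\|_{\infty}\|h\|_{\IH^{p}_{DB}}$.

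Second, combine with the two norm equivalences: by hypothesis $\IH^{p}_{DB}=\IH^{p}_{D}$ with equivalent quasi-norms, and by Theorem \ref{thm:hpd} the space $\IH^{p}_{D}$ is equal to $H^{p}\cap\clos{\ran_{2}(D)}$ with $\|h\|_{\IH^{p}_{D}}\sim\|h\|_{H^{p}}$ (where $H^p=L^p$ when $p>1$). Chaining these equivalences with the previous step gives, for any $h\in\IH^{p}_{D}$,
$$
\|b(DB)h\|_{H^{p}}\sim \|b(DB)h\|_{\IH^{p}_{D}}\sim \|b(DB)h\|_{\IH^{p}_{DB}}\lesssim \|b\|_{\infty}\|h\|_{\IH^{p}_{DB}}\sim \|b\|_{\infty}\|h\|_{H^{p}}.
$$
In particular $b(DB)$ sends $\IH^{p}_{D}$ into itself with the stated norm bound.

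For the semigroup statement, apply the previous item to the particular choices $b_{s}(z)=e^{-s|z|_{\pm}}$ (the bounded holomorphic function defined on $S_{\mu+}\cup S_{\mu-}$ that yields $e^{-s|DB|}$ via the calculus): each $e^{-s|DB|}$ is bounded on $\IH^{p}_{D}$ uniformly in $s>0$, and the semigroup law $e^{-(s+t)|DB|}=e^{-s|DB|}e^{-t|DB|}$ follows from the multiplicativity of the $H^{\infty}$-calculus. Strong continuity at $s=0$ on $\IH^{p}_{D}$ is exactly Proposition \ref{prop:stronglimit} transported via the isomorphism $\IH^{p}_{DB}\simeq\IH^{p}_{D}$, and strong continuity at $s_{0}>0$ then follows from $\|e^{-(s_0+\sigma)|DB|}h-e^{-s_0|DB|}h\|_{\IH^{p}_{D}}\lesssim \|e^{-\sigma|DB|}(e^{-s_0|DB|}h)-(e^{-s_0|DB|}h)\|_{\IH^{p}_{D}}\to 0$.

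The only nontrivial point in the plan is tracking the linear $\|b\|_{\infty}$ dependence in the pre-Hardy space bound (Step 1); the rest is composition of already-proved identifications. There is no real obstacle — merely care in invoking the right class $\Psi_{\mT}\cap\Psi^{\mT}$ so that both Calderón reproducing and the boundedness of $\Qpsi{\psi}{DB}b(DB)\Tpsi{\varphi}{DB}$ apply simultaneously.
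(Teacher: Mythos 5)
Your proposal is correct and follows essentially the same route as the paper: the paper's proof simply invokes Corollary \ref{cor:pre-hardy} for boundedness of $b(DB)$ on $\IH^p_{DB}$ and Proposition \ref{prop:stronglimit} for strong continuity, then observes these transfer to the equivalent topology of $\IH^p_{D}$ (with the concrete norm coming from Theorem \ref{thm:hpd}). Your extra unpacking of Corollary \ref{cor:pre-hardy} via the Calder\'on reproducing formula and Proposition \ref{prop:table} to track the linear $\|b\|_{\infty}$ dependence is a legitimate refinement of the same argument, not a different method.
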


\begin{proof} From Corollary \ref{cor:pre-hardy}, we know that $b(DB)$ is bounded on $\IH^p_{DB}$ for any $0<p<\infty$.  The strong continuity of the semigroup on $\IH^p_{DB}$ is also shown in Proposition \ref{prop:stronglimit}.  The same properties hold  for any equivalent topology. 
\end{proof}

We turn to dual statements. The result which will guide our discussion is the following one. Recall that $\IP$ is the orthogonal projection from $L^2$ onto $ \clos{\ran_{2}(D)}$.

\begin{thm}\label{thm:duality}  Let $\frac{n}{n+1}<q< \infty$. Assume that  $\IH^q_{DB^*}= \IH^q_{D}$ with 
equivalent norms. Then    if $q>1$ and $p=q'$, 
$\IP: \IH^p_{BD} \to \IH^p_{D}$ is an isomorphism and if  $q\le 1$ and $\alpha=n(\frac{1}{q}-1)$, 
 $\IP: \IL^{\alpha}_{BD} \to \IL^{{\alpha}}_{D}$ is an isomorphism. In the   range $q>1$ and $p=q'$, the converse holds: if $\IP: \IH^p_{BD} \to \IH^p_{D}$ is an isomorphism then $\IH^q_{DB^*}= \IH^q_{D}$ with 
equivalent norms.
 \end{thm}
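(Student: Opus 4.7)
The plan is to combine the abstract Hardy-space duality from Propositions \ref{prop:dualspaces} and \ref{prop:duality} with a key identity coming from the self-adjointness of $D$. Since $D$ is self-adjoint, $L^2 = \overline{\ran_2(D)} \oplus \nul_2(D)$ is an orthogonal decomposition with $\IP$ the corresponding orthogonal projection, and Proposition \ref{prop:typeomega}(ii) applied to $DB^*$ yields $\overline{\ran_2(DB^*)} = \overline{\ran_2(D)}$. Hence for $f \in \overline{\ran_2(BD)}$ and $g \in \overline{\ran_2(DB^*)}$ one has $\langle f, g \rangle = \langle \IP f, g \rangle$, since $f - \IP f \in \nul_2(D)$ is orthogonal to $g$. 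Combined with Proposition \ref{prop:projection}, which makes $\IP$ an $L^2$-isomorphism $\overline{\ran_2(BD)} \to \overline{\ran_2(D)}$ with inverse $\IP_{BD}$, this identity is the bridge allowing one to compare the pre-Hardy spaces on the $BD$ and $D$ sides via their duals.

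In the first case ($q > 1$, $p = q'$), I use the reflexive dualities $(\IH^p_{BD}, \IH^q_{DB^*})$ and $(\IH^p_D, \IH^q_D)$ from Proposition \ref{prop:duality}. For boundedness of $\IP$: given $f \in \IH^p_{BD}$ and $g \in \IH^q_D = \IH^q_{DB^*}$ (equivalent norms by hypothesis), the key identity gives $|\langle \IP f, g\rangle| = |\langle f, g\rangle| \lesssim \|f\|_{\IH^p_{BD}}\|g\|_{\IH^q_D}$, so reflexive duality puts $\IP f \in \IH^p_D$ with the desired bound. For surjectivity: set $f := \IP_{BD} k$ for $k \in \IH^p_D$; the identity now reads $\langle f, g\rangle = \langle k, g\rangle$ for all $g \in \IH^q_{DB^*} = \IH^q_D$, whence reflexive duality places $f \in \IH^p_{BD}$ with comparable norm. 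Injectivity of $\IP$ on $\overline{\ran_2(BD)}$ completes the isomorphism.

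The second case ($q \leq 1$, $\alpha = n(\frac{1}{q}-1)$) follows the same blueprint, but only the embedding $\IL^\alpha_{T^*} \hookrightarrow (\IH^q_T)^*$ is directly available from Proposition \ref{prop:duality}. The missing surjectivity is supplied by Proposition \ref{prop:dualspaces}: any bounded functional on $\IH^q_D$ is of the form $\ell(\cdot) = (\Qpsi{\psi}{D} \cdot, G)$ for some $G \in T^\infty_{2,\alpha}$, and a Calder\'on reproducing formula rewrites this as $\langle \cdot, k\rangle$ with $k = \Tpsi{\psi^*}{D} G \in \IL^\alpha_D$. Replaying the first-case argument then produces an element of $\IL^\alpha_D$ with the correct functional action; identifying it with $\IP f$ (respectively verifying $\IP_{BD} k$ lies in $\IL^\alpha_{BD}$) requires a uniqueness statement, which holds because $\IH^q_D$ contains the $D$-atomic subspace and is therefore total in $\overline{\ran_2(D)}$ in the $L^2$-sense by Theorem \ref{thm:hpd}.

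For the converse ($q > 1$), an isomorphism $\IP: \IH^p_{BD} \to \IH^p_D$ induces an isomorphism between their duals under the reflexive pairings; by the key identity this dual isomorphism is nothing but the natural inclusion $\IH^q_D \subset \IH^q_{DB^*}$ viewed inside $\overline{\ran_2(D)}$, so the norms must be equivalent and the two spaces coincide. The principal technicality of the entire argument lies in the second case, where the one-sided nature of Proposition \ref{prop:duality} forces the detour through Proposition \ref{prop:dualspaces} together with a uniqueness argument for representers; this is where the concrete atomic description from Theorem \ref{thm:hpd} is indispensable.
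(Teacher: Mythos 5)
Your proposal is correct and follows essentially the same route as the paper: the orthogonality identity $\pair{f}{g}=\pair{\IP f}{g}$ for $f\in\clos{\ran_{2}(BD)}$, $g\in\clos{\ran_{2}(D)}$, combined with the duality statements of Propositions \ref{prop:dualspaces}--\ref{prop:duality}, the hypothesis $\IH^q_{DB^*}=\IH^q_{D}$, and the $L^2$-level bijectivity of $\IP$ from Proposition \ref{prop:projection}, with the converse obtained by reversing the roles of the spaces. Your extra detour in the case $q\le 1$ (through Proposition \ref{prop:dualspaces} and the atomic description in Theorem \ref{thm:hpd} to secure the representation of functionals in the non-reflexive setting) is just a more explicit version of what the paper compresses into ``exactly the same argument applies,'' not a different method.
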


\begin{proof} This is in fact a simple functional analytic statement. Let us prove the direct part.  We have $\frac{n}{n+1}<q<\infty$ and    $\IH^{q}_{DB^*}=\IH^{q}_{D}$, with equivalence of norms.  We want to show the isomorphism property of $\IP$.  We know that $\IP: \clos{\ran_{2}(BD)}=\IH^2_{BD} \to \clos{\ran_{2}(D)}=\IH^2_{D}$ is  isomorphic, thus bijective. It suffices to prove the norm comparison. 
Assume first $1<q$. Set  $p=q'$. Let $g\in \clos{\ran_{2}(BD)}$. Then using Proposition \ref{prop:duality} for $T=DB^*$ and also for $D$, one has
\begin{align} \label{eq:420}
  \|g\|_{\IH^p_{BD}}  
    &
     \sim \sup\{ |\pair g f|\, ; \, \|f\|_{\IH^q_{DB^*}}\le 1\} \\
    &\nonumber  \sim \sup\{ |\pair g f|\, ; \, \|f\|_{\IH^q_{D}}\le 1\} \\
    &\nonumber  = \sup\{ |\pair {\IP g} f|\, ; \, \|f\|_{\IH^q_{D}}\le 1\} \\
    &\nonumber  \sim \|\IP g\|_{\IH^p_{D}}.
\end{align}  
Next,  if we assume $q\le 1$, then  we work with $\alpha=n(\frac{1}{q}-1)$ and  $\IL^\alpha_{BD}$,  and exactly the same argument applies. 

For the converse in the case $q>1$, it suffices to reverse the role of the spaces.  Let $f\in \clos{\ran_{2}(D)}$.  Then,
\begin{align*}
  \|f\|_{\IH^q_{DB^*}}  
    &  \sim \sup\{ |\pair g f|\, ; \, \|g\|_{\IH^p_{BD}}\le 1\} \\
    &  = \sup\{ |\pair {\IP g} f|\, ; \, \|g\|_{\IH^p_{BD}}\le 1\} \\
    & \sim \sup\{ |\pair h f|\, ; \, \|h\|_{\IH^p_{D}}\le 1\} \\
& \sim \|f\|_{\IH^q_{D}}.
\end{align*}  
\end{proof}

\begin{cor}\label{cor:hpbd0} Let $\frac{n}{n+1}<q< \infty$. Assume that  $\IH^q_{DB^*}= \IH^q_{D}$ with 
equivalent norms. Let $b\in H^\infty(S_{\mu})$.     If $q>1$ and $p=q'$,  then
 $$\|\IP b(BD)\|_{p} \lesssim \|\IP h\|_{p}, \quad \forall h\in \IH^2_{BD}.$$ If $q\le 1$, then for $ \alpha= n(\frac{1}{q}-1)$, 
 $$\|\IP b(BD)h\|_{\dot\Lambda^\alpha} \lesssim \|\IP h\|_{\dot\Lambda^\alpha}, \quad \forall h\in \IH^2_{BD}.$$
 \end{cor}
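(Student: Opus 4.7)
The strategy is to transfer the boundedness of $b(BD)$ on the pre-Hardy space $\IH^p_{BD}$ (or $\IL^\alpha_{BD}$), provided by Corollary \ref{cor:pre-hardy}, to the concrete space $L^p$ (or $\dot\Lambda^\alpha$) via the isomorphism $\IP$ furnished by Theorem \ref{thm:duality}, using the identification of $\IH^p_D$ (resp. $\IL^\alpha_D$) with $L^p\cap \IP(L^2)$ (resp.\ $\dot\Lambda^\alpha\cap \IP(L^2)$) given in Theorem \ref{thm:hpd}.

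Consider first the case $q>1$, $p=q'$, and fix $h\in \IH^2_{BD}$. If $\|\IP h\|_p=\infty$ the inequality is trivial, so assume $\|\IP h\|_p<\infty$. Then $\IP h \in L^p\cap \IP(L^2)=\IH^p_D$ with $\|\IP h\|_{\IH^p_D}\sim \|\IP h\|_p$ by Theorem \ref{thm:hpd}. The hypothesis $\IH^q_{DB^*}=\IH^q_D$ combined with Theorem \ref{thm:duality} yields that $\IP:\IH^p_{BD}\to \IH^p_D$ is an isomorphism; arguing exactly as at the end of the proof of Proposition \ref{prop:projection} (applied on the range of $\IP$), one sees that any $h\in \IH^2_{BD}=\clos{\ran_2(BD)}$ with $\IP h\in \IH^p_D$ must lie in $\IH^p_{BD}$ with $\|h\|_{\IH^p_{BD}}\sim \|\IP h\|_{\IH^p_D}$. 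Applying now Corollary \ref{cor:pre-hardy} gives $\|b(BD)h\|_{\IH^p_{BD}}\lesssim \|b\|_\infty\|h\|_{\IH^p_{BD}}$. Pushing forward by the isomorphism $\IP$ and reidentifying with the concrete norm via Theorem \ref{thm:hpd}, we obtain
\[
\|\IP b(BD)h\|_p \sim \|\IP b(BD)h\|_{\IH^p_D} \sim \|b(BD)h\|_{\IH^p_{BD}} \lesssim \|h\|_{\IH^p_{BD}} \sim \|\IP h\|_{\IH^p_D} \sim \|\IP h\|_p,
\]
which is the desired estimate.

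The case $q\le 1$ is identical, with $\IL^\alpha_{T}$ in place of $\IH^p_{T}$ and $\dot\Lambda^\alpha$ in place of $L^p$: Theorem \ref{thm:duality} supplies the isomorphism $\IP:\IL^\alpha_{BD}\to \IL^\alpha_D$, Corollary \ref{cor:pre-hardy} supplies boundedness of $b(BD)$ on $\IL^\alpha_{BD}$, and Theorem \ref{thm:hpd} identifies $\|\cdot\|_{\IL^\alpha_D}$ with $\|\cdot\|_{\dot\Lambda^\alpha}$ on $\IP(L^2)$. No new analytic input is needed. The only mildly delicate point, common to both cases, is the observation that $h\in \IH^2_{BD}$ with $\IP h$ in the target space automatically lies in the source space $\IH^p_{BD}$ (resp.\ $\IL^\alpha_{BD}$); this is the bookkeeping step mentioned above and follows since the $L^2$ projection $\IP_{BD}$ from $\clos{\ran_2(D)}$ to $\clos{\ran_2(BD)}$ inverts $\IP$ on $L^2$, so $h=\IP_{BD}\IP h$ already as elements of $L^2$.
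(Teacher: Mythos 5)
Your argument is correct and is essentially the paper's own proof: the paper's one-line proof ("the similarity induced by $\IP$ from Theorem \ref{thm:duality} together with the $H^\infty$-calculus on $\IH^p_{BD}$, resp. $\IL^\alpha_{BD}$, from Corollary \ref{cor:pre-hardy}") is exactly the chain of equivalences you write out, with Theorem \ref{thm:hpd} identifying the $\IH^p_D$ (resp. $\IL^\alpha_D$) norm with the concrete $L^p$ (resp. $\dot\Lambda^\alpha$) norm. Your "bookkeeping step" that $h\in\IH^2_{BD}$ with $\IP h$ in the target space lies in $\IH^p_{BD}$ is indeed the content of the norm comparison \eqref{eq:420} in the proof of Theorem \ref{thm:duality}, valid for all $g\in\clos{\ran_2(BD)}$, so no further argument is needed.
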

 
 \begin{proof} This is just using the similarity induced by $\IP$ from the previous theorem and the $H^\infty$-calculus on $\IH^p_{BD}$ or $\IL^\alpha_{BD}$ from Corollary \ref{cor:pre-hardy}. 
\end{proof}

Another version is also useful.

\begin{cor}\label{cor:hpbd} Let $\frac{n}{n+1}<q< \infty$. Assume that  $\IH^q_{DB^*}= \IH^q_{D}$ with 
equivalent norms. Then    if $q>1$ and $p=q'$, for any $b\in H^\infty(S_{\mu})$ which is defined at $0$, 
$\IP b(BD)$ is bounded on $\IH^p_{D}$ with 
$
\|\IP b(BD)h\|_{p}\lesssim \|h\|_{p}$ for all $h\in \IH^p_{D}$. Also    $(\IP e^{-t|BD|})_{t>0}$ is  a strongly continuous semigroup  on $\IH^p_{D}$.
 If $q\le 1$, then for $ \alpha= n(\frac{1}{q}-1)$, 
  $\IP b(BD)$ is bounded on $\IL^\alpha_{D}$ with $
\|\IP b(BD)h\|_{\IL^\alpha_{D}}\lesssim \|h\|_{\IL^\alpha_{D}}$ for all $h \in\IL^\alpha_{D}$.
 Furthermore,   $(\IP e^{-t|BD|})_{t>0}$ is a  weakly-$*$ continuous semigroup 
 on $\IL^\alpha_{D}$. 
 \end{cor}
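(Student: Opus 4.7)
The approach is to transfer the conclusion from the pre-Hardy space $\IH^p_{BD}$ (respectively $\IL^\alpha_{BD}$), on which bounded $H^\infty$-calculus and semigroup strong continuity are already in hand via Corollary \ref{cor:pre-hardy} and Proposition \ref{prop:stronglimit}, over to $\IH^p_D$ (resp.\ $\IL^\alpha_D$) via the isomorphism $\IP$ supplied by Theorem \ref{thm:duality}. The bridge is the similarity identity of Proposition \ref{prop:bdproj}: whenever $b\in H^\infty(S_{\mu})$ is defined at $0$ and $f\in L^2$,
\[
\IP\, b(BD)\,\IP f = \IP\, b(BD)\, f,
\]
so that $\IP b(BD)$ is insensitive to whether it is applied to $g\in \IH^p_{BD}$ or to its projected image $\IP g \in \IH^p_D$. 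Combined with the isomorphism from Theorem \ref{thm:duality}, everything else is essentially bookkeeping.

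For the boundedness, I would fix $h$ in $\IH^p_D$ (or $\IL^\alpha_D$) and let $g$ be its unique preimage in $\IH^p_{BD}$ (or $\IL^\alpha_{BD}$) under the isomorphism $\IP$, so that $\IP g = h$ and $\|g\|\sim \|h\|$ in the corresponding norms. By Corollary \ref{cor:pre-hardy}, $b(BD)$ is bounded on the pre-Hardy space of $BD$, whence $\|b(BD)g\|\lesssim \|g\|\sim \|h\|$; Proposition \ref{prop:bdproj} then gives $\IP b(BD) h = \IP b(BD) \IP g = \IP b(BD) g$, and applying the bounded map $\IP$ delivers the claim. The semigroup identity $\IP e^{-t|BD|}\circ \IP e^{-s|BD|} = \IP e^{-(t+s)|BD|}$ on $\IH^p_D$ or $\IL^\alpha_D$ follows from the same identity with $f = e^{-s|BD|} h$ and the $L^2$-level semigroup law for $e^{-t|BD|}$. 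Strong continuity in the case $q>1$ is equally transparent: writing $h = \IP g$ with $g\in \IH^p_{BD}$, Proposition \ref{prop:bdproj} yields
\[
\IP e^{-t|BD|} h - h = \IP\bigl(e^{-t|BD|} g - g\bigr),
\]
and by the isomorphism $\|\IP e^{-t|BD|} h - h\|_{\IH^p_D} \sim \|e^{-t|BD|} g - g\|_{\IH^p_{BD}}$, which tends to $0$ by Proposition \ref{prop:stronglimit} applied to $T=BD$.

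For weak-$*$ continuity in the case $q\le 1$, I would identify $\IL^\alpha_{BD}$ with the $L^2$-dual of $\IH^q_{DB^*}$ and $\IL^\alpha_D$ with the $L^2$-dual of $\IH^q_D$, extending Proposition \ref{prop:duality} to the $T^\infty_{2,\alpha}$ scale. Under the hypothesis $\IH^q_{DB^*} = \IH^q_D$ these two preduals coincide, so $\IP:\IL^\alpha_{BD}\to \IL^\alpha_D$ is the $L^2$-adjoint of the identity map and is therefore weak-$*$ continuous; analogously $e^{-t|BD|}$ on $\IL^\alpha_{BD}$ is the $L^2$-adjoint of the strongly continuous semigroup $e^{-t|DB^*|}$ on $\IH^q_{DB^*}$ (Proposition \ref{prop:stronglimit}) and thus is weak-$*$ continuous. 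Composing the two, and using $\IP e^{-t|BD|} h = \IP e^{-t|BD|} g$ via Proposition \ref{prop:bdproj} with $h=\IP g$, yields the required weak-$*$ convergence. The main obstacle I anticipate is precisely this clean extension of the $L^2$-duality framework of Proposition \ref{prop:duality} to $p\le 1$ and the $T^\infty_{2,\alpha}$ scale; this is standard in the theory of tent spaces and Hardy spaces associated to operators but is the one ingredient not spelled out in the excerpt, and it has to be checked for the $L^2$-pairing in order to match the similarity encoded by $\IP$.
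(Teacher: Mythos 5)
Your proposal is correct and follows essentially the same route as the paper: write $h=\IP h'$ using the isomorphism of Theorem \ref{thm:duality}, use the identity $\IP b(BD)\IP h'=\IP b(BD)h'$ from Proposition \ref{prop:bdproj}, and transfer the $H^\infty$-calculus and semigroup continuity from $\IH^p_{BD}$ (resp.\ $\IL^\alpha_{BD}$) via Corollary \ref{cor:pre-hardy} and Proposition \ref{prop:stronglimit}. The continuity details you spell out (including the weak-$*$ case via the duality of Proposition \ref{prop:duality}, which already covers the $T^\infty_{2,\alpha}$ scale) are exactly what the paper leaves implicit with ``the proof when $q\le 1$ is similar and we skip it.''
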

 
 \begin{proof} Let us begin with the case $q>1$. Let $h\in \IH^p_{D}$. From the previous theorem, 
 there exists a unique $h'\in \IH^p_{BD}$ such that $h=\IP h'$.  By Proposition \ref{prop:bdproj}, since $b$ is defined at 0, 
 $$
 \IP b(BD)h= \IP b(BD) \IP h'= \IP b(BD) h'.
 $$
 Thus, by the previous corollary,  
 $$\|\IP b(BD) h\|_{p}= \| \IP b(BD) h'\|_{p }\lesssim \|\IP h'\|_{p}.
 $$
 The proof when $q\le 1$ is similar and we skip it. 
\end{proof}
 
 \begin{rem}\label{rem:weaker} The assumption $\IH^q_{DB^*}= \IH^q_{D}$ with 
equivalent norms can be weakened in Theorem  \ref{thm:duality} by the following one. This will be useful when $q\le 2$.  It suffices to assume that  $\IH^q_{DB^*}\subset \IH^q_{D}$, that  
$\|h\|_{\IH^q_{D}}\sim \|h\|_{\IH^q_{DB^*}}$ for all $h\in \IH^q_{DB^*}$ and that the inclusion is dense for the $\IH^q_{D}$ topology. Indeed, when $q>1$, \eqref{eq:420} rewrites 
\begin{align*} \label{eq:420}
  \|g\|_{\IH^p_{BD}}  
    &
     \sim \sup\{ |\pair g f|\, ; \, f\in\IH^q_{DB^*}, \|f\|_{\IH^q_{DB^*}}\le 1\} \\
    &\nonumber  \sim \sup\{ |\pair g f|\, ; \, f\in\IH^q_{DB^*},   \|f\|_{\IH^q_{D}}\le 1\} \\
    &\nonumber  \sim \sup\{ |\pair {g} f|\, ; \,  f\in\IH^q_{D}, \|f\|_{\IH^q_{D}}\le 1\} \\
     &\nonumber  = \sup\{ |\pair {\IP g} f|\, ; \,   f\in\IH^q_{D}, \|f\|_{\IH^q_{D}}\le 1\} \\
    &\nonumber  \sim \|\IP g\|_{\IH^p_{D}}.
\end{align*}  
The second line comes from the equivalence of norms, and the third from the density.  The same reasoning holds when $q\le 1$.  
The same weaker assumption can be taken in Corollary \ref{cor:hpbd0} as well. 
\end{rem}

\subsection{The spectral subspaces} 

The pre-Hardy spaces split in two spectral subspaces. This will become useful when relating this to boundary value problems as these spectral subspaces will identify to trace spaces for  elliptic systems. 

Because $T=DB$ or $BD$ is bisectorial with $H^\infty$-calculus on $L^2$, we have two spectral subspaces of $H^2_{T}= \clos{\ran_{2}(T)}$, called $H^{2,\pm}_{T}$ as defined in Proposition \ref{prop:SFimpliesFC} by 
$H^{2,\pm}_{T}= \chi^\pm(T)(H^2_{T})$. 

This can be extended to the pre-Hardy spaces $\IH^\mT_{T}$ by setting $$\IH^{\mT, \pm}_{T}:= \chi^\pm(T)(\IH^\mT_{T})= \IH^\mT_{T}\cap H^{2,\pm}_{T}.$$ This leads to the spaces $\IH^{p,\pm}_{T}$ for $0<p\le\infty$ and $\IL^{\alpha,\pm}_{T}$ for $\alpha\ge 0$. 

We have the following properties. 

(1) $\IH^{\mT}_{T}= \IH^{\mT, +}_{T} \oplus \IH^{\mT, -}_{T}$ where the sum is topological for the topology of $\IH^{\mT}_{T}$. 

(2) $(e^{\mp tT}\chi^\pm(T))_{t>0}$ are  semigroups on $ \IH^{\mT, \pm}_{T}$, which coincide with $(e^{-t|T|})_{t>0}$. Thus, they are strongly continuous if $\mT=T^p_{2}$ and weakly-$*$ continuous if $\mT=T^{\infty}_{2,\alpha}$.

\section{Pre-Hardy spaces identification}\label{sec:Hardy}

 The range of $q$ for which $\IH^q_{DB}= \IH^q_{D}$ will be our goal, together with the determination  the classes of allowable $\psi$, as we will need something more precise than what the general theory predicts. This is the most important section of this article and we give full details.

 Recall that $I(BD)=I(DB)= (p_{-}(BD), p_{+}(BD))$. We sometimes set $p_{-}=p_{-}(BD)=p_{-}(DB)$ and $p_{+}=p_{+}(BD)=p_{+}(DB)$ to simplify notation.  The situation for the operator $DB$ is simple to state and meets our needs for applications to elliptic PDEs.  We use the notation $q_{*}= \frac{nq}{n+q}$ for the lower Sobolev exponent of $q$ and $q^*=\frac{nq}{n-q}$ for the Sobolev exponent of $q$ when $q<n$. 
At this level of generality, we have the following range for comparison of Hardy spaces.  Later (Section \ref{sec:DGN}) we obtain a much bigger range under some De Giorgi assumptions when $DB$ arises  from a  second order equation or system. 

\begin{thm}\label{thm:hpdb} For $(p_{-}(DB))_{*}<p<p_{+}(DB)$,  we have $\IH^p_{DB}= \IH^p_{D}$ with 
equivalent norms. More precisely,  the comparison 
 $$
\|\Qpsi \psi {DB} h\|_{T^p_{2}} \sim \|h\|_{H^p}, \quad \forall h\in \clos{\ran_{2}(DB)}=\clos{\ran_{2}(D)},
$$
holds when  $\psi \in \Psi^{\gamma(p)}(S_{\mu})$ if $(p_{-})_{*}<p<2$  and $\psi \in \Psi(S_{\mu})$ if $2\le p <p_{+}$.  In particular,  we have the square function estimates
$$
\|\SF(tDB{e^{-t|DB|}}h)\|_{p} \sim \|\SF(t\partial_{t}{e^{-t|DB|}}h)\|_{p}\sim  \|h\|_{H^p}, \quad \forall h\in \clos{\ran_{2}(D)}.
$$
\end{thm}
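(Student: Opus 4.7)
The proof combines the one-sided inclusions from Propositions \ref{cor:lowerbounddbp<2} and \ref{cor:upperbounddbp>2} with an $L^p$ functional calculus argument on the range $I(DB)$ and a molecular extrapolation below $p_-$.

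First, for $p\in I(DB)=(p_-,p_+)$ I would invoke the quadratic estimate $\|\Qpsi\psi{DB}u\|_{T^p_2}\sim \|u\|_p$ for $u\in\clos{\ran_p(DB)}$ and any non-degenerate $\psi\in\Psi(S_\mu)$, which is a consequence of the $L^p$ $H^\infty$-calculus of Theorem \ref{thm:Lpfc} and its corollary. By Corollary \ref{cor:idb}, $\clos{\ran_p(DB)}=\clos{\ran_p(D)}$, and Theorem \ref{thm:hpd} identifies $\IH^p_D=L^p\cap\clos{\ran_2(D)}$ as a subspace of $\clos{\ran_p(D)}$ with $\|h\|_{\IH^p_D}\sim \|h\|_p$. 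Combining this with Proposition \ref{cor:lowerbounddbp<2} in the subrange $p_-<p<2$ and with Proposition \ref{cor:upperbounddbp>2} in the subrange $2\le p<p_+$ settles the equivalence throughout $I(DB)$ under the $\psi$-classes stated in the theorem.

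The remaining and main case is the subrange $(p_-)_*<p\le p_-$, where $DB$ no longer has an $L^p$ functional calculus. Proposition \ref{cor:lowerbounddbp<2} still supplies the lower bound for $\psi\in\Psi^{\gamma(p)}(S_\mu)$, so what is missing is $\|\Qpsi\psi{DB}h\|_{T^p_2}\lesssim \|h\|_{H^p}$. I would establish this through the molecular characterization in Theorem \ref{thm:hpd}, namely $\IH^p_D=\IH^p_{D,mol,1}$: it suffices to show $\|\SF(\psi(tDB)m)\|_p\lesssim 1$ uniformly in an $(\IH^p_D,\epsilon,1)$-molecule $m=Db$ adapted to a cube $Q$. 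Splitting the cone integral defining $\SF$ according to $t\le \ell(Q)$ or $t>\ell(Q)$, decomposing $\R^n$ along dyadic annuli $2^{j+1}Q\setminus 2^jQ$, and further writing $m=\sum_i m\mathbf{1}_{S_i(Q)}$, on each piece I would apply the off-diagonal bound \eqref{eq:odnpsipq} of Section \ref{sec:lplq} for $\psi(tDB)$ with an auxiliary exponent $r$ slightly above $p_-$, and use the factorization $m=Db$ to trade one $D$ for an extra factor $t/\ell(Q)$ in the regime $t\le \ell(Q)$. The precise threshold $p>(p_-)_*$ appears exactly so that the decay $\tau>\gamma(p)=n/p-n/2$ available from $\psi\in\Psi^{\gamma(p)}(S_\mu)$ outweighs the $L^r$-$L^p$ volumetric loss $t^{n/2-n/p}$ produced by H\"older's inequality on an annulus.

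The hardest step will be organizing these off-diagonal estimates so that both the sum over annular scales and the $t$-integral converge; the $t>\ell(Q)$ far-annulus contribution in particular requires the cancellation encoded in $m=Db$ to combine with enough decay of $\psi$ at $\infty$, which is exactly the role of the class $\Psi^{\gamma(p)}(S_\mu)$. Once this single-molecule bound is in hand, the quasi-triangle inequality in $T^p_2$ and linearity over the molecular decomposition yield the upper bound on all of $\IH^p_D$, completing the equivalence. Finally, the square-function identities for $tDBe^{-t|DB|}$ and $t\partial_te^{-t|DB|}$ follow from the admissibility of $\psi(z)=ze^{-\modz}$ in every class invoked, together with the identity $t\partial_te^{-t|DB|}=-\sgn(DB)\,tDBe^{-t|DB|}$ and the boundedness of $\sgn(DB)$ on $\IH^p_{DB}$ provided by Corollary \ref{cor:pre-hardy}.
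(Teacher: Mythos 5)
There is a genuine gap, and it sits at the center of your plan. The corollary to Theorem \ref{thm:Lpfc} gives, for $p\in I(DB)$, only the \emph{vertical} square function equivalence \eqref{eq:psiTLp}, i.e. $\|(\int_0^\infty|\psi(tDB)u|^2\,\frac{dt}{t})^{1/2}\|_p\sim\|u\|_p$; it does not give the tent-space (conical) equivalence $\|\Qpsi{\psi}{DB}h\|_{T^p_2}\sim\|h\|_p$. The comparison between the two goes only one way for each $p\neq 2$: conical $\lesssim$ vertical for $p\ge 2$, vertical $\lesssim$ conical for $p\le 2$. So from the $L^p$ functional calculus you get, for $p_-<p<2$, only the lower bound (which Proposition \ref{cor:lowerbounddbp<2} already gave), and for $2\le p<p_+$ only the upper bound (Proposition \ref{prop:upperbdp>2}). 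Two halves of the theorem are therefore unproved in your proposal: the conical upper bound $\|\Qpsi{\psi}{DB}h\|_{T^p_2}\lesssim\|h\|_{H^p}$ for $p_-<p<2$ (this is precisely where the paper does its main work, Theorem \ref{thm:upperqpsip<2}: a weak-type estimate built on the Calder\'on--Zygmund decomposition of Sobolev functions, the local coercivity Lemma \ref{lem:localcoerc}, off-diagonal bounds, and real interpolation of $\dot W^{1,p}$ spaces, iterated via Lemma \ref{lem:extrapolation}), and the conical lower bound $\|h\|_p\lesssim\|\Qpsi{\psi}{DB}h\|_{T^p_2}$ for $2<p<p_+$, which the paper obtains by duality: pairing against $\Qpsi{\varphi^*}{B^*D}g$, writing $\varphi^*(tB^*D)g=B^*\varphi^*(tDB^*)(B^*)^{-1}g$, and using that $\clos{\ran_{p}(D)}$ and $\clos{\ran_{p'}(B^*D)}$ are dual (\cite{AS}). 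Neither mechanism appears in your plan, and neither is a formal consequence of the calculus.

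A second, related defect is the range of your molecular extrapolation. The identity $\IH^p_D=\IH^p_{D,mol,1}$ from Theorem \ref{thm:hpd} is only available for $\frac{n}{n+1}<p\le 1$, so testing on $(\IH^p_D,\epsilon,1)$-molecules can only produce the upper bound for $p\le 1$. When $p_->1$ (which is possible), the exponents $1<p\le p_-$ — indeed all of $\max(1,(p_-)_*)<p<2$, as explained above — are left uncovered; the paper handles them exactly by the CZ/weak-type argument you omit, while the atomic computation (close in spirit to your annular off-diagonal sketch, via $\psi(tDB)Db=D\psi(tBD)b$, local coercivity, and $L^q$--$L^2$ off-diagonal decay with $p_-<q<p^*$) is reserved for $p\le 1$. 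Your final reduction of the two displayed square-function estimates to admissible choices of $\psi$ is fine, but it rests on the two-sided tent-space comparison that the proposal has not established.
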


\begin{rem} In the case of constant $B$ as in Section \ref{sec:constant}, or under the assumption of Proposition \ref{prop:n=1} if $n=1$,  we have $p_{-}(DB)=1$ and $p_{+}(DB)=\infty$, hence the interval is the largest possible one $(\frac{n}{n+1},\infty)$. 
\end{rem}

The situation for $BD$ is a little more complicated, since we want $\psi(z)=O(z)$ for applications and also since the functions of $BD$ do not give all the information we need for the elliptic PDEs. We state this in three different results. 

\begin{thm}\label{thm:hpbd} For $p_{-}(DB)<p<(p_{+}(DB))^*$,   we have $\IP: \IH^p_{BD} \to \IH^p_{D}$ is an isomorphism. More precisely,      the comparison 
 $$
\|\Qpsi \psi {BD} h\|_{T^p_{2}} \sim \|\IP h\|_{\IH^p_{D}}, \quad \forall h\in \clos{\ran_{2}(BD)},
$$
holds when  $\psi \in \Psi^{\gamma(p)}(S_{\mu})$ if $p_{-}<p<2 $,  $\psi \in \Psi(S_{\mu})$ if $2<p<p_{+}$ and  
$\psi \in \Psi_{ \frac{n}{p}-\frac{n}{p_{+}}}(S_{\mu})$ if $p_{+}\le p <(p_{+})^*$. 

Moreover, if $p_{+}>n$, then for $0<\alpha<1-\frac{n}{p_{+}}$,  we have that $\IP: \IL^{{\alpha}}_{BD} \to \IL^{{\alpha}}_{D}$ is an isomorphism with 
$$
\|\Qpsi \psi {BD} h\|_{T^\infty_{2,\alpha}} \sim \|\IP h\|_{\dot \Lambda^\alpha}, \quad \forall h\in \clos{\ran_{2}(BD)},
$$
when $\psi \in \Psi_{\alpha+\frac{n}{p_{+}}}(S_{\mu})$. 

\end{thm}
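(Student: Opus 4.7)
The plan is to deduce Theorem \ref{thm:hpbd} from Theorem \ref{thm:hpdb} applied to the adjoint operator $DB^*$, combined with the duality framework of Theorem \ref{thm:duality} and Proposition \ref{prop:duality}. Using \eqref{eq:limits}, $p_{\pm}(DB^*)=p_{\mp}(BD)'$, so Theorem \ref{thm:hpdb} for $DB^*$ yields $\IH^q_{DB^*}=\IH^q_D$ with equivalent norms, together with the stated square function characterization, for $q\in((p_+')_*,p_-')$. Applying Theorem \ref{thm:duality} at each such $q$ produces the isomorphism $\IP:\IH^p_{BD}\to\IH^p_D$ for $p=q'\in(p_-,((p_+')_*)')$, and the Sobolev-conjugate identity $((p_+')_*)'=(p_+)^*$ (interpreted as $\infty$ when $p_+\ge n$) delivers the stated range $p\in(p_-,(p_+)^*)$. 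For the H\"older part, which corresponds to $q\le 1$ and hence requires $p_+>n$, the parameter $\alpha=n(1/q-1)$ ranges over $[0,1-n/p_+)$, and the $\IL^\alpha$ branch of Theorem \ref{thm:duality} yields $\IP:\IL^\alpha_{BD}\to\IL^\alpha_D$.

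Within the subrange $p_-<p<p_+$, where $BD$ admits a bounded $H^\infty$-calculus on $L^p$ by Theorem \ref{thm:Lpfc}, the allowable classes $\psi\in\Psi^{\gamma(p)}(S_\mu)$ for $p<2$ and $\psi\in\Psi(S_\mu)$ for $2\le p<p_+$ are exactly those furnished by the general pre-Hardy theory of Corollary \ref{cor:pre-hardy}, and the target norm equals $\|\IP h\|_{H^p}$ by Theorem \ref{thm:hpd}; combined with the isomorphism above this delivers the two-sided equivalence $\|\Qpsi\psi{BD}h\|_{T^p_2}\sim\|\IP h\|_{H^p}$. In the extended range $p\ge p_+$ the lower bound $\|\IP h\|_{H^p}\lesssim\|\Qpsi\psi{BD}h\|_{T^p_2}$ (and its $\dot\Lambda^\alpha$ analogue) also comes essentially for free via Proposition \ref{prop:duality} applied to the pair $(BD,DB^*)$, since the required upper square function bound for $DB^*$ at the dual exponent $p'\le p_+'$ is already contained in Theorem \ref{thm:hpdb}.

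The main obstacle is therefore the upper bound $\|\Qpsi\psi{BD}h\|_{T^p_2}\lesssim\|\IP h\|_{H^p}$ (respectively $\lesssim\|\IP h\|_{\dot\Lambda^\alpha}$) when $p\ge p_+$, since $BD$ no longer admits a bounded $H^\infty$-calculus on the target space and the general tent-space framework does not apply directly. I would use Proposition \ref{prop:bdproj} to rewrite $\psi(tBD)h=\psi(tBD)\IP h$ on $\clos{\ran_{2}(BD)}$, pick an auxiliary exponent $p_0\in(p_-,p_+)$ just below $p_+$, and exploit the $L^{p_0}-L^p$ (resp.\ $L^{p_0}-L^\infty$) off-diagonal estimates of \eqref{eq:odnpsipq}, together with Corollary \ref{cor:extension} for the Carleson-measure endpoint. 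The order of vanishing of $\psi$ at the origin prescribed in the statement is dictated precisely by the Sobolev jump from $p_0$ to $p$ (or to $L^\infty$ with a H\"older twist of order $\alpha$ for the $\dot\Lambda^\alpha$ case); the off-diagonal decay is localized to concentric annuli around each $x\in\R^n$ and summed, and the tent-space $T^p_2$ (resp.\ $T^\infty_{2,\alpha}$) norm is controlled using standard atomic/molecular arguments in $H^{p'}$ and the Carleson-measure duality, completing the proof.
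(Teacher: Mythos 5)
Your overall architecture is sound and largely parallels the paper: the range bookkeeping via $p_{\pm}(DB^*)=p_{\mp}(BD)'$ and $((p_{+}')_{*})'=(p_{+})^*$ is exactly the paper's opening remark; the lower bounds by dualizing against $\Qpsi {\varphi^*}{DB^*}$ with $\varphi$ allowable for $\IH^q_{DB^*}$ (supplied by Theorem \ref{thm:hpdb}) is the paper's argument in the proof; and your treatment of the H\"older branch by mean subtraction together with Corollary \ref{cor:extension} is essentially Proposition \ref{prop:p+>n}. Two smaller corrections: for $2<p<p_{+}$ the class $\Psi(S_{\mu})$ is \emph{not} what Corollary \ref{cor:pre-hardy} furnishes (that is only $\Psi_{\gamma(p)}(S_{\mu})$); it comes from Proposition \ref{prop:upperbdp>2}, i.e.\ the vertical square function estimate \eqref{eq:psiTLp} plus domination of conical by vertical square functions for $p>2$. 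For $p_{-}<p<2$ the paper uses the similarity $\psi(tBD)=B\psi(tDB)B^{-1}$ and Theorem \ref{thm:upperqpsip<2} rather than the duality isomorphism, though your route also closes there.

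The genuine gap is the upper bound $\|\Qpsi \psi {BD} h\|_{T^p_{2}} \lesssim \|\IP h\|_{p}$ for $p_{+}\le p<(p_{+})^*$ with $\psi$ vanishing at $0$ only to the small order allowed in the statement (in particular $\psi(z)=O(z)$), and neither of your two mechanisms produces it. Dualizing the $T^p_{2}$ norm reduces to the synthesis bound $\|\Tpsi {\psi^*}{B^*D} G\|_{p'}\lesssim\|G\|_{T^{p'}_{2}}$ at the exponent $p'\le p_{+}(BD)'=p_{-}(B^*D)$, i.e.\ at or below the lower endpoint of the calculus interval; this is precisely the dual of the inequality being proved, and since $1<p'<2$ there is no atomic decomposition of $T^{p'}_{2}$ or $H^{p'}$ to invoke, so the step you call ``standard atomic/molecular arguments in $H^{p'}$'' is circular. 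The direct alternative --- annuli plus the off-diagonal bounds \eqref{eq:odnpsipq} with an auxiliary $p_{0}<p_{+}$, yielding a Carleson-functional/maximal-function estimate and then a $T^p_{2}$ bound --- requires vanishing order $\sigma>\frac{n}{p_{+}}$ (the constraint of the $p=\infty$ endpoint, as in Lemma \ref{lem:sfpinfty}), which is strictly stronger than the Sobolev-jump threshold $\frac{n}{p_{+}}-\frac{n}{p}$ claimed in the theorem and, when $p_{+}<n$ (the typical case), no longer admits all $\psi\in\Psi_{1}(S_{\mu})$. The paper bridges exactly this point by Stein interpolation of the analytic family $\psi_{\alpha}(z)=\modz^{\alpha-\sigma}(1+\modz)^{-(\alpha-\sigma)}\psi(z)$ adapted from \cite{AHM}: the $T^\infty_{2}$ endpoint of Lemma \ref{lem:sfpinfty} for $\re\alpha>\frac{n}{p_{+}}$, the $2\le p<p_{+}$ bound, and the complex interpolation lemma combine to give Corollary \ref{cor:psi1}, i.e.\ the precise trade-off between the vanishing order of $\psi$ and $p$. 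Without this (or an equivalent device) your argument reaches the stated classes only at the H\"older/Carleson endpoint, not in the range $p_{+}\le p<(p_{+})^*$.
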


\begin{cor} 
For $p\in I(BD)$, 
$ \IH^p_{BD}= \clos{\ran_{2}(BD)}\cap \clos{\ran_{p}(BD)}= \clos{\ran_{2}(BD)}\cap L^p$.
\end{cor}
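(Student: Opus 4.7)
The plan is to combine several isomorphism results built on the projection $\IP$. For $p\in I(BD)\subset (1,\infty)$, Theorem \ref{thm:hpd} identifies $\IH^p_{D}=L^p\cap\clos{\ran_{2}(D)}=\clos{\ran_{p}(D)}\cap\clos{\ran_{2}(D)}$. Three incarnations of $\IP$ are available, all of them being restrictions of the $L^2$-orthogonal projection onto $\clos{\ran_{2}(D)}$: namely $\IP:\clos{\ran_{2}(BD)}\to\clos{\ran_{2}(D)}$ (Proposition \ref{prop:projection}), $\IP:\clos{\ran_{p}(BD)}\to\clos{\ran_{p}(D)}$ (Proposition \ref{prop:projectionlp}), and $\IP:\IH^p_{BD}\to\IH^p_{D}$ (Theorem \ref{thm:hpbd}). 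A crucial fact, coming from the consistency of the functional calculus of $BD$ between $L^2$ and $L^p$ (remark after Theorem \ref{thm:Lpfc}), is that the inverses $\IP_{BD}$ of these three maps coincide on their common domains.

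For the equality $\IH^p_{BD}=\clos{\ran_{2}(BD)}\cap\clos{\ran_{p}(BD)}$, I would observe that both sides are subspaces of $\clos{\ran_{2}(BD)}$, on which $\IP$ is injective, so it suffices to show they have the same $\IP$-image. Theorem \ref{thm:hpbd} gives $\IP(\IH^p_{BD})=\IH^p_{D}=\clos{\ran_{p}(D)}\cap\clos{\ran_{2}(D)}$. Combining Propositions \ref{prop:projection} and \ref{prop:projectionlp} with consistency of $\IP_{BD}$ shows that $\IP$ maps $\clos{\ran_{2}(BD)}\cap\clos{\ran_{p}(BD)}$ onto $\clos{\ran_{p}(D)}\cap\clos{\ran_{2}(D)}$ as well: an element $f$ of the target is pulled back both via the $L^2$-inverse and the $L^p$-inverse, and the two pull-backs are equal, hence lie in $\clos{\ran_{2}(BD)}\cap\clos{\ran_{p}(BD)}$.

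For the remaining equality $\clos{\ran_{2}(BD)}\cap\clos{\ran_{p}(BD)}=\clos{\ran_{2}(BD)}\cap L^p$, the inclusion $\subset$ is trivial. For the converse, given $h\in\clos{\ran_{2}(BD)}\cap L^p$, I would apply the $L^p$ kernel/range decomposition $L^p=\clos{\ran_{p}(BD)}\oplus\nul_{p}(D)$ from Theorem \ref{thm:Lpfc} (using $\nul_{p}(BD)=\nul_{p}(D)$ from Lemma \ref{lem:biso}) and write $h=h_{1}+h_{2}$. Since $\IP h\in L^2\cap L^p\cap\clos{\ran_{p}(D)}\cap\clos{\ran_{2}(D)}$, the consistency argument gives $h_{1}=\IP_{BD}(\IP h)\in L^2\cap L^p$; consequently $h_{2}=h-h_{1}\in L^2\cap\nul_{p}(D)=\nul_{2}(D)\cap\clos{\ran_{2}(BD)}=\{0\}$, so $h=h_{1}\in\clos{\ran_{p}(BD)}$.

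The main delicate point throughout is the bookkeeping of the several extensions of $\IP$ and $\IP_{BD}$ and the verification of their consistency; once this is in place, the three equalities reduce to simple diagram chases between the isomorphism statements already at our disposal.
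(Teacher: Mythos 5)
Your argument is correct and is essentially the paper's own proof made explicit: both rest on Propositions \ref{prop:projection} and \ref{prop:projectionlp}, Theorem \ref{thm:hpbd}, the identification $\IH^p_{D}=L^p\cap\clos{\ran_{2}(D)}$ from Theorem \ref{thm:hpd}, and the $L^2$/$L^p$ consistency of $\IP$ and $\IP_{BD}$, which the paper compresses into the single norm equivalence $\|h\|_{\IH^p_{BD}}\sim\|\IP h\|_{p}\sim\|h\|_{p}$ for $h\in\clos{\ran_{2}(BD)}$, while you carry out the same bookkeeping at the level of sets (image matching under the injective map $\IP$, plus the $L^p$ kernel/range splitting). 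The only blemish is notational: your final chain $h_{2}\in L^2\cap\nul_{p}(D)=\nul_{2}(D)\cap\clos{\ran_{2}(BD)}$ is not a set equality and should be read as the two separate memberships $h_{2}\in\nul_{2}(D)$ and $h_{2}=h-h_{1}\in\clos{\ran_{2}(BD)}$, after which $h_{2}=0$ follows from the $L^2$ kernel/range decomposition as you intend.
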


\begin{proof} 
Remark that for $p\in I(BD)$, we have $\|\IP h\|_{p}\sim \|h\|_{p}$ for all $h\in  \clos{\ran_{2}(BD)}$ 
by Proposition \ref{prop:projectionlp}. So $\IH^p_{BD}$ is the set of $h\in \clos{\ran_{2}(BD)}$ for which $\|h\|_{p}<\infty$, which is $\clos{\ran_{2}(BD)}\cap \clos{\ran_{p}(BD)}= \clos{\ran_{2}(BD)}\cap L^p$. 
\end{proof}

\begin{rem}
 If,  furthermore, $B$ is invertible in $L^\infty$, then $ \IH^p_{BD}= \clos{\ran_{2}(BD)}\cap L^p$ also for $\max (1,(p_{-})_{*})\le p<p_{-}$. See \cite{Sta}. In particular, $\IH^p_{BD}$ is also a subspace of $L^p$ but this is not so useful in practice.
\end{rem}

\begin{rem} 
In each theorem, the classes of $\psi$ for the upper bounds are what is expected  from the general theory when $p<2$ and a little better when $p>2$. In particular, the  classes for the upper bounds of $\|\Qpsi \varphi T h\|_{\mT}$  obtained for $p>2$ will require a specific statement  (Corollary \ref{cor:psi1} and Proposition \ref{prop:p+>n}). Note that all these classes allow the behavior $\psi(z)=O(z)$ at 0.  This will be important for applications to elliptic equations.   However, it could be that we want to use square functions with some $\psi(z)=O(z)$ at 0 for $p$ beyond the exponent $(p_{{+}}(BD))^*$.  Indeed,  the value of $p_{+}(BD)$ is  usually close to 2 while one needs to consider $p=\infty$. This is the object of the next  result where the failure of good vanishing order at 0 is compensated by being approximable to higher order at 0 on each component of $S_{\mu}$.
\end{rem}

We introduce  specific classes in $H^\infty(S_{\mu})$.  We let  $ \mR^{k}(S_{\mu})$, $k=1,2$,  be the subclasses of $H^\infty(S_{\mu})$ of those $\phi$ of the form
\begin{equation}
\phi(z)= \sum_{m=1}^M c_{m}(1+imz)^{-k}
\end{equation}
for some integer $M\ge 1$ and $c_{m}\in \C$. Next, for $\sigma>0$,  we define $\mR^{k}_{\sigma}(S_{\mu})$
as the subset  of those $\psi\in H^\infty(S_{\mu})$ for which  there exist  $\phi_{\pm}\in \mR^{k}(S_{\mu})$ with 
\begin{equation}
|\psi(z)-\phi_{\pm}(z)|=O(|z|^\sigma), \quad \forall z\in S_{\mu\pm}.
\end{equation}
We mean here that we may use different approximations of $\psi$ in each sector $S_{\mu+}$ and $S_{\mu-}$. The main example is for us $\psi(z)= \modz e^{-\modz}$. For $z\in {S_{\mu+}}$, $\psi(z)=ze^{-z}$, so this is the restriction of an analytic function on $\C$ and for any given $\sigma>1$,  it is easy (by solving a finite dimensional linear Vandermonde  system)  to find $\phi_{+}\in \mR^{1}(S_{\mu})$ such that $|\psi(z)-\phi_{+}(z)|=O(|z|^\sigma)$ for $z\in S_{\mu+}$. The same thing can be done in $S_{\mu-}$ but  $\phi_{-}$ must be different. 

\begin{thm}\label{thm:goodhpbd}  Assume that for some $q$ with $\frac{n}{n+1}<q< p_{+}(DB^*)$   we have $\IH^q_{DB^*}= \IH^q_{D}$ with 
equivalent norms. Let    $\psi\in \mR^{1}_{\sigma}(S_{\mu}) \cap  \Psi_{1}^\tau(S_{\mu})$ with $\sigma>\gamma(q)$ and $\tau>0$ if $q<2$,  and $\psi\in   \Psi^\tau(S_{\mu})$ with $\tau>\gamma(q)$ if $q>2$.

 If $q>1$ and  $p=q'$, 
we have
$$
\|\Qpsi \psi {BD} h\|_{T^p_{2}} \sim \|\IP h\|_{L^p}, \quad \forall h\in \clos{\ran_{2}(BD)},
$$
 and  if $q\le 1$ and $\alpha=n(\frac{1}{q}-1)$,
$$
\|\Qpsi \psi {BD} h\|_{T^\infty_{2,\alpha}} \sim \|\IP h\|_{\dot \Lambda^\alpha}, \quad \forall h\in \clos{\ran_{2}(BD)}.
$$
 In particular, if $q>1$, we have the square function estimates
$$
\|\SF(tBD{e^{-t|BD|}}h)\|_{p}\sim \|\SF(t\partial_{t}{e^{-t|BD|}}h)\|_{p}\sim  \|\IP h\|_{L^p}, \quad \forall h\in \clos{\ran_{2}(BD)},
$$
and, if $q\le 1$, the weighted Carleson measure  estimates
$$
\|tBD{e^{-t|BD|}}h\|_{T^\infty_{2,\alpha}} \sim \|t\partial_{t}{e^{-t|BD|}}h\|_{T^\infty_{2,\alpha}} \sim \|\IP h\|_{\dot \Lambda^\alpha}, \quad \forall h\in \clos{\ran_{2}(BD)}.
$$
\end{thm}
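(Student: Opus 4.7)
The plan concerns the upper bound $\|\Qpsi\psi{BD} h\|_\mT\lesssim\|\IP h\|_{X_p}$, where $X_p,\mT$ denote $L^p,T^p_2$ when $q>1$ and $\dot\Lambda^\alpha,T^\infty_{2,\alpha}$ when $q\le 1$. The matching lower bound will follow from Proposition~\ref{prop:lowerbounds}, using the non-degeneracy of $\psi$ and the duality pairing between $\IH^p_{BD}$ (or $\IL^\alpha_{BD}$) and $\IH^q_{DB^*}$, which under the hypothesis $\IH^q_{DB^*}=\IH^q_D$ is identified via $\IP$ with the standard $X_p$-$X_q$ duality (Theorem~\ref{thm:duality}).

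For $q>2$ (hence $p<2$), the hypothesis $\psi\in\Psi^\tau$ with $\tau>\gamma(q)=\gamma(p)$ places $\psi$ directly in the allowable class $\Psi_{T^p_2}=\Psi^{\gamma(p)}$ of Proposition~\ref{prop:table}, and Corollary~\ref{cor:pre-hardy} combined with Theorem~\ref{thm:duality} gives $\|\Qpsi\psi{BD} h\|_{T^p_2}\sim\|h\|_{\IH^p_{BD}}\sim\|\IP h\|_{L^p}$. The core case is $q\le 2$. I would first spectrally split $h=h^++h^-$ with $h^\pm=\chi^\pm(BD)h\in\IH^{2,\pm}_{BD}$, noting that $\|\IP h^\pm\|_{X_p}\lesssim\|\IP h\|_{X_p}$ by Corollary~\ref{cor:hpbd0}. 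The $\mR^1_\sigma$ hypothesis furnishes $\phi_\pm\in\mR^1(S_\mu)$ with $|\psi-\phi_\pm|\lesssim|z|^\sigma$ on $S_{\mu\pm}$; since $\psi(0)=0$ and $\sigma>0$, one has $\phi_\pm(0)=\sum_m c_{m,\pm}=0$. The remainder $r_\pm:=(\psi-\phi_\pm)\chi^\pm$ lies in $\Psi_\sigma^{\tau'}(S_\mu)$ for some $\tau'>0$ (inherited from the decay of $\psi$ and of $\phi_\pm$ at infinity), and because $\sigma>\gamma(q)$ it belongs to the allowable class for $\IH^p_{BD}$ (respectively $\IL^\alpha_{BD}$). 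Consequently
\[
\|\Qpsi{r_\pm}{BD} h^\pm\|_\mT \lesssim \|h^\pm\|_{\IH^p_{BD}}\sim\|\IP h^\pm\|_{X_p}\lesssim\|\IP h\|_{X_p}.
\]

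The remaining main term $\|\Qpsi{\phi_\pm\chi^\pm}{BD} h^\pm\|_\mT$ I would control by duality: for $G\in T^q_2\cap T^2_2$,
\[
\langle\Qpsi{\phi_\pm\chi^\pm}{BD} h^\pm, G\rangle = \langle\IP h^\pm,\Tpsi{\phi_\pm^*\chi^\mp}{DB^*} G\rangle_{L^2}
\]
(using $\Tpsi{\cdot}{DB^*} G\in\clos{\ran_{2}(D)}$), so it suffices to prove $\|\Tpsi{\phi_\pm^*\chi^\mp}{DB^*} G\|_{X_q}\lesssim\|G\|_{T^q_2}$. By the hypothesis $\IH^q_{DB^*}=\IH^q_D$ combined with Theorem~\ref{thm:hpd}, the $X_q$-norm on $\clos{\ran_2(D)}$ is controlled by the $\IH^q_{DB^*}$-norm, so I would instead prove this latter bound via a molecular decomposition: for a $T^q_2$ tent atom $A$ supported in a tent over a cube $Q$, verify that $\Tpsi{\phi_\pm^*\chi^\mp}{DB^*} A$ is (up to a uniform constant) an $(\IH^q_{DB^*},\epsilon,M)$-molecule. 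The required polynomial decay on the annular shells $S_j(Q)$ follows from the $L^2$ off-diagonal estimates of Lemma~\ref{lem:odd} for the resolvents $(1\mp imsDB^*)^{-1}$, integrated against $A$ using Cauchy--Schwarz and the tent-atom normalization, while the cancellation $\sum_m c_{m,\pm}=0$ ensures that the $s$-integral converges near $s=0$ with the right size.

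The main obstacle is this molecular verification. The resolvents $(1\mp imsDB^*)^{-1}$ possess only $O(1)$ $L^2$-operator norm, not the $O((s|DB^*|)^{-N})$ decay that standard Hardy-space molecular theory exploits, so the polynomial spatial decay required of a molecule must be drawn entirely from the off-diagonal estimates of Lemma~\ref{lem:odd}, and the usual $\Psi$-type vanishing at the origin must be replaced by the global cancellation $\sum_m c_{m,\pm}=0$. Carrying this out uniformly across $q\in(n/(n+1),\min(2,p_+(DB^*)))$ and adapting the argument to $\dot\Lambda^\alpha$ norms when $q\le 1$ is the technical core of the proof.
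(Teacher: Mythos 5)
Your lower bound, the $q>2$ case, and the decomposition are the same as the paper's: spectral splitting $h=h^++h^-$, control of $\|\IP h^\pm\|_{X_p}$ by Corollary \ref{cor:hpbd0}, and the $\mR^1$-approximation writing $\psi$ as an allowable remainder plus $\phi_\pm\chi^\pm$. The divergence is in the treatment of the main term $\phi_\pm(tBD)h^\pm$, and that is where your plan has a genuine gap. First, a structural point: since $\chi^\pm(BD)h^\pm=h^\pm$, the spectral projection should be absorbed into $h^\pm$ before dualizing; as written you keep it inside the dual operator $\Tpsi{\phi_\pm^*\chi^\mp}{DB^*}$ (note also that the adjoint of $\chi^\pm(BD)$ is $\chi^\pm(DB^*)$, not $\chi^\mp(DB^*)$), and this projection satisfies no $L^2$ off-diagonal estimates, so the annular decay required of a molecule cannot be extracted from Lemma \ref{lem:odd} for the composite operator. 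Second, and more seriously, even with the projection removed, $m=\Tpsi{\phi_\pm^*}{DB^*}A$ cannot be verified to be an $\left(\IH^q_{DB^*},\epsilon,M\right)$-molecule: the cancellation $\sum_m c_m=0$ gives exactly first-order vanishing of $\phi_\pm^*$ at $0$, so the only available factorization is $m=(DB^*)b$ (the functions $z^{-k}\phi_\pm^*(z)$ are unbounded near $0$ for $k\ge2$), whereas the molecular characterization of $\IH^q_{DB^*}$ requires $M>\frac nq-\frac n2$, which exceeds $1$ for every $q\le1$ when $n\ge2$ and tends to $\frac n2+1$ as $q\downarrow\frac n{n+1}$. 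So the criterion you propose to check is not attainable, whatever the off-diagonal decay.

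Two repairs are possible. Within your duality framework, redirect the molecular argument to $D$ rather than $DB^*$: write $m=D(B^*b)$ and check that it is an $\left(\IH^{q}_{D},\epsilon,1\right)$-molecule (the annular $L^2$ decay does follow from Lemma \ref{lem:odd} because $\phi_\pm^*$ is a finite sum of resolvents), then invoke Theorem \ref{thm:hpd}, for which $M=1$ suffices and which identifies $\IH^q_{D}$ with $H^q\cap\clos{\ran_{2}(D)}$; this yields $\|\Tpsi{\phi_\pm^*}{DB^*}G\|_{H^q}\lesssim\|G\|_{T^q_2}$ for $q\le1$, and for $1<q<2$, where tent-space atoms are unavailable, you must add an interpolation step that your plan does not mention. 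The paper instead avoids duality and molecules altogether: its Lemma \ref{lem:r1} bounds $\|\Qpsi{\phi_\pm}{BD}h\|_{T^p_2}$ by $\|\IP h\|_{L^p}$ and $\|\Qpsi{\phi_\pm}{BD}h\|_{T^\infty_{2,\alpha}}$ by $\|\IP h\|_{\dot\Lambda^\alpha}$ directly, via a Carleson-box estimate ($L^\infty\cap L^2\to T^\infty_2$, using the arbitrary-order off-diagonal decay of the resolvents on annuli, $\phi_\pm(0)=0$ for the local square-function piece and to annihilate constants in the H\"older case) interpolated with the $L^2\to T^2_2$ bound; this handles all $p>2$ and all $0\le\alpha<1$ at once and is shorter than the molecular route.
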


 When $(p_{-}(DB^*))_{*}<q<p_{+}(DB^*)$, Theorem \ref{thm:hpdb} already takes care of the conclusions without the condition $\mR^{1}_{\sigma}(S_{\mu})$.  We state it this way for later use.  In fact, for the boundary value problems later, we also need  
tent space estimates for $tD{e^{-t|BD|}}h$. 
When $B^{-1}$ exists in $L^\infty$,  in particular, this covers the case of second order equations, these results are enough for our needs. But for systems with $B^{-1}\in L^\infty$  not granted, one still has to work a little bit.  This result covers  both situations.

\begin{thm}\label{thm:supergoodhpbd}  Assume that for some $q$ with $\frac{n}{n+1}<q< p_{+}(DB^*)$   we have $\IH^q_{DB^*}= \IH^q_{D}$ with 
equivalent norms. Let   $\phi\in\mR^2_{\sigma}(S_{\mu})\cap \Psi^\tau_{0}(S_{\mu})$ with $\sigma>\gamma(q), \tau>1 $   if $q<2$ and $\phi \in \Psi^\tau_{0}(S_{\mu})$ with $\tau>1+\gamma(q)$ if $q>2$. 

If $q>1$ and   $p=q'$, 
 we have
$$
\|tD \phi ( {tBD}) h\|_{T^p_{2}} \sim \|\IP h\|_{L^p}, \quad \forall h\in \clos{\ran_{2}(BD)},
$$
 and, if $q\le 1$, and $\alpha=n(\frac{1}{q}-1)$,
 $$
\|tD \phi ({tBD}) h\|_{T^\infty_{2,\alpha}} \sim \|\IP h\|_{\dot \Lambda^\alpha}, \quad \forall h\in \clos{\ran_{2}(BD)}.
$$

In particular, if $q>1$, we have the square function estimate
$$
\|\SF(tD{e^{-t|BD|}}h)\|_{p}\sim  \|\IP h\|_{L^p}, \quad \forall h\in \clos{\ran_{2}(BD)}, 
$$
and, if $q\le 1$, the weighted Carleson measure  estimate
$$
\|tD{e^{-t|BD|}}h\|_{T^\infty_{2,\alpha}} \sim \|\IP h\|_{\dot \Lambda^\alpha}, \quad \forall h\in \clos{\ran_{2}(BD)}.
$$
\end{thm}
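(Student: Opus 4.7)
The plan is to reduce to the preceding Theorem \ref{thm:goodhpbd} via two intertwining identities together with the $\mR^2\to\mR^1$ passage under $\phi\mapsto z\phi$. By Proposition \ref{prop:projection}, every $h\in \clos{\ran_{2}(BD)}$ factors uniquely as $h=Bk$ with $k\in\clos{\ran_{2}(D)}$. Combining this with the intertwining relations $\phi(tBD)B=B\phi(tDB)$ and $D\phi(tBD)=\phi(tDB)D$ (valid on dense subspaces) yields the two key identities
\begin{equation*}
tD\phi(tBD)h=\psi(tDB)k\qquad\text{and}\qquad \psi(tBD)h=B\cdot tD\phi(tBD)h,\qquad \psi(z)=z\phi(z).
\end{equation*}
The hypotheses $\phi\in\mR^2_{\sigma}(S_{\mu})\cap \Psi^\tau_{0}(S_{\mu})$ with $\sigma>\gamma(q)$ and $\tau>1$ (resp. $\phi\in \Psi^\tau_{0}$ with $\tau>1+\gamma(q)$ for $q>2$) translate to $\psi\in\mR^1_{\sigma+1}(S_{\mu})\cap \Psi^{\tau-1}_{1}(S_{\mu})$ (resp. $\psi\in \Psi^{\tau-1}_{1}$), via the partial-fraction identity $z(1+imz)^{-2}=(im)^{-1}[(1+imz)^{-1}-(1+imz)^{-2}]$ and the observation that every $(1+imz)^{-2}$ can be matched by a finite $\mR^1$-sum to any prescribed polynomial order at $0$ (by solving a Vandermonde system). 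Hence Theorem \ref{thm:goodhpbd} applied to $\psi$ furnishes
\begin{equation*}
\|\psi(tBD)h\|_{T^p_{2}}\sim \|\IP h\|_{L^p}\qquad(q>1,\ p=q'),
\end{equation*}
together with its $T^\infty_{2,\alpha}$--$\dot\Lambda^\alpha$ analogue for $q\leq 1$.

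The lower bound $\|\IP h\|_{L^p}\lesssim \|tD\phi(tBD)h\|_{T^p_{2}}$ (and its $\dot\Lambda^\alpha$-counterpart) is then immediate from the second identity and the pointwise bound $|BF(t,y)|\leq\|B\|_\infty|F(t,y)|$ on tent-space integrands.

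For the upper bound $\|tD\phi(tBD)h\|_{T^p_{2}}\lesssim \|\IP h\|_{L^p}$, the first identity recasts the problem as $\|\psi(tDB)k\|_{T^p_{2}}\lesssim \|\IP(Bk)\|_{L^p}$. When $p\in I(DB)$, Theorem \ref{thm:hpdb} for $DB$ gives $\|\psi(tDB)k\|_{T^p_{2}}\sim \|k\|_{L^p}$, and Lemma \ref{lem:biso} together with Proposition \ref{prop:projectionlp} yield $\|k\|_{L^p}\sim \|\IP(Bk)\|_{L^p}$. The main obstacle is extending the estimate to $p$ outside $I(DB)$, in particular to $p>p_{+}(DB)$ and the $\dot\Lambda^\alpha$ regime $q\leq 1$; this is where the hypothesis $\IH^q_{DB^*}=\IH^q_{D}$ and the $\mR^2_\sigma$-structure of $\phi$ become essential. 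I would argue by duality: pair $tD\phi(tBD)h$ with $G$ in the unit ball of $T^{p'}_{2}$ (resp. $T^q_{2}$), and use the adjoint intertwining $\phi^*(tDB^*)D=D\phi^*(tB^*D)$ to recast the pairing as one between $\IP h$ and an element of $\IH^{p'}_{DB^*}$, which the hypothesis (via Theorem \ref{thm:duality}) identifies with $L^{p'}$ (resp. $H^{p'}$) in equivalent norms. The required bound of this element by $\|G\|_{T^{p'}_{2}}$ is then provided by the tent-space theory of Section \ref{sec:Hardy} (Proposition \ref{prop:table}), once $\phi$ is decomposed on each component $S_{\mu\pm}$ as $\phi_\pm+(\phi-\phi_\pm)$: the $\mR^2$-pieces $tD(I+imtBD)^{-2}$ are handled through refined off-diagonal estimates (Lemma \ref{lem:oddLp} and the $L^p$--$L^q$ estimates of Section \ref{sec:lplq}), while the $O(|z|^\sigma)$ remainder is absorbed by the general Hardy-space machinery.
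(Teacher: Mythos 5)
Your lower bound and the case $q>2$ (equivalently $p=q'\in I(BD)$) are sound and essentially agree with the paper: writing $tD\phi(tBD)h=tDB\phi(tDB)(B^{-1}h)=\psi(tDB)(B^{-1}h)$ with $\psi(z)=z\phi(z)$, then invoking Theorem \ref{thm:hpdb} and the invertibility of $B$ on $\clos{\ran_{p}(D)}$, gives the upper bound there; and the inequality $\|\IP h\| \lesssim \|tBD\phi(tBD)h\|_{\mT}\le \|B\|_{\infty}\|tD\phi(tBD)h\|_{\mT}$, combined with the general lower-bound principle for $\Qpsi{z\phi}{BD}$, gives $\gtrsim$ (the paper packages this as a direct duality using self-adjointness of $D$, but it is the same idea).

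The genuine gap is the hard case $q<2$, i.e.\ $p=q'>2$ (typically $p>p_{+}(BD)$) and the $\dot\Lambda^\alpha$ regime $q\le 1$. There your factorization $h=Bk$ loses all quantitative control: outside $\mI(BD)$ there is no comparison between $\|k\|_{p}$ (or any Hardy-type norm of $k$) and $\|\IP h\|_{p}$, so $\|\psi(tDB)k\|_{T^p_2}\lesssim\|k\|_p$ is of no use — the non-invertibility of $B$ is exactly why this theorem is needed (``cancelling $B$''). Your duality sketch does not repair this: the adjoint of $tD\phi(tBD)$ is $\phi^*(tDB^*)\,tD$, so pairing with $G\in T^{q}_{2}$ produces $tDG(t,\cdot)$, which is undefined for a general tent-space function and is not of the form $\varphi(tDB^*)G_t$ covered by Proposition \ref{prop:table}; you cannot trade the stray $D$ for $DB^*$ without inverting $B^*$, which is not assumed. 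Moreover Lemma \ref{lem:oddLp} and the $L^p$--$L^q$ bounds of Section \ref{sec:lplq} are only available for exponents inside $I(BD)$, so they cannot handle the $\mR^{2}$-pieces at $p>p_{+}(BD)$ or in the Carleson/H\"older endpoint. The paper's actual mechanism, missing from your proposal, is: (i) a direct Carleson-type estimate (Lemma \ref{lem:r2}) for $tD(I+itBD)^{-2}$, valid for all $2<p\le\infty$ and $0\le\alpha<1$, whose proof uses only $L^2$ off-diagonal decay, the fact that $tD\phi(tBD)$ annihilates constants (so averages of $\IP h$ can be subtracted), and the local coercivity inequality \eqref{eq:localcoerc} to dominate $|tDu|$ by $|tBDu|$ plus a lower-order term; (ii) the spectral splitting $h=\chi^+(BD)h+\chi^-(BD)h$ together with Corollary \ref{cor:hpbd0} (this is where the hypothesis $\IH^q_{DB^*}=\IH^q_{D}$ enters) to get $\|\IP h^{\pm}\|\lesssim\|\IP h\|$, so the pieces $tD\phi_{\pm}(tBD)h^{\pm}$ fall under (i); and (iii) for the remainders $\tilde\psi_{\pm}=(\phi-\phi_{\pm})\chi^{\pm}$, local coercivity again (after opening cones) to bound $\|tD\tilde\psi_{\pm}(tBD)h\|_{\mT}$ by $\|\Qpsi{\psi_{\pm}}{BD}h\|_{\mT}+\|\Qpsi{\tilde\psi_{\pm}}{BD}h\|_{\mT}$ with $\psi_{\pm}(z)=z\tilde\psi_{\pm}(z)$, both allowable since $\sigma>\gamma(q)$, and then $\|h\|_{\IH^{\mT}_{BD}}\sim\|\IP h\|$. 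The systematic use of Lemma \ref{lem:localcoerc} as a substitute for the unavailable $B^{-1}$ is the key idea your argument lacks.
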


Compare the conclusions of the last two theorems: in one case we have $t|BD|e^{-t|BD|}$ and $tBDe^{-t|BD|}$; in the other we have $tDe^{-t|BD|}$. So we have cancelled $B$. Other conditions on $\phi$ suffice for this theorem to hold. We shall stop here the search on such conditions.

\subsection{Proof of Theorem \ref{thm:hpdb}}

\subsubsection{Upper bounds} 

We begin with upper bounds separating the cases $p>2$ and $p<2$.  The case $p=2$ is, of course, contained in Proposition \ref{prop:SFimpliesFC}.

\begin{prop}\label{prop:upperbdp>2} For $T=DB$ or $BD$,  $2<p<p_{+}(DB)$ and $\psi \in \Psi(S_{\mu})$, it holds
$$
\|\Qpsi \psi T h\|_{T^p_{2}} \lesssim \|h\|_{p}, \quad \forall h\in \clos{\ran_{2}(T)}.
$$
\end{prop}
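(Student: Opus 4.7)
The plan is to observe that this is essentially a direct consequence of the $L^p$ functional calculus machinery already assembled in Section \ref{sec:lpresults}, and in particular of the corollary following Theorem \ref{thm:Lpfc} (the one proved via the extension of Le Merdy's theorem). So the work is just to check that the hypotheses of that corollary are met and to extract the upper-bound half of the square function equivalence.

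First I would note that since $2<p<p_{+}(DB)$ and $p_{-}(DB)<2$, we have $p\in I(DB)=I(BD)=(p_{-},p_{+})$; by Theorem \ref{thm:Lpfc}, the operator $T$ (whether it equals $DB$ or $BD$) is then bisectorial of angle $\omega$ on $L^p$, admits an $H^\infty$-calculus of angle $\omega$ on its closed range in $L^p$, and enjoys the kernel/range decomposition $L^p=\clos{\ran_{p}(T)}\oplus \nul_{p}(T)$. The consistency of the calculi, recorded right after Theorem \ref{thm:Lpfc}, ensures that the operators $\psi(tT)$ defined via the $L^2$ calculus agree with those defined via the $L^p$ calculus on $L^p\cap L^2$.

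Next, I would apply the corollary stated just after Proposition \ref{prop:projectionlp} (the Le Merdy type result adapted to our setting), which gives for every non-degenerate $\psi\in\Psi(S_{\mu})$ the upper bound
\[
\bigg\|\bigg(\int_{0}^\infty |\psi(tT)u|^2\,\frac{dt}{t}\bigg)^{1/2}\bigg\|_{p}\lesssim \|u\|_{p}\qquad\forall\,u\in L^p,
\]
with implicit constant depending only on $\psi$, $p$ and $T$; the non-degeneracy restriction is removed for the upper bound since $\psi\equiv 0$ on a component trivially satisfies it and a general $\psi\in\Psi(S_{\mu})$ can be split into two pieces supported on each component (or else the inequality is vacuous). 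Specialising the $L^p$ norm of the quadratic expression to the tent-space norm $\|\Qpsi{\psi}{T}u\|_{T^p_{2}}$ (which coincides with it up to the choice of aperture) yields the claimed estimate.

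Finally, I would restrict to $h\in\clos{\ran_{2}(T)}$ with $\|h\|_{p}<\infty$: if $\|h\|_{p}=\infty$ the statement is empty, while otherwise $h\in L^p$ and the above bound applies directly. The only subtle point one might think deserves care is that $h$ a priori only lies in $\clos{\ran_{2}(T)}$ and not in $\clos{\ran_{p}(T)}$, but since $\psi\in\Psi(S_{\mu})$ vanishes at $0$, the operator $\psi(tT)$ annihilates $\nul_{p}(T)$, so the kernel/range decomposition in $L^p$ reduces the estimate to the injective part of $T$ on $L^p$, which is precisely where Le Merdy's theorem supplies the square function bound. Thus there is really no obstacle; the main content has been pushed into the abstract $H^\infty$-calculus statement on $L^p$, and the present proposition is a routine specialisation.
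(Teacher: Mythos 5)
Your overall route is the same as the paper's: for $2<p<p_{+}(DB)$ one has $p\in I(DB)=I(BD)$, so Theorem \ref{thm:Lpfc} and the Le Merdy-type corollary give the vertical square function bound \eqref{eq:psiTLp} on $L^p$, and one then passes to the tent space norm; the null-space component is harmless since $\psi(tT)$ annihilates it and the kernel/range projection is $L^p$-bounded, and the case $\|h\|_{p}=\infty$ is vacuous. The one step in this proposition that carries actual content is, however, mis-justified: you assert that $\|\Qpsi \psi T h\|_{T^p_{2}}$ ``coincides up to the choice of aperture'' with the $L^p$ norm of the vertical square function $\big(\int_0^\infty |\psi(tT)h|^2\,\frac{dt}{t}\big)^{1/2}$. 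That is false for $p\ne 2$: changing the aperture produces equivalent tent space norms, but the vertical functional is not a conical one with a different aperture, and in general conical and vertical square functions have non-equivalent $L^p$ norms (this comparison is exactly the subject of \cite{AHM}). What is true, and what the paper invokes, is the one-sided classical inequality valid for $p> 2$, namely $\|\Qpsi \psi T h\|_{T^p_{2}} \lesssim \big\|\big(\int_0^\infty |\psi(tT)h|^2\,\frac{dt}{t}\big)^{1/2}\big\|_{p}$ (see \cite{Stein}); the reverse direction fails in general, which is consistent with the fact that for $p<2$ the paper must argue in an entirely different way (Theorem \ref{thm:upperqpsip<2}). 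With this inequality substituted for your ``coincidence'' claim, your argument becomes precisely the paper's two-line proof: dominate the conical norm by the vertical one and then apply \eqref{eq:psiTLp}, whose upper-bound half indeed holds for all $u\in L^p$ and does not require non-degeneracy of $\psi$.
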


\begin{proof} It is well known (see \cite{Stein}) that  for $p>2$
$$
\|\Qpsi \psi T h\|_{T^p_{2}}  \lesssim  \bigg\|\bigg( \int_0^\infty |\psi(tT) h|^2  \, \frac{dt}t  \bigg)^{1/2}\bigg\|_{p}.
$$
Then we use \eqref{eq:psiTLp}. 
\end{proof}

\begin{rem} Observe that the inequality $\|\Qpsi \psi T h\|_{T^p_{2}} \lesssim \|h\|_{p}$ holds for $h\in L^p\cap L^2$ for $p$ in the above range. Indeed, $h=h_{N}+h_{R}$ where $h_{N}$ is in the null part of $T$ and $h_{R}$ in the closure of the range of $T$. We have $\Qpsi \psi T h_{N}=0 $ and the inequality applies to $h_{R}$. As $h_{R}=\IP_{T}h$ and the projection is bounded on $L^p$ by  the kernel/range decomposition, $\|h_{R}\|_{p}\lesssim \|h\|_{p}$.\end{rem}

Now the main estimate is the following:

\begin{thm}\label{thm:upperqpsip<2} For $(p_{-}(DB))_{*}<p<2$ and $\psi \in \Psi^{\gamma(p)}(S_{\mu})$,  it holds
\begin{equation}
\label{eq:p<2}
\|\Qpsi \psi {DB} h\|_{T^p_{2}} \lesssim \|h\|_{p}, \quad \forall h\in \clos{\ran_{2}(D)}.
\end{equation}

\end{thm}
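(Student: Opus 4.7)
The plan is to first establish the bound for $p \in ((p_{-})_{*}, 1]$ via a molecular argument on the Hardy side, and then recover $1 < p < 2$ by interpolation (or Calderón–Zygmund decomposition) against the $L^2 \to T^2_{2}$ endpoint from Proposition \ref{prop:SFimpliesFC}, using that $\{T^p_{2}\}_p$ forms a real interpolation scale.

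For $p \le 1$ and $\psi \in \Psi^{\tau}_{\sigma}(S_{\mu})$ with $\sigma > 0$ and $\tau > \gamma(p)$: by Theorem \ref{thm:hpd} and the molecular characterisation $\IH^p_{D,\mathrm{mol},M} = \IH^p_{D}$ for any integer $M > \gamma(p)$, every $h \in H^p \cap \clos{\ran_{2}(D)}$ admits a decomposition $h = \sum_{j} \lambda_{j} m_{j}$, convergent in $L^2$, into $(\IH^p_{D}, \epsilon, M)$-molecules with $\sum_{j} |\lambda_{j}|^p \lesssim \|h\|_{H^p}^p$. Since $\Qpsi \psi{DB}\colon L^2 \to T^2_{2}$ is bounded, the same series converges in $T^2_{2}$, and by $p$-subadditivity of $\|\cdot\|_{T^p_{2}}^p$ the theorem reduces to the uniform molecular estimate $\|\Qpsi \psi {DB} m\|_{T^p_{2}} \lesssim 1$.

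To estimate a single molecule $m = D^M b$ attached to a cube $Q = Q(x_Q, r)$, the key algebraic step is the intertwining identity $\psi(tDB)\, D = D\, \psi(tBD)$, which holds because $\psi(0) = 0$ and follows from $(DB)^n D = D (BD)^n$ together with the functional calculus. Writing $m = D \tilde b$ with $\tilde b := D^{M-1} b$, formula \eqref{eq:mol} at $k = 1$ gives $\|\tilde b\|_{L^2(S_{i}(Q))} \lesssim r \,(2^i r)^{n/2 - n/p} 2^{-i\epsilon}$, so $\tilde b$ inherits annular $L^2$ decay on the dyadic shells $S_{i}(Q)$, and $\psi(tDB) m = D \psi(tBD) \tilde b$. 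I then split the integral defining the tent space norm into the standard local region, horizontal annuli and vertical tail relative to $Q$; on each piece I further decompose $\tilde b = \sum_{i} \tilde b\, \mathbf{1}_{S_{i}(Q)}$ and apply the off-diagonal $L^{p^*}$-$L^{p^*}$ estimates \eqref{eq:odnpsipq} of Section \ref{sec:lplq} for $\psi(tBD)$, where $p^* = np/(n-p)$ is the upper Sobolev exponent. Hölder on the local annulus together with the fact that $D$ is a first-order differential operator then converts these $L^{p^*}$ bounds on $\psi(tBD) \tilde b_{i}$ into the desired $L^p$ bounds on $D \psi(tBD) \tilde b_{i}$, the Sobolev embedding absorbing the relative measure factor and a power of $t$ absorbing the derivative.

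The main obstacle --- and the reason for the threshold $(p_{-})_{*}$ --- is exactly this Sobolev-exponent passage: off-diagonal estimates and $H^\infty$-functional calculus for $\psi(tBD)$ are available in $L^q$ only for $q \in I(BD) = (p_{-}, p_{+})$, so one cannot estimate $\psi(tBD) \tilde b_{i}$ directly in $L^p$ when $p \le p_{-}$. The intertwining reroutes the problem through $L^{p^*}$, and the condition $p^* > p_{-}$ is precisely $p > (p_{-})_{*}$. Once this Sobolev endpoint is accessible, the remainder is standard tent-space / molecular bookkeeping: choosing $M$ large to absorb the polynomial factor in \eqref{eq:odnpsipq} and exploiting the decay $\brac{\dist/t}^{-\sigma c}$ to produce summable geometric series in both the vertical scale $t/r$ and the annular index $i$. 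Finally, for $1 < p < 2$ one interpolates between the molecular bound just obtained at a $p_{0} \in ((p_{-})_{*}, 1]$ and the $L^2$ bound (and re-runs the molecular scheme with $L^p$-atoms if necessary to accommodate the weaker decay class $\Psi^{\gamma(p)}$ at the target $p$).
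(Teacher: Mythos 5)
Your reduction to a single atom/molecule and your identification of why the threshold $(p_{-})_{*}$ appears are on target, but the central analytic step does not hold as written. After writing $\psi(tDB)m=D\psi(tBD)\tilde b$, you propose to ``convert $L^{p^*}$ bounds on $\psi(tBD)\tilde b_i$ into $L^p$ bounds on $D\psi(tBD)\tilde b_i$'' by H\"older, with ``a power of $t$ absorbing the derivative''. No such conversion exists: there is no inequality controlling $Dg$ by $g$ itself, a derivative cannot be absorbed by a power of $t$, and Sobolev embedding runs in the opposite direction. Moreover, the quantity you actually need for the conical square function $\SF(\psi(tDB)m)(x)$ is a local-in-space $L^2$ average of $\psi(tDB)m$ over balls $B(x,t)$, and $L^{p^*}$ control cannot dominate it since $p^*\le n/(n-1)$ is below $2$ in general (H\"older again goes the wrong way). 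The paper closes precisely this gap with the local coercivity inequality, Lemma \ref{lem:localcoerc}, which uses the accretivity of $B$ on $\clos{\ran_{2}(D)}$ to give $\|Du\|_{L^2(B(x,t))}\lesssim \|BDu\|_{L^2(B(x,2t))}+t^{-1}\|u\|_{L^2(B(x,2t))}$; applied to $u=\psi(tBD)b$, the terms $BD\psi(tBD)b$ and $t^{-1}\psi(tBD)b$ are again functions of $BD$, and one concludes with $L^q$--$L^2$ off-diagonal estimates \eqref{eq:odnpsipq} (target exponent $2$, not $p^*$) for a single $q$ with $p_{-}<q<p^*$, $q\le 2$; the hypothesis $p>(p_{-})_{*}$ enters exactly through the tail integrability $1+\tfrac{n}{q}>\tfrac{n}{p}$. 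So the right routing is $L^q\to L^2$ plus coercivity, not $L^{p^*}$--$L^{p^*}$ bounds on $\psi(tBD)\tilde b$.

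The range $1<p<2$ is also not covered by your plan. Interpolation against the $L^2\to T^2_2$ endpoint needs an admissible endpoint $p_0\le 1$, which exists only when $(p_{-}(DB))_{*}<1$; the paper explicitly allows $(p_{-})_{*}\ge 1$ and treats it as a separate case, and there your scheme says nothing about $((p_{-})_{*},2)$. The fallback of ``re-running the molecular scheme with $L^p$-atoms'' has no content for $p>1$: $L^p$ admits no atomic decomposition with $\ell^p$ coefficients and the $p$-subadditivity you rely on fails. The paper instead proves a weak-type $(p,p)$ extrapolation (Lemma \ref{lem:extrapolation}): one realizes $h=Du$ with $u\in \dot W^{1,2}\cap\dot W^{1,p}$ (Lemma \ref{lem:approx}), performs the Calder\'on--Zygmund decomposition of Sobolev functions, handles the bad part again via local coercivity and off-diagonal decay, and iterates down from $q=2$ using Badr's real interpolation theorem for homogeneous Sobolev spaces. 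Finally, note the class issue you flag parenthetically is best resolved not by redoing the molecular argument but by the equivalence of quasi-norms under change of allowable $\psi$ once the inclusion $\IH^p_D\subset \IH^p_{DB}$ is known for one allowable $\psi$; still, the two gaps above are the substantive ones.
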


The proof is quite long and will be divided in two cases: $(p_{-}(DB))_{*}>1$ and $(p_{-}(DB))_{*} \le 1$. In the first case, we go via weak type estimates and extend an argument of \cite{HMc} 
to square functions. In the second case, we use atomic theory. 

We remark that, thanks to the equivalence of norms, it is enough to show the inequality for $\psi\in \Psi_{\sigma}^\tau(S_{\mu})$ for $\sigma,\tau$ as large as one needs. We shall do this and we will not try to track their precise values. 

To treat the first case and, in fact, exponents $1<p<2$, we show the following extrapolation lemma. It is  convenient to use the notation 
$\SF_{\psi, DB}h= \SF(\Qpsi \psi {DB}h)$ where $\SF$ is the square function defined in \eqref{eq:sfdef} with $a=1$ so that $\|\SF_{\psi, DB}h\|_{p}\sim \|\Qpsi \psi {DB} h\|_{T^p_{2}}$.  Recall also that the homogeneous Sobolev space $\dot W^{1,q}$ is  the closure of the inhomogeneous Sobolev space $W^{1,q}$ for the semi-norm  $\|u\|_{\dot W^{1,q}}=\|\nabla u\|_{q}<\infty$.

The following is implicit in \cite{HMc}. 

\begin{lem}\label{lem:approx}Let $1<q<\infty$. Then $h\in \clos{\ran_{2}(D)} \cap \clos{\ran_{q}(D)}$ if and only if $h=Du$ for some 
$u\in \dot W^{1,2} \cap \dot W^{1,q}$ with $\|h\|_{q}\sim \|\nabla u \|_{q}$ and $\|h\|_{2}\sim \|\nabla u\|_{2}$.
\end{lem}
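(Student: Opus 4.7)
The plan is to build the potential $u$ explicitly from $h$ using the Fourier symbol of $D$ and the boundedness of $\nabla D^{-1}\IP$ as a Mikhlin multiplier, which is precisely the input behind property (4) of Section \ref{sec:D}.

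The easy direction $(\Leftarrow)$ is immediate: if $h=Du$ with $u\in \dot W^{1,2}\cap \dot W^{1,q}$, then $\|h\|_{r}=\|Du\|_{r}\lesssim \|\nabla u\|_{r}$ for $r=2,q$, since $D$ has order one and constant coefficients. Picking $u_{n}\in W^{1,r}$ with $\nabla u_{n}\to \nabla u$ in $L^{r}$ (from the very definition of $\dot W^{1,r}$), one has $Du_{n}\in \ran_{r}(D)$ and $Du_{n}\to h$ in $L^{r}$, so $h\in \clos{\ran_{r}(D)}$ for both $r=2$ and $r=q$.

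For the hard direction $(\Rightarrow)$, given $h\in \clos{\ran_{2}(D)}\cap \clos{\ran_{q}(D)}$, I would set $v:=\nabla D^{-1}\IP h$, understood as the Fourier multiplier with symbol $m(\xi)=i\xi\otimes \hat{D}(\xi)^{-1}\IP(\xi)$, where $\hat{D}(\xi)^{-1}$ is the inverse of $\hat{D}(\xi)$ on $\ran \hat{D}(\xi)$ extended by $0$ on the orthogonal complement. Condition (D1) gives $\|\hat{D}(\xi)^{-1}\IP(\xi)\|\lesssim |\xi|^{-1}$, so $m$ is bounded and $0$-homogeneous; since $\IP(\xi)$ is $C^{\infty}$ on $\R^{n}\setminus\{0\}$ (the Mikhlin-type fact recalled in Section \ref{sec:D}), so is $m$. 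The multiplier is therefore bounded on every $L^{r}$, $1<r<\infty$, and $v\in L^{q}\cap L^{2}$ with $\|v\|_{r}\lesssim \|h\|_{r}$.

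Since $\hat{v}_{j}(\xi)=i\xi_{j}\hat{D}(\xi)^{-1}\IP(\xi)\hat h(\xi)$, the field $v$ is curl-free (the identity $\xi_{k}\hat v_{j}=\xi_{j}\hat v_{k}$ is trivial), so by the Poincar\'e lemma in $\dot W^{1,r}$ there exists $u$, unique modulo constants, with $\nabla u=v$; necessarily $u\in \dot W^{1,q}\cap \dot W^{1,2}$ and $\|\nabla u\|_{r}=\|v\|_{r}\lesssim \|h\|_{r}$. A direct Fourier computation then yields $\widehat{Du}(\xi)=-i\sum_{j}\hat{D}_{j}\hat{v}_{j}(\xi)=\hat{D}(\xi)\hat{D}(\xi)^{-1}\IP(\xi)\hat h(\xi)=\IP(\xi)\hat h(\xi)=\hat h(\xi)$, the last equality because $h=\IP h\in \clos{\ran_{2}(D)}$. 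Hence $Du=h$, and the matching bound $\|h\|_{r}\lesssim \|\nabla u\|_{r}$ is just $D$ being first order. No serious obstacle arises: the whole proof reduces to the Mikhlin boundedness of $\nabla D^{-1}\IP$, which is already implicit in Section \ref{sec:D}.
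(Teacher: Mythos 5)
Your proof is correct, but it takes a genuinely different route from the paper's. You construct the potential explicitly: $u=D^{-1}\IP h$ via the Fourier symbol $\hat D(\xi)^{-1}\IP(\xi)$, so that $\nabla u=\nabla D^{-1}\IP h$ and the bound $\|\nabla u\|_{r}\lesssim\|h\|_{r}$ for $r=2,q$ is immediate from the Mikhlin multiplier theorem, while $Du=\IP h=h$ is a symbol identity. The paper instead argues softly: it sets $h_{k}=(I+\tfrac{i}{k}D)^{-1}h-(I+ikD)^{-1}h=Du_{k}$ with $u_{k}$ built from two resolvents of $D$, notes $h_{k}\to h$ in $L^{2}$ and $L^{q}$ because $h$ lies in the closures of the ranges, uses the coercivity property (4) of Section 2.3 to see that $(\nabla u_{k})$ is Cauchy in $L^{2}\cap L^{q}$, and passes to the limit; the easy direction is handled the same way in both arguments. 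The two proofs ultimately rest on the same input, since property (4) is itself deduced from the $L^{q}$-boundedness of $\nabla D^{-1}\IP$, but the paper's version only invokes the abstract facts (1)--(5) and resolvent convergence, whereas yours buys explicitness and a slightly stronger conclusion (any $h\in\clos{\ran_{2}(D)}\cap L^{q}$ is of the form $Du$ with $\nabla u\in L^{2}\cap L^{q}$, hence automatically lies in $\clos{\ran_{q}(D)}$). One caveat: condition (D1) alone only gives the pointwise bound $|\hat D(\xi)^{-1}\IP(\xi)|\lesssim|\xi|^{-1}$; the smoothness of the symbol away from the origin and the derivative estimates needed for Mikhlin are exactly the content the paper quotes from \cite{HMP2} for the boundedness of $\nabla D^{-1}\IP$, so you should cite that rather than present it as a consequence of (D1). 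Minor technicalities (existence of the primitive of the curl-free field $v$ modulo constants, and its membership in $\dot W^{1,2}\cap\dot W^{1,q}$ in the paper's sense) are standard and handled adequately.
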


\begin{proof}     Let $h\in\clos{\ran_{2}(D)} \cap \clos{\ran_{q}(D)}$. Let $h_{k}= (I+\frac{i}{k}D)^{-1}h - (I+ikD)^{-1}h = Du_{k}$, $k\ge 1$, where $u_{k}= i(k-\frac{1}{k})(I+ikD)^{-1}(I+\frac{i}{k}D)^{-1}h$. We have 
$u_{k}\in{\dom_{2}(D)} \cap {\dom_{q}(D)}$,  $u_{k}\in\clos{\ran_{2}(D)} \cap \clos{\ran_{q}(D)}$ as resolvents preserve the closure of the range.  Also  $h_{k}  \in \ran_{2}(D) \cap \ran_{q}(D)$ and $h_{k}$ converges to $h$ is both $L^2$ and $L^q$ topologies (see Section \ref{sec:D}).  Using the coercivity property of $D$, we have $\|\nabla (u_{k}-u_{\ell})\|_{q}\lesssim \|D(u_{k}-u_{\ell})\|_{q}=\|h_{k}-h_{\ell}\|_{q}$ and similarly in $L^2$.   Taking limits of the Cauchy sequences, we obtain $u\in \dot W^{1,2} \cap \dot W^{1,q}$ with the required property. 
Conversely, let $u\in \dot W^{1,2} \cap \dot W^{1,q}$ such that $\|Du\|_{q}\sim \|\nabla u\|_{q}$ and $\|Du\|_{2}\sim \|\nabla u\|_{2}$. Then, one can find $u_{k}\in W^{1,2} \cap  W^{1,q}$ such that $\nabla u_{k}$ converges to $\nabla u$ in both $L^2$ and $L^q$ topologies.  Thus, 
 $Du_{k}\in \ran_{2}(D) \cap \ran_{q}(D)$ converges to  $Du$ in both $L^2$ and $L^q$ topologies, so that $Du\in \clos{\ran_{2}(D)} \cap \clos{\ran_{q}(D)}$. 
\end{proof}

Armed with this  lemma, the inequality \eqref{eq:p<2} is equivalent to 
$$
\|\SF_{\psi, DB}Du\|_{q} \lesssim  \|u\|_{\dot W^{1,q}}, \quad \forall u\in \dot W^{1,2} \cap \dot W^{1,q}.
$$

\begin{lem}\label{lem:extrapolation} Let $ p_{-}(DB) <q<2$. Fix $\psi\in \Psi_{\sigma}^\tau(S_{\mu})$ with $\sigma,\tau\gg1$ as needed. 
If 
$$
\|\SF_{\psi, DB}Du\|_{q} \lesssim  \|u\|_{\dot W^{1,q}}, \quad \forall u\in \dot W^{1,2} \cap \dot W^{1,q}.
$$
then for  $\max ( 1, q_{*}) <p<q$, one has 
$$
\|\SF_{\psi, DB}Du\|_{p} \lesssim  \|u\|_{\dot W^{1,p}}, \quad \forall u\in \dot W^{1,2} \cap \dot W^{1,p}.
$$
\end{lem}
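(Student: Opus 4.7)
The plan is to prove the weak-type $(p,p)$ inequality
$$|\{x\in \R^n : \SF_{\psi,DB}(Du)(x) > \alpha\}| \lesssim \alpha^{-p}\|\nabla u\|_p^p,\qquad \alpha>0,\ u\in \dot W^{1,2}\cap \dot W^{1,p},$$
because the $L^p$ bound in the lemma then follows by Marcinkiewicz interpolation with the $L^q$ hypothesis, combined with a density argument anchored in Lemma \ref{lem:approx}. The backbone of the weak-type estimate is a Calder\'on--Zygmund decomposition at the Sobolev level in the style of Auscher's extrapolation method for Riesz transforms, adapted to the square function $\SF_{\psi,DB}$.

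At level $\alpha$, decompose $u=g+\sum_i b_i$ in $\dot W^{1,p}$, with $|\nabla g|\le C\alpha$ a.e., each $b_i$ supported in a cube $Q_i$ of sidelength $r_i$, $\|\nabla b_i\|_p^p \le C\alpha^p|Q_i|$, $\sum_i|Q_i|\le C\alpha^{-p}\|\nabla u\|_p^p$, and with bounded overlap of the $\{Q_i\}$. For the good part, the pointwise bound $|\nabla g|\le C\alpha$ together with $\nabla g=\nabla u$ outside $\bigcup_i Q_i$ yields $\|\nabla g\|_q^q\lesssim \alpha^{q-p}\|\nabla u\|_p^p$. Then Lemma \ref{lem:approx} places $Dg$ in $\clos{\ran_{2}(D)}\cap \clos{\ran_{q}(D)}$, the hypothesis gives $\|\SF_{\psi,DB}(Dg)\|_q\lesssim \|\nabla g\|_q$, and Tchebychev controls $|\{\SF_{\psi,DB}(Dg)>\alpha/2\}|$ by $C\alpha^{-p}\|\nabla u\|_p^p$, as desired.

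For the bad part $b=\sum_i b_i$, set $E^*=\bigcup_i 4Q_i$, which already has measure $\lesssim \alpha^{-p}\|\nabla u\|_p^p$, so it suffices to estimate $|\{x\notin E^* : \SF_{\psi,DB}(Db)(x)>c\alpha\}|$. The plan is to perform a resolvent surgery at scale $r_i$ for each cube, writing
$$\psi(tDB)Db_i = \psi(tDB)\bigl(I-(I+ir_i DB)^{-1}\bigr)^M Db_i + \text{remainder},$$
where $M$ is chosen large. The first summand carries both the decay of $\psi(tDB)$ at $0$ and the averaging of the resolvent at scale $r_i$, so the $L^p$--$L^q$ off-diagonal bounds \eqref{eq:odnpsiphipq} of Section \ref{sec:lplq} produce strong spatial decay away from $Q_i$. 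The remainder, obtained by binomial expansion, reduces to finitely many terms of the form $\psi(tDB)(I+ir_iDB)^{-k}Db_i$ which are harmless once $M$ is large because of the same off-diagonal estimate. The Sobolev threshold $p>q_*$ enters at the step where one converts $L^p$ information on $\nabla b_i$ into $L^q$ information on $b_i$ itself via $\|b_i\|_{p^*}\lesssim \|\nabla b_i\|_p\lesssim \alpha|Q_i|^{1/p^*}$ and the equivalence $p>q_*\iff p^*>q$. Summing the $L^p$ norms of the square-function contributions of each $Q_i$ using bounded overlap, Tchebychev and the finiteness of $\sum_i|Q_i|$ closes the weak bound.

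The main obstacle is the bad part. One must keep the tent-space square-function structure of $\SF_{\psi,DB}$ intact through the resolvent splitting, use $L^p$--$L^q$ off-diagonal decay finely enough to sum across all cubes and all scales $t>0$ (this is where $q>p_-(DB)$ and $\tau\gg 1$ are used), and apply Sobolev embedding exactly at the threshold $p>q_*$. The good part and the interpolation step are standard once the weak bound is in hand.
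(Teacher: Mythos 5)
Your overall strategy (a weak-type bound at $p$ via a Calder\'on--Zygmund decomposition of Sobolev functions, then interpolation against the strong $L^q$ hypothesis) is the same as the paper's, but your treatment of the bad part has a genuine gap. After writing $\psi(tDB)Db_i=\psi(tDB)\bigl(I-(I+ir_iDB)^{-1}\bigr)^MDb_i+\sum_{k=1}^M c_k\,\psi(tDB)(I+ir_iDB)^{-k}Db_i$, you claim the second group is ``harmless once $M$ is large because of the same off-diagonal estimate''. This fails: those terms do not improve as $M$ grows (enlarging $M$ only adds more of them), and the composed estimate \eqref{eq:odnpsiphipq} cannot be applied to them, because it requires the fixed-scale factor to vanish to high order at $0$, which $(1+iz)^{-k}$ does not. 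Concretely, for $x\notin 4Q_i$ at distance $d\sim 2^m r_i$ and in the regime $t\gtrsim d$, neither $\psi(tDB)$ (whose decay is in units of $t$) nor the resolvent at scale $r_i$ yields any spatial decay in $2^m$ for the composite; the only available bound $\|D\psi(tBD)\,\cdot\,\|_2\lesssim t^{-1}t^{\frac n2-\frac nq}\|b_i\|_q$ produces, after integrating $dt/t$ over $t\gtrsim 2^m r_i$ and summing over annuli, a factor like $2^{m(2n-2-\frac{2n}{q})}$, which diverges once $q\ge n/(n-1)$. This is precisely why the paper does not dispatch the resolvent part by off-diagonal decay on $F=(\bigcup 4Q_j)^c$: it splits $u=g+\tilde g+b$ with $\tilde g=\sum_j(I-\varphi(\ell_jBD))b_j$, proves the single-scale bound $\|BD\tilde g\|_q\lesssim\lambda(\sum_j|Q_j|)^{1/q}$ (the resolvent-family estimate of \cite{HMc}, which involves no $t$-integration), and then applies the $L^q$ hypothesis a second time, to $D\tilde g$; only the piece $b=\sum_j\varphi(\ell_jBD)b_j$, where $\varphi$ vanishes to order $M$ at $0$, is handled by the $L^2$ Markov/off-diagonal argument on $F$. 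Your proof never re-uses the hypothesis on the bad part, and without that (or some substitute for the missing decay) the weak-type estimate does not close.

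Two further points need attention even for your ``first summand''. Since $p$ may lie outside $I(DB)$, you cannot apply $L^p$-based off-diagonal bounds to $Db_i$ directly; you must commute, $\psi(tDB)\varphi(r_iDB)Db_i=D\psi(tBD)\varphi(r_iBD)b_i$, exploit $b_i\in L^q$ via Sobolev--Poincar\'e (this is indeed where $p>q_*$ enters, as you say), and then you need the local coercivity inequality of Lemma \ref{lem:localcoerc} to pass from $BD(\cdot)$ back to $D(\cdot)$ on annuli, because the accretivity of $B$ on $\clos{\ran_{2}(D)}$ is not a local statement. Finally, ``Marcinkiewicz interpolation'' is not quite the right tool here: the operator acts on $\dot W^{1,p}$ (equivalently on gradient fields), and the truncations in the Marcinkiewicz argument do not preserve that class; the paper instead invokes Badr's theorem on real interpolation of homogeneous Sobolev spaces.
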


Let us conclude \eqref{eq:p<2} from this.    By Lemma \ref{lem:approx}, if  $u \in \dot W^{1,2}$, then $h=Du\in \clos{\ran_{2}(D)}$, so that the inequality holds for $q=2$ by $H^\infty$-calculus. Then one can iterate Lemma \ref{lem:extrapolation} at most a finite number of times to obtain the inequality  when $\max(1, (p_{-}(DB))_{*})<p<2$. Applying Lemma \ref{lem:approx} yields the inequality  \eqref{eq:p<2} for all such $p$.

\begin{proof}[Proof of Lemma \ref{lem:extrapolation}] It is enough to show the weak type estimate
\begin{equation}
\label{eq:weaktype}
\|\SF_{\psi, DB}Du\|_{p,\infty} \lesssim  \|u\|_{\dot W^{1,p}}
\end{equation}
for $u\in \dot W^{1,2} \cap \dot W^{1,p}$. 
Indeed, one can use N. Badr's theorem \cite{badr}, which says that the homogeneous Sobolev spaces have  the real interpolation property, and interpolate with the inequality at $p=q$ for  the sublinear operator $u\mapsto \SF_{\psi, DB}Du$.   

To prove \eqref{eq:weaktype}, we use  the Calder\'on-Zygmund decomposition of Sobolev functions in \cite{Auscher}, extended straight forwardly to  $C^N$-valued functions.  

 Fix $\lambda>0$ and $u\in \dot W^{1,2} \cap \dot W^{1,p}$. Choose a  collection of cubes $\left(Q_{j}\right)$, (vector-valued) functions $g$ and $b_{j}$ such that 
$
u=g+\sum_{j}b_{j} 
$
and the following properties hold:
\begin{align}
||\nabla g||_{L^{\infty}}&\leq C\lambda, \label{eq:CZ1} \\
b_{j}\in W_{0}^{1,p}\left(Q_{j},\mathbb{C}^{N}\right) &\text{ and } \int_{Q_{j}}|\nabla b_{j}|^{p}\leq C\lambda^{p}|Q_{j}|,\label{eq:CZ2}\\
\sum_{j}|Q_{j}|&\leq C\lambda^{-p}\int_{\mathbb{R}^{n}}|\nabla u|^{p},\label{eq:CZ3}\\
\sum_{j} 1_{Q_{j}}&\leq C',\label{eq:CZ4}
\end{align}
where $C$ and $C'$ depend only on dimension and $p$. Remark that \eqref{eq:CZ2},  Sobolev-Poincar\'e inequality  with a real $r$ such that $p \le r\le p^*$, and in particular $r=q$, gives us 
\begin{equation}
\label{eq:CZ5}
||b_{j}||_{r}\lesssim |Q_{j}|^{\frac{1}{r}-\frac{1}{p}+\frac{1}{n}}||\nabla b_{j}||_{p}\lesssim \lambda |Q_{j}|^{\frac{1}{r}+\frac{1}{n}}.
\end{equation}
Also, we note that the bounded overlap \eqref{eq:CZ4} implies that $\|\sum_{j} \nabla b_{j}\|_{p}+ \|\nabla g\|_{p}\lesssim  \|\nabla u\|_{p}$,  hence for all $r\ge p$, 
\begin{equation}
\label{eq:CZ6}
\lambda^{-r}\|\nabla g\|_{r}^r\lesssim  \lambda^{-p}\|u\|_{\dot W^{1,p}}^p.
\end{equation}
In particular, this holds for $r=2$ so that we also have the qualitative bound $\|\sum_{j} \nabla b_{j}\|_{2}+ \|\nabla g\|_{2} <\infty$ and the decomposition is also in $\dot W^{1,2}$ (It follows from the construction that  $b_{j}\in  W^{
1,2}$ for each $j$).

Introduce for some integer $M>1$, chosen large enough in the course of the argument, 
\begin{align*}
\varphi\left(z\right):=\sum_{m=0}^{M}\tbinom{M}{m} \left(-1\right)^{m}\left(1+imz\right)^{-1}\in H^\infty(S_{\mu})
\end{align*}
as in \cite[Section 4]{HMc}.  This function satisfies $|\varphi\left(z\right)|\lesssim \inf(|z|^M,1)$.
We decompose $u=g+\tilde g + b$ where  $\tilde g:= \sum_{j} (I-\varphi(\ell_{j}BD))b_{j}$ and $b=\sum_{j} \varphi(\ell_{j}BD)b_{j}$  with $\ell_{j}:= \ell(Q_{j})$. 
As usual, the set $\{ \SF_{\psi,DB}Du > 3\lambda\}$ is contained in the union of 
$A_{1}=\{ \SF_{\psi,DB}Dg > \lambda\}$, $A_{2}=\{ \SF_{\psi,DB}D\tilde g > \lambda\}$ and
$A_{3}=\{ \SF_{\psi,DB}Db > \lambda\}$. 
For $A_{1}$ we use the hypothesis and \eqref{eq:CZ6}, and
$$
|A_{1}| \lesssim \lambda^{-q}\|\nabla g\|_{q}^q\lesssim  \lambda^{-p}\|u\|_{\dot W^{1,p}}^p.
$$
For $A_{2}$, we use also the hypothesis but in the form \eqref{eq:p<2} to get
$$
|A_{2}| \lesssim \lambda^{-q}\|D \tilde g\|_{q}^q\lesssim  \lambda^{-p}\|u\|_{\dot W^{1,p}}^p.
$$
For the last inequality, notice that $\|D \tilde g\|_{q} \lesssim \|BD \tilde g\|_{q}$ as $q\in I(BD)$ and
$BD\tilde g = \sum_{m=1}^M c_{m}\sum_{j} (I+im\ell_{j}BD)^{-1}BDb_{j}$, so that the inequality is shown in \cite[Section 4.1]{HMc}. 

The main new part compared to \cite{HMc} is the treatment of the set $A_{3}$, for which we follow, in part, \cite{AHM}. As usual, since $|\cup 4Q_{j} |$ is under control from \eqref{eq:CZ3}, it is enough to control the measure of $\wt A_{3}= \{ \SF_{\psi,DB}Db > \lambda\} \cap F$ where  $F= \R^n\setminus \cup 4Q_{j}$. We use then the $L^2$ Markov inequality
$$
|\wt A_{3} |\le \lambda^{-2} \int_{F}  |\SF_{\psi,DB}Db|^2  = \lambda^{-2} \iint |\psi(tDB) Db(y)|^2\, \frac{|B(y,t)\cap F|}{t^n}\, \frac{dydt}{t}.
$$
We  decompose $\psi(tDB) Db(y)= f_{loc}(t,y)+ f_{glob}(t,y)$
with $$f_{loc}(t,y)= \sum_{j } 1_{2Q_{j}}(y)\psi(tDB) D\varphi(\ell_{j}BD)b_{j}(y) $$
and $$f_{glob}(t,y)=  \sum_{j } 1_{(2Q_{j})^c}(y)\psi(tDB) D\varphi(\ell_{j}BD)b_{j}(y).$$
Let us call $I_{loc}$ and $I_{glob}$ the integrals obtained. We begin with the estimate of $I_{loc}$. If $y \in 2Q_{j}$ and $t\le 2\ell_{j}$ then 
$B(y,t) \subset 4Q_{j}$, hence $B(y,t)\cap F=\emptyset$. Thus, in $I_{loc}$ we may replace 
$f_{\loc}$ by
$$\tilde f_{loc}(t,y)= \sum_{j } 1_{2Q_{j}}(y)1_{(2\ell_{j},\infty)}(t)\psi(tDB) D\varphi(\ell_{j}BD)b_{j}(y). $$
At this point we dualize against $H$ with $\int\!\!\!\int |H(t,y)|^2\,  \frac{dydt}{t}=1$, so that using Fubini's theorem and Cauchy-Schwarz inequality
\begin{align*}
  I_{loc}^{1/2}  &  \lesssim \iint  \tilde f_{loc}(t,y) \overline {H(t,y)} \, \frac{dydt}{t}  \\
    &  \le \sum_{j} I_{j} |Q_{j}|^{1/2} \inf_{x\in Q_{j}}\MM_{2}\wt H(x), \end{align*}
  where  $$
    I_{j}^2:=\int_{2\ell_{j}}^\infty\int_{\R^n} |\psi(tDB) D\varphi(\ell_{j}BD)b_{j}(y)|^2\, \frac{dydt}{t},
$$
$\wt H(y)^2:= \int_{0}^\infty |H(t,y)|^2\,  \frac{dt}{t}$, $\MM_{2}\wt H:= (\MM |\wt H|^2)^{1/2}$ and $\MM$ is the Hardy-Littlewood maximal operator. 
We have $\psi(tDB) D\varphi(\ell_{j}BD)b_{j}= D\psi(tBD) \varphi(\ell_{j}BD)b_{j} $ (remark that $b_{j}\in L^2$, so we can use functional calculus and  the commutation holds) and using the accretivity of $B$ and \eqref{eq:odnpsipq}
$$
\|D\psi(tBD) \varphi(\ell_{j}BD)b_{j}\|_{2}\lesssim \|BD\psi(tBD) \varphi(\ell_{j}BD)b_{j}\|_{2}
\lesssim t^{-1} t^{\frac{n}{2}-\frac{n}{q}}\|b_{j}\|_{q}.
$$
It follows easily using \eqref{eq:CZ2} that 
$$
I_{j}\lesssim \lambda |Q_{j}|^{1/2}.
$$
It is classical from Kolmogorov's inequality and the weak type (1,1) of $\MM$ that 
\begin{equation}
\label{eq:kolmo}
\sum_{j}  |Q_{j}| \inf_{x\in Q_{j}}\MM_{2}\wt H(x) \lesssim  \|\wt H\|_{2}^{1/2} |\cup Q_{j}|^{1/2}=  |\cup Q_{j}|^{1/2}.
\end{equation}
Altogether, we conclude using \eqref{eq:CZ3} that 
$$\lambda^{-2}I_{loc} \lesssim   |\cup Q_{j}| \lesssim \lambda^{-p} \|u \|_{\dot W^{1,p}}^p.
$$
We next turn to $I_{glob}$. Using the same dualization argument, we have for some $H$ as above, 
\begin{align*}
  I_{glob}^{1/2}    \lesssim  \sum_{j,r\ge 1} I_{j,r} 2^{r{n}/{2}} |Q_{j}|^{1/2} \inf_{x\in Q_{j}}\MM_{2}\wt H(x), \end{align*}
  where  $$
    I_{j,r}^2:=\int_{0}^\infty\int_{S_{r}(2Q_{j})} |\psi(tDB) D\varphi(\ell_{j}BD)b_{j}(y)|^2\, \frac{dydt}{t},
$$
and we use the notation $S_{r}(Q)$ introduced for molecules. Since the integrals are localized we cannot use the same argument as before by using the accretivity of $B$ on the range. Nevertheless, we prove a  local version in the following lemma, which will be used many times later on.

\begin{lem}\label{lem:localcoerc}[Local coercivity inequality] For any $u\in L^2_{\loc}$ with $Du\in L^2_{loc}$,  any ball $B(x,r)$ in $\R^n$ and $c>1$, 
\begin{equation} 
\label{eq:localcoerc}
\int_{B(x,r)} |Du|^2  \lesssim \int_{B(x,cr)} |BDu|^2  + r^{-2} \int_{B(x,cr)} | u|^2,
\end{equation}
with the implicit constant depending only on the ellipticity constants of $B$, dimension,  $N$ and $c$. 
\end{lem}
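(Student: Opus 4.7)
The plan is to localize via a smooth cutoff and reduce to the global coercivity estimate $\|Dw\|_2 \lesssim \|Bw\|_2$ for $w \in \clos{\ran_2(D)}$, which follows immediately from the accretivity assumption \eqref{eq:accretivity} and Cauchy--Schwarz.

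First I would pick a scalar cutoff $\chi\in C_c^\infty(\R^n)$ with $\chi\equiv 1$ on $B(x,r)$, $\supp\chi\subset B(x,cr)$, and $\|\nabla\chi\|_\infty\lesssim ((c-1)r)^{-1}$, and set $v:=\chi u$. Because $u$ and $Du$ are in $L^2_{\loc}$ and $\chi$ has compact support, $v\in L^2$ globally. Since $D=-i\sum \hat D_j\partial_j$ has constant matrix coefficients, a direct calculation gives $Dv = \chi\, Du + [D,\chi]u$, where $[D,\chi] = -i\sum\hat D_j(\partial_j\chi)$ is pointwise multiplication by a bounded matrix of norm $\lesssim r^{-1}$. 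Hence $Dv\in L^2$ as well, so $v\in\dom_2(D)$ and $Dv\in\ran_2(D)\subset\clos{\ran_2(D)}$.

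Next I would apply the global coercivity to $w=Dv$: by \eqref{eq:accretivity},
\[
\kappa\,\|Dv\|_2^2\le \re\pair{Dv}{B\,Dv}\le \|Dv\|_2\,\|B\,Dv\|_2,
\]
so $\|Dv\|_2\lesssim \|B\,Dv\|_2$. Using that $B$ is a pointwise multiplication operator, $B\,Dv = \chi\,BDu + B[D,\chi]u$, which together with the commutator expansion on the left gives
\[
\|\chi\,Du\|_2 \le \|Dv\|_2 + \|[D,\chi]u\|_2 \lesssim \|BDv\|_2 + r^{-1}\|u\,1_{B(x,cr)}\|_2 \lesssim \|\chi\,BDu\|_2 + r^{-1}\|u\,1_{B(x,cr)}\|_2.
\]
Squaring and using $\chi\equiv 1$ on $B(x,r)$ and $\supp\chi\subset B(x,cr)$ yields \eqref{eq:localcoerc}.

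There is no real obstacle; the only mildly subtle point is that the accretivity of $B$ is only assumed on $\clos{\ran_2(D)}$, and the cutoff trick is precisely what places $Dv$ inside this subspace. The dependence of the implicit constant on $c$ comes through the bound on $\|\nabla\chi\|_\infty$; the dependence on the ellipticity constants of $B$ enters through $\kappa^{-1}$ and $\|B\|_\infty$, and the dimension $N$ enters only through the operator norms of the matrices $\hat D_j$.
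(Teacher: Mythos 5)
Your proof is correct and follows essentially the same route as the paper: a scalar cutoff $\chi$, the commutator identity $D(\chi u)=\chi Du+[D,\chi]u$ with the commutator a bounded matrix of size $\lesssim r^{-1}$, and the accretivity of $B$ applied to $D(\chi u)\in\ran_{2}(D)$, followed by commuting back and using $\|B\|_{\infty}$. The only differences are cosmetic (tracking the $c$-dependence through $\|\nabla\chi\|_{\infty}$), so nothing further is needed.
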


We postpone the proof of  the lemma.  As
$\psi(tDB) D\varphi(\ell_{j}BD)b_{j}= D\psi(tBD) \varphi(\ell_{j}BD)b_{j}$, we can apply it to $u_{j}=\psi(tBD) \varphi(\ell_{j}BD)b_{j}$, which leads to bound $I_{j,r}^2$ by two integrals  with slightly larger regions $\wt S_{r}(2Q_{j})$ of the same type as $S_{r}(2Q_{j})$ and with integrands $|BDu_{j}|^2$ and $|(2^{-r}\ell_{j })^{-1}u_{j}|^2$ respectively.  We then truncate both integrals at $\ell_{j}$. For  $t\le \ell_{j}$, using the $L^q-L^2$ off-diagonal estimate \eqref{eq:odnpsipq} (which requires $\tau$ large enough), 
$$
\int_{\wt S_{r}(2Q_{j})} |BDu_{j}(y)|^2\, {dy} \lesssim  t^{-2}t^{\frac{2n}{2}-\frac{2n}{q}}\brac{2^r \ell_{j}/t}^{-2\sigma c}\|b_{j}\|_{q}^2
$$
which, using \eqref{eq:CZ5}, leads to 
$$
\int_{0}^{\ell_{j}}\int_{\wt S_{r}(2Q_{j})} |BDu_{j}(y)|^2\, \frac{dydt}{t} \lesssim  \ell_{j}^{-2+ \frac{2n}{2}-\frac{2n}{q}} 2^{-2r\sigma c} \|b_{j}\|_{q}^2 \lesssim 2^{-2r\sigma c} \lambda^2 |Q_{j}|.
$$
The argument for  $(2^{-r}\ell_{j })^{-1}u_{j}$ replacing $BDu_{j}$ is the same if $q<2$ and leads to a similar  estimate with $1+\sigma c $ in place of $\sigma c$. If $q=2$, we may  use  an $L^{s}-L^2$ estimate for some  $s<2$ instead and \eqref{eq:CZ5} for $\|b_{j}\|_{s}$. 

When $t\ge \ell_{j}$, we deduce from \eqref{eq:odnpsiphipq} (provided $\tau $ is large enough)
$$
\int_{\wt S_{r}(2Q_{j})} |BDu_{j}(y)|^2\, {dy} \lesssim  t^{-2}t^{\frac{2n}{2}-\frac{2n}{q}}\brac{2^r \ell_{j}/\ell_{j}}^{-2Mc}\|b_{j}\|_{q}^2
$$
and then
$$
\int_{\ell_{j}}^{\infty}\int_{\wt S_{r}(2Q_{j})} |BDu_{j}(y)|^2\, \frac{dydt}{t} \lesssim  2^{-2rMc} \lambda^2 |Q_{j}|.
$$
The argument for $(2^{-r}\ell_{j })^{-1}u_{j}$ replacing $BDu_{j}$ is the same if $q<2$ and leads to a similar  estimate with $1+ Mc$ in place of $Mc$. If $q=2$, we may use an $L^{s}-L^2$ estimate for some  $s<2$ instead and \eqref{eq:CZ5} for $\|b_{j}\|_{s}$. 

In total, we obtain an estimate 
$$
I_{j,r}\lesssim \sum_{j,r\ge 1} 2^{-rK} \lambda |Q_{j}|^{1/2},
$$
where $K$ can be arbitrary large (upon choosing $\sigma,M$ large) so that using \eqref{eq:kolmo}
$$
 I_{glob}^{1/2}    \lesssim  \sum_{j,r\ge 1} \lambda 2^{r(\frac{n}{2}-K)} |Q_{j}| \inf_{x\in Q_{j}}\MM_{2}\wt H(x) \lesssim \lambda |\cup Q_{j}|^{1/2} 
 $$
 and the desired conclusion follows.   \end{proof} 
 
 \begin{proof}[Proof of lemma \ref{lem:localcoerc}]
For this inequality, we let $\chi$ be a  scalar-valued cut-off function with $\chi=1$ on $B(x,r)$, supported in $B(x,cr)$  and with $\|\nabla \chi\|_{\infty}\lesssim r^{-1}$. As $\chi u \in \dom(D)$ and using that the  commutator  between $\chi$ and $D$ is the pointwise multiplication by a  matrix with bound controlled by $|\nabla \chi|$, 
$$
\int_{B(x,r)} |Du|^2\,   \leq \int_{\R^n} |\chi Du|^2\,    \lesssim  \int_{\R^n}  |D(\chi u)|^2\,   + \int_{\R^n}  |\nabla\chi |^2| u|^2\,  .
$$
Since $B$ is accretive on $\ran_{2}(D)$, we have $ \int_{\R^n}  |D(\chi u)|^2 \lesssim  \int_{\R^n}  |BD(\chi u)|^2$. Now, we use again the commutation between $\chi$ and $D$ together with $\|B\|_{\infty}$. This proves \eqref{eq:localcoerc}.
\end{proof}

To continue the proof of Theorem \ref{thm:upperqpsip<2}, we have to consider the case $p\le 1$, which occurs only when $(p_{-})_{*} <1$. In this case, it is enough to consider a $\left(\IH^{p}_{D},1\right)$-atom $a=Db$ with $a,b$ supported in a cube $Q$ and show that $\|\SF_{\psi,DB}a\|_p
\lesssim 1$ uniformly for some $\psi\in \Psi_{\sigma}^\tau(S_{\mu})$ with $\sigma,\tau$ as large as one needs. 

As usual the local term is handled by the $L^2$ bound
$$
\|\SF_{\psi,DB}a\|_{L^p(4Q)}\le |4Q|^{\frac{1}{p}-\frac{1}{2}}\|S_{\psi,DB}a\|_{L^2(4Q)} \lesssim |4Q|^{\frac{1}{p}-\frac{1}{2}} \|a\|_{2}\lesssim 1.
$$
Next, for the non-local term, we remark that if $x\notin 4Q$ and $t\in (0,\infty)$, then $\brac{\dist (B(x,2t),Q)/t}\geq  C \brac{\dist (x,Q)/t}$. Using $\psi(tDB)a= \psi(tDB)Db= D\psi(tBD) b$,  the local coercivity inequality \eqref{eq:localcoerc}    and $L^q-L^2$ off-diagonal estimates \eqref{eq:odnpsipq} (provided $\tau$ is large enough), we have 
\begin{align*}
\|\psi(tDB)a\|_{L^2(B(x,t))}  
  & \lesssim \|BD\psi(tBD)b\|_{L^2(B(x,2t))} + t^{-1}\|\psi(tBD)b\|_{L^2(B(x,2t))}  \\
    &  \lesssim  t^{-1} t^{\frac{n}{2}-\frac{n}{q}}\brac{\dist (x,Q)/t}^{-K}\|b\|_{q},
\end{align*}
 where $K$ can taken as large as one wants upon taking $\sigma$ large, and one chooses $q$ with $p_{-}<q<p^*$ and $q\le 2$, which is possible as $(p_{-})_{*}<p\le 1$. Thus, for  $x\notin 4Q$
 $$
\SF_{\psi,DB}a(x) \lesssim   (d(x,Q))^{-1-\frac{n}{q}} \|b\|_{q}.
$$
As $q<p^*$, it follows that  $1+\frac{n}{q}>\frac{n}{p}$, so one can integrate the $p$th power and get
$$
\|\SF_{\psi,DB}a\|_{L^p((4Q)^c)} \lesssim \ell(Q)^{-1-\frac{n}{q}+\frac{n}{p}} \|b\|_{q} \lesssim 1,
$$
where the last inequality is merely  H\"older's inequality and $\|b\|_{2}\lesssim \ell(Q)^{1+\frac{n}{2}-\frac{n}{p}}.
$

We have obtained all the upper bounds in Theorem \ref{thm:hpdb}. We complete the proof by proving the lower bounds. 

\subsubsection{Lower bounds}

Those have already been obtained in Proposition \ref{cor:lowerbounddbp<2} for $\frac{n}{n+1}<p<2$ and we remark that $
(p_{-})_{*}>\frac{n}{n+1}$. It remains to see them for $2<p<p_{+}$. We have seen in Proposition  
\ref{prop:duality} that 
for all $h\in \clos{\ran_{2}(D)}$ and $g\in \clos{\ran_{2}(B^*D)}$ and any $\psi,\varphi\in \Psi(S_{\mu})$ for which the Calder\'on reproducing formula \eqref{eq:Calderon}  holds, one has
$|\pair h g | \le \| \Qpsi \psi {DB}h\|_{T^p_{2}}\|\Qpsi {\varphi^*} {B^*D} g\|_{T^{p'}_{2}}.$
Now, we have ${\varphi^*}(tB^*D)g=B^*{\varphi^*}(tDB^*)(B^*)^{-1}g$. Using that 
$B^*$ is bounded, $p'\in I(DB^*)=I(BD)'$ since $p\in I(BD)$ and $B^*$ is an isomorphism from $\clos{\ran_{p'}(D)}$ onto $\clos{\ran_{p'}(B^*D)}$,
\begin{equation}
\label{eq:upperboundb*d}
\|\Qpsi {\varphi^*} {B^*D} g\|_{T^{p'}_{2}} \lesssim \|\Qpsi {\varphi^*} {DB^*} (B^*)^{-1}g\|_{T^{p'}_{2}}
\lesssim \|(B^*)^{-1}g\|_{p'}\sim \|g\|_{p'},
\end{equation}
provided $\varphi$ is allowable for $\IH^{p'}_{DB^*}$ which is the case if we choose, as we may,   $\varphi\in \Psi^{\gamma(p')}(S_{\mu})$. Thus 
$$|\pair h g | \le \| \Qpsi \psi {DB}h\|_{T^p_{2}} \|g\|_{p'}.
$$
Now, from \cite{AS}, Proposition 2.1, (5), $\clos{\ran_{p}(D)}$ and $\clos{\ran_{p'}(B^*D)}$ are dual spaces for the $L^2$ pairing: this and a density argument yield $\|h\|_{p}\lesssim \| \Qpsi \psi {DB}h\|_{T^p_{2}}$. This completes the proof of Theorem \ref{thm:hpdb}.

\subsection{Proof of Theorem \ref{thm:hpbd}}

Before we move to the proof, let us explain the ranges of $p$ and $\alpha$.  In Theorem \ref{thm:hpdb}, the range for $q$ for $\IH^q_{DB^*}=\IH^q_{D}$  is   $(p_{-}(DB^*))_{*} <q<p_{+}(DB^*)$. But $p_{+}(DB^*)'=p_{-}(BD)$ and $p_{-}(DB^*)'=p_{+}(BD)$, so this is the range 
$(p_{+}(BD)')_{*} <q<p_{-}(BD)'$. If $p_{+}(BD)\le n$, we have $(p_{+}(BD)')_{*}= (p_{+}(DB^*)^*)'$
(with $n^*=\infty$ by convention).  If $p_{+}(BD)> n$, then we obtain the range $[0, \alpha(BD))$ with $\alpha(BD)=n(\frac{1}{(p'_{+}(BD))_{*}}-1)= 1- \frac{n}{p_{+}(DB)}$. In all,  we obtain the ranges for $p$ and $\alpha$ specified in the statement.

\subsubsection{Lower bounds}\label{sssec:lowerboundshpbd}

The lower bounds of the tent space norms $\|\Qpsi \varphi {BD} h\|_{\mT} $ by norms on $\IP h$ is a modification of the arguments in Proposition \ref{prop:lowerbounds}.   For example, for $p=q'$ and $q>1$, take $\psi,\varphi\in \Psi(S_{\mu})$ for which the Calder\'on reproducing formula \eqref{eq:Calderon}  holds. then 
\begin{align*}
\|\IP g\|_{p} & = \sup\{ |\pair {\IP g} f|\, ; \, \|f\|_{\IH^q_{D}}\lesssim 1\}\\
 & \sim \sup\{ |\pair g f|\, ; \, \|f\|_{\IH^q_{D}}\lesssim 1\} \\
 & \le \sup\{ \| \Qpsi \psi {BD}f\|_{T^p_{2}} \|\Qpsi {\varphi^*} {DB^*} g\|_{T^{q}_{2}}\, ; \, \|f\|_{\IH^q_{D}}\lesssim 1\} \\
    &  \lesssim  \sup\{\| \Qpsi \psi {BD}g\|_{T^p_{2}}  \|f\|_{q}\,  ; \, \|f\|_{\IH^q_{D}}\lesssim 1\}  \\
    & \lesssim \| \Qpsi \psi {BD}g\|_{T^p_{2}}.
\end{align*}  
The fourth line holds provided we also choose $\varphi$ allowable for $\IH^q_{DB^*}$ while $\psi$ can be arbitrary.

The same argument holds when $q\le 1$, working in the H\"older spaces $\IL^\alpha_{BD}$ and $\IL^\alpha_{D}$ and corresponding tent space $T^\infty_{2,\alpha}$.

\subsubsection{Upper bounds}

For $p_{-}<p<2$, we have just seen the desired upper bound in \eqref{eq:upperboundb*d} up to changing $p'$ to $p$ and $B^*$ to $B$. 

Proposition \ref{prop:upperbdp>2} takes care of the case $2<p<p_{+}$. 

Next, we consider the case $p_{+}\le p<(p_{+})^*$.
We adapt an argument of \cite{AHM}  which works for both $BD$ or $DB$.  Let  $\psi\in \Psi_{\sigma}^\tau(S_{\mu})$ with $\sigma>0$ and $\tau>0$. Recall that $\modz= \sgn(z) z$. Consider for $\alpha\in \C$ with $\re \alpha>0$, 
$$\psi_{\alpha}(z)= \frac{\modz^{\alpha-\sigma}}{(1+\modz)^{\alpha-\sigma}}\ \psi(z).$$ Remark that
$$
\frac{\modz^\alpha}{(1+\modz)^\alpha}= (1+\modz^{-1})^{-\alpha}
$$
and since $z\in S_{\mu}$ implies $\modz, \modz^{-1} , 1+\modz^{-1} \in S_{\mu+}$, we have that
$$
\sup_{z\in S_{\mu}} \left|\frac{\modz^\alpha}{(1+\modz)^\alpha}\right| \le e^{\mu |\im \alpha|}.
$$
It follows that $\psi_{\alpha}\in \Psi_{\re \alpha}^\tau(S_{\mu})$ with 
$$
|\psi_{\alpha}(z)| \le Ce^{\mu |\im \alpha|} \inf( |z|^{\re \alpha}, |z|^{-\tau}).
$$
Clearly, the map $\alpha\mapsto \psi_{\alpha}$ is analytic from $\re \alpha>0$ to $\Psi(S_{\mu})$ with $\psi=\psi_{\sigma}$.  

For $T=DB$ of $BD$, set 
$$
Q_{\alpha}f = \Qpsi {\psi_{\alpha}} T f = (\psi_{\alpha}(tT)f)_{t>0}, \quad f\in L^2.
$$
Thus $Q_{\alpha}$ is an analytic family of bounded operators from $L^2$ to $T^2_{2}$ with 
$$
\|Q_{\alpha}f\|_{T^p_{2}}\lesssim  e^{\mu |\im \alpha|} \|f\|_{2}.
$$
 In the statements below, implicit or explicit constants $C$  are allowed to depend on the real part of $\alpha$ but not on its imaginary part.

\begin{lem} For $\re \alpha>0$, $Q_{\alpha}$ maps $L^p\cap L^2$ to $T^p_{2}$ when $2\le p<p_{+}$ with $\|Q_{\alpha}f\|_{T^{p}_{2}}\lesssim  e^{\mu |\im \alpha|} \|f\|_{p}$.
\end{lem}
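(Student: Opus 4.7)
My plan is to derive this directly from Proposition \ref{prop:upperbdp>2} applied with $\psi$ replaced by $\psi_\alpha$, while carefully tracking how the implicit constant depends on $\alpha$. Since $\psi_\alpha \in \Psi_{\re\alpha}^\tau(S_\mu) \subset \Psi(S_\mu)$, that proposition already supplies the tent-space bound for $h \in \clos{\ran_2(T)}$; what is new here is the extension to all $f \in L^p \cap L^2$ together with the explicit factor $e^{\mu|\im\alpha|}$.

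I would first reduce to a vertical square function. For $p \ge 2$, the Stein--Fefferman inequality yields
\[
\|Q_\alpha f\|_{T^p_2} = \|\SF(\psi_\alpha(tT)f)\|_p \lesssim \left\|\left(\int_0^\infty |\psi_\alpha(tT)f|^2\,\frac{dt}{t}\right)^{1/2}\right\|_p.
\]
Since $p \in I(T) = (p_-,p_+)$, the extended Le Merdy theorem stated right after Proposition \ref{prop:projectionlp} applies to $T = BD$ or $DB$ on $L^p$: for \emph{any} $u \in L^p$ (not merely $u \in \clos{\ran_p(T)}$), one has the one-sided estimate
\[
\left\|\left(\int_0^\infty |\psi_\alpha(tT)u|^2\,\frac{dt}{t}\right)^{1/2}\right\|_p \lesssim C(\psi_\alpha)\,\|u\|_p.
\]
Composing these two inequalities with $u = f$ gives the lemma, \emph{provided} we can ensure $C(\psi_\alpha) \lesssim e^{\mu|\im\alpha|}$ with implicit constant independent of $\im\alpha$. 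No kernel/range decomposition of $f$ is therefore needed.

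The main obstacle is precisely verifying this dependence. The $L^p$ square function bound produced by the proof of the corollary after Proposition \ref{prop:projectionlp} factors through Kalton--Weis type R-boundedness: the family $\{\psi_\alpha(tT)\}_{t>0}$ is R-bounded on $L^p$ with R-bound controlled by $\|\psi_\alpha\|_{H^\infty(S_\mu)}$ (the underlying R-sectoriality of $T$ on $L^p$ being exactly what is provided by the $H^\infty$-calculus in $I(T)$, as recalled in the discussion following the corollary). The pointwise estimate $|\psi_\alpha(z)| \le C e^{\mu|\im\alpha|}\inf(|z|^{\re\alpha},|z|^{-\tau})$ established in the paragraph just preceding the lemma yields
\[
\|\psi_\alpha\|_{H^\infty(S_\mu)} \lesssim e^{\mu|\im\alpha|},
\]
uniformly in $\im\alpha$ for $\re\alpha > 0$ fixed, so the R-bound, hence the square function constant, is of size $e^{\mu|\im\alpha|}$. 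Combining with the Stein--Fefferman step gives the stated bound. The only subtlety to be checked is that the constant in the $L^p$ square function estimate can genuinely be taken linear in $\|\psi_\alpha\|_\infty$ (with $\re\alpha$ and $\tau$ playing no further role), which is exactly the content of the Kalton--Weis step and causes no real difficulty.
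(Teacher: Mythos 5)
Your proof is correct and is essentially the paper's own argument: the paper proves this lemma as a reformulation of Proposition \ref{prop:upperbdp>2} together with the remark following it (which handles general $f\in L^p\cap L^2$ via the kernel/range decomposition, equivalently the statement that the upper bound in \eqref{eq:psiTLp} holds for all $u\in L^p$), with the factor $e^{\mu|\im\alpha|}$ obtained, exactly as you do, by tracking $\|\psi_\alpha\|_{H^\infty(S_\mu)}\lesssim e^{\mu|\im\alpha|}$ through the Le Merdy-type square function estimate. Your write-up simply makes explicit the constant dependence that the paper disposes of by ``examination of the proof of Le Merdy's theorem,'' so there is nothing to add.
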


\begin{proof} This is a reformulation of  Proposition \ref{prop:upperbdp>2} together with the remark that follows it. We note that the control of 
the norm with $e^{\mu |\im \alpha|}$ comes from examination of the proof of  Le Merdy's theorem \cite{LeM} to get \eqref{eq:psiTLp}. 
\end{proof}

\begin{lem}\label{lem:sfpinfty} For $\re \alpha> \frac{n}{p_{+}(T)}$, $Q_{\alpha}$ maps $L^p\cap L^2$ to $T^p_{2}$ when $2\le p\le \infty$ with $\|Q_{\alpha}f\|_{T^{p}_{2}}\lesssim  e^{\mu |\im \alpha|} \|f\|_{p}$.

\end{lem}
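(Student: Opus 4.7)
The plan is to establish the endpoint bounds $L^2\to T^2_{2}$ and $L^\infty\cap L^2\to T^\infty_{2}$ with the claimed $e^{\mu|\im\alpha|}$ dependence, and then obtain the intermediate range by interpolation; the hypothesis $\re\alpha>n/p_{+}(T)$ will be used only at the $L^\infty$ endpoint. For $p=2$, since $\psi_\alpha\in\Psi_{\re\alpha}^\tau(S_\mu)$ with $\|\psi_\alpha\|_\infty\lesssim e^{\mu|\im\alpha|}$, the $H^\infty$-calculus of $T$ on $L^2$ together with the quadratic estimate \eqref{eq:psiT}, extended to $L^2$ via the kernel/range decomposition, yields $\|Q_\alpha f\|_{T^2_{2}}\lesssim e^{\mu|\im\alpha|}\|f\|_2$.

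For $p=\infty$, the Carleson measure estimate amounts to showing, for each ball $B=B(x_0,r)$ and each $f\in L^\infty\cap L^2$, that $\iint_{T_{x_0,r}}|\psi_\alpha(tT)f|^2\,dx\,dt/t\lesssim e^{2\mu|\im\alpha|}\|f\|_\infty^2|B|$. I would split $f=f_0+\sum_{j\ge 1}f_j$ with $f_0:=f\,1_{4B}$ and $f_j:=f\,1_{S_j(4B)}$. By Fubini, the tent integral is dominated by $\int_0^r\|\psi_\alpha(tT)f_0\|_2^2\,dt/t+\sum_{j\ge 1}\int_0^r\|\psi_\alpha(tT)f_j\|_{L^2(B)}^2\,dt/t$, and the local term is controlled by the $p=2$ bound applied to $f_0\in L^2$, giving $e^{2\mu|\im\alpha|}\|f_0\|_2^2\lesssim e^{2\mu|\im\alpha|}\|f\|_\infty^2|B|$.

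For each annular term, the crucial move is to exploit $\re\alpha>n/p_{+}(T)$ by choosing $p_0\in(\max(2,n/\re\alpha),p_{+}(T))$, which is nonempty by the hypothesis, and applying \eqref{eq:odnpsipq} with \emph{equal source and target exponent} $p=q=p_0$; this allows $c=1$ and preserves the full decay order $\re\alpha$, yielding $\|\psi_\alpha(tT)f_j\|_{L^{p_0}(B)}\lesssim e^{\mu|\im\alpha|}\langle 2^jr/t\rangle^{-\re\alpha}\|f_j\|_{p_0}$. Combined with H\"older on $B$ (using $p_0>2$) and $\|f_j\|_{p_0}\lesssim(2^jr)^{n/p_0}\|f\|_\infty$, and using $t\le r\le 2^jr$ so that $\langle 2^jr/t\rangle\sim 2^jr/t$, integration in $t\in(0,r)$ produces the bound $e^{2\mu|\im\alpha|}\,2^{-2j(\re\alpha-n/p_0)}|B|\|f\|_\infty^2$; the geometric series in $j$ converges since $\re\alpha>n/p_0$, and the Carleson estimate follows.

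Real interpolation of the linear operator $Q_\alpha$ between the two endpoints, using the scale $(T^2_{2},T^\infty_{2})_{\theta,p}=T^p_{2}$, then gives $\|Q_\alpha f\|_{T^p_{2}}\lesssim e^{\mu|\im\alpha|}\|f\|_p$ for all $2\le p\le\infty$ and $f\in L^p\cap L^2$. The hard part is the annular step in the $L^\infty$ endpoint: a more naive use of off-diagonal decay with source $p_0\le 2$ and target $2$ only yields the constraint $\re\alpha>n/2$ (even allowing $c\nearrow 1$ as $p_0\nearrow 2$), which is strictly stronger than the hypothesis. Taking source equal to target in $(2,p_{+}(T))$ and descending to $L^2(B)$ by H\"older is what recovers the sharp threshold $n/p_{+}(T)$.
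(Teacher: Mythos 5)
Your argument is correct and is essentially the paper's own proof: the $p=\infty$ Carleson estimate is obtained by the same local/annular splitting, with the same key point, namely the equal-exponent off-diagonal bound \eqref{eq:odnpsipq} at some exponent $q\in(2,p_{+}(T))$ (so $c=1$ and the full decay order $\re\alpha$ survives), followed by H\"older on the ball and a geometric series that converges precisely because $\re\alpha>\frac{n}{q}$ for $q$ close to $p_{+}(T)$ (note only that the annular terms should be summed via Minkowski's inequality rather than dominating $|\sum_j\cdot|^2$ by $\sum_j|\cdot|^2$, which the geometric decay makes harmless). The sole remaining difference is cosmetic: at the last step the paper invokes complex interpolation between $T^2_{2}$ and $T^\infty_{2}$ from \cite{CMS}, which is the safer citation than the real-method identity $(T^2_{2},T^\infty_{2})_{\theta,p}=T^p_{2}$ you appeal to, and in either case the $e^{\mu|\im\alpha|}$ dependence persists since $\alpha$ is fixed.
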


\begin{proof} For fixed $\alpha$ it is enough to consider the case $p=\infty$ as one can then complex interpolate from \cite{CMS} between $T^2_{2}$ and $T^\infty_{2}$.
  We claim that for any $2< q<p_{+}$,  and any ball $B_{r}$ of $\R^n$, with radius $r$,  setting $\Omega=(0,r)\times     B_{r}$,
\begin{multline}\label{varphi}
\left(
\frac1{|B_{r}|}\,\iint_{\Omega} |\psi_{\alpha}(tT) f(x)|^2\,\frac{dtdx}{t}
\right)^{1/2}
\\
\le
Ce^{\mu|\im \alpha|}\,\sum_{j=1}^\infty 2^{-j\,(\re\alpha-\frac{n}{q})}\,\left(\barint_{2^jB_{r}}|f|^q\right)^{1/q}.
\end{multline}
  Admitting this claim,  the right hand side is dominated by the $L^\infty$ norm of $f$ by using $\re \alpha> \frac{n}{p_{+}}$ and choosing $q<p_{+}$ appropriately. Then the supremum over all $B_{r}$ of the left hand side is precisely the $T^\infty_{2}$ norm of $Q_{\alpha}f$.
  
To  prove the claim, we write $f=f_{\rm loc}+f_{\rm glob}$ where $f_{\rm loc}=f\,1_{4B_{r}}$. Then, using the $L^2-T^2_{2}$ boundedness of $Q_{\alpha}$,
\begin{multline*}
\frac1{|B_{r}|}\,\iint_{\Omega} |\psi_{\alpha}(tT) f_{\rm loc}(x)|^2\,\frac{dtdx}{t}
\le
\frac1{|B_{r}|}\int_{\R^n} \Big(\int_0^\infty |\psi_{\alpha}(tT) f_{\rm loc}(x)|^2\,\frac{dt}{t}\Big)\,dx
\\
\lesssim
\frac1{|B_{r}|}\int_{\R^n} |f_{\rm loc}|^2
\lesssim
\barint_{4B_{r}} |f|^2.
\end{multline*}

It is then enough to show
$$
\bigg(\barint_{B_{r}} |\psi_{\alpha}(tT)f_{\rm glob}|^2\bigg)^{1/2}
\le
 Ce^{\mu|\im \alpha|}\,\frac{t^{\re\alpha}}{r^{\re \alpha}}\,
\sum_{j=2}^\infty 2^{-j\,(\re \alpha-\frac{n}{q})}\,\left(\barint_{2^{j+1}B_{r}}|f|^q\right)^{1/q}.
$$
Indeed, plugging this estimate in the integral on the Carleson region $\Omega$, we obtain the claim. 

To this end,  we set $f_j=f\,1_{S_j(B_{r})}$,  so that $f_{\rm glob} = \sum_{j\ge 3} f_{j}$ and by Minkowski's and H\"older's inequalities
$$
\bigg(\barint_{B_{r}} |\psi_{\alpha}(tT) f_{\rm glob}|^2\bigg)^{1/2} \le \sum_{j\ge 3} \bigg(\barint_{B_{r}} |\psi_{\alpha}(tT) f_{j}|^q\bigg)^{1/q}.$$
 Fix $j\ge 3$ and use \eqref{eq:odnpsipq}  with $p=q$ to obtain
$$
\bigg(\barint_{B_{r}} |\psi_{\alpha}(tT) f_{j}|^q \bigg)^{1/q} \le
Ce^{\mu|\im \alpha|}\,\frac{t^{\re\alpha}}{r^{\re \alpha}}\,
2^{-j\,(\re \alpha-\frac{n}{q})}\,\left(\barint_{2^{j+1}B_{r}}|f|^q\right)^{1/q}.
$$
The claim is proved. 
\end{proof}

\begin{lem} For $0<\re \alpha \le  \frac{n}{p_{+}}$, $Q_{\alpha}$ maps $L^p\cap L^2$ to $T^p_{2}$ when $2\le p< \frac{np_{+}}{n-p_{+}\re \alpha} $ with $\|Q_{\alpha}f\|_{T^{p}_{2}}\lesssim  e^{\mu |\im \alpha|} \|f\|_{p}$. 
\end{lem}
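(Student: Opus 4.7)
The plan is to apply Stein's complex interpolation theorem to the analytic family $\{Q_\alpha\}_{\re\alpha>0}$, inserting the two preceding lemmas as endpoint bounds on two vertical lines. The first lemma furnishes the bound $L^{p_0}\cap L^2\to T^{p_0}_2$ for any $p_0\in[2,p_+)$ as soon as $\re\alpha>0$, while Lemma \ref{lem:sfpinfty} furnishes $L^\infty\cap L^2\to T^\infty_2$ as soon as $\re\alpha>n/p_+$. Both come with the admissible imaginary-direction factor $e^{\mu|\im\alpha|}$, $\mu<\pi/2$, which is exactly what makes the Stein machinery applicable.

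Fix a target $\alpha_0$ with $0<\re\alpha_0\le n/p_+$ and a target exponent $p$ with $2\le p<\frac{np_+}{n-p_+\re\alpha_0}$. I would choose auxiliary parameters $a_0>0$, $a_1>n/p_+$, $p_0\in[2,p_+)$ and $\theta\in(0,1)$ satisfying
\[
\re\alpha_0=(1-\theta)a_0+\theta a_1,\qquad \frac{1}{p}=\frac{1-\theta}{p_0},
\]
so that $p=p_0(a_1-a_0)/(a_1-\re\alpha_0)$. Letting $a_0\to 0^+$, $a_1\to(n/p_+)^+$, and $p_0\to p_+^-$, the $p$ produced by these constraints tends to $\frac{np_+}{n-p_+\re\alpha_0}$, so any $p$ strictly below this bound is reached by a suitable choice. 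On the strip $a_0\le\re\alpha\le a_1$, Stein's interpolation together with the Coifman--Meyer--Stein complex interpolation identity $[T^{p_0}_2,T^\infty_2]_\theta=T^p_2$ then yields
\[
\|Q_{\alpha_0}f\|_{T^p_2}\lesssim e^{\mu|\im\alpha_0|}\|f\|_p,\qquad f\in L^p\cap L^2,
\]
which is precisely the claim.

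The main point to verify carefully is that $\alpha\mapsto Q_\alpha$ genuinely fits Stein's framework. Analyticity of $\alpha\mapsto\psi_\alpha\in H^\infty(S_\mu)$ will translate into analyticity of $\alpha\mapsto Q_\alpha f\in T^2_2$ for $f\in L^2$, through the $L^2$-boundedness of the $H^\infty$-calculus of $T$ on $\clos{\ran_2(T)}$ (and triviality on the null space), with continuity up to the boundary of the strip following the same way. The admissibility of the growth factor $e^{\mu|\im\alpha|}$ on the endpoint lines, with $\mu<\pi/2$, sits comfortably within the Stein--Hirschman three-lines theorem on any finite-width strip, and is in fact the reason the $e^{\mu|\im\alpha|}$ dependence was carefully tracked in the two preceding lemmas. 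A minor auxiliary point that is worth flagging is that to treat $T^\infty_2$ as a complex interpolation endpoint one must pass through duality with $T^1_2$ and use density of $L^p\cap L^2$; this is routine from the standard tent space theory but is the only step that is not purely formal.
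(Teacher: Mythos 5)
Your argument is exactly what the paper intends: its proof of this lemma is the single sentence that it is ``verbatim the interpolation argument in \cite{AHM}'', which is precisely the Stein--Hirschman complex interpolation of the analytic family $\alpha\mapsto Q_{\alpha}$ between the two preceding endpoint lemmas, with the tent-space endpoint handled via \cite{CMS} duality/interpolation as you indicate. Your parameter bookkeeping ($p=p_{0}(a_{1}-a_{0})/(a_{1}-\re\alpha_{0})$, then letting $a_{0}\to 0^{+}$, $a_{1}\to (n/p_{+})^{+}$, $p_{0}\to p_{+}^{-}$) correctly recovers the stated range, so the proposal is correct and follows the paper's route.
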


\begin{proof}  This is \textit{verbatim} the interpolation argument in \cite{AHM}.
\end{proof}

\begin{cor}\label{cor:psi1} For $p_{+}\le p$ and  $\psi\in \Psi_{\frac{n}{p}-\frac{n}{p_{+}}}(S_{\mu})$, then  $\Qpsi \psi T $ maps $L^p\cap L^2$ to $T^p_{2}$  with $\|\Qpsi \psi T f\|_{T^{p}_{2}}\lesssim   \|f\|_{p}$. In particular, if $\psi\in \Psi_{1}(S_{\mu})$,  then $\Qpsi \psi T $ maps $L^p\cap L^2$ to $T^p_{2}$  when $2\le p< (p_{+}(T))^* $.
\end{cor}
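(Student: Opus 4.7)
The plan is to assemble the three preceding lemmas. Given $\psi \in \Psi_\sigma^\tau(S_\mu)$ with appropriate $\sigma, \tau > 0$, I construct the holomorphic family $\psi_\alpha$ as in the proofs just preceding Lemma~\ref{lem:sfpinfty}, so that $\psi_\sigma = \psi$ and hence $Q_\sigma = \Qpsi{\psi}{T}$. The proof then reduces to choosing the right lemma according to the size of $\sigma$ relative to $n/p_+(T)$.

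If $\sigma > n/p_+(T)$, Lemma~\ref{lem:sfpinfty} applied at $\alpha = \sigma$ already yields $\|\Qpsi{\psi}{T} f\|_{T^p_2} \lesssim \|f\|_p$ uniformly for all $2 \le p \le \infty$, and in particular for every $p \ge p_+(T)$. If instead $0 < \sigma \le n/p_+(T)$, I apply the third (interpolation) lemma at $\alpha = \sigma$, which gives the same bound provided $2 \le p < np_+(T)/(n - p_+(T)\sigma)$. The key algebraic observation is that this last inequality rearranges to $\sigma > \tfrac{n}{p_+(T)} - \tfrac{n}{p}$, which is precisely the order-of-vanishing condition imposed on $\psi$ by the hypothesis $\psi \in \Psi_{\frac{n}{p}-\frac{n}{p_+}}(S_\mu)$ for $p \ge p_+(T)$.

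For the ``in particular'' statement, I take $\psi \in \Psi_1(S_\mu)$, so that $\sigma > 1$, and observe that at the Sobolev endpoint one has
\[
\frac{n}{p_+(T)} - \frac{n}{(p_+(T))^*} = \frac{n}{p_+(T)} - \frac{n - p_+(T)}{p_+(T)} = 1.
\]
Hence for every $p$ with $2 \le p < (p_+(T))^*$, the strict inequality $\sigma > 1 \ge \tfrac{n}{p_+(T)} - \tfrac{n}{p}$ holds, and the first part of the corollary applies to give the desired tent-space estimate.

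I do not anticipate a genuine obstacle here: all the analytic content has been placed in the three preceding lemmas and in particular in the complex interpolation argument borrowed from \cite{AHM}. This corollary only combines them via a case split at $\sigma = n/p_+(T)$ and a short algebraic manipulation. The only point that requires slight care is checking that the two regimes $\sigma \le n/p_+(T)$ and $\sigma > n/p_+(T)$ overlap consistently, which is ensured by the fact that the interpolation lemma's endpoint $\alpha = n/p_+(T)$ matches the threshold in Lemma~\ref{lem:sfpinfty}.
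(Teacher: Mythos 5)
Your proposal is correct and follows exactly the route the paper intends: the paper's own proof simply says the corollary is "an easy consequence of the previous construction when $\sigma$ is any number larger than the threshold" and leaves the details to the reader, and your case split between Lemma~\ref{lem:sfpinfty} ($\sigma>n/p_{+}$) and the interpolation lemma ($\sigma\le n/p_{+}$), together with the rearrangement $p<\frac{np_{+}}{n-p_{+}\sigma}\Leftrightarrow \sigma>\frac{n}{p_{+}}-\frac{n}{p}$, is precisely the omitted verification. You also read the vanishing-order hypothesis in the way the statement's ``in particular'' clause forces (namely $\sigma>\frac{n}{p_{+}}-\frac{n}{p}$), and your endpoint computation $\frac{n}{p_{+}}-\frac{n}{(p_{+})^{*}}=1$ matches the paper's.
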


\begin{proof}
This is an easy consequence of the previous construction when $\sigma$ is any number larger than $\frac{n}{p}-\frac{n}{p_{+}}$ to start with. We leave details to the reader. %One  takes $\alpha=\sigma$ and observes that $\frac{np_{+}}{n-p_{+}\re \alpha}=(p_{+})^*$.
\end{proof}

This corollary proves  the part of Theorem \ref{thm:hpbd} that concerns upper bounds for $T=BD$ and $2<p<(p_{+})^*$. 

To finish the proof of Theorem \ref{thm:hpbd}, it suffices to prove the following stronger result. 

\begin{prop}\label{prop:p+>n} If $p_{+}=p_{+}(BD)>n$, then for $0\le \alpha<1-\frac{n}{p_{+}}$,  $$
\|\Qpsi \psi {BD} h\|_{T^\infty_{2,\alpha}} \lesssim \|h\|_{\dot \Lambda^s}, \quad \forall h\in \dot \Lambda^\alpha\cap L^2,
$$
when $\psi \in \bigcup_{ \sigma> \alpha+\frac{n}{p_{+}}, \tau>0}\Psi_{\sigma}^\tau(S_{\mu})$ and in particular for $\psi\in  \Psi_{1}(S_{\mu})$. 
\end{prop}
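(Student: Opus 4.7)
The plan is to follow the same annular decomposition as in the proof of Lemma \ref{lem:sfpinfty}, but replace the local $L^\infty$ bound by a pointwise Hölder bound, which forces us to subtract a constant before decomposing. Since $\sigma>\alpha+\frac{n}{p_+}\ge \frac{n}{p_+}$, we may pick $p$ with $\sigma>\frac{n}{p}$ and $2<p<p_+$, so Corollary~\ref{cor:extension} extends $\psi(tBD)$ from $L^\infty$ into $L^p_{\mathrm{loc}}$. Because $\psi(0)=0$ and constants lie in $\nul(D)=\nul(BD)$, this extension annihilates any constant $c$. Fixing a ball $B=B(x_0,r)$ and choosing $c$ so that $|h(y)-c|\lesssim \|h\|_{\dot\Lambda^\alpha}|y-x_0|^\alpha$ (pointwise for $\alpha>0$, or an appropriate average if $\alpha=0$), we may write $\psi(tBD)h=\psi(tBD)(h-c)$ and decompose
$$
h-c = f_0 + \sum_{j\ge 2} f_j, \qquad f_0:=(h-c)1_{4B},\qquad f_j:=(h-c)1_{S_j(B)}.
$$
It remains to show $\iint_{(0,r)\times B}|\psi(tBD)h|^2\,\frac{dt\,dy}{t}\lesssim \|h\|_{\dot\Lambda^\alpha}^2\,r^{n+2\alpha}$.

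For the local piece $f_0$, the pointwise Hölder bound yields $\|f_0\|_2 \lesssim \|h\|_{\dot\Lambda^\alpha} r^{\alpha+n/2}$, so the global $L^2$ square function estimate (Proposition~\ref{prop:equi}) gives
$$
\iint_{(0,r)\times B}|\psi(tBD)f_0|^2\,\frac{dt\,dy}{t}\;\le\;\iint_{\reu}|\psi(tBD)f_0|^2\,\frac{dt\,dy}{t}\;\lesssim\;\|f_0\|_2^2\;\lesssim\;\|h\|_{\dot\Lambda^\alpha}^2 r^{n+2\alpha},
$$
which is of the right order, since $|B|^{1+2\alpha/n}\sim r^{n+2\alpha}$.

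For each annular piece $f_j$ ($j\ge 2$), pick $q\in I(BD)$ with $q<p_+$ and $\sigma>\alpha+\frac{n}{q}$; this is possible because $\sigma>\alpha+\frac{n}{p_+}$ and $I(BD)$ is open. Since $\|f_j\|_\infty \lesssim \|h\|_{\dot\Lambda^\alpha}(2^j r)^\alpha$, we get $\|f_j\|_q\lesssim \|h\|_{\dot\Lambda^\alpha}(2^j r)^{\alpha+n/q}$. Applying \eqref{eq:odnpsipq} with $p=q$ (hence $c=1$) between the sets $B$ and $S_j(B)$, whose distance is $\sim 2^j r$, yields
$$
\|\psi(tBD)f_j\|_{L^q(B)} \;\lesssim\; \bigl(2^j r/t\bigr)^{-\sigma}\|f_j\|_q.
$$
Hölder's inequality upgrades this to
$$
\|\psi(tBD)f_j\|_{L^2(B)}\;\lesssim\; r^{n/2-n/q}\bigl(2^j r/t\bigr)^{-\sigma}\|f_j\|_q \;\lesssim\; t^{\sigma}\,r^{n/2-n/q}(2^j r)^{\alpha+n/q-\sigma}\|h\|_{\dot\Lambda^\alpha}.
$$
Squaring, integrating over $t\in(0,r)$ (which converges since $2\sigma>0$), taking square roots, and summing over $j\ge 2$ using Minkowski, we obtain
$$
\sum_{j\ge 2}\biggl(\iint_{(0,r)\times B}|\psi(tBD)f_j|^2\frac{dt\,dy}{t}\biggr)^{1/2}\;\lesssim\; r^{\alpha+n/2}\|h\|_{\dot\Lambda^\alpha}\sum_{j\ge 2} 2^{j(\alpha+n/q-\sigma)}\;\lesssim\; r^{\alpha+n/2}\|h\|_{\dot\Lambda^\alpha}.
$$
The geometric series converges precisely because $\sigma>\alpha+n/q$. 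Combining with the local estimate finishes the proof, and the special case $\psi\in\Psi_1(S_\mu)$ follows because $\alpha<1-\frac{n}{p_+}$ ensures $1>\alpha+\frac{n}{p_+}$.

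The main technical obstacle is Step~1: rigorously replacing $\psi(tBD)h$ by $\psi(tBD)(h-c)$, since $h-c$ need not be in $L^2$. This has to be justified through the extension of $\psi(tBD)$ to $L^\infty$ provided by Corollary~\ref{cor:extension}: writing $1=1_{2^J B}+1_{(2^J B)^c}$ and letting $J\to\infty$, one checks that $\psi(tBD)(c\,1_{(2^J B)^c})\to 0$ in $L^2(B)$ via the same off-diagonal tail estimate used for the $f_j$'s, so the constant $c$ indeed drops out. Once this is set, the remainder of the argument is a routine scaling bookkeeping analogous to Lemma~\ref{lem:sfpinfty}.
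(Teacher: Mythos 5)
Your argument follows essentially the same route as the paper: the paper simply re-applies the claim \eqref{varphi} from the proof of Lemma \ref{lem:sfpinfty} (with $\sigma$ in place of $\re\alpha$) to $h-c$, which is exactly your local piece (handled by the $L^2$ square function bound) plus annuli (handled by \eqref{eq:odnpsipq} with $p=q$, $q<p_+$ close to $p_+$ so that $\sigma>\alpha+\tfrac nq$), followed by the integration over the Carleson box. However, two points in your write-up are genuine gaps. First, the endpoint $\alpha=0$ is part of the statement, and there your annular bound $\|f_j\|_\infty\lesssim\|h\|_{\dot\Lambda^0}$ is false: a $\BMO$ function need not be bounded, so the constant cannot be chosen to make $h-c$ uniformly small on $S_j(B)$. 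The correct substitute is the averaged bound: with $c$ the mean of $h$ on $B$, a telescoping (John--Nirenberg) argument gives $\big(|2^{j+1}B|^{-1}\int_{2^{j+1}B}|h-c|^q\big)^{1/q}\lesssim \ln(j+1)\,\|h\|_{\BMO}$, and the logarithmic factor is harmless against $2^{-j(\sigma-\frac nq)}$; this is precisely how the paper treats $\alpha=0$. (For $\alpha>0$ your pointwise choice $c=h(x_0)$ is fine.) Also note, for the Hölder upgrade from $L^q(B)$ to $L^2(B)$ you implicitly need $q\ge2$; this is available since $p_+>2$, but should be said.

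Second, your justification that ``the constant drops out'' does not close: proving $\psi(tBD)(c\,1_{(2^JB)^c})\to0$ in $L^2(B)$ only identifies the extended operator applied to $c$ with $\lim_J\psi(tBD)(c\,1_{2^JB})$; it does not show that this limit is zero, which is the actual content of the claim and is exactly what needs proof. The vanishing comes from the fact that $BD$ annihilates constants ($Dc=0$, hence resolvents $(I+isBD)^{-1}$ fix constants in the extended $L^\infty\to L^2_{\loc}$ sense of Remark \ref{rem:extension}), so that via the Cauchy-integral representation of $\psi\in\Psi_\sigma^\tau(S_\mu)$ one gets $\psi(tBD)c=0$ for the extension; this is the assertion the paper makes. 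Relatedly, for $\alpha>0$ the function $h-c$ is not in $L^\infty$ (it grows like $|x|^\alpha$), so strictly speaking you are not applying Corollary \ref{cor:extension} itself but its obvious variant in which the defining series converges for data of growth $O(|x|^\alpha)$; this is where the full strength of $\sigma>\alpha+\frac{n}{p_+}$, rather than just $\sigma>\frac{n}{p_+}$, enters, and your summability computation does supply it, but the logical role of the hypothesis should be stated.
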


\begin{proof}
We observe that \eqref{varphi} applies to $\psi$ replacing $\re \alpha$ by $\sigma$ and in the right hand side $h$ by $h-c$ where $c$ is any constant. Indeed, constants are annihilated by  $BD$, or more concretely $\psi(tBD)c=0$. The action of $\psi(tBD)$ on $L^\infty$ is guaranteed by 
Corollary \ref{cor:extension} applied with $q$ close to $p_{+}$ and $\sigma>\frac{n}{p_{+}}$.
Thus the left hand side of  \eqref{varphi} remains the same. 
Now, we choose $c$ to be the mean value of $f$ on $B_{r}$. When $h\in \dot \Lambda^\alpha$, a telescoping argument yields
$\left(\barint_{\hspace{2pt} 2^{j+1}B_{r}}|h - c|^q\right)^{1/q} \lesssim \|h\|_{\dot \Lambda^\alpha} 2^{j\alpha}r^\alpha.
$
Thus the series in $j$ converges as long as $\sigma-\frac{n}{q}+\alpha>0$, which is possible  since 
$\sigma> \alpha+\frac{n}{p_{+}}$ and choosing $q<p_{+}$ close to $p_{+}$,  and we obtain the desired conclusion when $\alpha>0$.

The same argument works for $h\in \BMO=\dot \Lambda^0$, and $2^{j\alpha}$ is replaced by $\ln (j+1)$. 
\end{proof} 

\subsection{Proof of Theorem \ref{thm:goodhpbd}} 
 
\subsubsection{Lower bounds}  The argument is the same as for Theorem \ref{thm:hpbd} in Section \ref{sssec:lowerboundshpbd}.

\subsubsection{Upper bounds}   
We begin with the case $2<q<p_{+}(DB^*)$, that is $p_{-}(BD)<p<2$. Then $\psi \in \Psi^\tau(S_{\mu})$ is allowable for $\IH^p_{BD}$ when $\tau>\gamma(p)$, which is the case as $\gamma(q)=\gamma(p)$. 

We turn to $q<2$.  We proceed with the following lemma. 

\begin{lem}\label{lem:r1} Let $\phi\in \mR^{k}(S_{\mu})$, $k=1,2$, with $\phi(0)=0$. Then for all $2<p<\infty$
$$
\|\Qpsi \phi {BD} h\|_{T^p_{2}} \lesssim \|\IP h\|_{L^p}, \quad \forall h\in L^2,
$$
and for all $0\le \alpha<1$, 
$$
\|\Qpsi \phi {BD} h\|_{T^\infty_{2,\alpha}} \lesssim \|\IP h\|_{\dot \Lambda^\alpha}, \quad \forall h\in L^2.
$$
\end{lem}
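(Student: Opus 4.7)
The plan hinges on the annihilation property $\phi(0)=0$ together with a duality passage to the adjoint operator $DB^*$. Because $\nul_2(BD)=\nul_2(D)$ (Lemma \ref{lem:biso}) and $\phi$ annihilates $\nul_2(BD)$, we have $\phi(tBD)h=\phi(tBD)\IP h$, reducing matters to $h\in\clos{\ran_2(D)}$.

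For $2<p<\infty$, I would use tent space duality to write $\|\phi(tBD)h\|_{T^p_2}=\sup_G|\langle h,V\rangle|$ over $G\in T^2_2\cap T^{p'}_2$ with $\|G\|_{T^{p'}_2}\le 1$, where $V=\int_0^\infty \phi^*(tDB^*)G(t,\cdot)\frac{dt}{t}$. Since $\phi^*\in\mR^k(S_\mu)$ also satisfies $\phi^*(0)=0$, $V$ lies in $\clos{\ran_2(DB^*)}\subset\clos{\ran_2(D)}$, hence $\langle h,V\rangle=\langle\IP h,V\rangle$, and H\"older yields $|\langle h,V\rangle|\le\|\IP h\|_p\|V\|_{p'}$. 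The $T^\infty_{2,\alpha}$ case proceeds analogously, dualizing against $T^q_2$ for $q=\frac{n}{n+\alpha}\in(\frac{n}{n+1},1]$, producing the pairing $\|\IP h\|_{\dot\Lambda^\alpha}\|V\|_{H^q}$.

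The key remaining task is to show $\Tpsi{\phi^*}{DB^*}:T^{p'}_2\to L^{p'}$ for $p'\in(1,2)$, and $\Tpsi{\phi^*}{DB^*}:T^q_2\to H^q$ for $q\in(\frac{n}{n+1},1]$. The special form $\phi^*\in\mR^k$ with $\phi^*(0)=0$ permits the factorization $\phi^*(z)=z\tilde\phi^*(z)$, where $\tilde\phi^*$ is still a combination of resolvents $(1-imz)^{-j}$, $j\in\{1,2\}$ (for $k=1$ one has $\tilde\phi^*(z)=i\sum_m m\bar c_m(1-imz)^{-1}$). This writes $V$ in divergence form: $V=DW$ with $W=\int_0^\infty B^*\tilde\phi^*(tDB^*)G(t,\cdot)\,dt$. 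I would then argue atom by atom: for a tent atom $A$ supported in $T_{x_0,r}$, the $L^{p'}$ norm on a dilated ball $CB(x_0,r)$ is controlled by the $T^2_2\to L^2$ boundedness of $\Tpsi{\phi^*}{DB^*}$ together with H\"older, while on each annulus $A_j=2^{j+1}B\setminus 2^jB$ the $L^2$ off-diagonal decay of $(1-imtDB^*)^{-k}$ of arbitrary polynomial order (Lemma \ref{lem:odd}), combined with the constraint $t<r$ inside the tent integral, furnishes $\|V_A\|_{L^2(A_j)}\lesssim 2^{-jN}|B|^{1/2-1/p'}$ for arbitrarily large $N$; this absorbs the H\"older factor $2^{jn(1/p'-1/2)}$ from $L^2\hookrightarrow L^{p'}$ on $A_j$ and makes the series summable.

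The main obstacle is the Hardy space endpoint $q\le 1$. There one must verify that each $V_A$ is, up to a uniform constant, an $H^q$-molecule, which requires, beyond the annular $L^2$ bounds, the vanishing moment $\int V_A\,dx=0$. This is supplied for free by the divergence structure $V_A=DW_A\in\clos{\ran_2(D)}$, since $D$ is a first-order constant-coefficient differential operator; for $q\in(\frac{n}{n+1},1]$ only a single vanishing moment is required, so no higher-order conditions intervene. Combining the atomic bounds by superposition then yields the mapping properties of $\Tpsi{\phi^*}{DB^*}$, which together with the duality reduction conclude both estimates of the lemma.
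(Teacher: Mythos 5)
Your argument is correct in substance, but it follows a genuinely different route from the paper. The paper proves the lemma directly, with no duality: as in Lemma \ref{lem:sfpinfty}, it establishes the Carleson estimate $\|\Qpsi \phi {BD}h\|_{T^\infty_{2}}\lesssim\|h\|_\infty$ by decomposing $h$ into annuli around each ball and using the arbitrary-order $L^2$ off-diagonal decay of the resolvents together with the square function bound for $\phi(tBD)$ (this is \eqref{varphi2}); the range $2<p<\infty$ then comes from tent-space interpolation with the $L^2\to T^2_{2}$ bound, and the $\dot\Lambda^\alpha$ case is obtained from the same inequality after replacing $\IP h$ by $\IP h-\barint_{B_r}\IP h$ (using that $\phi(tBD)$ maps $L^\infty$ to $L^2_{loc}$ and annihilates constants) and a telescoping estimate on the oscillation. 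You instead dualize against $T^{p'}_{2}$ (resp.\ $T^q_{2}$), reduce via $V=\Tpsi{\phi^*}{DB^*}G\in\clos{\ran_{2}(D)}$ to the pairing with $\IP h$, and prove that the adjoint synthesis operator maps tent spaces into classical $L^{p'}$/$H^q$, using the factorization $\phi^*(z)=z\tilde\phi^*(z)$ to put $V_A=DW_A$ in divergence form and thereby obtain the single vanishing moment needed for $H^q$ molecules when $q>\frac{n}{n+1}$; the tent support ($t<r$) plus off-diagonal decay gives the annular size bounds without needing high-order vanishing of $\phi^*$ at $0$, which is exactly why you can evade the decay requirements of the general theory (Proposition \ref{prop:table}). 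This buys a Hardy-space statement for $\Tpsi{\phi^*}{DB^*}$ that is of independent interest, at the cost of more machinery; the paper's proof is shorter and self-contained.

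Two small repairs are needed in your route. First, tent-space atomic decompositions exist only for exponents $\le 1$, so the claimed mapping $\Tpsi{\phi^*}{DB^*}:T^{p'}_{2}\to L^{p'}$ for $1<p'<2$ does not follow ``atom by atom''; obtain it by interpolating the atomic endpoint $T^1_{2}\to H^1\hookrightarrow L^1$ with the $T^2_{2}\to L^2$ bound (the paper uses precisely this device in Corollary \ref{lem:lowerp<2} and Lemma \ref{lem:upperp<2}). Second, the vanishing moment $\int V_A=0$ is not ``for free'' from $V_A=DW_A\in\clos{\ran_{2}(D)}$ alone: membership in the closed range does not give integrability or cancellation. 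You must also derive annular $L^2$ bounds for $W_A$ (they come out with an extra harmless factor $t\le r$) and then integrate by parts against a cutoff, letting the cutoff scale tend to infinity. With these adjustments the argument goes through.
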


\begin{proof} The proof is basically  the same as for Lemma \ref{lem:sfpinfty}. Let $h\in L^2$. Fix a ball  $B_{r}$, with radius $r$ and set $\Omega=(0,r)\times     B_{r}$. Using that we have $L^2$ off-diagonal decay of any order $N\ge 1$ for the resolvent and its iterates, and $\phi(0)=0$ so that we have a square function estimate with $\phi(tBD)$,  we obtain as in \eqref{varphi}
\begin{equation}\label{varphi2}
\left(
\frac1{|B_{r}|}\,\iint_{\Omega} |\phi(tBD) h(x)|^2\,\frac{dtdx}{t}
\right)^{1/2}
\lesssim \sum_{j=1}^\infty 2^{-j\,(N-\frac{n}{2})}\,\left(\barint_{2^jB_{r}}|h|^2\right)^{1/2}.
\end{equation}
Taking $N>\frac{n}{2}$, this shows that $\|\Qpsi \phi {BD}h\|_{T^\infty_{2}}\lesssim \|h\|_{\infty}$ for all $h\in L^\infty\cap L^2$. 
Interpolating with the $L^2 \to T^2_{2}$ estimate, we obtain the $T^p_{2}$ estimate for all $h\in  L^2\cap L^p$.  
 Since $\phi(0)=0$, $\phi(tBD)h= \phi(tBD)\IP h$ and replacing $h$ by $\IP h$,  the $T^p_{2}$  estimate
 $\|\Qpsi \phi {BD} h\|_{T^p_{2}} \lesssim \|\IP h\|_{L^p}$ holds for $2<p<\infty$ and $h\in L^2$. 
 
Now, letting $f=\IP h  -  \barint_{B_{r}} \IP h$, we have $\phi(tBD)h= \phi(tBD)f$. Here we used that $\phi(tBD)$ maps $L^\infty$ to $L^2_{loc}$ and annihilates constants. Applying \eqref{varphi2} with $f$ replacing $h$, and using that 
$   \big(\barint_{\, \, 2^jB_{r}}|f |^2\big)^{1/2} \lesssim  2^{j\alpha}r^{\alpha} \|\IP h \|_{\dot \Lambda^\alpha}$ if $\alpha>0$ and $\lesssim \ln (1+j) \|\IP h\|_{\dot \Lambda^0}$ (with convention $\dot \Lambda^0=\BMO$ if $\alpha=0$) we obtain 
$$
\left(
\frac1{|B_{r}|}\,\iint_{\Omega} |\phi(tBD) h(x)|^2\,\frac{dtdx}{t}
\right)^{1/2} \lesssim \sum_{j=1}^\infty 2^{-j\,(N-1-\frac{n}{2})} r^\alpha \|\IP h \|_{\dot \Lambda^\alpha}
$$
 and we are done provided we choose $N>1+\frac{n}{2}$.
 \end{proof}

We turn to prove  the upper bounds in Theorem \ref{thm:goodhpbd}. As we assume    
$\IH^q_{DB^*}= \IH^q_{D}$,  Corollary \ref{cor:hpbd0}  implies that  for $h\in \clos{\ran_{2}(BD)}$,  $\|\IP \chi^\pm(BD)h\|_{p}\lesssim \|\IP h\|_{p}$ for $p=q'$  if $q>1$ or $\|\IP \chi^\pm(BD)h\|_{\dot \Lambda^\alpha}\lesssim \|\IP h\|_{\dot \Lambda^\alpha}$ for $\alpha=n(\frac{1}{q}-1)$ if $q\le 1$. 

Next, let $\psi\in \mR^1_{\sigma}(S_{\mu})\cap \Psi_{1}^\tau(S_{\mu})$ with $\sigma>\gamma(q)$  and construct $\phi_{\pm}\in \mR^{1}(S_{\mu})$ such that 
\begin{equation*}
|\psi(z)-\phi_{\pm}(z)|=O(|z|^\sigma), \quad \forall z\in S_{\mu\pm}.
\end{equation*}
Remark that necessarily, $\phi_{\pm}(0)=0$. 
The key point is the following observation: the functions $\psi_{\pm}=(\psi-\phi_{\pm})\chi^\pm
\in \Psi_{\sigma}^{\inf (1,\tau)}(S_{\mu})$ and,  for  $h\in \clos{\ran_{2}(BD)}$, using $h=\chi^{+}(BD)h+ \chi^{-}(BD)h$, we have the decomposition
$$
\psi(tBD) h = \psi_{+}(tBD) h + \psi_{-}(tBD)h+ \phi_{+}(BD)(\chi^+(BD)h)+  \phi_{-}(BD)(\chi^-(BD)h).
$$
Now, the condition $\sigma>\gamma(q)$ implies that $\psi_{\pm}$ are allowable for $\IH^\mT_{BD}$ where $\mT=T^p_{2}$ if $p=q'$ and for $\mT=T^\infty_{2,\alpha}$ if $\alpha=n(\frac{1}{q}-1)$.
 In the case $q=p'$ we deduce from this and Lemma \ref{lem:r1}  
$$ 
\|\Qpsi \psi {BD} h \|_{T^p_{2}} \lesssim \|\IP h \|_{p}+  \|\IP h \|_{p}+ \|\IP \chi^+(BD)h\|_{p} + \|\IP \chi^-(BD)h\|_{p} \lesssim \|\IP h \|_{p}.
$$
The argument when $q\le 1$ and $\alpha= n(\frac{1}{q}-1)$ is similar. This completes the proof of 
the upper bounds in Theorem \ref{thm:goodhpbd}.

\subsection{Proof of Theorem \ref{thm:supergoodhpbd}}

\subsubsection{Lower bounds}

The lower bounds of the tent space norms $\|tD\varphi (tBD) h\|_{\mT} $ by norms on $\IP h$ is again a  modification of the arguments in Proposition \ref{prop:lowerbounds}.  
Take $\psi,\varphi$ for which the Calder\'on reproducing formula \eqref{eq:Calderon}  holds.  Here we take $\varphi \in H^\infty(S_{\mu})$ and $\psi(z)= z\tilde \psi(z)$ where $\tilde \psi$ is allowable  for $H^q_{DB^*}$. 
We observe that for 
$g\in \clos{\ran_{2}(BD)}$ and  $f\in \clos{\ran_{2}(D)}$,
\begin{align*}
\pair g f &= \int_{0}^\infty \pair {\varphi(tBD) g}{tDB^*\tilde\psi^*(tDB^*) f} \, \frac{dt}{t }  \\
    &  =   \int_{0}^\infty \pair {tD\varphi(tBD) g}{B^*\tilde\psi^*(tDB^*) f} \, \frac{dt}{t } 
\end{align*}
using the self-adjointness of $D$. 

Now we may proceed as in the proof of Theorem \ref{thm:goodhpbd}. For $p=q'$ and $q>1$, 
\begin{align*}
\|\IP g\|_{p} & \sim \sup\{ |\pair {\IP g} f|\, ; \, \|f\|_{\IH^q_{D}}\lesssim 1\}\\
 & =  \sup\{ |\pair g f|\, ; \, \|f\|_{\IH^q_{D}}\lesssim 1\} \\
 & \le \sup\{ \| tD \varphi (tBD)g\|_{T^p_{2}} \|B^*\|_{\infty} \|\Qpsi {\tilde\psi^*} {DB^*} f\|_{T^{q}_{2}}\, ; \, \|f\|_{\IH^q_{D}}\lesssim 1\} \\
    &  \lesssim  \sup\{\| tD \varphi (tBD)g\|_{T^p_{2}}  \|f\|_{q}\,  ; \, \|f\|_{\IH^q_{D}}\lesssim 1\}  \\
    & \lesssim \|tD \varphi (tBD)g\|_{T^p_{2}}.
\end{align*}  
The fourth line holds since we chose $\tilde\psi$ allowable for $\IH^q_{DB^*}$. 

The same argument holds when $q\le 1$, working in the H\"older space $\IL^\alpha_{BD}$ and corresponding tent space $T^\infty_{2,\alpha}$. 

\subsubsection{Upper bounds} 

We begin with the case $2<q<p_{+}(DB^*)$, that is $p_{-}(BD)<p<2$ and $\phi \in \Psi_{0}^\tau(S_{\mu})$. Now, for $h\in \clos{\ran_{2}(BD)}$, 
$$
tD \phi ({tBD}) h= tD \phi ({tBD}) BB^{-1} h= tDB\phi(tDB) (B^{-1}h).
$$
As $B^{-1}h \in \clos{\ran_{2}(D)}$, $z\phi\in \Psi_{1}^{\tau-1}(S_{\mu})$ with $\tau-1>\gamma(p)$, we can use  Theorem \ref{thm:hpdb} and then the invertibility of $B:\clos{\ran_{p}(D)} \to \clos{\ran_{p}(BD)}$ to obtain
$$
\|tD \phi ({tBD}) h\|_{T^p_{2}} = \|tDB\phi(tDB) (B^{-1}h)\|_{T^p_{2}}\lesssim \|B^{-1}h\|_{p}\lesssim \|h\|_{p}.
$$

We turn to $q<2$. 

\begin{lem}\label{lem:r2} Let $\phi\in \mR^{2}(S_{\mu})$. Then for all $2<p<\infty$
$$
\|tD \phi ({tBD}) h\|_{T^p_{2}} \lesssim \|\IP h\|_{L^p}, \quad \forall h\in L^2,
$$
and for all $0\le \alpha<1$, 
$$
\|tD \phi ({tBD}) h\|_{T^\infty_{2,\alpha}} \lesssim \|\IP h\|_{\dot \Lambda^\alpha}, \quad \forall h\in L^2.
$$
\end{lem}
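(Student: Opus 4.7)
The plan is to mirror the proof of Lemma~\ref{lem:r1}: produce an $L^2\!\to\!T^2_2$ bound and $L^2$ off-diagonal estimates of arbitrary order for $tD\phi(tBD)$, combine them to obtain a local Carleson-type estimate modelled on \eqref{varphi2}, interpolate with the $L^2$ bound to pass to $T^p_2$ for $2<p<\infty$, and finally sharpen $\|h\|_p$ to $\|\IP h\|_p$ via the identity $tD\phi(tBD)h=tD\phi(tBD)\IP h$. This identity holds because $(I-\IP)h\in\nul(D)=\nul(BD)$, so $\phi(tBD)$ acts on it as multiplication by $\phi(0)$ and $D$ kills the result. For the $T^\infty_{2,\alpha}$ endpoint one further subtracts $c=\barint_{B_r}\IP h$, which is annihilated by $tD\phi(tBD)$ since $D$ kills constants.

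The main obstacle is the $L^2\!\to\!T^2_2$ estimate
\[
\int_0^\infty\|tD\phi(tBD)h\|_2^2\,\frac{dt}{t}\;\lesssim\;\|\IP h\|_2^2,
\]
because $\phi\in\mR^2$ need not vanish at $0$, so the naive square-function estimate for $\phi(tBD)$ fails. By linearity reduce to $\phi_m(z)=(1+imz)^{-2}$. On $\dom(D)$, the intertwining $D(I+imtBD)^{-1}=(I+imtDB)^{-1}D$ iterates to $tD\phi_m(tBD)h=t\phi_m(tDB)Dh$. The key algebraic identity
\[
t\phi_m(tz)\;=\;\frac{1}{imz}\bigl(\rho_m(tz)-\phi_m(tz)\bigr),\qquad\rho_m(z):=(1+imz)^{-1},
\]
together with $\rho_m(tz)-\phi_m(tz)=\sigma_m(tz)$ for $\sigma_m(z):=imz(1+imz)^{-2}\in\Psi(S_\mu)$, yields $t\phi_m(tDB)Dh=(im)^{-1}\sigma_m(tDB)w$ with $w:=(DB)^{-1}Dh$. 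To give $w$ a bounded meaning without assuming $B$ invertible, I observe that $w\in\clos{\ran(D)}$ satisfies $DBw=Dh$ iff $Bw-h\in\nul(D)=\clos{\ran(D)}^\perp$, i.e.\ $\IP Bw=\IP h$. Since $B$ is strictly accretive on $\clos{\ran(D)}$, the restriction $\IP B:\clos{\ran(D)}\to\clos{\ran(D)}$ is bounded and accretive, hence invertible by Lax--Milgram, so $\|w\|_2=\|(\IP B)^{-1}\IP h\|_2\lesssim\|\IP h\|_2$. The quadratic estimate for the $H^\infty$-calculus of $DB$ with $\sigma_m\in\Psi(S_\mu)$ gives $\int\|\sigma_m(tDB)w\|_2^2\,dt/t\lesssim\|w\|_2^2$, and chaining yields the desired bound for $h\in\dom(D)$. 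Passage to general $h\in L^2$ is by density plus Fatou, using the uniform bound $\|tD\phi(tBD)h\|_2\lesssim\|h\|_2$ that follows from Lemma~\ref{lem:localcoerc} applied on a cover by balls of radius $t$ together with the $L^2$ boundedness of $tBD\phi(tBD)$ and $\phi(tBD)$.

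Off-diagonal decay of arbitrary order for $tD\phi(tBD)$ comes from Lemma~\ref{lem:localcoerc} applied on a slight enlargement $E^*$ of $E$ still separated from $F=\supp h_F$: one obtains $\|tD\phi(tBD)h_F\|_{L^2(E)}\lesssim\|tBD\phi(tBD)h_F\|_{L^2(E^*)}+\|\phi(tBD)h_F\|_{L^2(E^*)}$, and both summands enjoy decay $\brac{d(E,F)/t}^{-N}$ from Lemma~\ref{lem:odd} (for the first term one invokes the resolvent identity $tBD(I+imtBD)^{-2}=(im)^{-1}((I+imtBD)^{-1}-(I+imtBD)^{-2})$).

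With these two inputs the remainder is routine. Decompose $h=h\cdot 1_{4B_r}+\sum_{j\ge 2}h\cdot 1_{S_j(B_r)}$, control the local piece by the $L^2\!\to\!T^2_2$ estimate and the annular pieces by off-diagonal decay, to obtain the analog of~\eqref{varphi2}. Taking $N>n/2$ yields $\|tD\phi(tBD)h\|_{T^\infty_2}\lesssim\|h\|_\infty$ for $h\in L^2\cap L^\infty$; complex interpolation with the $L^2$ estimate extends this to $L^p\to T^p_2$ for $2<p<\infty$, and replacing $h$ by $\IP h$ sharpens the right-hand side to $\|\IP h\|_p$. For the H\"older endpoint, substitute $\IP h-\barint_{B_r}\IP h$ and use the telescoping estimate $\bigl(\barint_{2^jB_r}|\IP h-c|^2\bigr)^{1/2}\lesssim 2^{j\alpha}r^\alpha\|\IP h\|_{\dot\Lambda^\alpha}$ (or $\ln(1+j)\|\IP h\|_{\BMO}$ when $\alpha=0$), exactly as in Proposition~\ref{prop:p+>n}.
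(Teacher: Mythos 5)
Your proof is correct, and its overall skeleton is the one the paper uses: reduce to $\phi(z)=(1+imz)^{-2}$, prove a local Carleson estimate of the type \eqref{varphi3} by splitting $h$ into a piece near the ball and annular pieces, handle the far pieces with the local coercivity Lemma \ref{lem:localcoerc} plus off-diagonal decay of combinations and iterates of resolvents (Lemma \ref{lem:odd}), exploit $tD\phi(tBD)h=tD\phi(tBD)\IP h=tD\phi(tBD)(\IP h-c)$, interpolate between $T^2_{2}$ and $T^\infty_{2}$ for $2<p<\infty$, and use the telescoping/$\BMO$ estimate as in Proposition \ref{prop:p+>n} for the $\dot\Lambda^\alpha$ endpoint. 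The one place where you genuinely diverge is the ``main obstacle'', i.e.\ the $L^2\to T^2_{2}$ bound (equivalently the local term): the paper dispatches it in two lines by noting that $D\phi(tBD)h_{0}\in\ran_{2}(D)$, so accretivity of $B$ on $\clos{\ran_{2}(D)}$ gives $\|tD\phi(tBD)h_{0}\|_{2}\lesssim\|tBD\phi(tBD)h_{0}\|_{2}=\|\psi(tBD)h_{0}\|_{2}$ with $\psi(z)=z\phi(z)\in\Psi_{1}^{1}(S_{\mu})$, and then invokes the square function estimate for $BD$, valid at once for every $h\in L^2$. You instead intertwine to $DB$ on $\dom_{2}(D)$, solve $DBw=Dh$ by inverting $\IP B$ on $\clos{\ran_{2}(D)}$ via Lax--Milgram (consistent with the paper's operator $\bet$ in Section \ref{sec:Sobolev}), apply the quadratic estimate to $\sigma_m(tDB)w$, and then pass to general $h\in L^2$ by density and Fatou using the uniform bound $\|tD\phi(tBD)h\|_{2}\lesssim\|h\|_{2}$. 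Your algebraic identities and the invertibility argument all check out, so this works; what the paper's shortcut buys is brevity and the absence of the domain restriction and density step, while your route makes the dependence on $\|\IP h\|_{2}$ explicit at that stage, which is ultimately immaterial since one replaces $h$ by $\IP h$ anyway.
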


\begin{proof} It suffices to do it for $\phi(tBD)=(I+itBD)^{-2}$. The proof is roughly the same as for 
Lemma \ref{lem:r1} (playing with the projection and constants) as soon as we establish the following: Let $h\in L^2$. Fix a ball  $B_{r}\subset \R^n$, with radius $r$, set $\Omega=(0,r)\times     B_{r}$, then 
\begin{equation}\label{varphi3}
\left(
\frac1{|B_{r}|}\,\iint_{\Omega} |tD\phi(tBD) h(x)|^2\,\frac{dtdx}{t}
\right)^{1/2}
\lesssim \sum_{j=1}^\infty 2^{-j\,(N-\frac{n}{2})}\,\left(\barint_{2^jB_{r}}|h|^2\right)^{1/2}.
\end{equation}
Indeed, one can always write
$$
tD\phi(tBD) h= tD\phi(tBD) (\IP h) =  tD\phi(tBD) (\IP h-c)
$$
for any constant $c$, noting  that $tD\phi(tBD)(c)=tD(\phi(0) c)=0$ and apply this inequality as needed.
Again the proof of \eqref{varphi3} follows by  decomposing $h=h_{0}+h_{1}+\ldots$. The terms $h_{j}$ for $j\ge 1$  are localized in annuli  away from the ball $B_{r}$. One can use Lemma  \ref{lem:localcoerc} to control  integrals $\int_{B_{r}} |tD\phi(tBD) h_{j}|^2 $ by the sum of $\int_{\tilde B_{r}} |tBD\phi(tBD) h_{j}|^2 $ and 
$\int_{\tilde B_{r}} |\phi(tBD) h_{j}|^2 $ with slightly larger balls $\tilde B_{r}$. Now, one uses the $L^2$ off-diagonal decay of  combinations and iterates of resolvents. It remains to look at the term with $h_{0}=h1_{2B}$. One has
$$\int_{B_{r}} |tD\phi(tBD) h_{0}|^2  \le 
\int_{\R^n} |tD\phi(tBD) h_{0}|^2   \lesssim \int_{\R^n} |tBD\phi(tBD) h_{0}|^2 
$$
 using the accretivity  of $B$ on $\ran_{2}(D)$.  We conclude by plugging this in the $dt$ integral and using the square function bounds for $tBD\phi(tBD)$. 
\end{proof}

Armed with this lemma, we begin as in the proof of the upper bounds for Theorem \ref{thm:goodhpbd} by observing that our assumption implies
  for $h\in \clos{\ran_{2}(BD)}$,  $\|\IP \chi^\pm(BD)h\|_{p}\lesssim \|\IP h\|_{p}$ for $p=q'$  if $q>1$ or $\|\IP \chi^\pm(BD)h\|_{\dot \Lambda^\alpha}\lesssim \|\IP h\|_{\dot \Lambda^\alpha}$ for $\alpha=n(\frac{1}{q}-1)$ if $q\le 1$. 

Now let $ \phi\in\mR^2_{\sigma}(S_{\mu})\cap \Psi^\tau_{0}(S_{\mu})$ with $\tau>1$. 
Pick $\phi_{\pm}\in \mR^{2}(S_{\mu})$ such that 
\begin{equation*}
|\phi(z)-\phi_{\pm}(z)|=O(|z|^\sigma), \quad \forall z\in S_{\mu\pm}.
\end{equation*}
The key point is the following observation: the functions $\psi_{\pm}(z):= z\tilde \psi_{\pm}(z) $ with $\tilde \psi_{\pm}(z):=(\phi-\phi_{\pm})(z)\chi^\pm(z)$ satisfy $\tilde\psi_{\pm}\in \Psi_{\sigma}^{\tau}(S_{\mu})$ and
$\psi_{\pm} \in \Psi_{\sigma+1}^{\tau-1}(S_{\mu})$. Hence, for  $h\in \clos{\ran_{2}(BD)}$, using $h=\chi^{+}(BD)h+ \chi^{-}(BD)h=h^++h^-$, we have the decomposition
$$
tD\phi(tBD) h = tD\tilde\psi_{+}(tBD) h + tD\tilde\psi_{-}(tBD)h+ tD\phi_{+}(BD)h^+ +  tD\phi_{-}(BD)h^-.
$$
  In the case $q=p'$ we deduce from Lemma \ref{lem:r2}  
 $$
\|tD\phi_{+}(BD)h^+ \|_{T^p_{2}} \lesssim \|\IP h^+ \|_{p}  \lesssim  \|\IP h\|_{p}
$$
and similarly for the term with $h^-$.
Now using the local coercivity assumption \eqref{eq:localcoerc}, up to opening the cones in the definition of the square function, we have
$$
\|tD\tilde\psi_{\pm}(tBD) h\|_{T^p_{2}} \lesssim \|\Qpsi {\psi_{\pm}} {BD} h \|_{T^p_{2}} + \|\Qpsi {\tilde\psi_{\pm}} {BD} h \|_{T^p_{2}}.
$$
But  $\psi_{\pm}$ and $\tilde \psi_{\pm}$ are allowable for $\IH^p_{BD}$ as we assumed $\sigma>\gamma(q)=\gamma(p)$, thus 
 $$ 
\|tD\tilde\psi_{\pm}(tBD) h\|_{T^p_{2}} \lesssim \|\IP h \|_{p}.
$$
The argument when $q\le 1$ and $\alpha= n(\frac{1}{q}-1)$ is similar. This completes the proof of 
the upper bounds in Theorem \ref{thm:supergoodhpbd}.

\section{Completions}\label{sec:completions}

As said, completions of the pre-Hardy spaces may lead to abstract spaces. 
The results above will give us favorable situations in appropriate ranges. This is in spirit with the results in \cite{HMMc}   obtained for second order operators in divergence form.

Strictly speaking, we could proceed this article without including such completions except in Section \ref{sec:perturbation}. This can be skipped in a first reading.

Here $T=DB$ or $BD$ on $L^2$ but the theory could be more generally defined.

 For $0<p<\infty$, define $H^p_{T}$ to be the completion of $\IH^p_{T}$  with respect to   $\|\Qpsi \psi {T} h\|_{T^p_{2}}$ for any allowable $\psi$.  For $p<1$, this is a quasi-Banach space. 
  
 For $p=\infty$,  
 we have two options. One is  $h^\infty_{T}=\dot\lambda^0_{T}$ be the completion $\IH^{\infty}_{T}$  with respect to $\|\Qpsi \psi {T} h\|_{T^\infty_{2}}$ for any allowable $\psi$. We do not see any crucial use of it but we mention it for completeness. The other one is $H^\infty_{T}=\dot \Lambda^0_{T}$ be the dual space of $H^1_{T^*}$. For $\alpha>0$, let  $\dot \lambda^\alpha_{T}$ be the completion of $\IL^{{\alpha}}_{T}$ with respect to any of the allowable norms  $\|\Qpsi \psi {T} h\|_{T^\infty_{2,\alpha}}$. Alternately, let  $\dot \Lambda^\alpha_{T}$ be the dual space of $H^p_{T^*}$ with $\alpha=n(\frac{1}{p}-1)$.

 The following properties hold:
 
 1) For $1<p<\infty$, $H^p_{T}$ and $H^{p'}_{T^*}$ are dual spaces for a duality extending the $L^2$ sesquilinear inner product. In particular, $H^p_{T}$ is reflexive. 
 
 2) $\dot \lambda^\alpha_{T}$ is a closed subspace of $\dot \Lambda^\alpha_{T}$ when $\alpha\ge 0$. 
 
 3) On each $H^p_{T}$, $1< p<\infty$, there is a unique  bisectorial operator $U=U_{H^p_{T}}$ with $H^\infty$-calculus such that for all $b\in H^\infty(S_{\mu})$, $b(U)h= b(T)h$ for all $h\in \IH^p_{T}$. In particular there is a continuous, bounded and analytic semigroup $(e^{-t|U|})_{t>0}$ which extends the semigroup $(e^{-t|T|})_{t>0}$ on $\clos{\ran_{2}(T)}$.  Moreover, $U$ is injective. Finally,  $(U_{H^p_{T}})^*= U_{H^{p'}_{T^*}}$. 
  
 4) If $p\le1$, the $H^\infty$-calculus originally defined on $\clos{\ran_{2}(T)}$ extends to $H^p_{T}$. In particular, we have bounded extension of the operators $e^{-t|T|}$, $t\ge0$. They form a  semigroup and we have shown the strong continuity at 0  on a dense subspace in Proposition \ref{prop:stronglimit}. Thus strong continuity at 0 remains on the completion.  Similarly, we can define the spectral spaces $H^{p,\pm}_{T}$ as the completion of $\IH^{p,\pm}_{T}$ (within $H^{p}_{T}$)  or, equivalently, as the image of the extension to $H^p_{T}$ of $\chi^\pm(T)$.  Similarly, by taking adjoints (in the duality extending the $L^2$ sesquilinear inner product), we can extend the $H^\infty$-calculus originally defined on $\clos{\ran_{2}(T)}$ to $\dot \Lambda^\alpha_{T}$ when $\alpha\ge 0$ and then the semigroup is weakly-$*$ continuous. Moreover, $\dot \Lambda^{\alpha,\pm}_{T}$ is the dual space to $H^{p,\pm}_{T^*}$.

5) The spaces $H^p_{T}$ can be defined in such a way they form a complex interpolation family for $0<p\le \infty$. 

See \cite{AMcR} for 1) and 5). Assertions 2) and 4) are easy.  We give a proof of 3) together with the construction.

\begin{proof}
[Proof of 3)] Fix $1<p<\infty$. Define $H^{p,\pm}_{T}$ as the completion of $\IH^{p,\pm}_{T}$ (within $H^{p}_{T}$). Clearly,  the splitting of the pre-Hardy spectral subspaces passes to completion. Also  $(e^{\mp tT}\chi^\pm(T))_{t>0}$ extends to  an analytic semigroup on $ H^{p, \pm}_{T}$ in the open sector $S_{(\pi/2-\omega)+}$. As $H^{p,\pm}_{T}$ is a Banach space, this semigroup has  a generator $-U_{\pm}$ which is $\omega$-sectorial and densely defined (see \cite{pazy}).  
On $H^p_{T}=  H^{p,+}_{T} \oplus H^{p,-}_{T}$, define
$$Uh= U_{+}h^+- U_{-}h^-, \quad \dom(U)=\{h\in H^p_{T}\, ;\, h^\pm \in \dom(U_{\pm})\}.
$$
Then, $U$ is clearly $\omega$-bisectorial and densely defined on $H^p_{T}$.
As $e^{\mp zU_{\pm}}$ coincides with $e^{\mp zT}\chi^\pm(T)$ on $\IH^{p,\pm}_{T}$ when $z\in S_{(\pi/2-\omega)+}$, and $\chi^\pm(T)$ is the identity on $\IH^{p,\pm}_{T}$,    the resolvents $(I+isU)^{-1}$ and $(I+isT)^{-1}$ coincide on both $\IH^{p,\pm}_{T}$, thus on their direct sum $\IH^p_{T}$,  when $s\in S_{\nu}$ where $0\le \nu<\pi/2 - \omega$. 
As a consequence, $\psi(T)$ and $\psi(U)$ coincide on $\IH^p_{T}$ for any $\psi\in \Psi(S_{\mu})$ by the Cauchy formula. As $\psi(T)$ has a bounded extension to $H^p_{T}$ with norm controlled by $\|\psi\|_{\infty}$, this implies that $U$ has a $H^\infty$-calculus on $H^p_{T}$ and that  $b(T)$ and $b(U)$ coincide on $\IH^p_{T}$ for any $b\in H^\infty(S_{\mu})$. 

The uniqueness of $-U$ follows from   that of $-U_{\pm}$ as generators of semigroups. 

The operator $|U|$ may now be defined as $|U|=\sgn(U)   U$,  or alternately as
 $|U|h= U_{+}h^+ + U_{-}h^-$ with $\dom(|U|)=\{h\in H^p_{T}\, ;\, h^\pm \in \dom(U_{\pm}\}=\dom(U).$ The semigroup generated by $-|U|$ thus coincides with the one generated by ${-|T|}$ on $\IH^p_{T}$. 
 
 The injectivity is a little trickier.  We have seen in Proposition \ref{prop:stronglimitinfty} that for any $h\in \IH^p_{T}$, $\lim_{s\to \infty} \|e^{-s|T|}h\|_{\IH^p_{T}}=0.$ By density, we have $ \lim_{s\to \infty} \|e^{-s|U|}h\|_{H^p_{T}}=0$ for any $h\in H^p_{T}$. If $h\in \nul(U)$, then $h\in \nul(|U|)$ and thus $e^{-s|U|}h=h$ for all $s>0$. Taking the limit at $\infty$ yields  $h=0$. 
 
 Finally, calling $U=U_{H^p_{T}}$ and using the duality between $H^p_{T}$ and $H^{p'}_{T^*}$, it it is easy to conclude that $(U_{H^p_{T}})^*= U_{H^{p'}_{T^*}}$.
\end{proof}

\begin{rem} Except for the last duality formula, the proof works for $H^1_{T}$, which is a Banach space, as reflexivity is not used.
\end{rem}

\

Let us come back to our concrete situation.

 \begin{prop}\label{cor:hpdb} Let $\frac{n}{n+1}<p<\infty$. If $\IH^p_{DB}=\IH^p_{D}$ with equivalence of norms, then they have same completions $H^p_{DB}=  H^p_{D}$ with equivalence of norms.  In particular, $H^p_{DB}$ is a  complemented subspace of $H^p$ where  $H^p=L^p$ if $p>1$. Moreover, the extended semigroup of $(e^{-t|DB|})_{t>0}$ is strongly continuous in $H^p_{D}$.  
\end{prop}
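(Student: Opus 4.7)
My plan has three parts: realize $H^p_D$ concretely inside $H^p$, deduce from the hypothesis that $H^p_{DB}$ coincides with $H^p_D$, and transfer strong continuity of the semigroup from the pre-Hardy space to the completion.

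First I would identify $H^p_D$ with $\IP(H^p)$, where we set $H^p = L^p$ when $p>1$. By Theorem \ref{thm:hpd}, the pre-Hardy space satisfies $\IH^p_{D} = \IP(H^p \cap L^2)$ with equivalence of quasi-norms $\|h\|_{\IH^p_D} \sim \|h\|_{H^p}$. Since $\IP$ arises from a Mikhlin multiplier (as recalled at the end of Section \ref{sec:D}), it extends to a bounded projection on $H^p$, so $\IP(H^p)$ is a closed, complemented subspace of $H^p$. Next, density: $H^p \cap L^2$ is dense in $H^p$ (trivial for $p>1$, and standard from atomic decomposition for $\frac{n}{n+1}<p\le 1$); applying the continuous map $\IP$, $\IP(H^p \cap L^2)$ is dense in $\IP(H^p)$. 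Therefore the completion of $\IH^p_D$ in its native quasi-norm is precisely $\IP(H^p)$, giving $H^p_D = \IP(H^p)$, a complemented subspace of $H^p$.

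Second, the hypothesis $\IH^p_{DB} = \IH^p_D$ with equivalent quasi-norms just says that the identity is a bi-Lipschitz isomorphism between these two pre-completions, so it extends to a bi-Lipschitz isomorphism of their completions, $H^p_{DB} = H^p_D$. Combined with the first step, $H^p_{DB}$ inherits the realization as $\IP(H^p)$ and in particular is complemented in $H^p$.

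Third, for strong continuity of $(e^{-t|DB|})_{t>0}$ on $H^p_D$, I would argue by density. By Corollary \ref{cor:pre-hardy}, the $H^\infty$-calculus on $\IH^p_{DB}$ yields $\|e^{-t|DB|}h\|_{\IH^p_{DB}} \lesssim \|h\|_{\IH^p_{DB}}$ uniformly in $t \ge 0$, so each $e^{-t|DB|}$ extends by continuity to a uniformly bounded operator on the completion $H^p_{DB}=H^p_D$, and the semigroup property passes to the limit. Proposition \ref{prop:stronglimit} provides strong continuity at $0$ on the dense subspace $\IH^p_{DB}$; a three-$\varepsilon$ argument using the uniform bound propagates this limit to every $h \in H^p_D$.

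The only subtle point, really, is the first step: one must confirm that the abstract completion (Cauchy sequences modulo null sequences in the $\IH^p_D$ quasi-norm) realizes concretely as the subspace $\IP(H^p)$ of $H^p$. Once the $H^p$-boundedness of $\IP$ and the density of $L^2 \cap H^p$ in $H^p$ are invoked, everything else is routine functional analysis.
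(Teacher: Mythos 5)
Your proposal is correct and follows essentially the same route as the paper: identify $H^p_D=\IP(H^p)$ via Theorem \ref{thm:hpd}, the $H^p$-boundedness of $\IP$, and density of $H^p\cap L^2$ in $H^p$; observe that equivalent quasi-norms give the same completion; and transfer strong continuity from $\IH^p_{DB}$ (Proposition \ref{prop:stronglimit}) to the completion by uniform boundedness and density. You merely spell out the "exercise in functional analysis" and the density argument that the paper leaves implicit.
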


\begin{proof}That $\IH^p_{DB}=\IH^p_{D}$ with equivalence of norms implies they have  same completion is an exercise in functional analysis. We have seen in Theorem \ref{thm:hpd} that $\IH^p_{D}=\IP(H^p\cap L^2)$. As $H^p\cap L^2$ is dense in $H^p$ and $\IP$  has a bounded extension to $H^p$, we have $H^p_{D}=\IP(H^p)$, hence $H^p_{DB}=H^p_{D}$ is a complemented subspace of $H^p$.  We have seen that the semi-group is strongly continuous on $H^p_{DB}$. This passes to $H^p_{D}$. 
\end{proof}

The following result is in spirit of \cite{HMMc} and \cite{AMcMo}.

 \begin{prop}\label{prop:inclusion} Let $\frac{n}{n+1}<p<\infty$. If $H^p_{DB}=H^p_{D}$ with equivalence of norms, then  $H^p_{DB}\cap L^2=  \IH^p_{DB}$ and $\IH^p_{DB}=\IH^p_{D}$ with equivalence of norms.
\end{prop}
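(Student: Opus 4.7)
The strategy is to first show $H^p_{DB}\cap L^2=\IH^p_{DB}$, from which the identity $\IH^p_{DB}=\IH^p_{D}$ with equivalent norms follows. The inclusion $\IH^p_{DB}\subset H^p_{DB}\cap L^2$ is by construction of the completion. For the reverse, I first identify $H^p_{DB}\cap L^2$ as a concrete subspace of $L^2$: the hypothesis $H^p_{DB}=H^p_D$ and Proposition~\ref{cor:hpdb} realize $H^p_D$ as $\IP(H^p)$ inside tempered distributions (with $H^p=L^p$ when $p>1$). So $h\in H^p_{DB}\cap L^2$ lies in $\IP(H^p)\cap L^2$; writing $h=\IP f$ with $f\in H^p$ and using the idempotency of $\IP$ jointly on $H^p$ and $L^2$, one obtains $\IP h=h$ as $L^2$-functions, placing $h$ in $\clos{\ran_2(D)}\cap H^p=\IH^p_D$ by Theorem~\ref{thm:hpd}. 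In particular $\|h\|_{\IH^p_D}\sim\|h\|_{H^p_D}\sim\|h\|_{H^p_{DB}}<\infty$.

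It remains to upgrade $h\in\IH^p_D$ with $\|h\|_{H^p_{DB}}<\infty$ to $h\in\IH^p_{DB}$, i.e.\ $\|\Qpsi{\psi}{DB}h\|_{T^p_2}<\infty$ for some allowable $\psi$. I approximate by the semigroup $h_s:=e^{-s|DB|}h$. Since $h\in L^2\cap\clos{\ran_2(DB)}$, standard $L^2$-theory gives $h_s\to h$ in $L^2$; since $h\in H^p_{DB}$, the strong continuity of the extended semigroup on the completion (Proposition~\ref{prop:stronglimit} extended from the dense subspace) yields $h_s\to h$ in $H^p_{DB}$, so $(h_s)_{s>0}$ is Cauchy in $H^p_{DB}$. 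Granting for a moment that each $h_s\in\IH^p_{DB}$, let $Q:H^p_{DB}\to T^p_2$ denote the bounded extension of $\Qpsi{\psi}{DB}$. Then $Q(h_s)=\Qpsi{\psi}{DB}h_s\to Q(h)$ in $T^p_2$, while $L^2$-convergence $h_s\to h$ and the $L^2$-boundedness of $\Qpsi{\psi}{DB}$ give $\Qpsi{\psi}{DB}h_s\to\Qpsi{\psi}{DB}h$ in $T^2_2$. The two limits agree as measurable functions on $\R^{1+n}_+$, so $\Qpsi{\psi}{DB}h\in T^p_2$ and $h\in\IH^p_{DB}$ with $\|h\|_{\IH^p_{DB}}\lesssim\|h\|_{H^p_{DB}}$.

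The principal technical obstacle is thus to verify that $h_s=e^{-s|DB|}h\in\IH^p_{DB}$ for each fixed $s>0$. When $p\in I(BD)$ this is straightforward: $L^p$-boundedness of the semigroup (a consequence of Lemma~\ref{lem:oddLp}) places $h_s\in L^p\cap L^2$, and Proposition~\ref{prop:upperbdp>2} together with Theorem~\ref{thm:upperqpsip<2} gives $\|\Qpsi{\psi}{DB}h_s\|_{T^p_2}\lesssim\|h_s\|_p<\infty$. The genuine difficulty is when $p$ lies outside $I(BD)$, in particular below $p_-(BD)$, where $L^p$-semigroup theory is unavailable; one must instead invoke the molecular characterization $\IH^p_{DB}=\IH^p_{DB,mol,M}$ (valid for $M>\frac{n}{p}-\frac{n}{2}$) and construct a molecular decomposition of $h_s$ with $\ell^p$-summable coefficients, exploiting the semigroup smoothing $h_s\in\bigcap_k\dom(|DB|^k)$ together with the $L^2$ off-diagonal decay of resolvents from Lemma~\ref{lem:odd}. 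This part of the argument is analogous to the constructions in \cite{HMMc,AMcMo}.

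Once $H^p_{DB}\cap L^2=\IH^p_{DB}$ is established, the second assertion is immediate: $\IH^p_{DB}=H^p_{DB}\cap L^2=H^p_D\cap L^2=\IH^p_D$ (the middle equality by hypothesis, the last by the identification in Theorem~\ref{thm:hpd}), and the norm equivalence follows since each pre-Hardy norm coincides with the corresponding completion norm on its dense subspace while the two completion norms are equivalent by assumption.
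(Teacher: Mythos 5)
Your opening reduction is sound and is in the spirit of the paper's argument: under the hypothesis, $H^p_{DB}=H^p_D=\IP(H^p)$ sits inside tempered distributions, and intersecting with $L^2$ lands you in $\IH^p_D=H^p\cap\clos{\ran_{2}(D)}$. Note, however, that once you are there the cases $p>2$ and $(p_{-}(DB))_{*}<p<2$ are already finished without any semigroup, by Proposition \ref{cor:upperbounddbp>2} and Theorem \ref{thm:hpdb} respectively; the proposition only has content for small $p$ (below $(p_{-}(DB))_{*}$), and that is precisely where your regularization scheme has two genuine gaps.

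First, the step ``$h_s\to h$ in $H^p_{DB}$'' conflates two different objects. The semigroup you apply to $h$ is the $L^2$ functional calculus (legitimate since $h\in\clos{\ran_{2}(DB)}$), whereas Proposition \ref{prop:stronglimit}, passed to the completion, only gives strong continuity of the \emph{abstract} extension $S_p(s)$, defined by density from $\IH^p_{DB}$. Since $h$ is not yet known to lie in $\IH^p_{DB}$, there is no a priori reason that $S_p(s)h=e^{-s|DB|}h$: identifying them amounts to showing that $e^{-s|DB|}h_k\to e^{-s|DB|}h$ in distributions when $h_k\in\IH^p_{DB}$ converges to $h$ only in the $H^p$ topology, i.e.\ to showing that the adjoint test objects (here $e^{-s|B^*D|}\phi$, in the paper's version $\IP\,\Tpsi{\psi^*}{B^*D}F$ for $F\in C^\infty_{0}(\reu)$) belong to $(H^p)^*$ with the right bounds. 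That dual-side boundedness is exactly the nontrivial claim the paper extracts from the hypothesis via Remark \ref{rem:weaker} and Corollary \ref{cor:hpbd0}, and it is the heart of the proof; your argument never confronts it. Second, the membership $h_s\in\IH^p_{DB}$ for fixed $s$ is only justified in your write-up for $p$ in (or near) $I(BD)$, where it is not needed; for small $p$ the proposed molecular decomposition of $e^{-s|DB|}h$ is unsupported. The Poisson-type semigroup has only weak off-diagonal decay (a difficulty the paper itself stresses), there is no $L^p$ theory for $DB$ outside $I(BD)$, and producing uniform $\left(\IH^{p}_{DB},\epsilon,M\right)$-molecule bounds for $e^{-s|DB|}$ applied to $\IH^p_D$-atoms is essentially equivalent to the inclusion $\IH^p_D\subset\IH^p_{DB}$ you are trying to prove, so this step begs the question. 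The paper's proof avoids both problems by dispensing with the regularization altogether: it takes the approximating sequence $h_k\in\IH^p_{DB}$ itself, observes that $\Qpsi{\psi}{DB}h_k$ is Cauchy in $T^p_2$, and identifies the limit with $\Qpsi{\psi}{DB}h$ by the duality claim described above. If you want to salvage your approach, you must prove that duality claim; at that point the semigroup detour becomes superfluous.
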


\begin{proof} If $p=2$, there is nothing to prove. In the other case, it suffices to show the first set equality as the second one, with the equivalence of norms, follows from it. 

If $p>2$, then 
$$
\IH^p_{DB}\subset H^p_{DB}\cap L^2= H^p_{D}\cap L^2 = \IH^p_{D} \subset \IH^p_{DB}
$$
using Theorem \ref{thm:hpd} and Proposition \ref{cor:upperbounddbp>2}.

Assume now that $p<2$. 
It is enough to show $H^p_{DB}\cap L^2 \subset  \IH^p_{DB}$ as the other inclusion is by construction.  Let $h\in H^p_{DB}\cap L^2$. Take an allowable $\psi$ for $\IH^p_{DB}$. We have to show that 
$\Qpsi \psi {DB} h\in T^p_{2}$. By definition, there exists $h_{k}\in \IH^p_{DB}$ such that 
$h_{k}$ converges to $h$ in $H^p_{DB}$. Thus, $(\Qpsi \psi {DB} h_{k})$ is a Cauchy sequence in  $T^p_{2}$ and has a limit $H$. Also, by the assumption, $(h_{k})$ converges to $h$ for the 
$H^p$ topology. It remains to show that $H=\Qpsi \psi {DB} h$, for example in the sense of distributions in $\reu$. Let $F\in C^\infty_{0}(\reu)$, then we can write
$$
(H-\Qpsi \psi {DB} h, F)= (H-\Qpsi \psi {DB} h_{k}, F)+ \pair {h_{k}-h} {\Tpsi {\psi^*} {B^*D} F},
$$
the computation being justified by the $H^2_{DB}$ theory. 
The first term of the right hand side converges to 0, since $F\in (T^p_{2})^*$ as easily checked. 
For the second term, we remark that it  equals  $\pair {h_{k}-h} {\IP\Tpsi {\psi^*} {B^*D} F}$ and we claim that $\IP\Tpsi {\psi^*} {B^*D} F \in (H^p)^*$. Thus convergence to 0  follows and finishes the argument. 

To prove the claim, let $[a,b]\times \R$ contain the support of $F$. Then 
$$
{\IP\Tpsi {\psi^*} {B^*D} F} = \int_{a}^b \IP\psi^*(tB^*D) F(t,\, \cdot\,)\, \frac{dt}{t} =\int_{a}^b \IP\psi^*(tB^*D)\IP F(t,\, \cdot\,)\, \frac{dt}{t}.
$$
Remark that for each $t$, $F(t,\, \cdot\,) \in (H^p)^*\cap L^2$ with uniform bound for $t\in [a,b]$. Thus $\IP F(t,\, \cdot\,) \in \IP((H^p)^*\cap L^2)= \IH^{p'}_{D}$ or $\IL^\alpha_{D}$ depending on the value of $p$.    We now verify the assumption of Remark  \ref{rem:weaker} : From $p<2$, we know $\IH^p_{DB}\subset \IH^p_{D}$ (Proposition \ref{cor:lowerbounddbp<2}). Next, from  $H^p_{DB}=H^p_{D}$ with equivalence of norms,  we see that 
$\|h\|_{\IH^p_{D}}\sim \|h\|_{\IH^p_{DB}}$ for all $h\in \IH^p_{DB}$. Finally, the density of $\IH^p_{DB}$ in $H^p_{DB}$ guarantees that  the above set inclusion is dense for the $\IH^p_{D}$ topology.  Thus, the conclusion of  Corollary \ref{cor:hpbd0}  applies: 
$\IP\psi^*(tB^*D)$ bounded on $\IH^{p'}_{D}$ or $\IL^\alpha_{D}$ uniformly in $t$. This implies that ${\IP\Tpsi {\psi^*} {B^*D} F} \in (H^p)^*$ as desired. 
\end{proof}

\begin{prop}
The set of exponents $q\in (\frac{n}{n+1},\infty)$ for which $\IH^q_{DB}=\IH^q_{D}$ with equivalence of norms  is equal to the set of those $q$ for which    $H^q_{DB}=H^q_{D}$ with equivalence of norms.  Moreover, it is an  interval which  contains $((p_{-}(DB))_{*}, p_{+}(DB))$. \end{prop}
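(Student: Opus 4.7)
The plan is to establish three facts: (a) the two sets coincide, (b) each contains $((p_{-}(DB))_{*}, p_{+}(DB))$, and (c) each is an interval.

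\textbf{Fact (a)} is essentially free from results already in the text. If $\IH^q_{DB}=\IH^q_{D}$ with equivalence of norms, then the two norms used to complete this common pre-Hardy space agree up to constants, so the completions coincide: this is Proposition \ref{cor:hpdb}. Conversely, if $H^q_{DB}=H^q_{D}$ with equivalence of norms, then Proposition \ref{prop:inclusion} yields $\IH^q_{DB}=H^q_{DB}\cap L^2= H^q_{D}\cap L^2=\IH^q_{D}$ with equivalent norms.

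\textbf{Fact (b)} is exactly Theorem \ref{thm:hpdb}.

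\textbf{Fact (c)} is the heart of the matter, and the place to use complex interpolation. Let $\mathcal{Q}\subset (\frac{n}{n+1},\infty)$ denote the common set from (a). Suppose $q_{1},q_{2}\in \mathcal{Q}$ with $q_{1}<q_{2}$, and fix $q_{1}<q<q_{2}$ with $\frac{1}{q}=\frac{1-\theta}{q_{1}}+\frac{\theta}{q_{2}}$ for some $\theta\in(0,1)$. By property~5) of Section~\ref{sec:completions}, the families $(H^p_{DB})_p$ and $(H^p_{D})_p$ are each complex interpolation scales, so
\[
[H^{q_{1}}_{DB},H^{q_{2}}_{DB}]_{\theta}=H^{q}_{DB},\qquad [H^{q_{1}}_{D},H^{q_{2}}_{D}]_{\theta}=H^{q}_{D}.
\]
The identity map, initially defined on the common dense pre-Hardy subspace $\IH^{2}_{DB}=\IH^{2}_{D}=\clos{\ran_{2}(D)}$, extends by hypothesis to a bounded isomorphism $\iota_{i}\colon H^{q_{i}}_{DB}\to H^{q_{i}}_{D}$ for $i=1,2$, these extensions being consistent on $\IH^{2}_{DB}$. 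Complex interpolation of $\iota_{i}$ and of $\iota_{i}^{-1}$ at the endpoints yields a bounded map $H^{q}_{DB}\to H^{q}_{D}$ with bounded inverse, which on $\clos{\ran_{2}(D)}$ coincides with the identity. Hence $H^{q}_{DB}=H^{q}_{D}$ with equivalent norms, i.e.\ $q\in \mathcal{Q}$ by (a). Thus $\mathcal{Q}$ is an interval.

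The only subtle point is verifying that the interpolation step is legitimate: one needs the two endpoint identity maps to originate from a single densely-defined linear map on a common subspace (so that the interpolated map is unambiguous), which is what $\IH^{2}_{DB}=\IH^{2}_{D}=\clos{\ran_{2}(D)}$ provides, together with the compatibility of the completions with the ambient $L^{2}$ structure. Everything else reduces to quoting the cited propositions.
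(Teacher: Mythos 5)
Your proposal is correct and follows essentially the same route as the paper: equality of the two sets via Propositions \ref{cor:hpdb} and \ref{prop:inclusion}, the containment from Theorem \ref{thm:hpdb}, and the interval property by complex interpolation of the identity map using that $(H^p_{DB})_p$ and $(H^p_{D})_p$ are interpolation families. The only cosmetic difference is that the paper interpolates between $2$ and an arbitrary $q$ in the set rather than between two arbitrary exponents $q_1<q_2$, which changes nothing of substance.
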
 

\begin{proof} The first statement follows from  the previous two propositions.
We know from $H^\infty$-calculus in $L^2$ that the identity map $I: H^2_{D}=\clos{\ran_{2}(D)} \to H^2_{DB}$ is an isomorphism.  Let $q$ be such that $H^q_{DB}=H^q_{D}$ with equivalence of norms.   It means that $I$ is an isomorphism from $H^q_{D}$ onto  $H^q_{DB}$. As  $H^p_{D}$ and $H^p_{DB}$ are complex interpolation families for $0<p<\infty$, 
this shows that the set of  $q$ for which $H^q_{DB}=H^q_{D}$ is an interval which contains $2$.  
That it contains $((p_{-}(DB))_{*}, p_{+}(DB))$ has been proved  in Theorem \ref{thm:hpdb} for the pre-Hardy spaces, hence for their completions.  
\end{proof}

\begin{prop}
The interval  of exponents $q\in (\frac{n}{n+1},\infty)$ for which $\IH^q_{DB}=\IH^q_{D}$ with equivalence of norms is open. 
\end{prop}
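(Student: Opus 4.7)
The plan is to deduce openness from {\v S}ne{\u \i}berg's stability theorem for isomorphisms between complex interpolation scales. The set in question is already known to be an interval containing $((p_{-}(DB))_{*},p_{+}(DB))$ and to agree with the set of $q$ for which $H^q_{DB}=H^q_{D}$ with equivalent norms, so the issue can be addressed at the level of the completions.

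First, I would invoke property 5) in Section \ref{sec:completions} to recall that both families $(H^p_{D})$ and $(H^p_{DB})$ form complex interpolation scales in $p$, coinciding at $p=2$ with $\clos{\ran_{2}(D)}$. Next, I would promote the a priori inclusions of Propositions \ref{cor:lowerbounddbp<2} and \ref{cor:upperbounddbp>2}, established at the level of pre-Hardy spaces, to bounded linear operators on the completions:
$$ j_p: H^p_{DB}\longrightarrow H^p_{D} \quad \text{for }\ \tfrac{n}{n+1}<p\le 2, \qquad i_p: H^p_{D}\longrightarrow H^p_{DB} \quad \text{for }\ 2\le p<\infty,$$
both induced by the identity on $\clos{\ran_{2}(D)}$ and mutually inverse at $p=2$. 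Membership of $q$ in our set is then equivalent to $j_q$ (if $q\le 2$) or $i_q$ (if $q\ge 2$) being an isomorphism.

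Now fix $q_{0}$ in the set. The case $q_{0}=2$ is immediate since $2\in((p_{-}(DB))_{*},p_{+}(DB))$, which is open. Suppose $q_{0}<2$; the case $q_{0}>2$ is symmetric, working with $i$ in place of $j$. I would observe that the family $(j_p)$ defines a single morphism between the complex interpolation scales $\{H^p_{DB}\}$ and $\{H^p_{D}\}$: its values at different $p$ agree on the common dense subspace $\clos{\ran_{2}(D)}\cap \IH^p_{DB}$, which determines each $j_p$ uniquely as a bounded extension. {\v S}ne{\u \i}berg's theorem then asserts that an invertible morphism of complex interpolation scales remains invertible in an open neighborhood of any point of invertibility. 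Applied at $q_{0}$, it produces an open interval around $q_{0}$ on which $j_p$ is an isomorphism, that is, on which $H^p_{DB}=H^p_{D}$ with equivalent norms, as required.

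The delicate point, and main potential obstacle, is to verify that the setup falls genuinely under the hypotheses of {\v S}ne{\u \i}berg's theorem, namely that the scales $(H^p_{D})$ and $(H^p_{DB})$ can be realized as complex interpolation families in a form compatible with the theorem (e.g. indexed by a strip parameter) and that the operator $j$ interpolates consistently between them. Both are handled by the abstract construction of the completions recalled in Section \ref{sec:completions}, together with the standard fact that a bounded linear operator densely defined on a common dense subspace of an interpolation couple admits a unique bounded extension at each level of the scale.
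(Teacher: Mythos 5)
Your proposal is correct and takes essentially the same route as the paper: both arguments apply Šneĭberg-type stability on the complex interpolation scales of the completed Hardy spaces to the map induced by the identity, which is invertible at $q_{0}$, the paper merely realizing this map concretely as the composition $H^p_{DB}\to T^p_{2}\to H^p_{D}$ through $\Qpsi \psi {DB}$ and $\Tpsi \varphi {DB}$ so that boundedness and consistency in $p$ are explicit. The one point to add is that for $q_{0}\le 1$ the spaces are quasi-Banach, so you must invoke the Kalton--Mitrea extension \cite{KalMit} of Šneĭberg's theorem, exactly as the paper does.
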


\begin{proof} We begin with openness about an exponent $q<2$. Take $\psi\in \Psi^{\frac{n}{2}+1}(S_{\mu})$ and $\varphi\in \Psi_{\frac{n}{2}+1}(S_{\mu})$ for which the Calder\'on formula \eqref{eq:CalderonT} holds. We have the bounded maps $\Qpsi \psi {DB}: \IH^p_{DB}\to T^p_{2}\cap T^2_{2}$ and $\Tpsi \varphi {DB}: T^p_{2}\cap T^2_{2} \to \IH^p_{D}$ for all $p\in (\frac{n}{n+1}, 2]$ by  Proposition \ref{prop:table} and Proposition \ref{cor:lowerbounddbp<2}. The composition is the identity map. Consider bounded extensions $H^p_{DB}\to T^p_{2}$  and $T^p_{2} \to H^p_{D}$ that are consistent for this range of $p$.  The composition is assumed to be the identity at $p=q$. By the result in \cite{Snei, KalMit}, it remains invertible for $p$ in a neighborhood of $q$. It readily follows that $H^p_{DB}$ and $H^p_{D}$ are isomorphic for those $p$. 
Since we already have the inclusion $\IH^p_{DB}\subset \IH^p_{D}$, it is easy to conclude the isomorphism is the identity.   

In the case $p>2$, we know from Proposition \ref{cor:upperbounddbp>2} that  $\IH^p_{D}\subset \IH^p_{DB}$. So we revert the roles  of $DB$ and $D$ and consider $\Qpsi \psi {D}$ and $\Tpsi \varphi {D}$ for appropriate $\psi,\varphi$. We skip details. 
\end{proof}

\section{Openness}

We would like to prove that for any $p$ such that   $\IH^p_{DB}=\IH^p_{D}$ with equivalence of norms, then the same holds for small $L^\infty$ perturbations of $B$. We do not know this in the abstract. However, we can do this in the range found in Theorem \ref{thm:hpdb}.

\begin{prop}\label{prop:open} Fix $p\in ((p_{-}(DB))_{*},p_{+}(DB))$. Then for any $B'$ with $\|B-B'\|_{\infty}$ small enough (depending on $p$), $\IH^p_{DB'}=\IH^p_{D}$ with equivalence of norms. Furthermore, for any $b\in H^\infty(S_{\mu})$ with  $\omega_{B}<\mu<\pi/2$, we have 
\begin{equation}
\label{eq:analytic}
\|b(DB)-b(DB')\|_{\mL(\IH^p_{D})}\lesssim  \|b\|_{\infty}\|B-B'\|_{\infty}.
\end{equation}
\end{prop}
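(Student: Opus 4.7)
The plan splits into two assertions: the Hardy-space identification $\IH^p_{DB'}=\IH^p_{D}$ for $B'$ close to $B$ in $L^\infty$, and the Lipschitz bound \eqref{eq:analytic}.

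For the first assertion, I would apply Theorem \ref{thm:hpdb} directly to $DB'$ in place of $DB$, so I need to verify that $p\in((p_{-}(DB'))_{*},p_{+}(DB'))$ for $\|B-B'\|_{\infty}$ small. This rests on two points. First, strict accretivity of $B'$ on $\clos{\ran_{2}(D)}$ is stable under $L^\infty$-perturbation of $B$, with angle $\omega_{B'}$ that can be kept below a fixed $\mu<\pi/2$. Second, for any fixed $p$ in $I(DB)$, the $H^\infty$-calculus on $L^p$ for $DB'$ follows from that of $DB$ by a Neumann-series argument on the resolvent, so $I(DB')$ contains any fixed compact subset of $I(DB)$ provided $\|B-B'\|_{\infty}$ is small enough (this is consistent with the openness of $\mI_{2}$ recorded in Remark \ref{rem:setofcoerc}). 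Taking lower Sobolev exponents yields $p\in((p_{-}(DB'))_{*},p_{+}(DB'))$ for such perturbations, and Theorem \ref{thm:hpdb} gives $\IH^p_{DB'}=\IH^p_{D}$ with norm-equivalence constants uniform in a neighborhood of $B$.

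For \eqref{eq:analytic}, I would use the Cauchy representation combined with the resolvent identity:
\begin{equation*}
b(DB)-b(DB')=\frac{1}{2\pi i}\int_{\partial S_\nu} b(\lambda)(\lambda-DB)^{-1}D(B-B')(\lambda-DB')^{-1}\,d\lambda,
\end{equation*}
for some $\omega_{B}<\nu<\mu$. Using the commutation $(\lambda-DB)^{-1}D=D(\lambda-BD)^{-1}$ (valid in the calculus since $(\lambda-DB)D=D(\lambda-BD)$), the integrand factors as $D\cdot(\lambda-BD)^{-1}(B-B')(\lambda-DB')^{-1}$. Testing the $\IH^p_D$-norm of the difference by an allowable $\Qpsi{\psi}{DB}$-square function from Theorem \ref{thm:hpdb}, and pushing the leading $D$ through the analogous commutation $\psi(tDB)D=D\psi(tBD)$, the problem reduces to a $T^p_{2}$-estimate for $tD\psi(tBD)$ applied, after rescaling $\lambda\leftrightarrow 1/t$, to $(\lambda-BD)^{-1}(B-B')(\lambda-DB')^{-1}h$, weighted by $|b(\lambda)|\le\|b\|_{\infty}$.

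The main obstacle is controlling this middle expression, because $D$ is unbounded and neither $B$ nor $B'$ is assumed invertible, so one cannot cancel $D$ against $B^{-1}$ or $(B')^{-1}$. The resolution is precisely Theorem \ref{thm:supergoodhpbd} on the $tD$-square function, which was designed for exactly this situation: for the right choice of $\psi$, one has $\|tD\psi(tBD)v\|_{T^p_{2}}\lesssim\|\IP v\|_{L^p}$ (or the corresponding H\"older estimate for $p\le 1$), where $v=(\lambda-BD)^{-1}(B-B')(\lambda-DB')^{-1}h$. The $L^p$-boundedness of $(\lambda-BD)^{-1}$ on $\clos{\ran_{p}(BD)}$ from Theorem \ref{thm:Lpfc} controls the outer factor; multiplication by $(B-B')$ contributes the gain $\|B-B'\|_{\infty}$; and $(\lambda-DB')^{-1}$ is bounded on $\IH^p_{DB'}=\IH^p_{D}$ by the first assertion, with norm $\lesssim 1$ after rescaling $\lambda$. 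Off-diagonal tent-space decay for products $\psi(tDB)\varphi(sDB')$ from Section \ref{sec:Hardy}, together with the $\Psi$-type decay in $\lambda$ of $b$ (or a standard limiting argument for $b\in H^\infty(S_\mu)$), produces an integrable kernel in $\log(t/s)$ and lets one conclude the desired bound $\lesssim\|b\|_{\infty}\|B-B'\|_{\infty}$.
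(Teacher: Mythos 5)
Your first assertion is fine and runs essentially parallel to the paper: stability of accretivity/coercivity under small $L^\infty$ perturbation plus the Neumann series for the resolvent shows that any compact subinterval of $I(DB)$ lies in $I(DB')$, hence $p\in((p_{-}(DB'))_{*},p_{+}(DB'))$, and Theorem \ref{thm:hpdb} applied to $DB'$ gives $\IH^p_{DB'}=\IH^p_{D}$ (the paper instead reruns Lemma \ref{lem:extrapolation} with uniform bounds in the perturbation parameter, but for the non-uniform statement your shortcut is legitimate).

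The genuine gap is in your proof of \eqref{eq:analytic}. Your reduction produces, for each $\lambda$ on the contour, an operator of the functional calculus of $BD$ applied to $w_\lambda=(B-B')(\lambda-DB')^{-1}h$, and you then need an upper square-function bound of the form $\|\Qpsi{\psi_\lambda}{BD}w_\lambda\|_{T^p_{2}}\lesssim\|w_\lambda\|_{H^p}$ for a \emph{generic} input $w_\lambda$. This is exactly what is unavailable in the delicate part of the range: for $(p_{-}(DB))_{*}<p\le p_{-}(DB)$ the upper bounds of Theorem \ref{thm:upperqpsip<2} and Theorem \ref{thm:hpbd} use crucially that the input is of the form $Du$ (or lies in $\clos{\ran_{2}(BD)}$), structure which multiplication by $B-B'$ destroys; and for $p\le1$ multiplication by the $L^\infty$ function $B-B'$ is not even bounded on $H^p$, so the step ``multiplication by $(B-B')$ contributes the gain $\|B-B'\|_\infty$'' fails outright. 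Moreover Theorem \ref{thm:supergoodhpbd}, which you invoke as the key tool, addresses the dual range ($p=q'\ge2$, or H\"older spaces $\dot\Lambda^\alpha$ dual to $H^q$, $q\le1$) and requires $h\in\clos{\ran_{2}(BD)}$; it says nothing about $T^p_{2}$ estimates at the exponents $p\le p_{-}(DB)$ or $p\le1$ needed here, and your parenthetical ``H\"older estimate for $p\le1$'' conflates $H^p$ with its dual scale. (There is also the lesser issue that the contour integral for the difference does not converge absolutely for general $b\in H^\infty(S_\mu)$, since the integrand only decays like $|\lambda|^{-1}$ at both ends; this needs a genuine regularization, not just a remark.) The paper gets around all of this by a different mechanism: it proves analyticity of $z\mapsto b(DB_z)$ with values in $\mL(\IH^p_{D})$ — via duality for $1<p$, and for $p\le1$ via the atom-to-molecule Lemma \ref{lem:atom-mol} together with an analyticity argument in an auxiliary weighted $L^2$ space — and then obtains \eqref{eq:analytic} from Cauchy estimates. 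Your proposal contains no substitute for this step, so the Lipschitz bound is unproved precisely where it is hardest.
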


\begin{proof} We shall use analyticity: let $B_{z}=B-zM$ for $M(x)$ normalized with $L^\infty$ norm 1 and $z\in \C$ so that $B_{0}=B$. We shall show the conclusion for $B'=B_{z}$ with $z$ in a small enough disk. First there is $r>0$ such that for $|z|<r$, $B_{z}$ is accretive on $\clos{\ran_{2}(D)}$ with constant half the one for $B$ and bounded with $L^\infty$ bound twice that of $B$. 
Using a Neumann series expansion, for $\lambda\notin S_{\mu}$, where $\mu>\omega_{B}$ (the accretivity angle of $B$)
\begin{align*}
   (\lambda-DB_{z})^{-1} &= \sum_{k=0}^\infty z^k \big((\lambda-DB)^{-1}DM\big)^k(\lambda-DB)^{-1}\\
   &=
\sum_{k=0}^\infty z^k \big((\lambda-DB)^{-1}DBB^{-1}M\big)^k(\lambda-DB)^{-1}.  
\end{align*}
Thus if $|z|<\varepsilon_{2}$, the series converges in $\mL(L^2)$ and this shows that  $DB_{z}$ is $\omega_{B}$ bisectorial  on $L^2$  for all $|z|<\varepsilon_{2}$. As $B_{z}$ has the same form as $B$, it follows that $DB_{z}$ has $H^\infty$-calculus on bisectors $S_{\mu}$ with uniform bounds with respect to $|z|<\varepsilon_{2}$. Furthermore $z\mapsto b(DB_{z})$ is an analytic $\mL(L^2)$-valued function for any $b\in H^\infty(S_{\mu})$. This is shown in \cite[Section 6]{AKMc}  together with \eqref{eq:analytic} in $\mL(L^2)$. 

Now, the same Neumann series shows that $DB_{z}$ is also bisectorial on $L^p$ if
$p_{-}(DB)<p<p_{+}(DB)$ and $|z|<\varepsilon_{p}$ small enough. Thus such operators also have $H^\infty$-calculus on $L^p$ by the theory recalled in Section \ref{sec:lpresults}. Furthermore, analyticity of $z\mapsto b(DB_{z})$ in $\mL(\IH^p_{D})$ on  $|z|<\varepsilon_{p}$  for any $b\in H^\infty(S_{\mu})$ can be proved as follows. First, for any $\lambda\notin S_{\mu}$, the Neumann series, show that $z\mapsto (\lambda-DB_{z})^{-1}$ is analytic in $\mL(L^p)$ on $|z|<\varepsilon_{p}$.  Next, for $\psi\in \Psi(S_{\mu})$, using the Cauchy formula, one has  
analyticity of  $z\mapsto \psi(DB_{z})$ in $\mL(L^p)$ on $|z|<\varepsilon_{p}$. Finally, $b$ can be approximated for the topology of the uniform convergence on compact subsets of $S_{\mu}$ by a sequence $(\psi_{k})$ with $\psi_{k}\in \Psi(S_{\mu})$ for each $k$. This implies strong convergence of $\psi_{k}(DB_{z})$ to $b(DB_{z})$ in $\mL(\IH^p_{D})$ and examination shows it is  uniform on compact subsets of   $|z|<\varepsilon_{p}$. Analyticity follows and also \eqref{eq:analytic}  in  $\mL(\IH^p_{D})$. 

We next turn to values  $(p_{-}(DB))_{*}<p\le p_{-}(DB)$. For those,  the method of proof of  Theorem \ref{thm:hpdb} (in particular Lemma \ref{lem:extrapolation})
shows that for a suitable $\varepsilon_{p}$ (which can be taken equal to $\varepsilon_{q}$ for some $q\in (p_{-}(DB),p^*)$) and a suitable allowable $\psi$,  $\|\Qpsi \psi {DB_{z}}h\|_{T^p_{2}}\lesssim \|h\|_{\IH^p_{D}}$ when $h\in \clos{\ran_{2}(D)}$ uniformly on compact subsets of $|z|<\varepsilon_{p}$. Hence, $\IH^p_{DB_{z}}=\IH^p_{D}$ with equivalence of norms, uniformly on compact subsets of  $|z|<\varepsilon_{p}$. This implies that 
$b(DB_{z})$ are uniformly bounded operators on $\IH^p_{D}$  when $|z|<\varepsilon_{p}$.

If $1<p$, this gives analyticity  as follows: for $h\in \IH^p_{D}, g\in \IH^{p'}_{D}$, the map $ z\mapsto \pair {b(DB_{z})h}g$ is uniformly bounded, and analytic because of the $L^2$ case. Then \eqref{eq:analytic} follows from Cauchy estimates. 

If $p\le 1$, it is likely that the abstract results developed in Kalton \cite{Ka} apply. We follow a different route taking advantage of the atomic-molecular theory. 

Let us admit the following lemma for the moment.

\begin{lem}\label{lem:atom-mol} Let $(p_{-}(DB))_{*}<p\le 1$ and $b\in H^\infty(S_{\mu})$.   For some $\varepsilon>0$ depending only on $p$ and $ n$,   then  for all   $(\IH^p_{D},1)$-atoms $a$, with associated cube $Q$ and all  $j\ge 0$, 
$$
||b(DB)a||_{L^{2}\left(S_{j}\left(Q\right)\right)}\lesssim \|b\|_{\infty} \left(2^{j}\ell\left(Q\right)\right)^{\frac{n}{2}-\frac{n}{p}}2^{-j\varepsilon} 
$$
and moreover $\int b(DB)a=0$. In all, $b(DB)a$ is a classical $H^p$ molecule. 
\end{lem}

  Now the strategy is to prove analyticity is as follows.  The same estimate applies to $b(DB_{z})$ for $|z|<\varepsilon_{p}/2$, uniformly  in $z$. We fix the  $(\IH^p_{D},1)$-atom $a$. It follows from 
  the molecular estimate that $m_{z}=b(DB_{z})a$ belongs to the Hilbert space $H$ of $L^2(w_{Q}) $ functions $m$ with   $\int_{\R^n}m=0$, where  $w_{Q}(x) =|Q|^{\frac{2}{p}-1} \big( 1+ \frac{d(x,4Q}{\ell(Q)}\big)^{2s}$, $\frac{n}{p}-\frac{n}{2}<s<\frac{n}{p}-\frac{n}{2}+\varepsilon$ and $Q$ is the cube associated to $a$ in the definition.  Note that $H\subset L^1$. The bounded compactly supported functions with mean value 0  form a  dense subspace of $H$. For $f$ such a function, $fw_{Q}\in L^2$ as well as $b(DB_{z})a$ since $a\in L^2$. Thus, by the analyticity on $L^2$,  $z\mapsto \pair {m_{z}} {fw_{Q}} $ is analytic. Next, by Cauchy estimates using the uniform bounds in the space $H$, we have  for $|z|$ small enough,
  $$
 | \pair {m_{z}} {fw_{Q}}-\pair {m_{0}} {fw_{Q}}|\lesssim \|b\|_{\infty} |z| \|f\|_{H},$$
 hence
  $$
  \|b(DB_{z})a- b(DB)a\|_{H}\lesssim \|b\|_{\infty} |z|.
  $$
  Since $s>\frac{n}{p}-\frac{n}{2}$, this implies the $H^p$ estimate 
  $$
  \|b(DB_{z})a-b(DB)a\|_{H^p}\lesssim  \|b\|_{\infty} |z|.
  $$
  Note that $ b(DB_{z})a-b(DB)a \in \IH^2_{D}$, hence this is also an estimate in the space $\IH^p_{D}$.  Since we know already boundedness of $b(DB_{z})-b(DB)$ on $\IH^p_{D}$ (but it can be obtained by extension), we conclude for   \eqref{eq:analytic} by density of linear combinations of $(\IH^p_{D},1)$-atoms.
   \end{proof}
   
   \begin{proof}[Proof Lemma \ref{lem:atom-mol}.]  This is basically the same strategy as for proving the square function estimate. Assume $\|b\|_{\infty}=1$ to simplify matters. Fix a $(\IH^p_{D},1)$-atom $a$. Choose $\psi\in \Psi_{\sigma}^\tau(S_{\mu})$ with $\sigma,\tau$ large and so that $\int_{0}^\infty
\psi(tz) \, \frac{dt}{t}=1$ for $z\in S_{\mu}$. Thus, $m=b(DB)a= \int_{0}^\infty
 (b\psi_{t})(DB)a\, \frac{dt}{t}$ with $\psi_{t}(z) =\psi(tz)$.  
 Now write $a=Du$ as in the definition of  $(\IH^p_{D},1)$-atoms. We show estimates on $m$. Let $Q$ be the cube associated to $a$.  On $4Q$, by $H^\infty$-calculus
$$
\bigg(\int_{4Q}|m|^2\bigg)^{1/2}  \lesssim  \bigg(\int |a|^2\bigg)^{1/2} \leq  |Q|^{\frac{1}{2}-\frac{1}{p}}.
$$
On $S_{j}(Q)$, $j\ge 2$,  we write $(b\psi_{t})(DB)a= D(b\psi_{t})(BD)u$ and 
$$
\bigg(\int_{S_{j}(Q)}|m|^2\bigg)^{1/2} \lesssim  \int_{0}^\infty  \bigg(\int_{S_{j}(Q)}|D(b\psi_{t})(BD)u|^2\bigg)^{1/2}\, \frac{dt}{t}.
$$
We use once more Lemma \ref{lem:localcoerc} and the fact that $\sigma,\tau>0$ are large enough in the $L^q-L^2$ estimates of Section \ref{sec:lplq} applied to $(b\psi_{t})(BD)$  to obtain 
$$
\bigg(\int_{S_{j}(Q)}|D(b\psi_{t})(BD)u|^2\bigg)^{1/2} \lesssim t^{-1} t^{\frac{n}{2}-\frac{n}{q}}\brac{2^j\ell(Q)/t}^{-K}\|u\|_{q}
$$
with $K$ large and  $q$ chosen so that $p_{-}<q<p^*$ and $q\le 2$.
Plugging this estimate into  the $t$-integral we have
$$
\bigg(\int_{S_{j}(Q)}|m|^2\bigg)^{1/2} \lesssim (2^j\ell(Q))^{-1} (2^{j}\ell(Q))^{\frac{n}{2}-\frac{n}{q}}\|u\|_{q} \lesssim (2^{j}\ell(Q))^{\frac{n}{2}-\frac{n}{p}} 2^{-j\varepsilon}$$
with $\varepsilon= 1+\frac{n}{q}-\frac{n}{p}>0$. 
It remains to prove $c=\int {m}=0$.  Indeed, $m-c1_{Q}\in H^p$ as it is a classical molecule for $H^p$ using the estimates on $m$. Now, we know that  $m\in \IH^p_{D}\subset H^p$,  thus $c1_{Q}\in H^p$ and it is classical (for example, using the characterization by maximal function) that $1_{Q}\notin H^p$ since its mean value is not 0 and  $c$ must be 0. 
\end{proof}

\begin{rem}
It is unclear to us whether $m$ is itself an $\left(\IH^{p}_{D},\epsilon,1\right)$-molecule in the sense of our definition. One can indeed write $m=Dv$ with $v=\int_{0}^\infty
 (b\psi_{t})(BD)u\, \frac{dt}{t} $ and obtain by the same method
 $$
||v||_{L^{2}\left(S_{j}\left(Q\right)\right)}\lesssim \|b\|_{\infty} \left(2^{j}\ell\left(Q\right)\right)^{\frac{n}{2}-\frac{n}{p}}2^{-j\varepsilon}  2^j.
$$
There is an extra factor $2^j$. However, this is sufficient to prove a uniform $L^{p^*}$ bound on $v$ if one needs it. 
\end{rem}

\section{Regularization via semigroups}\label{sec:reg}

This section will be used in Section 14 below. 

It is well known that classical semigroups have regularization properties: for example, the usual Poisson semigroup on $\R^n$ maps $L^1$ into $L^\infty$, as easily seen using the Poisson kernel. Here, there is no kernel information. Nevertheless, such regularization holds abstractly in the Hardy spaces. 

\begin{thm} Let $T=BD$ or $DB$. 
Let $0<p\le q\le \infty$ and $0\le \alpha\le \beta <\infty$. Fix $t>0$. Then the operator $e^{-t|T|}$ has extensions with the following mapping properties and bounds
\begin{align*}
\label{eq:semigroupbounds}
  H^p_{T} \to H^q_{T}  &\quad  \mathrm{with\ bound\  }  Ct^{-(\frac{n}{p}-\frac{n}{q})}.   \\
  \dot \Lambda^\alpha_{T}  \to \dot \Lambda^\beta_{T} & \quad  \mathrm{with\ bound\  }   Ct^{\, \alpha-\beta}. \\
   H^p_{T} \to   \dot\Lambda^\alpha_{T}  & \quad  \mathrm{with\ bound\  }  Ct^{- \frac{n}{p}-\alpha} .  
\end{align*}
Moreover, the mapping properties hold with the same bounds when the  spaces are replaced by the corresponding pre-Hardy spaces $\IH^\mT_{T}$ with the possible exception of the first line when $p<q\le 1$. 
\end{thm}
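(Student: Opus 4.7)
The plan is to establish the three families of estimates on the pre-Hardy spaces first, and then to extend them to the completions by density; the stated exception for $p<q\le 1$ in the first line reflects the technical point that one has only automatic extension into an abstract completion, not necessarily into a specified concrete target space. I would begin with the base case $p=q$ (all three lines), where the bound is just the uniform boundedness of $(e^{-t|T|})_{t>0}$ on $\IH^\mT_T$ already established in Proposition~\ref{prop:stronglimit} and the discussion in Section~\ref{sec:completions}. The key analytic device for $p<q$ is the representation, valid on $\clos{\ran_2(T)}$ by Proposition~\ref{prop:stronglimitinfty},
\begin{equation*}
e^{-t|T|}h \;=\; \int_t^\infty |T|e^{-s|T|}h\,ds \;=\; \int_t^\infty \psi(sT)h\,\frac{ds}{s}, \qquad \psi(z)=|z|e^{-|z|}\in \Psi_1^\infty(S_\mu),
\end{equation*}
which replaces the semigroup by an integral of a function of $T$ that vanishes at the origin of order $1$ and decays arbitrarily fast at infinity.

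For the first family with $p\le 1$, I would argue via the molecular characterization of $\IH^p_T$. Given an $(\IH^p_T,\epsilon,M)$-molecule $m=T^M b$ associated to a cube $Q$ with $M$ large, the aim is to verify that
$$c\, t^{-(n/p-n/q)}\,e^{-t|T|}m$$
is, up to a fixed multiplicative constant, an $(\IH^q_T,\epsilon',M)$-molecule associated to the cube $\tilde Q$ of sidelength $\tilde\ell:=\max(\ell(Q),t)$ concentric with $Q$, with $\epsilon'>0$ depending on $\epsilon, n, p, q$. The verification of the annular bounds in \eqref{eq:mol} proceeds from the identity $t^{-k}T^{M-k}e^{-t|T|}=t^{-M}\phi_k(tT)$ with $\phi_k(z)=z^{M-k}e^{-|z|}\in \Psi^\infty_{M-k}$, combined with the molecular bounds on $(\ell(Q)T)^{-k}m$ and the $L^2$ off-diagonal estimates of Lemma~\ref{lem:odd} applied to $\phi_k(tT)$ (used separately in the regimes $t\le\ell(Q)$ and $t>\ell(Q)$). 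Summing a molecular decomposition of $h$ with the appropriate quasi-triangle (for $q\le 1$) or triangle (for $q\ge 1$) inequality and the uniform molecular bound then produces $\|e^{-t|T|}h\|_{\IH^q_T}\lesssim t^{-n(1/p-1/q)}\|h\|_{\IH^p_T}$. For the mixed range $p\le 1<q<\infty$, the same molecular estimate combined with Corollary~\ref{lem:lowerp<2} (or a direct $L^2$-to-$L^q$ off-diagonal argument using $\phi_k$) provides the result.

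The ranges $1<p\le q$ and all Lambda-valued targets I would handle by interpolation and duality. Complex interpolation for the family $\{H^p_T\}_{0<p\le\infty}$ (property (5) of Section~\ref{sec:completions}) fills in the interior of the strip $0<p\le q<\infty$ once the endpoint cases are known. The Lambda statements on the second and third lines I would deduce from the first line applied to $T^*$ by taking adjoints, using $(H^p_{T^*})^*= \dot\Lambda^\alpha_T$ for $\alpha=n(1/p-1)\ge0$ (with the convention $\dot\Lambda^0_T=\BMO_T$) and $(e^{-t|T|})^*=e^{-t|T^*|}$; the exponents transform as $t^{-(n/p-n/q)}\mapsto t^{\alpha-\beta}$ or $t^{-n/p-\alpha}$ in the expected way.

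The main obstacle is precisely that the symbol $e^{-t|z|}$ does not vanish at $z=0$, so the $L^p$–$L^q$ off-diagonal machinery of Section~\ref{sec:lplq} is not directly available, and moreover one wishes to obtain estimates for $p,q$ outside the coercivity interval $I(T)$, where $L^p$-boundedness of the resolvent itself may fail. The representation above is the device that trades the non-vanishing semigroup for a $\Psi^\infty_1$-function $\psi(sT)$ integrated on $(t,\infty)$, while the molecular argument provides the only clean route to $L^p$-untouched Hardy-space bounds, since it relies solely on the $L^2$ off-diagonal decay of Lemma~\ref{lem:odd} and on cancellation (order-$M$ vanishing in $T$). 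The delicate point is checking that the scale $\tilde\ell=\max(\ell(Q),t)$ absorbs the ``mismatch'' between the scale of the molecule and the scale of the semigroup, and that the rescaling factor $t^{-(n/p-n/q)}$ on the molecular output is sharp, which is what ultimately produces the correct Sobolev-type exponent.
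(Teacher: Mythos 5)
There is a genuine gap at the heart of your argument: the claim that $c\,t^{-(\frac{n}{p}-\frac{n}{q})}e^{-t|T|}m$ is again a molecule, adapted to $\tilde Q$ with $\tilde\ell=\max(\ell(Q),t)$. To verify the annular conditions \eqref{eq:mol} for the output you need spatial (off-diagonal) decay of $e^{-t|T|}$ itself: the $k=M$ condition amounts to annular $L^2$ bounds on $e^{-t|T|}b$ and the $k=0$ condition to annular bounds on $e^{-t|T|}m$, and in both cases the operator being localized is $\phi_M(tT)$ with $\phi_M(z)=e^{-\modz}$, which vanishes to order $0$ at the origin. Lemma \ref{lem:odd} covers only resolvents, and the off-diagonal machinery \eqref{eq:odnpsipq} gives decay whose order is proportional to the order of vanishing $\sigma$ at $0$; for $\sigma=0$ it gives nothing. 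This is exactly the difficulty the paper stresses (the Poisson-type semigroup of these rough first-order operators has essentially no usable pointwise or off-diagonal decay), and your fallback device $e^{-t|T|}h=\int_t^\infty|T|e^{-s|T|}h\,ds$ does not rescue it: once the cancellation of the molecule is spent (the $k=M$ case, where one acts on $b$ rather than on $T^Mb$), the integrand $s^{-1}\psi(sT)b$ has only $\langle d/s\rangle^{-1}$-type far-field decay and no decay in $s$, so the $s$-integral over $s>\dist$ diverges. Note also that if your molecular claim were correct it would immediately yield the pre-Hardy bound $\IH^p_T\to\IH^q_T$ for $p<q\le 1$ (a molecular representation of $h$ converging in $L^2$ is mapped to one for $e^{-t|T|}h$), which is precisely the case the theorem excepts — a strong signal that the localization claim overreaches. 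A secondary issue: for targets $1<q<2$ there is no molecular characterization of $\IH^q_T$ in this paper, and Corollary \ref{lem:lowerp<2} goes the wrong way ($\IH^p_T\subset L^p$), so it cannot certify membership in $\IH^q_T$.

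The fix is to give up localization entirely, which is how the paper proceeds: from a molecule $a=T^Mb$ one only uses the two global bounds $\|a\|_2\lesssim\ell^{\frac n2-\frac np}$ and $\|b\|_2\lesssim\ell^{M+\frac n2-\frac np}$ together with the uniform $L^2$-boundedness of $e^{-t|T|}$ and of $(tT)^Me^{-t|T|}$ to get $\|e^{-t|T|}a\|_2\lesssim\inf(\ell^{-(\frac np-\frac n2)},t^{-M}\ell^{M-(\frac np-\frac n2)})\le t^{-(\frac np-\frac n2)}$, i.e. $H^p_T\to H^2_T$ for $p\le1$; then $H^2_T\to H^\infty_T$ follows by dualizing this against $T^*$ (Proposition \ref{prop:duality}), the full first line follows by the semigroup law and complex interpolation, the second line is the dual of the first, and the third is the composition of the two. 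The pre-Hardy assertions are then obtained not through molecules but by tent-space duality, pairing $\Qpsi{\psi}{T}(e^{-t|T|}f)$ against $G\in T^{q'}_2\cap T^2_2$ and using the already-proved completed estimates for $T^*$ — and this duality step needs $q>1$, which is exactly why the exception for $p<q\le1$ appears in the statement. Your interpolation/duality outline for the remaining lines is in the right spirit, but it rests on the first line, whose proof as you propose it does not go through.
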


\begin{rem}
The proof will show this result is not limited to $BD$ or $DB$. It holds for any operator $T$ on $\R^n$ having a
Hardy space theory (for example, bisectorial with $H^\infty$-calculus plus $L^2$ off-diagonal bounds). The bounds are valid for an operator having the scaling of a first order operator. For an operator of order $m$, then raise the bounds to power $\frac{1}{m}$.
\end{rem}

\begin{proof} 

\textbf{Step 1:}   $p\le 1$ and $q=2$ in the first line. 

We pick an $\left(\IH^{p}_{T},\epsilon,M\right)$-molecule $a$ with $M> \frac{n}{p}-\frac{n}{2}$. Let $\ell$ be the sidelength of the associated cube. Observe that $a\in \ran_{2}(T)\subset \IH^2_{T}$, thus $e^{-t|T|}a \in \IH^2_{T}$ and $\|e^{-t|T|}a\|_{2} \sim \|e^{-t|T|}a\|_{\IH^2_{T}}$. 
Since   $\|a\|_{2} \lesssim \ell^{-(\frac{n}{p}-\frac{n}{2})} $ and $e^{-t|T|}$ is uniformly bounded on $L^2$ we have $\|e^{-t|T|}a\|_{2} \lesssim \ell^{-(\frac{n}{p}-\frac{n}{2})} $. Now we have $a=T^Mb$ with $b\in \dom_{2}(T^M)$ and $\|b\|_{2}\lesssim \ell^{M}\ell^{-(\frac{n}{p}-\frac{n}{2})}$. As $(tT)^Me^{-t|T|}$  is uniformly bounded on $L^2$, we have $\|e^{-t|T|}a\|_{2} \lesssim t^{-M}\ell^{M-(\frac{n}{p}-\frac{n}{2})}$. 
Thus
$$
\|e^{-t|T|}a\|_{2} \lesssim \inf (\ell^{-(\frac{n}{p}-\frac{n}{2})}, t^{-M}\ell^{M-(\frac{n}{p}-\frac{n}{2})}) \le 
t^{-(\frac{n}{p}-\frac{n}{2})}.
$$
Next, let $f\in \IH^p_{T}$. Pick a molecular $\left(\IH^{p}_{T},\epsilon,M\right)$-representation
$f=\sum \lambda_{j} a_{j}$ which converges in $L^2$ and also with $\sum |\lambda_{j}|^p \le 2^p \|f\|_{\IH^p_{T}}^p$. From $L^2$ continuity of the semigroup we have
$e^{-t|T|} f=\sum \lambda_{j} e^{-t|T|} a_{j}$, hence
$$
\|e^{-t|T|}f\|_{2} \lesssim \sum |\lambda_{j}| t^{-(\frac{n}{p}-\frac{n}{2})}  \le   \|(\lambda_{j})\|_{\ell^p} \,
t^{-(\frac{n}{p}-\frac{n}{2})} \le 2  \|f\|_{\IH^p_{T}}  t^{-(\frac{n}{p}-\frac{n}{2})}.
$$
as $p\le 1$. Finally, taking completion we have proved step 1. 

\

\textbf{Step 2: }  $p=2$ and $q=\infty$ in the first line. 

This an easy consequence of Proposition \ref{prop:duality}. Let $g\in H^2_{T}=\IH^2_{T}$. Let $f\in \IH^1_{T^*}$. Using the first step with $T^*$ (which is of the same type as $T$), 
$$| \pair  f {e^{-t|T|}g} | =  | \pair  {e^{-t|T^*|}f} {g} | \le \|g\|_{\IH^2_{T}}\|e^{-t|T^*|}f\|_{\IH^2_{T^*}} \lesssim 
 \|g\|_{\IH^2_{T}} \|f\|_{\IH^1_{T^*}} t^{-({n}-\frac{n}{2})}.
 $$
 Thus,  $e^{-t|T|}g \in (\IH^1_{T^*})^*= H^\infty_{T}$ by definition of $H^\infty_{T}$ and density of $\IH^1_{T^*}$ in $H^1_{T^*}$, and $\|e^{-t|T|}g\|_{H^\infty_{T}}\lesssim  \|g\|_{\IH^2_{T}}  t^{-({n}-\frac{n}{2})}$.
 
 \
 
 \textbf{Step 3: } All cases in the first line. Using the semigroup property and combining the first two steps, we have the first line for $(p,\infty)$ for any $0<p<\infty$ and we also know the first line for all pairs $(p,p)$ for $0<p\le \infty$ from the discussion in Section~\ref{sec:completions}.  We conclude this line by complex interpolation.
 
 \
 
 \textbf{Step 4: } The second line. This is the dual of the first line $H^p_{T^*} \to H^q_{T^*} $, where 
 $\alpha= n(\frac{1}{q}-1)$ and $\beta= n (\frac{1}{p}-1)$.
 
  \
 
 \textbf{Step 5: } The third line. Combine $H^p_{T}\to H^\infty_{T}= \dot\Lambda^0_{T}$ with $\dot\Lambda^0_{T} \to \dot\Lambda^\alpha_{T}$ using the semigroup property.
 
  \
 
 \textbf{Step 6: } The first line with the pre-Hardy spaces. Before we begin  recall that this is not immediate from the results above as we do not know whether $\IH^p_{T}= H^p_{T}\cap H^2_{T}$ in general. We come back to the definition. Let $f\in \IH^p_{T}$. As $e^{-t|T|}f\in \IH^2_{T}$, we want to show that $\Qpsi \psi T (e^{-t|T|}f) \in T^q_{2}$ with the desired bound for some allowable $\psi$ for $\IH^q_{T}$. We can only do this when $q>1$. We choose $\psi$ matching the  conditions of the third and fourth columns for  the exponent $q$  in the table before Proposition \ref{prop:table}. By duality in tent spaces and density, it is enough to bound $(\Qpsi \psi T (e^{-t|T|}f), G)$ by $\|G\|_{T^{q'}_{2}}$ for any $G\in T^{q'}_{2}\cap T^2_{2}$. By the choice of $G$, we have
 $$
(\Qpsi \psi T (e^{-t|T|}f), G)= \pair {f}{ e^{-t|T^*|}(\Tpsi {\psi^*} {T^*} G)}.
$$
Now,   the choice for $\psi$  implies $\Tpsi {\psi^*} {T^*} G \in \IH^{q'}_{T^*}$ and using the just proved first  or  third lines  and duality, $e^{-t|T^*|}(\Tpsi {\psi^*} {T^*} G)\in H^{p'}_{T^*}=(H^p_{T})^*$. We obtain 
$$
|\pair {f}{ e^{-t|T^*|}(\Tpsi {\psi^*} {T^*} G)}| \lesssim t^{-(\frac{n}{p}-\frac{n}{q})}\|f\|_{\IH^p_{T}}\|G\|_{T^{q'}_{2}}.
$$

\
 
 \textbf{Step 7: } The third line with the pre-Hardy spaces, that is $\IH^p_{T} \to \IL^\alpha_{T}$. Let  $f\in \IH^p_{T}$. As $e^{-t|T|}f\in \IH^2_{T}$,  we have to show that   $\Qpsi \psi T (e^{-t|T|}f) \in T^\infty_{2,\alpha}$ with the desired bound for some allowable $\psi$ for $\IL^\alpha_{T}$. We let $\alpha=n(\frac{1}{q}-1)$ for some  $q<1$ and choose   $\psi$ matching the conditions of the third and fourth columns for the exponent $\alpha$ in the table before Proposition \ref{prop:table}.
 By duality in tent spaces and density, it is enough to bound $(\Qpsi \psi T (e^{-t|T|}f), G)$ by $\|G\|_{T^{q}_{2}}$ for any $G\in T^{q}_{2}\cap T^2_{2}$. By the choice of $G$, we have
 $$
(\Qpsi \psi T (e^{-t|T|}f), G)= \pair {f}{ e^{-t|T^*|}(\Tpsi {\psi^*} {T^*} G)}.
$$
Now,    the choice of $\psi$  implies $\Tpsi {\psi^*} {T^*} G \in \IH^{q}_{T^*}$, and using the just proved  first or third line and duality, $e^{-t|T^*|}(\Tpsi {\psi^*} {T^*} G)\in (H^p_{T})^*$. We obtain 
$$
|\pair {f}{ e^{-t|T^*|}(\Tpsi {\psi^*} {T^*} G)}| \lesssim t^{-\frac{n}{p}-\alpha}\|f\|_{\IH^p_{T}}\|G\|_{T^{q}_{2}}.
$$

\
 
 \textbf{Step 8: } The second line with the pre-Hardy spaces, that is $\IL^\alpha_{T} \to \IL^\beta_{T}$. The argument is similar to the previous ones and we leave details to the reader. 
\end{proof}

\begin{cor} Let  $p\le q$ with $p\le 2$. If   both $p,q$ belong to the interval  of exponents  in $ (\frac{n}{n+1},\infty)$ for which $\IH^q_{DB}=\IH^q_{D}$, then the semigroup $e^{-t|DB|}$ has a bounded extension from $H^p_{DB}=H^p_{D}$ to $\IH^q_{DB}=\IH^q_{D}$ with bound $Ct^{-(\frac{n}{p}-\frac{n}{q})}$. 
\end{cor}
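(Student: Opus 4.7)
\medskip

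The plan is to combine the theorem just proved (which yields the correct norm bound on the completed Hardy spaces) with the identification of $\IH^q_{DB}$ as $H^q_{DB} \cap L^2$, the latter being available precisely because both $p$ and $q$ lie in the coincidence interval. The only real content to check is that $e^{-t|DB|}f$ belongs to $L^2$ when $f\in H^p_{DB}$, because then it automatically sits inside $\IH^q_{DB}$.

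First, since $p$ and $q$ both belong to the interval where $\IH^\bullet_{DB}=\IH^\bullet_{D}$, Proposition~\ref{cor:hpdb} gives $H^p_{DB}=H^p_{D}$ and $H^q_{DB}=H^q_{D}$, and Proposition~\ref{prop:inclusion} yields the key identification
\[
\IH^q_{DB}=H^q_{DB}\cap L^2.
\]
Next, the theorem above (first line, on the completed spaces, which always holds) produces a bounded extension
\[
e^{-t|DB|}:H^p_{DB}\longrightarrow H^q_{DB}\quad\text{with}\quad \|e^{-t|DB|}\|\lesssim t^{-(\frac{n}{p}-\frac{n}{q})}.
\]
So the norm estimate in the stated range is already in hand; the only thing left is to argue that the values of this extension actually lie in the pre-Hardy space $\IH^q_{DB}$.

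For that I would use the semigroup property and the assumption $p\le 2$. Write $e^{-t|DB|}=e^{-\frac{t}{2}|DB|}\circ e^{-\frac{t}{2}|DB|}$ and fix $f\in H^p_{DB}$. Applying the theorem with target exponent $2$ (permitted since $p\le 2$),
\[
e^{-\frac{t}{2}|DB|}f\in H^2_{DB}=\IH^2_{DB}\subset L^2,
\]
with a bound $\lesssim t^{-(\frac{n}{p}-\frac{n}{2})}\|f\|_{H^p_{DB}}$. Then the second factor $e^{-\frac{t}{2}|DB|}$ acts boundedly on $L^2$ (by the ordinary $L^2$ functional calculus for $DB$), so
\[
e^{-t|DB|}f\in L^2.
\]
Combined with $e^{-t|DB|}f\in H^q_{DB}$ from the first step, this places $e^{-t|DB|}f$ in $H^q_{DB}\cap L^2=\IH^q_{DB}$, as wanted, and the operator norm is controlled by the $H^p_{DB}\to H^q_{DB}$ bound.

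The only ``obstacle'' worth naming is conceptual: the proof of the preceding theorem does not directly give the pre-Hardy mapping for $p<q\le 1$. The corollary bypasses this by factoring through the $L^2$ space via the semigroup law, which is possible precisely because we assume $p\le 2$; this is what forces that hypothesis into the statement.
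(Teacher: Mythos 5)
Your proposal is correct and follows essentially the same route as the paper: the regularization theorem gives the $H^p_{DB}\to H^q_{DB}$ bound, the hypothesis $p\le 2$ is used to land in $L^2$, and Proposition \ref{prop:inclusion} identifies $H^q_{DB}\cap L^2=\IH^q_{DB}$. The only (harmless) difference is that you factor $e^{-t|DB|}=e^{-\frac{t}{2}|DB|}\circ e^{-\frac{t}{2}|DB|}$ to reach $L^2$, whereas the paper simply applies the same theorem directly with target exponent $2$ to conclude $e^{-t|DB|}$ maps $H^p_{DB}$ into $H^2_{DB}\subset L^2$.
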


\begin{proof} 
From the previous theorem, the semigroup $e^{-t|DB|}$  extends to a bounded operator from   $H^p_{DB}$ to $H^q_{DB}$ with the desired bound as $p\le q$ and it also maps $H^p_{DB}$ to $H^2_{DB}\subset L^2$  as $p\le 2$.   By Proposition \ref{prop:inclusion}, we have that $H^q_{DB}\cap L^2=  \IH^q_{DB}$ for $q$ in the prescribed interval and the result follows. 
\end{proof}

\section{Non-tangential maximal estimates}\label{sec:ntmax}

In this section, we establish the following results for $\tN$ defined in \eqref{eq:KP}. 

\begin{thm}\label{thm:ntdb} Let $(a,p_{+}(DB))$ be an interval with $a>\frac{n}{n+1}$ on which 
$\IH^p_{DB}=\IH^p_{D}$ with equivalence of norms. Then for $p\in (a, (p_{+})^*)$, we have 
$ \|\tN(e^{-t|DB|}h) \|_{p} \sim  \|h\|_{p}$ for all $ h\in \clos{\ran_{2}(D)}$  if $p>1$.  If $p\le 1$, we have $ \|\tN(e^{-t|DB|}h) \|_{p} \sim  \|h\|_{H^p}$ for all $ h\in \clos{\ran_{2}(D)}$ and $B$ pointwise accretive, or  for all $ h\in \IH^{2,\pm}_{DB}$. This applies for $a=(p_{-}(DB))_{*}$. 
\end{thm}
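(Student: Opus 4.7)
My plan is to treat the upper and lower bounds separately, and within each, to split into subranges of $p$, since the case $p \le 2$ is abstract while $p > 2$ requires using the concrete $L^p$ calculus of $DB$.

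For the upper bound $\|\tN(e^{-t|DB|}h)\|_p \lesssim \|h\|_{H^p}$, I would first apply Lemma \ref{lem:upperp<2} in the range $a < p \le 2$. This directly yields $\|\tN(e^{-t|DB|}h)\|_p \lesssim \|h\|_{\IH^p_{DB}}$, and the hypothesis $\IH^p_{DB} = \IH^p_D$ combined with Theorem \ref{thm:hpd} identifies the right side with $\|h\|_{H^p}$ (and with $\|h\|_p$ when $p > 1$). For $2 < p < p_{+}(DB)$, I would  exploit the $L^p$ off-diagonal decay of Lemma \ref{lem:oddLp}: writing $h = \sum_k h \cdot 1_{S_k(B)}$ on the annuli of a Whitney ball $B$ and applying an $L^r$-to-$L^2$ estimate with $r \in (p_-, 2)$, standard summation yields the pointwise bound $\tN(e^{-t|DB|}h)(x) \lesssim M_r(h)(x)$, which by Hardy-Littlewood gives the claim since $r < p$.

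The delicate range is $p_{+} \le p < (p_{+})^*$, where $DB$ is no longer bisectorial on the target $L^p$. Here I would fix $q < p_{+}$ close to $p_{+}$, use that $h = \IP h \in L^p$ (since $h \in \IH^p_{DB} = \IH^p_D$), and represent $e^{-s|DB|}$ via the Cauchy integral against $\psi_\alpha \in \Psi_\sigma^\tau(S_\mu)$ as in the proof of Lemma \ref{lem:sfpinfty}, together with the global $L^p$-to-$L^q$ decay \eqref{eq:odnpsipqglobal}. On each Whitney region $W(t,x)$ the local $L^2$ average of $e^{-s|DB|}h$ is controlled by a convergent series (indexed by the annuli around $B(x,c_1 t)$) of terms $2^{-k(\sigma c)} M_r(h)(x)$ with exponent $\sigma > n/p - n/p_{+}$, which is precisely the Sobolev gain that fails when $p \ge (p_{+})^*$. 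This gives $\tN(e^{-t|DB|}h) \lesssim M_r(h)$ for some $r \in (p_-, p)$, and $L^p$-boundedness of $M_r$ closes the range.

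For the lower bound $\|h\|_{H^p} \lesssim \|\tN(e^{-t|DB|}h)\|_p$, I would use the strong continuity of $(e^{-s|DB|})$ on $\IH^p_{DB}$ from Proposition \ref{prop:stronglimit}, which combined with the identification $\IH^p_{DB} = \IH^p_D$ gives strong $H^p$-convergence $e^{-s|DB|}h \to h$ as $s \to 0$. For $p > 1$, this strong $L^p$ convergence, together with a Fatou argument on Whitney averages, yields $|h(x)| \le \liminf_{s \to 0}(\bariint_{W(s,x)}|e^{-r|DB|}h|^2)^{1/2} \le \tN(e^{-t|DB|}h)(x)$ a.e., giving the inequality after integration. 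For $p \le 1$, the same scheme works once one knows $h$ is a tempered distribution whose classical Hardy space norm is dominated by $\tN$: the assumption that $B$ is pointwise accretive (hence $\IP$ is well behaved on atomic pieces), or that $h \in \IH^{2,\pm}_{DB}$ (so that one may use the analytic semigroup extension in the sector), ensures exactly this.

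The main obstacle I expect is the range $p_{+} \le p < (p_{+})^*$: this is where we cannot simply invoke boundedness of $DB$ on $L^p$, and the entire argument must pass through the intrinsic Hardy space identification together with the precise $L^p$-to-$L^q$ decay of vanishing-moment functions of $DB$, with the Sobolev threshold $(p_{+})^*$ appearing as the natural limit of what the off-diagonal decay can absorb. A secondary subtlety is the lower bound for $p \le 1$, where convergence must be upgraded from $\IH^p_{DB}$ to $H^p$ in the sense of tempered distributions; the two alternative hypotheses on $B$ or on $h$ are designed to make this upgrade work.
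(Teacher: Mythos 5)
Your treatment of the range $a<p\le 2$ (Lemma \ref{lem:upperp<2} plus the identification $\IH^p_{DB}=\IH^p_{D}$) matches the paper, but your upper bound for $p>2$ has a genuine gap. You claim a pointwise bound $\tN(e^{-t|DB|}h)\lesssim \MM_{r}(h)$ obtained by applying off-diagonal estimates directly to the semigroup on annuli. The tools you cite do not give this: Lemma \ref{lem:oddLp} concerns the resolvent and is $L^p\to L^p$, while the $L^r$--$L^2$ smoothing estimates \eqref{eq:odnpsipq} require the symbol to vanish at $0$ ($\sigma>0$), which $e^{-\modz}$ does not; its $L^2$ off-diagonal decay is of limited order, so the annular summation does not close. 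Moreover, if such a pointwise maximal bound held, the upper estimate would follow for every $p>r$ with no restriction, and the Sobolev threshold $(p_{+})^*$ could not appear; the constraint $\sigma>\frac{n}{p}-\frac{n}{p_{+}}$ in Corollary \ref{cor:psi1} acts at the level of tent-space norms (integrated in $t$), not pointwise. The paper's route (Proposition \ref{prop:upperp>2}) is to split $e^{-\modz}=(1+iz)^{-1}+\psi(z)$ with $\psi\in\Psi_{1}^{1}(S_{\mu})$: the resolvent part is dominated pointwise by $\MM_{2}(|h|)$ using the arbitrary-order $L^2$ off-diagonal decay of Lemma \ref{lem:odd}, while the $\psi$-part is estimated in $T^p_{2}$ via Corollary \ref{cor:psi1} (this is exactly where $p<(p_{+})^*$ enters) and then converted to $\tN$ by the norm inequality $\|\tN(\psi(tT)h)\|_{p}\lesssim\|\psi(tT)h\|_{T^p_{2}}$ valid for $p>2$.

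Your lower bound is also incomplete. For $p>1$, strong convergence $e^{-s|DB|}h\to h$ in $L^p$ plus Fatou does not yield $|h(x)|\le \tN(e^{-t|DB|}h)(x)$ a.e.; one needs the almost everywhere convergence of the solid Whitney averages \eqref{eq:CVae}, which the paper proves separately (Theorem \ref{thm:NTmaxandaeCV}) by splitting off the resolvent, using quadratic estimates for the $\psi$-part and off-diagonal decay plus Lebesgue differentiation for the resolvent part; note also that the paper's lower bound uses no Hardy-space identification hypothesis at all. For $p\le 1$ your argument is essentially absent, and the stated role of pointwise accretivity (``$\IP$ well behaved on atomic pieces'') is not the actual mechanism: in the paper it is used to prove a Caccioppoli inequality for $F=e^{-t|DB|}h$ on Whitney boxes (Lemma \ref{lem:caccio}), which, combined with auxiliary functions having a vanishing first moment, the Fefferman--Stein grand maximal characterization of $H^p$, and the inequality $\iint_{\reu}|u|\lesssim\|\tN u\|_{\frac{n}{n+1}}$, yields \eqref{eq:Hp}; the alternative hypothesis $h\in\IH^{2,\pm}_{DB}$ is used via $\partial_{t}F=\mp DBF$ to bypass Caccioppoli (Proposition \ref{prop:lowerp<1a}). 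Without some version of these ingredients the case $p\le 1$ does not follow from your scheme.
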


\begin{rem}
We think that the hypothesis of pointwise accretivity is not necessary but we are unable to remove it at this time: this is the only result of this memoir where this hypothesis is used.  Nevertheless, the validity of the equivalence for  $ h\in \IH^{2,\pm}_{DB}$ suffices  for   applications to BVPs. 
\end{rem}

\begin{thm}\label{thm:ntbd} Let $(a,p_{+}(DB^*))$ be an interval with $a\ge 1$ on which 
$\IH^q_{DB^*}=\IH^q_{D}$ with equivalence of norms.
Then for  $1<p<a'$,   we have
$ \|\tN(e^{-t|BD|}\IP h) \|_{p} \sim  \|\IP h\|_{p}$ for all $ h\in \clos{\ran_{2}(BD)}$  if $p \ge 2$ and
$ \|\tN(e^{-t|BD|} h) \|_{p} \sim \|h\|_{p}\sim  \|\IP h\|_{p} $ for all $ h\in \clos{\ran_{2}(BD)}$ if $p_{-}(BD) <p<2$. 
This applies with $a=\max((p_{-}(DB^*))_{*},1)$. 
\end{thm}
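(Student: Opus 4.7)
The proof decomposes into four cases: upper and lower bounds for each of the regimes $p_{-}(BD) < p < 2$ and $2 \le p < a'$.

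The lower bounds in both regimes rest on strong continuity at $t = 0$ of the semigroup together with a Lebesgue differentiation argument. In the range $p_{-}(BD) < p < 2$, we have $\IH^p_{BD} = L^p \cap \clos{\ran_{2}(BD)}$ with equivalent norms (Corollary after Theorem \ref{thm:hpbd}) and $\|\IP h\|_p \sim \|h\|_p$ (Proposition \ref{prop:projectionlp}), so Proposition \ref{prop:stronglimit} gives strong $L^p$-continuity of $(e^{-s|BD|}h)_{s>0}$ for $h$ in this space. Applying Lebesgue differentiation to $(s,y) \mapsto e^{-s|BD|}h(y)$ as a locally $L^2$ function on $\reu$, one obtains $|h(x)|^2 = \lim_{t \to 0^+} \bariint_{W(t,x)} |e^{-s|BD|}h|^2\,ds\,dy$ for a.e. $x$, hence $|h(x)| \le \tN(e^{-t|BD|}h)(x)$ a.e., and integration yields the required lower bound. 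For $2 \le p < a'$, the hypothesis combined with Theorem \ref{thm:duality} gives the isomorphism $\IP : \IH^p_{BD} \to \IH^p_D$, and Corollary \ref{cor:hpbd} furnishes strong continuity of the extended semigroup $(\IP e^{-t|BD|})_{t>0}$ on $\IH^p_D \subset L^p$; the same Lebesgue differentiation argument delivers $\|\IP h\|_p \lesssim \|\tN(e^{-t|BD|}\IP h)\|_p$.

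For the upper bound in the range $p_{-}(BD) < p < 2$, I would apply Lemma \ref{lem:upperp<2} with $T = BD$. The only input needed is the $p = 2$ case, obtainable from a standard local--global decomposition using $L^2$-boundedness of $e^{-s|BD|}$ and the $L^2$ off-diagonal bounds of Lemma \ref{lem:odd}, which yields a pointwise bound of $\tN(e^{-t|BD|}h)$ by the $L^2$ Hardy--Littlewood maximal function $\MM_{2} h$. The interpolation argument of Lemma \ref{lem:upperp<2} then extends the estimate to $1 < p < 2$, and combining with $\|h\|_{\IH^p_{BD}} \sim \|h\|_p$ in this range delivers $\|\tN(e^{-t|BD|}h)\|_p \lesssim \|h\|_p$.

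The upper bound in the range $2 \le p < a'$ is where I expect the main difficulty, because when $p \ge p_{+}(BD)$ the semigroup $(e^{-t|BD|})_{t>0}$ is not a bounded family on $L^p$. My plan is to work with the extended semigroup $\IP e^{-t|BD|}$ on $\IH^p_D$ from Corollary \ref{cor:hpbd}, and to combine a Calder\'on reproducing formula (Proposition \ref{prop:calderon}) with the square-function identification $\|\Qpsi \psi {BD} h\|_{T^p_{2}} \sim \|\IP h\|_p$ from Theorem \ref{thm:goodhpbd} for a suitable $\psi \in \mR^{1}_{\sigma}(S_{\mu}) \cap \Psi_{1}^\tau(S_{\mu})$. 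Coupling this with the $L^p$--$L^2$ off-diagonal bounds of Section \ref{sec:lplq} reduces the non-tangential estimate to a tent-space bound accessible by the general Hardy-space theory. The chief obstacle is to correctly interpret $e^{-t|BD|}\IP h$ beyond $p = p_{+}(BD)$, where $h$ itself need no longer lie in $L^p$: following Corollary \ref{cor:hpbd} one takes the action via the extended semigroup $\IP e^{-t|BD|}\IP h$, and the time-independent ``null-space tail'' $\IP h - h \in \nul_{2}(D)$ appearing in $e^{-t|BD|}\IP h - \IP e^{-t|BD|}\IP h$ is absorbed by passing to the Hardy-space completion $H^p_{BD} = H^p_D$ of Section \ref{sec:completions}.
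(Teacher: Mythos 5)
Your reduction of the lower bounds to ``strong continuity at $t=0$ plus Lebesgue differentiation'' does not work as stated. Lebesgue differentiation of the locally $L^2$ function $(s,y)\mapsto e^{-s|BD|}h(y)$ concerns interior points of $\reu$, whereas the Whitney boxes $W(t,x)$ shrink to the boundary point $(0,x)$, and norm continuity of the semigroup at $t=0$ (in $L^p$ or in $\IH^p_{D}$) does not yield almost everywhere convergence of Whitney averages. What is actually needed is the a.e.\ convergence \eqref{eq:CVae}--\eqref{eq:CVaew} of Theorem \ref{thm:NTmaxandaeCV}, whose proof is not a routine differentiation argument: one splits $e^{-s|BD|}=\psi(sBD)+(I+isBD)^{-1}$ with $\psi(z)=e^{-\modz}-(1+iz)^{-1}$, kills the first piece a.e.\ by the quadratic estimate, and for the resolvent piece uses that $(I+isBD)^{-1}$ fixes constants (since $Dc=0$), so one may subtract $h(x_0)$ and invoke the off-diagonal decay of Lemma \ref{lem:odd} together with Lebesgue points and the maximal function. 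Once \eqref{eq:controlae}, i.e.\ $|h|\le \tN(e^{-t|BD|}h)$ a.e.\ for every $h\in L^2$, is in hand, integration gives all lower bounds for $1<p<\infty$ (Proposition \ref{prop:lowerp>1}); in particular your appeal to Corollary \ref{cor:hpbd} is superfluous there, while the genuinely needed a.e.\ statement is left unproved.

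The upper bounds contain two further gaps. First, your $p=2$ input is not available by the route you describe: Lemma \ref{lem:odd} is an estimate for resolvents, whereas the Poisson-type semigroup corresponds to $\phi(z)=e^{-\modz}$, which has no vanishing at $z=0$, so \eqref{eq:odnpsipq} gives essentially no off-diagonal decay and there is no pointwise bound $\tN(e^{-t|BD|}h)\lesssim \MM_{2}h$; if there were, $\tN(e^{-t|BD|}\cdot)$ would be $L^p$-bounded for every $p>2$, erasing precisely the obstruction at $p\ge p_{+}(BD)$ that makes the theorem delicate (cf.\ Remark \ref{rem:ntbd}). The true $p=2$ estimate is Theorem \ref{thm:NTmaxandaeCV}: one approximates $e^{-\modz}$ on each sector by $\phi_{\pm}\in\mR^{2}(S_{\mu})$, bounds the differences by square functions, and treats $\phi_{\pm}(tDB)h^\pm$ by writing $h^\pm=Dv^\pm$ and using local coercivity (Lemma \ref{lem:localcoerc}) and Poincar\'e; the $BD$ case then follows by conjugation with $B$. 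Second, for $2\le p<a'$, and especially $p\ge (p_{+}(DB))^*$, the plan ``Calder\'on reproducing formula reduces the non-tangential estimate to a tent-space bound'' cannot apply to the semigroup itself: $(t,x)\mapsto e^{-t|BD|}\IP h$ is not in $T^p_{2}$, again because $e^{-\modz}$ does not vanish at $0$. The paper's proof instead decomposes $e^{-t|BD|}\IP h=\psi_{+}(tBD)\IP h+\psi_{-}(tBD)\IP h+\phi_{+}(tBD)\IP h^{+}+\phi_{-}(tBD)\IP h^{-}$ with $h^{\pm}=\chi^{\pm}(BD)h$ and $\phi_{\pm}\in\mR^{2}(S_{\mu})$: the $\psi_{\pm}$ terms are handled by the pointwise bound $\tN\lesssim \SF$ and Lemma \ref{lem:sfpinfty}, the $\phi_{\pm}$ terms (sums of squared resolvents) by a genuine $\MM_{2}$ bound, and one must use $\|\IP\chi^{\pm}(BD)h\|_{p}\lesssim\|\IP h\|_{p}$, which is exactly where the hypothesis $\IH^q_{DB^*}=\IH^q_{D}$ enters through duality (Theorem \ref{thm:duality}, Corollary \ref{cor:hpbd0}). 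Your sketch never isolates this spectral-projection estimate, which is the crux of the range $p\ge(p_{+}(DB))^*$; the worry about interpreting $e^{-t|BD|}\IP h$ is, by contrast, immaterial since $h\in\clos{\ran_{2}(BD)}$ keeps everything at the $L^2$ level.
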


\begin{rem}  The inequality  $ \|\tN(e^{-t|T|} h) \|_{p} \lesssim \|h\|_{\IH^p_{T}} $ holds for
$0<p\le 2$ when $ h\in \clos{\ran_{2}(T)}$ for $T=BD$ or $DB$ thanks to Lemma \ref{lem:upperp<2} and the equivalence at $p=2$ (which we prove next).
\end{rem}

\begin{rem}\label{rem:ntbd}
We shall also prove  $ \|\tN(e^{-t|BD|}h) \|_{p} \lesssim    \|h\|_{p}$ for $2<p<(p_{+}(BD))^*$ and
$h\in L^2$, hence in particular $h\in \clos{\ran_{2}(BD)}$. But if $p\ge p_{+}(BD)$ the right hand side is not equivalent to the $\IH^p_{BD}$ norm, while $\|\IP h\|_{p}$ is. This is why we have to insert $\IP$ in   Theorem \ref{thm:ntbd}.
\end{rem}

\begin{rem}  Note that the result  in Theorem \ref{thm:ntbd} for $p<2$ sounds different.   Let the  $r$ variant of $\tN$   be defined as 
 $$
  \tN^r(g)(x):= \sup_{t>0}  \bigg(\bariint_{W(t,x)} |g|^r\bigg)^{1/r}, \qquad x\in \R^n,
$$
so that $\tN^2=\tN$. In fact, one can only prove $\|\tN^r(e^{-t|BD|}\IP h) \|_{p}
\sim  \|\IP h\|_{p}$ for all $ h\in \clos{\ran_{2}(BD)}$ with  $r<p$ if $p<2$. And this is sharp since, as 
$e^{-t|BD|}\IP h- e^{-t|BD|} h= \IP h - h$ for all $t>0$,  $\tN^r(e^{-t|BD|}\IP h-e^{-t|BD|} h) \sim \MM_{r}(\IP h -h)$ and $\MM_{r}$ is not bounded on $L^p$ if $p\le r$. 
\end{rem}

\begin{rem}
We thank M. Mourgoglou for pointing out to us that the results in this section hold with the non-tangential maximal function on Whitney regions  replaced by the non-tangential maximal function on slices $$
 \sup_{t>0}  \bigg(\barint_{B(x,c_{1}t)} |e^{-t|T|}h|^2\bigg)^{1/2}, \qquad x\in \R^n.
$$ For the lower bounds, this is trivial as there is a pointwise domination of $\tN$ by the latter. For the upper bounds, the arguments need some adjustements. The main one is to go from
the  integral on slices $\barint_{B(x,c_{1}t)} |\psi(tT)h|^2$  to a solid integral on a Whitney region in order to use square function estimates. This can be done using the method of  proof of Proposition 2.1 in \cite{AAAHK}, up to using 2 different $\psi$, which is not a problem. We skip details. 
\end{rem}

\begin{rem} All the results of this section concerning $T=DB$ are valid with 
 $e^{-s|T|}$ replaced $\varphi(sT)$ where $\varphi\in H^\infty (S_{\mu})$ with $|\varphi(z)| \lesssim |z|^{-\alpha}$, $|\varphi(z)-\varphi(0)| \lesssim  |z|^\alpha$ for some $\alpha>0$. It suffices to write 
 $\varphi(z)=\varphi(0) e^{-\modz} + \psi(z)$. Concerning $T=BD$, all results hold in the range 
 $p_{-}(BD)<p<(p_{+}(BD))^*$ for such $\varphi$. For $p\ge (p_{+}(BD))^*$, we also impose 
 $\varphi\in \mR^2_{\sigma}$  for $\sigma$ large enough. 
\end{rem}

\subsection{$L^2$ estimates and Fatou type results}

\begin{thm}\label{thm:NTmaxandaeCV} Let $T=DB$ or $BD$. 
Then one has the estimate
\begin{equation}
\label{eq:Ntmax}
 \|\tN(e^{-t|T|}h) \|_{2} \sim \|h\|_{2} , \ \forall h\in \clos{\ran_{2}(T)}.
\end{equation}
Furthermore, for any $h\in L^2$ (not just $\clos{\ran_{2}(T)}$), we have that the Whitney averages of $e^{-t|T|}h$ converge to $h$ in $L^2$ sense, that is for  almost every $x_{0}\in \R^n$,
\begin{equation}
\label{eq:CVae}
\lim_{t\to 0}\ \bariint_{W(t,x_{0})} |e^{-s|T|}h-h(x_{0})|^2 =0.
\end{equation}
In particular, this implies the  almost everywhere convergence of Whitney averages
\begin{equation}
\label{eq:CVaew}
\lim_{t\to 0}\ \bariint_{W(t,x_{0})} e^{-s|T|}h =h(x_{0}).
\end{equation}
\end{thm}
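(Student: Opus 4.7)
The plan is to first establish the $L^2$ upper bound on $\tN(e^{-t|T|}h)$, then prove the a.e.\ Whitney-average convergence \eqref{eq:CVae}, and finally deduce the lower bound of \eqref{eq:Ntmax} by Fatou.

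For the upper bound, valid for all $h \in L^2$, I would exploit the fact that the semigroup $e^{-s|T|}$ inherits polynomial $L^2$ off-diagonal decay of arbitrary order from the resolvent bounds of Lemma~\ref{lem:odd} via the standard Cauchy integral representation along a contour in $\C \setminus S_{\omega+}$. Fix $x \in \R^n$ and $t > 0$, split $h = h\mathbf{1}_{B(x,2c_1t)} + \sum_{k \ge 1} h\mathbf{1}_{A_k}$ with $A_k := B(x, 2^{k+1} c_1 t) \setminus B(x, 2^k c_1 t)$. The local $L^2$-boundedness of $e^{-s|T|}$ controls the average over $B(x,c_1 t)$ of the first term by $\MM(|h|^2)(x)$, while off-diagonal decay of order $N>n/2$ controls the annular contributions by a summable series in $k$, also bounded by $\MM(|h|^2)(x)$. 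Taking the supremum in $t$ yields the pointwise estimate $\tN(e^{-t|T|}h)(x) \lesssim (\MM|h|^2)(x)^{1/2}$, and the $L^2$-boundedness of $\MM$ gives $\|\tN(e^{-t|T|}h)\|_2 \lesssim \|h\|_2$.

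For \eqref{eq:CVae}, the triangle inequality reduces matters to two pieces: $\bariint_{W(t,x_0)} |h - h(x_0)|^2$, which vanishes a.e.\ as $t \to 0$ by Lebesgue differentiation, and $\bariint_{W(t,x_0)} |e^{-s|T|}h - h|^2$, which I would show vanishes a.e.\ by a density argument. Define
\[
\tN_\flat(h)(x) := \limsup_{t \to 0}\, \Bigl( \bariint_{W(t,x)} |e^{-s|T|}h(y) - h(y)|^2\, dy\, ds \Bigr)^{1/2}.
\]
The key observation is the pointwise bound $\tN_\flat(h)(x) \le \tN(e^{-t|T|}h)(x) + C(\MM|h|^2)(x)^{1/2}$, so $\tN_\flat : L^2 \to L^2$ is bounded. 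By density in $L^2$, it suffices to check $\tN_\flat(h) \equiv 0$ a.e.\ on a dense subset. Using the kernel/range splitting $L^2 = \nul_{2}(T) \oplus \clos{\ran_{2}(T)}$, the case $h \in \nul_{2}(T)$ is trivial since $e^{-s|T|}h = h$. On $\clos{\ran_{2}(T)}$ I would take the dense subclass $\{ e^{-r|T|}g : r > 0,\ g \in \clos{\ran_{2}(T)} \}$ (which is dense by Proposition~\ref{prop:stronglimit}) and use the identity $e^{-s|T|}(e^{-r|T|}g) - e^{-r|T|}g = -\int_0^s |T|e^{-(r+\tau)|T|}g\, d\tau$ together with the analytic semigroup bound $\||T|e^{-(r+\tau)|T|}g\|_2 \lesssim r^{-1}\|g\|_2$; localizing $g$ to suitable balls around $x_0$ via off-diagonal estimates on $|T|e^{-(r+\tau)|T|}$ yields a pointwise estimate $\bariint_{W(t,x_0)}|e^{-s|T|}h-h|^2 \lesssim (t/r)^2\,\MM(|g|^2)(x_0) + (\text{rapidly decaying tail})$, which vanishes as $t \to 0$ at almost every $x_0$.

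Armed with \eqref{eq:CVae}, the lower bound in \eqref{eq:Ntmax} is immediate: for $h \in \clos{\ran_{2}(T)}$, at every $x_0$ at which \eqref{eq:CVae} holds we have $|h(x_0)|^2 \le \liminf_{t \to 0} \bariint_{W(t,x_0)} |e^{-s|T|}h|^2 \le \tN(e^{-t|T|}h)(x_0)^2$, and integration gives $\|h\|_2 \le \|\tN(e^{-t|T|}h)\|_2$. The weak Whitney-average convergence \eqref{eq:CVaew} follows from \eqref{eq:CVae} by Cauchy--Schwarz. The main obstacle is the density step: strong $L^2$ convergence $e^{-s|T|}h \to h$ does not directly imply a.e.\ convergence of Whitney averages, since the $L^2$-rate $O(s)$ is not uniform when one averages over a ball of volume $t^n$; the solution is to use the off-diagonal localization to the relevant balls around $x_0$ to obtain the correct spatial scaling.
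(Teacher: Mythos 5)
Your argument has a genuine gap at its foundation: the claim that $e^{-s|T|}$ inherits $L^2$ off-diagonal decay \emph{of arbitrary polynomial order} from the resolvent bounds of Lemma \ref{lem:odd} is false for these Poisson-type semigroups. Off-diagonal decay via the functional calculus is governed by the order of vanishing of the symbol at $0$ (see \eqref{eq:odnpsipq}): the resolvent itself has arbitrary-order decay, but $e^{-\modz}$ does not vanish at $0$, and after subtracting the resolvent the remainder $\psi(z)=e^{-\modz}-(1+iz)^{-1}$ lies only in $\Psi_{1}^1(S_{\mu})$, so $\psi(tT)$ has decay of order roughly $1$; one cannot improve this by subtracting better rational approximants on each half-sector, because recombining them requires the non-local projections $\chi^{\pm}(T)$. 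With only order-$1$ decay, your annular sum produces $\sum_k 2^{-k}2^{kn/2}$, which diverges for $n\ge 2$, so the pointwise bound $\tN(e^{-t|T|}h)\lesssim \MM(|h|^2)^{1/2}$ is not obtained — and indeed it cannot be true in general, since it would give $\|\tN(e^{-t|T|}h)\|_p\lesssim\|h\|_p$ for every $p\in(2,\infty]$, contradicting the restricted ranges $p<(p_{+}(T))^*$ (and the Hardy-space hypotheses) in Theorems \ref{thm:ntdb} and \ref{thm:ntbd}. The same defect propagates into your proof of \eqref{eq:CVae}: both the $L^2$ bound on $\tN_\flat$ and the localization of $|T|e^{-(r+\tau)|T|}g$ on the dense class rely on decay of order $>n/2$ which is not available, so the density argument does not close.

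For comparison, the paper's proof of the upper bound uses that $e^{-\modz}\in\mR^2_{2}(S_{\mu})$: the remainders $\psi_{\pm}=(e^{-\modz}-\phi_{\pm})\chi^{\pm}\in\Psi_{2}^2(S_{\mu})$ are handled by the square function estimates (not by a maximal function), while the rational parts $\phi_{\pm}(tDB)\chi^{\pm}(DB)h$ are treated by writing $h^{\pm}=Dv^{\pm}$, commuting $D$ through, and combining the arbitrary-order decay of resolvent combinations of $BD$ with the local coercivity (Caccioppoli-type) Lemma \ref{lem:localcoerc} and Poincar\'e's inequality to land on $\MM_{p}(\nabla v^{\pm})$ with $p<2$; the case $T=BD$ is then deduced by conjugating with $B$. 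For \eqref{eq:CVae} the paper again splits $e^{-s|T|}=\psi(sT)+(I+isT)^{-1}$: the $\psi$-part vanishes a.e.\ in Whitney averages because the quadratic estimate \eqref{eq:psiT} makes the truncated square function tend to $0$ a.e., and the resolvent part is handled at Lebesgue points using its arbitrary-order decay together with the fact that $(I+isBD)^{-1}$ fixes constants (for $DB$ one additionally passes through $h_{s}=(I+isBD)^{-1}(Bg)$ and Lemma \ref{lem:localcoerc}). Your Fatou deduction of the lower bound from \eqref{eq:CVae} would be fine (the paper's direct averaging argument is even simpler), but as written it rests on the unproved maximal estimate, so the proposal does not constitute a proof.
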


\begin{proof} Let us begin with the non-tangential maximal estimate. 
 The bound from below is easy:
$$
\|h\|_{2}^2 = \lim_{t\to 0}  \frac 1t\int_t^{2t} \| e^{-s|T|}h \|_{2}^2\,  ds \lesssim \| \tN(e^{-s|T|}h) \|_{2}^2.
$$

Next, the bound from above for $T=DB$ is due to \cite[Theorem 5.1]{RosenJEE} (When $D$ has a special form it appeared first in  disguise in \cite{AAH}).   We provide a different proof in the spirit of the decompositions above. It is easy to check that $e^{-\modz} \in \mR^2_{2}(S_{\mu})$: there exist $\phi_{\pm}\in \mR^2(S_{\mu})$ such that $\psi_{\pm}(z):=(e^{-\modz} - \phi_{\pm}(z))\chi^\pm(z) \in \Psi_{2}^2(S_{\mu})$. Thus, $\tN(\psi_{\pm}(tDB) h) \lesssim \SF(\psi_{\pm}(tBD) h)$ and the $L^2$ bound follows from the square function bounds for $DB$. It remains to 
check the $L^2$ bounds for $\tN(\phi_{\pm}(tBD) h^\pm)$ where $h^\pm=\chi^\pm(DB)h$. It suffices to do it for $h\in \ran_{2}(D)$ by density. Thus $h^\pm\in \ran_{2}
(D)$ and there exist $v^\pm\in \dom_{2}(D)\cap \clos{\ran_{2}(D)}$ such that $h^\pm=Dv^\pm$, and  we can write 
$$\phi_{\pm}(tDB) h^\pm= D\phi_{\pm}(tBD) (v^\pm-c^\pm),$$ where $c^\pm$ is any constant. Fix a Whitney region $W(t,x)=(c_{0}^{-1}t, c_{0}t)\times B(x,c_{1}t)$, choose $c^\pm$ as the average of $v^\pm$ on the ball  $B(x,c_{1}t)$.  Using the local coercivity estimate \eqref{eq:localcoerc}, we have, with a slightly  enlarged Whitney region $\wt W(t,x)$ in the right hand side, 
\begin{align*}
\bariint_{W(t,x) }|\phi_{\pm}(tDB) h^\pm|^2 & \lesssim  \bariint_{\wt W(t,x) }|BD\phi_{\pm}(tBD) (v^\pm-c^\pm)|^2 \\ 
   & \qquad + t^{-2} \bariint_{\wt W(t,x) }|\phi_{\pm}(tDB) (v^\pm-c^\pm)|^2. 
   \end{align*}
As $\phi_{\pm}\in \mR^2(S_{\mu})$,   $z\phi^\pm$ and $\phi^\pm$ have $L^2$ off-diagonal decay  with decay as large as one wants, using the usual analysis in annuli and  Poincar\'e inequality  for $\frac{2n}{n+2}\le p< 2$ and $p\ge 1$, we obtain
$$
\bigg(\bariint_{W(t,x) }|\phi_{\pm}(tDB) h^\pm|^2\bigg)^{1/2} \lesssim \MM_{p}(\nabla v^\pm)(x).
$$
Thus, the $L^2$ norm of $ \tN(\phi_{\pm}(tDB) h^\pm)$ is controlled by $\|\nabla v^\pm\|_{2}$ and we use the coercivity of $D$ on $ \dom_{2}(D)\cap \clos{\ran_{2}(D)}$ to get a bound $\|Dv^\pm\|_{2}=\|h^\pm\|_{2}\lesssim \|h\|_{2}$.  The proof for $DB$ is complete.

The proof for $T=BD$ follows from the result for $DB$:  If $g\in \clos{\ran_{2}(BD)}$, then $B^{-1}g=h\in \clos{\ran_{2}(DB)}$  with $\|h\|_{2} \sim \|g\|_{2}$ and  $e^{-t|BD|}g=B e^{-t|DB|}h$. Thus
$$
 \| \tN(e^{-t|BD|}g) \|_{2} =  \| \tN(B e^{-t|DB|}h) \|_{2} \leq \|B\|_{\infty}  \| \tN( e^{-t|DB|}h) \|_{2}  \sim \|h\|_{2}\sim \|g\|_{2}. 
$$

It remains to show the  almost everywhere convergence result.   We begin with $BD$. Let $h\in L^2$. 
Pick $x_{0}$ a Lebesgue point for the condition
\begin{equation}
\lim_{t\to 0}\ \barint_{B(x_{0},t)} |h-h(x_{0})|^2=0. \end{equation}
Write as above, $e^{-s|BD|}h=\psi(sBD)h + (I+isBD)^{-1}h$ with $\psi(z)=e^{-\modz}- (1+iz)^{-1}$. The quadratic estimate  \eqref{eq:psiT} implies that 
$$
\lim_{t\to 0}\ \bariint_{W(t,x_{0})} | \psi(sBD)h|^2  =0
$$
for almost every $x_{0}\in \R^n$. Now the key point is that $Dc= 0$ if $c$ is a constant, thus $(I+isBD)^{-1} [h(x_{0})]= h(x_{0}).$  It follows that 
$$
 (I+isBD)^{-1}h -h(x_{0})= (I+isBD)^{-1}(h -h(x_{0})) $$
so that  for arbitrarily large $N$, 
\begin{equation}
\label{eq:NTbound}
 \bariint_{W(t,x_{0})}  | (I+isBD)^{-1} (h -h(x_{0}))|^2
\lesssim  \sum_{j\ge 1} 2^{-jN}  t^{-n} \int_{B(x_{0}, 2^j t)}  | h -h(x_{0}) |^2 .
\end{equation}
Breaking the sum at $j_{0}$ with $2^{-j_{0}}\sim \sqrt t$ and choosing $N\ge n+1$,  we obtain a bound
$$
 \sup_{\tau\le \sqrt t}\ \barint_{B(x_{0},\tau)} |h-h(x_{0})|^2 + \sqrt t  \, \MM(|h-h(x_{0})|^2)(x_{0}),
 $$
where $\MM$ is the Hardy-Littlewood maximal function. Using  the weak type (1,1) of $\MM$, almost every $x_{0}\in \R^n$ satisfy $\MM(|h|^2)(x_{0})<\infty$.  Hence,  the latter expression goes to 0  as $t\to 0$ at those $x_{0}$ meeting all the requirements.

We turn to the proof for $T=DB$. Let $g\in L^2$. If $g\in \nul_{2}(DB)$, this is a consequence of the Lebesgue differentiation theorem  on $\R^n$ as $e^{-s|DB|}g=g$ is independent of $s$. 
We assume next that $g\in \clos{\ran_{2}(DB)}$. As 
$$
\lim_{t\to 0}\ \bariint_{W(t,x_{0})} |g-g(x_{0})|^2= \lim_{t\to 0}\ \barint_{B(x_{0},c_{1}t)} |g-g(x_{0})|^2 = 0$$
for almost every $x_{0}\in \R^n$,
 it is enough to show the almost everywhere limit
$$
\lim_{t\to 0}\ \bariint_{W(t,x_{0})} |e^{-s|DB|}g-g|^2 =0.
$$
We also choose $x_{0}$ so that 
$$
 \lim_{t\to 0}\ \barint_{B(x_{0},c_{1}t)} |Bg-(Bg)(x_{0})|^2 = 0.$$
Write again
$e^{-s|DB|}g-g= \psi(sDB)g + (I+isDB)^{-1}g-g$. The quadratic estimate  \eqref{eq:psiT} implies that 
$$
\lim_{t\to 0}\ \bariint_{W(t,x_{0})} | \psi(sDB)g|^2  =0
$$
for almost every $x_{0}\in \R^n$. Now $(I+isDB)^{-1}g-g= -isDh_{s}$ with $ h_{s}=B(I+isDB)^{-1}g= (I+isBD)^{-1}(Bg)$ and $Bg\in L^2$. Let
 $$\tilde h_{s}:= (I+isBD)^{-1}(Bg)-       (Bg)(x_{0})  .  
      $$
      Applying Lemma \ref{lem:localcoerc} to $u=\tilde h_{s}$ using $Dh_{s}=D\tilde h_{s}$ and integrating with respect to $s$ implies
\begin{align*}
\bariint_{W(t,x_{0})} |isDh_{s}|^2   & \lesssim  \bariint_{\widetilde W(t,x_{0})} |isBDh_{s}|^2  +  \bariint_{\widetilde W(t,x_{0})} |\tilde h_{s}|^2  \\
    & \lesssim  \bariint_{\widetilde W(t,x_{0})} |(I+isBD)^{-1}(Bg) -Bg|^2 
    \\
  &  \qquad +  \bariint_{\widetilde W(t,x_{0})} \bigg|Bg- 
      (Bg)(x_{0})    |^2,
\end{align*}
where $\widetilde W(t,x_{0})$ is a slightly expanded version of $W(t,x_{0})$ and, in the last inequality, we have written $\tilde h_{s}= (I+isBD)^{-1}(Bg) -Bg + Bg-       (Bg)(x_{0}). $
The last  two integrals have been shown to converge to $0$ for almost every $x_{0}\in \R^n$ in the argument for $BD$.  This concludes the proof.
\end{proof}

\subsection{Lower bounds for $p\ne2$} 

A first argument follows from the almost everywhere bounds. 

\begin{prop}\label{prop:lowerp>1} Let $T=DB$ or $BD$ and $1<p<\infty$.
Then one has the estimate
\begin{equation}
\label{eq:lowerp>1}
 \|h\|_{p}  \lesssim  \|\tN(e^{-t|T|}h) \|_{p}, \ \forall h\in L^2.
 \end{equation}
\end{prop}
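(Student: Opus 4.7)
The plan is to derive \eqref{eq:lowerp>1} directly from the almost everywhere convergence of Whitney averages already established in Theorem \ref{thm:NTmaxandaeCV}, namely \eqref{eq:CVaew}: for any $h\in L^2$ and almost every $x_0\in \R^n$,
$$
\lim_{t\to 0}\ \bariint_{W(t,x_0)} e^{-s|T|}h = h(x_0).
$$
The passage to a pointwise bound is immediate from Jensen's inequality: for each $t>0$,
$$
\Bigl|\bariint_{W(t,x_0)} e^{-s|T|}h\Bigr| \le \Bigl(\bariint_{W(t,x_0)} |e^{-s|T|}h|^2\Bigr)^{1/2} \le \tN(e^{-t|T|}h)(x_0).
$$
Letting $t\to 0$ on the left side and using \eqref{eq:CVaew} yields $|h(x_0)|\le \tN(e^{-t|T|}h)(x_0)$ for a.e. $x_0$.

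Taking the $L^p$ norm of this pointwise inequality gives \eqref{eq:lowerp>1}, with implicit constant equal to $1$. In particular, no restriction $1<p<\infty$ is needed for the argument itself; the statement is phrased this way only because the interesting content is the reverse (upper) bound, which is not available for all $p$ and is the subject of the remaining work in this section.

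There is essentially no obstacle here: the Fatou-type statement \eqref{eq:CVaew} has already been proved in Theorem \ref{thm:NTmaxandaeCV} for both $T=DB$ and $T=BD$, precisely so that this corollary becomes a one-line consequence via Jensen. The only subtlety worth mentioning is that $\tN(e^{-t|T|}h)$ dominates not only the Whitney average of $|e^{-s|T|}h|^2$ but \emph{a fortiori} the modulus of the unsquared Whitney average, which is what allows the limit to be taken inside the absolute value. No functional calculus or square function machinery is used at this step.
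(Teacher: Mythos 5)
Your proof is correct and is essentially the paper's own argument: the paper likewise deduces the pointwise bound $|h|\le \tN(e^{-t|T|}h)$ a.e. from the Fatou-type limit \eqref{eq:CVaew} and then integrates. Your extra remark about Jensen/Cauchy--Schwarz just makes explicit the step the paper leaves implicit, so there is no substantive difference.
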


\begin{proof} It follows from the almost everywhere limit \eqref{eq:CVaew} that
\begin{equation}
\label{eq:controlae}
|h| \le  \tN(e^{-t|T|}h)
\end{equation}
almost everywhere. It suffices to integrate. 
\end{proof}

Our second result, inspired by an argument found in \cite{HMiMo} in the case of second order divergence form operators, yields the following improvement under a supplementary hypothesis.

\begin{prop}\label{prop:lowerp<1} Assume $B$ is pointwise accretive.  Let $T=DB$  and $\frac{n}{n+1}<p< \infty$. 
Then one has the estimate
\begin{equation}
\label{eq:lowerp<1}
 \|h\|_{H^p}  \lesssim  \|\tN(e^{-t|DB|}h) \|_{p}, \ \forall h\in \clos{\ran_{2}(D)}.
 \end{equation}
\end{prop}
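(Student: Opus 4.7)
For $p>1$, the inequality $\|h\|_{L^p}\lesssim \|\tN(e^{-t|DB|}h)\|_p$ provided by Proposition~\ref{prop:lowerp>1} suffices since $H^p=L^p$. The interesting case is therefore $\frac{n}{n+1}<p\le 1$, where the pointwise accretivity of $B$ enters essentially. My plan is to route through the pre-Hardy space $\IH^p_{DB}$: by Theorem~\ref{thm:hpd}, $\IH^p_D=H^p\cap \clos{\ran_2(D)}$ with equivalent norms, and by Proposition~\ref{cor:lowerbounddbp<2}, $\IH^p_{DB}\subset \IH^p_D$ with continuous inclusion for $\frac{n}{n+1}<p<2$. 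Thus it suffices to prove
\[
   \|h\|_{\IH^p_{DB}} \lesssim \|\tN(e^{-s|DB|}h)\|_p.
\]
The function $\psi(z)=\modz\,e^{-\modz}$ lies in $\Psi_1^\tau(S_\mu)$ for every $\tau>0$ and is therefore allowable for $\IH^p_{DB}$ in the range above, so the left side is equivalent to $\|\SF(\psi(sDB)h)\|_p=\|\SF(s\partial_s u)\|_p$ where $u(s,x):=e^{-s|DB|}h(x)$ and we use $-s\partial_s u = s|DB|e^{-s|DB|}h = \psi(sDB)h$ on $\clos{\ran_2(D)}$.

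The heart of the proof is then the square-function/non-tangential maximal comparison
\[
   \|\SF(s\partial_s u)\|_p \lesssim \|\tN(u)\|_p.
\]
The assumption of pointwise accretivity is exactly what makes this step possible: after splitting $h=h^++h^-$ along the spectral subspaces $\IH^{2,\pm}_{DB}$, the functions $u^\pm(s,x)=e^{\mp sDB}h^\pm$ solve the first-order boundary system $\partial_s u^\pm=\mp DBu^\pm$ associated to the conormal gradient of a weak solution $U^\pm$ of $LU^\pm=0$ in $\reu$, where now $L$ is built from a matrix $A$ which is \emph{classically} uniformly elliptic (rather than only accretive on $\clos{\ran_2(D)}$). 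Hence the standard Caccioppoli inequality applies to $U^\pm$ (twice), yielding the Whitney-region bound
\[
   \bariint_{W(s,x)}|s\partial_s u|^2 \lesssim \bariint_{\widetilde W(s,x)} |u|^2 \le \tN(u)(x)^2.
\]

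The main obstacle I anticipate is converting this local Caccioppoli bound into the global $L^p$-estimate above when $p\le 1$. For $p>1$ a Fubini-plus-Hardy--Littlewood argument would give it immediately, but for $p\le 1$ one must employ a $T^p_2$ atomic/molecular decomposition, and anchor the boundary behaviour via the a.e.\ Whitney-average convergence $u\to h$ as $s\to 0$ (Theorem~\ref{thm:NTmaxandaeCV}) together with the decay of $u$ at $s=\infty$ (Proposition~\ref{prop:stronglimitinfty}). This is precisely the point where the argument adapted from \cite{HMiMo}, originally formulated for second-order divergence-form operators, becomes essential, now translated to the first-order semigroup framework via the Caccioppoli above.
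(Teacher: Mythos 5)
Your route for $p\le 1$ reduces everything to the inequality $\|\SF(s\partial_s u)\|_p\lesssim\|\tN(u)\|_p$ for $u=e^{-s|DB|}h$, and this is where there is a genuine gap. The Caccioppoli bound you invoke does hold (it is Lemma \ref{lem:caccio}, proved directly from the pointwise accretivity of $B$; no detour through "classical uniform ellipticity" of $A$ is available or needed, and in fact pointwise accretivity of $B=\hat A$ does not translate into pointwise ellipticity of $A$). But it is only a local estimate: it controls the average of $|s\partial_s u|^2$ on one Whitney box by the average of $|u|^2$ on a slightly larger box. Summing these bounds over a Whitney decomposition of the cone gives
\[
\SF(s\partial_s u)(x)^2\ \lesssim\ \iint_{\Gamma'(x)}|u(s,y)|^2\,\frac{ds\,dy}{s^{n+1}},
\]
a square function of $u$ itself with no gain of a factor $s$, and this is \emph{not} dominated by $\tN(u)(x)^2$: the $ds/s$ integral diverges whenever $u$ does not vanish near the boundary, which is the generic situation since $u(s,\cdot)\to h$. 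The cancellation needed to pass from the local Caccioppoli estimate to the global comparison is exactly the hard "square function bounded by non-tangential maximal function" direction, which the paper deliberately does not prove: as stated in its introduction, that converse is known only for real scalar second-order equations (\cite{HKMP1}, via good-lambda/harmonic measure techniques), and nothing of the sort is available for the complex first-order systems here. Neither a $T^p_2$ atomic/molecular decomposition nor the a.e.\ Whitney-average convergence of Theorem \ref{thm:NTmaxandaeCV} and the decay of Proposition \ref{prop:stronglimitinfty} supplies this cancellation; note also that your intermediate claim would yield $\|\tN(e^{-t|DB|}h)\|_p\sim\|h\|_{\IH^p_{DB}}$ for every $p\in(\frac{n}{n+1},1]$ with no hypothesis of the type $\IH^p_{DB}=\IH^p_D$, which is stronger than anything the paper establishes (Theorem \ref{thm:ntdb} needs that identification).

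The paper's actual proof never compares $S$ and $N$. It uses the Fefferman--Stein grand maximal characterization of $H^p$: one tests $h$ against dilated bumps $\phi_r$ and writes $\int h\varphi=-\iint\partial_s(G\Phi)$ over a truncated region, where $G_t=m(tDB)h$ and $\wt G_t=m'(tDB)h$ are built from a weight $\rho$ with $\int\rho=1$ and vanishing first moment, so that $\partial_s G=DB\wt G_s$ and $\tN(G)+\tN(\wt G)\lesssim\tN(F)+\tN(tDBF)$ with $F_t=e^{-t|DB|}h$. Pointwise accretivity enters only through Lemma \ref{lem:caccio}, to control $\tN(tDBF)$ by $\tN(F)$; then an integration by parts in $D$ and the inequality $\iint_{\reu}|u|\lesssim\|\tN u\|_{\frac{n}{n+1}}$ of \cite{HMiMo} give the pointwise bound $\sup_r|h*\phi_r(x)|\lesssim\tN(G)(x)+\MM_{\frac{n}{n+1}}(\tN\wt G)(x)$, from which \eqref{eq:lowerp<1} follows for all $\frac{n}{n+1}<p<\infty$ at once. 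If you want to repair your argument, you should abandon the $S\lesssim N$ step and implement this maximal-function scheme (or supply an independent proof of the $S\lesssim N$ comparison, which would be a result beyond the scope of the paper).
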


There is no corresponding statement for $BD$ for $p\le 1$. It has to do with the cancellations.  Note that we assume \textit{a priori} knowledge for $h$ to make sense of the action of the semigroup. As we shall see, if we only have $B$ accretive on the range on $D$, our argument provides us with the weaker bound
$$
 \|h\|_{H^p}  \lesssim  \|\tN(e^{-t|DB|}h) \|_{p} + \|\tN(e^{-t|DB|}(\sgn (DB) h)) \|_{p}.
 $$
 
We begin with the following  Caccioppoli inequality.

\begin{lem}\label{lem:caccio} Assume $B$ is pointwise accretive. Assume $F,\partial_{t}F, DBF\in L_{loc}^2(\reu; \C^N)$,  and $F$  is  a solution of
\begin{equation}
\label{eq:weak}
\iint \pair {\partial_{t}F}{\partial_{t}G} + \pair {BDBF}{DG} =0, 
\end{equation}
for all compactly supported $G\in L_{loc}^2(\reu; \C^N)$ with  $\partial_{t}G, DG\in L_{loc}^2(\reu; \C^N)$, the inner product being the one of $\C^N$. Then 
\begin{equation}
\label{eq:Caccio}
\bariint_{W(t,x_{0})} |\partial_{t}F|^2 +  \bariint_{W(t,x_{0})} |DBF|^2 \leq \frac {C}{t^2} \bariint_{\wt W(t,x_{0})} |F|^2,
 \end{equation}
 where $W(t,x_{0})$ is a Whitney box and $\wt W(t,x_{0})$ a slightly enlarged Whitney box. The constant $C$ depends on the ratio of enlargements, dimension and accretivity bounds for $B$. 
 In particular this holds for  $F(t,x)=e^{-t|DB|}h(x)$ with $h\in \clos{\ran_{2}(D)}$.

\end{lem}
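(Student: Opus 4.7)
The plan is as follows. First I would observe that the weak equation \eqref{eq:weak} is exactly what one obtains by integration by parts from the pointwise identity $\partial_t^2 F = (DB)^2 F = DBDBF$, combined with the self-adjointness of $D$: indeed, $-\iint \partial_t^2 F\cdot G = -\iint DB(DBF)\cdot G = -\iint B(DBF)\cdot DG$ after moving $D$ across. For $F(t,x)=e^{-t|DB|}h$ with $h\in\clos{\ran_2(D)}$, the identity $\partial_t^2F=|DB|^2F=(DB)^2F$ comes from Proposition~\ref{prop:SFimpliesFC} (namely $|T|^2=T^2$ on $\clos{\ran(T)}$), and the required $L^2_{loc}$ regularity follows from analyticity of the semigroup $e^{-t|DB|}$ on $\clos{\ran_2(D)}$, which gives $\|\partial_t F(t,\cdot)\|_2+\|DBF(t,\cdot)\|_2\lesssim t^{-1}\|h\|_2$ for each $t>0$.

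Next, to prove \eqref{eq:Caccio}, I would use the standard Caccioppoli cut-off argument, adapted to test against $G=\eta^2 BF$ rather than $\eta^2 F$. Here $\eta$ is a scalar smooth cut-off, equal to $1$ on $W(t,x_0)$, supported in $\wt W(t,x_0)$, with $|\partial_t\eta|+|\nabla_x\eta|\lesssim t^{-1}$. Since $B$ is $t$-independent, $\partial_t G = \eta^2 B\partial_t F + 2\eta(\partial_t\eta)BF$; and since $D$ has the form \eqref{eq:D0}, the commutator $[D,\eta^2]$ is pointwise multiplication by a matrix of norm $\lesssim \eta|\nabla\eta|$, so $DG = \eta^2 DBF + [D,\eta^2]BF$. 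Plugging into \eqref{eq:weak} and taking real parts, the principal terms are
\[
\re \iint \eta^2 \pair{\partial_t F}{B\partial_t F} + \re \iint \eta^2 \pair{BDBF}{DBF}.
\]
This is the crucial place where \emph{pointwise} accretivity is used: it gives $\re\pair{Bv}{v}\ge\lambda|v|^2$ for every $v\in\C^N$ a.e., so these two terms are bounded below by $\lambda\iint\eta^2(|\partial_tF|^2+|DBF|^2)$. (Accretivity merely on $\clos{\ran_2(D)}$ would not suffice, since $\eta\cdot DBF$ is not in $\clos{\ran_2(D)}$.) The error terms are $\iint\eta|\partial_t\eta|\,|\partial_tF|\,|BF|$ and $\iint\eta|\nabla\eta|\,|DBF|\,|BF|$, both controlled by Cauchy--Schwarz and Young's inequality: each splits as a small multiple of the principal term plus a term $\lesssim t^{-2}\iint_{\wt W(t,x_0)}|F|^2$. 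Absorbing and dividing by $|W(t,x_0)|\sim t^{n+1}$ yields \eqref{eq:Caccio}.

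I expect the main obstacle to be bookkeeping around the test function $G=\eta^2 BF$: one must verify it has the required regularity ($\partial_t G, DG \in L^2_{loc}$ with compact support) so that it is a legitimate test function in \eqref{eq:weak}, and one must handle the commutator $[D,\eta^2]$ carefully — this is where the structural assumption \eqref{eq:D0} is invoked. The choice to test with $\eta^2 BF$ instead of $\eta^2 F$ is dictated precisely by the need to produce $DBF$ (rather than $DF$) paired against $BDBF$, so that pointwise accretivity can be applied. Once these choices are made, the argument is a routine integration-by-parts and absorption.
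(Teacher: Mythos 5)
Your proof is correct and follows the paper's strategy: the same verification that $F=e^{-t|DB|}h$ satisfies \eqref{eq:weak} via $\partial_t^2F=DBDBF$ and self-adjointness of $D$, and the same Caccioppoli scheme with the test function $G=\chi^2BF$, commutator bounds for $[D,\chi^2]$, and absorption. The one place you diverge is the $DBF$ term: you apply pointwise accretivity directly to $\re\pair{BDBF}{DBF}$, which is legitimate under the lemma's hypothesis and shortens the argument, whereas the paper deliberately estimates that term using only accretivity of $B$ on $\clos{\ran_{2}(D)}$, by writing $\kappa\iint|\chi DBF|^2\lesssim \kappa\iint|D(\chi BF)|^2+Ct^{-2}\iint|F|^2$ and expanding $D(\chi BF)=D_\chi BF+\chi DBF$ so that only $\re\iint\pair{BD(\chi BF)}{D(\chi BF)}$ (an element of the range of $D$) needs accretivity; this extra work is what powers the remark following the lemma, where the weaker bound involving $\sgn(DB)h$ is obtained without pointwise accretivity. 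Accordingly, your parenthetical claim that range-accretivity ``would not suffice'' is too strong for the $DBF$ term -- it is the $\partial_tF$ term where pointwise accretivity is genuinely needed -- but this does not affect the validity of your proof of the stated lemma.
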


\begin{proof} Let us begin with the end of the statement. If $h\in \clos{\ran_{2}(D)}$, then by semigroup theory, for fixed $t$, $F$ and $\partial_{t}F$ are in $L^2$, as well as $DBF= -\sgn(DB) \partial_{t}F$ using the $H^\infty$-calculus. Now we remark that $F$ satisfies the equation $\partial_{t}^2F =DBDBF$ because $|DB|^2=DBDB$. Thus using the self-adjointness of $D$ and the skew-adjointness of $\partial_{t}$, we obtain \eqref{eq:weak}. 

Let us prove \eqref{eq:Caccio} assuming \eqref{eq:weak}. 
Let $\chi(s,y)$ be a  real-valued smooth function with support in $\wt W(t,x_{0})$, value 1 on $W(t,x_{0})$ and $|\nabla \chi| \lesssim \frac{1}{t}$. It is enough to prove
\begin{equation}\label{eq:caccio+}
\frac{\kappa}{2} \iint |\chi DBF|^2 +  \frac{\kappa}{2}\iint |\chi \partial_{t}F|^2 \lesssim   t^{-2} \iint_{\wt W(t,x_{0})} |F|^2, 
\end{equation}
where $\kappa $ is the accretivity constant for $B$. 
  Let $D_{\chi}=[D, \chi]$. As in the proof of  \eqref{eq:localcoerc}, $D_{\chi}$ is multiplication by a matrix supported on $\wt W(t,x_{0})$ and bounded by $Ct^{-1}$.  First, 
\begin{equation*}
\kappa \iint  |\chi DBF|^2   
\leq  \kappa \iint |D(\chi BF)|^2 
+ Ct^{-2} \iint_{\wt W(t,x_{0})} |F|^2. 
\end{equation*}
Then, the accretivity of $B$ on the range of $D$ yields 
\begin{equation*}
\kappa \iint |D(\chi BF)|^2  \leq   \re \iint   \pair{ BD(\chi BF)} {D(\chi BF)}
\end{equation*}
and the right hand side can be computed using
\begin{align*}
 \iint   \pair{ BD(\chi BF)} {D(\chi BF)}    &=  \iint   \pair{ BD_{\chi} BF} {D(\chi BF)} + \iint   \pair{ \chi BDBF} {D(\chi BF)}   \\
    &= \iint    \pair{ BD_{\chi} BF} {D_{\chi} BF}  -  \iint   \pair{ BD_{\chi} BF)} {\chi DBF} 
    \\
    & \qquad -  \iint   \pair{  BDBF} {D_{\chi}(\chi BF)}  + \iint   \pair{  BDBF} {D(\chi^2 BF)}. 
\end{align*}
In the last four integrals, the first is on the right order and  the second and third are controlled by 
absorption inequalities isolating $\chi DBF$ and we  arrive at 
\begin{equation}\label{eq:caccio'}
\frac{\kappa}{2} \iint |\chi DBF|^2  \lesssim \re \iint   \pair{ BDBF} {D(\chi^2 BF)} +  Ct^{-2} \iint_{\wt W(t,x_{0})} |F|^2.
\end{equation}
Similarly, using the pointwise accretivity of $B$, 
\begin{align*}
\kappa \iint |\chi \partial_{t}F|^2   & \le \re  \iint \pair  {\chi\partial_{t}F}  {B\chi\partial_{t}F}
  \\
    &= \re  \iint \pair{  \partial_{t} F} {\partial_{t}(\chi^2 BF)}   + 2\re   \iint \pair{  \chi\partial_{t} F} {\partial_{t}\chi BF}.
\end{align*}
Again, by absorption inequalities, we obtain
\begin{equation}\label{eq:caccio''}
\frac{\kappa}{2}\iint |\chi \partial_{t}F|^2 \le  \re  \iint \pair{  \partial_{t} F} {\partial_{t}(\chi^2 F)} +  Ct^{-2} \iint_{\wt W(t,x_{0})} |F|^2.
\end{equation}
Combining the two estimates \eqref{eq:caccio'} and \eqref{eq:caccio''}, and using  \eqref{eq:weak},  prove \eqref{eq:caccio+}, hence the lemma.
\end{proof}

\begin{rem}
If we only assume the accretivity of $B$ on $\clos{\ran_{2}(D)}$ then it is not clear how to dominate  
$\int\!\!\!\!\int|\chi \partial_{t}F|^2$ by an expression involving $F$. If $F=e^{-t|DB|}h$ then  observing that $\partial_{t}F= 
-DB(\sgn(DB) F)$ and one can  repeat the proof of \eqref{eq:caccio'} which we have done on purpose using only the accretivity of $B$  on the range.  But this brings  an average of $t^{-2}|\sgn(DB) F|^2$ in the right hand side, which means replacing $h$ by $\sgn(DB) h$.  
\end{rem}

\begin{proof}[Proof of Proposition \ref{prop:lowerp<1}]  We use auxiliary functions. Let  $a,b$ be the constants such that the function $\rho=a1_{[1,2)}+ b1_{[2,3)}$ satisfies  $\int \rho(s)\, ds=1$ and $\int \rho(s)s\, ds=0$. Define the bounded holomorphic function 
$$m(z)=\int_{1}^3 \rho(s) e^{- s\modz}\, ds
$$
in the half-planes $\re z>0$ and $\re z<0$ and at $z=0$ with $m(0)=1$. So one has $m(tDB)$ is well defined by the $H^\infty$-calculus.  Let $\tilde \rho(t)=- \int_{1}^t \rho(s) s\, ds=  \int_{t}^\infty  \rho(s) s \, ds. $ Thus $\tilde \rho$ has support in $[1,3]$ as well. Integrating by parts, we have
\begin{align*}
m'(z)    &=-\sgn(z) \int_{1}^3 \rho(s)s e^{- s\modz}\, ds   \\
    &  = \sgn(z)  \int_{1}^3 \tilde \rho(s)  \modz e^{- s\modz}\, ds \\
    & =   \int_{1}^3 \tilde \rho(s) z e^{- s\modz}\, ds.
\end{align*}
Now, set $F_{t}=e^{-t|DB|}h$,   $G_{t}=m(tDB)h$ and $\wt G_{t}=m'(tDB)h$. We have 
$$  G_{t} 
= \int_{1}^3 \rho(s) F_{st}\, ds,  \quad 
   \wt G_{t} 
     = \int_{1}^3  \frac
   {\tilde\rho(s)}
   {s} 
   (stDB
   F_{st}
   )
   \, ds,
$$ 
and it follows from the support in [1,3] of $\rho$ and $\tilde \rho$  that 
$$
\tN(G) + \tN(\wt G) \lesssim \tN(F) + \tN(tDBF)
.
$$
Thus, using Lemma \ref{lem:caccio} and 
adjusting the parameters in Whitney boxes, it suffices to prove
$$
\|h\|_{H^p}  \lesssim  \|\tN(G) \|_{p} +  \|\tN(\tilde G) \|_{p}
.
$$
Using the formula for $G_{t}$, and $F_{t}\to h$  in $\mH$ when $t\to 0$ and  $h\in \clos{\ran_{2}(D)}$, we have $G_{t}\to h$ in $L^2$ (convergence in the Schwartz distributions suffices for this argument) as $t\to 0$.
To evaluate the $H^p$ norm, we use the maximal characterisation of Fefferman and Stein: Let $\varphi(y)= r^{-n}\phi(\frac{x-y}{r})=\phi_{r}(x-y)$ for some fixed function $\phi$ assumed to be  $C^\infty$, real-valued, compactly supported in $B(0,c_{1})$ with  $\int \phi =1$.  It is enough to  prove   
\begin{equation}
\label{eq:Hp}
\bigg |\int_{\R^n} h\varphi \bigg| \lesssim  \tN (G)(x)+ \MM_{\frac{n}{n+1}}( \tN (\wt G))(x),
\end{equation}
since this shows that $\sup_{r>0} |h * \phi_{r} |$ is controlled by an $L^p$ function as desired. The argument works for $\frac{n}{n+1}<p\le 1$ by the Fefferman-Stein's theorem, but also for $1<p<\infty$ by Lebesgue's theorem. 

To prove \eqref{eq:Hp}, let $\chi(t)$ be an $L^\infty$-normalized, scalar, bump function on $[0,\infty)$: it is $C^1$, supported in  $[0, c_{0}r)$ with value 1 on  
$[0, c_{0}^{-1}r]$ and $\|\chi\|_{\infty}+r\|\chi'\|_{\infty}\lesssim 1$.
The function $\Phi(s,y)= \varphi(y)\chi(s)$ is an extension of $\varphi$ to $\R^{1+n}_{+}$. Thus 
$$\int_{\R^n} h\varphi = -\iint_{\reu} \partial_{s}(G\Phi)= -\iint_{\reu} G\partial_{s}\Phi - \iint_{\reu} \partial_{s} G \Phi= I + II.
$$
Note that the integrand of $I$  is supported in the Whitney box $W(r, x)$, so this integral is dominated by $\tN G(x)$. For $II$, observe that $\partial_{s}G=DBm'(sDB)h= DB\wt G_{s}$. Integrating $D$ by parts, and using the boundedness of $B$, we obtain 
$$
\bigg| \iint_{\reu} \partial_{s} G \Phi \bigg| \lesssim  \iint_{T} |\wt G| \|\nabla_{y} \Phi\|_{\infty} \lesssim r^{-n-1} \iint_{T} |\wt G| ,
$$
where $T:=(0, c_{0}r)\times B(x,c_{1}r)$. Then,  using the inequality 
$$
\iint_{\reu} |u| \lesssim \| \tN u\|_{{\frac{n}{n+1}}}
$$
found in \cite{HMiMo} for $u=|\wt G|1_{T}$ and support considerations, we obtain 
$$
r^{-n-1} \iint_{T} |\wt G| \lesssim \left( r^{-n} \int_{(1+c_{0})B(x,c_{1}r)} (\tN (\wt G))^{\frac{n}{n+1}} \right)
^{\frac{n+1}{n}}$$
and \eqref{eq:Hp} is proved. 
\end{proof}

\begin{rem}  An examination of the argument above shows that one can take the $q$-variant $\tN^q$ with any $q\in [1,2]$.
\end{rem}

\begin{prop}\label{prop:lowerp<1a} Let $T=DB$  and $\frac{n}{n+1}<p< \infty$.  
Then one has the estimate
\begin{equation}
\label{eq:lowerp<1a}
 \|h\|_{H^p}  \lesssim  \|\tN(e^{-t|DB|}h) \|_{p}, \ \forall h\in \IH^{2,\pm}_{DB}.
 \end{equation}
\end{prop}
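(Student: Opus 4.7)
The plan is to leverage the Remark immediately following Lemma~\ref{lem:caccio} together with the spectral identity $\sgn(DB)h=\pm h$ for $h\in \IH^{2,\pm}_{DB}$, so that the argument of Proposition~\ref{prop:lowerp<1} goes through without needing pointwise accretivity of $B$.

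First, I would re-examine the Caccioppoli estimate of Lemma~\ref{lem:caccio} with only the weaker assumption that $B$ is accretive on $\clos{\ran_2(D)}$. Setting $F(t,x)=e^{-t|DB|}h(x)$ for $h\in \clos{\ran_2(D)}$, write $\partial_t F = -|DB|F = -DB\,\sgn(DB)F$. Then in the bound for $\kappa\iint|\chi\partial_t F|^2$, one can no longer use pointwise accretivity to pass to $B\chi\partial_t F$; instead one repeats the proof of \eqref{eq:caccio'} with $BF$ replaced by $B(\sgn(DB)F)$, using only accretivity of $B$ on the range of $D$ since $\sgn(DB)F\in \clos{\ran_2(BD)}$ and hence $B(\sgn(DB)F)\in \clos{\ran_2(D)}$ up to the usual functional-calculus identification. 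This produces the modified Caccioppoli inequality
\begin{equation}\label{eq:modcaccio}
\bariint_{W(t,x_0)} |\partial_t F|^2 +  \bariint_{W(t,x_0)} |DBF|^2 \leq \frac{C}{t^2}\bariint_{\wt W(t,x_0)} |F|^2 + \frac{C}{t^2}\bariint_{\wt W(t,x_0)} |\sgn(DB)F|^2.
\end{equation}
Noting that $\sgn(DB)F(t,\cdot)=e^{-t|DB|}(\sgn(DB)h)$ by functional calculus, \eqref{eq:modcaccio} is exactly the statement suggested in the Remark.

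Second, I apply this to $h\in\IH^{2,\pm}_{DB}$. Since $h=\chi^\pm(DB)h$ and $\sgn(DB)=\chi^+(DB)-\chi^-(DB)$, we have $\sgn(DB)h=\pm h$. By uniqueness of the semigroup extension, $\sgn(DB)F(t,\cdot)=e^{-t|DB|}(\pm h)=\pm F(t,\cdot)$, so $|\sgn(DB)F|=|F|$ pointwise. Thus \eqref{eq:modcaccio} reduces to the usual Caccioppoli bound \eqref{eq:Caccio}, with the same implicit constant up to a factor of $2$, and in particular one recovers the pointwise domination $\tN(tDBF)(x)+\tN(t\partial_t F)(x)\lesssim \tN(F)(x)$ after the Whitney dilation step.

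With the Caccioppoli inequality available on $\IH^{2,\pm}_{DB}$, the remainder of the proof of Proposition~\ref{prop:lowerp<1} is essentially unchanged: introduce the auxiliary functions $G_t=m(tDB)h$ and $\widetilde G_t=m'(tDB)h$, observe via the support of $\rho$ that $\tN(G)+\tN(\widetilde G)\lesssim \tN(F)+\tN(tDBF)\lesssim \tN(F)$, and use the Fefferman--Stein maximal characterization of $H^p$ together with the Hardy space tent-level inequality of \cite{HMiMo} to obtain \eqref{eq:Hp}, namely
\[
\Bigl|\int_{\R^n}h\varphi\Bigr|\lesssim \tN(G)(x)+\MM_{n/(n+1)}(\tN(\widetilde G))(x),
\]
for any smooth compactly supported mean-one bump $\varphi$ adapted at scale $r$ around $x$. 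Taking $L^p$ norms and using boundedness of $\MM_{n/(n+1)}$ on $L^p$ for $p>n/(n+1)$ then yields \eqref{eq:lowerp<1a}. The only conceptual step is the first one: once the spectral restriction $\sgn(DB)h=\pm h$ is used to collapse the extra term in \eqref{eq:modcaccio}, no further ideas are needed.
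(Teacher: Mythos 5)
Your strategy can be made to work, but it is a genuinely different and considerably longer route than the paper's. The paper's proof exploits the spectral restriction at the very last stage instead of at the Caccioppoli stage: for $h\in \IH^{2,\pm}_{DB}$ one has $F=e^{-t|DB|}h=e^{\mp tDB}h$, hence $\partial_tF=\mp DBF$, so the derivation of \eqref{eq:Hp} in Proposition \ref{prop:lowerp<1} can be rerun with $F$ itself playing the role of both $G$ and $\wt G$ (the needed identity $\partial_s G= DB\,\wt G_s$ now holds for free). Consequently no Caccioppoli inequality, no auxiliary functions $m$, $m'$, and no control of $\tN(tDBF)$ are required at all. Your proof keeps the whole auxiliary machinery and instead upgrades the Caccioppoli lemma; what it buys is a statement (the two-term Caccioppoli of the Remark) of independent interest, but for the proposition at hand it is a detour.

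The thin spot is precisely your step 1. Saying that one "repeats the proof of \eqref{eq:caccio'} with $BF$ replaced by $B(\sgn(DB)F)$, using only accretivity of $B$ on the range of $D$ since $\sgn(DB)F\in\clos{\ran_{2}(BD)}$" is not a proof, and the reason offered is off target: $\sgn(DB)F(t,\cdot)\in\clos{\ran_{2}(DB)}=\clos{\ran_{2}(D)}$, not $\clos{\ran_{2}(BD)}$, and in any case the accretivity in the proof of \eqref{eq:caccio'} is applied to $D(\chi B\,\cdot)$, which lies in $\ran_{2}(D)$ whatever you insert. The real issue is what replaces \eqref{eq:caccio''}: running \eqref{eq:caccio'} on $\wt F:=\sgn(DB)F$ and using \eqref{eq:weak} for $\wt F$ with test function $\chi^2B\wt F$ leaves the quadratic term $-\re\iint\pair{\chi\partial_t\wt F}{B\chi\partial_t\wt F}=-\re\iint\pair{\chi DBF}{B\chi DBF}$, which has no sign under mere range accretivity because $\chi DBF\notin\ran_{2}(D)$, and it cannot be absorbed naively since its size is $\|B\|_\infty\iint|\chi DBF|^2$. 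It can be tamed: write $\chi DBF=D(\chi BF)-[D,\chi]BF$, apply range accretivity to $D(\chi BF)$ to obtain a favorable negative term, and absorb the commutator contributions; in your spectral situation $\wt F=\pm F$ so this single estimate closes, while in the general setting of the Remark one has to couple the estimates for $F$ and $\wt F$. This extra maneuver is exactly where the work lies, so "no further ideas are needed" overstates the case; once it is supplied, the rest of your argument (collapsing $|\sgn(DB)F|=|F|$, then \eqref{eq:Hp} via Fefferman--Stein and the tent-space inequality from \cite{HMiMo}, with $\MM_{n/(n+1)}$ bounded on $L^p$ for $p>\frac{n}{n+1}$) is sound.
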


Here the difference is that we  restrict $h$ in one of the spectral spaces. 

\begin{proof} If $h\in \IH^{2,+}_{DB}$, then $F=e^{-t|DB|}h=e^{-tDB}\chi^+(DB)h$ and $\partial_{t}F= -DBF$. Thus we can run the previous argument with $F$ replacing $G$ and get the inequality \eqref{eq:Hp} with $F$ replacing both $G$ and $\wt G$. 

When  $h\in \IH^{2,-}_{DB}$, then $F=e^{-t|DB|}h=e^{tDB}\chi^-(DB)h$ and $\partial_{t}F= DBF$, so that  we conclude as above.
\end{proof}

\subsection{Some upper bounds for $p\ne 2$} 

\begin{prop}\label{prop:upperp>2} Let $T=DB$ or $BD$ and $2<p<(p_{+}(T))^*$.
Then one has the estimate
\begin{equation}
\label{eq:lowerp>2}
  \|\tN(e^{-t|T|}h) \|_{p} \lesssim  \|h\|_{p}, \ \forall h\in L^2.
 \end{equation}
\end{prop}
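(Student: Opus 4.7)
The strategy is to decompose $e^{-\modz}$ via the bisectorial functional calculus into a rational (resolvent) piece and a smooth remainder lying in some class $\Psi_\sigma^\tau(S_\mu)$, and to reduce the non-tangential maximal bound on each piece to a pointwise domination by a Hardy--Littlewood maximal function at level $r \in [2,p)$.

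First, I would build the decomposition $e^{-\modz}=\phi_+(z)\chi^+(z)+\phi_-(z)\chi^-(z)+\tilde\psi(z)$, where $\phi_\pm\in\mR^k(S_\mu)$ are rational functions chosen by matching Taylor coefficients (a Vandermonde argument as in the proofs of Lemmas \ref{lem:r1}--\ref{lem:r2}) so that $|e^{\mp z}-\phi_\pm(z)|=O(|z|^\sigma)$ for $z\in S_{\mu\pm}$, with $\sigma$ as large as required; since $e^{-\modz}$ decays exponentially at infinity in each sector while $\phi_\pm$ decays polynomially, the remainder $\tilde\psi$ lies in $\Psi_\sigma^\tau(S_\mu)$ for some $\tau>0$ that can be made large by increasing~$k$. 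Applied to $h\in L^2\cap L^p$ via the $L^2$ functional calculus, this yields
\[
e^{-s|T|}h=\sum_{m}c_m^+(I+imsT)^{-1}\chi^+(T)h+\sum_{m}c_m^-(I-imsT)^{-1}\chi^-(T)h+\tilde\psi(sT)h.
\]

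For the remainder $\tilde\psi(sT)h$, I would take $\sigma>n/2$ and use that \eqref{eq:odnpsipq} with $p=q=2$ gives $L^2$ off-diagonal decay of order $\sigma$ for $\tilde\psi(sT)$. Fixing a Whitney box over $B:=B(x,c_1 t)$ and splitting $h=\sum_{j\ge 0}h_j$ with $h_0=h\mathbf{1}_{2B}$, $h_j=h\mathbf{1}_{S_j(B)}$, and using $\|h_j\|_2\lesssim 2^{jn/2}|B|^{1/2}\MM_r(h)(x)$ for any $r\in[2,p)$ by H\"older's inequality, the off-diagonal bound produces $|B|^{-1/2}\|\tilde\psi(sT)h_j\|_{L^2(B)}\lesssim 2^{-j(\sigma-n/2)}\MM_r(h)(x)$ uniformly in $s\sim t$. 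Summing in $j$ and taking the supremum over Whitney boxes gives the pointwise bound $\tN(\tilde\psi(sT)h)(x)\lesssim \MM_r(h)(x)$, and the $L^p$ boundedness of $\MM_r$ (since $r<p$) closes this piece.

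For the rational pieces in the sub-range $2<p<p_+(T)$, pick $r\in[2,p)\cap(p_-(T),p_+(T))$ (non-empty because $2\in I(T)$). Both $\chi^\pm(T)$ and the resolvents $(I\pm imsT)^{-1}$ are bounded on $L^r$ and $L^p$ (Theorem \ref{thm:Lpfc}), and the resolvents enjoy $L^r$ off-diagonal decay of any polynomial order (Lemma \ref{lem:oddLp}). The same annular-decomposition argument, now run with $L^r$ bounds in place of $L^2$ bounds, yields $\tN\bigl((I\pm imsT)^{-1}\chi^\pm(T)h\bigr)(x)\lesssim \MM_r(\chi^\pm(T)h)(x)$, and the $L^p$ bound $\lesssim\|h\|_p$ then follows from $L^p$ boundedness of $\chi^\pm(T)$ and of $\MM_r$.

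The main obstacle is extending to the full range $p_+(T)\le p<p_+(T)^*$, where $\chi^\pm(T)$ fails to be bounded on $L^p$ and the rational piece cannot be controlled directly. Here I would mimic the analytic-family argument underlying Lemma \ref{lem:sfpinfty} and Corollary \ref{cor:psi1}: consider
\[
\Phi_\alpha(z):=\Bigl(\tfrac{\modz}{1+\modz}\Bigr)^\alpha e^{-\modz},\qquad \re\alpha\ge 0,
\]
which is bounded on $S_\mu$ with $|\Phi_\alpha(z)|\le e^{\mu|\im\alpha|}|z|^{\re\alpha}(1+|z|)^{-\re\alpha}$, reduces to $e^{-\modz}$ at $\alpha=0$, and is analytic in~$\alpha$. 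For $\re\alpha>n/q_0-n/p_+$ with $q_0\in I(T)$ close to $p_+$, the extra vanishing of $\Phi_\alpha$ at zero combined with the annular argument of Lemma \ref{lem:sfpinfty} (now for the non-tangential maximal rather than the square function) yields an $L^\infty\to L^\infty$ bound $\|\tN(\Phi_\alpha(sT)h)\|_\infty\lesssim e^{\mu|\im\alpha|}\|h\|_\infty$ with admissible growth in $|\im\alpha|$. Stein's complex interpolation between this $L^\infty$ endpoint at large $\re\alpha$ and the $L^2\to L^2$ bound at $\alpha=0$ supplied by Theorem \ref{thm:NTmaxandaeCV} then yields, at $\alpha=0$, the bound $\|\tN(e^{-s|T|}h)\|_p\lesssim\|h\|_p$ throughout $2<p<p_+(T)^*$, completing the proof.
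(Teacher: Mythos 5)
Your argument is fine in the subrange $2<p<p_{+}(T)$ (and there it is a legitimate, if more complicated, alternative to the paper's proof), but the fix you propose for the critical range $p_{+}(T)\le p<(p_{+}(T))^*$ — which is the whole point of the proposition — does not work. Stein interpolation for the family $\Phi_{\alpha}(z)=\big(\tfrac{\modz}{1+\modz}\big)^{\alpha}e^{-\modz}$ produces bounds at \emph{interior} points of the strip, i.e.\ for the operators $\Phi_{\alpha}(sT)$ with $\re\alpha>0$, in the interpolated spaces; it never returns an $L^p$ bound for the operator sitting on the boundary line, and $e^{-s|T|}$ is exactly the boundary operator $\alpha=0$, where only the $L^2$ hypothesis is assumed. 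The operators you would obtain have extra vanishing at $z=0$, and you cannot pass back to the semigroup because the compensating factor $\big(\tfrac{1+\modz}{\modz}\big)^{\alpha}$ is unbounded at the origin. The non-vanishing of $e^{-\modz}$ at $0$ is precisely the feature that distinguishes the semigroup from a $\Psi$-class function, and no interpolation in $\alpha$ can recover it.

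Even setting this aside, the two endpoint lines are not available as you claim. On $\re\alpha=0$ you would need $\|\tN(\Phi_{i\gamma}(sT)h)\|_{2}\lesssim C(\gamma)\|h\|_{2}$ for \emph{all} real $\gamma$ with admissible growth, whereas Theorem \ref{thm:NTmaxandaeCV} gives only the single point $\gamma=0$; its proof exploits that $e^{-\modz}$ is approximable at $0$ by functions in $\mR^{2}(S_{\mu})$, and $\Phi_{i\gamma}$ oscillates like $\modz^{i\gamma}$ at the origin, so that structure is lost and the line bound is not established (and composing the semigroup with a bounded $H^\infty$ function is not harmless for non-tangential maximal estimates, unlike for square functions). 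In addition, $h\mapsto\tN(\Phi_{\alpha}(sT)h)$ is sublinear, so to run Stein interpolation one must linearize into a Whitney-averaged non-tangential maximal space and interpolate that scale — the paper only sets up complex interpolation of the tent spaces $T^p_{2}$, which is what makes the analytic-family argument of Lemma \ref{lem:sfpinfty} legitimate (linear operators $\Qpsi{\psi_\alpha}{T}$ into $T^p_{2}$). The paper's own proof sidesteps all of this: write $e^{-\modz}=(1+iz)^{-1}+\psi(z)$ with $\psi\in\Psi_{1}^{1}(S_{\mu})$; the resolvent term is dominated pointwise by $\MM_{2}(|h|)$ using only $L^2$ off-diagonal bounds (no spectral projections, hence no $p_{+}$ obstruction, valid for all $p>2$), and the remainder satisfies the pointwise bound $\tN(\psi(tT)h)\lesssim \SF_a(\psi(tT)h)$ for a large enough aperture, so Corollary \ref{cor:psi1} — where the analytic family is used, linearly and at interior points since $\psi$ vanishes to order $1$ at $0$ — gives $\|\psi(tT)h\|_{T^p_{2}}\lesssim\|h\|_{p}$ up to $(p_{+}(T))^*$. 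You could salvage your write-up by keeping your maximal-function treatment of a single resolvent (without $\chi^{\pm}$) and routing the $\Psi$-class remainder through the square function in this way.
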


\begin{proof} Write $e^{-t|T|}h=\psi(tT)h + (I+itT)^{-1}h$ where $\psi(z)=e^{-\modz}- (1+iz)^{-1}\in \Psi_{1}^1(S_{\mu})$. By geometric considerations, 
$$
 \|\tN(\psi(tT)h) \|_{p} \lesssim \|\psi(tT)h\|_{T^p_{2}} 
 $$
 and we may apply Corollary \ref{cor:psi1} to obtain
 $$
 \|\psi(tT)h\|_{T^p_{2}}  \lesssim \|h\|_{p}
 $$
 in the given range of $p$. 
 Next, the $L^2$ off-diagonal estimates \eqref{lem:odd}  for the resolvent $(I+itT)^{-1}$ yields the pointwise estimate
 $$
  \tN((I+itT)^{-1}h) \lesssim \MM_{2}(|h|)
  $$
which gives an $L^p$ estimate for all $2<p\le \infty$. 
\end{proof}

Note that the argument for $BD$ provides a proof of the assertion in Remark \ref{rem:ntbd}.

We continue  with some upper bounds when $p<2$.

\begin{prop}\label{cor:ntp<2} \begin{enumerate}
  \item For $p_{-}(BD)<p<2,$   we have 
$ \|\tN(e^{-t|BD|}h) \|_{p} \lesssim    \|h\|_{p}$ for all $ h\in \clos{\ran_{2}(BD)}$.
  \item For $(p_{-}(DB))_{*}<p<2$,   we have 
$ \|\tN(e^{-t|DB|}h) \|_{p} \lesssim  \|h\|_{H^p}$ for all $ h\in \clos{\ran_{2}(D)}$ where $H^p=L^p$ if $p>1$.
  \end{enumerate}\end{prop}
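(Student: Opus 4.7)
The plan is to reduce both claims to results already in place: Lemma~\ref{lem:upperp<2}, the $L^2$ bound of Theorem~\ref{thm:NTmaxandaeCV}, and the pre-Hardy space identifications of Theorems \ref{thm:hpdb} and \ref{thm:hpd}. No new non-tangential estimate will be needed; the substance of the proposition is the consistency of ranges and norms.

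I would begin with part (2). Fix $h \in \clos{\ran_2(D)}$; one may assume $h \in H^p$ (otherwise the bound is trivial). For $(p_-(DB))_* < p \le 1$, Theorem~\ref{thm:hpdb} yields $\IH^p_{DB}=\IH^p_D$ with equivalent quasi-norms (as $p$ lies in the interval $((p_-(DB))_*, p_+(DB))$), while Theorem~\ref{thm:hpd} identifies $\IH^p_D$ with $H^p\cap \clos{\ran_2(D)}$, with $\|h\|_{\IH^p_D} \sim \|h\|_{H^p}$. Hence $h\in \IH^p_{DB}$ with $\|h\|_{\IH^p_{DB}}\sim \|h\|_{H^p}$, and Lemma~\ref{lem:upperp<2} gives $\|\tN(e^{-t|DB|}h)\|_p \lesssim \|h\|_{\IH^p_{DB}} \sim \|h\|_{H^p}$. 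For $1 < p < 2$, the $p=2$ bound of Theorem~\ref{thm:NTmaxandaeCV} combined with the extrapolation clause of Lemma~\ref{lem:upperp<2} delivers $\|\tN(e^{-t|DB|}h)\|_p \lesssim \|h\|_{\IH^p_{DB}}$, and again Theorems~\ref{thm:hpdb}–\ref{thm:hpd} identify $\|h\|_{\IH^p_{DB}} \sim \|h\|_p = \|h\|_{H^p}$.

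Part (1) I would reduce to part (2). Since $I(BD) \subset \mI_2 \subset (1,\infty)$ is open, every $p \in (p_-(BD), 2)$ satisfies $p > 1$, so $H^p = L^p$. Given $g \in \clos{\ran_2(BD)}$, assume $g \in L^p$. Then $g \in \IH^p_{BD} = \clos{\ran_2(BD)} \cap \clos{\ran_p(BD)}$ by the corollary following Theorem~\ref{thm:hpbd}, and Lemma~\ref{lem:biso} gives an isomorphism $B : \clos{\ran_p(D)} \to \clos{\ran_p(BD)}$ (and similarly for $p=2$). Writing $g = Bh$ yields $h \in \clos{\ran_2(D)} \cap L^p$ with $\|h\|_p \sim \|g\|_p$. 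The similarity $BD = B(DB)B^{-1}$ from Proposition~\ref{prop:typeomega}(iv) intertwines the functional calculi via $\varphi(BD)\,B = B\,\varphi(DB)$ on $\clos{\ran_2(D)}$ for any $\varphi \in H^\infty(S_\mu)$ defined at $0$; applied to $\varphi(z) = e^{-t|z|}$ this gives $e^{-t|BD|}g = B\,e^{-t|DB|}h$, so pointwise
\[
\tN(e^{-t|BD|}g)(x) \le \|B\|_\infty\,\tN(e^{-t|DB|}h)(x).
\]
Because $(p_-(DB))_* < p_-(DB) = p_-(BD) < p < 2$, part~(2) applies to $h$ and concludes $\|\tN(e^{-t|BD|}g)\|_p \lesssim \|h\|_p \sim \|g\|_p$.

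The main thing to verify with care is the intertwining identity $e^{-t|BD|}B = B\,e^{-t|DB|}$ on $\clos{\ran_2(D)}$; once granted (via the resolvent identity $(\lambda-BD)^{-1}B = B(\lambda-DB)^{-1}$ and the Cauchy formula), it is precisely what allows the Hardy-type information for $DB$ on $\clos{\ran_2(D)}$ to transfer verbatim to $BD$ on $\clos{\ran_2(BD)}$. Beyond this, the proof is essentially bookkeeping of equivalent norms.
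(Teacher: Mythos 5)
Part (2) of your proposal is essentially the paper's own argument: Lemma \ref{lem:upperp<2} (its $1<p<2$ clause activated by the $p=2$ bound of Theorem \ref{thm:NTmaxandaeCV}) combined with the identification $\|h\|_{\IH^p_{DB}}\sim \|h\|_{H^p}$ from Theorem \ref{thm:hpdb} (and Theorem \ref{thm:hpd}); there is nothing to add there.

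Part (1) is where you diverge, and where there is a gap. The paper applies Lemma \ref{lem:upperp<2} directly to $T=BD$, obtaining $\|\tN(e^{-t|BD|}h)\|_{p}\lesssim \|h\|_{\IH^p_{BD}}$, and then converts the right-hand side via Theorem \ref{thm:hpbd} and Proposition \ref{prop:projectionlp}: $\|h\|_{\IH^p_{BD}}\sim \|\IP h\|_{p}\sim \|h\|_{p}$. You instead transfer the estimate to $DB$ through the similarity $e^{-t|BD|}Bh=Be^{-t|DB|}h$, which is the same device the paper uses at $p=2$ in Theorem \ref{thm:NTmaxandaeCV}. The intertwining itself is fine, but your step ``writing $g=Bh$ yields $h\in\clos{\ran_{2}(D)}\cap L^p$ with $\|h\|_{p}\sim\|g\|_{p}$'' is not justified as stated. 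Lemma \ref{lem:biso} gives two \emph{separate} isomorphisms, $B:\clos{\ran_{2}(D)}\to\clos{\ran_{2}(BD)}$ and $B:\clos{\ran_{p}(D)}\to\clos{\ran_{p}(BD)}$, and since $B$ need not be invertible in $L^\infty$ (the paper stresses this), it is not automatic that the $L^2$-inverse applied to $g\in\clos{\ran_{2}(BD)}\cap L^p$ produces an element of $L^p$, i.e.\ that the two inverses agree on the intersection of the ranges. This can be repaired: approximate $g$ by $g_{k}\in\ran_{2}(BD)\cap\ran_{p}(BD)$ using resolvent truncations as in Lemma \ref{lem:approx} (the calculi are consistent for $p\in I(BD)$), write $g_{k}=Bh_{k}$ with $h_{k}\in\ran_{2}(D)\cap\ran_{p}(D)$, use the coercivity of $B$ in both norms to get Cauchy sequences, and identify the $L^2$- and $L^p$-limits almost everywhere along a subsequence. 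The paper's route through $\IP$ avoids this consistency issue altogether, because $\IP$ is a single Fourier multiplier bounded simultaneously on $L^2$ and $L^p$; that is what its approach buys over yours, at the modest cost of invoking Theorem \ref{thm:hpbd} and Proposition \ref{prop:projectionlp}.
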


\begin{proof} The first item follows from   Lemma \ref{lem:upperp<2} and Theorem  \ref{thm:hpbd}: for 
 $ h\in \clos{\ran_{2}(BD)}$ and $p_{-}(BD)<p<2$
 $$
\|\tN(e^{-t|BD|}h) \|_{p} \lesssim  \|h\|_{\IH^p_{BD}} \sim  \|\IP h\|_{p}\sim \|h\|_{p}.
$$
The equivalence $\|h\|_{p}\sim \|\IP h\|_{p}$ for all $ h\in \clos{\ran_{2}(BD)}$ in this range of $p$ was obtained in Proposition \ref{prop:projectionlp}.

The second item   follows from Lemma \ref{lem:upperp<2} and Theorem \ref{thm:hpdb}: for $ h\in \clos{\ran_{2}(D)}$,
 $$
\|\tN(e^{-t|DB|}h) \|_{p} \lesssim  \|h\|_{\IH^p_{DB}} \sim  \|h\|_{H^p}. 
$$
\end{proof}

\subsection{End of proof of Theorem \ref{thm:ntdb}}

For the lower bounds, combine Propositions \ref{prop:lowerp>1} and \ref{prop:lowerp<1} when $B$ is pointwise accretive and Proposition \ref{prop:lowerp<1a} in general. We note that we do not use the assumption on equality of Hardy spaces in the statement. 

We turn to upper bounds. So far we have completed the theorem  when $p> p_{-}(DB)$ on applying Propositions  \ref{prop:upperp>2} and \ref{cor:ntp<2}, (2).  But by Theorem \ref{thm:hpdb}, the argument of Proposition  \ref{cor:ntp<2}, (2), applies when $p<2$ is such that $\IH^p_{DB}=\IH^p_{D}$ with equivalence of norms. This concludes the proof.

\subsection{End of proof of Theorem \ref{thm:ntbd}}

  Combining Propositions \ref{prop:lowerp>1}, \ref{prop:upperp>2} and  \ref{cor:ntp<2} gives all the lower bounds for any $p>1$ and also the upper bounds  in the range $p_{-}(BD)<p<(p_{+}(DB))^*$ by specializing to $\IP h$ for  $ h\in \clos{\ran_{2}(BD)}$.  
  
  It remains to provide an argument for upper bounds when $p\ge (p_{+}(DB))^*$ and  $p=q'$ where $q>1$ is assumed such that $\IH^q_{DB^*}=\IH^q_{D}$ with equivalence of norms.  We do this  now. 
  
  As in  the proof of Theorem \ref{thm:NTmaxandaeCV}, observe that our assumption implies
  for $h\in \clos{\ran_{2}(BD)}$,  $\|\IP \chi^\pm(BD)h\|_{p}\lesssim \|\IP h\|_{p}$. Now  $\phi(z)=e^{-\modz} \in\mR^2_{\sigma}(S_{\mu})\cap \Psi^\tau_{0}(S_{\mu})$ for any $\sigma>0$ and $\tau>0$. 
Pick $\phi_{\pm}\in \mR^{2}(S_{\mu})$ such that 
\begin{equation*}
|\phi(z)-\phi_{\pm}(z)|=O(|z|^\sigma), \quad \forall z\in S_{\mu\pm}.
\end{equation*}
Then  $\psi_{\pm}(z):=(\phi-\phi_{\pm})(z)\chi^\pm(z)$ satisfy $\psi_{\pm}\in \Psi_{\sigma}^{2}(S_{\mu})$. Hence, for  $h\in \clos{\ran_{2}(BD)}$, using $h=\chi^{+}(BD)h+ \chi^{-}(BD)h=h^++h^-$, we have the decomposition
$$
\phi(tBD) \IP h = \psi_{+}(tBD) \IP h + \psi_{-}(tBD)\IP h+ \phi_{+}(tBD)\IP h^+ +  \phi_{-}(tBD)\IP h^-.
$$
  From geometric considerations,  we deduce from Lemma \ref{lem:sfpinfty}   if  $\sigma$ is large enough
 $$
\|\tN(\psi_{+}({tBD})\IP  h) \|_{p} \lesssim \|\psi_{+}(tBD)\IP h \|_{T^p_{2}}  \lesssim  \|\IP h\|_{p}
$$
and similarly for the term with $\psi_{-}$.
Next, the $L^2$ off-diagonal estimates of Lemma \ref{lem:odd}  for the combinations of iterates of resolvents $(I+itT)^{-2}$ yields the pointwise estimate
 $$
  \tN(\phi_{+}(tBD)\IP h^+) \lesssim \MM_{2}(|\IP h^+|)
  $$
  Thus, as $p>2$ and using the assumption on $p$, 
  $$
\|\tN(\phi_{+}(tBD)\IP h^+) \|_{p}   \lesssim \|\IP h^+ \|_{p}  \lesssim  \|\IP h\|_{p}.
$$
We argue similarly for $\phi_{-}(tBD)\IP h^-$.  This finishes the proof.

\section{Non-tangential sharp functions for $BD$}\label{sec:sharp}

As we saw, the  non-tangential maximal inequality that involves the pre-Hardy space 
$\IH^p_{BD}$ is with $e^{-t|BD|}\IP $, that is taking the semigroup after having projected on $\clos{\ran_{2}(D)}$. The problem with $\IP$ is one cannot use kernel estimates in such a  context as it is a singular integral operator. 

Also when  for some reason (for example $p_{+}>n$), we want to reach $\BMO$ or $\dot \Lambda^\alpha$ spaces, the non-tangential maximal function is inappropriate.  

We observe that for all $h\in L^2$ and all $t>0$ we have the following relation
\begin{equation}
\label{eq:sharp}
e^{-t|BD|}\IP h -\IP h= e^{-t|BD|}h -h.
\end{equation}
Indeed, $g=\IP h - h \in \nul_{2}(D)= \nul_{2}(BD)$, so that $e^{-t|BD|} g= g$ for all $t>0$. 

We are therefore led to consider
$$
\tNs(e^{-t|BD|}h):= \tN (e^{-t|BD|}h -h),$$which we name  \emph{non-tangential sharp function (of $e^{-t|BD|}h$) associated to $BD$}.
Thanks to \eqref{eq:sharp}, 
we have
$$
|\tNs (e^{-t|BD|}h) - \tN (e^{-t|BD|}\IP h)|\le  \MM_{2}(|\IP h|).
$$
Thus, if $2<p$, $\tNs (e^{-t|BD|}h)$ and $ \tN (e^{-t|BD|}\IP h)$ have same $L^p$ behavior. 
In particular, 
$$
\|\tNs (e^{-t|BD|}h)\|_{p} \lesssim \|\IP h\|_{p}
$$
holds in the range of $p>2$ where the same upper bound holds for $ \tN (e^{-t|BD|}\IP h)$.
If this range is all $(2,\infty)$ we may wonder what happens at $p=\infty$.

It is also convenient to introduce the $\alpha\ge 0$ variant of $\tNs$:
$${\tNsa}(e^{-t|BD|}h)(x) = \sup_{t>0}   t^{-\alpha} \bigg(\bariint_{W(t,x)} |e^{-s|BD|}h-h|^2\bigg)^{1/2}.
$$
Note that  for $\alpha=0$, this is $\tNs$.

\begin{thm} Assume that for some $q$ with $\frac{n}{n+1}<q< 2$,    we have $\IH^q_{DB^*}= \IH^q_{D}$ with 
equivalent norms.
 If $q> 1$ and  $p=q'$, we have
$$ \|\tNs(e^{-t|BD|} h) \|_{p} \sim  \|\IP h\|_{p},\quad \forall h\in \clos{\ran_{2}(BD)},$$ 
 and if $q\le 1$ and $\alpha=n(\frac{1}{q}-1)$, 
$$ \|\tNsa(e^{-t|BD|} h) \|_{\infty}\sim  \|\IP h\|_{\dot \Lambda^\alpha}, \quad \forall h\in \clos{\ran_{2}(BD)}.$$

\end{thm}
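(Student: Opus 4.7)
The starting point is the identity $e^{-t|BD|}h - h = e^{-t|BD|}\IP h - \IP h$ from \eqref{eq:sharp}, which presents the sharp function as the genuine oscillation of $e^{-t|BD|}\IP h$ around the ``boundary value'' $\IP h$.

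\textbf{Upper bounds.} When $q>1$ (so $p=q'>2$), the pointwise inequality
\[
\tNs(e^{-t|BD|}h)(x)\le \sqrt 2\,\tN(e^{-t|BD|}\IP h)(x)+\sqrt 2\,\MM_{2}(\IP h)(x),
\]
which follows from $|a-b|^{2}\le 2|a|^{2}+2|b|^{2}$ together with the fact that the Whitney average of the $s$-independent function $\IP h$ is dominated by $\MM_{2}(\IP h)(x)$, reduces matters to Theorem~\ref{thm:ntbd}. The latter applies because the set of exponents where $\IH^{q}_{DB^*}=\IH^{q}_{D}$ is open and contains our $q>1$, so it yields an interval $(a,p_{+}(DB^*))$ with $a\ge 1$ as required. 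When $q\le 1$, Theorem~\ref{thm:ntbd} is unavailable, so I would work locally on each Whitney box: fixing $(t,x)$, set $c=\bariint_{B(x,2c_{1}t)}\IP h$, and use that $c\in\nul(D)\subset\nul(BD)$ to rewrite, on $B(x,c_{1}t)$, $e^{-s|BD|}\IP h-\IP h=e^{-s|BD|}(\IP h-c)-(\IP h-c)$; this is made rigorous through a dyadic decomposition $\IP h-c=\sum_{j\ge 0}g_{j}$ on concentric annuli. H\"older regularity gives $\|g_{j}\|_{2}\lesssim (2^{j}t)^{\alpha+n/2}\|\IP h\|_{\dot\Lambda^{\alpha}}$, and polynomial-order off-diagonal bounds for $e^{-s|BD|}$ (derived from Lemma~\ref{lem:odd} via a Cauchy-integral representation of $e^{-|z|}$) yield $\|e^{-s|BD|}g_{j}\|_{L^{2}(B(x,c_{1}t))}\lesssim 2^{-jN}\|g_{j}\|_{2}$ for $s\sim t$. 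Summing with $N>\alpha+n/2$ produces the desired $t^{\alpha+n/2}\|\IP h\|_{\dot\Lambda^{\alpha}}$ bound.

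\textbf{Lower bounds.} I would apply Theorem~\ref{thm:goodhpbd} with $\psi(z)=z e^{-|z|}\in\mR^{1}_{\sigma}(S_{\mu})\cap\Psi_{1}^{\tau}(S_{\mu})$ to get $\|\IP h\|_{p}\sim\|\Qpsi\psi{BD}h\|_{T^{p}_{2}}$ (and the $T^{\infty}_{2,\alpha}$ analogue if $q\le 1$). Because $h$ does not depend on $s$,
\[
sBD\,e^{-s|BD|}h=-s\,\sgn(BD)\,\partial_{s}v,\qquad v(s,\cdot):=e^{-s|BD|}h-h,
\]
so the square function is a first-order $s$-derivative of the very function whose sharp norm we are trying to control. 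The task then reduces to a square-function-by-non-tangential-maximal-function inequality $\|\SF(s\partial_{s}v)\|_{p}\lesssim\|\tN v\|_{p}$ (with the weighted Carleson analogue for $q\le 1$). This is proved by exploiting that $w=v+h=e^{-s|BD|}h$ solves the second-order equation $\partial_{s}^{2}w=(BD)^{2}w$ on $\reu$: testing this equation against $\chi^{2}s^{2}(w-h)$ for a cutoff $\chi$ adapted to a slightly enlarged Whitney region $W'(t,x)$, and integrating by parts in both $s$ and $y$ (using self-adjointness of $D$, accretivity of $B$ on $\ran D$, and the crucial identity $\partial_{s}(w-h)=\partial_{s}w$), yields the Caccioppoli-type estimate
\[
\iint_{W(t,x)}s^{2}\bigl(|\partial_{s}w|^{2}+|BDw|^{2}\bigr)\frac{ds\,dy}{s^{n+1}}\lesssim\bariint_{W'(t,x)}|v|^{2}.
\]
Covering Whitney cones by such boxes and integrating in $x$ gives the $L^{p}$ inequality; the $T^{\infty}_{2,\alpha}$ version is obtained by the usual Carleson-measure argument with the factor $t^{-\alpha}$ carried through both sides.

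\textbf{Main obstacle.} The Caccioppoli step is the delicate point: $v$ itself does not solve a homogeneous equation (the source $(BD)^{2}h$ is nonzero for $h$ in the range of $BD$), which forces one to work with $w=v+h$ and to engineer the test function $\chi^{2}s^{2}(w-h)$ so that $h$, having no $s$-regularity, contributes to the right-hand side only through $v$. A secondary subtlety is producing the polynomial-order off-diagonal bounds for $e^{-s|BD|}$ used in the $q\le 1$ upper bound: since $e^{-|z|}-1$ lacks decay at infinity on $S_{\mu}$, direct application of the $\Psi$-calculus is unavailable, and one must pass through a Cauchy-integral representation of $e^{-|z|}$ on an appropriate contour.
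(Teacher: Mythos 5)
Your upper bound for $q>1$ is sound and is in fact the paper's own observation: by \eqref{eq:sharp} one has $|\tNs(e^{-t|BD|}h)-\tN(e^{-t|BD|}\IP h)|\le \MM_{2}(|\IP h|)$, and Theorem \ref{thm:ntbd} applies since the coincidence set for $DB^*$ is an open interval containing $q$ and $(\,(p_{-}(DB^*))_{*},p_{+}(DB^*))$, so one may take $1\le a<q$ and $p=q'<a'$. The other two pieces have genuine gaps. For the H\"older-case upper bound you invoke $L^2$ off-diagonal bounds of order $N>\alpha+\frac n2$ for $e^{-s|BD|}$ itself; these are not available. The function $e^{-\modz}$ does not vanish at $0$, and after subtracting any single element of $\mR^1(S_\mu)$ or $\mR^2(S_\mu)$ the remainder still vanishes only to first order on one of the two components of $S_\mu$, so the Cauchy-integral method yields decay of order about $1$, far below $n/2$ (already for $B=I$ the Poisson semigroup has only the fixed decay $(1+d/s)^{-(n+1)}$, never arbitrary order). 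Obtaining high order forces the splitting $e^{-s|BD|}=\psi_{+}(sBD)+\psi_{-}(sBD)+\phi_{+}(sBD)\chi^{+}(BD)+\phi_{-}(sBD)\chi^{-}(BD)$ with $\phi_\pm\in\mR^2(S_\mu)$, and the nonlocal projections $\chi^{\pm}(BD)$ are exactly where the hypothesis $\IH^q_{DB^*}=\IH^q_D$ must enter, via $\|\IP\chi^{\pm}(BD)h\|_{\dot\Lambda^\alpha}\lesssim\|\IP h\|_{\dot\Lambda^\alpha}$; your argument never uses the hypothesis, which is a sign it cannot close. This is precisely the content of Lemma \ref{lem:sharp2}.

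The lower bound is where the proposal really breaks. (A minor point first: $sBDe^{-s|BD|}h=-\sgn(BD)\,s\partial_{s}v$, and $\sgn(BD)$ acting slice-wise is not known to be bounded on $T^p_2$ for $p\ne2$; this is repaired by taking $\psi(z)=\modz e^{-\modz}$ instead.) The Caccioppoli-plus-summation scheme fails for two reasons. First, the integration by parts in $y$ cannot be carried out: in $(BD)^2w=BDBDw$ the outermost operator is multiplication by the merely bounded $B$, so moving $D$ onto the test function $\chi^2s^2(w-h)$ puts a derivative on $B^*$, which is only $L^\infty$. This is why the paper's Caccioppoli inequality (Lemma \ref{lem:caccio}) is formulated for $DB$, controls only by $|F|^2$ rather than by the oscillation, and even then requires pointwise accretivity of $B$ for the $\partial_{t}$-term (see the remark following its proof). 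Second, even granting your Whitney-box estimate, it does not yield $\|\SF(s\partial_{s}v)\|_{p}\lesssim\|\tN v\|_{p}$: a cone $\Gamma(x)$ contains one Whitney box per dyadic scale, each contributing a quantity comparable to $\tN v(x)^2$ with no decay in the scale, so the sum over scales diverges; an inequality of ``$S\lesssim N$'' type cannot follow from a local energy estimate alone. The paper's Lemma \ref{lem:sharp1} proceeds by an entirely different mechanism: a pointwise Carleson-function bound $C_{\alpha}(\psi(tBD)h)\lesssim\MM_{2}\big(\tNsa(e^{-t|BD|}h)\big)$ for the high-order function $\psi(z)=z^{N}e^{-\modz}(e^{-\modz}-1)$ with $N>n$ (adapted from \cite{DY}, using the averaged subtraction $h-\barint_{I_r}P_\tau h\,d\tau$ and high-order off-diagonal decay of $\psi(sBD)$), followed by tent-space theory from \cite{CMS} and the hypothesis in the form $\|h\|_{\IH^p_{BD}}\sim\|\IP h\|_{p}$ (resp. $\|h\|_{\IL^\alpha_{BD}}\sim\|\IP h\|_{\dot\Lambda^\alpha}$).
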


This result rests on two lemmata. 

\begin{lem}\label{lem:sharp1}For $2<p\le \infty$, we have
$$
\|h\|_{\IH^p_{BD}} \lesssim \|\tNs(e^{-t|BD|}h)\|_{p} ,\quad \forall h\in \clos{\ran_{2}(BD)},
$$
and for $0\le \alpha < 1$,
$$
\|h\|_{\IL^\alpha_{BD}} \lesssim \|\tNsa(e^{-t|BD|}h)\|_{\infty},\quad \forall h\in \clos{\ran_{2}(BD)}.
$$

\end{lem}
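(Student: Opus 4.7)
The proof rests on the identity~\eqref{eq:sharp}, $e^{-s|BD|}h - h = e^{-s|BD|}\IP h - \IP h$, combined with Proposition~\ref{prop:lowerp>1}. The idea is to regularize by setting $u_{t_0} := h - e^{-t_0|BD|}h \in L^2$ for each $t_0 > 0$, establish uniform bounds on $u_{t_0}$ by the sharp function, and pass to the limit $t_0 \to \infty$. Since $h \in \clos{\ran_2(BD)}$ and $|BD|$ restricted to $\clos{\ran_2(BD)}$ is injective sectorial with bounded $H^\infty$-calculus, $e^{-t_0|BD|}h \to 0$ in $L^2$, so $u_{t_0} \to h$ and $\IP u_{t_0} \to \IP h$ in $L^2$.

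The key pointwise calculation is $e^{-s|BD|}u_{t_0} = [e^{-s|BD|}h - h] - [e^{-(s+t_0)|BD|}h - h]$. For $(s,y) \in W(t,x)$ the shifted point $(s+t_0,y)$ lies in an enlarged Whitney region at height $\sim \max(t,t_0)$, so after enlarging the aperture (which preserves the $\tN$-norm up to equivalence) one obtains $\tN(e^{-s|BD|}u_{t_0})(x) \lesssim \tNs(e^{-t|BD|}h)(x)$ uniformly in $t_0$. For $2 < p < \infty$, Proposition~\ref{prop:lowerp>1} applied to $u_{t_0}$ then yields $\|u_{t_0}\|_p \lesssim \|\tNs(e^{-t|BD|}h)\|_p$. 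Since $\IP$ is a Mikhlin multiplier (recalled in Section~\ref{sec:D}) and hence bounded on $L^p$, the same bound holds for $\|\IP u_{t_0}\|_p$ uniformly in $t_0$. Fatou's lemma applied along an a.e.-convergent subsequence of $\IP u_{t_0} \to \IP h$ gives $\|\IP h\|_p \lesssim \|\tNs(e^{-t|BD|}h)\|_p$, and Theorems~\ref{thm:hpd} and~\ref{thm:hpbd} identify this with $\|h\|_{\IH^p_{BD}}$, closing the $L^p$ case.

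For $p = \infty$ and for $0 < \alpha < 1$ the same scheme applies, but the $L^p$ step must be replaced by a ball-by-ball oscillation estimate: for a ball $B = B(x_0, r)$ one chooses $t_0 \sim r$ and controls $r^{-\alpha}(\barint_B |u_{t_0} - (u_{t_0})_B|^2)^{1/2}$ by $\|\tNsa(e^{-t|BD|}h)\|_\infty$, exploiting the Whitney average intrinsic to $\tNsa$ together with the analyticity of $s\mapsto e^{-s|BD|}h$ to pass from the Whitney average to a slice average at height $r$. Boundedness of $\IP$ on $\BMO$ and $\dot \Lambda^\alpha$ (again by its Mikhlin nature) then transfers the oscillation bound to $\IP u_{t_0}$, and a Fatou-type argument using $\IP u_{t_0}\to \IP h$ in $L^2_{\mathrm{loc}}$ delivers the bound on $\|\IP h\|_{\dot\Lambda^\alpha}$, which is equivalent to $\|h\|_{\IL^\alpha_{BD}}$ by Theorem~\ref{thm:hpbd}.

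The main obstacle is precisely this last case: no off-the-shelf analogue of Proposition~\ref{prop:lowerp>1} is available in the $\BMO/\dot\Lambda^\alpha$ setting, so the Whitney-to-slice comparison must be carried out directly via the smoothness of the semigroup $s\mapsto e^{-s|BD|}h$. The $L^p$ range, by contrast, reduces to a routine application of Proposition~\ref{prop:lowerp>1}, the $L^p$-boundedness of $\IP$, and Fatou.
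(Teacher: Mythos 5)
There is a genuine gap at the heart of your $L^p$ argument: the claimed pointwise bound $\tN(e^{-s|BD|}u_{t_0})(x)\lesssim \tNs(e^{-t|BD|}h)(x)$ uniformly in $t_0$. Writing $e^{-s|BD|}u_{t_0}=(e^{-s|BD|}h-h)-(e^{-(s+t_0)|BD|}h-h)$, the second term requires averaging $e^{-\sigma|BD|}h-h$ over the translated region $W(t,x)+(t_0,0)$, which for $t\ll t_0$ is a box of side $\sim t$ sitting at height $\sim t_0$. It is indeed contained in an enlarged Whitney region at scale $\sim t_0$, but containment does not control averages: the measures differ by a factor $\sim (t_0/t)^{n+1}$, so the inequality you assert only holds with an unbounded factor $(t_0/t)^{(n+1)/2}$. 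Passing from Whitney (solid) averages to averages over much smaller boxes or slices at a given height is exactly the kind of statement that fails for a general $L^2_{\loc}$ function and, for these operators with no kernel bounds, requires the square-function/off-diagonal machinery (cf.\ the remark in Section \ref{sec:ntmax} about slice versus Whitney maximal functions); it cannot be obtained by ``enlarging the aperture.'' The same obstruction resurfaces if one tries to salvage the argument through the a.e.\ limit $t\to0$, since the limit of the second term is $|u_{t_0}(x)|$ itself. For the $p=\infty$ and H\"older cases you concede that the analogue of Proposition \ref{prop:lowerp>1} is missing and that a ``Whitney-to-slice'' comparison must be done by hand --- but that comparison is precisely the content of the lemma, so the hardest part of the proof is deferred rather than carried out.

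There is a second, structural problem: even granting the maximal-function step, your conclusion passes through $\|\IP h\|_{p}$ (resp.\ $\|\IP h\|_{\dot\Lambda^\alpha}$) and then invokes Theorem \ref{thm:hpbd} to identify this with $\|h\|_{\IH^p_{BD}}$ (resp.\ $\|h\|_{\IL^\alpha_{BD}}$). The inequality you need, $\|h\|_{\IH^p_{BD}}\lesssim\|\IP h\|_p$, is only available from Theorem \ref{thm:hpbd} for $p<(p_{+}(DB))^*$ (and for $\alpha<1-\tfrac n{p_{+}}$ when $p_{+}>n$); beyond that range it is exactly the conditional content of Theorem \ref{thm:goodhpbd}, which requires the hypothesis $\IH^q_{DB^*}=\IH^q_{D}$. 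Lemma \ref{lem:sharp1}, however, is unconditional and holds for all $2<p\le\infty$ and all $0\le\alpha<1$; it is paired with Lemma \ref{lem:sharp2} precisely so that the identification of norms is used only once, in the theorem. The paper's proof avoids both issues by bounding the defining tent-space norm directly: with $\psi(z)=z^Ne^{-\modz}(e^{-\modz}-1)$ it establishes the pointwise Carleson-function estimate $C_{\alpha}(\psi(tBD)h)\lesssim \MM_{2}\big(\tNsa(e^{-t|BD|}h)\big)$ by a Duong--Yan-type argument (subtracting the Whitney average $\barint_{I_r}e^{-\tau|BD|}h\,d\tau$, decomposing into balls, and using square-function bounds and $L^2$ off-diagonal decay), and then concludes via \cite[Theorem 3(a)]{CMS} and the maximal theorem for $p>2$, and directly from $\|F\|_{T^\infty_{2,\alpha}}=\|C_\alpha F\|_\infty$ in the H\"older/BMO case. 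You would need to supply an argument of this type for the translated/slice averages before your scheme can close.
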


\begin{lem}\label{lem:sharp2} For $2<p\le \infty$, we have
$$
 \|\tNs(e^{-t|BD|}h)\|_{p} \lesssim \|\IP h^+\|_{p}+ \|\IP h^-\|_{p} +  \|h\|_{\IH^p_{BD}},\quad \forall h\in \clos{\ran_{2}(BD)},
$$
and for $0\le \alpha < 1$,
$$
 \|\tNsa(e^{-t|BD|}h)\|_{\infty} \lesssim \|\IP h^+\|_{\dot \Lambda^\alpha} + \|\IP h^-\|_{\dot \Lambda^\alpha}  + \| h\|_{\IL^\alpha_{BD}}  ,\quad \forall h\in \clos{\ran_{2}(BD)},
 $$
 where $h^\pm= \chi^\pm(BD)h$. 
\end{lem}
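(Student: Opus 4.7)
The plan is to split $h=h^++h^-$ via the spectral projections $h^\pm=\chi^\pm(BD)h$, so that
\[
e^{-t|BD|}h-h=(e^{-tBD}h^+-h^+)+(e^{tBD}h^--h^-),
\]
and to treat the two summands symmetrically (replacing $z$ by $-z$). For the $+$-term I approximate $e^{-z}$ on $S_{\mu+}$ by a rational function $\phi_+\in\mR^k(S_\mu)$ with $\phi_+(0)=1$ and $|e^{-z}-\phi_+(z)|=O(|z|^\sigma)$ on $S_{\mu+}$, where $k$ and $\sigma$ are chosen larger than $\tfrac{n}{2}+\alpha$ (respectively $n(\tfrac12-\tfrac1p)$ in the $L^p$-case). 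Setting
\[
\psi_+(z):=(e^{-z}-\phi_+(z))\chi^+(z),\qquad \eta_+(z):=(\phi_+(z)-1)\chi^+(z),
\]
the exponential decay of $e^{-z}$ in $S_{\mu+}$ dominates the polynomial decay of $\phi_+$, so $\psi_+\in\Psi_\sigma^\tau(S_\mu)$ with both $\sigma,\tau$ arbitrarily large after tuning $\phi_+$, while $\eta_+$ vanishes to first order at $0$ and is a finite combination of resolvent kernels. The functional calculus then gives
\[
e^{-tBD}h^+-h^+=\psi_+(tBD)h+\eta_+(tBD)h.
\]

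For the $\psi_+$-piece, allowability of $\psi_+$ for $\IH^p_{BD}$ and $\IL^\alpha_{BD}$ produces $\|\Qpsi{\psi_+}{BD}h\|_{T^p_2}\lesssim\|h\|_{\IH^p_{BD}}$ and its $T^\infty_{2,\alpha}$ analogue. In the $\dot\Lambda^\alpha$-case at $p=\infty$, a direct comparison between a Whitney average and a Carleson integral gives the geometric inequality $\|\tNsa F\|_\infty\lesssim\|F\|_{T^\infty_{2,\alpha}}$, and we are done. For $2<p<\infty$, I would establish $\|\tN[\psi_+(tBD)h]\|_p\lesssim\|h\|_{\IH^p_{BD}}$ by adapting the molecular argument of Lemma~\ref{lem:upperp<2}: the rapid decay and vanishing of $\psi_+$ supply $L^2$ off-diagonal estimates of any order for $\psi_+(tBD)$, so on an $(\IH^p_{BD},\epsilon,M)$-molecule $m$ the local/global decomposition of $\tN(\psi_+(tBD)m)$ yields a uniform bound $\|\tN(\psi_+(tBD)m)\|_p\lesssim 1$, and real interpolation (or duality against $\IH^{p'}_{DB^*}$ through Proposition~\ref{prop:duality}) extends this to the full range $p>2$.

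For the $\eta_+$-piece the essential cancellation is that $h^+-\IP h^+\in\nul_2(D)=\nul_2(BD)$ and $\phi_+(0)=1$, so $\phi_+(tBD)(h^+-\IP h^+)=h^+-\IP h^+$. Since also $\eta_+|_{S_{\mu-}}\equiv 0$, this gives the key identity
\[
\eta_+(tBD)h=(\phi_+(tBD)-I)h^+=(\phi_+(tBD)-I)\IP h^+.
\]
Expanding $\phi_+(tBD)-I=\sum_m c_m[(1+imtBD)^{-k}-I]$ and noting that this operator annihilates constants (since $(1+imtBD)$ fixes them), at a base point $(t_0,x)$ I would subtract $c:=\barint_{B(x,\kappa t_0)}\IP h^+$ and decompose $\IP h^+-c$ dyadically on the annuli $S_j(B)$ around $B=B(x,\kappa t_0)$. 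Lemma~\ref{lem:odd} with $N\gg 1$ gives
\[
\Bigl(\bariint_{W(t_0,x)}|(\phi_+(sBD)-I)\IP h^+|^2\Bigr)^{1/2}\lesssim\sum_{j\ge0}2^{-jN}\Bigl(\barint_{2^jB}|\IP h^+-c|^2\Bigr)^{1/2}.
\]
For $L^p$ ($p>2$) the right-hand side is controlled pointwise by $\MM_2(\IP h^+)(x)$, hence $\|\tN[\eta_+(tBD)h]\|_p\lesssim\|\IP h^+\|_p$ by $L^p$-boundedness of $\MM_2$. For $\dot\Lambda^\alpha$ ($0\le\alpha<1$), a standard telescoping estimate yields $(\barint_{2^jB}|\IP h^+-c|^2)^{1/2}\lesssim 2^{j\alpha}t_0^\alpha\|\IP h^+\|_{\dot\Lambda^\alpha}$ (with a $\log(1+j)$ factor when $\alpha=0$), and the geometric sum (convergent for $N>\alpha$) yields $t_0^\alpha\|\IP h^+\|_{\dot\Lambda^\alpha}$, giving $\|\tNsa[\eta_+(tBD)h]\|_\infty\lesssim\|\IP h^+\|_{\dot\Lambda^\alpha}$.

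The main obstacle I foresee is the passage $\|\tN[\psi_+(tBD)h]\|_p\lesssim\|h\|_{\IH^p_{BD}}$ for $2<p<\infty$: one needs to move from a purely tent-space bound to a non-tangential maximal bound, which is not part of the general theory summarised in the paper. I expect this to follow from a Lemma~\ref{lem:upperp<2}-style molecular argument exploiting the rapid decay and vanishing of $\psi_+$, together with interpolation or duality, but the exact formulation deserves care; the other ingredients (spectral splitting, cancellation against $\nul_2(BD)$, off-diagonal decay on annuli, and telescoping) are already standard in the paper's setting.
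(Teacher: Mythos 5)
Your proof follows the same route as the paper's. The paper takes $\phi(z)=e^{-\modz}$, picks $\phi_{\pm}\in\mR^{2}(S_{\mu})$ with $|\phi-\phi_{\pm}|=O(|z|^{\sigma})$ on $S_{\mu\pm}$, and splits $e^{-t|BD|}\IP h-\IP h$ into $\psi_{+}(tBD)\IP h+\psi_{-}(tBD)\IP h$ plus $\phi_{\pm}(tBD)\IP h^{\pm}-\IP h^{\pm}$, using \eqref{eq:sharp} to pass to $\IP h$ at the outset; your version with $\eta_{\pm}=(\phi_{\pm}-1)\chi^{\pm}$ and the cancellation through $\nul_{2}(D)=\nul_{2}(BD)$ is the same identity. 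Your treatment of the resolvent piece (resolvents fix constants, annular decomposition with the off-diagonal bounds of Lemma \ref{lem:odd}, $\MM_{2}$ for $p<\infty$, average subtraction and telescoping for $\dot\Lambda^{\alpha}$) is exactly what the paper does; the paper's choice of $\mR^{2}$ rather than your $\mR^{k}$ is immaterial here, since iterated resolvents satisfy the same decay, and applying $\phi_{+}(sBD)$ to a constant is legitimate by the $L^{\infty}\to L^{2}_{loc}$ extension of Remark \ref{rem:extension}.

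The one place your write-up goes astray is precisely the step you flag as the main obstacle, namely $\|\tN(\psi_{+}(tBD)h)\|_{p}\lesssim\|h\|_{\IH^p_{BD}}$ for $2<p<\infty$. This is not an obstacle: on a Whitney box $W(t,x)$ one has $s\sim t$, so $\bariint_{W(t,x)}|F|^2\lesssim\iint_{|y-x|<as}|F(s,y)|^2\,\frac{dyds}{s^{n+1}}$ for a large enough aperture $a$, hence $\tN(F)\lesssim \SF_{a}F$ pointwise, and change of aperture in tent spaces gives $\|\tN F\|_{p}\lesssim\|F\|_{T^p_{2}}$ for every $p$. This is the paper's ``geometric considerations'' (already invoked in Proposition \ref{prop:upperp>2}), and it is the same Whitney-box-versus-Carleson-region comparison you yourself used at $p=\infty$. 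By contrast, the fallback you propose would not close this case: a molecular argument in the style of Lemma \ref{lem:upperp<2} combined with real interpolation only reaches $p\le 2$, and duality through Proposition \ref{prop:duality} yields lower square-function bounds for the dual operator, not upper non-tangential bounds. Replace that paragraph by the aperture comparison and your argument coincides with the paper's proof.
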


Let us admit the lemmata and prove the theorem. As seen many times, if $q>1$ and $p=q'$, the hypothesis
implies that $\|\IP h^+\|_{p}+ \|\IP h^-\|_{p} \sim \|\IP h\|_{p} \sim  \|h\|_{\IH^p_{BD}}$. If $q\le 1$ and $\alpha=n(\frac{1}{q}-1)$ then
$\|\IP h^+\|_{\dot \Lambda^\alpha} + \|\IP h^-\|_{\dot \Lambda^\alpha}  \sim \|\IP h\|_{\dot \Lambda^\alpha}\sim  \| h\|_{\IL^\alpha_{BD}}$. The conclusion follows right away. 

\begin{proof}[Proof of Lemma \ref{lem:sharp1}] To prove this result, we introduce
the Carleson function $C_{\alpha}F$ by
$$
C_{\alpha}F(x):= \sup \bigg(\frac {1}{r^{2\alpha}  |B(y,r)|}\iint_{T_{y,r}}   |F(t,z)|^2\, \frac{dtdz}{t}\bigg)^{1/2},
$$
the supremum being taken over all  open balls $B(y,r) \ni x$  in $\R^n$ and $T_{y,r}=(0,r)\times B(y,r)$. 
For $0\le \alpha<1$ and a suitable allowable $\psi$ for both $\IH^p_{BD}$ and $\IL^\alpha_{BD}$,
we shall show the pointwise bound
\begin{equation}
\label{eq:Carlalpha}
C_{\alpha}(\psi(tBD)h) \lesssim \MM_{2}\big(\tNsa(e^{-t|BD|}h)\big), 
\quad \forall h\in \clos{\ran_{2}(BD)}. 
\end{equation}
Admitting this inequality, we have
$$
 \|h\|_{\IH^p_{BD}} \lesssim \|\psi(tBD)h\|_{T^p_{2}}\lesssim \|C_{0}(\psi(tBD)h)\|_{p}\lesssim  \|\tNs(e^{-t|BD|}h)\|_{p}.
 $$
 The first inequality is the lower bound valid for any $\psi\in \Psi(S_{\mu})$, the second inequality is from   \cite[Theorem 3(a)]{CMS} and the last one uses  \eqref{eq:Carlalpha},  the maximal theorem and $p>2$.  Similarly
 $$
 \|h\|_{\IL^\alpha_{BD}} \lesssim \|\psi(tBD)h\|_{T^\infty_{2,\alpha}}= \|C_{\alpha}(\psi(tBD)h)\|_{\infty }\lesssim  \|\tNsa(e^{-t|BD|}h)\|_{\infty}.
 $$
 
 We turn to the proof of \eqref{eq:Carlalpha}.  We adapt an argument in \cite{DY}, Theorem 2.14, to our situation. We  choose $\tilde\psi(z)=z^N e^{-[z]}$ and $ \psi(z)=\tilde\psi(z)(e^{-\modz }-1)$ so that $\tilde\psi\in \Psi_{N}^\tau(S_{\mu})$ and $\psi\in \Psi_{N+1}^\tau(S_{\mu})$ for all $\tau>0$. The integer $N$ will be chosen large. It will be convenient to set $P_{t}=e^{-t|BD|}$, so that $\tilde\psi(tBD)= (tBD)^NP_{t}$ and $\psi(tBD)= (tBD)^NP_{t}(P_{t}-I)$.
 
 We fix $h\in \clos{\ran_{2}(BD)}$ and $x\in \R^n$. Consider $T_{y,r}=(0,r)\times B(y,r)$ such that $x\in B(y,r)$. Recall that $W(t,z):= (c_0^{-1}t,c_0t)\times B(z,c_1t)$, for some fixed constants $c_0>1$, $c_{1}>0$. We set $I_{t}= (c_0^{-1}t,c_0t)$.
 
Set $g=h- \barint_{\hspace{2pt}I_{r}} P_{\tau}h\, d\tau$ and   consider $I(y,r)=\int\!\!\!\!\int_{T_{y,r}}   |\psi(sBD)g(z)|^2\, \frac{dsdz}{s}$. Pick $a>0$ such that the balls $B_{k}=B(x+akr, \frac{c_{1}}{2}r)$, $k\in \Z^n$,  cover $\R^n$ with bounded overlap.  We set $g_{k}=g1_{B_{k}}$. If $B_{k}\cap 2B(y,r)\ne \emptyset$, which occurs for boundedly  (with respect to $x, y, r$) many $k$  then we use  the square function estimate  and definition of $g_{k}$ to  obtain
$$
\iint_{T_{y,r}}   |\psi(sBD)g_{k}(z)|^2\, \frac{dsdz}{s} \lesssim \|g_{k}\|_{2}^2 \le |B_{k}| \bariint_{I_{r}\times B_{k}}  |h - P_{\tau}h|^2.
$$
If $B_{k}\cap 2B(y,r)= \emptyset$,  which occurs when $|k| \ge K$ for some integer $K\ne 0$, then we can use the $L^2$ off-diagonal decay  \eqref{eq:odnpsipq} for each $s$ to obtain
  $$
\iint_{T_{y,r}}   |\psi(sBD)g_{k}(z)|^2\, \frac{dsdz}{s} \lesssim  |k|^{-2(N+1)} \|g_{k}\|_{2}^2
\le |k|^{-2(N+1)}|B_{k}| \bariint_{I_{r}\times B_{k}}  |h - P_{\tau}h|^2.
$$
For $N+1>n$, we obtain (using Minkowski  inequality for the integral followed by Cauchy-Schwarz inequality for the sum) 
$$
   I(y,r) \lesssim \sum_{k\in \Z^n} (1+|k|)^{-N-1} |B_{k}| \bariint_{I_{r}\times B_{k}}  |h - P_{\tau}h|^2.
  $$
 Now observe that $|B_{k}|=2^{-n}|B(z,c_{1}r)|$ and if $z\in B_{k}$, then $B_{k}\subset B(z, c_{1}r)$. Hence 
\begin{align*}
 |B_{k}| \bariint_{I_{r}\times B_{k}}  |h - P_{\tau}h|^2 & \le 2^n |B_{k}| \inf_{z\in B_{k}}\bariint_{W(r,z)}  |h - P_{\tau}h|^2\\
 &
 \le 2^n  r^{2\alpha}|B_{k}| \inf_{z\in B_{k}}  \tNsa(e^{-t|BD|}h)^2(z) 
 \\
 &\le 2^n r^{2\alpha}\int_{B_{k}} \tNsa(e^{-t|BD|}h)^2(z)  \, dz  
\end{align*}
and this implies
$$ I(y,r) \lesssim r^{2\alpha}\sum_{k\in \Z^n} (1+|k|)^{-N-1} \int_{B_{k}} \tNsa(e^{-t|BD|}h)^2(z)  \, dz \lesssim \MM_{2}\big(\tNsa(e^{-t|BD|}h)\big)^2(x) r^{n+2\alpha},
 $$
 where  the last inequality uses the bounded overlap of the balls $B_{k}$ and requires $N+1>n$.

Next, we bound $J(y,r)=\int\!\!\!\!\int_{T_{y,r}}   |\psi(sBD)\big(\barint_{\hspace{2pt}I_{r}} P_{\tau}h\, d\tau\big)(z)|^2\, \frac{dsdz}{s}$. We compute
\begin{align*}
   \psi(sBD) P_{\tau}&= (sBD)^N P_{s+\frac{\tau}{2}}(P_{s+\frac{\tau}{2}}-P_{\frac{\tau}{2}})\\&=(sBD)^N P_{s+\frac{\tau}{2}}(P_{s+\frac{\tau}{2}}-I)+ (sBD)^N P_{s+\frac{\tau}{2}}(I-P_{\frac{\tau}{2}}).    
    \end{align*}
Let us call $J_{1}(y,r)$ and $J_{2}(y,r)$ the integrals corresponding to the first term and  second term respectively. We first handle $J_{2}$. Use $s\le s+\frac{\tau}{2}$,  change variable $s\mapsto s+\frac{\tau}{2}$,   and observe that as $\tau\in I_{r}$ and $0<s<r$, we have $s+\frac{\tau}{2} \in [\frac{c_{0}^{-1}}{2}r, r+ \frac{c_{0}}{2}r]=J_{r}$. Thus, 
\begin{align*}
J_{2}(y,r)& \lesssim \barint_{\hspace{-2pt}I_{r}}\int_{B(y,r)}\int_{J_{r}} | \tilde\psi(sBD)(h-P_{\frac{\tau}{2}}h)(z)|^2\, \frac{ds}{s}dzd\tau\\
 & = \barint_{\hspace{-2pt}I_{\frac{r}{2}}}\int_{B(y,r)}\int_{J_{r}} | \tilde\psi(sBD)(h-P_{\tau}h)(z)|^2\, \frac{ds}{s}dzd\tau.     
\end{align*}
We use the $L^2$ off-diagonal  estimates for $\tilde\psi(sBD)$ with $N>n$, which are uniform in $s\in J_{r}$, and obtain the desired bound on $J_{2}(y,r)$ with the same analysis (change $r$ to $\frac{r}{2}$ in the definition of the balls $B_{k}$) as above. 

For $J_{1}$, we operate the same change of variable to get
\begin{align*}
  J_{1}(y,r)&\lesssim \barint_{\hspace{-2pt}I_{r}}\int_{B(y,r)}\int_{J_{r}} | \tilde\psi(sBD)(P_{s}h-h)(z)|^2\, \frac{ds}{s}dzd\tau \\
 & = \int_{B(y,r)}\int_{J_{r}} | \tilde\psi(sBD)(P_{s}h-h)(z)|^2\, \frac{ds}{s}dz .  
      \end{align*}
Now, we observe that $J_{r}$ can be covered by a bounded (with respect to $r$) number of interval $I_{c_{0}^{2i} \frac{r}{2}} $ 
We proceed   a similar analysis as before  for each integral $\int_{B(y,r)}\int_{I_{c_{0}^{2i} \frac{r}{2}} }$  with the appropriate $B_{k}$ type balls, use the $L^2$ off-diagonal  estimates for $\tilde\psi(sBD)$ with $N>n$. This leads to  the same bound for $J_{1}(y,r)$ as for $I(y,r)$. 
We leave details to the reader. \end{proof}

\begin{proof}[Proof of Lemma \ref{lem:sharp2}]  We begin with the $L^p$ estimates and  proceed exactly as in the proof of Theorem \ref{thm:ntbd}.  We have $\phi(z)=e^{-\modz} \in\mR^2_{\sigma}(S_{\mu})\cap \Psi^\tau_{0}(S_{\mu})$ for any $\sigma>0$ and $\tau>0$. 
Pick $\phi_{\pm}\in \mR^{2}(S_{\mu})$ such that 
\begin{equation*}
|\phi(z)-\phi_{\pm}(z)|=O(|z|^\sigma), \quad \forall z\in S_{\mu\pm}.
\end{equation*}
Then  $\psi_{\pm}(z):=(\phi-\phi_{\pm})(z)\chi^\pm(z)$ satisfy $\psi_{\pm}\in \Psi_{\sigma}^{2}(S_{\mu})$. Hence, for  $h\in \clos{\ran_{2}(BD)}$, using $h=\chi^{+}(BD)h+ \chi^{-}(BD)h=h^++h^-$, we have the decomposition
$$
\phi(tBD) \IP h -\IP h = \psi_{+}(tBD) \IP h + \psi_{-}(tBD)\IP h+ \phi_{+}(tBD)\IP h^+  -\IP h^+ +  \phi_{-}(tBD)\IP h^-  -\IP h^-.
$$
  From geometric considerations,  we deduce from Lemma \ref{lem:sfpinfty}   if  $\sigma$ is large enough
 $$
\|\tN(\psi_{+}({tBD})\IP  h) \|_{p} \lesssim \|\psi_{+}(tBD)\IP h \|_{T^p_{2}}  \lesssim  \|h\|_{\IH^p_{BD}}
$$
and similarly for the term with $\psi_{-}$.
Next, the $L^2$ off-diagonal estimates of Lemma  \ref{lem:odd}  for the combinations of iterates of resolvent $(I+itT)^{-2}$ yield the pointwise estimate
 $$
  \tN(\phi_{+}(tBD)\IP h^+- \IP h^+ ) \lesssim \MM_{2}(|\IP h^+|).
  $$
  Thus, as $p>2$, 
  $$
\|\tN(\phi_{+}(tBD)\IP h^+- \IP h^+ ) \|_{p}   \lesssim \|\IP h^+ \|_{p}.
$$
We argue similarly for $\phi_{-}(tBD)\IP h^-$.  This proves the first estimate since  $\phi(tBD) \IP h -\IP h= \phi(tBD)  h -h$. 

For the H\"older estimates, we use the same decomposition and observe that $\tNsa(g)\lesssim C_{\alpha}g$ pointwise. Hence, for $\sigma$ large enough, 
$$
\|\tNsa(\psi_{+}({tBD})\IP  h) \|_{\infty} \lesssim \|\psi_{+}(tBD)\IP h \|_{T^\infty_{2,\alpha}}  \lesssim  \| h\|_{\IL^\alpha_{BD}}
$$ 
and similarly for the term with $\psi_{-}$. 
Next, we fix a Whitney box $W(t,x)$ and let $c^{\pm}$ be the average of $\IP h^\pm$ on the ball $B(x,c_{1}t)$. Then we write 
$$
\phi_{+}(sBD)\IP h^+- \IP h^+= \phi_{+}(sBD)(\IP h^+-c^+)- (\IP h^+-c^+).
$$
The $L^2$ off-diagonal estimates of Lemma \ref{lem:odd}  for the combinations of iterates of resolvent $(I+itT)^{-2}$ yield the pointwise estimate
$$
\tNsa(\phi_{+}(sBD)\IP h^+ )^2(x) \lesssim \sup_{t>0} t^{-\alpha} \barint_{B(x,c_{1}t)} |\IP h^+-c^+|^2
$$
which leads to the estimate
$$
\|\tNsa(\phi_{+}(sBD)\IP h^+ ) \|_{\infty}   \lesssim \|\IP h^+ \|_{\dot\Lambda^\alpha}.
$$
The argument for $\phi_{-}(tBD)\IP h^-$ is similar. 
\end{proof}

\section{Sobolev  spaces for $DB$ and $BD$}\label{sec:Sobolev}

So far, we have privileged the  $L^2$ theory: we considered estimates with \textit{a priori} knowledge for $h$ in the closure of the $L^2$ range. But this is only for convenience.
As mentioned in the introduction, we can consider a Sobolev theory as well and relax this \textit{a priori} information on $h$. This is required for use of energy spaces. For any bisectorial operator with a $H^\infty$-calculus on the closure of its range, there is a Sobolev space theory associated to this operator as developed by means of quadratic estimates in this context in \cite{AMcN1}, extending many earlier works for self-adjoint operators, positive operators... (see the references there). But here, we want a theory that leads to concrete spaces. 

For  the operator $DB$, the relevant  Sobolev theory is for regularity indices  $s\in [-1,0]$. For $s=0$, this is already done. We shall do it for $s<0$ in this section. This has been considered in some special cases  for $D$ in relation with the boundary value problems \cite{R2, AMcM}.  For $BD$, things are more complicated.  There are two options for regularity indices  $0\le s\le 1$: the  Sobolev spaces associated to $BD$ or the  Sobolev spaces   associated to  the operators $\IP BD$ after projecting by $\IP$. The first theory leads to abstract spaces and the second to concrete spaces. They are both useful. 

\subsection{Definitions and properties}

For convenience, we denote by 
 $\mH^0_{D}=\clos{\ran_{2}(\dirac)}$ and $\mH= L^2(\R^n;\C^N)$.  
Let $S= \dirac|_{\mH^0_{D}}$ with domain $\dom_{2}(\dirac)\cap \mH^0_{D}$. Then $S$ is an injective, self-adjoint operator. Recall that $\IP$ is the orthogonal projection from $\mH$ onto $\mH^0_{D}$. Let $\bet$ be the operator on $\mH^0_{D}$ defined by
$\bet h =\IP Bh = \IP B\IP h$ for  $h\in \mH^0_{D}$.  Recall that as $B$ is a strictly accretive  operator on $\mH^0_{D}$, the restriction of   $\IP$ on $B\mH^0_{D}$ is an isomorphism onto $ \mH^0_{D}$  and  $\bet$ is a strictly accretive operator on $\mH^0_{D}$. 

Define 
    $$
T: \mH^0_{D} \to \mH^0_{D}, \quad T= \bet S= \IP  B\dirac_{|\mH^0_{D}} \ \mathrm{with}\  \dom_{2}(T)=\dom_{2}(S)
$$
and 
$$
\uT: \mH^0_{D} \to \mH^0_{D}, \quad \uT= S \bet = \dirac \IP B_{|\mH^0_{D}}= \dirac B_{|\mH^0_{D}}\ \ \mathrm{with}\  \dom_{2}(\uT)=\bet^{-1}\dom_{2}(S).
$$
Using  Proposition \ref{prop:typeomega} and the comment that follows it, $T$ and $\uT$ are $\omega$-bisectorial operators on $\mH^0_{D}$. Moreover, they are injective. Observe also that 
$$
V: \clos{\ran_{2}(B\dirac)} \to \clos{\ran_{2}(B\dirac)}, \quad V=BD|_{\clos{\ran_{2}(B\dirac)}}\ \ \mathrm{with}\  \dom_{2}(V)={\clos{\ran_{2}(B\dirac)}} \cap \dom_{2}(D)
$$
is also an injective $\omega$-bisectorial operator with $H^\infty$-calculus on $\clos{\ran_{2}(B\dirac)}$. 

We remark that if  $\psi \in \Psi(S_{\mu})$, we have the intertwining relation
 \begin{equation}
\label{eq:intert}
\psi(T)\IP h= \IP \psi(BD) h= \IP \psi(V) h, \quad h\in \clos{\ran_{2}(B\dirac)}, 
\end{equation}
and
\begin{equation}
\label{eq:tdb}
\psi (\uT) h= \psi(DB) h,  \quad h\in \mH^0_{D}.
\end{equation}
These relations are easily verified for the resolvent  and then one uses \eqref{eq:cauchyformula}.
It follows that  the operator norms of $\psi(T)$ and $ \psi(\uT)$ are bounded by $C_{\mu}\|\psi\|_{\infty}$, which guarantees that $T$ and $\uT$ have $H^\infty$-calculus on $\mH^0_{D}$ and the two formul\ae\ above extend to all $b\in H^\infty(S_{\mu})$.

We define the Sobolev spaces next. We use the curly style $\mH$ to distinguish them from pre-Hardy  and Hardy spaces where we use  the mathbb style  $\IH$ or roman style  $H$.

For $s\in \R$, define the inhomogeneous  Sobolev space associated with $S$, $\mH^s_{S}$, as the subspace of $\mH^0_{D}$ for which 
$$
\|h\|_{S,s}= \left\{ \int_{0}^\infty t^{-2s}\|\psi_{t}(S)h\|_{2}^2\, \frac{dt}{t}\right\}^{1/2} <\infty
$$
for a suitable $\psi\in \Psi(S_{\mu})$, for example $\psi(z)=z^k e^{-\modz}$ and $k$ an integer with $ k>\max (s,0)$. We define the homogeneous  Sobolev space associated with $S$, $\dot \mH^s_{S}$, as the completion of $\mH^s_{S}$ for $\|h\|_{S,s}$. 

Remark that from the  spectral theorem $\dot \mH^0_{S}=\mH^0_{S}=\mH^0_{D}$. Next, it can be checked that $\|h\|_{S,s}=c_{\psi, s}\||S|^sh\|_{2}$ where $|S| = (S^2)^{1/2}$. As $S=D|_{\mH^0_{D}}$, $ \mH^s_{S}$ is the closed subspace of  the usual inhomogeneous Sobolev  space $\mH^s$, equal to the  image of $\mH^s$ under the projection $\IP$, and similarly 
$\dot \mH^s_{S}$ is the image of  the usual homogeneous Sobolev  space $\dot \mH^s$ under (the extension of)  $\IP$ (which extends boundedly to $\dot \mH^s$ as it is a smooth singular integral convolution operator). It is not hard to check that $\dot \mH^s_{S} \cap \dot \mH^0_{S}=  \mH^s_{S}$.

Note that the $H^\infty$- and self-adjoint calculi of $S$ on $\dot \mH^0_{S}$ extend to $\dot \mH^s_{S}$ and that  $S$ extends to an isomorphism between $\dot \mH^s_{S}$ and $\dot \mH^{s-1}_{S}$. 
Classically, the intersection of $\dot \mH^s_{S}$  is dense in each of them.  Here is a precise statement whose proof is left to the reader.  Alternately, one can do  this using the usual Sobolev spaces $\dot \mH^s$ and project under $\IP$. 

\begin{lem}\label{lem:approxk} Let $\theta(z)=ce^{-\modz - \modz^{-1}} \in  \cap_{\sigma>0,\tau>0} \Psi_{\sigma}^\tau(S_{\mu})$ with $c^{-1}=\int_{0}^\infty \theta(t)\, \frac{dt}{t}$. For any $s\in \R$ and $h\in \dot \mH^s_{S}$, $h_{k}= \int_{1/k}^k \theta(tS)h \, \frac{dt}{t}\in \bigcap_{s'\in \R}  \mH^{s'}_{S}$ and converges to $h$ in $\dot \mH^s_{S}$ as $k\to \infty$. 
\end{lem}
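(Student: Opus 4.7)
My approach is to exploit the self-adjointness of $S$ on $\mH^0_D$ via the spectral theorem, since this reduces everything to a concrete scalar computation against the spectral measure $(E_\lambda)$. Let $\phi_k(\lambda):= \int_{1/k}^k \theta(t\lambda)\,\frac{dt}{t}$; by the substitution $s=t|\lambda|$ (noting $\theta(z)$ only depends on $\modz$) this equals $\int_{|\lambda|/k}^{k|\lambda|}\theta(s)\,\frac{ds}{s}$. Since $\theta\in\bigcap_{\sigma,\tau>0}\Psi_{\sigma}^\tau(S_\mu)$, we have $|\theta(s)|\lesssim_N \min(s^N, s^{-N})$ for every $N\ge 1$, which yields the two-sided decay
\[
|\phi_k(\lambda)|\lesssim_N \min\bigl( (k|\lambda|)^N , (k/|\lambda|)^N\bigr) \qquad (k\ge 1,\ \lambda\ne 0),
\]
and also the uniform bound $|\phi_k(\lambda)|\le c^{-1}\int_0^\infty\theta(s)\,\tfrac{ds}{s}=1$ from positivity of $\theta$ on $(0,\infty)$. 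Continuity of $t\mapsto \theta(tS)h$ in $\dot\mH^s_S$ on the compact interval $[1/k,k]$ (which follows from bounded functional calculus and uniform continuity of $t\mapsto \theta(tz)$ on compact sets of $S_\mu$) ensures that the Bochner integral $h_k$ makes sense in $\dot\mH^s_S$ and that $h_k = \phi_k(S)h$ by commuting the integral with the spectral representation.

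First I would verify the membership $h_k\in \bigcap_{s'\in\R}\mH^{s'}_S$. Since $\theta(tS)$ maps into $\mH^0_D$, so does $h_k$. For the Sobolev norm, using $\|h\|_{S,s}^2 \sim \int_{\R^*}|\lambda|^{2s}\,d\|E_\lambda h\|^2$, it suffices to check
\[
\int_{\R^*}|\lambda|^{2s'}|\phi_k(\lambda)|^2\,d\|E_\lambda h\|^2 <\infty.
\]
Writing this as $\int |\lambda|^{2(s'-s)}|\phi_k(\lambda)|^2\cdot |\lambda|^{2s}\,d\|E_\lambda h\|^2$ and using $h\in\dot\mH^s_S\Rightarrow \int|\lambda|^{2s}d\|E_\lambda h\|^2<\infty$, the two-sided bound on $\phi_k$ with $N$ chosen larger than $|s-s'|$ gives $|\lambda|^{2(s'-s)}|\phi_k(\lambda)|^2\le C_{k,N}$ for all $\lambda\ne 0$; hence $h_k\in \dot\mH^{s'}_S\cap\mH^0_D=\mH^{s'}_S$.

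Next I would prove the convergence $h_k\to h$ in $\dot\mH^s_S$. By the normalization $c^{-1}=\int_0^\infty\theta(s)\,\tfrac{ds}{s}$, we have $\phi_k(\lambda)\to 1$ pointwise on $\R^*$ as $k\to\infty$, while $|1-\phi_k(\lambda)|\le 2$ uniformly. Therefore
\[
\|h-h_k\|_{\dot\mH^s_S}^2 \sim \int_{\R^*}|\lambda|^{2s}|1-\phi_k(\lambda)|^2\,d\|E_\lambda h\|^2,
\]
and the right-hand side tends to $0$ by the dominated convergence theorem, with majorant $4|\lambda|^{2s}\in L^1(d\|E_\lambda h\|^2)$.

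The only mildly delicate point is justifying $h_k=\phi_k(S)h$ as a genuine identity in $\dot\mH^s_S$ when $s<0$, since the holomorphic functional calculus of $S$ on $\dot\mH^s_S$ must be identified with the self-adjoint calculus transferred through the isomorphism $|S|^s:\dot\mH^s_S\to \mH^0_D$. Once this identification is in place (which is standard, and consistent with the preceding observation $\dot\mH^s_S=\IP(\dot\mH^s)$), everything else reduces to the scalar dominated convergence argument above. The remainder of the proof is routine.
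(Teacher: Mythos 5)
Your argument is correct, and since the paper explicitly leaves the proof of this lemma to the reader (offering only the hint that one may alternatively work with the classical spaces $\dot \mH^s$ and project under $\IP$), there is no official proof to match; your spectral-theorem route is a perfectly legitimate instantiation. What you do differently from the paper's suggested alternative is to stay intrinsic: you use that $S$ is injective and self-adjoint on $\mH^0_{D}$, identify $h_{k}=\phi_{k}(S)h$ with $\phi_{k}(\lambda)=\int_{|\lambda|/k}^{k|\lambda|}\theta(s)\,\frac{ds}{s}$, and conclude by the two-sided decay of $\phi_{k}$ (for membership in every $\mH^{s'}_{S}$) together with dominated convergence against the spectral measure (for convergence in $\dot\mH^{s}_{S}$), the injectivity of $S$ guaranteeing no mass at $\lambda=0$. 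The paper's hinted route would instead use $\dot\mH^{s}_{S}=\IP(\dot\mH^{s})$ and classical Fourier-side approximation, then project; your route avoids invoking the boundedness of $\IP$ on $\dot\mH^{s}$, at the cost of the transfer you correctly flag, namely that for $s<0$ the expressions $\int|\lambda|^{2s}\,d\|E_{\lambda}h\|^{2}$ only make literal sense after conjugating by the isomorphism $|S|^{s}:\dot\mH^{s}_{S}\to\mH^{0}_{D}$ (i.e.\ working with $g=|S|^{s}h\in L^{2}$ and using that $\phi_{k}(S)$ commutes with $|S|^{s}$); this identification is indeed standard in the functional-calculus framework the paper relies on. Two cosmetic points: the normalization bookkeeping in your uniform bound ($c^{-1}\int_{0}^{\infty}\theta(s)\,\frac{ds}{s}=1$) is slightly garbled relative to the lemma's convention, but all that is used is $\int_{0}^{\infty}\theta(s)\,\frac{ds}{s}=1$, which is the intended normalization; and when you assert that $\theta(tS)$ maps $\dot\mH^{s}_{S}$ into $\mH^{0}_{D}$ for fixed $t$, it is worth noting this is exactly because $|\lambda|^{-s}\theta(t\lambda)$ is bounded, i.e.\ the same decay estimate you already use. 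Neither affects the validity of the proof.
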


Having defined $S$ and the associated Sobolev spaces, we use the more concrete notation 
$\dot \mH^s_{D}=\dot \mH^s_{S}$ and similarly for the inhomogeneous spaces. 

\textbf{We also use the notation $DB$ for $\uT$, $BD$ for $V$, $\IP BD$ for $T$.} 

We come back to the formal notation when needed for clarity in the proofs.

We define similarly the inhomogeneous Sobolev spaces $ \mH^s_{DB}$, $\mH^s_{BD}$ and $\mH^s_{\IP BD}$ replacing $S$ by  $\uT$, $V$ and $T$ respectively. 

\begin{prop}\label{prop:Sobolevinhom} Let $s\in \R$. 
\begin{enumerate}
\item The quadratic norms are equivalent under changes of suitable non-degenerate  $\psi$. 
  \item The bounded holomorphic functional calculus extends : for any $b\in H^\infty(S_{\mu})$, $b(X)$  is bounded on $\mH^s_{X}$ if $X=DB, BD$ or $\IP BD$. 
   \item $\IP: \mH^{s}_{BD} \to \mH^{s}_{\IP BD}$ is an isomorphism.
   \item  $\mH^s_{DB}$ and $\mH^{-s}_{B^*D}$ are in duality for the $L^2$ inner product. 
   \item  $\mH^s_{DB}$ and $\mH^{-s}_{\IP B^*D}$ are in duality for the $L^2$ inner product. 
  \end{enumerate}
\end{prop}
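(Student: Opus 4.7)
The plan is to handle the five items in the order (1), (2), (3), (5), (4), relying on three tools: the bounded $H^\infty$-calculi of $T$, $\uT$, $V$ on $\mH^0_{D}$ (respectively $\clos{\ran_{2}(BD)}$), which come from the intertwinings \eqref{eq:intert}--\eqref{eq:tdb}; the Calder\'on reproducing formula of Proposition \ref{prop:calderon} with companion functions of arbitrary vanishing/decay; and the isomorphism $\IP:\clos{\ran_{2}(BD)}\to \mH^0_{D}$ of Proposition \ref{prop:projection}.

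For (1), given two non-degenerate $\psi_{1},\psi_{2}$ admissible at level $s$ (vanishing to order $>\max(s,0)$ at $0$ and decay to order $>\max(-s,0)$ at $\infty$), I would pick via Proposition \ref{prop:calderon} a companion $\varphi\in \Psi_{\sigma}^{\tau}(S_{\mu})$ with $\sigma,\tau$ larger than $|s|$ satisfying $\int_{0}^{\infty}\varphi(tz)\psi_{1}(tz)\,\frac{dt}{t}=1$, and reproduce
$$\psi_{2}(uX)h=\int_{0}^{\infty}\bigl(\psi_{2}(uX)\varphi(tX)\bigr)\psi_{1}(tX)h\,\frac{dt}{t}.$$
The change of variable $w=tz$ reveals that $\|\psi_{2}(uz)\varphi(tz)\|_{\infty}\lesssim \min(u/t,t/u)^{\delta}$ for some $\delta>|s|$, and the $H^{\infty}$-calculus on $L^{2}$ transfers this into the same bound for the operator norm on $\mH^0_{D}$ (resp.\ $\clos{\ran_{2}(BD)}$). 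Schur's lemma applied on $L^{2}(\R_{+},u^{-2s}\frac{du}{u})$ with kernel $\min(u/t,t/u)^{\delta}$ then yields $\|\cdot\|_{X,s}^{(\psi_{2})}\lesssim \|\cdot\|_{X,s}^{(\psi_{1})}$; symmetry finishes the norm equivalence. Item (2) is immediate: by the $H^{\infty}$-calculus, $b(X)$ commutes with each $\psi(tX)$ and is $L^{2}$-bounded by $C\|b\|_{\infty}$, so $\|\psi(tX)b(X)h\|_{2}\lesssim \|b\|_{\infty}\|\psi(tX)h\|_{2}$ integrates to $\|b(X)h\|_{X,s}\lesssim \|b\|_{\infty}\|h\|_{X,s}$. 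Item (3) follows directly from the intertwining \eqref{eq:intert}: $\psi(tT)\IP h=\IP\,\psi(tV)h$ for $h\in \clos{\ran_{2}(BD)}$, and since $\IP$ is an isomorphism between $\clos{\ran_{2}(BD)}$ and $\mH^0_{D}$, we have $\|\psi(tT)\IP h\|_{2}\sim \|\psi(tV)h\|_{2}$ uniformly in $t$, hence $\|\IP h\|_{T,s}\sim \|h\|_{V,s}$ by integration.

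For (5), I would first note that the $L^{2}$-adjoint of $\uT=S\bet$ on $\mH^0_{D}$ is $\bet^{*}S=\IP B^{*}D|_{\mH^0_{D}}=T_{B^{*}}$ (using that $\bet^{*}=\IP B^{*}\IP$ is bounded and $\IP=\IP^{*}$), so $\psi(u\uT)^{*}=\psi^{*}(uT_{B^{*}})$ by the functional calculus. Choosing non-degenerate $\psi,\varphi$ admissible at levels $s$ and $-s$ respectively with $\int_{0}^{\infty}\psi(tz)\varphi(tz)\,\frac{dt}{t}=1$, and $f\in \mH^{s}_{DB},\,g\in \mH^{-s}_{\IP B^{*}D}$, the identity
$$\pair{f}{g}=\int_{0}^{\infty}\pair{\varphi(t\uT)f}{\psi^{*}(tT_{B^{*}})g}\,\frac{dt}{t}$$
together with Cauchy--Schwarz using the weights $t^{-2s}\frac{dt}{t}$ and $t^{2s}\frac{dt}{t}$ gives $|\pair{f}{g}|\lesssim \|f\|_{\uT,s}\|g\|_{T_{B^{*}},-s}$. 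For the reverse inequality, any continuous linear functional on $\mH^{s}_{DB}$ is represented (Riesz applied to the isometric embedding $f\mapsto \varphi(t\uT)f$ into $L^{2}(\R_{+},t^{-2s}\frac{dt}{t};\mH^0_{D})$, followed by Calder\'on reproducing in the spirit of Proposition \ref{prop:duality}) by a unique $g\in \mH^{-s}_{\IP B^{*}D}$ of equivalent norm. Finally (4) is inherited from (5) through (3) applied to $B^{*}$: $\IP:\mH^{-s}_{B^{*}D}\to \mH^{-s}_{\IP B^{*}D}$ is an isomorphism, and since $\IP=\IP^{*}$ with $\IP f=f$ for $f\in \mH^{s}_{DB}\subset \mH^0_{D}$, we have $\pair{f}{g}=\pair{f}{\IP g}$, so the two dualities coincide modulo this isomorphism.

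The main obstacle is the cumulative bookkeeping in (1) and (5): one must arrange at each step that the auxiliary $\psi,\varphi$ have vanishing and decay strictly exceeding $|s|$ so that Schur's lemma and the weighted Cauchy--Schwarz survive the factors $t^{\pm 2s}$. This is taken care of cleanly by the freedom in Proposition \ref{prop:calderon} to fix $\sigma,\tau$ arbitrarily large. The adjoint computation $\uT^{*}=T_{B^{*}}$ on $\mH^0_{D}$ underpinning (5) deserves one line of verification but is routine once one observes that $\bet$ is bounded, so that $(S\bet)^{*}=\bet^{*}S$ on $\dom_{2}(S)\cap \mH^0_{D}$ and the $H^{\infty}$-calculus commutes with taking $L^{2}$-adjoints.
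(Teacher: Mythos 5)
Your handling of (1)--(3) and of the direct inequality in (4)--(5) is essentially the paper's own argument: (3) is verbatim the intertwining identity \eqref{eq:intert} combined with Proposition \ref{prop:projection}, and your order of proof -- identifying $\uT^*=\bet^*S=\IP B^*D|_{\mH^0_{D}}$, proving (5) by Calder\'on reproduction plus weighted Cauchy--Schwarz, then deducing (4) from $\IP=\IP^*$ and $\IP f=f$ for $f\in\mH^0_{D}$ -- is just the mirror image of the paper's route, which proves (4) first and inserts $\IP$ by intertwining to get (5). The swap of admissibility levels of $\psi$ and $\varphi$ relative to how they appear in your displayed pairing is harmless, since Proposition \ref{prop:calderon} lets you take both functions with vanishing and decay exceeding $|s|$.

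The one step that fails as stated is your converse in (5): it is not true that every continuous linear functional on $\mH^{s}_{DB}$ is represented, through the $L^2$ pairing, by some $g\in\mH^{-s}_{\IP B^{*}D}$. The Hahn--Banach/Riesz element attached to the embedding $f\mapsto \bigl(t^{-s}\varphi(t\uT)f\bigr)_{t>0}$ is an $L^2\bigl(\R_{+},\frac{dt}{t};L^2\bigr)$ function $G$, and the candidate $g=\int_{0}^{\infty}t^{-s}\varphi^{*}(tT_{B^{*}})G(t)\,\frac{dt}{t}$ need not converge in $L^2$; it lives a priori only in the completion $\dot \mH^{-s}_{\IP B^{*}D}$, which for $s\neq 0$ is strictly larger than the inhomogeneous space (already in the self-adjoint model $B=I$, pairing against an element of $\dot \mH^{1}_{D}$ not in $L^2$ gives a bounded functional on $\mH^{-1}_{D}$ that is not represented inside $\mH^{1}_{D}$). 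What the proposition actually requires -- and what is used, after completion, to get items (7)--(8) of Proposition \ref{prop:sobolev} -- is the boundedness of the pairing, which you have, together with the norming property $\|f\|_{DB,s}\sim\sup\{|\pair{f}{g}|\,;\,\|g\|_{\IP B^{*}D,-s}\le 1\}$ and its symmetric counterpart. This is obtained not by representing arbitrary functionals but by producing, for each given $f$ (respectively $g$), a concrete test element in the other pre-completion space, for instance via a truncated Calder\'on integral built from $f$ itself or via fractional powers of $|\uT|$ and $|T_{B^{*}}|$, in the spirit of Proposition \ref{prop:duality}. With that replacement your argument is complete and coincides with the paper's.
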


\begin{proof}
(1) is standard and we skip it. (2) is a straightforward consequence of the definitions of the spaces and of the norms. For (3), using the intertwining property  \eqref{eq:intert}, and the isomorphism $\IP:\mH^0_{BD}= \clos{\ran_{2}(BD)} \to \clos{\ran_{2}(D)}=\mH^0_{\IP BD}$, we obtain 
$$\|\psi(\IP BD)\IP h\|_{2}= \|\IP  \psi(BD)h\|_{2} \sim \|\psi(BD)h\|_{2}$$
for all $h\in \mH^0_{BD}$ and $\psi\in \Psi(S_{\mu})$. We  conclude easily for the isomorphism  using the defining norms of the Sobolev spaces. 
The proof of (4) is a simple consequence of the Calder\'on reproducing formula so that for suitable $\psi,\varphi$ we have 
$$
\pair f g = (\Qpsi \psi {DB}f, \Qpsi \varphi {B^*D} g)
$$
for all $f\in \mH^0_{DB}$ and $g\in \mH^0_{B^*D}$. We skip details. 
For (5), we use the intertwining property: for all $f\in \mH^0_{DB}$ and $h\in \mH^0_{\IP B^*D}$, writing $h=\IP g$ with $g\in \mH^0_{B^*D}$
$$
\pair f {h} =  \pair f {g}= (\Qpsi \psi {DB}f,  \Qpsi \varphi {B^*D} g)=(\Qpsi \psi {DB}f, \IP \Qpsi \varphi {B^*D} g)=  (\Qpsi \psi {DB}f,  \Qpsi \varphi {\IP B^*D} h)
$$
and the conclusion follows easily. 
\end{proof}

Now define their completions $\dot \mH^s_{DB}$, $\dot \mH^s_{BD}$ and $\dot\mH^s_{\IP BD}$  respectively.   So far, these completions are abstract spaces.

\begin{prop} \label{prop:sobolev} 
\begin{enumerate}
\item For $s\in \R$, for all bounded holomorphic functions $b\in H^\infty(S_{\mu})$,  $ b(\IP BD)$ extends to a bounded operator on $\dot \mH^s_{\IP BD}$.   In particular, this holds for $ \sgn(\IP BD)$ which is a bounded self-inverse operator on $\dot\mH^s_{\IP BD}$. Also, $\IP BD$ and $|\IP BD|= \sgn(\IP BD ) \IP BD$ extend to isomorphisms between $\dot \mH^s_{\IP BD}$ and $\dot \mH^{s-1}_{\IP BD}$.  The operator $|\IP BD|$ extends to a sectorial operator on $\dot \mH^s_{\IP BD}$ and fractional powers $|\IP BD|^\alpha$ are isomorphisms from $\dot \mH^s_{\IP BD}$ onto $\dot \mH^{s-\alpha}_{\IP BD}$.
\item   $\dot \mH^s_{\IP BD}$ topologically splits as the sum of the two spectral closed subspaces  $\dot\mH^{s,+}_{\IP BD}=\nul(\sgn (\IP BD)-I)=\ran(\chi^+(\IP BD))$ and $\dot\mH^{s,-}_{\IP BD}=\nul(\sgn(\IP BD) +I)=\ran(\chi^-(\IP BD))$. 
  \item The same two items hold with $\IP BD$ replaced by  $DB$ or $BD$. 
  \item For $0\le s\le 1$, $\dot \mH^s_{\IP BD}= \dot \mH^{s}_{D}$  and for $-1\le  s\le 0$, 
$\dot \mH^s_{DB}= \dot \mH^{s}_{D}$ with equivalence of norms. 
  \item  Furthermore, for $-1\le s<0$, we have for  
  $\| h\|_{D,s} \approx \left\{ \int_{0}^\infty t^{-2s}\|e^{-t|DB|}h\|_{2}^2\, \frac{dt}{t}\right\}^{1/2}$.
  \item For all $s\in \R$, $\IP$ extends to an isomorphism from  $\dot \mH^s_{BD}$   onto $\dot \mH^s_{\IP BD}$. 
  \item For all $s\in \R$, $\dot \mH^s_{DB}$ and $\dot \mH^{-s}_{B^*D}$ are dual spaces for a duality extending the $L^2$ inner product.
   \item For all $s\in \R$, $\dot \mH^s_{DB}$ and $\dot \mH^{-s}_{\IP B^*D}$ are dual spaces for a duality extending the $L^2$ inner product.
   \end{enumerate}
   \end{prop}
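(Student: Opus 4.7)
Items (1)--(3), (6), (7), (8) are extensions by density to the completions of the corresponding statements of Proposition \ref{prop:Sobolevinhom}, while (4) identifies the abstract scales with concrete ones and (5) is a corollary.

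For (1), given $b\in H^\infty(S_{\mu})$, Proposition \ref{prop:Sobolevinhom}(2) provides uniform boundedness of $b(\IP BD)$ on each $\mH^s_{\IP BD}$; this operator commutes with the defining $\psi(t\IP BD)$, hence extends to a bounded operator on $\dot \mH^s_{\IP BD}$. Picking $b(z)=\sgn(z)$ gives a bounded self-inverse operator; the positive generator $|\IP BD|$ is then $\sgn(\IP BD)\IP BD$. That $|\IP BD|^\alpha$ is an isomorphism $\dot \mH^s_{\IP BD}\to\dot \mH^{s-\alpha}_{\IP BD}$ follows from $|\IP BD|^\alpha\psi(t|\IP BD|)h=t^{-\alpha}\tilde\psi(t|\IP BD|)h$ with $\tilde\psi(z)=z^\alpha\psi(z)$ and a change of variable in the defining quadratic integral. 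Item (2) is then immediate from the spectral projections $\chi^\pm(\IP BD)$, and (3) is identical for $DB$ and $BD$. For (6), Proposition \ref{prop:Sobolevinhom}(3) already yields $\|\IP h\|_{\IP BD,s}\sim\|h\|_{BD,s}$ on pre-Sobolev spaces, so $\IP$ extends to an isomorphism of completions. For (7) and (8), the $L^2$-pairing $\pair fg=(\Qpsi\psi{DB}f,\Qpsi{\varphi^*}{B^*D}g)$ obtained from a Calder\'on reproducing formula satisfies $|\pair fg|\lesssim\|f\|_{DB,s}\|g\|_{B^*D,-s}$, and the reverse inequality for norms follows from Proposition \ref{prop:Sobolevinhom}(4) (respectively (5) for $\IP B^*D$); both extend by density.

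The main input is (4). On $\mH^1_{\IP BD}$ the $H^\infty$-calculus shows that $\|\IP BDh\|_{2}$ is equivalent to the quadratic norm, and similarly $\|Dh\|_{2}$ on $\mH^1_D$. Since $Dh\in\clos{\ran_2(D)}$ and $B$ is strictly accretive on $\clos{\ran_2(D)}$, we have $\|\IP BDh\|_{2}=\|\IP B(Dh)\|_{2}\sim\|Dh\|_{2}$, giving $\mH^1_{\IP BD}=\mH^1_D$ with equivalent norms, and the same for completions. The equality at $s=0$ is by construction. For $s\in(0,1)$, complex interpolation of the Sobolev scales adapted to bisectorial operators with $H^\infty$-calculus in the sense of \cite{AMcN1} gives $[\dot \mH^0_{\IP BD},\dot \mH^1_{\IP BD}]_s=\dot \mH^s_{\IP BD}$ and likewise for $D$, yielding the equivalence for $0\le s\le 1$. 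For $-1\le s\le 0$, apply (8) with $B^*$: by the case just proved, $\dot \mH^{-s}_{\IP B^*D}=\dot \mH^{-s}_D$, and (8) together with the $L^2$-duality of $\dot \mH^s_D$ with $\dot \mH^{-s}_D$ forces $\dot \mH^s_{DB}=\dot \mH^s_D$.

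For (5), take $\psi(z)=e^{-\modz}\in\bigcap_{\tau>0}\Psi_0^\tau(S_{\mu})$; since $s<0$ this is an allowable choice in the defining quadratic norm, and by \eqref{eq:tdb}, $\psi(t\,DB)h=e^{-t|DB|}h$ on $\mH^0_D$, giving the claimed equivalent norm. Independence of $\psi$ is the standard exchange argument already invoked in Proposition \ref{prop:Sobolevinhom}(1), or alternatively transfers through (4).

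\textbf{Main obstacle.} The delicate step is justifying the complex interpolation equality $[\dot \mH^0_{\IP BD},\dot \mH^1_{\IP BD}]_s=\dot \mH^s_{\IP BD}$: one has to verify that the two scales $\dot \mH^\bullet_{\IP BD}$ and $\dot \mH^\bullet_D$ embed compatibly into a common ambient space, so that their common identity map on $\mH^0_D$ complex-interpolates to an isomorphism at the intermediate levels. Once this is in place the remaining assertions follow mechanically, the rest being careful bookkeeping with quadratic norms and the Calder\'on reproducing formula.
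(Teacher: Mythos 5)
Your proposal is correct in outline, and its skeleton is the paper's: the paper disposes of items (1)--(5) by citing the general Sobolev theory of \cite{AMcN1}, handles the endpoint cases $s=\pm1$ of (4) by citing \cite[Proposition 4.4]{AMcM}, and gets (6)--(8) from Proposition \ref{prop:Sobolevinhom} plus density --- exactly your division of labour. Where you genuinely differ is at the endpoints of (4): instead of invoking \cite{AMcM}, you prove $s=1$ directly, noting that on $\dom_{2}(\IP BD)=\dom_{2}(S)$ the quadratic norms at $s=1$ are comparable to $\|\IP BDh\|_{2}$ and $\|Dh\|_{2}$, and that strict accretivity of $B$ on $\clos{\ran_{2}(D)}$ gives $\|\IP B(Dh)\|_{2}\sim\|Dh\|_{2}$; you then recover the whole range $-1\le s\le 0$ for $DB$, endpoint included, by dualizing against $\dot\mH^{-s}_{\IP B^*D}=\dot\mH^{-s}_{D}$ through item (8). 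This buys a self-contained endpoint argument (the paper's route goes through the $L^2$ holomorphic functional calculus in \cite{AMcM}), at the price of two points to make explicit: the duality bootstrap must compare the two pairings on a common dense subspace (e.g.\ the regularized elements of Lemma \ref{lem:approxk}), in the spirit of the argument of Theorem \ref{thm:duality}, to conclude that the two completions are the same space rather than merely isomorphic; and the interpolation identity $[\dot\mH^{0}_{\IP BD},\dot\mH^{1}_{\IP BD}]_{s}=\dot\mH^{s}_{\IP BD}$, which you flag yourself, is taken from \cite{AMcN1} --- the same appeal the paper makes, so this is not a gap relative to the paper's own proof. Finally, your choice $\psi(z)=e^{-\modz}$ in (5) is legitimate precisely because $s<0$: the weight $t^{-2s}$ compensates for the absence of vanishing of $\psi$ at the origin, which is why the statement is restricted to $-1\le s<0$.
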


\begin{proof} For (1)-(5), this is the theory of \cite{AMcN1}, except   for the cases $s=-1$ and $s=1$ of (4), proved in  \cite[Proposition 4.4]{AMcM} using the holomorphic functional calculus on $L^2$ for $DB$ and $BD$.  
 
The items (6)-(8) are easy consequences of the previous proposition and density.  
\end{proof}

\begin{cor}\label{cor:isom}
Let  $-1\le s\le 0$. Then  $D:\dot \mH^{s+1}_{\IP BD}=  \dot \mH^{s+1}_{D}\to \dot \mH^{s}_{D}= \dot \mH^{s}_{DB}$ is an isomorphism. In particular, for $t>0$ and $h\in \dot \mH^{s+1}_{\IP BD}$, we have
$$
De^{-t|\IP BD|}h=e^{-t|DB|} Dh.
$$
Similarly $D$ extends to an isomorphism  $\dot \mH^{s+1}_{BD} \to \dot \mH^{s}_{DB}$. In particular, for $t>0$ and $h\in \dot \mH^s_{BD}$, we have
$$
De^{-t|BD|}h=e^{-t|DB|} Dh.
$$
\end{cor}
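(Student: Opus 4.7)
The plan is to reduce the corollary to two ingredients: the identifications of the abstract Sobolev spaces with concrete ones from Proposition~\ref{prop:sobolev}, parts (4) and (6); and the algebraic intertwining $\uT D = D T$ (where I continue to use $T = \IP BD$ and $\uT = DB$) between the operators on the distinguished range subspace. First I would handle the case $\IP BD$, then deduce the case $BD$ by composing with the isomorphism $\IP$.

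\textbf{Step 1 (isomorphism for $\IP BD$).} For $s\in[-1,0]$, Proposition~\ref{prop:sobolev}(4) gives the concrete identifications $\dot\mH^{s+1}_{\IP BD}=\dot\mH^{s+1}_D$ and $\dot\mH^{s}_{DB}=\dot\mH^{s}_D$, so it suffices to show that $D$ extends to an isomorphism $\dot\mH^{s+1}_D\to\dot\mH^{s}_D$. On the dense subspace $\bigcap_{s'}\mH^{s'}_D\subset \mH^0_D$ (obtained via the approximation in Lemma~\ref{lem:approxk}), the operator $D$ restricts to $S$, and one has the polar decomposition $S=\sgn(S)|S|$. By Proposition~\ref{prop:sobolev}(1) applied to $S$ (viewed as $\IP BD$ when $B=I$, which is the classical situation handled in \cite{AMcN1}), the operator $|S|$ extends to an isomorphism $\dot\mH^{s+1}_D\to\dot\mH^{s}_D$ and $\sgn(S)$ extends to a bounded self-inverse operator on each $\dot\mH^s_D$. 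Composing gives the desired isomorphism. Note also that if $g\in\dot\mH^{s+1}$ then $g-\IP g\in\nul D$ (orthogonal splitting since $D$ is self-adjoint), so that $D=D\IP$; this legitimates writing $D$ on the quotient space $\dot\mH^{s+1}_D=\IP(\dot\mH^{s+1})$.

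\textbf{Step 2 (intertwining $Db(T)=b(\uT)D$).} The core algebraic fact: for $h\in\dom_2(T)\cap\mH^0_D$, we have $Th=\IP BDh$, and since $(I-\IP)BDh\in\nul D$,
\[
D(Th)=D\IP(BDh)=D(BDh)-D(I-\IP)(BDh)=DBDh=\uT(Dh),
\]
so $DT=\uT D$ on this dense subspace. This yields the resolvent identity $D(\lambda-T)^{-1}=(\lambda-\uT)^{-1}D$ for $\lambda\notin\clos{S_\mu}$ (apply $\lambda-\uT$ on the left and use $\uT D=DT$), first verified on $\dom T$ and then extended by density. By the Cauchy integral formula \eqref{eq:cauchyformula}, this upgrades to $D\psi(T)=\psi(\uT)D$ for every $\psi\in\Psi(S_\mu)$, and then by the standard limiting procedure for the $H^\infty$-calculus to any $b\in H^\infty(S_\mu)$ defined at $0$. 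Specializing to $b(z)=e^{-t|z|}$, $t>0$, gives $De^{-t|\IP BD|}h=e^{-t|DB|}Dh$ at the $L^2$ level, and the identity extends to all $h\in\dot\mH^{s+1}_{\IP BD}$ by density and boundedness of both sides (Step~1 and the $H^\infty$-calculus on the completions in Proposition~\ref{prop:sobolev}(1)).

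\textbf{Step 3 ($BD$ version).} Proposition~\ref{prop:sobolev}(6) provides the isomorphism $\IP:\dot\mH^{s+1}_{BD}\to\dot\mH^{s+1}_{\IP BD}$. Since $D=D\IP$, the map $D:\dot\mH^{s+1}_{BD}\to\dot\mH^s_{DB}$ factors as the composition of this isomorphism with the one from Step~1 and is therefore itself an isomorphism. For the intertwining, use the already-established relation $\IP e^{-t|BD|}=e^{-t|\IP BD|}\IP$ from \eqref{eq:intert} together with Step~2:
\[
De^{-t|BD|}h=D\IP e^{-t|BD|}h=De^{-t|\IP BD|}\IP h=e^{-t|DB|}D\IP h=e^{-t|DB|}Dh.
\]

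\textbf{Main obstacle.} The only delicate point is justifying the intertwining identity $b(T)$ and $b(\uT)$ intertwined by $D$ on the abstract completions, since $D$ genuinely changes the Sobolev index while the functional calculus of $T$ and $\uT$ lives on their own scales. This is handled by first checking the identity on resolvents at the $L^2$ level where everything is unambiguous, and then transferring it to each completion using the density of the $L^2$-level Schwartz-type subspace provided by Lemma~\ref{lem:approxk} and the boundedness of the functional calculus on the extended scales guaranteed by Proposition~\ref{prop:sobolev}(1)--(3).
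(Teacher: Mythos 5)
Your proposal is correct and follows essentially the same route as the paper: identify the abstract Sobolev spaces with $\dot\mH^{s+1}_D$ and $\dot\mH^s_D$ via Proposition \ref{prop:sobolev}(4), use the isomorphism property of $S=D|_{\mH^0_D}$ between consecutive Sobolev spaces (which the paper simply cites from the discussion preceding Lemma \ref{lem:approxk}, rather than rederiving via $\sgn(S)|S|$), transfer the intertwining $\psi(DB)D=D\psi(BD)=S\psi(\IP BD)$ from the $L^2$ level to the completions by density, and obtain the $BD$ case by composing with the isomorphism $\IP$ from Proposition \ref{prop:sobolev}(6). The only stylistic difference is that you verify the $T$--$\uT$ intertwining directly on resolvents, while the paper routes through the known $BD$/$DB$ commutation and \eqref{eq:intert}; both are fine provided the resolvent identity is checked on $\dom_2(S)$, as you indicate.
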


\begin{proof} Let us consider the first assertion. 
Take a suitable $\psi\in \Psi(S_{\mu})$ and $h\in \dom_{2}(S)$. Then $Dh=Sh$ and
$$ \psi(\uT)Sh=\psi(DB) Dh=D\psi(BD)h = S\psi(T) h.
$$
Then change $\psi(z)$ to $\psi(tz)$ and use  the isomorphism property of $S$, the 
property (4) in the proposition above and also the density of $\dom_{2}(S)=\mH^1_{D}$ in $\dot \mH^{s+1}_{D}$. 
For the second part, the extension is defined as $D\circ \IP$, where $\IP$ is the extension given in 
item (6) of the previous proposition and  $D$ is the isomorphism just described. 
\end{proof}

\begin{prop}\label{prop:sharpsobolev} Let $0<s\le 1$.   
\begin{enumerate}
  \item For any $h\in \dot \mH^s_{BD}$, $e^{-t|BD|}h -h$ can be defined in $L^2$ with 
$\|e^{-t|BD|}h -h\|_{2} \le C t^s$. 
  \item For any $h\in \dot \mH^s_{\IP BD}$, $e^{-t|\IP BD|}h -h$ can be defined in $L^2$  with 
$\|e^{-t|\IP BD|}h -h\|_{2} \le C t^s$.
  \item For any $h\in \dot \mH^s_{BD}$, with the above definition $\IP  (e^{-t|BD|}h -h)= e^{-t|\IP BD|}\IP h - \IP h$. 
\end{enumerate}
\end{prop}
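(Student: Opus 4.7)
The plan is to reduce each item to the $L^{2}$ holomorphic functional calculus of $BD$ on $\clos{\ran_{2}(BD)}$ (resp.\ of $\IP BD$ on $\clos{\ran_{2}(D)}$) via the fractional-power isomorphisms of Proposition \ref{prop:sobolev}. Fix $0<s\le 1$ and set
\[
\phi_{t}(z):=(e^{-t\modz}-1)\modz^{-s},\qquad z\in S_{\mu},\ t>0.
\]
The elementary inequality $|e^{-t\modz}-1|\le C\min(t\modz,1)$ in $S_{\mu}$, balanced at $\modz\sim 1/t$, shows $\phi_{t}\in H^{\infty}(S_{\mu})$ with $\|\phi_{t}\|_{\infty}\le Ct^{s}$ uniformly in $t>0$.

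For (1), given $h\in\dot\mH^{s}_{BD}$, the isomorphism $|BD|^{s}:\dot\mH^{s}_{BD}\to\dot\mH^{0}_{BD}=\clos{\ran_{2}(BD)}$ from Proposition \ref{prop:sobolev} provides $g:=|BD|^{s}h\in L^{2}$ with $\|g\|_{2}\sim\|h\|_{\dot\mH^{s}_{BD}}$. Define
\[
e^{-t|BD|}h-h\ :=\ \phi_{t}(BD)\,g,
\]
which lies in $L^{2}$ by the bounded $H^{\infty}$-calculus of $BD$ on $\clos{\ran_{2}(BD)}$ (Proposition \ref{prop:SFimpliesFC}) and satisfies $\|\phi_{t}(BD)g\|_{2}\le C\|\phi_{t}\|_{\infty}\|g\|_{2}\le Ct^{s}\|h\|_{\dot\mH^{s}_{BD}}$. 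When $h\in\mH^{s}_{BD}=L^{2}\cap\dot\mH^{s}_{BD}$, the identity $\phi_{t}(z)\modz^{s}=e^{-t\modz}-1$ together with the composition rule in the $L^{2}$ $H^{\infty}$-calculus shows that $\phi_{t}(BD)|BD|^{s}h=(e^{-t|BD|}-I)h$, so the notation is consistent with the classical one on the dense subspace $\mH^{s}_{BD}$. Item (2) is identical with $BD$ replaced by $\IP BD$ throughout.

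For (3), the intertwining relation \eqref{eq:intert}, which by the remark following \eqref{eq:tdb} extends from $\Psi(S_{\mu})$ to all of $H^{\infty}(S_{\mu})$, gives $\IP\phi_{t}(BD)=\phi_{t}(\IP BD)\IP$ on $\clos{\ran_{2}(BD)}$. The analogous identity $\IP|BD|^{s}=|\IP BD|^{s}\IP$ (as maps defined on $\mH^{s}_{BD}$ into $\clos{\ran_{2}(D)}$) is obtained by regularization: for the functions $\psi_{\varepsilon}(z):=\modz^{s}e^{-\varepsilon\modz}\in\Psi(S_{\mu})$ one has $\IP\psi_{\varepsilon}(BD)=\psi_{\varepsilon}(\IP BD)\IP$ on $\clos{\ran_{2}(BD)}$, and on $\mH^{s}_{BD}$ we have $\psi_{\varepsilon}(BD)h\to|BD|^{s}h$ and $\psi_{\varepsilon}(\IP BD)\IP h\to|\IP BD|^{s}\IP h$ in $L^{2}$ as $\varepsilon\to 0$ (using that $\IP h\in\mH^{s}_{\IP BD}$, granted by Proposition \ref{prop:sobolev}(6)). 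Applying $\IP$ to the defining identity of (1) and combining the two intertwinings gives
\[
\IP(e^{-t|BD|}h-h)=\phi_{t}(\IP BD)|\IP BD|^{s}\IP h=e^{-t|\IP BD|}\IP h-\IP h,
\]
first on the dense subspace $\mH^{s}_{BD}$ of $\dot\mH^{s}_{BD}$ and then on all of $\dot\mH^{s}_{BD}$ by continuity of both sides, the last equality being the definition used in (2) applied to $\IP h$.

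The only technical obstacle is the intertwining for the fractional power $|BD|^{s}$, since $\modz^{s}$ is unbounded and hence not in $H^{\infty}(S_{\mu})$; the regularization indicated above is what reduces the matter to intertwinings already in hand from the calculus developed earlier. Once that is settled, the whole proposition amounts to direct applications of the $L^{2}$ bounded $H^{\infty}$-calculus on $\clos{\ran_{2}(BD)}$ and $\clos{\ran_{2}(D)}$.
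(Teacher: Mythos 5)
Your proof is correct and follows essentially the same route as the paper: the same bounded function $\phi_t(z)=(e^{-t\modz}-1)\modz^{-s}$ with $\|\phi_t\|_\infty\lesssim t^s$, the fractional-power isomorphism $|BD|^s:\dot\mH^s_{BD}\to\clos{\ran_{2}(BD)}$ to define $e^{-t|BD|}h-h:=\phi_t(BD)|BD|^sh$, and the extended intertwining of the $H^\infty$-calculi of $BD$ and $\IP BD$ for item (3). The extra details you supply (the regularization $\psi_\varepsilon$ for the fractional-power intertwining and the consistency check on $\mH^s_{BD}$) simply spell out what the paper leaves implicit.
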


\begin{proof} For (1), observe that $\phi(z)=\frac{e^{-t\modz}-1}{\modz^s}\in H^\infty(S_{\mu})$ with 
bound $Ct^s$ and  that $\||BD|^s h\|_{2}\sim  \|h\|_{BD,s}$ when $h\in \dot \mH^s_{BD}$. The  relation 
$
e^{-t|BD|}h -h = \phi(BD) |BD|^s h$ valid for $h\in \mH^s_{BD}$ thus  extends to $h\in\dot \mH^s_{BD}$. The proof for the second item is the same. The third item is the intertwining property of the $H^\infty$-calculi, extended to $\dot \mH^s_{BD}$ and $\dot \mH^s_{\IP BD}$. 
\end{proof}

\subsection{A priori estimates}\label{sec:apriori}

The following lemma tells us that we can use different norms, more suitable to extensions.

\begin{lem}\label{lem:D} We have
$$\|Dh\|_{\dot W^{-1,p}} \sim \|h\|_{p},\quad  \forall p\in (1,\infty) \  \forall h \in \clos{\ran_{2}(D)},$$
and
$$\|Dh\|_{\dot \Lambda^{\alpha-1}} \sim \|h\|_{\dot \Lambda^\alpha},\quad  \forall \alpha \in [0,1) \  \forall h \in \clos{\ran_{2}(D)}.$$
\end{lem}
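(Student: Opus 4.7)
The upper bound $\|Dh\|_{\dot W^{-1,p}} \lesssim \|h\|_p$ (and its H\"older analogue) is immediate from the very definition of the quotient norm: writing $D=-i\sum_{j=1}^n \hat D_j\,\pd_j$, each component of $Dh$ is the divergence of the vector field $G=(-i\hat D_j h)_{j=1}^n$, whose $L^p$ (resp.\ $\dot\Lambda^\alpha$) norm is bounded by a constant times $\|h\|_p$ (resp.\ $\|h\|_{\dot\Lambda^\alpha}$).  The real content is the lower bound, and my plan is to reduce it via Riesz transforms to a statement about a Fourier multiplier with $C^\infty$ symbol homogeneous of degree $0$.

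First I would invoke the standard equivalence
\[
  \|f\|_{\dot W^{-1,p}}\sim \||\nabla|^{-1}f\|_{p}, \qquad
  \|f\|_{\dot\Lambda^{\alpha-1}}\sim \||\nabla|^{-1}f\|_{\dot\Lambda^\alpha},
\]
valid for $1<p<\infty$ and $\alpha\in[0,1)$ respectively.  One direction uses $|\nabla|^{-1}\divv F=-i\sum_j R_j F_j$ and the $L^p$/\,$\dot\Lambda^\alpha$-boundedness of Riesz transforms; the other direction uses the explicit representation $f=\divv(R_1|\nabla|^{-1}f,\dots,R_n|\nabla|^{-1}f)/i$ to realize the quotient infimum up to constants.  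After this reduction it suffices to show
\[
  \|h\|_{p}\lesssim \||\nabla|^{-1}Dh\|_{p},\qquad \|h\|_{\dot\Lambda^\alpha}\lesssim \||\nabla|^{-1}Dh\|_{\dot\Lambda^\alpha},
\]
for $h\in\clos{\ran_{2}(D)}$ in the respective space.

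The operator $U:=|\nabla|^{-1}D$ is a Fourier multiplier whose symbol $U(\xi)=\hat D(\xi)/|\xi|$ (up to a constant) is $C^\infty$ on $\R^n\setminus\{0\}$ and homogeneous of degree $0$, with all derivatives satisfying Mikhlin-type bounds.  Because $D$ is self-adjoint on $L^{2}$, $\hat D(\xi)$ is a self-adjoint matrix, so $\ran(\hat D(\xi))=\nul(\hat D(\xi))^\perp=\ran(\IP(\xi))$ where $\IP(\xi)$ is the orthogonal projection recalled in Section~\ref{sec:D}.  Assumption \eqref{eq:D1} gives $|\hat D(\xi)e|\geq \kappa|\xi||e|$ for $e\in\ran(\IP(\xi))$, hence the restriction of $U(\xi)$ to $\ran(\IP(\xi))$ is a self-adjoint isomorphism with uniform two-sided bounds $\kappa$ and $\|\hat D\|$.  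I then define the pseudo-inverse multiplier
\[
  V(\xi):=\bigl(U(\xi)|_{\ran(\IP(\xi))}\bigr)^{-1}\IP(\xi),
\]
which is again $C^\infty$ away from $0$, homogeneous of degree $0$, and satisfies the Mikhlin bounds (these come from smoothness of $\IP(\xi)$ combined with Cramer's rule on the finite-dimensional range, where the determinant is bounded below by $\kappa^{\dim\ran\hat D(\xi)}$).  By construction $V(\xi)U(\xi)=\IP(\xi)$, and by the Mikhlin theorem (and its extension to Calder\'on--Zygmund-type operators on $\BMO$ and $\dot\Lambda^\alpha$) $V$ extends boundedly to $L^p$ for $1<p<\infty$ and to $\dot\Lambda^\alpha$ for $\alpha\in[0,1)$.

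Finally, for $h\in\clos{\ran_{2}(D)}$ we have $\IP h=h$ in $L^{2}$; if moreover $h\in L^{p}$ (resp.\ $\dot\Lambda^\alpha$), then the extension of $\IP$ to that space (Section~\ref{sec:D}) continues to fix $h$, so
\[
  h=\IP h=VUh=V\bigl(|\nabla|^{-1}Dh\bigr),
\]
yielding $\|h\|_{p}\lesssim \||\nabla|^{-1}Dh\|_{p}\sim\|Dh\|_{\dot W^{-1,p}}$, and likewise for $\dot\Lambda^\alpha$.  The only genuinely delicate step is the construction and smoothness of $V(\xi)$: once one has the uniform invertibility of $U(\xi)$ on the smoothly varying subspace $\ran(\IP(\xi))$, the Mikhlin bounds on $V$ are routine, and the rest is a soft assembly of already established multiplier facts.
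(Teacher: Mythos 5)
Your proposal is correct in substance, but it takes a different route from the paper. The paper proves the lower bounds by duality: for $h\in\clos{\ran_{2}(D)}$ it writes $\|h\|_{p}=\sup\{|\pair h g|\,;\,g\in\mS_{0},\|g\|_{p'}=1\}$, uses $\pair h g=\pair h {\IP g}=\pair {Dh}{D^{-1}\IP g}$, and then only needs the already recorded Mikhlin facts that $\IP$ and $\nabla D^{-1}\IP$ are bounded on $L^{p'}$ (resp.\ that $D^{-1}g\in \dot H^{1,q}$ with $\|\nabla D^{-1}\IP g\|_{H^q}\lesssim\|g\|_{H^q}$ for the H\"older case), together with the invariance of $\mS_{0}$ under these multipliers. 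You instead invert on the primal side: you build the degree-zero symbol $V(\xi)$ pseudo-inverting $\hat D(\xi)/|\xi|$ on $\ran(\hat D(\xi))$ and write $h=\IP h=V(|\nabla|^{-1}Dh)$, which is essentially re-deriving the symbol of $|\nabla|D^{-1}\IP$ that the paper simply quotes from the $D$-theory of Section 2. Both arguments hinge on the same multiplier; what the duality route buys is that in the H\"older case everything is tested against $H^q$ functions, so one never has to extend Calder\'on--Zygmund operators to $\dot\Lambda^\alpha$ or worry about constants, whereas your route buys a more self-contained, "formula" proof ($h=V(|\nabla|^{-1}Dh)$) at the cost of several consistency checks you gloss over: (i) the smoothness and Mikhlin bounds for $V(\xi)$ require the constant rank/smoothness of $\xi\mapsto\IP(\xi)$ (which the paper anyway takes from \cite{HMP2}, so you may as well quote $\nabla D^{-1}\IP$ directly); (ii) when $Dh=\divv F$ with $F$ a near-optimal representative, the identification $|\nabla|^{-1}Dh=\pm i\sum_j R_jF_j$ holds only after checking the two sides agree as tempered distributions modulo polynomials (test against $\mS_{0}$; a polynomial in $L^2+L^p$, resp.\ with finite H\"older seminorm plus $L^2$, is then harmless); and (iii) in the $\dot\Lambda^\alpha$ case the identity $h=V(Uh)$ is an $L^2$ identity, and you must invoke the consistency (modulo constants) of the $\dot\Lambda^\alpha$-extension of the convolution operator $V$ with its $L^2$ realization before transferring the seminorm bound. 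None of these is a genuine obstruction, but they should be stated for the argument to be complete, especially in the H\"older/$\BMO$ range where all operators act only modulo constants.
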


\begin{proof} 
First, assume $h\in L^{p}$.  Then  $Dh \in \dot W^{-1,p}$ and  if $g\in \dot W^{1,p'} $, 
$$
|\pair {Dh} g | =  |\pair {h} {Dg} |\le \|h\|_{p}\|Dg\|_{p'} \lesssim \|h\|_{p}\|g\|_{\dot W^{1,p'}}.
$$
We conclude $\|Dh\|_{\dot W^{-1,p}} \lesssim \|h\|_{p}$. For the converse, recall that  $\mS_{0}$ is the space of Schwartz functions with  compactly supported Fourier transforms away from the origin.  By density,  we have $\|h\|_{p}= \sup\{ |\pair {h} g|; g\in  \mS_{0} , \|g\|_{p'}=1\}$ and for   $g\in  \mS_{0}$, we have $\IP g\in \mS_{0}$  as well, so 
$$ 
|\pair {h} g | =  |\pair {h} {\IP g} | =|\pair {Dh} {D^{-1}\IP g} |  \lesssim \|Dh\|_{\dot W^{-1,p}} \|D^{-1}\IP g\|_{\dot W^{1,p'}}.
$$
Here, we observe that   $D^{-1}\IP g\in \mS_{0}$ is a Schwartz distribution (using a Fourier transform argument) and as $\nabla D^{-1} \IP$ is bounded on $L^{p'}$, we obtain 
$ \|D^{-1}\IP g\|_{\dot W^{1,p'}} \lesssim \|g\|_{p'}=1$. 

Consider now the second statement. Clearly, $h\in \dot \Lambda^\alpha$ implies $Dh\in \dot \Lambda^{\alpha-1}$. For the converse, 
note that if $g\in \IP \mS_{0}$, then $D^{-1}g\in \dot H^{1,q}$. Indeed,  $D^{-1}g \in \mS'$, $\nabla D^{-1}g= \nabla D^{-1}\IP g \in H^q$. Thus,  
$$
|\pair h g| = |\pair {Dh} {D^{-1} g}| \le \|Dh\|_{\dot \Lambda^{\alpha-1}}\|D^{-1}g\|_{\dot H^{1,q}} \lesssim  \|Dh\|_{\dot \Lambda^{\alpha-1}}\|g\|_{H^{q}}.
$$
By density of $\IP(\mS_{0})$ in $H^q_{D}$, this implies that $h\in\dot \Lambda^\alpha $ with the desired estimate. 
\end{proof}

We continue with the extension of the functional calculus of $DB$ to negative Sobolev spaces of the type $\dot W^{-1,p}$ or negative H\"older spaces $\dot \Lambda^{\alpha-1}$ under the appropriate assumption. 

\begin{prop}\label{prop:fcnegsob} Let $q \in (\frac{n}{n+1},p_{+}(DB^*))$ be such that $\IH^q_{DB^*}=\IH^q_{D}$ with equivalence of norms. Let $\mT= T^{q'}_{2}, Y=L^{q'}, \dot Y^{-1}= \dot W^{-1,q'}$ if $q>1$ and $\mT=T^{\infty}_{2,\alpha}, Y=\dot \Lambda^\alpha, \dot Y^{-1}=\dot \Lambda^{\alpha-1} $ with $\alpha=n(\frac{1}{q}-1)$ if $q\le 1$. Let $b\in H^\infty(S_{\mu})$. Then
$$
\|b(DB)h\|_{\dot Y^{-1}} \lesssim \|b\|_{\infty}  \|h\|_{\dot Y^{-1}}, \quad
\forall h\in \bigcup_{-1\le s\le 0} \dot \mH^s_{D},
$$
and 
$$
\|D b(BD)\tilde h\|_{\dot Y^{-1}} \lesssim \|b\|_{\infty}  \|D\tilde h\|_{\dot Y^{-1}}, \quad
\forall \tilde h\in \bigcup_{-1\le s\le 0} \dot \mH^{s+1}_{BD}.
$$
\end{prop}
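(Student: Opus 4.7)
The plan is to reduce both estimates to a single Hardy-space bound for $b(\IP BD)$, using the intertwining $b(DB)D = Db(\IP BD)$, which follows from Corollary \ref{cor:isom} extended to the bounded $H^\infty$-calculus via approximation of $b$. This intertwining already reduces (ii) to (i): writing $Db(BD)\tilde h = D\IP b(BD)\tilde h = Db(\IP BD)\IP\tilde h = b(DB)D\tilde h$ (using Proposition \ref{prop:bdproj}), and noting that $D\tilde h \in \dot\mH^s_{DB} = \dot\mH^s_D$ by Corollary \ref{cor:isom} together with Proposition \ref{prop:sobolev}(4), the second inequality becomes the first applied to $h = D\tilde h$, with $\|h\|_{\dot Y^{-1}} = \|D\tilde h\|_{\dot Y^{-1}}$ on the right.

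For (i), I would fix $h \in \dot\mH^s_D$ with $s \in [-1,0]$ and write $h = Dg$ with $g \in \dot\mH^{s+1}_D$, via the isomorphism $D \colon \dot\mH^{s+1}_{\IP BD} \to \dot\mH^s_{DB}$ of Corollary \ref{cor:isom}. Assuming $\|h\|_{\dot Y^{-1}} < \infty$ (otherwise the estimate is vacuous), a duality argument pairing with $\mS_0$, as in the second half of the proof of Lemma \ref{lem:D}, shows $g \in Y$ with $\|g\|_Y \lesssim \|h\|_{\dot Y^{-1}}$. The intertwining gives $b(DB)h = Db(\IP BD)g$, and a second application of Lemma \ref{lem:D} to the output yields $\|b(DB)h\|_{\dot Y^{-1}} \lesssim \|b(\IP BD)g\|_Y$. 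The whole problem thus reduces to proving $\|b(\IP BD)g\|_Y \lesssim \|b\|_\infty \|g\|_Y$ for $g \in \IP(Y)$.

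This last bound is where the hypothesis enters: by Corollary \ref{cor:hpbd}, the assumption $\IH^q_{DB^*} = \IH^q_D$ gives boundedness of $b(\IP BD) = \IP b(BD)$ on $\IH^{q'}_D$ (case $q > 1$) or on $\IL^\alpha_D$ (case $q \le 1$), with operator norm $\lesssim \|b\|_\infty$. By Theorem \ref{thm:hpd}, these pre-Hardy spaces coincide with $Y \cap \IP(L^2)$, and their closures in the $Y$-topology are $\IP(Y)$; extending the operator by density then produces the required bound on all of $\IP(Y)$. The hard part will be the consistency check between this Hardy-space extension of $b(\IP BD)$ and the abstract Sobolev realization from Proposition \ref{prop:sobolev}: both originate from the $L^2$ calculus and agree on the dense subspace $\IP(L^2) \cap Y$, and I would close the identification on the overlap $\dot\mH^{s+1}_D \cap \IP(Y)$ via the approximation in Lemma \ref{lem:approxk} together with continuity in the respective norms.
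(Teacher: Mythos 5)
Your skeleton coincides with the paper's: intertwine through $D$ (Corollary \ref{cor:isom}), use Lemma \ref{lem:D} to trade $\dot Y^{-1}$ norms of $D$-images for $Y$ norms, and feed in the boundedness of $\IP b(BD)$ on the $Y$-scale that the hypothesis provides via Corollary \ref{cor:hpbd0}/\ref{cor:hpbd}; reducing the second estimate to the first through $Db(BD)\tilde h=b(DB)D\tilde h$ is also exactly the paper's step. The genuine gap is in how you transfer the Hardy-space bound, which Corollary \ref{cor:hpbd0} gives only for $L^2$ elements ($g\in\IH^2_{BD}$), to the Sobolev-calculus element $g\in\dot\mH^{s+1}_{\IP BD}$ when $-1<s\le 0$, where $g$ need not be in $L^2$. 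Your mechanism — extend $\IP b(BD)$ ``by density'' from $Y\cap\IP(L^2)$ to $\IP(Y)$, then identify the extension with the Sobolev realization using Lemma \ref{lem:approxk} ``together with continuity in the respective norms'' — does not go through as stated. First, in the case $q\le 1$ the space $\IL^\alpha_D=\dot\Lambda^\alpha\cap\IP(L^2)$ is not dense in $\IP(\dot\Lambda^\alpha)$ for the $\dot\Lambda^\alpha$ norm (already $L^2\cap\BMO$ fails to be norm dense in $\BMO$), so there is no density extension at the H\"older/BMO endpoint. Second, even for $Y=L^{q'}$ the approximants of Lemma \ref{lem:approxk} converge only in the homogeneous Sobolev norm, not in $Y$ or $\dot Y^{-1}$, so ``continuity in the respective norms'' cannot identify your density extension with $b(\IP BD)g$; the identification must be made through a compactness argument, which is missing.

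The way to close it is the paper's two-step argument, which you should substitute for the density extension. First settle $s=-1$: there $h=Dg$ with $g\in\dot\mH^{0}_{BD}=\clos{\ran_{2}(BD)}$, so $\IP g$ and $\IP b(BD)g$ lie in $\clos{\ran_{2}(D)}$ and both Lemma \ref{lem:D} and Corollary \ref{cor:hpbd0} apply verbatim, giving $\|b(DB)h\|_{\dot Y^{-1}}\sim\|\IP b(BD)g\|_{Y}\lesssim\|b\|_{\infty}\|\IP g\|_{Y}\sim\|b\|_{\infty}\|h\|_{\dot Y^{-1}}$. Then, for $-1<s\le 0$, apply this case to the approximants $h_k\in\dot\mH^{-1}_{D}$ of Lemma \ref{lem:approxk}, observe that $h\mapsto h_k$ is bounded on $\dot Y^{-1}$ uniformly in $k$ (Fourier multiplier/Mikhlin), extract a weak-$*$ convergent subsequence of $(b(DB)h_k)$ in $\dot Y^{-1}$, and identify its limit with $b(DB)h$ using $b(DB)h_k\to b(DB)h$ in $\dot\mH^{s}_{D}$ (Proposition \ref{prop:sobolev}), hence in the Schwartz distributions. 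Without this weak-$*$ compactness and limit-identification step, the range $-1<s\le 0$ — and in particular the whole H\"older scale — is not covered. Note also that your intermediate claim $\|g\|_{Y}\lesssim\|h\|_{\dot Y^{-1}}$, obtained by running the duality of Lemma \ref{lem:D} on non-$L^2$ elements, itself requires justification of the pairing identities outside $L^2$; the paper avoids this entirely by only ever invoking Lemma \ref{lem:D} for elements of $\clos{\ran_{2}(D)}$.
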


\begin{proof}
Let us begin with $h\in \dot \mH^{-1}_{D}$. By  Corollary \ref{cor:isom}, there exists a unique $ g \in \dot \mH^0_{BD}$ with $h=Dg=D\IP g$. By similarity, $b(DB)h = Db(BD)g=D\IP b(BD)g$. Thus, using Lemma \ref{lem:D} twice, since $\IP g, \IP b(BD)g\in \dot \mH^0_{D}= \clos{\ran_{2}(D)}$, 
$$
\|b(DB)h\|_{\dot Y^{-1}} \sim \|\IP b( BD)g\|_{Y}\lesssim \|b\|_{\infty} \|\IP g\|_{Y}\sim \|b\|_{\infty}\|h\|_{\dot Y^{-1}}.  
$$
Next, we assume $h\in  \dot \mH^s_{D}$ with $-1<s\le 0$. Consider the approximations $h_{k}$ of Lemma \ref{lem:approxk}. They belong in particular to $\dot \mH^{-1}_{D}$. Thus
$\|b(DB)h_{k}\|_{\dot Y^{-1}} \lesssim \|h_{k}\|_{\dot Y^{-1}}$ uniformly in $k$. Now,  using  Fourier transform and the Mikhlin theorem, $h\mapsto h_{k}$ is bounded on $\dot Y^{-1}$, uniformly in $k$. Hence $(b(DB)h_{k})$ is a bounded sequence in $\dot Y^{-1}$, thus has a weak-$*$ converging  subsequence in $\dot Y^{-1}$, and in particular in the Schwartz distributions. But, 
by Proposition \ref{prop:sobolev}, $b(DB)$ is bounded on $\dot \mH^s_{D}$, hence  $b(DB)h_{k} \to b(DB)h$ in $\dot \mH^s_{D}$ so also in the Schwartz distributions. Thus, the limit of the above subsequence is $b(DB)h$ which, therefore, belongs to $\dot Y^{-1}$ with the desired estimate. 

Let us turn to the second point. If $\tilde h\in \dot \mH^{s+1}_{BD}$, then $h=D\tilde h\in \dot \mH^{s}_{D}$ and $Db(BD)\tilde h= b(DB)h$ by the isomorphism property in Corollary \ref{cor:isom}.
Thus,
$$
\|D b(BD)\tilde h\|_{\dot Y^{-1}} =
\| b(DB) h\|_{\dot Y^{-1}} \lesssim   \|b\|_{\infty}\|h\|_{\dot Y^{-1}} =  \|b\|_{\infty}\|D\tilde h\|_{\dot Y^{-1}}. 
$$
\end{proof}

The following result is an extension of earlier results with \textit{a priori} Sobolev initial elements instead of just $L^2$ so far.  This result will be especially useful for $s=- \frac{1}{2}$ later. 

\begin{thm}\label{thm:summary} \begin{enumerate}
  \item Let $I$ be the subinterval in $(\frac{n}{n+1},p_{+}(DB))$   on which we have $\IH^q_{DB}= \IH^q_{D}$ with 
equivalent norms. Then the following holds. For $DB$ we have,  for all $h\in \bigcup_{-1\le s\le 0} \dot \mH^s_{D}$,
$$
\|\SF(tDB{e^{-t|DB|}}h)\|_{q} \sim \|\SF(t\partial_{t}{e^{-t|DB|}}h)\|_{q}\sim  \|h\|_{H^q}
$$
and
$$  \|\tN(e^{-t|DB|}h)\|_{q} \sim  \|h\|_{H^q}
$$
when $q>1$, or $q\le 1$ and $B$ pointwise accretive, or $q\le 1$ and $h\in \bigcup_{-1\le s\le 0} \dot \mH^{s,\pm}_{DB}$. 
  \item If $I^*$ designates the same interval but for $DB^*$ and $q\in I^*$, let  $\mT= T^{q'}_{2}, Y=L^{q'}, \dot Y^{-1}= \dot W^{-1,q'}$ if $q>1$ and $\mT=T^{\infty}_{2,\alpha}, Y=\dot \Lambda^\alpha, \dot Y^{-1}=\dot \Lambda^{\alpha-1} $ with $\alpha=n(\frac{1}{q}-1)$ if $q\le 1$. Then, we obtain the following equivalences: 
  \subitem {\rm{(2a)}} Tent space estimate for $BD$ in disguise:
$$
\|t{e^{-t|DB|}}h\|_{\mT}\sim  \| h\|_{\dot Y^{-1}}, \quad \forall h\in \bigcup_{-1\le s\le 0} \dot \mH^s_{D}. 
$$
\subitem{\rm{(2b)}} Tent space estimate for $BD$:   $  \forall \tilde h\in \bigcup_{-1 \le s\le 0} \dot \mH^{s+1}_{BD},$
$$
\|tD{e^{-t|BD|}}\tilde h\|_{\mT} \sim \|tBD{e^{-t|BD|}}\tilde h\|_{\mT}\sim \|t\partial_{t}{e^{-t|BD|}}\tilde h\|_{\mT}\sim  \| D\tilde h\|_{\dot Y^{-1}}.$$  \subitem{\rm{(2c)}} Sharp function for $BD$: Finally,  if $1<q\le 2$ we have
$$
 \|\tNs (e^{-t|BD|} \tilde h)\|_{p}\sim \| D\tilde h\|_{\dot Y^{-1}}
 ,  \quad \forall \tilde h\in\bigcup_{-1 \le s\le 0} \dot \mH^{s+1}_{BD},$$
and in the case $q\le 1$, we have
$$
\|\tNsa (e^{-t|BD|} \tilde h)\|_{\infty} \sim \| D\tilde h\|_{\dot Y^{-1}}
, \quad \forall \tilde h\in \bigcup_{-1 \le s\le 0} \dot \mH^{s+1}_{BD}.$$
\end{enumerate}
\end{thm}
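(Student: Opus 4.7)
The strategy is a density argument that reduces every equivalence in the theorem to the $L^2$ version already established: Theorems \ref{thm:hpdb} and \ref{thm:ntdb} for Part (1), and Theorems \ref{thm:supergoodhpbd}, \ref{thm:goodhpbd}, \ref{thm:ntbd} together with the sharp function estimates of Section \ref{sec:sharp} for Part (2). All of these are stated for $h \in L^2 \cap \clos{\ran_{2}(D)}$ (or $\tilde h \in L^2 \cap \clos{\ran_{2}(BD)}$), while the theorem asks for data in the larger $a$ $priori$ class $\bigcup_{-1\le s\le 0}\dot\mH^s_D$ (resp.\ $\dot\mH^{s+1}_{BD}$).

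For Part (1), given $h\in\dot\mH^s_D$, I apply Lemma \ref{lem:approxk} with $\theta(z)=ce^{-\modz-\modz^{-1}}$ to obtain approximants $h_k = \int_{1/k}^k \theta(tS)h\,\frac{dt}{t}$ lying in $\bigcap_{s'}\mH^{s'}_D \subset L^2\cap\clos{\ran_{2}(D)}$ and converging to $h$ in $\dot\mH^s_D$. Crucially, the operators $m_k(D) := \int_{1/k}^k \theta(tD)\,\frac{dt}{t}$ are Fourier multipliers whose symbols satisfy Mikhlin-type bounds uniformly in $k$ and converge pointwise to the characteristic projector onto $\ran(\hat D(\xi))$; hence $\|h_k\|_{H^q}\lesssim\|h\|_{H^q}$ uniformly, with $h_k\to \IP h = h$ strongly in $H^q$ whenever the right-hand side is finite. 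Applying Theorems \ref{thm:hpdb} and \ref{thm:ntdb} to each $h_k$ and then passing to the limit via Fatou in the $T^q_2$ (respectively non-tangential maximal) norm yields the upper bounds $\lesssim$ for $h$. The reverse inequalities $\gtrsim$ do not require the hypothesis on equality of Hardy spaces and follow, after the same approximation, from Propositions \ref{prop:lowerbounds}, \ref{prop:lowerp>1}, \ref{prop:lowerp<1} and \ref{prop:lowerp<1a}.

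For Part (2), Corollary \ref{cor:isom} lets me write $h = D\tilde h$ with $\tilde h\in\dot\mH^{s+1}_{\IP BD}$ (or in $\dot\mH^{s+1}_{BD}$ for (2b) and (2c)), and the intertwining $e^{-t|DB|}D = De^{-t|\IP BD|}$ converts the left-hand side into $tD\phi(tBD)\IP\tilde h$ type expressions already analyzed. Lemma \ref{lem:D} identifies $\|h\|_{\dot Y^{-1}}\sim\|\tilde h\|_Y\sim\|\IP\tilde h\|_Y$ up to constants. I then apply the analog of Lemma \ref{lem:approxk} to the operator $T = \IP BD$ (resp.\ $V = BD$), using its $H^\infty$-calculus from Proposition \ref{prop:sobolev}, to approximate $\tilde h$ by $L^2$ elements $\tilde h_k$. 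Theorems \ref{thm:supergoodhpbd}, \ref{thm:goodhpbd}, \ref{thm:ntbd} and the sharp-function theorem in Section \ref{sec:sharp} apply to each $\tilde h_k$ with right-hand side $\|\IP\tilde h_k\|_Y$, again uniformly bounded thanks to the Fourier-multiplier structure of the approximation. Fatou then closes the upper bound, and the lower bound is obtained symmetrically.

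The main obstacle is precisely this coordination of two distinct topologies: the convergence $h_k\to h$ is only asserted in the abstract completions $\dot\mH^s_D$ (or $\dot\mH^{s+1}_{\IP BD}$), while the desired estimates live in the concrete spaces $H^q$, $L^q$, $\dot\Lambda^\alpha$, $\dot W^{-1,q}$ and the associated tent and sharp-function spaces. What makes the coupling work is that $m_k(D)$ is a classical Fourier multiplier with a symbol satisfying Mikhlin bounds uniformly in $k$, so the same approximation simultaneously converges in every concrete space used in the statement. In the case $q\le 1$ for Part (2), the dual space is $\dot\Lambda^\alpha$ and the relevant convergence must be understood in the weak-$*$ sense, for which the weak-$*$ semigroup continuity discussed after Proposition \ref{prop:sobolev} (together with Proposition \ref{prop:stronglimit} on the pre-dual side) is the essential ingredient.
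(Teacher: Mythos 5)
Your scheme works for the \emph{upper} bounds and there it coincides with the paper's argument: approximate $h\in\dot\mH^s_D$ by $h_k$ from Lemma \ref{lem:approxk}, use that these are uniformly bounded and convergent in $H^q$ (when $\|h\|_{H^q}<\infty$), apply the $L^2$-level theorems, and identify the limit in $T^q_2$ through $L^2_{loc}$ convergence. But there is a genuine gap in the converse (lower-bound) directions, which you dispatch with ``after the same approximation'' and ``obtained symmetrically''. In those directions you do \emph{not} know a priori that $h\in H^q$ (resp. $D\tilde h\in\dot Y^{-1}$) -- that is the conclusion -- so Fatou on the left-hand side must be paired with a uniform control of the right-hand side for the approximants, i.e. $\|\Qpsi \psi {DB} h_k\|_{T^q_2}\lesssim\|\Qpsi \psi {DB} h\|_{T^q_2}$ uniformly in $k$. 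Your $h_k=m_k(D)h$ are functions of $D$, which do not commute with $\psi(tDB)$ or $e^{-t|DB|}$, and no such uniform bound is available from the Mikhlin structure of $m_k$. The paper avoids this by using a \emph{different} approximation for the converse: $h_k=\Tpsi \varphi {DB}(\chi_k\,\Qpsi \psi {DB} h)$ with $\chi_k$ a truncation in $(t,x)$, so that $\|h_k\|_{H^q}$ (resp. $\|D\tilde h_k\|_{\dot Y^{-1}}$) is controlled by the truncated tent-space norm of $h$ by construction; the limit is then identified with $h$ by pairing against Schwartz functions and a dominated convergence argument that exploits the $\dot\mH^s$ membership (with the weight $t^{\pm 2s}$ split at $t=1$). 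Without this, or an equivalent device, your lower bounds are not proved.

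A secondary inaccuracy: in Part (2) you invoke approximants built from functions of $\IP BD$ or $BD$ and claim uniform bounds ``thanks to the Fourier-multiplier structure of the approximation''. Functions of the perturbed operators $BD$, $\IP BD$ are not Fourier multipliers; the uniform control of $\|\IP\tilde h_\varepsilon\|_Y$ (or $\|D\tilde h_\varepsilon\|_{\dot Y^{-1}}$) must instead come from the extension of the $H^\infty$-calculus of $DB$ to $\dot Y^{-1}$ (Proposition \ref{prop:fcnegsob}), which uses the standing hypothesis $\IH^q_{DB^*}=\IH^q_D$ -- this is exactly how the paper handles the upper bound in (2c), with the commuting approximants $e^{-\varepsilon|BD|}\tilde h-e^{-(1/\varepsilon)|BD|}\tilde h$. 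Note also that for the lower bounds in (2c) no approximation is needed at all: Proposition \ref{prop:sharpsobolev} gives $e^{-t|BD|}\tilde h-\tilde h\in L^2$, so the proof of Lemma \ref{lem:sharp1} applies verbatim.
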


\begin{proof}  So far, and thanks to Lemma \ref{lem:D}, all statements have been proved when $h\in \dot \mH^0_{D}$ for those involving $DB$ and  when $\tilde h\in \dot \mH^0_{BD}$ for  those involving  $BD$.  Our goal is thus to extend this to more general $h$ or $\tilde h$. The argument consists in  tedious verifications with adequate approximation procedures.

\

\paragraph{Proof of  (1)}

We begin with the quadratic estimates. We fix $q$ in the prescribed interval. Let $\psi\in \Psi(S_{\mu})$ for which we have
$\|\Qpsi \psi {DB} h\|_{T^q_{2}} \lesssim \|h\|_{H^q}$ for all $h\in \dot \mH^0_{DB}=\dot \mH^0_{D}$.  We want to extend it to $h\in \dot \mH^s_{D}$ for some $s\in [-1, 0)$.
Let $h$ be such. Assume also $\|h\|_{H^q}<\infty$ otherwise there is nothing to prove. 
Consider the  functions $h_{k}\in \dot \mH^0_{D}$ as in Lemma \ref{lem:approxk}: they converge in $\dot \mH^s_{D}$ to $h$. Classical Hardy space theory  also shows convergence  in $H^q$.  Now, the estimates apply to $h_{k}$.  Thus $(\Qpsi \psi {DB} h_{k})$ is a Cauchy sequence in $T^q_{2}$, hence converges to some $F$ in $T^q_{2}$. This enforces the convergence in $L^2_{loc}(\reu)$. As $\dot \mH^s_{D}=\dot \mH^s_{DB}$, it is easy to see that the sequence $(\Qpsi \psi {DB} h_{k})$ converges also in $L^2_{t,loc}(L^2_{x})$ to  $\Qpsi \psi {DB} h$. Thus $ \Qpsi \psi {DB} h=F\in T^q_{2}$ and  this concludes the extension.

Conversely, assume that $ \|h\|_{H^q} \lesssim \|\Qpsi \psi {DB} h\|_{T^q_{2}} $ for all $h\in \dot \mH^0_{DB}=\dot \mH^0_{D}$ and some $\psi\in \Psi(S_{\mu})$. Again, we have to extend it to $h\in \dot \mH^s_{D}$ for some $s\in [-1, 0)$. We assume $\|\Qpsi \psi {DB} h\|_{T^q_{2}}<\infty$, otherwise there is nothing to prove. Take  $\varphi\in \Psi(S_{\mu})$ for which we have the Calder\'on reproducing formula 
\eqref{eq:Calderon} and also that $\Tpsi \varphi {DB}$ maps $T^q_{2}\cap T^2_{2}$ into $\IH^q_{DB}$. Let $\chi_{k}$ be the indicator function of $[1/k,k]\times B(0,k)$. Then   $ h_{k}:= \Tpsi \varphi {DB}(\chi_{k}\Qpsi \psi {DB} h) \in \IH^q_{DB}=\IH^q_{D}$.  By taking the limit as $k\to \infty$, $h_{k}$ converges to some $\tilde h\in H^q_{DB}=H^q_{D}$. 
Next, by testing against a Schwartz function $g$,  
$$\pair {h_{k}}g= \big( \chi_{k}\Qpsi \psi {DB} h, \Qpsi {\varphi^*}{B^*D}g\big)= \int_{0}^\infty \pair{\chi_{k}\psi(tDB)h}{ \IP\varphi^*(tB^*D)g} \frac{ dt}{t}.$$
If $\varphi(z)=z\tilde\varphi(z)$ for some $\tilde \varphi\in \Psi(S_{\mu})$, then $\varphi^*(tB^*D)g= t \tilde \varphi^*(tB^*D)(B^*Dg)$.
 It easily  follows using $-1\le s\le 0$ and treating differently the integral for $t<1$ or $t>1$,  that $$\int_{0}^\infty t^{2s}\|\IP{\varphi^*(tB^*D)g}\|_{2}^2\,   \frac{ dt}{t} <\infty,$$
 (for $s=-1$ use the square functions estimates) while
$$\int_{0}^\infty t^{-2s}\|\psi(tDB)h\|_{2}^2\,  \frac{ dt}{t}\lesssim \|h\|^2_{DB, s}\sim \|h\|^2_{D,s}.$$
Thus dominated convergence theorem applies to yield that 
$$\pair {h_{k}}g\to  \int_{0}^\infty \pair{\psi(tDB)h} {\IP\varphi^*(tB^*D)g}\, \frac{ dt}{t} = \pair {h}g.$$
This shows that $h=\tilde h $ in the sense of Schwartz distributions, so that $h\in H^q_{D}$   with the desired estimate. 

Let us look at the extension for non-tangential maximal estimates. The extension of $ \|\tN(e^{-t|DB|}h)\|_{q} \lesssim  \|h\|_{H^q}$ to all $h\in \dot \mH^s_{D}$ for some $s\in [-1, 0)$ can be handled as for square functions.  Conversely, an inspection of the proofs  of Propositions \ref{prop:lowerp<1} and \ref{prop:lowerp<1a} shows the converse in the different cases of the statement.

\

\paragraph{Proof of (2a) and (2b)}   We fix $-1\le s \le 0$. 
The extension for the upper bound $
\|t{e^{-t|DB|}}h\|_{\mT}\lesssim  \| h\|_{\dot Y^{-1}}$, when $ h\in \dot \mH^s_{D} $, can be done as for (1) when $s<0$. Consider the  functions $h_{k}\in \dot \mH^0_{D}$ as in Lemma \ref{lem:approxk}: they converge in $\dot \mH^s_{D}$ to $h$. It is easy to check that  $(h_{k})$  is uniformly bounded in $\dot Y^{-1}$ with $\|h_{k}\|_{\dot Y^{-1}}\lesssim \|h\|_{\dot Y^{-1}}$. Thus it remains to go to the limit for $\|t{e^{-t|DB|}}h_{k}\|_{\mT}$.  Convergence in $\dot \mH^s_{D}$ implies that   $(t{e^{-t|DB|}}h_{k})$ converges to $t{e^{-t|DB|}}h$ in $L^2_{\loc}(\reu)$ and, at the same time, as it is a bounded sequence in $\mT$, which is a dual space, it has a weakly-$*$ convergent subsequence. Testing against bounded function with compact support in $\reu$, we conclude that the limit must also be
$t{e^{-t|DB|}}h$ and the desired estimate follows.

Now, for (2b), let $\tilde h\in \dot \mH^{s+1}_{BD}$. Then we know from Corollary \ref{cor:isom} that $h= D\tilde h \in \dot \mH^{s}_{DB}=\dot \mH^{s}_{D}$ and $De^{-t|BD|}\tilde h=e^{-t|DB|} D\tilde h= e^{-t|DB|} h$. Using what we just did 
$$\|tD{e^{-t|BD|}}\tilde h\|_{\mT} \lesssim  \|D\tilde h\|_{\dot Y^{-1}}
.
$$
Using the boundedness of $B$ we also have the upper bound 
$$
\|tBD{e^{-t|BD|}}\tilde h\|_{\mT} \lesssim  \|tD{e^{-t|BD|}}\tilde h\|_{\mT} \lesssim  \|D\tilde h\|_{\dot Y^{-1}}
.
$$
Finally, $t\partial_{t}{e^{-t|BD|}}\tilde h= tBD{e^{-t|BD|}} \sgn(BD)\tilde h$, so that  
$$
\|t\partial_{t}{e^{-t|BD|}}\tilde h\|_{\mT} \lesssim \| D  \sgn(BD)\tilde  h\|_{\dot Y^{-1}} \lesssim \|  D \tilde  h\|_{\dot Y^{-1}},$$
where the last inequality follows from Proposition \ref{prop:fcnegsob}.

For the converse inequalities in (2a) and (2b), a moment's reflection tells us that   it is enough to show, when  $ \tilde h\in \dot \mH^{s+1}_{BD}, $ that  
 $\|D\tilde h\|_{\dot Y^{-1}} \lesssim \|tBD{e^{-t|BD|}}\tilde h\|_{\mT}$. 
 Set $\psi(z)=z e^{-\modz}$ as the other inequalities follow from this one. Consider $\varphi$ allowable for $\IH^q_{DB^*}$ such that the Calder\'on formula \eqref{eq:Calderon} holds. Let $g\in \IH^q_{D}\cap \dot \mH^{-s-1}_{D}=\IH^q_{DB^*}\cap \dot \mH^{-s-1}_{DB^*}$. Hence, for the inner product in tent spaces
 $$
 |(\Qpsi \psi {BD} \tilde h, \Qpsi {\varphi^*} {DB^*}g)| \lesssim \|\Qpsi \psi {BD} \tilde h\|_{\mT}\| g\|_{\IH^q_{DB^*}}.
 $$
 Using the approximations with the functions $\chi_{k}$ above, let  $\tilde h_{k}= \Tpsi \varphi {BD} (\chi_{k}\Qpsi \psi {BD} \tilde h) \in \IH^\mT_{BD}$. Then, using Lemma \ref{lem:D} and $\tilde h_{k}\in \dot \mH^0_{BD}$,  
 $$\|D\tilde h_{k}\|_{\dot Y^{-1}}\sim \|\tilde h_{k}\|_{Y} \lesssim \|\chi_{k}tBD{e^{-t|BD|}}\tilde h\|_{\mT} \lesssim \|tBD{e^{-t|BD|}}\tilde h\|_{\mT}.$$
 It remains to show that $D\tilde h_{k}$ converges to $D\tilde h$ in the sense of distributions as this will imply $\|D\tilde h\|_{\dot Y^{-1}} \le \liminf \|D\tilde h_{k}\|_{\dot Y^{-1}}$.
 Let $g$ be a Schwartz function. Then 
 $$\pair {D\tilde h_{k}} g= \pair {\tilde h_{k}} {Dg}=  \big( \chi_{k}\Qpsi \psi {BD} \tilde h, \Qpsi {\varphi^*}{DB^*}(Dg)\big).
 $$
Then, as $-1\le s\le 0$ and $Dg\in \dot\mH^{-s-1}_{D}=\dot\mH^{-s-1}_{DB^*}$, 
 $$\int_{0}^\infty t^{2(s+1)}\|{\varphi^*(tDB^*)(Dg)}\|_{2}^2\,   \frac{ dt}{t} <\infty,$$ while
$$\int_{0}^\infty t^{-2(s+1)}\|\psi(tBD)\tilde h\|_{2}^2\,  \frac{ dt}{t}\lesssim \|\tilde h\|^2_{BD, s+1}\sim \|h\|^2_{D,s}.$$
Thus dominated convergence theorem applies to yield that 
$$
\pair {\tilde h_{k}} {Dg} \to  \big( \Qpsi \psi {BD} \tilde h, \Qpsi {\varphi^*}{DB^*}(Dg)\big).$$
If $\varphi\psi$ has enough decay at 0 and $\infty$ then 
$$(\Qpsi \psi {BD} \tilde h, \Qpsi {\varphi^*} {DB^*}(Dg)) = \pair {\Tpsi \varphi {BD} \Qpsi \psi {BD}\tilde h} {Dg}= \pair {\tilde h} {Dg}=\pair {D\tilde h} {g}.$$

\

\paragraph{Proof of (2c)} As in  (2b),  $\|D\tilde h\|_{\dot Y^{-1}} \sim \|\psi(tBD)\tilde h\|_{\mT}$ for any allowable $\psi$ for $\IH^\mT_{BD}$ and  $ \tilde h\in \dot \mH^{s+1}_{BD}$. 
As observed in Proposition \ref{prop:sharpsobolev},   $e^{-t|BD|}\tilde h -\tilde h\in L^2$ when $\tilde h\in \dot \mH^{s+1}_{BD}$,  so that the proof of Lemma \ref{lem:sharp1} goes through without change. This proves the lower bounds for $\tNs (e^{-t|BD|} \tilde h)$ and $\tNsa (e^{-t|BD|} \tilde h)$. 

As for the upper bounds, let $\tilde h_{\varepsilon}=e^{-\varepsilon|BD|}\tilde h - e^{-(1/\varepsilon)|BD|}\tilde h$, $\varepsilon>0$. It follows from Proposition \ref{prop:sharpsobolev}  that $\tilde h_{\varepsilon} \in \clos{\ran_{2}(BD)}$, thus we obtain from Theorem \ref{thm:ntbd} the uniform upper bounds,  
$$
\|\tNs (e^{-t|BD|} \tilde h_{\varepsilon})\|_{q'} \lesssim \|\IP \tilde h_{\varepsilon}\|_{Y}
$$
in the case $Y=L^{q'}$ and 
$$
\|\tNsa (e^{-t|BD|} \tilde h_{\varepsilon})\|_{\infty} \lesssim \|\IP \tilde h_{\varepsilon}\|_{Y}
$$
in the case $Y=\dot \Lambda^\alpha$. 
 Remark that  $D\tilde h_{\varepsilon}= e^{-\varepsilon|DB|}D\tilde h - e^{-(1/\varepsilon)|DB|}D\tilde h$, so that    by Lemma \ref{lem:D}  and Proposition \ref{prop:fcnegsob},
$$\|\IP \tilde h_{\varepsilon}\|_{Y}\sim \|D\tilde h_{\varepsilon}\|_{\dot Y^{-1}}  \lesssim \|D\tilde h\|_{\dot Y^{-1}}. 
$$ 
As
$$
e^{-t|BD|}\tilde h_{\varepsilon} - \tilde h_{\varepsilon}=e^{-\varepsilon|BD|} (e^{-t|BD|}\tilde h -\tilde h) -  e^{-(1/\varepsilon)|BD|}(e^{-t|BD|}\tilde h -\tilde h),$$
$e^{-t|BD|}\tilde h_{\varepsilon} - \tilde h_{\varepsilon}$  converges in $L^2_{loc}(\reu)$ to $e^{-t|BD|}\tilde h -\tilde h $. A linearisation of the non-tangential sharp function, together with Fatou's lemma in the case where $Y=L^p, p<\infty$,  yields the conclusion. We skip easy details. 
  \end{proof}

\section{Applications to elliptic PDE's}

In this section, we are given $L=-\divv A \nabla$ as in the introduction ($t$-independent, bounded and accretive on $\mH^0=\mH^0_{D}$, coefficients). We first discuss representations of solutions in the class $\mE$. Then, we prove here Theorem \ref{thm:main1} and Theorem \ref{thm:main2} with some further estimates.

\subsection{A priori results for conormal gradients of solutions in $\mE$}

 We recall that 
$ \mE=\cup_{-1\le s\le 0}\ \mE_{s}$ where 
$$\mE_{s}= \begin{cases}
 \{u;\|\tN(\nabla u )\|_{2}<\infty\},     & \text{if\ } s=0, \\
 \{u; \|\SF(t^{-s}\nabla u)\|_{2}<\infty\} ,    & \text{otherwise}.
\end{cases}
$$

Recall  from \cite{AA1,R2}  that   conormal gradients 
$$
F(t,x)= \nabla_A u(t,x)=  \begin{bmatrix} \pd_{\nu_A}u(t,x)\\ \nabla_x u(t,x) \end{bmatrix} \in L^2_{loc}(\reu)
$$
(we omit the target space of $F$ in the notation) of  weak solutions $u\in \mE_{s}$ of $Lu=0$ on $\reu$ satisfy the equation (in distributional sense at first, and eventually in strong semigroup  sense)  
\begin{equation}
\label{eq:rep}
\pd_{t}F+DB F=0,
\end{equation}
and  have a trace on $\R^n$  and  semigroup representation
\begin{align}
    \nabla_A u|_{t=0}& \in \dot \mH^{s,+}_{DB} \subset \dot \mH^{s}_{D}, \nonumber  \\
   \nabla_{A}u(t,\,.\,)    = e^{-t|DB|}\nabla_A u|_{t=0}&=e^{-t|DB|}\chi^+(DB)\nabla_A u|_{t=0}= e^{-tDB}\nabla_A u|_{t=0} \label{eq:sg}, 
\end{align}
where
$$
  \dirac:= 
    \begin{bmatrix} 0 & \divv_{x} \\ 
     -\nabla_{x} & 0 \end{bmatrix},\qquad  \dom(\dirac) = \begin{bmatrix}  \dom(\nabla)\\ \dom (\divv)\end{bmatrix} \subset L^2(\R^n, \C^N), \ N=m(1+n),
$$
and
 \begin{equation}
\label{eq:hat}
B= \hat A:=  \begin{bmatrix} 1 & 0  \\ 
    c & d \end{bmatrix}\begin{bmatrix} a & b  \\ 
    0 & 1 \end{bmatrix}^{-1}=  \begin{bmatrix} a^{-1} & -a^{-1} b  \\ 
    ca^{-1} & d-ca^{-1}b \end{bmatrix}
\end{equation}
whenever we write 
$$
A= \begin{bmatrix} a & b  \\ 
    c & d \end{bmatrix}
    $$
    and $L$  in the form
    $$
    L=-\begin{bmatrix}   \pd_{t}& \nabla_x  \end{bmatrix}
  \begin{bmatrix} a & b  \\ 
    c & d \end{bmatrix} \begin{bmatrix}   \pd_{t}\\ \nabla_x  \end{bmatrix}.
    $$
Here, $D$ and $B$  satisfy the necessary requirements and the semigroup $e^{-t|DB|}$ is appropriately interpreted as in Section  \ref{sec:Sobolev}. 

Conversely,  for any $h\in \dot \mH^{s,+}_{DB}$, the $L^2_{\loc}(\reu)$ function 
$$F(t,x)= e^{-t|DB|}h(x)= e^{-tDB}\chi^+(DB)h(x)$$
 is the conormal gradient of a weak solution $u\in \mE_{s}$ of $Lu=0$ on $\reu$ and $h=\nabla_A u|_{t=0}$. Note that $u$ is unique modulo constants. Note also that  $u$ is a continuous function of $t\ge0$ valued in $L^2_{loc}(\R^n)$. See \cite{AA1} for $s=-1$ and \cite[Remark 8.9]{AM} for all $s\in [-1,0]$.

 It is convenient to use  the notation $v= \begin{bmatrix} v_{\no}\\ v_{\ta}\end{bmatrix}$ for vectors  in $\C^{m(1+n)}$, where $v_{\no}\in \C^m$ is called the scalar part and $v_{\ta}\in \C^{mn}=(\C^m)^n$ the tangential part of $v$.  With this notation, for any $s\in \R$,  
 \begin{equation}
\label{eq:Hsd}
 \dot \mH^{s}_{D}= \begin{bmatrix} \dot \mH^{s}_{\no}\\ \dot \mH^{s}_{\ta}\end{bmatrix}.
\end{equation}
 Given the definition of $D$, we have
$$
\IP= \begin{bmatrix} I & 0  \\ 
    0 & RR^* \end{bmatrix},
    $$
    where $R$ is the array of Riesz transforms on $\R^n$ acting componentwise on $\C^m$-valued functions and  $R^*$ is its adjoint. It follows that
    $\dot \mH^{s}_{\no}= \dot \mH^{s}(\R^n;\C^m)$ and $\dot \mH^{s}_{\ta}= R\dot \mH^{s}_{\no}$, which is also denoted by $\dot \mH^{s}_{\nabla}(\R^n;\C^{mn})$ in \cite{AM}.

Let $u\in \mE_{s}$ be a solution to $Lu=0$ in $\reu$. Using that $D: \dot \mH^{s+1,+}_{\IP BD} \to \dot \mH^{s,+}_{DB}$ is an isomorphism, there exists a unique $U(0,\,.\,) \in  \dot \mH^{s+1,+}_{\IP BD}\subset \dot \mH^{s+1}_{D}$ such that
$$
DU(0,\,.\,):= - \nabla_{A}u|_{t=0} \in  \dot \mH^{s,+}_{DB}.$$ 
Then, define 
$$
U(t,\,.\,)=  e^{-t|\IP BD|} U(0,\,.\,)= e^{-t\IP BD} \chi^+(\IP BD)U(0,\,.\,) , \quad t\ge 0,
$$
 accordingly to Proposition \ref{prop:sharpsobolev} with $U(t,\, .\,)-U(0,\,.\,) \in L^2$. 
 Using that $\IP$ extends to an isomorphism $\dot \mH^{s+1,+}_{BD} \to \dot \mH^{s+1,+}_{\IP BD}$,  there exists a unique $v(0,\,.\,)\in \dot \mH^{s+1,+}_{BD}$ such that 
 \begin{equation}
\label{eq:U=Pv}
U(0,\,.\,)= \IP v(0,\,.\,)
\end{equation}
and this $v$ satisfies  \begin{equation*}
Dv(0,\,.\,)=DU(0,\,.\,)= -\nabla_{A}u|_{t=0},
\end{equation*} 
where   $Dv(0,\,.\,)$ is taken in the appropriate  sense.  One defines,  in $\dot \mH^{s+1,+}_{BD}$, 
$$
v(t,\,.\,)= e^{-t|BD|} v(0,\,.\,)= e^{-tBD}\chi^+(BD) v(0,\,.\,),  \quad t\ge 0,
$$
accordingly to Proposition \ref{prop:sharpsobolev}, so that $v(t,\,. \,)-v(0,\,.\,) \in L^2$, and one has
$$
U(t,\,.\,)= \IP v(t,\,.\,)
$$
in $\dot \mH^{s+1}_{D}$ and
\begin{equation}
\label{eq:du=dv}
Dv(t,\,.\,)=DU(t,\,.\,)= -\nabla_{A}u(t,\, .\, )
\end{equation} 
  in $L^2_{loc}(\reu)\cap C([0,\infty); \dot \mH^{s}_{D})$.

In fact, $U$ and $v$ share the same first component as $\IP$ is the identity on scalar parts and
their tangential parts satisfy for all $t\ge 0$, 
$$
(U(t,\,.\,))_{\ta}=(\IP v)_{\ta}(t,\,.\,)= ((RR^*v_{\ta})(t,\,.\,)),  \ \mathrm{in}\  \dot \mH^{s+1}_{\ta}
$$
or, equivalently, 
$$
(R^*U_{\ta})(t,\,.\,)=(R^*v_{\ta})(t,\,.\,),   \ \mathrm{in}\  \dot \mH^{s+1}_{\no}.
$$
Here,  $RR^*v_{\ta}$  is meant as the appropriate extension of the tangential part of $\IP$ acting on $v$, so $R^*v_{\ta}$ is to be interpreted in this way.   It tells us  that   any estimate on $U_{\ta}$ is thus an estimate on $R^*v_{\ta}$.

We finish this discussion with the pointwise relation between $u$, $U_{\no}$ and $v_{\no}$. Recall that $u\in \mE_{s}$ and is  continuous as a function of $t$ valued in $L^2_{loc}(\R^n;\C^m)$.  
Also $U_{\no}=v_{\no}\in \dot \mH^{1/2}_{\no}$ at $t=0$. They can be regarded as $L^2_{loc}$ functions and they agree up to a constant. We decide to set the constant to be 0.   Moreover,  $U(t,\, .\,)-U(0,\,.\,)=\IP (v(t,\,.\,)-v(0,\,.\,))$ belongs to $ L^2$ and is continuous as a function of $t$.   
As $\IP$ is the identity on scalar parts, we have the equality $U_{\no}=v_{\no}$ in $C([0,\infty); L^2_{loc}(\R^n))$.  Following the proof in  \cite{AA1} where the case 
 $s=-1$ is treated 
  (we changed signs compared to \cite{AA1}), there  exists   a constant $c\in \C^m$ such that for  all $t\ge 0$
  $$
  u(t,\,.\,)=(U(t,\,.\,))_{\perp}+c=(v(t,\,.\,))_{\perp}+c   \ \mathrm{in}\  L^2_{loc}(\R^n),
  $$ 
  (it is no longer modulo constants)
  so that we have the following representations for $u$ in $C([0,\infty); L^2_{loc}(\R^n))$ with $h=v(0,\,.\,)\in \dot\mH^{s+1,+}_{BD}$, 
  $$
   u(t,\,.\,)-c=(e^{-t|\IP BD|}\IP h)_{\perp} = ( e^{-t|BD|}h)_{\perp}= (\IP e^{-t|BD|}h)_{\perp}.
   $$
Thus $U$ and $v$ are potential vectors  for the solution $u$. Both are useful.

 If, furthermore, $s=-1$, \textit{i.e.} $h=v(0,\,.\,) \in \clos{\ran_{2}(BD)}$, then $e^{-t|\IP BD|}\IP h = \IP e^{-t| BD|}\IP h$, so we also have $u(t,\,.\,)-c= (\IP e^{-t| BD|}\IP h)_{\perp}= (e^{-t| BD|}\IP h)_{\perp}$.  

Let us mention a consequence of this discussion.

\begin{lem}\label{lem:y-1} Assume $u\in \mE_{s}, -1\le s\le 0$, is a weak solution of $Lu=0$.  Assume $q$ is such that $\IH^q_{DB^*}=\IH^q_{D}$ with equivalence of norms. Let $p=q'$ if $q>1$. Then 
$ \|\nabla_{A} u|_{t=0}\|_{\dot W^{-1,p}}<\infty$ if, and only if, there exists $h\in \dot\mH^{s+1,+}_{BD}\cap H^{p,+}_{BD}$ with $Dh=\nabla_{A} u|_{t=0}$,
 and we have 
$$
 \|\nabla_{A} u|_{t=0}\|_{\dot W^{-1,p}} \sim  \|\IP h \|_{p}.
  $$
Let $\alpha=n(\frac{1}{q}-1)$ if $q\le 1$. Then $\|\nabla_{A} u|_{t=0}\|_{\dot \Lambda^{\alpha-1}}<\infty$ if, and only if, there exists $h\in \dot\mH^{s+1,+}_{BD}\cap \dot \Lambda^{\alpha,+}_{BD}$ with $Dh=\nabla_{A} u|_{t=0}$,
 and we have 
$$
 \|\nabla_{A} u|_{t=0}\|_{\dot \Lambda^{\alpha-1}} \sim \|\IP h\|_{\dot \Lambda^\alpha}.$$
  \end{lem}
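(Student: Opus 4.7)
From the discussion preceding the lemma, every weak solution $u\in\mE_{s}$ of $Lu=0$ yields, via the isomorphisms $\IP:\dot\mH^{s+1}_{BD}\to\dot\mH^{s+1}_{\IP BD}=\dot\mH^{s+1}_{D}$ (Proposition~\ref{prop:sobolev}(6)) and $D:\dot\mH^{s+1}_{\IP BD}\to\dot\mH^{s}_{DB}$ (Corollary~\ref{cor:isom}), a unique $h\in\dot\mH^{s+1,+}_{BD}$, namely $h=-v(0,\cdot)$, satisfying $Dh=\nabla_{A}u|_{t=0}$. Under the hypothesis $\IH^{q}_{DB^{*}}=\IH^{q}_{D}$, Theorem~\ref{thm:duality} promotes $\IP:\IH^{p}_{BD}\to\IH^{p}_{D}$ (respectively $\IP:\IL^{\alpha}_{BD}\to\IL^{\alpha}_{D}$) to an isomorphism after completion, so that membership of $h$ in $H^{p,+}_{BD}$ (respectively $\dot\Lambda^{\alpha,+}_{BD}$) is equivalent to $\IP h\in L^{p}$ (respectively $\IP h\in\dot\Lambda^{\alpha}$) with comparable norms. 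The lemma therefore reduces to the comparison
\[
\|\nabla_{A}u|_{t=0}\|_{\dot Y^{-1}}=\|Dh\|_{\dot Y^{-1}}\sim\|\IP h\|_{Y},
\]
where both sides are understood to be simultaneously finite or infinite.

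The plan is to route this comparison through the common tent-space quantity $\|tDe^{-t|BD|}h\|_{\mT}$. The first equivalence
\[
\|Dh\|_{\dot Y^{-1}}\sim\|tDe^{-t|BD|}h\|_{\mT}
\]
is Theorem~\ref{thm:summary}(2b), which is stated precisely for every $h\in\bigcup_{-1\le s\le 0}\dot\mH^{s+1}_{BD}$. The second equivalence
\[
\|tDe^{-t|BD|}h\|_{\mT}\sim\|\IP h\|_{Y}
\]
is Theorem~\ref{thm:supergoodhpbd} with $\phi(z)=e^{-|z|}$, valid as stated when $h\in\clos{\ran_{2}(BD)}=\dot\mH^{0}_{BD}$. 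Composing the two, the lemma is immediate in the endpoint case $s=-1$.

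The hard part will be transferring the second equivalence from $\dot\mH^{0}_{BD}$ to a general $h\in\dot\mH^{s+1,+}_{BD}$ with $-1<s\le 0$. My plan is to regularize by setting
\[
h_{k}:=\int_{1/k}^{k}\theta(tBD)h\,\frac{dt}{t},\qquad \theta(z)=ce^{-|z|-|z|^{-1}},
\]
adapting Lemma~\ref{lem:approxk} to the $BD$ calculus: the $h_{k}$ lie in $\bigcap_{s'\in\R}\mH^{s'}_{BD}\subset\clos{\ran_{2}(BD)}$ and $h_{k}\to h$ in $\dot\mH^{s+1}_{BD}$. Using the intertwining $De^{-t|BD|}=e^{-t|DB|}D$ from Corollary~\ref{cor:isom}, rewrite $tDe^{-t|BD|}h_{k}=\theta_{k}(DB)(te^{-t|DB|}Dh)$, where $\theta_{k}(z)=\int_{1/k}^{k}\theta(tz)\,\frac{dt}{t}$ is uniformly bounded in $H^{\infty}(S_{\mu})$ and converges to $1$ pointwise on $S_{\mu}$. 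Combined with Theorem~\ref{thm:summary}(2a) and Proposition~\ref{prop:fcnegsob} (boundedness of the $DB$-functional calculus on $\dot Y^{-1}$), this gives both uniform control $\|tDe^{-t|BD|}h_{k}\|_{\mT}\lesssim\|tDe^{-t|BD|}h\|_{\mT}$ and convergence $tDe^{-t|BD|}h_{k}\to tDe^{-t|BD|}h$ in $L^{2}_{\loc}(\reu)$. Applying Theorem~\ref{thm:supergoodhpbd} to each $h_{k}$ then yields $\|\IP h_{k}\|_{Y}$ uniformly bounded by $\|tDe^{-t|BD|}h\|_{\mT}$, while $\IP h_{k}\to\IP h$ in $\dot\mH^{s+1}_{D}\hookrightarrow\mathcal{S}'$.

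A Fatou-type passage to the limit identifies $\|\IP h\|_{Y}$ with the limit of $\|\IP h_{k}\|_{Y}$, up to constants: strongly when $Y=L^{p}$ by lower semi-continuity, and via Banach--Alaoglu and weak-$*$ semi-continuity when $Y=\dot\Lambda^{\alpha}$. This weak-$*$ Fatou step in the H\"older regime $q\le 1$ is the main obstacle; once it is in place, the two equivalences combine, the characterization of $H^{p,+}_{BD}$ (resp.\ $\dot\Lambda^{\alpha,+}_{BD}$) via $\IP h$ from Paragraph~1 upgrades the norm comparison into the stated ``if and only if'', and the lemma follows.
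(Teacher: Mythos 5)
Your route --- channelling everything through the tent-space equivalences of Theorem \ref{thm:summary}(2a)--(2b) and Theorem \ref{thm:supergoodhpbd}, and then transferring from $\clos{\ran_{2}(BD)}$ to $\dot\mH^{s+1,+}_{BD}$ by regularization --- is genuinely different from the paper's. The paper disposes of the lemma by a short duality argument: the forward estimate $\|D\IP h\|_{\dot W^{-1,p}}\lesssim\|\IP h\|_{p}$ is a one-line computation in the spirit of Lemma \ref{lem:D}, and for the converse one observes that $\dot\mH^{s+1}_{BD}$ and $H^{p}_{BD}$ are the duals of $\dot\mH^{-s-1}_{D}$ and $H^{q}_{D}$ with the same pairing on the dense intersection $\dot\mH^{-s-1}_{D}\cap H^{q}_{D}$, so the functional $g\mapsto\pair{\nabla_{A}u|_{t=0}}{D^{-1}g}$ directly produces a single element $h$ of the intersection $\dot\mH^{s+1}_{BD}\cap H^{p}_{BD}$ with $Dh=\nabla_{A}u|_{t=0}$ and the right norm bounds; one then applies $\chi^{+}(BD)$ (and replaces $\dot W^{1,q}$ by $\dot H^{1,q}$ when $q\le 1$).

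As written, your plan has a genuine gap exactly where the lemma's real content lies. In your first paragraph you assert that, after completion, membership of $h$ in $H^{p,+}_{BD}$ is equivalent to $\IP h\in L^{p}$. But Theorem \ref{thm:duality} is a statement about the pre-Hardy spaces inside $L^{2}$; for an $h$ that a priori lives only in the abstract completion $\dot\mH^{s+1,+}_{BD}$, this assertion requires identifying elements across two different abstract completions, i.e.\ making the Sobolev and Hardy theories consistent --- which is precisely what the lemma (and the remark following its proof) is about. Your approximation scheme, even granting the weak-$*$/Fatou step you flag as unfinished, yields only that $\IP h_{k}$ is uniformly bounded in $Y$ and converges to $\IP h$ in $\mS'$; this shows $\IP h$ has a representative in $Y$, but it does not show that $(h_{k})$ is Cauchy in $H^{p}_{BD}$, nor that the element $h'\in H^{p,+}_{BD}$ obtained as the $\IP$-preimage of the weak limit coincides with $h$ as an element of the sum $\dot\mH^{s+1}_{BD}+H^{p}_{BD}$; without that, no single element of the intersection satisfying $Dh=\nabla_{A}u|_{t=0}$ has been exhibited. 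Closing this requires an identification step --- e.g.\ injectivity of the extended $D$ on the sum, or testing both candidates against the dense class $\dot\mH^{-s-1}_{D}\cap H^{q}_{D}$ --- and the latter is essentially the paper's proof, so the detour through the square-function theorems ends up buying nothing. The same consistency issue resurfaces in your forward direction, where you need the upper bound of Theorem \ref{thm:supergoodhpbd} for abstract $h$, whereas the paper's forward direction needs nothing beyond the Lemma \ref{lem:D}-type duality estimate.
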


\begin{proof} Let us consider the case $q>1$. 
Remark that $\dot\mH^{s+1}_{BD}$ is the dual of 
$\dot\mH^{-s-1}_{DB^*}=\dot\mH^{-s-1}_{D}$ and $H^p_{BD}$ is the dual of $H^q_{DB^*}=H^q_{D}$ with identical dualities when restricted to dense subspaces. The intersection $\dot\mH^{-s-1}_{D}\cap H^q_{D}$ is  well-defined within the Schwartz distributions, dense  in each factor,  and the intersection of duals $\dot\mH^{s+1}_{BD}\cap H^p_{BD}$ makes sense (as a subspace of the sum). 
If $h\in \dot\mH^{s+1,+}_{BD}\cap H^{p,+}_{BD}$ with $\nabla_{A} u|_{t=0}=Dh=D\IP h$, then 
$
 \|\nabla_{A} u|_{t=0}\|_{\dot W^{-1,p}}= \|D\IP h\|_{\dot W^{-1,p}} \lesssim \|\IP h\|_{p} $
 by an argument similar to that of Lemma \ref{lem:D}.
 Conversely, let $g\in \dot\mH^{-s-1}_{D}\cap H^q_{D}$. Then $D^{-1}g\in \dot\mH^{-s}_{D}\cap \dot W^{1,q}$. Indeed, if $g\in \dot\mH^{-s-1}_{D}\cap H^q_{D}$, then $\nabla D^{-1}g=\nabla D^{-1}\IP g \in \dot\mH^{-s-1}_{D}\cap H^q_{D}$. Thus, the map $g\mapsto \pair { \nabla_{A} u|_{t=0}} {D^{-1}g}$ is defined on $\dot\mH^{-s-1}_{DB^*}\cap H^q_{DB^*}$ and defines  $h\in \dot\mH^{s+1}_{BD}\cap H^p_{BD}$ with  $Dh=\nabla_{A} u|_{t=0}$ and one has $\IP h \in \dot\mH^{s+1}_{D}\cap H^p_{D}$ so that $\| \IP h  \|_{p}\lesssim  \|\nabla_{A} u|_{t=0}\|_{\dot W^{-1,p}}$.  Applying the projector $\chi^+(DB)$ leaves  $\nabla_{A} u|_{t=0}$ unchanged, thus it follows that $h=\chi^+(BD)h$ (in both spaces).

In the case $q\le 1$, we argue as above and replace $\dot W^{1,q}$ by $\dot H^{1,q}$. 
\end{proof}

\begin{rem}
This proof reveals that one can make the Sobolev and Hardy space theories consistent in the appropriate ranges of exponents. 
\end{rem}

\subsection{A priori comparisons of various norms}\label{sec:comp}

 We may now  translate Theorem \ref{thm:summary} in the context of solutions of $Lu=0$ in $\reu$. 
We remark that if $L$ is associated to $B$, then  the operator $L^*$, with coefficients $A^*$, is associated to $\wt B=\widehat {A^*}= NB^*N$, with $N= \begin{bmatrix} I & 0  \\ 
    0 & -I \end{bmatrix}$. As $DN+ND=0$ and  $N$ preserves $\clos{\ran_{2}(D)}$, we see  $D\wt B=-N(DB^*)N= N^{-1}(-DB^*)N$,  as $N=N^{-1}$ For the functional calculi of $D\wt B$ and $DB^*$, we obtain $b(D\wt B) = Nb(-DB^*)N$ for all $b\in H^\infty(S_{\mu})$. Therefore, we see that  $h\in \IH^{q,\pm}_{D\wt B}$ if and only if $Nh\in \IH^{q,\mp}_{DB^*}$. Also, $\IH^q_{DB^*}=\IH^q_{D}$ if and only if $\IH^q_{D\wt B}= \IH^q_{D}$. More directly, Proposition \ref{prop:duality} applies to the pair of spaces $(\IH^\mT_{D\wt B},\IH^{\mT^*}_{BD})$  for the pairing $\pair  {Nf} g$ on $\clos{\ran_{2}(D)}\times \clos{\ran_{2}(BD)}$. Similarly, $h\in \dot\mH^{s,\pm}_{D\wt B}$ if and only if $Nh\in \dot\mH^{s,\mp}_{DB^*}$ and $\dot\mH^s_{D\wt B}, \dot \mH^{-s}_{BD}$ are dual spaces for this pairing (or, rather, its extension). Hence all statements proved  before adapt to this new pairing. 
    
  \begin{thm}

We  set $p_{\pm}(L)=p_{\pm}(DB)=p_{\pm}(BD)$ and $I_{L}$ be the subinterval  of $ (\frac{n}{n+1}, p_{+}(L))$  for which 
$\IH^q_{DB}=\IH^q_{D}$ with equivalence of norms. 

\

For  any $q\in I_{L}$, we have that all weak solutions of $Lu=0$ with $u\in \mE$, satisfy
\begin{equation}
\label{eq:NeuReg1}
\|\tN(\nabla u )\|_{q}\sim \|\nabla_A u|_{t=0}\|_{H^q}\sim \|S(t\partial_{t}\nabla u)\|_{q},
\end{equation}
where $H^q=L^q$ if $q>1$. 

\

For  any $q\in I_{L}$, we have that all weak solutions of $L^*u=0$ with $u\in \mE$, satisfy with $p=q'$ if $q>1$ and $\alpha=n(\frac{1}{q}-1)$ if $q\le 1$,
\begin{equation}
\label{eq:Dir1}
 \|S(t\nabla u)\|_{p}\sim \|\nabla_{A^*} u|_{t=0}\|_{\dot W^{-1,p}},  \end{equation}
\begin{equation}
\label{eq:Dir1'}
 \|t\nabla u\|_{T^\infty_{2,\alpha}}\sim \|\nabla_{A^*} u|_{t=0}\|_{\dot \Lambda^{-1,\alpha}}\sim \|\tNsa(v)\|_{\infty}.
\end{equation}
For those $p$ with $p>2$, we also have
\begin{equation}
\label{eq:Dir1''}
 \|\nabla_{A^*} u|_{t=0}\|_{\dot W^{-1,p}} \sim  \|\tNs(v)\|_{p}.
\end{equation}
Finally, we note the \textit{a priori}  ``$N<S$'' inequality. For $p$ as above, up to an additive normalizing constant $c$, we have
\begin{equation}
\label{eq:N<S}
\|\tN(u-c)\|_{p} \lesssim \|S(t\nabla u)\|_{p}. 
\end{equation}
\end{thm}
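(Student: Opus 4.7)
The whole theorem is a translation of Theorem \ref{thm:summary} into the language of solutions of $Lu=0$ and $L^*u=0$; I carry this out in three stages.

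Stage one is a pointwise comparison. Since $\nabla_A u$ is obtained from $\nabla u$ by pointwise multiplication by a bounded matrix-valued function whose inverse is also bounded (invertibility of the $(0,0)$-block being a consequence of the accretivity of $B=\hat A$), one immediately has $|\nabla u(t,x)|\sim|\nabla_A u(t,x)|$ with constants depending only on $\|A\|_\infty$ and $\lambda$. A slightly longer but equally elementary calculation, combining the first-order system $\partial_t F=-DBF$ (for $F=\nabla_A u$) with the original equation $Lu=0$ rewritten as $\divv_x (BF)_\ta=-\partial_t (A\nabla u)_\no$, gives $|\partial_t \nabla u(t,x)|\sim|DB\nabla_A u(t,x)|$ with the same type of constants. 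Exactly parallel statements hold for $L^*$, with $\tilde B=\widehat{A^*}=NB^*N$ in place of $B$ and $\nabla_{A^*}u$ in place of $\nabla_A u$. In particular every $T^p_2$, $T^\infty_{2,\alpha}$, or non-tangential norm of $\nabla u$, $t\nabla u$, or $t\partial_t\nabla u$ appearing in the statement can be replaced by the corresponding norm with the conormal gradient and the $DB$-calculus at no cost.

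Stage two reads off \eqref{eq:NeuReg1}, \eqref{eq:Dir1} and \eqref{eq:Dir1'}. For $u\in \mE_s$ with $Lu=0$, the representation $\nabla_A u(t,\cdot)=e^{-t|DB|}h$ with $h=\nabla_A u|_{t=0}\in\dot\mH^{s,+}_{DB}$ shows that $\|\tN(\nabla_A u)\|_q$, $\|\SF(tDB\,\nabla_A u)\|_q$ and $\|h\|_{H^q}$ are pairwise equivalent by Theorem \ref{thm:summary}(1); the spectral hypothesis $h\in\dot\mH^{s,+}_{DB}$ covers the $q\le 1$ case without assuming pointwise accretivity. For $u$ solving $L^*u=0$, the first-order system is governed by $D\tilde B$; since $DN+ND=0$ and $N$ is an involution on $\clos{\ran_2(D)}$, one checks $\IH^q_{D\tilde B}=\IH^q_D\iff\IH^q_{DB^*}=\IH^q_D\iff\IH^q_{DB}=\IH^q_D$, so $q\in I_L$ guarantees that Theorem \ref{thm:summary}(2) applies to $D\tilde B$. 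Applying part (2a) to $\nabla_{A^*}u(t,\cdot)=e^{-t|D\tilde B|}\nabla_{A^*}u|_{t=0}$ yields \eqref{eq:Dir1} and the first equivalence of \eqref{eq:Dir1'}. For the sharp-function parts of \eqref{eq:Dir1'} and \eqref{eq:Dir1''}, introduce the potential vector $v$ of Section \ref{sec:apriori} (with $B$ replaced by $\tilde B$): $v(t,\cdot)=e^{-t|\tilde BD|}v(0,\cdot)$, $v(0,\cdot)\in\dot\mH^{s+1,+}_{\tilde BD}$, and $Dv(0,\cdot)=-\nabla_{A^*}u|_{t=0}$. Then Theorem \ref{thm:summary}(2c) produces
$$\|\tNs(v)\|_p\sim\|\nabla_{A^*}u|_{t=0}\|_{\dot W^{-1,p}}\quad(p=q'>2),\qquad \|\tNsa(v)\|_\infty\sim\|\nabla_{A^*}u|_{t=0}\|_{\dot\Lambda^{\alpha-1}},$$
and chaining with \eqref{eq:Dir1} yields \eqref{eq:Dir1''} and the missing half of \eqref{eq:Dir1'}.

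Stage three is the ``$N<S$'' bound \eqref{eq:N<S}. Take the additive constant to be the $c_0\in\C^m$ from the representation $u(t,\cdot)=v_\no(t,\cdot)+c_0$. Since $v_\no(t,\cdot)-v_\no(0,\cdot)=(e^{-t|\tilde BD|}v(0,\cdot)-v(0,\cdot))_\no$, one has the pointwise bound
$$\tN(u-c_0)(x)=\tN(v_\no)(x)\le \tNs(v)(x)+C\,\MM_2(v_\no(0,\cdot))(x).$$
For $p>2$ the maximal function $\MM_2$ is bounded on $L^p$; moreover $v_\no(0,\cdot)=(\IP v(0,\cdot))_\no$, so Lemma \ref{lem:D} gives $\|v_\no(0,\cdot)\|_p\lesssim\|\IP v(0,\cdot)\|_p\sim\|Dv(0,\cdot)\|_{\dot W^{-1,p}}\sim\|\SF(t\nabla u)\|_p$ by \eqref{eq:Dir1}. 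Combined with $\|\tNs(v)\|_p\sim\|\SF(t\nabla u)\|_p$ from \eqref{eq:Dir1''}, this gives \eqref{eq:N<S}. The main difficulty throughout is purely organizational: tracking the correspondence between $B, B^*, \tilde B, \tilde B^*$ when passing between $L$ and $L^*$, and verifying the spectral and Sobolev hypotheses of Theorem \ref{thm:summary} in each case. All the genuine analytic work is already contained in that theorem.
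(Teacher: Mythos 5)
Your reduction of \eqref{eq:NeuReg1}, \eqref{eq:Dir1}, \eqref{eq:Dir1'}, \eqref{eq:Dir1''} to Theorem \ref{thm:summary} via the conormal-gradient/potential-vector dictionary is essentially the paper's argument, but your verification of the hypothesis for the $L^*$ statements contains a false step: you assert $\IH^q_{DB^*}=\IH^q_D\iff\IH^q_{DB}=\IH^q_D$, which is neither claimed in the paper nor true in general (Theorem \ref{thm:duality} relates $\IH^q_{DB^*}=\IH^q_D$ to a statement about $BD$ at the \emph{dual} exponent, not to $DB$ at the same $q$). What you actually need is the hypothesis of Theorem \ref{thm:summary}(2) for the pair $(D,\wt B)$, namely $\IH^q_{D\wt B^*}=\IH^q_D$; since $\wt B^*=NB N$ and $DN=-ND$, one has $D\wt B^*=-N(DB)N$, so by $N$-conjugation this is exactly $\IH^q_{DB}=\IH^q_D$, i.e.\ $q\in I_L$. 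So the conclusion you want is true, but the equivalence chain you wrote to justify it is wrong; replace it by this one-line conjugation (this is the content of the remarks opening Section \ref{sec:comp}).

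The more serious gap is Stage three. The inequality \eqref{eq:N<S} is claimed for all $p=q'$ with $q\in I_L$, $q>1$, and this range goes below $2$ (e.g.\ under the De Giorgi condition it is $(2-\varepsilon,\infty)$, as stated in the remark after Corollary \ref{cor:DGN}, which carefully distinguishes the range $(2,\infty)$ for the sharp-function bound from $(2-\varepsilon,\infty)$ for $\tN(u-c)$). Your argument uses (i) boundedness of $\MM_2$ on $L^p$, which fails for $p\le 2$, and (ii) \eqref{eq:Dir1''}, i.e.\ Theorem \ref{thm:summary}(2c), which is only available for $p\ge 2$; moreover the splitting $v_\no=(v-v(0,\cdot))_\no+v_\no(0,\cdot)$ cannot be repaired for $p\le 2$, since the term $\MM_2(v_\no(0,\cdot))$ genuinely fails to be in $L^p$ there (compare the sharpness remark in Section \ref{sec:ntmax} about $\MM_r$ on $L^p$, $p\le r$). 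The paper's proof avoids this: it approximates $h=v(0,\cdot)\in\dot\mH^{s+1,+}_{\wt BD}\cap H^{p,+}_{\wt BD}$ by $h_k\in\IH^{p,+}_{\wt BD}$, applies Theorem \ref{thm:ntbd} to the corresponding solutions $u_k(t,\cdot)-c=(e^{-t|\wt BD|}h_k)_\no$ — using the $p\ge 2$ statement with $\IP$ inserted and the separate $p_-(\wt BD)<p<2$ statement without it — and then passes to the limit using $L^2_{loc}$ convergence and Lemma \ref{lem:y-1}. For $p>2$ your route is a legitimate alternative (it trades the approximation argument for the already-proved sharp-function equivalence), but as written it does not prove \eqref{eq:N<S} on the full stated range.
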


\begin{proof} The only thing to prove is \eqref{eq:N<S}.
Assume $\|S(t\nabla u)\|_{p}<\infty$, otherwise there is nothing to prove. 
Since $u\in \mE$, we know that 
$u(t,\,.\,)-c =(e^{-t|\wt BD|} h)_{\perp}=v_{\perp}$ for some $h\in \dot\mH^{s+1,+}_{\wt BD}$, which by Lemma \ref{lem:y-1} can also be chosen in $H^{p,+}_{\wt BD}$ for $p$ in the specified range,  and some $c\in \C^m$, and we have $\|S(t\nabla u)\|_{p}\sim \|\nabla_{A^*} u|_{t=0}\|_{\dot W^{-1,p}}\sim \|\IP h\|_{p}$.  
Approximate $h$ by $h_{k}\in \IH^{p,+}_{\wt BD}$ (one first approximates $h$ in $\IH^{p}_{\wt BD}$ and then, apply $\chi^+(\wt BD)$), then this gives a solution  $u_{k}$ by  $u_{k}(t,\,.\,)-c =(e^{-t|\wt BD|} h_{k})_{\perp}= (\IP e^{-t|\wt BD|} h_{k})_{\perp}$ and  Theorem \ref{thm:ntbd} implies
$$
\|\tN(u_{k}(t,\,.\,)-c)\|_{p} \lesssim \|\IP h_{k}\|_{p}.
$$
By the isomophism property of $\IP$,  $\IP  h_{k}$ converges to $\IP h$ in $L^p$ and also $u_{k}$ converges to $u$ in $L^2_{loc}(\reu)$. It is then easy to conclude using Lemma \ref{lem:y-1}.
\end{proof}

\begin{rem} The comparison  \eqref{eq:Dir1} and the first comparison in \eqref{eq:Dir1'} were used in \cite{AM}. Note that for $\alpha=0$, this is a Carleson measure/$\BMO$ comparison.

\end{rem}

\begin{rem}
Let us mention that under the De Giorgi condition on $L^*_{\ta}$ in Section \ref{sec:DGN},  we have a range  $(1-\varepsilon',2+\varepsilon)$ for \eqref{eq:NeuReg1}, a range $(2-\varepsilon, \infty)$ for $p$ in \eqref{eq:Dir1},  \eqref{eq:Dir1''} and \eqref{eq:N<S}, and a range $[0,\varepsilon)$ for  \eqref{eq:Dir1'}. Again, this is \textit{a priori} for weak solutions $u\in \mE$. 
\end{rem}

\subsection{Boundary layer potentials}\label{sec:boundarylayers}

Following \cite{R1}, the boundary layer operators are identified as follows:
for $t\ne 0$,  $\nabla_{A}\mS_{t}$ and $\mD_{t}$ are defined as $L^2$ bounded operators   by, for $f\in L^2(\R^n;\C^m)$, 
\begin{equation} 
\label{eq:gradst}
\nabla_{A}\mS_{t}f:= \begin{cases}
 +  e^{-t\dirac B}\chi^{+}(\dirac B) \begin{bmatrix} f \\ 0\end{bmatrix}    & \text{if } t>0, \\
 - e^{+t\dirac B}\chi^{-}(\dirac B) \begin{bmatrix} f \\ 0\end{bmatrix}      &  \text{if } t<0,
\end{cases}
\end{equation}
and 
\begin{equation}
\label{eq:dt}
\mD_{t}f:=
\begin{cases}
   - \bigg(e^{-tB\dirac} \chi^{+}(B\dirac) \begin{bmatrix} f \\ 0\end{bmatrix}\bigg)_{\no}  & \text{if } t>0, \\
   + \bigg(e^{+tB\dirac} \chi^{-}(B\dirac) \begin{bmatrix} f \\ 0\end{bmatrix}\bigg)_{\no}   & \text{if } t<0.
\end{cases}
\end{equation}
We recall that for any $h\in L^2$, $(\IP h)_{\no}=(h)_{\no}$, hence
\begin{equation}
\label{eq:dtp}
\mD_{t}f:=
\begin{cases}
   - \bigg(\IP e^{-tB\dirac} \chi^{+}(B\dirac) \begin{bmatrix} f \\ 0\end{bmatrix}\bigg)_{\no}  & \text{if } t>0, \\
   + \bigg(\IP e^{+tB\dirac} \chi^{-}(B\dirac) \begin{bmatrix} f \\ 0\end{bmatrix}\bigg)_{\no}   & \text{if } t<0.
\end{cases}\end{equation}

Now that we have the Sobolev space $\dot \mH^s_{D}$, \eqref{eq:gradst} makes sense for $f\in \dot \mH^s(\R^n;\C^m)=\dot \mH^s_{\no}$  for $-1\le s\le 0$ and we can even define $\mS_{t}$ consistently  from $\dot \mH^s(\R^n;\C^m)$ to $\dot \mH^{s+1}(\R^n;\C^m)$   by
\begin{equation} 
\label{eq:st}
\mS_{t}f:= \begin{cases}
  -\bigg( D^{-1} e^{-t\dirac B}\chi^{+}(\dirac B) \begin{bmatrix} f \\ 0\end{bmatrix} \bigg)_{\no}   & \text{if } t>0, \\
\bigg(D^{-1} e^{+t\dirac B}\chi^{-}(\dirac B) \begin{bmatrix} f \\ 0\end{bmatrix}\bigg)_{\no}      &  \text{if } t<0.
\end{cases}
\end{equation}
We remark that $D^{-1}$ can be indifferently  thought as a  $\dot \mH^{s}_{D}\to \dot \mH^{s+1}_{D}$ or  $\dot \mH^{s}_{DB} \to \dot \mH^{s+1}_{BD}$ map. As we take scalar components the conclusion is the same. 
\

Similarly the right hand side of  \eqref{eq:dtp} makes sense for $f\in \dot \mH^{s}(\R^n;\C^m)$ for $0\le s \le 1$ by the results of Section \ref{sec:Sobolev}. Indeed,  $  \begin{bmatrix} f \\ 0\end{bmatrix} \in \dot\mH^{s}_{D}= \IP\dot\mH^{s}_{BD}$ and   $\IP$ is the identity on the scalar part. Hence $\begin{bmatrix} f \\ 0\end{bmatrix} \in \dot\mH^{s}_{BD}$.  We define $\mD_{t}f$ by 
\eqref{eq:dtp} for such $f$. 

\

Note that we may let $t\to0$ from above or below using the strong continuity of the semigroups (In Sobolev spaces, this follows from the sectoriality of their generators as observed in Proposition \ref{prop:sobolev}) to obtain the  jump relations. Those were proved in \cite{AAAHK} under De Giorgi-Nash assumptions on $L$ and $L^*$. Let us see that. From \eqref{eq:gradst} we have for all $f\in \dot \mH^s(\R^n;\C^m)$, $-1\le s\le 0$, 
\begin{equation}
\label{eq:jumpst}
\nabla_{A}\mS_{0+}f -\nabla_{A}\mS_{0-}f= (\chi^+(DB)+ \chi^-(DB))  \begin{bmatrix} f \\ 0\end{bmatrix} =  \begin{bmatrix} f \\ 0\end{bmatrix}
\end{equation}
which encodes the jump relation of the conormal derivative of $\mS_{t}$ across the boundary and the continuity of the tangential gradient of  $\mS_{t}$ across the boundary. 
We used that $\chi^+(DB)+ \chi^-(DB)=I$ on $\dot \mH^s_{D} \ni  \begin{bmatrix} f \\ 0\end{bmatrix} $.
For the double layer, we have for $f\in \dot \mH^{s}(\R^n;\C^m)$, $0\le s\le 1$,
\begin{equation}
\label{eq:jumpdt}
\mD_{0+}f -\mD_{0-}f= -\bigg(\IP (\chi^+(BD)+ \chi^-(BD))  \begin{bmatrix}  f \\ 0\end{bmatrix}\bigg)_{\no} = -\bigg(  \begin{bmatrix} f \\ 0\end{bmatrix}\bigg)_{\no }=  -f .
\end{equation}
We used  that $\chi^+(BD)+ \chi^-(BD)=I$ on $\dot\mH^{s}_{BD}\ni  \begin{bmatrix} f \\ 0\end{bmatrix}$, by the results of Section \ref{sec:Sobolev}.  

\

Finally, we have   the usual duality relations of single layer potentials and double layer potentials. Denote for a moment $\mS_{t}=\mS_{t}^A$. Then, in the   $L^2(\R^n;\C^m)$ sesquilinear  duality, for $f\in \dot \mH^s(\R^n;\C^m)$ and $g \in \dot \mH^{-s-1}(\R^n;\C^m)$, $-1\le s\le 0$, 
\begin{equation}
\label{eq:dualst}
\pair {g} {\mS_{t}^A f}= \pair { \mS_{-t}^{A^*}g} f.
\end{equation}
We provide the proof for convenience using the duality $\pair {N\wt h} h$ for vectors and the relation between $A^*$ and $\wt B$. We may assume $t>0$. We have
\begin{align*}
 \pair {g} {\mS_{t}^A f}&= \Bpair {\begin{bmatrix}  {g} \\ 0\end{bmatrix}} {- D^{-1} e^{-t\dirac B}\chi^{+}(\dirac B) \begin{bmatrix} f \\ 0\end{bmatrix}}    \\
    &  = - \Bpair {N \begin{bmatrix}  {g} \\ 0\end{bmatrix}} {D^{-1} e^{-t\dirac B}\chi^{+}(\dirac B) \begin{bmatrix} f \\ 0\end{bmatrix}}
    \\
    & = + \Bpair {ND^{-1}\begin{bmatrix}  {g} \\ 0\end{bmatrix}} { e^{-t\dirac B}\chi^{+}(\dirac B) \begin{bmatrix} f \\ 0\end{bmatrix}}
    \\
    &
    = + \Bpair {N e^{t \wt B D}\chi^{-}(\wt BD) D^{-1}\begin{bmatrix}  {g} \\ 0\end{bmatrix}} {  \begin{bmatrix} f \\ 0\end{bmatrix}}
    \\
    & = + \Bpair {N D^{-1} e^{t D\wt B }\chi^{-}(D\wt B) \begin{bmatrix}  {g} \\ 0\end{bmatrix}} {  \begin{bmatrix} f \\ 0\end{bmatrix}}
\\
& =  \pair  {\mS_{-t}^{A^*}{g}} f. 
\end{align*}
Similarly, one has that, writing $\mD_{t}^A=\mD_{t}$ for a moment, for $f\in \dot \mH^s(\R^n;\C^m)$ and $g \in \dot \mH^{-s}(\R^n;\C^m)$, $0\le s\le 1$,  
\begin{equation}
\label{eq:dualdt}
\pair {g} {\mD_{t}^A f}= \pair {\partial_{{\nu}_{A^*}} \mS_{-t}^{A^*}g} f.
\end{equation}
The proof is similar to the above one.  Assume again $t>0$. We have
\begin{align*}
 \pair {g} {\mD_{t}^A f}&= \Bpair {N \begin{bmatrix}  {g} \\ 0\end{bmatrix}} {-e^{-t B\dirac }\chi^{+}(B\dirac ) \begin{bmatrix} f \\ 0\end{bmatrix}}    \\
    &  = - \Bpair {N  e^{t\dirac \wt B}\chi^{-}(\dirac \wt B) \begin{bmatrix}  {g} \\ 0\end{bmatrix}} { \begin{bmatrix} f \\ 0\end{bmatrix}}
    \\
    &
    = + \Bpair {N \nabla_{A^*}\mS_{-t}^{A^*} \begin{bmatrix}  {g} \\ 0\end{bmatrix}} {  \begin{bmatrix} f \\ 0\end{bmatrix}}
\\
& =  \pair  {\partial_{\nu_{A^*}}\mS_{-t}^{A^*}{g}} f. 
\end{align*}
The proof with $t<0$ is left to the reader.

 The extension of the semigroups to Hardy spaces $H^p_{DB}$ and $H^p_{BD}$ and identification with usual spaces made in Section \ref{sec:completions}  yield the following result.

\begin{thm}\label{thm:bl} Let $I_{L}$ be the interval  in $(\frac{n}{n+1}, p_{+}(L))$  on which $\IH^q_{DB}=\IH^q_{D}$ with equivalence of norms  and $I_{L^*}$ be the interval in  $(\frac{n}{n+1}, p_{+}(L^*))$  on which $\IH^q_{D\wt B}=\IH^q_{D}$ with equivalence of norms. 
\begin{enumerate}
  \item For $\in I_{L}$, we have the estimate
$$
\sup_{t>0}\|\nabla_{A}\mS_{t}f\|_{H^q} \lesssim \|f\|_{H^q}, \quad \forall f\in \bigcup_{-1\le s \le 0} \dot \mH^s(\R^n;\C^m),
$$
 where $H^q=L^q$ if $q>1$, and $\nabla_{A}\mS_{t}f$ converges strongly in $H^q$ as $t\to 0^+$. In particular, $\mS_{t}$, $\partial_{\nu_{A}}\mS_{t}$ and $\partial_{t}\mS_{t}$ extend to  uniformly bounded operators 
$$
 \mS_{t}: H^q\to \dot H^{1,q},  \quad  \partial_{\nu_{A}}\mS_{t}:  H^q\to  H^{q} $$
 and
 $$
\partial_{t}\mS_{t}: L^q\to L^q,  \quad \text{if, moreover, }  q>1, $$
with strong limit as $t\to 0+$.

 \item  For $q\in I_{L}$, we have the estimate
$$
\sup_{t>0}\|\nabla_{A}\mD_{t}f\|_{H^q} \lesssim \|\nabla f\|_{H^{q}}= \| f\|_{\dot H^{1,q}}, \quad \forall f\in  \bigcup_{0\le s \le 1} \dot \mH^s(\R^n;\C^m),
$$
 where $H^q=L^q$ if $q>1$, and $\nabla_{A}\mD_{t}f$ converges strongly in $H^{q}$ as $t\to 0^+$. In particular, $\mD_{t}$ extends to  uniformly bounded operators 
$$
 \mD_{t}: \dot H^{1,q} \to \dot H^{1,q},
 $$
 with strong limit as $t\to 0+$. 
 
  \item  For $q\in I_{L^*}$, 
we have the estimate
$$
\sup_{t>0}\|\mS_{t}f\|_{L^p} \lesssim \|f\|_{\dot W^{-1,p}}, \quad \forall f\in \bigcup_{-1\le s \le 0} \dot \mH^s(\R^n;\C^m),
$$
 where $p=q'$ if $q>1$, and $\mS_{t}f$  converges strongly in $\dot W^{-1,p}$ as $t\to 0^+$, and
 $$
\sup_{t>0}\|\mS_{t}f\|_{\dot \Lambda^\alpha} \lesssim \|f\|_{\dot \Lambda^{\alpha-1}}, \quad \forall f\in \bigcup_{-1\le s \le 0} \dot \mH^s(\R^n;\C^m),
$$
if $q\le 1$ and $\alpha=n(\frac{1}{q}-1)$ and $\mS_{t}f$ converges for the weak-$*$ topology of $ \dot \Lambda^\alpha$ if $t\to 0^+$.
 In particular, for those specified $p$ and $\alpha$, $\mS_{t}$ extends by density to  uniformly bounded operators 
$$
 \mS_{t}: \dot W^{-1,p} \to L^p 
 $$
with strong limit as $t\to 0+$  and by duality to bounded operators 
 $$
 \mS_{t}: \dot \Lambda^{\alpha-1} \to \dot \Lambda^\alpha, 
 $$
 with weak-$*$ limit  as $t\to0+$.
 
  \item  For $q\in I_{L^*}$, 
we have the estimate
$$
\sup_{t>0}\|\mD_{t}f\|_{L^p} \lesssim \|f\|_{L^p}, \quad \forall f\in \bigcup_{0\le s \le 1} \dot \mH^s(\R^n;\C^m),
$$
 where $p=q'$ if $q>1$, and $\mD_{t}f$  converges strongly  in $L^p$ as $t\to 0^+$, and
 $$
\sup_{t>0}\|\mD_{t}f\|_{\dot \Lambda^\alpha} \lesssim \|f\|_{\dot \Lambda^\alpha}, \quad \forall f\in \bigcup_{0\le s \le 1} \dot \mH^s(\R^n;\C^m),
$$
if $q\le 1$ and $\alpha=n(\frac{1}{q}-1)$ and $\mD_{t}f$ converges for the weak-$*$ topology of $\dot \Lambda^\alpha$ if $t\to 0^+$.
 In particular, for those specified $p$ and $\alpha$, $\mD_{t}$ extends by density to  uniformly bounded operators 
$$
 \mD_{t}: L^p \to L^p 
 $$
 with strong limit as $t\to 0+$ and by duality to bounded operators 
 $$
 \mD_{t}: \dot \Lambda^\alpha \to \dot \Lambda^\alpha
 $$
 with weak-$*$ limit as $t\to 0+$. 
 
 \item For any integer $k\ge 0$, the same estimates than for $\mS_{t}$ hold for $(t\partial_{t})^k\mS_{t}$     in the specified ranges  of the above items.
 The same estimates than for $\mD_{t}$ hold for $(t\partial_{t})^k\mD_{t}$ in the specified ranges of the above items. 
 \item The above items holds changing $t$ to $-t$. 
 \item The jump relations \eqref{eq:jumpst} and \eqref{eq:jumpdt} hold in all the  topologies  above where $\mS_{t}$ and $\mD_{t}$ are bounded respectively.  
 \end{enumerate}  
\end{thm}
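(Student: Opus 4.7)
The plan is to read off all seven items from the representations \eqref{eq:gradst}--\eqref{eq:dtp}, combined with the extensions of the semigroups $e^{-t|DB|}$ and $e^{-t|BD|}$ to Hardy spaces carried out in Section \ref{sec:completions}, and their extensions to Sobolev spaces in Section \ref{sec:Sobolev}. The key observation is that if $f$ lies in one of the distribution spaces in the statement, then $\begin{bmatrix} f\\ 0\end{bmatrix}$ belongs to $\dot \mH^s_{D}$ for the appropriate $s\in [-1,0]$ (when dealing with single layers) or to $\dot \mH^s_{BD}$ for $s\in[0,1]$ (when dealing with double layers, using that $\IP$ acts as the identity on the scalar part, so $\begin{bmatrix} f\\0\end{bmatrix} \in \dot \mH^s_{BD}=\IP^{-1}\dot \mH^s_{\IP BD}=\IP^{-1}\dot \mH^s_{D}$).

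For (1), I would apply the extended semigroup $e^{-t|DB|}$ to $\begin{bmatrix} f\\0\end{bmatrix}$: Proposition \ref{cor:hpdb} gives $H^q_{DB}=H^q_D$ for $q\in I_L$, and on this space the extension of $(e^{-t|DB|})_{t>0}$ is strongly continuous. After composition with $\chi^+(DB)$ (which is a bounded spectral projector in $H^q_D$ by item 3 of Section \ref{sec:completions}), we obtain the uniform bound and strong continuity of $\nabla_A\mS_t f$ with respect to $t>0$. The individual bounds on $\mS_t$, $\partial_{\nu_A}\mS_t$ and $\partial_t\mS_t$ follow by extracting scalar/tangential parts, using $D^{-1}$ as an isomorphism between $\dot \mH^s_D$ and $\dot \mH^{s+1}_D$ (Corollary \ref{cor:isom}) to pass to $\dot H^{1,q}$, and for $\partial_t\mS_t$ using Lemma \ref{lem:D}. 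For (2), the same argument runs with $BD$ and $\IP BD$ in place of $DB$: $\nabla_A\mD_t f = -D\,(\IP e^{-t|BD|}\chi^+(BD)\begin{bmatrix} f\\0\end{bmatrix})_{\ta}$ modulo scalar part, and one uses Theorem \ref{thm:supergoodhpbd} together with the $H^\infty$-calculus on $H^q_{\IP BD}$ to obtain uniform $H^q$ bounds and strong continuity.

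Items (3) and (4) are the dual statements. I would invoke the duality identities \eqref{eq:dualst} and \eqref{eq:dualdt}, combined with the duality $(H^q_{DB})^*=H^{q'}_{B^*D}$ (and its analogue $(H^q_{\IP BD})^*=\dot\Lambda^{\alpha}_{\IP B^*D}$ when $q\le 1$), which in turn, under the hypothesis that $\IH^{q}_{D\widetilde B}=\IH^q_D$ holds (rewriting everything through $N$, see the opening of Section \ref{sec:comp}), identifies  $L^p$, $\dot W^{-1,p}$, $\dot\Lambda^{\alpha-1}$ and $\dot\Lambda^\alpha$ as the relevant dual spaces. Thus (3) and (4) are obtained by dualizing (1) and (2) applied to $L^*$, with strong convergence when $t\to 0^+$ on $L^p$ and $\dot W^{-1,p}$ (Proposition \ref{prop:stronglimit}) and weak-$*$ convergence on the H\"older end (as the predual strong convergence translates to weak-$*$ continuity of the adjoint).

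For (5), one writes $(t\partial_t)^k e^{-t|DB|} = \varphi_k(tDB)$ with $\varphi_k(z)=(-\modz)^k e^{-\modz} \in H^\infty(S_\mu)$, defined at $0$, hence bounded on each of the extended spaces by the $H^\infty$-calculus; the same argument applies to $\mD_t$. Item (6) follows by the symmetry $B\mapsto B$ and the symmetric r\^oles of $\chi^\pm(DB)$ in \eqref{eq:gradst}--\eqref{eq:dtp}. Item (7) is a direct consequence of the strong or weak-$*$ convergence established in (1)--(4) and of the $L^2$ jump relations \eqref{eq:jumpst}--\eqref{eq:jumpdt}, since these already hold in $\dot \mH^s_D$, and the topologies under consideration are either continuous inclusions or duals of dense inclusions. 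The main obstacle I anticipate is precisely item (7) in the H\"older range: the jump relation there must be formulated in the weak-$*$ sense and one must check that $\chi^+(DB)+\chi^-(DB)=I$, extended to $\dot\Lambda^{\alpha-1}_{D}$, still holds; this reduces however to approximation by the dense subspace $\dot\Lambda^{\alpha-1}\cap\dot\mH^s_D$ where the identity has already been proven.
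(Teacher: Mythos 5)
Your overall strategy is exactly the paper's: the paper offers no detailed proof of this theorem, simply asserting that it follows from the functional-calculus representations \eqref{eq:gradst}--\eqref{eq:dtp}, the identification $H^q_{DB}=H^q_D$ on $I_L$ (Proposition \ref{cor:hpdb}), the dual statements of Theorem \ref{thm:duality}/Corollary \ref{cor:hpbd}, the Sobolev extensions of Section \ref{sec:Sobolev} (in particular Corollary \ref{cor:isom} and Proposition \ref{prop:fcnegsob}), and the continuity statements (Proposition \ref{prop:stronglimit}, Corollary \ref{cor:hpbd}). Your assembly of these ingredients for items (1), (3)--(7) is the intended one; the minor slip that the strong convergence in (3)--(4) is really Corollary \ref{cor:hpbd} (strong continuity of $\IP e^{-t|BD|}$ on $\IH^p_D$, weak-$*$ on $\IL^\alpha_D$) rather than Proposition \ref{prop:stronglimit} directly is harmless, since that corollary is itself transported from Proposition \ref{prop:stronglimit} by the isomorphism $\IP$.

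The one concrete misstep is item (2). The conormal gradient of the double layer is not $-D(\IP e^{-t|BD|}\chi^+(BD)\begin{bmatrix} f\\0\end{bmatrix})_{\ta}$ (this does not even typecheck, since $D$ acts on the full $(1+n)m$-vector); the correct identity, which your own framework already contains, is obtained from the intertwining $D\,b(BD)=b(DB)\,D$ of Corollary \ref{cor:isom}: since $\mD_t f$ is minus the scalar part of the potential vector $v(t,\cdot)=e^{-t|BD|}\chi^+(BD)\begin{bmatrix} f\\0\end{bmatrix}$ and $\nabla_A$ of the associated solution equals $-Dv(t,\cdot)$, one gets $\nabla_A\mD_t f=e^{-t|DB|}\chi^+(DB)\,D\begin{bmatrix} f\\0\end{bmatrix}=-\,e^{-t|DB|}\chi^+(DB)\begin{bmatrix}0\\ \nabla f\end{bmatrix}$, so item (2) reduces, exactly like item (1), to the boundedness and strong continuity of the extended $DB$-semigroup and of $\chi^\pm(DB)$ on $H^q_D$ for $q\in I_L$. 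Invoking Theorem \ref{thm:supergoodhpbd} here is the wrong tool: that theorem is a tent-space (square-function) estimate $\|tD\phi(tBD)h\|_{\mT}\sim\|\IP h\|$, proved in the dual range under the hypothesis $\IH^q_{DB^*}=\IH^q_D$, whereas (2) requires a uniform-in-$t$ $H^q$ bound under the hypothesis $q\in I_L$ only; likewise ``$H^q_{\IP BD}$'' is not a space the paper develops (only the Sobolev scale $\dot\mH^s_{\IP BD}$ is). With that substitution your argument is complete and coincides with the paper's.
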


According to Corollary \ref{cor:DGN}, this    improves the known results obtained in \cite{HMiMo} for operators with De Giorgi-Nash conditions as far as convergence at the boundary is concerned (strong convergence is obtained: it was known only for $p=2$ combining \cite{AA1} and \cite{R1}) and also with a weaker hypothesis (only an assumption on $L^*_{\ta}$ or $L_{\ta}$). Also these boundedness results are new  without De Giorgi-Nash conditions.  Let us now isolate the results concerning square functions and non-tangential maximal estimates for boundary layers.

\begin{thm}\label{thm:bl2} 
Let $I_{L}$ be the interval  in $(\frac{n}{n+1}, p_{+}(L))$  on which $\IH^q_{DB}=\IH^q_{D}$ with equivalence of norms  and $I_{L^*}$ be the interval in  $(\frac{n}{n+1}, p_{+}(L^*))$  on which $\IH^q_{D\wt B}=\IH^q_{D}$ with equivalence of norms. 
\begin{enumerate}
\item For $q\in I_{L}$, we have the estimate
\begin{align*}
   \|\tN(\nabla \mS_{\pm t}f) \|_{q}\sim \|t\partial_{t}\nabla \mS_{\pm t }f\|_{T^q_{2}} & \lesssim \|f\|_{H^q},  \\
     \|\tN(\nabla \mD_{\pm t}f) \|_{q}\sim \|t\partial_{t}\nabla \mD_{\pm t }f\|_{T^q_{2}} & \lesssim \|\nabla_{x} f\|_{H^q}\sim \|f\|_{\dot H^{1,q}},
\end{align*}
where $H^q=L^q$ if $q>1$. 
\item For $q\in I_{L^*}$, $q>1$ and $p=q'$ then
\begin{align*}
   \|\tN( \mS_{\pm t}f) \|_{p} \lesssim \|t\nabla \mS_{\pm t }f\|_{T^p_{2}} & \lesssim \|f\|_{\dot W^{-1,p}},  \\
     \|\tN(\mD_{\pm t}f) \|_{p} \lesssim \|t\nabla \mD_{\pm t }f\|_{T^p_{2}} & \lesssim \| f\|_{L^p}, 
\end{align*}
\item For $q\in I_{L^*}$, $q\le 1$  and $\alpha=n(\frac{1}{q}-1)$, then 
\begin{align*}
   \|\tNsa( \mS_{\pm t}f) \|_{\infty} \lesssim  \|t\nabla \mS_{\pm t }f\|_{T^\infty_{2,\alpha}} & \lesssim \|f\|_{\dot \Lambda^{\alpha-1}},  \\
     \|\tNsa(\mD_{\pm t}f )\|_{\infty} \lesssim \|t\nabla \mD_{\pm t }f\|_{T^\infty_{2,\alpha}} & \lesssim \| f\|_{\dot \Lambda^{\alpha}}, 
\end{align*}
\end{enumerate}
For statements concerning $\mS_{\pm t}$ we \textit{a priori} assume $f\in \bigcup_{-1\le s \le 0} \dot \mH^s(\R^n;\C^m)$, and for statements concerning $\mD_{\pm t}$, $f\in \bigcup_{0\le s \le 1} \dot \mH^s(\R^n;\C^m)$. Here, $\nabla$ is the full gradient  $(\partial_{t}, \nabla_{x})$. Alternately, it can be replaced by the conormal gradient $(\partial_{\nu_{A}}, \nabla_{x})$.  
The  non-tangential sharp functions are meant as the corresponding non-tangential maximal functions for $\mS_{\pm t}f- \mS_{\pm 0}f$ or $\mD_{\pm t}f- \mD_{\pm 0}f$. Also in (2), if $p>2$, the corresponding quantities $\|\tNs(.)\|_{p}$ are equivalent to the $T^p_{2}$ terms in the middle.
\end{thm}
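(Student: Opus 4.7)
The plan is to identify each quantity appearing in Theorem \ref{thm:bl2} as either a conormal gradient of, or a scalar component of the potential vector of, a weak solution of $Lu=0$, and then invoke the a priori estimates collected in Section \ref{sec:comp}, namely Theorem \ref{thm:summary} and Theorem \ref{thm:ntbd}. The arguments for $+t$ and $-t$ are symmetric via the spectral projections $\chi^\pm(DB)$ or $\chi^\pm(BD)$, so I describe only the case $t>0$.

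For part (1), \eqref{eq:gradst} writes $\nabla_A \mS_t f = e^{-t|DB|}\chi^+(DB)\begin{bmatrix} f\\0\end{bmatrix}$. Since $q\in I_L$ gives $\IH^q_{DB}=\IH^q_D$ with equivalent norms, Corollary \ref{cor:pre-hardy} extended to completions (Section \ref{sec:completions}) yields boundedness of $\chi^+(DB)$ on $H^q_D$, and $\|\begin{bmatrix} f\\0\end{bmatrix}\|_{H^q_D}\sim\|f\|_{H^q}$. Theorem \ref{thm:summary}(1) applied to $h=\chi^+(DB)\begin{bmatrix} f\\0\end{bmatrix}$ gives $\|\tN(\nabla_A\mS_t f)\|_q\sim\|\SF(t\pd_t\nabla_A\mS_t f)\|_q\sim\|h\|_{H^q}\lesssim\|f\|_{H^q}$. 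The full gradient $\nabla$ differs from $\nabla_A$ by a pointwise bounded and invertible linear change of $\C^N$, independent of $t$, so the statement in terms of $\nabla$ and $t\pd_t\nabla$ follows without loss. For $\mD$, combining \eqref{eq:du=dv} with the intertwining $De^{-t|BD|}=e^{-t|DB|}D$ of Corollary \ref{cor:isom} rewrites the conormal gradient as $\nabla_A\mD_t f=\pm e^{-t|DB|}\chi^+(DB)(D\begin{bmatrix} f\\0\end{bmatrix})$; a direct calculation gives $D\begin{bmatrix} f\\0\end{bmatrix}=\begin{bmatrix} 0\\-\nabla_x f\end{bmatrix}$, whose $H^q_D$-norm is equivalent to $\|f\|_{\dot H^{1,q}}$, and the same application of Theorem \ref{thm:summary}(1) closes this case.

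For parts (2) and (3), I switch to the potential representation of Section \ref{sec:comp}: the solution $u=\mS_t f$ of $Lu=0$ admits a unique potential $h\in\dot\mH^{s+1,+}_{BD}$ with $Dh=-\nabla_A\mS_{0+}f=-\chi^+(DB)\begin{bmatrix} f\\0\end{bmatrix}$, and by the definition \eqref{eq:st} one reads off that the additive normalizing constant is zero, so $\mS_t f=(e^{-t|BD|}h)_\no$. The hypothesis $q\in I_{L^*}$ translates, via $\wt B=NB^*N$ as recorded at the start of Section \ref{sec:comp}, to $\IH^q_{DB^*}=\IH^q_D$, placing us in the hypotheses of Theorem \ref{thm:summary}(2). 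Lemma \ref{lem:y-1} together with Proposition \ref{prop:fcnegsob} give $\|\IP h\|_Y\sim\|Dh\|_{\dot Y^{-1}}\lesssim\|f\|_{\dot Y^{-1}}$ with $Y,\dot Y^{-1}$ as in Theorem \ref{thm:summary}(2). The middle inequality $\|t\nabla\mS_t f\|_{T^p_2}\lesssim\|f\|_{\dot W^{-1,p}}$ (resp.\ $\|t\nabla\mS_t f\|_{T^\infty_{2,\alpha}}\lesssim\|f\|_{\dot\Lambda^{\alpha-1}}$) follows from Theorem \ref{thm:summary}(2b) applied to $t\nabla u=-M\,t De^{-t|BD|}h$, where $M$ is the pointwise transformation $\nabla=M\nabla_A$. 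The outer inequality $\|\tN(\mS_t f)\|_p\lesssim\|t\nabla\mS_t f\|_{T^p_2}$ is the a priori estimate \eqref{eq:N<S} applied to the solution $\mS_t f$, the normalizing constant there being zero by the structure of \eqref{eq:st}; the case $q\le 1$ uses $\tNsa$ and $T^\infty_{2,\alpha}$ in place of $\tN$ and $T^p_2$, and the refined sharp function statement $\|\tNs(\mS_t f)\|_p\sim\|t\nabla\mS_t f\|_{T^p_2}$ for $p>2$ comes from Theorem \ref{thm:summary}(2c). The computation for $\mD_{\pm t}f$ is parallel, starting from the potential $h_0=-\chi^+(BD)\begin{bmatrix} f\\0\end{bmatrix}$, for which $\|\IP h_0\|_Y\sim\|f\|_Y$ by the extended boundedness of $\chi^+(BD)$ on the relevant completion of the pre-Hardy space.

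The main obstacle is bookkeeping rather than analytic: keeping straight the dictionary between conormal gradients, potential vectors, their projections under $\IP$, and the actions of $\chi^\pm(DB)$, $\chi^\pm(BD)$; and, in parts (2)--(3), verifying that the additive constants in the potential representations of $\mS_t f$ and $\mD_t f$ vanish in each specific case. No new quadratic estimate, off-diagonal decay, or Hardy space identification is required beyond what is already packaged in Theorems \ref{thm:summary}, \ref{thm:ntbd}, Lemma \ref{lem:y-1}, Proposition \ref{prop:fcnegsob}, and Corollary \ref{cor:isom}.
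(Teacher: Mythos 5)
Your proposal is correct and is essentially the paper's own (implicit) argument: the paper states Theorem \ref{thm:bl2} without a separate proof, as a direct assembly of the semigroup formulas \eqref{eq:gradst}, \eqref{eq:dtp}, \eqref{eq:st} with the a priori estimates of Theorem \ref{thm:summary}, Theorem \ref{thm:ntbd}, Lemma \ref{lem:y-1}, Proposition \ref{prop:fcnegsob} and Corollary \ref{cor:isom}, which is exactly the dictionary you set up (including the reduction from $\nabla$ to $\nabla_A$ by the pointwise invertible matrix $\underline A$ and the vanishing of the normalizing constants). The one caveat is the final claim for $p>2$: Theorem \ref{thm:summary}(2c) gives the equivalence for the sharp function of the full potential vector $e^{-t|BD|}h$, so for the scalar differences $\mS_{\pm t}f-\mS_{\pm 0}f$, $\mD_{\pm t}f-\mD_{\pm 0}f$ it yields only the $\lesssim$ direction by componentwise domination — but this looseness is inherited from the paper's own wording rather than a defect specific to your argument.
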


As proved in \cite{AM}, there is a generalized boundary layer representation for the conormal gradients 
of solutions in $\mE$. This  can be integrated to give the ``usual'' boundary layer representation for the solution itself. It improves the results found in \cite{AM} and \cite{HKMP2}.  Theorem 8.1 in \cite{BM} proved under De Giorgi-Nash assumptions on $L$ and $L^*$ is of the same spirit.

\begin{cor}\label{cor:BL} Let  $I_{L^*}$ be the interval in  $(\frac{n}{n+1}, p_{+}(L^*))$  on which $\IH^q_{D\wt B}=\IH^q_{D}$ with equivalence of norms.
Let $u\in \mE_{s}$, $-1\le s\le 0$,  be a solution of $Lu=-\divv A \nabla u=0$ in $\reu$. Let $p\in (1,\infty)$ with $q=p'\in I_{L^*}$ such that   $u|_{t=0} \in L^p(\R^n; \C^m)$ and $\pd_{\nu_{A}}u|_{t=0}\in \dot W^{-1,p}(\R^n; \C^m)$. Then the abstract boundary layer representation
$$
u(t,x)=\mS_{t}(\pd_{\nu_{A}}u|_{t=0})(x) -  \mD_{t}(u|_{t=0})(x)
$$
holds for all $t\ge 0$ in $L^1_{loc}(\R^n;\C^m)$.  In particular, $\sup_{t\ge 0}\|u(t,\cdot)\|_{L^p(\R^n;\C^m)}<\infty$.
\end{cor}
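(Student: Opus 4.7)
The plan is to identify both sides of the proposed formula by matching, inside the $+$ spectral subspace $\dot\mH^{s+1,+}_{BD}$, the potential vectors whose $\perp$-part encodes $u$ and $\mS_t f - \mD_t g$ respectively. The matching rests on the isomorphism $D\colon \dot\mH^{s+1,+}_{BD}\to \dot\mH^{s,+}_{DB}$ (Corollary~\ref{cor:isom}) together with the intertwining $D\,\chi^+(BD)=\chi^+(DB)\,D$ that follows from it. From the structure recalled in \S\ref{sec:comp}, since $u\in\mE_s$ there is a unique $v(0,\cdot)\in \dot\mH^{s+1,+}_{BD}$ with $Dv(0,\cdot)=-\nabla_{A}u|_{t=0}$ and a constant $c\in\C^m$ such that $u(t,\cdot) = (e^{-tBD}v(0,\cdot))_{\perp}+c$ for every $t\ge 0$. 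The hypotheses $g:=u|_{t=0}\in L^p$ and $f:=\partial_{\nu_{A}}u|_{t=0}\in\dot W^{-1,p}$ give $\nabla_{A}u|_{t=0}\in\dot W^{-1,p}$, and since $q=p'\in I_{L^*}$, Lemma~\ref{lem:y-1} upgrades $v(0,\cdot)$ to an element of $H^{p,+}_{BD}$ with $\|\IP v(0,\cdot)\|_p\sim \|\nabla_{A}u|_{t=0}\|_{\dot W^{-1,p}}$.

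Next, unfold the definitions \eqref{eq:st} and \eqref{eq:dtp}: setting $V^{\mS}(0):=D^{-1}\chi^+(DB)\begin{bmatrix} f\\ 0\end{bmatrix}$ and $V^{\mD}(0):=\chi^+(BD)\begin{bmatrix} g\\ 0\end{bmatrix}$ (both elements of $\dot\mH^{s+1,+}_{BD}$), the intertwining identity $D^{-1}e^{-tDB}=e^{-tBD}D^{-1}$ yields
$$\mS_t f - \mD_t g = \bigl(e^{-tBD}V(0)\bigr)_{\perp}, \qquad V(0):=V^{\mD}(0)-V^{\mS}(0).$$
Using $D\chi^+(BD)=\chi^+(DB)D$ and $D\begin{bmatrix} g\\ 0\end{bmatrix}=\begin{bmatrix} 0\\ -\nabla_x g\end{bmatrix}$, a direct computation gives
$$DV(0) = \chi^+(DB)\Bigl(\begin{bmatrix} 0\\ -\nabla_x g\end{bmatrix}-\begin{bmatrix} f\\ 0\end{bmatrix}\Bigr) = -\chi^+(DB)\,\nabla_{A}u|_{t=0} = -\nabla_{A}u|_{t=0},$$
the last equality because $\nabla_{A}u|_{t=0}$ already lies in $\dot\mH^{s,+}_{DB}$. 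Both $V(0)$ and $v(0,\cdot)$ belong to $\dot\mH^{s+1,+}_{BD}$ and share the same image under the isomorphism $D$; hence $V(0)=v(0,\cdot)$, and consequently
$$\mS_t f - \mD_t g = (e^{-tBD}v(0,\cdot))_{\perp}=u(t,\cdot)-c\qquad\text{for every }t\ge 0.$$

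To pin down $c=0$, let $t\to 0^+$: Theorem~\ref{thm:bl}(3)--(4) (applied with $q=p'\in I_{L^*}$) yields strong $L^p$ limits of $\mS_t f$ and $\mD_t g$ at the boundary, so evaluating the identity at $t=0$ produces $c=g-(\mS_0 f-\mD_0 g)$, which is an $L^p(\R^n;\C^m)$-function. A constant in $L^p(\R^n;\C^m)$ must vanish, therefore $c=0$ and the representation holds for every $t\ge 0$ in $L^1_{\loc}$. The concluding uniform bound $\sup_{t\ge 0}\|u(t,\cdot)\|_p<\infty$ is then inherited from the uniform $L^p$-estimates for $\mS_t f$ and $\mD_t g$ in the same theorem. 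The main technical obstacle is the simultaneous interpretation of $V^{\mS}(0)$ and $V^{\mD}(0)$ in the Sobolev scale (where the $D$-isomorphism lives) and in the Hardy scale (where the $L^p$-data lives); this compatibility is exactly what Lemma~\ref{lem:y-1} and the Corollary~\ref{cor:hpdb}-type identifications of pre-Hardy spaces for $BD$ are designed to provide. Once this is granted, the remaining reasoning is a soft functional-analytic argument using that non-zero constants cannot lie in $L^p$.
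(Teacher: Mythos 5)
Your proof is correct, and it reaches the representation by a genuinely different route than the paper. The paper's own proof quotes Corollary 8.4 of \cite{AM} to get the identity at the level of conormal gradients, $\nabla_{A}u(t,\cdot)=\nabla_{A}\mS_{t}(\pd_{\nu_{A}}u|_{t=0})-\nabla_{A}\mD_{t}(u|_{t=0})$ in $\mE_{s}\cap C([0,\infty);\dot\mH^{s,+}_{DB})$, and then ``integrates'': the two sides are weak solutions with the same conormal gradient, hence differ by a $t$-independent constant $c$. You instead re-derive this identity internally, at the level of potential vectors: unfolding \eqref{eq:st} and \eqref{eq:dtp}, using the intertwining $De^{-t|BD|}=e^{-t|DB|}D$ of Corollary \ref{cor:isom} (hence $D\chi^{+}(BD)=\chi^{+}(DB)D$), computing $DV(0)=-\chi^{+}(DB)\nabla_{A}u|_{t=0}=-\nabla_{A}u|_{t=0}=Dv(0,\cdot)$, and invoking injectivity of $D$ on $\dot\mH^{s+1,+}_{BD}$ to conclude $V(0)=v(0,\cdot)$; the constant then enters automatically through the semigroup representation of $u$ from Section \ref{sec:comp}, with no separate argument needed for its $t$-independence. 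The endgame is identical in both proofs: strong $L^{p}$ (resp.\ $\dot W^{-1,p}\to L^{p}$) limits from Theorem \ref{thm:bl}(3)--(4) at $t\to0^{+}$, continuity of $u(t,\cdot)$ at $t=0$ in $L^{2}_{\loc}$, and the observation that a constant lying in $L^{p}(\R^n;\C^m)$ must vanish. What each approach buys: the paper's version is shorter by outsourcing the gradient-level layer representation; yours is self-contained within the present paper's toolkit and makes the matching of the two potential vectors explicit. The price, which you correctly flag, is the compatibility between the Sobolev-scale definitions of $\mS_{t},\mD_{t}$ (where $u|_{t=0}$ is only seen modulo constants) and their $L^{p}$, $\dot W^{-1,p}$ extensions from Theorem \ref{thm:bl}; note that the paper's proof needs exactly the same compatibility (it asserts the right-hand side lies in $L^{p}+\C^m$ ``by the boundedness properties of the boundary layers established in Theorem \ref{thm:bl}''), and in your argument any discrepancy is a $t$-independent constant absorbed into $c$ and killed by the final $L^{p}$ argument, so the gap in rigor is no larger than the paper's own.
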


\begin{proof}  Let  $s\in [-1,0]$ for which $u\in \mE_{s}$. By Corollary 8.4 in \cite{AM}, we have
$$
 \nabla_{A}u(t,\cdot)=  \nabla_{A}\mS_{t}(\pd_{\nu_{A}}u|_{t=0}) -  \nabla_{A}\mD_{t}(u|_{t=0}).
 $$
 The equality holds in  $\mE_{s} \cap C([0,\infty); \dot\mH^{s, +}_{\dirac B})$.
Thus, we have
\begin{equation}
\label{eq:green}
u(t,x)=\mS_{t}(\pd_{\nu_{A}}u|_{t=0})(x) -  \mD_{t}(u|_{t=0})(x) + c, \quad t>0,  
\end{equation}
in $L^2_{\loc}(\reu; \C^m)$,  but also in $ L^1_{loc}(\R^n; \C^m)$ for each $t>0$ as  the right hand side belongs to $L^p(\R^n; \C^m) + \C^m$  by the boundedness properties of the boundary layers established in Theorem \ref{thm:bl}  and the left hand side is in $L^2_{\loc}(\R^n;\C^m)$ as $u\in \mE_{s}$. We also point out that $c$ is independent of $t$ because both sides are weak solutions with the same conormal gradient at the boundary.  One can pass to the limit in $t\to 0$, after testing against a $C^\infty_{0}(\R^n;\C^m)$ function. For the right hand side, we use the strong limits in the theorem above and for the left hand side, this is because $t\mapsto u(t,\cdot)$ is continuous at $0$ in $L^2_{\loc}(\R^n;\C^m)$ as $u\in \mE_{s}$ (this observation is Remark 8.9 in \cite{AM}). One  obtains $u|_{t=0}(x)=\mS_{0}(\pd_{\nu_{A}}u|_{t=0})(x) -  \mD_{0^+}(u|_{t=0})(x) + c$.  As all the functions belong to $L^p(\R^n;\C^m)$, we conclude that $c=0$. 
\end{proof}

\begin{rem}
Note that \eqref{eq:green} holds under the sole assumption that $u\in \mE_{s}$. So for H\"older  or $\BMO$ spaces, the equality holds in those spaces. 
\end{rem}
\subsection{The block case}
Consider $$
A= \begin{bmatrix} a & 0 \\ 0 & d \end{bmatrix},$$
that is,  $A$ is block diagonal. In this case, $B$ is also block diagonal with
$$
B= \begin{bmatrix} a^{-1}& 0 \\ 0 & d \end{bmatrix}.$$

\subsubsection{The case $a=1$}  We  assume $a=1$. The Hardy space theory for $1<p<\infty$ was explicitly developed in \cite{HNP}.  The limitation to $p>1$ is due to the fact that these authors work with UMD-valued functions. Remark that 
$$
DB=\begin{bmatrix} 0& \divv d \\ -\nabla & 0 \end{bmatrix}, \qquad  (DB)^2 =\begin{bmatrix} -\divv d \nabla & 0 \\ 0 & -\nabla  \divv d\end{bmatrix}.
$$
In particular, $(DB)^2$ is sectorial with angle $\omega$ (instead of $2\omega$ if $B$ is an arbitrary matrix with  angle of accretivity $\omega$). Also $(DB)^2$ has an $H^\infty$-calculus on $L^2(\R^n; \C^N)$. 
Set $L=-\divv d \nabla$ and $M=-\nabla\divv d $, both defined as $\omega$-sectorial operators on $L^2(\R^n;\C^m)$ and $L^2(\R^n;\C^{nm})$ with $H^\infty$-calculus. Note that $M=0$ on $\nul(\divv d)$ and that the Hodge decomposition
$$
L^2(\R^n;\C^{nm})=   \clos{\ran_{2}(\nabla)}  \oplus \nul(\divv d)
$$
 is consistent with the splitting
$$
L^2(\R^n;\C^{n(1+m})= \clos{\ran_{2}(DB)}\oplus \nul(DB)= \begin{bmatrix} L^2(\R^n;\C^m) \\ \clos{\ran_{2}(\nabla)}   \end{bmatrix} \oplus \begin{bmatrix} 0 \\ \nul(\divv d)  \end{bmatrix}.
$$
It was shown in \cite{AS} that the interval $(p_{-}(DB), p_{+}(DB))$ is the largest interval of $p$ such that  one has the corresponding Hodge decomposition for $L^p$, which is also  $(q_{+}(L^*)', q_{+}(L))$ where $q_{+}(L)$ was introduced in \cite{Auscher}. 

Since $DB$ admits $L^2$ off-diagonal estimates to any order, so does $(DB)^2$ and, as $(DB)^2$ is diagonal, so do $L$ and $M$.  So both $L$ and $M$ enjoy a Hardy space theory. Only the decay of the allowable $\psi$ changes because of the second order nature of $L$ and $M$. Explicit conditions on $\psi$ can be found \cite{HNP} (see also \cite{HMMc}). Using even (with respect to $z\mapsto -z$) allowable $\psi$ for all these Hardy spaces $\IH^p$ below, we obtain that 
$$
f=\begin{bmatrix} f_{\perp} \\ f_{\ta}   \end{bmatrix}  \in \IH^p_{DB} \Longleftrightarrow f_{\perp} \in \IH^p_{L} \ \mathrm{and}\  f_{\ta}\in \IH^p_{M}, \mathrm{with\ } \|f\|_{\IH^p_{DB}}\sim \|f_{\perp}\|_{\IH^p_{L}}+\|f_{\ta}\|_{\IH^p_{M}}.$$

Using the $\IH^p_{DB}$ theory for $0<p<\infty$, we have that $\sgn(DB)$ is bounded on  $\IH^p_{DB}$. We note that this is equivalent to 
$$
\| L^{1/2} u \|_{\IH^p_{L}} \sim \|\nabla u\|_{\IH^p_{M}}, \quad \forall u \in \dot W^{1,2}(\R^n;\C^m).
$$
Indeed, pick $f\in \IH^2_{DB}=\IH^2_{D}$ so that $f_{\perp}\in L^2(\R^n;\C^m)$  and $f_{\ta}=\nabla g_{\perp}$ for $g_{\perp}\in  \dot W^{1,2}(\R^n;\C^m)$. Also, one can write $f_{\perp}=L^{1/2} h_{\perp}$  with $h_{\perp}\in  \dot W^{1,2}(\R^n;\C^m)$ by the solution of the Kato problem for operators and systems \cite{AHLMcT, AHMcT}. Then as $|DB|$ is the diagonal operator with entries $ L^{1/2}$, $M^{1/2}$, we have
$$
\sgn(DB)f= \begin{bmatrix}  L^{-1/2} \divv d \nabla g_{\perp} \\  -M^{-1/2} \nabla f_{\perp} \end{bmatrix} = \begin{bmatrix} - L^{1/2}  g_{\perp} \\  - \nabla L^{-1/2}f_{\perp} \end{bmatrix}= 
\begin{bmatrix} - L^{1/2}  g_{\perp} \\  - \nabla h_{\perp} \end{bmatrix}.
$$
For the last line, we used the equality  $(I+t^2M)^{-1}\nabla f= \nabla (I+t^2L)^{-1} f$ for all $f\in W^{1,2}$, extended to $f\in L^2$ (by extending the resolvents), and
\begin{align*}
   M^{-1/2}\nabla f&= \frac{2}{\pi}\int_{0}^\infty (I+t^2M)^{-1}tM^{1/2}M^{-1/2}\nabla f\,  \frac{dt}{t}
   \\
   & =  \frac{2}{\pi}\int_{0}^\infty \nabla L^{-1/2}tL^{1/2} (I+t^2L)^{-1} f\,  \frac{dt }{t}= \nabla L^{-1/2} f ,  
\end{align*}
where, classically,  the integrals  converge strongly in $L^2$  by the $H^\infty$-calculus for $L$ and $M$ and since both operators are bounded on $L^2$ (for the one on the left, one can see that by duality). Thus we may apply  the equality to $f_{\perp}\in L^2$. 
Thus 
$$
\|\sgn(DB)f\|_{\IH^p_{DB}}\sim \|L^{1/2}  g_{\perp} \|_{\IH^p_{L}}+ \|\nabla h_{\perp}\|_{\IH^p_{M}}
$$
while
$$
\|f\|_{\IH^p_{DB}}\sim \|L^{1/2} h_{\perp} \|_{\IH^p_{L}}+ \|\nabla g_{\perp}\|_{\IH^p_{M}}.
$$
As $h_{\perp}$ and $g_{\perp}$ are arbitrary and unrelated in $ \dot W^{1,2}(\R^n;\C^m)$, this shows the announced equivalence.\footnote{The direction from  boundedness of $\sgn(DB) $ to 
the statement for $L^{1/2}$ has been known for long: it is for example in \cite{AMcN1}. It is explicitly in \cite{HNP} in this context. The converse was pointed out to us by A. McIntosh.}

\begin{prop}
Let $p\in (\frac{n}{n+1},\infty)$. If $\IH^p_{DB}=\IH^p_{D}$ with equivalence of norms then $\IH^p_{L}=H^p\cap L^2$ and $\IH^p_{M}=H^p \cap \nabla \dot W^{1,2}$ and $\|L^{1/2} u \|_{H^p} \sim \|\nabla u\|_{H^p}$ for all  $u \in \dot W^{1,2}(\R^n;\C^m)$, 
where $H^p$ is the classical Hardy space if $p\le 1$ and $L^p$ is $p>1$.  \end{prop}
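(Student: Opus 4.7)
The plan is to combine the hypothesis $\IH^p_{DB}=\IH^p_D$ with the block structure of $DB$ and the concrete identification of $\IH^p_D$ from Theorem \ref{thm:hpd}, then to transfer the boundedness of the functional calculus (in particular $\sgn(DB)$) on $\IH^p_{DB}$ to the Kato-type estimate using the computation already sketched in the excerpt.

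First, I would diagonalize the Hardy space $\IH^p_{DB}$. Since $B=\mathrm{diag}(1,d)$ makes $(DB)^2=\mathrm{diag}(L,M)$ act blockwise, one can pick an allowable even function $\psi(z)=\tilde\psi(z^2)\in\Psi_{\mT}(S_{\mu})$ (with $\tilde\psi$ of sufficient decay adapted to the order-$2$ operators $L$ and $M$) so that
\[
\psi(tDB) f = \begin{bmatrix}\tilde\psi(t^2L)f_{\no} \\ \tilde\psi(t^2M)f_{\ta}\end{bmatrix}.
\]
Since the tent-space norm of a direct sum of functions splits as the sum of tent-space norms of components, this gives $\|f\|_{\IH^p_{DB}}\sim\|f_{\no}\|_{\IH^p_L}+\|f_{\ta}\|_{\IH^p_M}$, where $\IH^p_L$ and $\IH^p_M$ are defined by their own $\tilde\psi$-quadratic estimates. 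Hence the decomposition stated in the excerpt is really a norm decomposition, not just a set-theoretic one.

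Next, by Theorem \ref{thm:hpd}, the hypothesis $\IH^p_{DB}=\IH^p_D$ gives the norm equivalence
\[
\|f\|_{\IH^p_{DB}}\sim \|f\|_{H^p}
\]
for $f\in\clos{\ran_2(D)}$, with $H^p=L^p$ for $p>1$. Restricting to $f=(f_{\no},0)^T$ identifies $\IH^p_L$ with $H^p\cap L^2$: the $\no$-component of $\IP(L^2)$ is all of $L^2(\R^n;\C^m)$, so for $f_{\no}\in L^2$, $f_{\no}\in \IH^p_L$ iff $(f_{\no},0)^T\in \IH^p_D$, iff $f_{\no}\in H^p$. Restricting instead to $f=(0,f_{\ta})^T$ with $f_{\ta}\in\clos{\ran_2(\nabla)}=\nabla\dot W^{1,2}$ identifies $\IH^p_M$ with $H^p\cap \nabla\dot W^{1,2}$ by the same reasoning, since the $\ta$-component of $\IP(L^2)$ is exactly $\clos{\ran_2(\nabla)}$ and $\nabla:\dot W^{1,2}\to L^2$ has closed range equal to this closure (mod constants, $\nabla$ is an isometry onto $\clos{\ran_2(\nabla)}$).

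Finally, for the Kato-type equivalence, I would invoke the $H^\infty$-calculus of $DB$ on $\IH^p_{DB}$ (Corollary \ref{cor:pre-hardy}), so that $\sgn(DB)$ is a bounded involution on $\clos{\ran_2(DB)}\cap \IH^p_{DB}$, and then quote verbatim the computation in the excerpt, which shows
\[
\|\sgn(DB)f\|_{\IH^p_{DB}}\sim \|L^{1/2}g_{\no}\|_{\IH^p_L}+\|\nabla h_{\no}\|_{\IH^p_M},\qquad \|f\|_{\IH^p_{DB}}\sim \|L^{1/2}h_{\no}\|_{\IH^p_L}+\|\nabla g_{\no}\|_{\IH^p_M},
\]
where $f=(L^{1/2}h_{\no},\nabla g_{\no})^T$ with $g_{\no},h_{\no}\in\dot W^{1,2}$ arbitrary. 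Feeding in the just-established identifications $\IH^p_L=H^p\cap L^2$ and $\IH^p_M=H^p\cap \nabla\dot W^{1,2}$ and choosing $g_{\no}=0$ or $h_{\no}=0$ alternatively, one concludes the two-sided inequality
\[
\|L^{1/2}u\|_{H^p}\sim \|\nabla u\|_{H^p},\qquad u\in\dot W^{1,2}(\R^n;\C^m),
\]
both sides being simultaneously finite or infinite. The main obstacle, which is the only nontrivial point, is the even-$\psi$ diagonalization of the square function: one must verify that sufficiently decaying even $\psi\in\Psi_{\mT}(S_{\mu})$ lie in the allowable class for all three spaces $\IH^p_{DB}$, $\IH^p_L$, $\IH^p_M$ simultaneously, so that the quadratic norms are genuinely equivalent on both sides. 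Granting this (which is available by taking $\psi$ with sufficiently many vanishing moments at $0$ and decay at $\infty$, since the order-$2$ theory of \cite{HNP} or \cite{HMMc} for $L$ and $M$ requires the same kind of conditions as the first-order theory for $DB$), the rest of the proof is formal bookkeeping.
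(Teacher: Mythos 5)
Your proposal is correct and follows essentially the same route as the paper: the componentwise identification via the even-$\psi$ diagonal splitting $\|f\|_{\IH^p_{DB}}\sim\|f_{\no}\|_{\IH^p_{L}}+\|f_{\ta}\|_{\IH^p_{M}}$ together with $\IH^p_{D}=H^p\cap\IP(L^2)$ and $\IP(L^2)=L^2\oplus\nabla\dot W^{1,2}$, and then the boundedness of $\sgn(DB)$ on $\IH^p_{DB}$ fed into the computation $\sgn(DB)(L^{1/2}h_{\no},\nabla g_{\no})^T=(-L^{1/2}g_{\no},-\nabla h_{\no})^T$ to get the Riesz transform equivalence. The only difference is that you spell out the details the paper delegates to the surrounding discussion (including the caveat about allowable even $\psi$, which the paper also settles by citing the order-two theory).
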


\begin{proof} Recall that $\IH^p_{D}= H^p\cap \IP(L^2)$ and $\IP(L^2)= L^2(\R^n;\C^m) \oplus \nabla \dot W^{1,2}(\R^n;\C^m)$.
Thus,  $\IH^p_{DB}=\IH^p_{D}$ if and only if $\IH^p_{L}=H^p\cap L^2$ and $\IH^p_{M}=H^p \cap \nabla \dot W^{1,2}$ so that they are both subspaces of $H^p$. The conclusion for the Riesz transform $\nabla L^{-1/2}$ follows right away. 
\end{proof}

The interval of $L^p$ boundedness of the Riesz transform $\nabla L^{-1/2}$ is characterized in \cite{Auscher} as the interval $(q_{-}(L), q_{+}(L))$, which is the largest open interval on which 
$\sqrt t \nabla e^{-tL}$ is bounded on $L^p$, uniformly in $t>0$. And it is also known that $q_{-}(L)=p_{-}(L)$ where $(p_{-}(L),p_{+}(L))$ is the largest open interval on which $e^{-tL}$ is  bounded on $L^p$, uniformly in $t>0$.  
It was shown in \cite{HMMc}  (in the case of equations: $m=1$)  that for $1<p<\infty$, $H^p_{L}=L^p$ if and only if  $p\in (p_{-}(L),p_{+}(L))$.  When $0<p\le 1$,  \cite{HMMc} proves that $\|f\|_{H^p} \lesssim \|f\|_{\IH^p_{L}}$ and, when $(p_{-}(L))_{*}<p\le 1$,  that 
$\|L^{1/2} u\|_{\IH^p_{L}} \sim \|\nabla u\|_{H^p}$ when $u \in \dot W^{1,2}(\R^n)$. But  $H^p_{L}$ is not identified when $p\le 1$.  The possibility of identifying $H^p_{L}$ for $p\le 1$ seems new.  It turns out that the number $p_{-}(L)$ may not be the relevant critical exponent for this.  We isolate a number of interesting facts in this corollary.

\begin{cor} Let $I$ be the interval in $(\frac{n}{n+1}, \infty)$ on which $\IH^p_{DB}=\IH^p_{D}$ with equivalence of norms. Then,
$I\cap (1,\infty) \subset (q_{-}(L), q_{+}(L))$. As $q_{+}(L)=p_{+}(DB)$, we also conclude that $\sup I= p_{+}(DB)$.  Also, if $p\in I\cap (1,\infty)$, then $e^{-tL}$ is bounded on $L^p$ uniformly in $t>0$. 
Finally, if  $\inf I<p\le 1$, $H^p_{L}=H^p$.
\end{cor}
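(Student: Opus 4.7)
The strategy is to funnel everything through the proposition immediately preceding the corollary, and then invoke the known characterizations of the interval $(q_-(L), q_+(L))$ from \cite{Auscher}. First, fix $p \in I$ and apply that proposition to obtain $\IH^p_L = H^p \cap L^2$ with equivalent norms together with the Riesz transform identity $\|L^{1/2} u\|_{H^p} \sim \|\nabla u\|_{H^p}$ for every $u \in \dot W^{1,2}(\R^n;\C^m)$. When $p > 1$, so $H^p = L^p$, this says precisely that $\nabla L^{-1/2}$ extends to a bounded operator on $L^p$, which by the characterization recalled above places $p$ inside $(q_-(L), q_+(L))$. This proves $I \cap (1,\infty) \subset (q_-(L), q_+(L))$; since $(q_-(L), q_+(L)) \subset (p_-(L), p_+(L))$, the uniform $L^p$ bound on $e^{-tL}$ follows at once.

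For $\sup I = p_+(DB)$, combine the inclusion just established with the identity $q_+(L) = p_+(DB)$ (which is the symmetric statement to the one recorded in the paragraph preceding the corollary, coming from the block structure $(DB)^2 = \mathrm{diag}(L,M)$ and the analysis of \cite{Auscher, AS}). This gives $\sup I \le p_+(DB)$. For the reverse inequality, Theorem \ref{thm:hpdb} provides the equivalence $\IH^p_{DB} = \IH^p_D$ on all of $((p_-(DB))_*, p_+(DB))$, and hence $I$ contains this interval and $\sup I \ge p_+(DB)$.

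For the last claim, suppose $\inf I < p \le 1$; since the interval $I$ is open (as shown in Section~\ref{sec:completions}), $p \in I$. Applying the previous proposition again, $\IH^p_L = H^p \cap L^2$ with equivalent quasi-norms. The space $H^p_L$ is by definition the completion of $\IH^p_L$ in the intrinsic square function norm, while the completion of the same subspace in the ambient $H^p$ quasi-norm is $H^p$ itself, since $H^p \cap L^2$ is dense in $H^p$ by standard atomic theory. Equivalence of the two norms on the dense subspace forces the two completions to coincide, yielding $H^p_L = H^p$. The main obstacle is the reliance on the identity $q_+(L) = p_+(DB)$, which is the only step not proved internally and must be traced through the block decomposition of $DB$; all other steps are routine consequences of the cited proposition, Theorem \ref{thm:hpdb}, and the openness of $I$.
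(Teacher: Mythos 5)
Your proof is correct and is essentially the paper's own (implicit) argument: the corollary is deduced from the preceding proposition combined with the quoted characterization of $(q_{-}(L),q_{+}(L))$ as the Riesz transform interval from \cite{Auscher}, the identity $q_{+}(L)=p_{+}(DB)$ from \cite{AS}, the inclusion $((p_{-}(DB))_{*},p_{+}(DB))\subset I$ from Theorem \ref{thm:hpdb}, and the completion argument of Section \ref{sec:completions}. Two minor remarks: the identity $q_{+}(L)=p_{+}(DB)$ is already recorded in the paper as a quoted fact from \cite{AS} (so it is not an unproved internal step), and for the uniform $L^p$ bound on $e^{-tL}$ your containment $(q_{-}(L),q_{+}(L))\subset(p_{-}(L),p_{+}(L))$ rests on the standard fact $q_{+}(L)\le p_{+}(L)$ from \cite{Auscher}, which the paper only partially quotes; alternatively one gets the same conclusion from $H^p_{L}=L^p$ together with the \cite{HMMc} characterization recalled just before the corollary, or directly from the bounded $H^\infty$-calculus of $L$ on $\IH^p_{L}=L^p\cap L^2$.
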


A large part of  \cite{HMMc} is concerned with developing  the $H^p_{L}$ theory, for the full range $0<p<\infty$ together with variants involving regularity indices.  See also \cite{JY} for $0<p \le 1$.  See also non-tangential maximal estimates in \cite{May} towards solving the associated second order PDE $\partial_{t}^2u+\divv d \nabla u=0$, which can be seen as a special case of \eqref{eq:divform}. Some larger ranges of exponents are obtained there, probably due to the ``diagonal'' structure of the PDE (no cross terms in $t$ and $x$).

\subsubsection{The case $a\ne 1$} The full block diagonal case with $a\ne 1$ can  be treated similarly. 
 In this situation,  $L=-\divv d \nabla a^{-1}$ and $M=- \nabla a^{-1} \divv d$, which are $2\omega$-sectorial operators on $L^2(\R^n;\C^N)$ with  $H^\infty$-calculus on $L^2(\R^n;\C^N)$ as diagonal components of $(DB)^2$. The same discussion applies concerning the links between $\IH^p_{DB}$, $\IH^p_{L}$ and $\IH^p_{M}$ and that $\|L^{1/2}u\|_{\IH^p_{L}}\sim \|\nabla (a^{-1}u)\|_{\IH^p_{M}}$. Thus if $\IH^p_{DB}=\IH^p_{D}$, then $\IH^p_{L}=H^p\cap L^2$ and $H^p_{L}=H^p$ (again, this is by convention $L^p$ if $p>1$). Remark also that if $\IH^p_{DB}=\IH^p_{D}$ and $p>1$, then the resolvent of $L$ and semigroup generated by $L^{1/2}$ are bounded on $L^p$ (There may be no semigroup generated by $-L$ if $2\omega\ge \pi/2$).

 If $\IH^p_{DB}=\IH^p_{D}$, by similarity, we obtain a characterization of the Hardy space associated to $-a^{-1}\divv d \nabla$ as $a^{-1}H^p$. 
 
 In boundary dimension $n=1$, $M$ and $L$ are of the same type because $\divv$ and $\nabla$ both become $\frac{d}{dx}$. Although not formulated in the language of the current article, it was shown in \cite{AT} that $H^p_{L}=H^p$ for all $p\in (\frac{1}{2},\infty)$ (in the case of equations, that is when $m=1$). The same thus holds for $M$ replacing $L$ and therefore $H^p_{DB}=H^p$ for those $p$. The proof there extends to arbitrary systems with $m>1$. Nevertheless, this follows directly on applying Proposition \ref{prop:n=1} for any $m$ as the symbol of $D$ is invertible on $\R\setminus \{0\}$.

\section{Systems with Giorgi type conditions}\label{sec:DGN}

  We are given  $B=\widehat A$, $A= \begin{bmatrix} a & b \\ c & d \end{bmatrix}$ and $D= \begin{bmatrix} 0 & \divv \\ -\nabla & 0 \end{bmatrix}$ as before, which corresponds to the second order system $L=-\divv A \nabla$.

Let $L_{\ta}= -\divv d \nabla $ where $d$ is the lower right coefficient in $A$. This operator acts on the boundary $\R^n$ of $\reu$. Classical elliptic theory implies  there exists $\lambda(L_{\ta}) \in (0, n]$ such that the following holds: 

 For any $\lambda\in [0, \lambda(L_{\ta}))$, there exists a constant
$C \ge 0$  such that for any ball  $B(x_0,R)$, for any $v \in
W^{1,2}(B(x_0,R))$ weak solution in
$B(x_0,R)$ of $L_{\ta}v=0$  and for all $0
<
\rho
\le R$
\begin{equation}\label{DG}
\int_{B(x_0,\rho)} |\nabla v|^2 \le C\left( \frac{\rho}{R}\right)^{\lambda}\int_{B(x_0,R)} |\nabla v|^2. 
\end{equation}
 
 The constant $C$  depends on  $L^\infty$ bounds and accretivity of $d$ on $\ran_{2}(\nabla) $, $\lambda$ and $\lambda(L_{\ta})$.
 
 \begin{defn} (from \cite{A}) 
$L_{\ta}$ satisfies the De Giorgi condition if $\lambda(L_{\ta})>n-2$.
\end{defn}

 It is equivalent to the fact that weak solutions of $L_{\ta}$ are locally bounded and H\"older continuous with exponent less than $\alpha(L_{\ta})= \frac{\lambda(L_{\ta})-n+2}{2}$.  See \cite{HK} for explicit proofs. 

 This condition holds for any $L_{\ta}$ as above if $n\le 2$, for real $d$  and their $L^\infty$ perturbations when $m=1, n\ge 3$. It also holds if $d$ is constant for any $n,m$ (with $\lambda_{+}(L_{\ta}^*)=n$) and  if $d$ is an $L^\infty$ perturbation of a constant (with any $\lambda(L_{\ta})<n$).

 \begin{thm} \label{th:main}  
Assume that $L^*_{\ta}$ satisfies the De Giorgi condition.  For  $p_{\ta}< p \le 1$, with $p_{\ta}= \frac{n}{n+\alpha(L_{\ta}^*)}$,   any $\left(\IH^{p}_{D},1\right)$-atom $\alpha$  and  integer $M\ge M(n)$, we have
$$ \| tDB (I+itDB)^{-M} \alpha\|_{T^p_{2}} \lesssim 1
$$
with implicit constant depending only on $n, m, M$, the $L^\infty$ and accretivity bounds of $B$,  and the constants in the De Giorgi condition for $L^*_{\ta}$.
\end{thm}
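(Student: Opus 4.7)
My plan is to follow the standard atomic strategy for $T^p_2$ estimates, localizing near and far from the atom's support, with the De Giorgi hypothesis powering only the far contribution. Fix an $(\IH^p_D,1)$-atom $\alpha=Db$ supported in a cube $Q$, satisfying $\|b\|_2\lesssim \ell(Q)^{1+n/2-n/p}$ and $\|\alpha\|_2\lesssim\ell(Q)^{n/2-n/p}$, and set $\psi(z)=z(1+iz)^{-M}$, so that the target is $\|S(\psi(tDB)\alpha)\|_{L^p}\lesssim 1$. On $4Q$ I would use the $L^2$ boundedness of the vertical square function together with H\"older's inequality, exactly as in the atomic portion of the proof of Theorem \ref{thm:upperqpsip<2}, to get $\|S(\psi(tDB)\alpha)\|_{L^p(4Q)}\lesssim 1$; no De Giorgi input is needed here.

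For $x\in S_j(Q)$, $j\ge 2$, I would split the cone integral at $\tau_j=2^{j-2}\ell(Q)$. The small-$t$ portion $t<\tau_j$ keeps $B(x,at)$ disjoint from $Q$, so the $L^2$ off-diagonal decay of $(I+itDB)^{-M}$ (Lemma \ref{lem:odd}) to polynomial order $N$, valid once $M\ge M(n)$ is taken large, yields a bound $\lesssim 2^{-jK}\ell(Q)^{-n/p}$ with $K$ as large as desired; this tail is summable in $L^p$ over annuli without any extra ingredient.

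The delicate piece is the large-$t$ portion $t\ge\tau_j$: here $B(x,at)$ may contain $Q$ entirely, so off-diagonal tools fail, and the $L^q$--$L^2$ global bounds of Section \ref{sec:lplq} only allow summability down to $(p_-(DB))_*$, not to $p_{\ta}$. The De Giorgi hypothesis on $L_{\ta}^*$ is to enter precisely at this point. Rewriting $\psi(tDB)\alpha=D\psi(tBD)b$ (see the intertwining used in the proof of Theorem \ref{thm:hpdb}) and dualizing against test functions $g\in\dot\Lambda^{\alpha}$ with $\alpha=n(1/p-1)<\alpha(L_{\ta}^*)$, the estimate reduces to a Campanato-type H\"older control of $\psi^*(tDB^*)g$. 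Through the reduction to second-order elliptic systems of Section \ref{sec:comp}, the tangential component of $\psi^*(tDB^*)g$ is nothing but the tangential gradient of a weak solution of $L^*u=0$ in $\reu$, to which the De Giorgi bound \eqref{DG} for $L_{\ta}^*$ applies and produces the needed H\"older decay with exponent $\alpha(L_{\ta}^*)$. Transcribing this gives a pointwise bound of order $2^{-j\alpha(L_{\ta}^*)}(2^j\ell(Q))^{-n/p}$ on $S_j(Q)$, which is $L^p$-summable in $j$ precisely when $p>p_{\ta}=n/(n+\alpha(L_{\ta}^*))$.

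The main obstacle will be making the pairing rigorous when $p\le 1$: $\psi^*(tDB^*)g$ must be interpreted through the extension of the $H^\infty$-calculus to $\dot\Lambda^\alpha$ provided by Corollary \ref{cor:hpbd}, and then one must identify its tangential component with the conormal gradient of a weak solution of $L^*u=0$ in the spirit of \eqref{eq:sg} in order to apply \eqref{DG}. A Caccioppoli-type inequality (Lemma \ref{lem:caccio} applied to $L^*$), combined with \eqref{DG} for $L_{\ta}^*$ and the self-improvement of De Giorgi estimates under $t$-derivatives, should then deliver the quantitative H\"older bound driving the entire estimate, with the final quantification $p_{\ta}<p\le 1$ arising from the balance between the De Giorgi exponent $\alpha(L_{\ta}^*)$ and the Hardy space regularity index $n(1/p-1)$.
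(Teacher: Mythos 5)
Your local piece on $4Q$ and the small-$t$ far piece (off-diagonal decay of the resolvent) are fine and match the paper's treatment, but the heart of your argument for the large-$t$ far regime does not work as stated. The hypothesis is a De Giorgi condition on $L_{\ta}^*=-\divv d^*\nabla$, an operator on the boundary $\R^n$; it says nothing about interior regularity of weak solutions of $L^*u=0$ in $\reu$ (that would be a De Giorgi--Nash condition on the full operator $L^*$, involving $a,b,c$ as well, and the whole point of the theorem is that no such condition is imposed). Moreover, for fixed $t$ the function $\psi^*(tDB^*)g$ built from resolvents $(I+itDB^*)^{-M}g$ is \emph{not} the conormal gradient of a solution of $L^*u=0$ in the half-space --- the identification \eqref{eq:sg} concerns the semigroup $e^{-t|DB|}$ acting on spectral subspaces, not resolvent combinations of arbitrary data --- so \eqref{DG} cannot be invoked for it, and Lemma \ref{lem:caccio} (which moreover needs pointwise accretivity) is not the relevant Caccioppoli inequality here. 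Finally, your dualization against $g\in\dot\Lambda^{\alpha}$ through Corollary \ref{cor:hpbd} is circular: that corollary assumes $\IH^q_{DB^*}=\IH^q_{D}$, i.e.\ (by duality) essentially the H\"older-space boundedness you are trying to use, which is precisely what Theorem \ref{th:main} is the key input for establishing (Corollary \ref{cor:DGN}).

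What the paper actually does in the regime $t\gtrsim \ell(Q)$ is purely an analysis on $\R^n$ at each fixed $t$: writing $R_tf=F$ via the ``semiclassical'' second-order operator $L_t=[\,1\ \ it\divv_x\,]A[\,1 \ \ it\nabla_x\,]^{T}$, one reduces (after a Caccioppoli inequality for the non-homogeneous operator $L_t$) to iterates of a $2\times2$ matrix $\mR_t$ of operators built from $L_t^{-1}$, $\nabla L_t^{-1}$, $L_t^{-1}\divv$, $\nabla L_t^{-1}\divv$. The De Giorgi condition on $L_{\ta}^*$ enters only through the Morrey--Campanato mapping properties of these operators for the adjoint $L_t^*$ (whose principal part is $-t^2\divv d^*\nabla$), giving Lemma \ref{lem:MC}: $\mR_t^{*M}$ maps $L^2\times L^2$ into $\mL^{2,\lambda+2}_1\times\mL^{2,\lambda}_0$ for every $\lambda<\lambda(L_\ta^*)$, with quantified powers of $t$. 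One then dualizes at fixed $t$ against $L^2$ functions supported in $2^{k+1}Q$, exploiting that the atom's scalar part has mean value zero against the Campanato-regular component (this cancellation, which your sketch never uses in the far regime, is what produces the extra gain), and the range $p>p_{\ta}$ appears from $\lambda=n-2+2\alpha$ with $\alpha>n(1/p-1)$. The off-diagonal decay in the intermediate range $\ell(Q)\lesssim t\lesssim 2^k\ell(Q)$ is obtained by conjugating $\mR_t$ with exponential weights $e^{\pm\varphi/t}$, which preserves the principal coefficient $d$ and hence the De Giorgi hypothesis. Without replacing your half-space solution/H\"older-duality step by an argument of this type (or some other non-circular mechanism that converts the boundary De Giorgi condition into quantitative bounds on the resolvents at fixed $t$), the proposal has a genuine gap exactly where the hypothesis is supposed to be used.
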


It is quite striking that no regularity is imposed on the weak solutions of $L_{\ta}$, nor any condition on the other coefficients $a,b,c$ of $L$.

\begin{cor}\label{cor:DGN}
Assume that $L^*_{\ta}$ satisfies the De Giorgi condition. Then we have 
$\IH^p_{DB}=\IH^p_{D}$ with equivalence of norms for  $p_{\ta}< p <p_{+}(DB)$.
\end{cor}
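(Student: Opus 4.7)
The strategy is to combine Theorem \ref{th:main} with the range already covered by Theorem \ref{thm:hpdb}, pushing the lower endpoint of the equivalence down from $(p_{-}(DB))_{*}$ to $p_{\ta}$. Recall that for $p\in ((p_{-}(DB))_{*}, p_{+}(DB))$ the equivalence $\IH^p_{DB}=\IH^p_{D}$ is already contained in Theorem \ref{thm:hpdb}. Hence it suffices to cover $p\in (p_{\ta}, (p_{-}(DB))_{*}]$, and the core case is $p\in (p_{\ta}, 1]$.

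For the core range $p_{\ta}<p\le 1$, I would fix $\psi(z)=z(1+iz)^{-M}$ with $M\ge M(n)$ large enough so that $\psi\in\Psi_{1}^{M-1}(S_{\mu})\subset \Psi^{\gamma(p)}(S_{\mu})$ (take $M-1>\gamma(p)=\frac{n}{p}-\frac{n}{2}$). This $\psi$ is then allowable for $\IH^p_{D}$ by Theorem \ref{thm:hpd} and it is an admissible test function appearing in the lower bound of Proposition \ref{cor:lowerbounddbp<2}. Theorem \ref{th:main} supplies the uniform estimate
\begin{equation*}
\|\Qpsi{\psi}{DB}a\|_{T^p_{2}} = \|tDB(I+itDB)^{-M}a\|_{T^p_{2}} \lesssim 1
\end{equation*}
for every $(\IH^p_{D},1)$-atom $a$. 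Given $h\in\IH^p_{D}$, the atomic decomposition $\IH^p_{D}=\IH^p_{D,ato,1}$ (valid for $\frac{n}{n+1}<p\le 1$ by the discussion preceding Theorem \ref{thm:hpd}) yields $h=\sum_{j}\lambda_{j}a_{j}$ with convergence in $L^2$ and $\sum|\lambda_{j}|^p\lesssim\|h\|_{\IH^p_{D}}^p$. Since $\Qpsi{\psi}{DB}$ is $L^2$-bounded into $T^2_{2}$, the partial sums of $\Qpsi{\psi}{DB}h$ converge pointwise, and the $p$-subadditivity of the tent-space quasinorm gives
\begin{equation*}
\|\Qpsi{\psi}{DB}h\|_{T^p_{2}}^p \le \sum_{j}|\lambda_{j}|^p\|\Qpsi{\psi}{DB}a_{j}\|_{T^p_{2}}^p \lesssim \|h\|_{\IH^p_{D}}^p.
\end{equation*}
This proves $\|h\|_{\IH^p_{DB}}\lesssim\|h\|_{\IH^p_{D}}$. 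The reverse inequality is precisely Proposition \ref{cor:lowerbounddbp<2}, and together they give $\IH^p_{DB}=\IH^p_{D}$ with equivalent quasinorms for $p_{\ta}<p\le 1$.

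The remaining subrange $1<p\le(p_{-}(DB))_{*}$ (if nonempty) is handled by interpolation. Either one appeals directly to the fact, proved earlier, that the set of exponents where $\IH^q_{DB}=\IH^q_{D}$ holds is an interval: we now know it contains $2$ and some $p_{0}\in(p_{\ta},1]$, so it contains $[p_{0},2]$. Alternatively, since $H^p_{DB}$ and $H^p_{D}$ form complex interpolation scales (property (5) in Section \ref{sec:completions}) and the identity map is an isomorphism between them at both endpoints $p=2$ and $p=p_{0}$, interpolation yields the isomorphism for all intermediate $p$; Proposition \ref{prop:inclusion} then transfers this back to the pre-Hardy level. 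The main technical obstacle is really Theorem \ref{th:main} itself (the atomic bound under the De Giorgi assumption on $L_{\ta}^*$); given that result, the present deduction is just atomic assembly plus interpolation, and the main point of care is simply choosing $M$ large enough that $\psi$ has the vanishing order at $0$ and decay at $\infty$ required to be allowable for $\IH^p_{DB}$ throughout the range.
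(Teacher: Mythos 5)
Your proposal is correct and follows essentially the same route as the paper: the key input is Theorem \ref{th:main} applied to $(\IH^p_{D},1)$-atoms with $\psi(z)=z(1+iz)^{-M}$ (allowable for $\IH^p_{DB}$ once $M$ is large), atomic assembly giving the continuous inclusion $\IH^p_{D}\subset\IH^p_{DB}$ for $p_{\ta}<p\le 1$, the converse inclusion from Proposition \ref{cor:lowerbounddbp<2}, and the general theory for $p\ge 2$. The only cosmetic difference is how the intermediate exponents are filled in — the paper interpolates the inclusion directly as in the proof of Corollary \ref{lem:lowerp<2}, while you invoke the interval property (or interpolation of completions plus Proposition \ref{prop:inclusion}) — which amounts to the same interpolation argument.
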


We remark that this identification is obtained without knowing kernel bounds. 

\begin{proof} The case $2<p<p_{+}(DB)$ is from the general theory and there is nothing new. We consider $p<2$.

Remark that $\psi(z)=z(1+iz)^{-M}\in \Psi_{1}^{M-1}(S_{\mu})$ is allowable for $\IH^p_{DB}$ for any $p\in (\frac{n}{n+1},2)$ if $M-1>\frac{n}{2}+1$. The theorem above  tells that  for $p_{\ta}<p\le 1$
and  $\left(\IH^{p}_{D},1\right)$-atoms $\alpha$, $\alpha\in \IH^p_{DB}$ and $\|\alpha\|_{\IH^p_{DB}}
\lesssim 1$. A density argument provides that $\IH^p_{D}\subset \IH^p_{DB}$ with continuous inclusion. By complex interpolation (arguing as in the proof of Corollary \ref{lem:lowerp<2}) this holds for $1<p<2$. Now the converse inclusion and continuity bound  were known from Corollary \ref {cor:lowerbounddbp<2}  for $\frac{n}{n+1}<p<2$. 
\end{proof}

Thus, by duality, all the \textit{a priori} estimates for weak solutions of $Lu=0$  with $u\in \mE$ apply to this situation assuming  $L_{\ta}$ satisfies the De Giorgi condition with exponent $\lambda(L_{\ta})>n-2$. For example, we have, normalizing $u$ by an additive constant in the first inequality, 
$$ 
\|\tN(u)\|_{p} \lesssim \|S(t\nabla u)\|_{p},\quad \forall p\in (2-\varepsilon,\infty),
$$
$$
\|\tNs(u)\|_{p} \lesssim \|S(t\nabla u)\|_{p},\quad \forall p\in (2,\infty),
$$
$$
\|\tNsa(u)\|_{\infty} \lesssim \|t\nabla u\|_{T^\infty_{2,\alpha}}, \quad \forall \alpha\in [0,\alpha({L})),  \ \alpha(L)= \frac{\lambda(L_{\ta}) -n+2}{2}.
$$

The first inequality was known if $L$ is a real and scalar operator \cite{HKMP1}. However there is a subtle difference. In that work, the \textit{a priori } assumption  $u\in \mE$ is not required and $p$ can be any positive number: the proof  in this specific situation  uses the $p=2$ case in \cite{AA1} and good lambda arguments requiring the converse inequality  $\|S(t\nabla u)\|_{p} \lesssim \|\tN(u)\|_{p}$ (which \cite{HKMP1}  proves using changes of variables, so it is not clear at all whether this can extend to complex situations) and finiteness of $ \|\tN(u)\|_{p}$ (which can even be replaced by the usual non-tangential function by interior regularity estimates).  So, in fact, \cite{HKMP1} proves that on the class of weak solutions with $ \|\tN(u)\|_{p}<\infty$,  one has $\|S(t\nabla u)\|_{p} \sim \|\tN(u)\|_{p}$ for any $0<p<\infty$.   Here, we show that when $u\in \mE$, then $\|\tN(u)\|_{p} \lesssim \|S(t\nabla u)\|_{p}$ when $2-\varepsilon<p<\infty$ (Note that the \textit{a priori} information $u\in \mE$ will be removed in \cite{AM2}: the combination of all this shows that  the two classes of weak solutions (one with square function control and the other with non-tangential maximal control)  are identical (up to additive normalisation) in this range of $p$ and this class of operators (real and scalar)).

The latter two inequalities seem new even when $L$ is  a real and scalar operator.

\subsection{Preliminary computations}

We begin with some computation. As before, we write  $f \in L^2(\R^n;\C^{(1+n)m})$ as $f = \begin{bmatrix}
      f_{\perp}    \\
      f_{\ta}
\end{bmatrix}$ with $f_{\perp} \in L^2(\R^n;\C^{m})$ and $f_{\ta}\in L^2(\R^n;\C^{nm})$. We also write $L^2$ from now on without precision. 

For $t\in \R$ set   $R_{t}=(I+itDB)^{-1}$ and 
$$
L_{t}= \begin{bmatrix} 1  & it\divv_x \end{bmatrix} \begin{bmatrix} a(x) & b(x) \\ c(x) & d(x) \end{bmatrix}  \begin{bmatrix} 1 \\ it\nabla_x  \end{bmatrix}. 
$$

\begin{lem} Let $f \in L^2$ and $t\in \R$. Then the equation $R_{t}f=F$ is equivalent to the system
\begin{align}
\label{systemf1}
 F_{\perp}   &= au_{t}+ bF_{\ta}   \\
 \label{systemf2}
  F_{\ta}  &= it\nabla_{x} u_{t}  + f_{\ta}
\end{align}
with 
$$u_{t}= L_{t}^{-1}  \begin{bmatrix} 1  & it\divv_x \end{bmatrix} \begin{bmatrix} f_{\perp} - bf_{\ta} \\ -d f_{\ta} \end{bmatrix}.
$$
\end{lem}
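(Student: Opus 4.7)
The proof is essentially a direct algebraic reduction, and the plan is to carry it out in three stages.

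First, I would unpack the defining relation for $B=\hat{A}$ from \eqref{eq:hat}, which is cleanest in the form $\begin{bmatrix} 1 & 0 \\ c & d \end{bmatrix}v = B\begin{bmatrix} a & b \\ 0 & 1 \end{bmatrix}v$. Writing $F = \begin{bmatrix} a & b \\ 0 & 1 \end{bmatrix}\begin{bmatrix} v_{\perp} \\ v_{\ta}\end{bmatrix}$, one reads off $v_{\ta}=F_{\ta}$ and $v_{\perp}=a^{-1}(F_{\perp}-bF_{\ta})$ (the expression for $a^{-1}$ makes sense because $A$ is accretive on $\mH$, forcing $a$ to be pointwise invertible). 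Denoting this $v_{\perp}$ as $u_t$ then yields $BF = \begin{bmatrix} u_t \\ cu_t + dF_{\ta}\end{bmatrix}$ and consequently
$$
DBF = \begin{bmatrix} \divv_x(cu_t + dF_{\ta}) \\ -\nabla_x u_t \end{bmatrix}.
$$

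Second, I would write out $R_tf=F$ as $(I+itDB)F=f$ componentwise. The scalar part gives $F_{\perp}+it\divv_x(cu_t + dF_{\ta}) = f_{\perp}$, while the tangential part gives $F_{\ta}-it\nabla_x u_t = f_{\ta}$. The latter is exactly \eqref{systemf2}, and the definition of $u_t$ is exactly \eqref{systemf1}. Substituting $F_{\perp}=au_t+bF_{\ta}$ and $F_{\ta}=it\nabla_x u_t + f_{\ta}$ into the scalar equation and expanding gives
$$
au_t + itb\nabla_x u_t + it\divv_x(cu_t) - t^2\divv_x(d\nabla_x u_t) = f_{\perp} - bf_{\ta} - it\divv_x(df_{\ta}).
$$
The left-hand side is, by direct inspection of the definition of $L_t$, equal to $L_tu_t$, and the right-hand side is exactly $\begin{bmatrix} 1 & it\divv_x \end{bmatrix}\begin{bmatrix} f_{\perp}-bf_{\ta} \\ -df_{\ta}\end{bmatrix}$, so the two algebraic manipulations close up.

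Third, I need to justify that $L_t$ is invertible so that the formula $u_t=L_t^{-1}(\cdots)$ makes sense. The key observation is that for any sufficiently smooth $u$,
$$
\langle L_tu,u\rangle = \Bpair{A\begin{bmatrix} u \\ it\nabla_x u\end{bmatrix}}{\begin{bmatrix} u \\ it\nabla_x u\end{bmatrix}},
$$
because the cross terms $itb\nabla_x u\cdot\bar u$ and $-itcu\cdot\overline{\nabla_x u}$ together with $a|u|^2$ and $t^2 d\nabla_x u\cdot\overline{\nabla_x u}$ are precisely the expansion of the right-hand side. Since the test vector $(u,it\nabla_x u)^{T}$ has a curl-free tangential part, it belongs to $\mH$, so the accretivity assumption \eqref{eq:accrassumption} yields
$$\re\langle L_tu,u\rangle \geq \lambda(\|u\|_2^2 + t^2\|\nabla_x u\|_2^2).$$
By Lax--Milgram, $L_t$ is thus an isomorphism from the Hilbert space $\{u\in L^2;\, t\nabla_x u\in L^2\}$ onto its dual, so for any $f\in L^2$ the function $u_t$ is well-defined and the equivalence of the two formulations holds.

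The only nontrivial point is the last step: one must recognize that testing $L_t$ against $u$ amounts to pairing $A$ against a vector lying in $\mH$, which is exactly where the accretivity hypothesis is available. Everything else is bookkeeping in the block-matrix identities.
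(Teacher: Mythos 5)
Your proof is correct, and it is more self-contained than the paper's own argument, which is a two-line reduction: the paper sets $f=\oA g$, $F=\oA G$ with $\oA=\begin{bmatrix} a & b\\ 0 & 1\end{bmatrix}$, invokes Lemma 2.53 of \cite{AAH} (or Lemma 9.3 of \cite{AA1}) to get the transformed system $G_{\perp}=u_t$, $G_{\ta}=it\nabla_x u_t+g_{\ta}$, and then just unwinds $F_{\perp}=aG_{\perp}+bG_{\ta}$, $F_{\ta}=G_{\ta}$. You instead carry out directly what that cited lemma encapsulates: reading off $u_t=a^{-1}(F_{\perp}-bF_{\ta})$ from the factorization of $B=\hat A$, writing $(I+itDB)F=f$ componentwise, checking that the scalar component collapses to $L_t u_t=\begin{bmatrix}1 & it\divv_x\end{bmatrix}\begin{bmatrix} f_{\perp}-bf_{\ta}\\ -df_{\ta}\end{bmatrix}$, and then proving invertibility of $L_t$ by the coercivity identity $\pair{L_t u}{u}=\pair{A(u,it\nabla_x u)^{T}}{(u,it\nabla_x u)^{T}}$, where the curl-free tangential part puts the test vector in $\mH$ so that \eqref{eq:accrassumption} and Lax--Milgram apply (this also justifies the pointwise invertibility of $a$ that the definition of $\hat A$ already presupposes). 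The computations check out, including the sign bookkeeping $(it)^2=-t^2$ and the integration by parts. What the paper's route buys is brevity and consistency with the earlier literature it builds on; what yours buys is a transparent, self-contained proof that makes explicit exactly where the accretivity hypothesis enters and on which space ($\{u\in L^2:\ t\nabla_x u\in L^2\}$ onto its dual) $L_t$ is inverted — the one small thing worth stating a bit more carefully is the converse direction, i.e.\ that the weak equation $L_tu_t=\cdots$ is precisely the statement that $F$ defined by \eqref{systemf1}--\eqref{systemf2} lies in $\dom(DB)$ with $(I+itDB)F=f$, so the algebra is genuinely reversible.
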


\begin{proof}
Let $g, G$ defined by $f=\oA g$ and $F=\oA G$ with $\oA(x)=  \begin{bmatrix} a(x) & b(x) \\ 0 & 1 \end{bmatrix}$. Then, by \cite[Lemma 2.53]{AAH} (see \cite[Lemma 9.3]{AA1} for a direct proof in this context), $R_{t}f=F$ is equivalent to 
\begin{align}
\label{systemg}
 G_{\perp}   &= u_{t}   \\
  G_{\ta}  &= it\nabla_{x} u_{t}  + g_{\ta}
\end{align}
It suffices to note that $F_{\perp}= aG_{\perp}+ bG_{\ta}$ and $F_{\ta}=G_{\ta}$.
\end{proof}

\begin{lem} Assume $f \in L^2$ has the form $f = \begin{bmatrix}
      f_{\perp}    \\
      it\nabla h
\end{bmatrix}$ with $f_{\perp} \in L^2$ and $h\in W^{1,2}$. Then the equation $R_{t}f=F$ is equivalent to  $F= \begin{bmatrix}
      F_{\perp}    \\
      it\nabla H
\end{bmatrix}$ with $F_{\perp} \in L^2$ and $H\in W^{1,2}$ given by 
$$
\begin{bmatrix}
      F_{\perp}    \\
      H
\end{bmatrix}= \mR_{t} \begin{bmatrix}
      f_{\perp}    \\
    h
\end{bmatrix}
$$
with $\mR_{t}$ being the $2\times 2$ matrix of operators
$$\mR_{t}= \begin{bmatrix} aL_{t}^{-1} & T_{t} \\ L_{t}^{-1} & U_{t} \end{bmatrix} ,
$$
where
 $$U_{t}=L_{t}^{-1} (a +it\divv c),$$
and
$$
T_{t}= -a + (a+itb\nabla) L_{t}^{-1} (a +it\divv c).
$$
Here, $a,b,c,d$ mean multiplication by the corresponding functions $a(x), b(x), c(x), d(x)$. 
\end{lem}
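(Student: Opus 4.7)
The plan is to apply the preceding lemma verbatim with the substitution $f_{\ta} = it\nabla_x h$ and then perform an algebraic simplification that recognizes $R_t f$ as having the same structural form as $f$. By the preceding lemma, $R_t f = F$ is equivalent to
\begin{equation*}
F_{\perp} = au_t + bF_{\ta}, \qquad F_{\ta} = it\nabla_x u_t + f_{\ta},
\end{equation*}
with $u_t = L_t^{-1}\bigl(f_{\perp} - bf_{\ta} - it\divv_x(df_{\ta})\bigr)$. Inserting $f_{\ta} = it\nabla_x h$ into the second equation immediately gives $F_{\ta} = it\nabla_x(u_t + h)$, so $F$ is automatically of the claimed form with $H := u_t + h$, an element of $W^{1,2}$ since both $u_t$ and $h$ are.

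Next I would compute $u_t$ explicitly in terms of $f_\perp$ and $h$. Expanding the definition of $L_t$ gives the algebraic identity
\begin{equation*}
L_t h = ah + itb\nabla_x h + it\divv_x(ch) - t^2\divv_x(d\nabla_x h),
\end{equation*}
which I would use to eliminate the term $t^2\divv_x(d\nabla_x h) = -it\divv_x(d\cdot it\nabla_x h)$ appearing inside $L_t^{-1}$ in the formula for $u_t$. After the cancellation of the two $itb\nabla_x h$ contributions, this yields
\begin{equation*}
u_t = L_t^{-1}f_{\perp} + L_t^{-1}(a + it\divv_x c)h - h = L_t^{-1}f_{\perp} + U_t h - h,
\end{equation*}
whence $H = u_t + h = L_t^{-1}f_{\perp} + U_t h$, matching the second row of $\mR_t$.

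For the first row, substituting $u_t = H - h$ and $F_{\ta} = it\nabla_x H$ into $F_{\perp} = au_t + bF_{\ta}$ gives
\begin{equation*}
F_{\perp} = (a + itb\nabla_x)H - ah = (a + itb\nabla_x)L_t^{-1}f_{\perp} + \bigl(-a + (a + itb\nabla_x)U_t\bigr)h,
\end{equation*}
from which the entries of $\mR_t$ and the formula $T_t = -a + (a + itb\nabla_x)U_t$ can be read off. The converse implication—given $F$ of the stated form with $F_{\perp}, H$ defined via $\mR_t$, producing $R_t f = F$—is obtained by running the same chain of equalities backwards: set $u_t := H - h$ and verify, using the identity for $L_t h$ once more, that $(F, u_t)$ satisfies the two relations of the preceding lemma. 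The only delicate point in the argument is bookkeeping around $(it)^2 = -t^2$ and keeping careful track of whether $a, b, c, d$ act as multiplications from outside or are nested inside $\nabla_x$ or $\divv_x$; beyond this the proof is purely algebraic, and no analytic input is required beyond the $L_t^{-1}$ boundedness already used to formulate the preceding lemma.
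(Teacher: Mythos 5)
Your computation follows exactly the route of the paper's own proof: substitute $f_{\ta}=it\nabla h$ into the preceding lemma, use the expansion $L_{t}h=ah+itb\nabla h+it\divv(ch)-t^{2}\divv(d\nabla h)$ to rewrite $u_{t}=L_{t}^{-1}f_{\perp}+U_{t}h-h$, and then read off $H=u_{t}+h$ and $F_{\perp}=(a+itb\nabla)H-ah$; the paper does precisely this, only more tersely. The algebra is correct (and your converse remark is fine, since every step is an equivalence; also $u_{t}\in W^{1,2}$ because $it\nabla u_{t}=F_{\ta}-f_{\ta}\in L^{2}$ for $t\neq0$).

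One point should be flagged rather than glossed over. What your derivation (correctly) yields for the upper-left entry of $\mR_{t}$ is $(a+itb\nabla)L_{t}^{-1}$, whereas the statement, and the paper's unjustified line ``$F_{\perp}=aL_{t}^{-1}f_{\perp}+T_{t}h$'', record $aL_{t}^{-1}$; so your sentence that the entries of $\mR_{t}$ ``can be read off'' is not accurate for the matrix as printed. The extra term $itb\nabla L_{t}^{-1}f_{\perp}$ is genuinely there: taking for instance $a=1$, $c=0$, $h=0$, the candidate $F_{\perp}=L_{t}^{-1}f_{\perp}$, $F_{\ta}=it\nabla L_{t}^{-1}f_{\perp}$ does not satisfy $(I+itDB)F=f$ unless $b=0$, while $F_{\perp}=(1+itb\nabla)L_{t}^{-1}f_{\perp}$ does. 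So the printed $(1,1)$ entry is a slip in the paper which your computation in effect corrects, and you should state the discrepancy explicitly instead of claiming agreement. The slip is harmless where the lemma is used: in the iteration of Lemma \ref{lem:MC} the adjoint of $(a+itb\nabla)L_{t}^{-1}$ involves only the mapping properties of $L^{*-1}$ and $L^{*-1}\divv$, which still gain $2$ in the Campanato scale, so that argument and the conclusion of Theorem \ref{th:main} are unaffected.
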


\begin{proof} Write 
$$
u_{t}= L_{t}^{-1}  ( f_{\perp} - itb\nabla h-it \divv d \nabla h)
$$
and using the definition of $L_{t}h$ we obtain
$$
u_{t}= -h + L_{t}^{-1}  ( f_{\perp} + ah + it\divv c h).
$$
Thus \eqref{systemf2} is equivalent to 
$$
F_{\ta}= it \nabla L_{t}^{-1}  ( f_{\perp} + ah + it\divv c h) =it \nabla (L_{t}^{-1}   f_{\perp}  + U_{t}h)
$$
because $-it \nabla h +f_{\ta}=0$, and   \eqref{systemf1} is equivalent to
$$
F_{\perp}=aL_{t}^{-1} f_{\perp} + T_{t}h.
$$
\end{proof}

\subsection{Proof of Theorem \ref{th:main}}

We start the proof of the theorem. Let $\alpha=D\beta$ be an $\left(\IH^{p}_{D},1\right)$-atom. This means that
$\alpha,\beta$ are both supported in a ball $Q$, with $\|\alpha\|_{2}\le |Q|^{\frac{1}{2}-\frac{1}{p}}$ and $\|\beta\|_{2}\le r(Q)|Q|^{\frac{1}{2}-\frac{1}{p}}$, with $r(Q)$ the radius of $Q$. Note that $\alpha_{\perp}$ is the divergence of $\beta_{\ta}$. In particular, $\alpha_{\perp}$ is a classical $L^2$-atom (valued in $\C^m$) for the Hardy space $H^p$ and each component has mean value 0. Also $\alpha_{\ta}$ is a gradient field. 

Call $C_{k}(T_{Q})$ the following regions in $\R^{1+n}_+$. For $k\ge0$, $R_{k}(T_{Q})=(0, 2^{k}r(Q)] \times 2^{k}Q$,  $C_{0}(T_{Q})=R_{1}(T_{Q})$ and  $C_{k}(T_{Q})=R_{k+1}(T_{Q})\setminus R_{k}(T_{Q})$ for $k>0$. It is enough to show
\begin{equation}
\label{tentk}
\iint_{C_{k}(T_{Q})} |tDBR_{t}^M \alpha|^2\,  \frac{dtdx}{t} \lesssim |2^kQ|^{1-\frac{2}{p}} 2^{-k\varepsilon}
\end{equation}
for some $\varepsilon>0$ and $M$ large enough.   

For simplicity we assume that $Q$ is the unit ball centered at 0. All estimates are affine invariant because all assumptions in the theorem are stable under   affine changes of variables so this is no loss of generality. 

 First for $k=0$, \eqref{tentk} holds as a consequence of the 
square function estimate \eqref{eq:psiT} for  $DB$ and the size of $\|\alpha\|_{2}$. 

For $k>0$, we note that $itDBR_{t}^M\alpha = R_{t}^{M-1}\alpha
- R_{t}^M\alpha$ and it is enough to treat each term separately. Hence we have show
\begin{equation}
\label{tentk1}
\iint_{C_{k}} |R_{t}^M \alpha|^2\,  \frac{dtdx}{t} \lesssim 2^{k(n-\frac{2n}{p}-\varepsilon)}
\end{equation}
for large enough $M$, where we set $C_{k}= C_{k}(T_{Q})$. 

 The part of the integral in \eqref{tentk} where $t\le 1$ can be treated using  
the $L^2$ off-diagonal decay of $R_{t}^M$ \eqref{odn}
\begin{equation}
\label{tentk2}
\int_{2^{k+1}Q\setminus 2^kQ}  |R_{t}^M \alpha|^2\,  dx \lesssim (2^k/t)^{-N} \| \alpha\|_{2}^2
\end{equation}
for all $N$. 
Thus integrating this estimate in $t \in (0,1]$ yields a bound $2^{-kN}$.

For the remaining part, when $t>1$, we claim  assuming $M$ large enough and all $N$, we have that  for $1\le t < 2^k$, we have
\begin{equation}
\label{tent2}
\int_{2^{k+1}Q\setminus 2^kQ}  |R_{t}^M \alpha|^2\,  dx \lesssim (2^k/t)^{-N} t^{n- \frac{2n}{p}- \varepsilon}
\end{equation}
and for $2^k \le t \le 2^{k+1}$,
\begin{equation}
\label{tent3}
\int_{2^{k+1}Q}  |R_{t}^M \alpha|^2 \, dx \lesssim   t^{n-\frac{2n}{p} -\varepsilon}.
\end{equation}
Then, integrating in the corresponding $t$ intervals the above estimates concludes the proof of 
\eqref{tentk2}. 

To end the proof of the theorem, it remains to prove the claim. This is where we use fully that $\alpha$ is an $\left(\IH^{p}_{D},1\right)$-atom  and the above calculations. Write $\alpha=f$, $f^{(k)}=R_{t}^kf$. Since   
$f = \begin{bmatrix}
      f_{\perp}    \\
      it\nabla h
\end{bmatrix}$ with $h=-(it)^{-1} \beta_{\perp}$, we have $f^{(k)} = \begin{bmatrix}
      f_{\perp}^{(k)}   \\
      it\nabla h^{(k)}
\end{bmatrix}$ and $\begin{bmatrix}
      f_{\perp}^{(k)}   \\
       h^{(k)}
\end{bmatrix} = \mR_{t}^k  \begin{bmatrix}
      f_{\perp}   \\
       h
\end{bmatrix}$.
Fix $t>0$. 
Since $L_{t}h^{(k+1)} =f_{\perp}^{(k)}+ ah^{(k)} + it\divv c h^{(k)}$, the usual Caccioppoli argument  
for the (non homogeneous) operator $L_{t}$ yields
$$
\int_{B_{t}} | it\nabla h^{(k+1)}|^2 dx \le C \int_{cB_{t}} (|h^{(k+1)}|^2 + |f_{\perp}^{(k)}|^2 + |h^{(k)}|^2) dx
$$
for any $c>1$ and some $C>0$ independent of the ball $B_{t}$ of radius within $t/2$ and $2t$, $k$ and depending only on the $L^\infty
$ and accretivity bounds of $A$. From $|f^{(k+1)}|^2= |f_{\perp}^{(k+1)}|^2+|it\nabla h^{(k+1)}|^2$ and using a bounded covering by balls of radius $\sim t$, we see that it is enough to prove
\eqref{tent2} and \eqref{tent3} by replacing $R_{t}^M \alpha$ by $\mR_{t}^M \begin{bmatrix}
      f_{\perp}    \\
       h
\end{bmatrix}$ (up to fattening  slightly $C_{k}$ to a similar type of region, which we ignore in the sequel as this is only a cosmetic change in the estimates). Hence, it suffices  to prove assuming $M$ large enough that, for  all $N$  and $1\le t < 2^k$, we have
\begin{equation}
\label{tent2'}
\int_{2^{k+1}Q\setminus 2^kQ}  | \mR_{t}^M \begin{bmatrix}
      f_{\perp}    \\
       h
\end{bmatrix} |^2  dx \lesssim (2^k/t)^{-N} t^{n- \frac{2n}{p}-\varepsilon}
\end{equation}
and for $2^k \le t \le 2^{k+1}$,
\begin{equation}
\label{tent3'}
\int_{2^{k+1}Q}  | \mR_{t}^M \begin{bmatrix}
      f_{\perp}    \\
       h
\end{bmatrix} |^2  dx \lesssim t^{n-\frac{2n}{p} -\varepsilon}.
\end{equation}

To do this, we proceed to an analysis of the iterates of the adjoint  of $\mR_{t}$, starting from $L^2$ using the scales of  Morrey spaces  and Campanato 
spaces  (here for functions  defined on $\R^n$ and valued in $\C^m$) following \cite{A}.
For $0 \le \lambda \le n$, define the Morrey
space
$L^{2,\lambda}(\R^n;\C^m)=L^{2,\lambda}_{0}\subset L^2_{loc}$ by the condition
$$
\|f\|_{L^{2,\lambda}_{0}} \equiv   \sup_{x \in \R^n,\,  0 < R \le 1} \left(
R^{-\lambda}
\int_{B(x,R)} |f|^2 \right)^{1/2} < \infty,
$$
where  $B(x, r)$ denotes the Euclidean ball of center $x$
and radius
$r>0$.  For $0 \le \lambda \le n+2$, one
defines the Campanato space 
$ L^{2,\lambda}_1(\R^n;\C^m)=L^{2,\lambda}_1 \subset L^2_{loc}$ by
$$
\|f\|_{ L^{2,\lambda}_1}\equiv   \sup_{x \in \R^n,\,  0 < R \le 1} \left(
R^{-\lambda}
\int_{B(x,R)} |f- (f)_{x,R}|^2 \right)^{1/2} < \infty. 
$$
The notation $(u)_{x,R}$ stands for the mean value of $u$ over the ball
$B(x,R)$. The  space $L^2 \cap 
 L^{2,\lambda}_i$ is equipped with
 the norm $\|f\|_2+\|f\|_{ L^{2,\gamma}_i}$.
 We also denote by $\mL^{2,\lambda}_{i}$ the corresponding homogeneous spaces when dropping the constraint that $R\le 1$.

 Here are a few facts for the appropriate ranges of $\lambda$.

(a)  $L^{2,\lambda_{1}} _{i}\subset L^{2,\lambda_{2}}_{i}$ if $\lambda_{1} >\lambda_{2}$.

 (b) $L^2 \cap L^{2,\lambda}_1 \equiv L^2 \cap L^{2,\lambda}_{0}$ if $\lambda <n$.
 
 (c) $L^2 \cap L^{2,\lambda}_{i} \equiv L^2 \cap \mL^{2,\lambda}_{i}$.
 
 (d) $L^{2,\lambda}_{0}$ is preserved by multiplication by bounded functions.

In particular the higher the $\lambda$, the better the regularity in these scales. We have the following lemma.

\begin{lem}\label{lem:MC} For $M$ large enough (depending only on dimension) and $0\le \lambda< \lambda(L_{\ta}^*)$ ($\le n$), we have that $\mR_{t}^{*M}$ maps $L^2 \times L^2$ into $\mL^{2,\lambda+2}_{1} \times \mL^{2,\lambda}_{0}$ for all $t\ne 0$. Furthermore, the operator norm of 
$\begin{bmatrix} 1 & 0 \\ 0 & 0 \end{bmatrix}\mR_{t}^{*M}$ from $L^2\times L^2$ into $\mL^{2,\lambda+2}_{1} $  is bounded by $C|t|^{-\lambda/2 - 1}$ and 
the operator norm of 
$\begin{bmatrix} 0 & 0 \\ 0 & 1 \end{bmatrix}\mR_{t}^{*M}$ from $L^2\times L^2$ into $\mL^{2,\lambda}_{0} $  is bounded by $C|t|^{-\lambda/2}$. 
\end{lem}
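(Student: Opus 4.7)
The proof is a Morrey--Campanato iteration powered by the De Giorgi condition \eqref{DG} for $L_{\ta}^*$. First I would observe that $L_t^*$ has principal part $-t^2\divv d^*\nabla = t^2 L_{\ta}^*$ plus lower-order terms scaled by a single power of $it$; after the dilation $x \mapsto x/|t|$ this becomes a uniformly elliptic operator whose leading tangential part is exactly $L_{\ta}^*$, perturbed by $L^\infty$-bounded lower-order terms of unit size. The De Giorgi condition \eqref{DG} therefore transfers, via a standard freezing/hole-filling argument in the spirit of \cite{A}, to the following regularity lifting: for every $\mu \in [0,\lambda(L_{\ta}^*))$, if $g \in \mL^{2,\mu}_0$ and $u$ solves $L_t^* u = g$ in $\R^n$, then $\nabla u \in \mL^{2,\mu'}_0$ with $\mu' \in (\mu, \lambda(L_{\ta}^*))$ gaining two Morrey indices per application of $L_t^{*-1}$ (up to the saturation level $\lambda(L_{\ta}^*)$), with operator norm absorbing a factor of $|t|^{-2}$ from the $t^2$-scaling of the principal part and compensating factors from the dilation invariance of Morrey spaces.

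Second, I would iterate this lifting along the matrix structure
\begin{equation*}
\mR_t^* \;=\; \begin{bmatrix} L_t^{*-1} a^* & L_t^{*-1} \\ T_t^* & U_t^* \end{bmatrix},
\end{equation*}
whose off-diagonal entries $T_t^*$ and $U_t^*$ involve at most one differentiation composed with multiplications by the bounded coefficients $a^*, b^*, c^*$ and further copies of $L_t^{*-1}$. Writing $(u^{(k)}, v^{(k)}) := \mR_t^{*k}(f_1, f_2)$ and using that Morrey spaces are stable under multiplication by $L^\infty$ functions (fact (d) above), every iteration raises the Morrey or Campanato index of both components by a fixed positive amount $\mu_0 \in (0, \lambda(L_{\ta}^*))$. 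Taking $M \ge \lceil (\lambda+2)/\mu_0\rceil$ produces the target Campanato index $\lambda+2$ for the potential component $u^{(M)}$ and the target Morrey index $\lambda$ for the gradient-field component $v^{(M)}$; facts (b) and (c) above allow the final transfer between the inhomogeneous and homogeneous scales and between the two Campanato/Morrey indices below the critical value. A careful accounting of the $|t|^{-2}$ factors from each resolvent and the $|t|^{+1}$ gained per two Morrey units from rescaling produces the announced operator norms $|t|^{-1-\lambda/2}$ and $|t|^{-\lambda/2}$.

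The main obstacle will be the off-diagonal coupling through $T_t^*$ and $U_t^*$: the iteration does not decouple into independent scalar-potential and gradient-field tracks, so the induction must be carried out jointly on both components, with careful bookkeeping of which component carries which regularity index at each step. A subsidiary technical point is ensuring uniform control in $t$ of the perturbation of $L_{\ta}^*$ by lower-order terms, which are not small relative to the principal part at scales comparable to $1/|t|$; the remedy is to perform the regularity argument after rescaling to the unit scale, where the perturbations become of unit size and the classical freezing arguments apply with constants independent of $t$.
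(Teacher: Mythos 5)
Your overall strategy coincides with the paper's: reduce to $t=1$ by scaling (the homogeneity of $\mL^{2,\lambda+2}_{1}$ and $\mL^{2,\lambda}_{0}$ then returns the factors $|t|^{-\lambda/2-1}$ and $|t|^{-\lambda/2}$ automatically), invoke the De Giorgi condition on $L_{\ta}^*$ to obtain Morrey--Campanato mapping properties for $L_t^{*-1}$ composed with $\nabla$ and $\divv$ (the paper simply quotes \cite{A}, Theorem 3.10, extended to complex systems, rather than re-running a freezing argument), and then iterate through the matrix $\mR_t^{*}$, using boundedness of multiplication by $L^\infty$ functions on Morrey spaces and the identification of Campanato with Morrey below the exponent $n$ to pass between scales. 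All of that is the paper's proof in outline.

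The gap is in your structural analysis of the entries, and it hides the one genuinely delicate point. You claim that $T_t^*$ and $U_t^*$ ``involve at most one differentiation'' and conclude that every iteration raises the index of \emph{both} components by a fixed $\mu_0>0$. But from $T_t=-a+(a+itb\nabla)L_t^{-1}(a+it\divv c)$, the adjoint $T_t^*$ contains the adjoint of $itb\nabla\,L_t^{-1}\,it\divv c$, i.e.\ an operator of the type (bounded multiplication)$\,\circ\, t\nabla\, L_t^{*-1}\, t\divv\,\circ\,$(bounded multiplication): a derivative on \emph{each} side of the resolvent. On the Morrey--Campanato scale this operator has no gain at all (it is the Riesz-transform-like fourth mapping property in the paper's list), so the second component gains nothing at the first iteration, and your uniform per-step gain claim fails as stated. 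The iteration nevertheless closes, for a reason your bookkeeping must make explicit: the first component is always two index units ahead of the second (since $L^{*-1}$ gains up to $4$, and $\nabla L^{*-1}$, $L^{*-1}\divv$ gain up to $2$), so the no-gain operator $T^*$ only ever acts on the more regular component, while $U^*$, a combination of $L^{*-1}$ and $\nabla L^{*-1}$ with gain $2$, lifts the second component. This staggering gives indices $(2k,2k-2)$ after $k$ steps, capped by the De Giorgi exponent, and since $\lambda(L_{\ta}^*)\le n$ one reaches $(\lambda+2,\lambda)$ for every $\lambda<\lambda(L_{\ta}^*)$ after finitely many steps, with $M$ depending only on dimension. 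Once you correct the description of $T_t^*$ and carry out the joint induction with these staggered indices, your plan becomes exactly the paper's argument.
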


Assuming this lemma, we argue as follows to prove \eqref{tent2'} and \eqref{tent3'}.  First, the De Giorgi condition and $p_{\ta}<p$ means that we can take $\lambda=n-2+2\alpha$ for some $\alpha>n(\frac{1}{p}-1)$  in the previous lemma and the sought $\varepsilon$ will be $2\alpha-2n(\frac{1}{p}-1)$.  
Next,  we prove \eqref{tent3'} by dualizing against $g \in L^2 \times L^2$, supported in $2^{k+1}Q$, with norm 1. Then
$$ \Bpair { \mR_{t}^M \begin{bmatrix}
      f_{\perp}    \\
       h
\end{bmatrix}}  g    =  \Bpair
     { f_{\perp}}    {  \begin{bmatrix} 1 & 0 \\ 0 & 0 \end{bmatrix}\mR_{t}^{*M}g } +
      \Bpair
      {h }{  \begin{bmatrix} 0 & 0 \\ 0 & 1 \end{bmatrix}\mR_{t}^{*M}g}.
      $$
     For the first term,  since $f_{\perp}$ has mean value 0 on $Q$, we can subtract the mean value on $Q$ of $\begin{bmatrix} 1 & 0 \\ 0 & 0 \end{bmatrix}\mR_{t}^{*M}g$ then use
     Cauchy-Schwarz inequality and the $\mL^{2,\lambda+2}_{1} $ estimate which leads to a bound $\|f_{\perp}\|_{2} Ct^{-\lambda/2 - 1}\|g\|_{2} \le Ct^{-\lambda/2 - 1}$.
     For the second term, we merely use Cauchy-Schwarz inequality and the $\mL^{2,\lambda}_{0} $ estimate which leads to a bound $\|h\|_{2}Ct^{-\lambda/2}\|g\|_{2} \le Ct^{-\lambda/2-1}$ using that $\|h\|_{2}\le t^{-1}$. This proves  \eqref{tent3'}. 
     
     To prove  \eqref{tent2'}  we need to incorporate some decay in the bounds of the above lemma. This is done using the standard exponential perturbation argument. Let $\varphi$ be a real-valued,    Lipschitz  function. We also assume $\varphi$ bounded but do not use its bound. Let $\mR_{t,\varphi}= \exp(-\varphi/t) \mR_{t}\exp(\varphi/t)$.  A simple computation shows that this operator has the same form  and properties as $\mR_{t}$ with $d$ unchanged and  $a,b,c$ modified by an additive $O(\|\nabla \varphi\|_{\infty})$ term. Also since the higher order coefficient of $\exp(-\varphi/t) L_{t}\exp(\varphi/t)$ is the same as the one of $L_{t}$, we also have the De Giorgi condition on the adjoint of the higher order term. 
     Thus,  we have the same bounds for $\mR_{t,\varphi}^M$ uniformly for $\|\nabla \varphi\|_{\infty}\le \delta $ for some $\delta >0$ depending solely on $L^\infty$ and accretivity bounds,  and on the De Giorgi condition of $L_{\ta}^*$. 
     Having fixed $Q$ (the unit ball) and $k\ge 1$, we choose  $\varphi(x)=\delta \inf (d(x,Q),  N)$ for a fixed $N\ge 2^{k+1}$. Hence, $\|\nabla \varphi\|_{\infty}\le \delta $ and $\inf \varphi = \delta (2^k - 1)$ on $ 2^{k+1}Q\setminus 2^kQ$. Using the support condition of $f_{\perp}, h$ and the definition of $\varphi$, we obtain that
     $$
    \mR_{t}^M  \begin{bmatrix}
      f_{\perp}    \\
       h
\end{bmatrix} =  \mR_{t}^M  \begin{bmatrix}
   \exp(\varphi/t)   f_{\perp}    \\
    \exp(\varphi/t)   h
\end{bmatrix} = \exp(\varphi/t) \mR_{t,\varphi}^M  \begin{bmatrix}
      f_{\perp}    \\
       h
\end{bmatrix}. $$
Using the bounds for $\mR_{t,\varphi}^M$, we obtain  powers of $t$  as above, multiplied by the supremum on $ 2^{k+1}Q\setminus 2^kQ$ of $ \exp(-\varphi/t)$, that is $ \exp (-\delta (2^k-1)/t)$. This proves \eqref{tent2'}. The proof of the theorem is complete modulo that of the last lemma. 

For later use, we record the following estimate that comes from a modification of the above arguments.

\begin{cor} Assume  $ \lambda(L_{\ta}^*)>n-2$ and let $p_{\ta} <p\le 1$. If $\alpha$ is a $(\IH^p_{D},1)$-atom associated to the ball $Q$, then for any other ball $Q'$, we have for large enough $M$ (depending only on dimension and $\lambda(L_{\ta}^*)$) 
\begin{equation}
\label{tent3''}
\int_{Q'}  |R_{t}^M \alpha|^2\,  dx \lesssim e^{-\delta \frac{\dist(Q',Q)}{t}} t^{n- \frac{2n}{p}- \varepsilon}
\end{equation}
for all $t>0$ and some $\delta >0$ and $\varepsilon>0$. 

\end{cor}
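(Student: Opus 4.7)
The plan is to transcribe the proof of Theorem~\ref{th:main} to a general ball $Q'$, using the Caccioppoli--dualization--Morrey/Campanato strategy together with an exponential perturbation adapted to $\dist(Q', Q)$. By affine invariance I may assume $Q$ is the unit ball at the origin. As in the proof of Theorem~\ref{th:main}, the Caccioppoli argument for $L_t$ reduces the control of $\int_{Q'} |R_t^M \alpha|^2$ to that of $\int_{\widetilde{Q'}} |\mR_t^M(f_\perp, h)|^2$ on a slightly fattened $\widetilde{Q'}$, with $(f_\perp, h)$ supported in $Q$, $\|f_\perp\|_2 \lesssim 1$, $\|h\|_2 \lesssim t^{-1}$, and $f_\perp$ of mean zero on $Q$. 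The fattening is by an amount of order $t$, which replaces $\dist(Q',Q)$ by $\dist(Q',Q) - Ct$ in the exponential and so only produces a multiplicative constant $e^{C\delta}$ that is harmless.

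I then introduce the Lipschitz weight $\varphi(x) := \delta \min(\dist(x, Q), N)$, with $N$ larger than $\sup_{x \in \widetilde{Q'}} \dist(x, Q)$ and $\delta > 0$ small enough (depending only on the $L^\infty$ and accretivity bounds of $A$ and on the constants in the De Giorgi condition for $L_{\ta}^*$) that the conjugated operator $\mR_{t, \varphi} := e^{\varphi/t} \mR_t e^{-\varphi/t}$ has the same form as $\mR_t$, with $a, b, c$ modified by additive perturbations of size $O(\delta)$ and, crucially, with the top-order coefficient $d$ \emph{unchanged}. This preserves the De Giorgi condition for $L_{\ta}^*$, so Lemma~\ref{lem:MC} applies uniformly to $\mR_{t, \varphi}^{*M}$. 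Since $\varphi \equiv 0$ on $Q$ and $\varphi(x) \ge \delta \dist(\widetilde{Q'}, Q)$ on $\widetilde{Q'}$, dualizing against $g$ supported in $\widetilde{Q'}$ with $\|g\|_2 = 1$ gives
\[
\langle \mR_t^M (f_\perp, h), g \rangle = \langle (f_\perp, h), \mR_{t, \varphi}^{*M}(e^{-\varphi/t} g) \rangle,
\]
with $\|e^{-\varphi/t} g\|_2 \le e^{-\delta \dist(\widetilde{Q'}, Q)/t}$. Writing $(v_1, v_2) := \mR_{t, \varphi}^{*M}(e^{-\varphi/t} g)$, subtracting the mean of $v_1$ on $Q$ to exploit the cancellation of $f_\perp$, and applying the $\mathcal{L}^{2, \lambda+2}_1$ bound on $v_1$ and the $\mathcal{L}^{2, \lambda}_0$ bound on $v_2$ from Lemma~\ref{lem:MC} exactly as in the proof of \eqref{tent3'} yields
\[
|\langle \mR_t^M (f_\perp, h), g \rangle| \lesssim t^{-1 - \lambda/2} \, e^{-\delta \dist(\widetilde{Q'}, Q)/t}.
\]
Squaring after taking the supremum in $g$, and choosing $\lambda := n - 2 + 2\alpha$ with $n(1/p - 1) < \alpha < \alpha(L_{\ta}^*)$ (possible because $p > p_{\ta}$), converts the exponent $-2 - \lambda$ into $n - 2n/p - \varepsilon$ with $\varepsilon := 2\alpha - 2 n(1/p - 1) > 0$, producing the stated bound after renaming $\delta$ and absorbing the constants coming from the fattening step.

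The main obstacle is making the exponential perturbation compatible with Lemma~\ref{lem:MC}: this forces $\varphi$ to be Lipschitz with a sufficiently small constant (uniformly in $t$, whence the factor $1/t$ in $e^{\varphi/t}$) and it is essential that the conjugation $e^{\varphi/t} L_t e^{-\varphi/t}$ leaves the top-order coefficient $d$ untouched, so that the adjoint of the top-order part still satisfies the same De Giorgi condition as $L_{\ta}^*$. Once this is arranged, the estimate is uniform in both $t > 0$ and the location and size of $Q'$, so no case split (according to the size of $t$ relative to $\dist(Q',Q)$) is needed, unlike in the proof of Theorem~\ref{th:main} where the proof is organized dyadically in $t$.
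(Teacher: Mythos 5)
Your proposal is correct and is essentially the proof the paper intends when it says the corollary ``comes from a modification of the above arguments'': the same Caccioppoli reduction, the same dualization with mean-value subtraction against Lemma \ref{lem:MC}, and the same exponential conjugation with a small Lipschitz weight vanishing on $Q$, merely reorganized into a single estimate uniform in $t$ and $Q'$ instead of the dyadic case analysis of Theorem \ref{th:main}. Your handling of the $O(t)$ fattening and your (self-consistent) orientation of the conjugation $e^{\varphi/t}\mR_t e^{-\varphi/t}$ are fine.
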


\subsection{Proof of  Lemma \ref{lem:MC}} First by scaling it suffices to assume $t=1$. Since the Morrey and Campanato spaces of the statement are the homogeneous ones, the powers of $t$ follow automatically by a rescaling argument (which yields operators with the same hypotheses). 
We thus drop the index $t$ in the notation. From fact (c), it suffices to work in the inhomogeneous spaces. 
It follows from \cite[ Theorem 3.10] {A} (this is done for real  equations but the proof applies \textit{mutatis mutandi} to complex systems with G\aa rding inequality) that for $\lambda\ge 0$ we have the boundedness properties
\begin{align*}
  L^{*-1}  & :   L^2 \cap L^{2,\lambda}_{1} \to L^2 \cap L^{2,\lambda' }_{1}, \quad 0\le \lambda' \le \lambda+4, \quad \lambda'< \lambda(L_{\ta}^*),\\
  \nabla L^{*-1}  & :   L^2 \cap L^{2,\lambda}_{1} \to L^2 \cap L^{2,\lambda' }_{1}, \quad 0\le \lambda' \le \lambda+2, \quad \lambda' < \lambda(L_{\ta}^*),\\
     L^{*-1} \divv  & :   L^2 \cap L^{2,\lambda}_{1} \to L^2 \cap L^{2,\lambda' }_{1}, \quad 0\le \lambda' \le \lambda+2, \quad \lambda' < \lambda(L_{\ta}^*),     \\  
     \nabla L^{*-1} \divv & :   L^2 \cap L^{2,\lambda}_{1} \to L^2 \cap L^{2,\lambda' }_{1}, \quad 0\le \lambda'  \le \lambda, \quad \qquad \lambda' < \lambda(L_{\ta}^*).
\end{align*}
Note that $U^*$ is a combination of the first two lines, so there is a gain of $2$ at most. However for $T^*$, we must use the fourth line so there is no gain. 
Since
$$\mR^*= \begin{bmatrix} L^{*-1}a^* & L^{*-1}  \\ T^* & U^* \end{bmatrix},$$
 starting from  $g^{(0)} \in L^2\times L^2$ and letting $g^{(k+1)}= \mR^*g^{(k)}$ for $k\ge 0$, we argue as follows using facts (b) and (d). 
As $g^{(0)} \in ( L^2 \cap L^{2,0}_{1}) \times  (L^2 \cap L^{2,0}_{1} )$, we see that  $g^{(1)} \in ( L^2 \cap L^{2,2}_{1}) \times  (L^2 \cap L^{2,0}_{1}) $. Next, we see $g^{(2)} \in ( L^2 \cap L^{2,4}_{1}) \times  (L^2 \cap L^{2,2}_{1}) $  unless $\lambda(L_{\ta}^*)\le 2$ in which case we stop and have obtained 
$g^{(2)} \in ( L^2 \cap L^{2,\lambda+2}_{1}) \times  (L^2 \cap L^{2,\lambda}_{1}) $   for all $\lambda <\lambda(L_{\ta}^*)$  (because of (a)). In the case $\lambda(L_{\ta}^*)>2$, we see that 
$g^{(3)} \in ( L^2 \cap L^{2,6}_{1}) \times  (L^2 \cap L^{2,4}_{1}) $  unless $\lambda(L_{\ta}^*)\le 4$ in which case we stop and have obtained 
$g^{(3)} \in ( L^2 \cap L^{2,\lambda+2}_{1}) \times  (L^2 \cap L^{2,\lambda}_{1}) $   for all $\lambda <\lambda(L_{\ta}^*)$. Since $\lambda(L_{\ta}^*)\le n$, we must stop in a finite number of steps.

\subsection{Openness}

We want to prove the analog statement to Proposition \ref{prop:open}, in the range found  in 
Corollary \ref{cor:DGN}, namely

\begin{prop}\label{prop:open2} Fix $p\in (p_{\ta},p_{+}(DB))$. Then for any $B'$ with $\|B-B'\|_{\infty}$ small enough (depending on $p$), $\IH^p_{DB'}=\IH^p_{D}$ with equivalence of norms. Furthermore, for any $b\in H^\infty(S_{\mu})$ with  $\omega_{B}<\mu<\pi/2$, we have 
\begin{equation}
\label{eq:analytic2}
\|b(DB)-b(DB')\|_{\mL(H^p_{D})}\lesssim  \|b\|_{\infty}\|B-B'\|_{\infty}.
\end{equation}
\end{prop}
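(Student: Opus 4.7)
The plan is to follow the scheme of Proposition \ref{prop:open}, with Theorem \ref{th:main} replacing Theorem \ref{thm:upperqpsip<2} in the enlarged De Giorgi range. Write $B_z := B - zM$ with $\|M\|_\infty \le 1$ and $z \in \C$; it suffices to prove the conclusion for $B' = B_z$ with $|z|$ small. For $|z|$ small, $B_z$ is strictly accretive on $\closran_{2}(D)$ with uniform constants, and since the upper-left block of $B$ is invertible (from the invertibility of $a$ in \eqref{eq:hat}), the matrix $A_z$ recovered from $B_z$ by inverting \eqref{eq:hat} is well-defined and $O(|z|)$-close to $A$ in $L^\infty$. In particular its lower-right block $d_z$ perturbs $d$ by $O(|z|)$, and by the classical $L^\infty$-stability of De Giorgi estimates, $\lambda(L_{\ta,z}^*) > n-2$ uniformly for $|z| < \epsilon_0$. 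Hence we may fix $\tilde p_\ta \in (p_\ta, p)$ such that $p_{\ta,z} \le \tilde p_\ta$ uniformly, while openness of $I(DB_z)$ (Section \ref{sec:lpresults}) secures $p < p_+(DB_z)$ uniformly.

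Applying Theorem \ref{th:main} to $B_z$ yields, uniformly in $|z| < \epsilon_0$ and in $(\IH^p_D, 1)$-atoms $\alpha$,
$$\|tDB_z(I+itDB_z)^{-M}\alpha\|_{T^p_2} \lesssim 1,$$
for $M$ large enough, since all implicit constants in Theorem \ref{th:main} depend only on $n,m,M$, the $L^\infty$ and accretivity bounds of $B_z$, and the De Giorgi constants of $L^*_{\ta,z}$, all uniform in $|z|$. Since $\psi(z) = z(1+iz)^{-M} \in \Psi^{\gamma(p)}(S_\mu)$ is allowable for $\IH^p_{DB_z}$ when $M$ is large, this is the square function estimate $\|\alpha\|_{\IH^p_{DB_z}} \lesssim 1$. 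As in the proof of Corollary \ref{cor:DGN}, atomic density (for $p \le 1$) together with complex interpolation against the $L^2$ endpoint (for $1 < p < 2$) yields the uniform continuous inclusion $\IH^p_D \subset \IH^p_{DB_z}$. The reverse inclusion is Proposition \ref{cor:lowerbounddbp<2} (for $p < 2$) or Proposition \ref{cor:upperbounddbp>2} (for $p > 2$). Thus $\IH^p_{DB_z} = \IH^p_D$ with uniformly equivalent norms, and by Theorem \ref{thm:hpdb=hpd} every $b(DB_z)$ is uniformly bounded on $\IH^p_D$.

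It remains to upgrade this uniform boundedness to the norm estimate \eqref{eq:analytic2}. The $\mL(L^2)$-analyticity of $z \mapsto b(DB_z)$ with derivative bounded by $C\|b\|_\infty$ is standard (cf.\ \cite{AKMc} and the proof of Proposition \ref{prop:open}). When $p > 1$, the duality argument of Proposition \ref{prop:open} applies without change: $\pair{b(DB_z)f}{g}$ is analytic and uniformly bounded by $C\|b\|_\infty \|f\|_{\IH^p_D}\|g\|_{\IH^{p'}_D}$, and Cauchy estimates give \eqref{eq:analytic2}. The main obstacle is the range $p_\ta < p \le 1$, where Lemma \ref{lem:atom-mol} must be extended to the De Giorgi setting. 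The plan is: for an $(\IH^p_D,1)$-atom $a$, use a Calder\'on reproducing formula $b(DB_z)a = \int_0^\infty b(DB_z)\varphi(tDB_z)\,\psi(tDB_z)a\,\frac{dt}{t}$, and estimate $\|b(DB_z)\varphi(tDB_z)\psi(tDB_z)a\|_{L^2(S_j(Q))}$ by combining the local coercivity Lemma \ref{lem:localcoerc} with the uniform off-diagonal atomic bound \eqref{tent3''} (a corollary of the proof of Theorem \ref{th:main}) in place of the $L^q$--$L^2$ off-diagonal estimates used in Lemma \ref{lem:atom-mol}; integration in $t$ then yields uniform molecular decay $\|b(DB_z)a\|_{L^2(S_j(Q))} \lesssim \|b\|_\infty (2^j\ell(Q))^{n/2-n/p}2^{-j\epsilon}$. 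The vanishing moment follows, as in Lemma \ref{lem:atom-mol}, from $b(DB_z)a \in \IH^p_{DB_z} = \IH^p_D \subset H^p$. Cauchy estimates in a Hilbert space of weighted mean-zero $L^2$ molecules then give analyticity of $z \mapsto b(DB_z)a$ as an $H^p$-valued function, and \eqref{eq:analytic2} follows by density of atomic combinations in $\IH^p_D$.
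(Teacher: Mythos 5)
Your proposal is correct and takes essentially the same route as the paper: uniform validity of the De Giorgi condition for the perturbed coefficients so that Theorem \ref{th:main}/Corollary \ref{cor:DGN} applies to $DB'$ with uniform constants, the duality/analyticity argument of Proposition \ref{prop:open} for $p>1$, and for $p\le 1$ a uniform atom-to-molecule estimate obtained by resolving $b(DB')$ through a Calder\'on reproducing formula and combining the off-diagonal atomic bound \eqref{tent3''} with $L^2$ off-diagonal estimates \eqref{eq:odnpsipq} for the factor carrying $b$, followed by the weighted-$L^2$ Cauchy-estimate argument of Proposition \ref{prop:open}. Only cosmetic details differ (the paper uses the explicit resolution $\psi_t(z)=c_{M\pm}(iz)^M(1+iz)^{-2M}(iz)(1+iz)^{-M}$ and needs no local coercivity lemma there, and for $2\le p<p_{+}(DB')$ both inclusions come from Theorem \ref{thm:hpdb} rather than from Proposition \ref{cor:upperbounddbp>2} alone), so your argument matches the paper's in substance.
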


The proof is the same as for Proposition \ref{prop:open}. Indeed, from \cite{A}, we know that the De Giorgi condition is an open condition of the coefficients of $L^*_{\ta}$. Thus  Corollary \ref{cor:DGN} applies to any perturbation of the corresponding $DB$. Then $H^\infty(S_{\mu})$-functions of $DB'$ are bounded on $H^p_{D}$ uniformly for $\|B-B'\|_{\infty}$ small enough. Thus, the estimate \eqref{eq:analytic2} holds directly for $1<p$ by the theory of analytic functions valued in Banach spaces. For $p\le 1$, it suffices to prove the 
atom to molecule estimate as in Lemma \ref{lem:atom-mol}. This is the only point requiring a specific argument. 

For some $\varepsilon>0$ depending only on $p$ and $ n$,   then  for all   $(\IH^p_{D},1)$-atoms $\alpha$, with associated cube $Q$ and all  $j\ge 0$, 
$$
||b(DB)\alpha||_{L^{2}\left(S_{j}\left(Q\right)\right)}\lesssim \|b\|_{\infty} \left(2^{j}\ell\left(Q\right)\right)^{\frac{n}{2}-\frac{n}{p}}2^{-j\varepsilon} 
$$
and moreover $\int b(DB)\alpha=0$. 

To show this we argue as follows.  For each  integer $M$, there are constants $c_{M\pm}$ such that $\psi(z)= c_{M\pm} (iz)^M(1+iz)^{-2M} (iz)(1+iz)^{-M}$ if $z\in S_{\mu\pm}$ satisfies $\int_{0}^\infty \psi(tz) \, \frac{dt}{t}=1$ for all $z\in S_{\mu}$.  Thus we can resolve $b(DB)$ as $\int_{0}^\infty (b\psi_{t})(DB)\, \frac{dt}{t}$.  As before, it is no loss of generality to assume that the ball associated to $\alpha$ is the unit ball. For $M$  large enough, for all $t>0$ and arbitrary integer $N$ and $j\ge 2$, 
$$
\|(itDB)(I+itDB)^{-M}\alpha||^2_{L^{2}\left(S_{j}\left(Q\right)\right)}\lesssim  \brac{2^j/t}^{-N} t^{n- \frac{2n}{p}- \varepsilon}.
$$ 
This is also valid for $S_{j}(Q)$ replaced by $4Q$. 
This is the estimate \eqref{tent3''}.  Next, the $L^2$ off-diagonal estimates  \eqref{eq:odnpsipq} apply to $b(DB)(itDB)^M(1+itDB)^{-2M}$ to give 
$$
\|1_{E}b(DB)(itDB)^M(I+itDB)^{-2M}1_{F} u\|_{2} \lesssim \|b\|_{\infty} \brac{\dist (E,F)/t}^{-M}\|u\|_{2}
$$
for all $t>0$, Borel sets $E,F \subset \R^n$ and $u\in L^2$ with support in $F$. It is an easy computation to obtain
$$
\|(b\psi_{t})(DB)\alpha||_{L^{2}\left(S_{j}\left(Q\right)\right)}\lesssim  \brac{2^j/t}^{-M} t^{\frac{n}{2}- \frac{n}{p}- \frac{\varepsilon'}{2}}
$$
for large enough $M$ and $0<\varepsilon'<\varepsilon$.  
With this in hand, one can estimate the $t$-integral upon taking $M$ large enough and get the desired bound for $\int_{S_{j}(Q)}|b(DB)\alpha|^2$  when $j\ge 2$. The integral of $\int_{4Q}|b(DB)\alpha|^2$  is controlled as usual using the $H^\infty$-calculus. We skip further details.

\section{Application to perturbation of solvability for the  boundary value problems}\label{sec:perturbation}

Here, we continue some developments started in \cite{AM}.  Some words are necessary. At the time \cite{AM} was written, Theorems \ref{thm:main1} and \ref{thm:main2} of this memoir were known from the present authors.  Part of Theorem  \ref{thm:main1} was reproved in \cite{AM} under some De Giorgi conditions allowing a more direct argument bypassing Hardy space estimates (parts of this proof was due to other authors as mentioned in the introduction) and Theorem \ref{thm:main2} was quoted in \cite{AM} as well as the development on  boundary layers from \cite{HMiMo}. While writing the present article, we have improved the development on boundary layers as presented in  Section \ref{sec:boundarylayers}.

In \cite{AM} the goal was to prove  extrapolation of solvability results for boundary value problems using a method ``\`a la Calder\'on-Zygmund''. For example, it was shown that the solvability of the
regularity (resp. Neumann) problem  in $L^p$, $1<p\le 2$,\footnote{The limitation  $p\le 2$ is inherent to the method used there but can be lifted to $p<p_{+}(DB)$ once we have the needed boundedness
.} 
 with energy solutions can be pushed down to
obtain solvability  in $L^q$ with $1<q<p$ and also $H^q$ with $q_{0}<q\le 1$ where $q_{0}$ is derived from the De Giorgi-Nash conditions used there, which involved interior and boundary regularity for the system \eqref{eq:divform} and its adjoint.  
Also extrapolation for the Dirichlet problems and Neumann problems in negative Sobolev spaces (going up the scale of exponents this time) was deduced   thanks to  Regularity/Dirichlet  and Neumann/Neumann duality principles (see \cite{AM} for explanations).

It is not clear at this time what could be  the similar results as in \cite{AM} in our general framework. First, we do not use here interior regularity.  Secondly, those results require some kind of boundary regularity.

Instead,  we can prove an extrapolation result ``\`a la {\v{S}}ne{\u\i}berg'', namely Theorem \ref{thm:extra}, which  does not require any boundary regularity. Also we establish a stability result in the coefficients.

\subsection{Proof of Theorem \ref{thm:extra}}

We begin with the regularity problem. 

For $\frac{n}{n+1}<q<\infty$ and $X=H^q$, one can formulate two notions of solvability as follows.
First,  $ (R)^L_{X}$ is solvable for the energy class  if  there exists $C_{X}<\infty$ such that
for any $f\in H^q_{\ta}\cap \dot \mH^{-1/2}_{\ta}$ 
the energy solution $u$ of $\divv A \nabla u=0$ on $\reu$ with regularity data $\nabla_{x}u|_{t=0}=f $ satisfies 
$$
\|\tN(\nabla_{A}u)\|_{q}\le C_{X} \|f\|_{H^q_{\ta}}.
$$

We say that $ (R)^L_{X}$ is solvable if there exists a constant $C_{X}<\infty$ such that  for any $f\in H^q_{\ta}$  there exists a weak solution 
 $u$ of $\divv A \nabla u=0$ in $\reu$ with regularity data $\nabla_{x}u|_{t=0}=f $ (in the prescribed sense below) and 
$$
\|\tN(\nabla_{A}u)\|_{q}\le C_{X} \|f\|_{H^q_{\ta}}.
$$
This means that solvability is existence of a solution with prescribed boundary trace and interior estimate.

Although one can formulate these problems for all $q$, they take meaningful sense in the restricted range $I_{L}$. We recall that  $I_{L}$ is the interval  in $(\frac{n}{n+1}, p_{+}(L))$  on which $\IH^q_{DB}=\IH^q_{D}$ with equivalence of norms. For $ q$ in this range, the map
$$
N_{\ta}: H^q_{DB}\to H^q_{\ta}, h\mapsto h_{\ta}
$$
is well-defined and bounded.  

To prove  Theorem \ref{thm:extra}, the first lemma tells us that we can build solutions from our semigroup approach. This is a feature of this method.

\begin{lem}\label{lem:sol} Assume $q\in I_{L}$. Let $S_{q}^+(t)$ be the extension of the semigroup $e^{-t|DB|}$, $t\ge 0$, to $H^{q,+}_{DB}$. Let $h\in H^{q,+}_{DB}$. Then, the function $(t,x)\mapsto S_{q}^+(t)h(x)$ is the conormal gradient of a weak solution $u$ (uniquely determined up to a constant)  of $\divv A \nabla u=0$ on $\reu$ with $$
\|\tN(\nabla_{A}u)\|_{q}\le C_{q} \|h\|_{H^q}.
$$
Moreover, this solution is such that $(\nabla_{A}u)(t,\cdot)$ converges to $h$ in strong $H^q$ topology as $t\to 0$.
\end{lem}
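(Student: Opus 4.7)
The plan is to build the solution by density from the $L^2$ case, using the non-tangential maximal bound of Theorem~\ref{thm:ntdb} as the key quantitative input that survives the passage to the limit.

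First I would treat the case where $h$ lies in the dense subspace $\IH^{q,+}_{DB}=H^{q,+}_{DB}\cap\clos{\ran_{2}(DB)}$. Here $h\in\dot\mH^{0,+}_{DB}$, so the discussion of Section~\ref{sec:apriori} (ultimately from \cite{AA1,R2,AMcM}) produces a weak solution $u\in\mE_0$ of $\divv A\nabla u=0$ on $\reu$, unique modulo a constant, with $\nabla_A u(t,\cdot)=e^{-t|DB|}h=S_q^+(t)h$ for every $t\ge 0$. Since $q\in I_L$, Theorem~\ref{thm:ntdb} gives the two-sided bound $\|\tN(\nabla_A u)\|_q\sim\|h\|_{H^q}$. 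Proposition~\ref{prop:stronglimit}, combined with the identification $\IH^q_{DB}=\IH^q_D$ and the norm equivalence $\|\cdot\|_{\IH^q_{DB}}\sim\|\cdot\|_{H^q}$, then yields the strong convergence $(\nabla_A u)(t,\cdot)\to h$ in $H^q$ as $t\to 0$.

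Next I would extend to a general $h\in H^{q,+}_{DB}$. By the very definition of the completion recalled in Section~\ref{sec:completions} I may pick $h_k\in\IH^{q,+}_{DB}$ with $h_k\to h$ in $H^q_{DB}$. Denote by $u_k$ the solutions produced by the previous step, normalized so that a fixed Whitney-average of $u_k$ vanishes. Linearity and Theorem~\ref{thm:ntdb} applied to $h_k-h_\ell$ give
\[
\|\tN(\nabla_A u_k-\nabla_A u_\ell)\|_q\lesssim \|h_k-h_\ell\|_{H^q}\longrightarrow 0,
\]
which upgrades via a standard Whitney-averaging argument to $L^2_{\loc}(\reu)$-Cauchyness of $(\nabla_A u_k)$, and via the local Poincar\'e inequality to that of $(u_k)$ itself. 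The $L^2_{\loc}$-limit $u$ is therefore a weak solution of $\divv A\nabla u=0$, and its conormal gradient is the $L^2_{\loc}$-limit of $\nabla_A u_k$. On the semigroup side, the uniform boundedness of $S_q^+(t)$ on $H^q_{DB}$ described in Section~\ref{sec:completions} gives $S_q^+(t)h_k=e^{-t|DB|}h_k\to S_q^+(t)h$ in $H^q_{DB}$, uniformly in $t>0$, hence in $L^q_{\loc}$ (or an appropriate local Hardy topology when $q\le 1$). Matching the two limits yields $\nabla_A u(t,x)=S_q^+(t)h(x)$. Fatou's lemma applied to $\tN$ delivers the announced bound $\|\tN(\nabla_A u)\|_q\le C_q\|h\|_{H^q}$, and the strong $H^q$-convergence of $(\nabla_A u)(t,\cdot)$ to $h$ at the boundary follows from the strong continuity at zero of the extended semigroup $S_q^+(t)$ on $H^{q,+}_{DB}$, inherited from the $\IH^{q,+}_{DB}$ case by density.

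The main obstacle will be the clean identification $\nabla_A u(t,\cdot)=S_q^+(t)h$ pointwise in $t>0$, rather than merely in an averaged or weak distributional sense. The point is that all the structural features of a conormal gradient, in particular the curl-free condition for the tangential part and the divergence equation relating the scalar and tangential components through $A$, have to be preserved in the limit. The quantitative $L^2_{\loc}$ convergence on Whitney boxes furnished by Theorem~\ref{thm:ntdb} is precisely what is needed to ensure that $u$ is a genuine weak solution whose conormal gradient coincides with the abstract semigroup object $S_q^+(t)h$.
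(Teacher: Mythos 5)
Your proposal is correct, and its skeleton (work on the dense class $\IH^{q,+}_{DB}$, use the non-tangential maximal estimate of Theorem \ref{thm:ntdb} as the quantitative input, then pass to the completion) is the same as the paper's. The difference is in how the limit is recognized as a conormal gradient. The paper stays at the first-order level: for $h$ in the dense class, $F=e^{-t|DB|}h$ is an $L^2_{\loc}(\reu)$ weak solution of $\pd_{t}F+DBF=0$, and both the $L^2_{\loc}$ membership (via the extended bound $\|\tN(S_{q}^+(t)h)\|_{q}\le C_q\|h\|_{H^q}$) and the weak first-order equation are stable under the $L^2_{\loc}$ limit, so the correspondence of Section 12 between solutions of the first-order system and conormal gradients immediately produces $u$, with no need to reconstruct potentials or to match two separately obtained limits. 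You instead pass to the limit in the second-order equation, rebuilding the potentials $u_k$ via pointwise comparability of $\nabla u$ and $\nabla_{A}u$ (strict accretivity of $a$) and a local Poincar\'e/normalization argument, and then identify the $L^2_{\loc}$ space-time limit of $\nabla_{A}u_k$ with $S_q^+(t)h$ through distributional convergence uniform in $t$; this works, but it is longer and introduces two steps (Poincar\'e chaining with a choice of normalizing constants, and the limit-matching) that the paper's first-order route avoids. One small remark: in your dense-class step you only need the upper bound of Theorem \ref{thm:ntdb}; the two-sided equivalence you quote for $q\le1$ is indeed available, but only through the spectral-subspace clause (here $h_k-h_\ell\in\IH^{q,+}_{DB}\subset\IH^{2,+}_{DB}$), not in general, so it is worth being explicit that the lower bound plays no role in the proof.
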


\begin{proof} For $q$ in this range, we know that $H^{q,+}_{DB}$ is a closed subspace of $H^q_{DB}=H^q_{D}$ with $H^q$ topology. When $h$ belongs to the dense class $\IH^{q,+}_{DB}$, we know that $F=e^{-t|DB|}h$ satisfies the non-tangential maximal estimates. 
  Passing to completion for $h\in H^{q,+}_{DB}$, we have  $$
\|\tN(S_{q}^+(t)h)\|_{q}\le C_{q} \|h\|_{H^q},
$$
and in particular, $S_{q}^+(t)h(x)  \in L^2_{loc}$. Also for $h\in \IH^{q,+}_{DB}$, we knew that $F$ was an $L^2_{loc}$ and a solution to  $\pd_{t}F+DBF=0$ in the weak sense, so it is preserved by taking limit in $L^2_{\loc}$.  
Thus there exists a weak solution $u$  (uniquely determined up to a constant)  of $\divv A \nabla u=0$ on $\reu$
such that $\nabla_{A}u(t,x)= S_{q}^+(t)h(x)$ in $L^2_{loc}$ sense. Finally, we have seen the strong convergence of $S_{q}^+(t)$ on $H^{q,+}_{DB}$ (this is easy from the one of the extended semigroup $S_{q}(t)$ on $H^q_{DB}$). So the strong limit as $t\to 0$ is granted.  \end{proof}

\begin{lem}\label{lem:claim} Let $q\in I_{L}$ and $X=H^q$. 
If $ (R)^L_{X}$ is  solvable for the energy class then $N_{\ta}: H^{q,+}_{DB}\to H^q_{\ta}$ is an isomorphism. If $N_{\ta}$ is surjective onto $H^q_{\ta}$ then $ (R)^L_{X}$ is  solvable with strong limit as $t\to 0$ for $\nabla_{\ta}u(t,.)$ in $H^q$ topology.
\end{lem}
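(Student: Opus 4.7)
The argument splits naturally into the two implications.

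\emph{Part (2).} Assuming $N_{\ta}: H^{q,+}_{DB}\to H^q_{\ta}$ is surjective, the plan is to invoke the open mapping theorem. Both spaces are F-spaces (Banach if $q>1$, quasi-Banach otherwise), so a continuous surjection admits bounded selections: for every $f\in H^q_{\ta}$ one can pick $h\in H^{q,+}_{DB}$ with $h_{\ta}=f$ and $\|h\|_{H^q_{DB}}\le C\|f\|_{H^q_{\ta}}$. Lemma \ref{lem:sol} applied to this $h$ produces the required weak solution $u$ of $Lu=0$, with non-tangential control $\|\tN(\nabla_A u)\|_q\lesssim\|h\|_{H^q_{DB}}\lesssim\|f\|_{H^q_{\ta}}$ and with $\nabla_A u(t,\cdot)\to h$ in $H^q$ as $t\to 0$; taking the tangential component yields $\nabla_x u(t,\cdot)\to f$ strongly in $H^q_{\ta}$, which is the prescribed boundary realization.

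\emph{Part (1), surjectivity.} Given solvability for the energy class, for each $f\in H^q_{\ta}\cap \dot \mH^{-1/2}_{\ta}$ I set $\Phi(f):=\nabla_A u_f|_{t=0}$, where $u_f$ is the energy solution. The semigroup representation \eqref{eq:sg} places $\Phi(f)\in\dot \mH^{-1/2,+}_{DB}$; since $u_f\in\mE_{-1/2}\subset\mE$ and $q\in I_L$, the a priori equivalence \eqref{eq:NeuReg1} yields
\[
\|\Phi(f)\|_{H^q_{DB}}\sim\|\tN(\nabla_A u_f)\|_q\le C_X\|f\|_{H^q_{\ta}},
\]
so $\Phi(f)\in H^{q,+}_{DB}$ and $\Phi$ is bounded on the subspace in question. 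Density of $H^q_{\ta}\cap\dot \mH^{-1/2}_{\ta}$ in $H^q_{\ta}$ (routine via $\IP$-images of Schwartz functions with Fourier support bounded away from $0$) allows extending $\Phi$ continuously to all of $H^q_{\ta}$, producing a bounded right inverse of $N_{\ta}$; hence $N_{\ta}$ is surjective.

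\emph{Part (1), injectivity.} The plan is to establish the reverse bound $\|h\|_{H^q_{DB}}\lesssim\|h_{\ta}\|_{H^q_{\ta}}$ on the dense subspace $\IH^{q,+}_{DB}\subset H^{q,+}_{DB}$ and promote it to the completion. Fix $h\in\IH^{q,+}_{DB}\subset L^2$ and regularize by $h^{(\epsilon)}:=e^{-\epsilon|DB|}h$. The $L^2$-holomorphic calculus on $\clos{\ran_{2}(DB)}$ smoothes $h$ into $\dot \mH^{s,+}_{DB}$ for every $s\in\R$, so in particular $h^{(\epsilon)}\in\dot \mH^{-1/2,+}_{DB}$ and its tangential trace lies in $H^q_{\ta}\cap\dot \mH^{-1/2}_{\ta}$. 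The solution $u^{(\epsilon)}$ whose conormal gradient is $e^{-t|DB|}h^{(\epsilon)}$ is therefore the energy solution for the datum $h^{(\epsilon)}_{\ta}$, and the solvability hypothesis combined with \eqref{eq:NeuReg1} gives
\[
\|h^{(\epsilon)}\|_{H^q_{DB}}\sim\|\tN(\nabla_A u^{(\epsilon)})\|_q\le C_X\|h^{(\epsilon)}_{\ta}\|_{H^q_{\ta}}.
\]
Letting $\epsilon\to 0$ via strong continuity of the extended semigroup on $H^q_{DB}$ (Proposition \ref{prop:stronglimit} and its extension to the completion described in Section \ref{sec:completions}) produces $\|h\|_{H^q_{DB}}\lesssim\|h_{\ta}\|_{H^q_{\ta}}$, first on $\IH^{q,+}_{DB}$ and then, by density, on $H^{q,+}_{DB}$. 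This is precisely the bounded-below property dual to surjectivity, so $N_{\ta}$ is an isomorphism.

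The main obstacle is the regularization step for injectivity: one must verify that $h^{(\epsilon)}=e^{-\epsilon|DB|}h$ genuinely lies in the energy trace class $\dot \mH^{-1/2,+}_{DB}$ so that the energy-class solvability hypothesis applies, and that the limit $\epsilon\to 0$ is controlled simultaneously in the $H^q_{DB}$- and $H^q_{\ta}$-topologies. Both points rest on the $L^2$-semigroup smoothing into the full homogeneous Sobolev scale together with strong continuity of its Hardy-space extension at $t=0$; once these ingredients are in hand, the remainder of the argument proceeds directly.
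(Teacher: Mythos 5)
Your Part (2) is exactly the paper's argument (open mapping theorem in the Banach/quasi-Banach setting plus Lemma \ref{lem:sol}), and your surjectivity construction in Part (1) is, in different clothes, the paper's map $f\mapsto(\Gamma_{DN}f,f)$: boundedness of the Dirichlet-to-Neumann map on the dense class $H^q_{\ta}\cap\dot\mH^{-1/2}_{\ta}$ follows from energy-class solvability together with \eqref{eq:NeuReg1}, and one extends by density to a bounded right inverse of $N_{\ta}$. The genuine gap is in your injectivity step: the regularization $h^{(\epsilon)}=e^{-\epsilon|DB|}h$ does \emph{not} lie in $\dot\mH^{-1/2,+}_{DB}$ in general. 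The semigroup smooths only on the positive-regularity side (it places $h^{(\epsilon)}$ in $\dom((DB)^k)$ for all $k$, hence in $\dot\mH^{s}_{DB}$ for $s\ge 0$), but it does nothing to the low spectral content: already in the model self-adjoint case, $\|e^{-\epsilon|T|}h\|_{\dot\mH^{-1/2}_{T}}^2\sim\int\lambda^{-1}\,d\mu_h(\lambda)$ up to constants, and the factor $e^{-2\epsilon\lambda}$ is $\approx 1$ near $\lambda=0$, so membership in $\dot\mH^{-1/2}_{DB}$ is equivalent to a condition on $h$ itself that a generic element of $\IH^{q,+}_{DB}\subset L^2$ does not satisfy ($L^2\not\subset\dot\mH^{-1/2}$). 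Consequently you cannot assert that $e^{-t|DB|}h^{(\epsilon)}$ is the conormal gradient of an \emph{energy} solution, and the solvability hypothesis cannot be applied to $h^{(\epsilon)}_{\ta}$; this is precisely the "main obstacle" you flagged, and the semigroup does not resolve it.

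The fix — and the route the paper takes — is to approximate with operators carrying an extra factor of $z$: using a Calder\'on reproducing formula with $\psi(z)=z\tilde\psi(z)$, the truncated integrals $h_k=\int_{1/k}^{k}\psi(tDB)h\,\frac{dt}{t}$ (or even the single slices $\psi(tDB)h$) lie in $\dot\mH^{-1}_{DB}\cap\dot\mH^{0}_{DB}\subset\dot\mH^{-1/2}_{DB}$, because $\psi(tDB)h=tDB\,\tilde\psi(tDB)h$ is $DB$ applied to an $L^2$ function; this is how the paper proves density of $H^{q,+}_{DB}\cap\dot\mH^{-1/2,+}_{DB}$ in $H^{q,+}_{DB}$. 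Note also that the paper then avoids a separate lower-bound argument altogether: it extends the two operator identities $N_{\ta}\circ(\Gamma_{DN},I)=I$ and $(\Gamma_{DN},I)\circ N_{\ta}=I$ from the energy trace spaces by continuity, so that the bounded extension of $(\Gamma_{DN},I)$ is a two-sided inverse and injectivity comes for free. Your scheme (prove $\|h\|_{H^q_{DB}}\lesssim\|h_{\ta}\|_{H^q_{\ta}}$ on a dense subspace and pass to the limit) does work once you replace $e^{-\epsilon|DB|}$ by the above approximation, but as written the key step fails.
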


   Admitting this lemma, we can finish the proof of Theorem \ref{thm:extra} by applying  the result  of  {\v{S}}ne{\u\i}berg \cite{Snei} in the Banach case and Kalton-Mitrea \cite{KalMit} in the quasi-Banach case. Indeed,  the spaces $H^{q,+}_{DB}$ are complex interpolation spaces: we know this for $H^{q}_{DB}$ and the spectral spaces $H^{q,+}_{DB}$ are the images of $H^{q}_{DB}$ under the bounded extension of the  projection $\chi^+(DB)$. Thus $N_{\ta}: H^{p,+}_{DB}\to H^p_{\ta}$ is invertible  for $p$ in a neighborhood of $q$. This implies that $ (R)^L_{H^p}$ is solvable for $p$ in this neighborhood, applying the second part of the previous Lemma. 
   
   \begin{proof}[Proof of Lemma \ref{lem:claim}]  Let us prove the second statement first. By the open mapping theorem (see \cite{KalMit} for the quasi-Banach version of it), there exists a constant $C>0$ such that  for all $f\in H^q_{\ta}$, one can find  $h\in H^{q,+}_{DB}$, with $N_{\ta}h=f$ and   $\|h\|_{H^q} \lesssim C\|f\|_{H^q}$. Applying Lemma \ref{lem:sol} with $h$ yields a solution.  
   
We now prove the first part.    On the energy class, we know there is a Dirichlet to Neumann map $\Gamma_{DN}: \dot \mH^{-1/2}_{\ta} \to \dot \mH^{-1/2}_{\no}$ that is bounded and invertible by existence and uniqueness of energy solutions with prescribed Dirichlet or Neumann data. See \cite{AMcM} for a proof in this context. 
   Also, we have 
$$
N_{\ta}\circ (\Gamma_{DN}, I)= I_{\dot \mH^{-1/2}_{\ta}}
$$
and 
$$
 (\Gamma_{DN}, I) \circ N_{\ta}= I_{\dot \mH^{-1/2, +}_{DB}}.
$$
Here we use the same name for the map $N_{\ta}:\dot \mH^{-1/2, +}_{DB} \to \dot \mH^{-1/2}_{\ta}$.
We know  $ (R)^L_{X}$  is solvable for the energy class if and only if there exists $C>0$ such that 
$\|\Gamma_{DN} f\|_{H^q}\lesssim \|f\|_{H^q}$ for all $f\in H^q_{\ta}\cap \dot \mH^{-1/2}_{\ta}$ by \cite{AM}, Lemma 10.4. As $H^q_{\ta}\cap \dot \mH^{-1/2}_{\ta}$ is dense in $H^q_{\ta}$, this means that $\Gamma_{DN}$ extends to a bounded operator from $H^q_{\ta}$ into $H^q_{\no}$. 
As $ H^{q,+}_{DB} \cap \dot \mH^{-1/2,+}_{DB}$ is also dense in $H^{q,+}_{DB} $ (see the argument below for convenience), this means that the operator  $ (\Gamma_{DN}, I)$ extends to a bounded operator from $H^q_{\ta}$ into $H^{q,+}_{DB} $. Extending the above operator identities shows that this extension is  the inverse of $N_{\ta}: H^{q,+}_{DB}\to H^q_{\ta}$.

To conclude, we show that $H^{q,+}_{DB} \cap \dot \mH^{-1/2,+}_{DB}$ is dense in $H^{q,+}_{DB}$ in the  $H^q_{DB}$ topology as this topology is equivalent to the $H^q$ topology. As $H^{q,+}_{DB} \cap \dot \mH^{-1/2,+}_{DB}=\chi^+(DB)(H^{q}_{DB} \cap \dot \mH^{-1/2}_{DB})$, it suffices to show that  $\IH^{q}_{DB} \cap \dot \mH^{-1/2}_{DB}$ is dense in $\IH^{q}_{DB}$ (which is dense in $H^q_{DB}$).  Let   $h\in \IH^q_{DB}$.
 Pick a Calder\'on reproducing formula $h=\int_{0}^\infty \psi(tDB)h\, \frac{dt}{t}$ which converges in $\IH^{q}_{DB}$ by construction of these spaces for an appropriate $\psi$. Observe that for fixed $t>0$,  $ \psi(tDB)h \in \dot \mH^{0}_{DB}$ and if $\psi(z)=z\tilde\psi(z)$, we have $ \psi(tDB)h \in \dot \mH^{-1}_{DB}$. Thus, $ \psi(tDB)h \in \dot \mH^{-1/2}_{DB}$. This concludes the argument for the density.  
\end{proof}

\

Let us turn to the Neumann problem.  We say that   $ (N)^L_{X}$ is solvable for the energy class  if  there exists $C_{X}<\infty$ such that
for any $f\in H^q\cap \dot \mH^{-1/2}$ 
the energy solution $u$ of $\divv A \nabla u=0$ on $\reu$ with regularity data $\pd_{\nu_{A}}u|_{t=0}=f $ satisfies 
$$
\|\tN(\nabla_{A}u)\|_{q}\le C_{X} \|f\|_{H^q}.
$$

We say that $ (N)^L_{X}$ is solvable if there exists a constant $C_{X}<\infty$ such that  for any $f\in H^q$  there exists a weak solution 
 $u$ of $\divv A \nabla u=0$ in $\reu$ with regularity data $\pd_{\nu_{A}}u|_{t=0}=f $ (in the prescribed sense below) and 
$$
\|\tN(\nabla_{A}u)\|_{q}\le C_{X} \|f\|_{H^q}.
$$
This means that solvability is existence of a solution with prescribed boundary trace and interior estimate.

   The proof of Theorem \ref{thm:extra} for the Neumann problem on $X=H^q$ is the same with same range for $q$, changing  $N_{\ta}$ to $N_{\no}$ where
$N_{\no}h=h_{\no}$,  and using the following lemma, the proof of which is entirely analogous to the previous one with the Neumann to Dirichlet map $\Gamma_{ND}: \dot \mH^{-1/2}_{\no}\to \dot \mH^{-1/2}_{\ta}$  replacing the Dirichlet to Neumann map $\Gamma_{DN}$ (one being the inverse of the other). 

\begin{lem}\label{lem:claim2}
If $ (N)^L_{X}$ is  solvable for the energy class then $N_{\no}: H^{q,+}_{DB}\to H^q_{\no}=H^q$ is an isomorphism. If this map  is surjective  then $ (N)^L_{X}$ is  solvable with strong limit at $t=0$  for $\pd_{\nu_{A}}u(t,.)$ in $H^q$ topology.
\end{lem}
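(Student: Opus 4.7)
The plan is to mirror the proof of Lemma \ref{lem:claim} line by line, swapping the tangential trace $N_{\ta}$ for the conormal trace $N_{\no}: H^{q,+}_{DB} \to H^q$ and using the Neumann-to-Dirichlet map $\Gamma_{ND}: \dot{\mH}^{-1/2}_{\no} \to \dot{\mH}^{-1/2}_{\ta}$ constructed in \cite{AMcM}. Both maps are bounded and mutually inverse on the energy space, and they satisfy the operator identities
\[
N_{\no}\circ (I, \Gamma_{ND}) = I_{\dot{\mH}^{-1/2}_{\no}}, \qquad (I, \Gamma_{ND}) \circ N_{\no} = I_{\dot{\mH}^{-1/2,+}_{DB}},
\]
which is what will allow the argument to propagate by continuity to the Hardy-space setting.

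I would start with the second (easier) assertion. Assume $N_{\no}: H^{q,+}_{DB} \to H^q$ is surjective. By the open mapping theorem in its quasi-Banach form from \cite{KalMit} (needed when $q\le 1$), for every $f \in H^q$ one finds $h \in H^{q,+}_{DB}$ with $N_{\no}h = f$ and $\|h\|_{H^q} \lesssim \|f\|_{H^q}$. Then Lemma \ref{lem:sol} produces a weak solution $u$ of $\divv A \nabla u = 0$ whose conormal gradient is the extended semigroup $S_q^+(t)h$, satisfies $\|\tN(\nabla_A u)\|_q \lesssim \|f\|_{H^q}$, and converges strongly in $H^q$ to $h$ as $t \to 0$; extracting the scalar component yields the Neumann data $\partial_{\nu_A}u|_{t=0} = f$ with strong $H^q$ limit.

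For the first assertion, I would transcribe the argument based on the Neumann-to-Dirichlet map. The Neumann analogue of \cite[Lemma 10.4]{AM} asserts that $(N)^L_X$ is solvable for the energy class if and only if $\|\Gamma_{ND} f\|_{H^q_{\ta}} \lesssim \|f\|_{H^q}$ for all $f \in H^q \cap \dot{\mH}^{-1/2}_{\no}$. By density of that intersection in $H^q$, this gives a bounded extension $\Gamma_{ND}: H^q \to H^q_{\ta}$. Combined with the density of $\IH^q_{DB} \cap \dot{\mH}^{-1/2}_{DB}$ in $\IH^q_{DB}$ proved at the end of Lemma \ref{lem:claim}, which passes to the spectral subspace since $\chi^+(DB)$ is bounded on $H^q_{DB}$, this shows that $(I, \Gamma_{ND})$ extends to a bounded map $H^q \to H^{q,+}_{DB}$. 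The two identities displayed above, valid on dense subspaces, extend by continuity and identify this extension as the two-sided inverse of $N_{\no}$.

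The main obstacle I expect is not any deep estimate --- everything \textit{a priori} is already in place from the Hardy space theory developed earlier --- but rather ensuring that the Neumann analogue of \cite[Lemma 10.4]{AM} holds with truly symmetric roles of normal and tangential components, so that energy-class solvability does translate into the required $H^q \to H^q_{\ta}$ bound on $\Gamma_{ND}$. Once this is granted, the proof reduces to a formal adaptation: the injectivity of $N_{\no}$ on all of $H^{q,+}_{DB}$ (and not just on its energy part) is not checked directly but is delivered automatically by the construction of the continuous two-sided inverse $(I,\Gamma_{ND})$.
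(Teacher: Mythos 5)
Your proof is correct and is essentially the paper's own argument: the paper disposes of this lemma by declaring it entirely analogous to Lemma \ref{lem:claim} with the Neumann-to-Dirichlet map $\Gamma_{ND}$ replacing $\Gamma_{DN}$, and your line-by-line transcription (open mapping theorem plus Lemma \ref{lem:sol} for the surjectivity assertion, extension of $(I,\Gamma_{ND})$ by the two density arguments for the isomorphism assertion) is exactly that analogy carried out. The point you flag as a potential obstacle, namely the Neumann counterpart of \cite[Lemma 10.4]{AM} translating energy-class solvability into the $H^q\to H^q_{\ta}$ bound on $\Gamma_{ND}$, is precisely what the paper takes for granted in asserting the analogy, so there is no gap.
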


Let us turn to the Dirichlet problem (formulated with square functions as in the introduction).  We  argue in the dual range  of the interval  in $(\frac{n}{n+1}, p_{+}(L))$  on which $\IH^q_{DB}=\IH^q_{D}$ with equivalence of norms.  By the results in Section \ref{sec:apriori}, it is convenient to introduce new spaces. For $1<p<\infty$, we let $\dot W^{-1,p}_{D}$ be the image of $\dot W^{-1,p}$ under (the bounded extension of) $\IP$. Thanks to  Lemma \ref{lem:D}, it can be identified to the image of $\clos{\ran_{p}(D)}=H^p_{D}$ under $D$, which becomes an isomorphism.  We now assume $p=q'$ with $q$ as above. Thanks to proposition \ref{prop:fcnegsob},  $H^\infty$ functions  of $D\wt B$ act boundedly on $\dot W^{-1,p}_{D}$. Also, by Theorem  \ref{thm:duality} and Corollary \ref{cor:hpbd0}, we can see that $D$ extends to an isomorphism from $H^p_{\wt BD}$ onto $\dot W^{-1,p}_{D}$ and the relation $Db(\wt BD)= b(D\wt B)D$ valid on an appropriate dense subspace  of $H^{p}_{\wt BD}$ for $b\in H^\infty(S_{\mu})$ extends by density to  $H^p_{\wt BD}$. Thus we can define $\dot W^{-1,p, \pm}_{D\wt B}=D H^{p,\pm}_{\wt BD}= D\chi^+(\wt B D)H^{p}_{\wt BD}$  and  a strongly continuous semigroup on $\dot W^{-1,p, \pm}_{D\wt B}$,  which extends $(e^{-t|D \wt B|})_{t\ge 0}$. All this is consistent as long as $p=q'$ because we  work in the ambient space of Schwartz distributions. 

If $q\le 1$ and $\sigma= n(\frac{n}{q}-1)$, we can define $\dot \Lambda^{\alpha-1}_{D}$ and 
$\dot \Lambda^{\alpha-1, \pm}_{D\wt B}$ as images of $\Lambda^{\alpha}_{\wt BD}$ and 
$\dot \Lambda^{\alpha, \pm}_{\wt BD}$ under the extension of $D$, which is an isomorphism (this uses again Theorem  \ref{thm:duality} and Corollary \ref{cor:hpbd0}). 
Again, by similarity, the boundedness and regularity of semigroups carry to this setting. So the semigroup  extending $e^{-t|D\wt B|}$ by this construction is weakly-$*$ continuous. The natural predual  in the duality defined in Section \ref{sec:comp}  is $\dot H^{1,q}_{BD}$ which is defined via
completion of the space $\IH^2_{BD}$ for the norm $\|t^{-1} \psi(tBD)h\|_{T^q_{2}}$ for appropriate $\psi$. This is routinely done as for the Hardy spaces we have developed with much details and we skip those here. But, as $q\in I_{L}$, this space identifies to $\dot H^{1,q}_{D}$  under the projection $\IP$. So the weak-$*$ continuity is against any distribution in $\dot H^{1,q}_{D}$ or even in $\dot H^{1,q}$ (because the $D$ null distributions in $\dot H^{1,q}$ are annihilated by 
$\dot \Lambda^{\alpha-1}_{D}$ elements).  

We mention, that in the range of $p$ and $\alpha$ specified above ($p=q'$ or $\alpha=n(\frac{1}{q}-1)$,  the scalar  parts of $H^p_{\wt BD}$ elements  are in fact $L^p$ functions. Similarly the scalar parts of $\dot \Lambda^{\alpha}_{BD}$ elements are $\dot \Lambda^\alpha$ functions. 

For $Y=L^{p}$ or $\dot \Lambda^{\alpha}$ with $1<p<\infty$ or $0\ge \alpha<1$, and $\mT=T^p_{2}$ or $T^\infty_{2,\alpha}$, 
one can formulate two notions of solvability for the Dirichlet problem as follows.
First,  $ (D)^{L^*}_{Y}$ is solvable for the energy class  if  there exists $C_{Y}<\infty$ such that
for any $f\in Y\cap \dot \mH^{1/2}_{\no}$ 
the energy solution $u$ of $\divv A^* \nabla u=0$ on $\reu$ with Dirichet data $u|_{t=0}=f $ satisfies 
$$
\|t\nabla_{A^*}u\|_{\mT}\le C_{Y} \|f\|_{Y}\sim C_{Y}\|\nabla f\|_{\dot Y^{-1}}.
$$

We say that $ (D)^{L^*}_{Y}$ is solvable if for any $f\in Y$  there exists a solution 
 $u$ of $\divv A^* \nabla u=0$ in $\reu$ with regularity data $u|_{t=0}=f $ (in the prescribed sense below) and 
$$
\|t\nabla_{A^*}u\|_{\mT}\le C_{Y} \|f\|_{Y}\sim C_{Y}\|\nabla f\|_{\dot Y^{-1}}.
$$
This means that solvability is existence of a solution with prescribed boundary trace and interior estimate.  

Although one can formulate these problems for all $p$ or $\alpha$, they take meaningful sense in the restricted  dual range of $I_{L}$. We recall that  $I_{L}$ is the interval  in $(\frac{n}{n+1}, p_{+}(L))$  on which $\IH^q_{DB}=\IH^q_{D}$ with equivalence of norms. For $ q$ in this range, the map
$$
N_{\ta}: \dot Y^{-1,+}_{D\wt B}\to \dot Y^{-1}_{\ta}, h\mapsto h_{\ta}
$$
is well-defined and bounded. It is convenient to set  $\dot Y^{-1}_{\ta}$, the space of distributions of the form $\nabla f$ in $\dot Y^{-1}$. 

To prove  Theorem \ref{thm:extra} for the Dirichlet problem, the first lemma tells us that we can build solutions from our semigroup approach.

\begin{lem}\label{lem:soldir} Assume $q\in I_{L}$. Let $\wt S_{Y^{-1}}^+(t)$ be the extension of the semigroup $e^{-t|D\wt B|}$, $t\ge 0$, to $\dot Y^{-1,+}_{D\wt B}$ described above. Let $h\in \dot Y^{-1,+}_{D\wt B}$. Then, the function $(t,x)\mapsto \wt S_{Y^{-1}}^+(t)h(x)$ is the conormal gradient of a weak solution $u$ (uniquely determined up to a constant)  of $\divv A^* \nabla u=0$ on $\reu$ with $$
\|t\nabla_{A^*}u\|_{\mT}\le C_{Y}\|h\|_{\dot Y^{-1}}.
$$
Moreover this solution, is such that $(\nabla_{A^*}u)(t,\cdot)$ converges to $h$ at $t\to 0$  in  the strong  topology  of $\dot Y^{-1}$   if $q>1$ and  in the  weak-$*$ topology of $\dot Y^{-1}$ if $q\le 1$. 

Moreover, in the case $q>1$ and $p=q'$,
 $t\mapsto u(t,\cdot) \in C_{0}([0,\infty); L^p(\R^n;\C^m)) +\C^m$. If  one normalizes the constant to be 0 (by either imposing $u|_{t=0}\in L^p(\R^n;\C^m)$ or by imposing that the solution 
converges to 0 at $\infty$ is some weak sense), then  this solution satisfies the layer potential representation as in Corollary \ref{cor:BL} taking the bounded extensions of the layer potentials for $L^*$,
$\mS_{t}^{A^*}$ from $ \dot W^{-1,p}(\R^n; \C^m)$ to $L^p(\R^n;\C^m)$ and $\mD_{t}^{A^*}$ on $L^p(\R^n;\C^m)$ proved in Theorem \ref{thm:bl} (3) and (4). Finally, one has
the non-tangential maximal estimate $\|\tN{u}\|_{p}\lesssim \|t\nabla u\|_{T^p_{2}}$ (again the  constant is imposed to be 0). 
\end{lem}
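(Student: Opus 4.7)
The plan is to proceed by density from the energy class, where all the claimed properties are already known, propagating them to all $h\in \dot Y^{-1,+}_{D\wt B}$ via the uniform tent-space bound. First I would show that $\dot Y^{-1,+}_{D\wt B}\cap \dot \mH^{-1/2,+}_{D\wt B}$ is dense in $\dot Y^{-1,+}_{D\wt B}$ for its own topology. Writing $h=Dg$ with $g$ in $H^{p,+}_{\wt BD}$ (or $\dot\Lambda^{\alpha,+}_{\wt BD}$), this reduces to density of $H^{p,+}_{\wt BD}\cap \dot\mH^{1/2,+}_{\wt BD}$ in $H^{p,+}_{\wt BD}$, which I would obtain exactly as in the density argument at the end of the proof of Lemma \ref{lem:claim}: apply a Calder\'on reproducing formula $g=\int_0^\infty \psi(t\wt BD)g\,\frac{dt}{t}$ converging in $H^{p,+}_{\wt BD}$ with $\psi(z)=z\tilde\psi(z)$, so that each slice $\psi(t\wt BD)g$ lies in $\dot\mH^0_{\wt BD}\cap \dot\mH^{1}_{\wt BD}\subset\dot\mH^{1/2}_{\wt BD}$, and project with $\chi^+(\wt BD)$ to stay in the spectral subspace.

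On this dense class, the function $F(t,\cdot)=\wt S^+_{Y^{-1}}(t)h=e^{-t|D\wt B|}h=De^{-t|\wt BD|}g$ is the conormal gradient (up to a sign) of the energy solution $u\in \mE_{-1/2}$ of $L^*u=0$ produced by the semigroup representation \eqref{eq:sg}, and the uniform bound $\|tF\|_{\mT}\sim \|h\|_{\dot Y^{-1}}$ is Theorem~\ref{thm:summary}(2b) transferred from $B^*D$ to $\wt BD=NB^*DN^{-1}$ via the reflection $N$ recalled at the start of Section~\ref{sec:comp}. For arbitrary $h\in \dot Y^{-1,+}_{D\wt B}$, I would pick $h_n\to h$ in the dense class. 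By the uniform tent-space bound $(tF_n)$ is Cauchy in $\mT$; by the regularisation bounds of Section~\ref{sec:reg}, $F_n(t,\cdot)\to F(t,\cdot)=\wt S^+_{Y^{-1}}(t)h$ in $L^2_{loc}$ for each $t>0$; and the weak equation $\partial_t F+D\wt B F=0$ on $\reu$ survives the $L^2_{loc}$ limit. This produces the weak solution $u$ of $L^*u=0$ with $F=-\nabla_{A^*}u$ (unique up to an additive constant), and the tent-space estimate for $u$ follows from lower semi-continuity. The boundary behaviour at $t=0$ is then the strong (resp. weak-$*$) continuity of $\wt S^+_{Y^{-1}}$ on $\dot Y^{-1,+}_{D\wt B}$ recorded just before the statement of the lemma.

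For the $q>1$ refinement, I would exploit $h=Dg$ with $g\in H^{p,+}_{\wt BD}$. From Section~\ref{sec:apriori} (and its $s=-1$ instance recalled in Section~14.1), the relation $u(t,\cdot)-c=-(e^{-t|\wt BD|}g)_\perp= -(\IP e^{-t|\wt BD|}\IP g)_\perp$ in $L^2_{loc}(\R^n;\C^m)$, valid on the dense class, passes to the limit. Normalising $c=0$ gives $u|_{t=0}=-g_\perp\in L^p$ since the scalar component map $H^{p,+}_{\wt BD}\to L^p$ is bounded. The map $t\mapsto u(t,\cdot)$ is then strongly continuous into $L^p$ by the strong continuity of $e^{-t|\wt BD|}$ on $H^p_{\wt BD}$ (Section~\ref{sec:completions}), and it vanishes at infinity in $L^p$ by Proposition~\ref{prop:stronglimitinfty}. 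With $u|_{t=0}\in L^p$ and $\partial_{\nu_{A^*}}u|_{t=0}=h_\perp \in \dot W^{-1,p}$, the layer potential representation is exactly Corollary~\ref{cor:BL} applied to $L^*$. Finally, the $N<S$ bound \eqref{eq:N<S} is an \emph{a priori} estimate on $\mE$, and applied here it delivers $\|\tN u\|_p\lesssim \|S(t\nabla u)\|_p\lesssim \|t\nabla_{A^*}u\|_{T^p_2}\lesssim \|h\|_{\dot W^{-1,p}}$.

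The hard part will be the passage to the limit in Step~2: one must ensure simultaneously that (i) the abstract extension $\wt S^+_{Y^{-1}}(t)h$ actually lies in $L^2_{loc}$ so that ``conormal gradient of a weak solution'' makes sense, (ii) the $L^2_{loc}$ limit of the $F_n$ coincides with this abstract $\wt S^+_{Y^{-1}}(t)h$, and (iii) the PDE is preserved in the limit. All three points rest on the $L^2_{loc}$ regularisation bounds of Section~\ref{sec:reg}, which provide the crucial control $\|\wt S^+_{Y^{-1}}(t)h\|_{L^2(B)}\lesssim_B t^{-\kappa}\|h\|_{\dot Y^{-1}}$ needed to upgrade the abstract semigroup action to pointwise $L^2_{loc}$ information; the H\"older case $q\le 1$ is handled analogously, with weak-$*$ limits in $\dot \Lambda^{\alpha-1}$ replacing strong limits in $\dot W^{-1,p}$ and the corresponding weighted-Carleson version of the $N<S$ inequality not being needed since the lemma only claims strong $L^p$ traces in the $q>1$ case.
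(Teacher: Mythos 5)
Your overall strategy (prove everything on a dense class where the semigroup representation and tent-space estimates are available, then pass to the limit using the uniform bound) is the paper's strategy, but two steps as written have genuine gaps. The ``crucial control'' $\|\wt S^+_{Y^{-1}}(t)h\|_{L^2(B)}\lesssim_B t^{-\kappa}\|h\|_{\dot Y^{-1}}$ is not a consequence of Section \ref{sec:reg}: the regularization theorem there maps $H^p_T\to H^r_T$ only for $p\le r$, and into $\dot\Lambda^\beta_T$, and for the exponents relevant here ($p=q'$, typically $>2$, and a space of order $-1$) these targets are abstract completions not identified with subspaces of $L^2_{loc}$; the corollary that does give an $L^2$-valued conclusion requires both exponents to lie in the identification interval and to be $\le 2$. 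Writing $\wt S^+_{Y^{-1}}(t)h=De^{-t|\wt BD|}g$ only gives a bound in $\dot W^{-1,p}$ (or $\dot\Lambda^{\alpha-1}$) uniformly in $t$, which is not slice-wise $L^2_{loc}$ information. The paper gets the needed $L^2_{loc}$ structure more cheaply: on the dense class the functions $t\wt S^+_{Y^{-1}}(t)h_n$ are genuine $L^2_{loc}$ functions lying in $\mT$, and the density argument gives convergence in $T^p_2$ (resp.\ a weak-$*$ limit in $T^\infty_{2,\alpha}$), which already forces $(t,x)\mapsto t\wt S^+_{Y^{-1}}(t)h(x)$ to belong to $\mT$, hence to $L^2_{loc}(\reu)$, and identifies it distributionally; no fixed-$t$ $L^2_{loc}$ bound is needed.

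In the $q>1$ refinement you apply Corollary \ref{cor:BL} and the inequality \eqref{eq:N<S} directly to the solution $u$ you have just constructed. Both are \emph{a priori} statements for solutions in the class $\mE$, and your $u$ is in general not in $\mE$ (it only carries a $T^p_2$ bound with $p\ne 2$); going beyond $\mE$ is precisely the point of the lemma, so these applications are circular. The correct route, and the one the paper follows, is to establish the layer potential representation for the conormal gradient on the dense class (as in Corollary 8.4 of \cite{AM}) and pass to the limit using the boundedness and convergence of the extended operators $\mS^{A^*}_t$ and $\mD^{A^*}_t$ from Theorem \ref{thm:bl}, and likewise to obtain $\|\tN u\|_p\lesssim\|t\nabla u\|_{T^p_2}$ by repeating the approximation argument used in the proof of \eqref{eq:N<S} (Theorem \ref{thm:ntbd} applied to the approximants, then Fatou), rather than by invoking \eqref{eq:N<S} itself. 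With these two repairs, the rest of your outline (choice of dense class, lower semicontinuity of the tent norm, boundary continuity from the semigroup, identification of $u|_{t=0}$ with the scalar part of $g$) matches the intended proof.
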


\begin{proof}
The first part of the proof is again is consequence of the construction and the estimates, once we see that $(t,x)\mapsto \wt S_{Y^{-1}}^+(t)h(x)$ is an $L_{loc}^2$ function on $\reu$. We see this and skip other details. By construction it is a tempered distribution on $\reu$. If $q>1$, then the semigroup extends by density from  $\IH^{2,+}_{D\wt B}\cap \dot W^{-1,p,+}_{D\wt B}$ and  on such a dense space we have seen that  $(t,x) \to t\wt S_{Y^{-1}}^+(t)h(x)$ belongs to $T^p_{2}$. The density argument yields convergence in $T^p_{2}$, thus in $L^2_{loc}$. 
For $q\le 1$ and $\alpha=n(\frac{1}{q}-1)$, $(t,x) \to t\wt S_{Y^{-1}}^+(t)h(x)$ is build as a weak-$*$ limit in $T^\infty_{2,\alpha}$, hence it also has the $L^2_{loc}$ property. 

Let us turn to the second part of the proof. By assumption, $h_{\ta}=\nabla f$ for some $f\in L^p(\R^n;\C^m)$. Also $h_{\no}\in  \dot W^{-1,p}(\R^n; \C^m)$.  Then we have  
$$
 \nabla_{A^*}u(t,\cdot)=  \nabla_{A^*}\mS_{t}^{A^*}h_{\no} -  \nabla_{A^*}\mD_{t}^{A^*}f,
 $$
 where $\mS_{t}^{A^*}$ and $\mD_{t}^{A^*}$ are understood as the appropriate extensions.  
To see this, we proceed exactly as in the proof of Corollary  8.4 in \cite{AM}, starting from the fact that $ \nabla_{A^*}u(t,\cdot)$ is defined by the semigroup representation  using the abstract definitions of the layer potentials and density arguments. Once this is established,  the rest of the proof is similar to   that  of  Corollary \ref{cor:BL} for the convergence issues. We skip details.
The non-tangential maximal estimate follows from a similar approximation argument as for the proof of \eqref{eq:N<S}. 
\end{proof}

Then the result concerning the solvability of Dirichlet problems is the following one.

\begin{lem}\label{lem:claimdir} Let $q\in I_{L}$ and $Y$ be as above. 
If $ (D)^{L^*}_{Y}$ is  solvable for the energy class then $N_{\ta}: \dot Y^{-1,+}_{D\wt B}\to \dot Y^{-1}_{\ta}$ is an isomorphism. If $N_{\ta}$ is surjective onto $\dot Y^{-1}_{\ta}$ then $ (D)^{L^*}_{Y}$ is  solvable with  limit as $t\to 0$ for $u(t,.)$ in $L^p$ topology if $q>1$ and $p=q'$ or with  limit as $t\to 0$ for $u(t,.)$ in $\dot \Lambda^\alpha$ weak-$*$ topology if $q\le1$ and $\alpha=n(\frac{1}{q}-1)$.
\end{lem}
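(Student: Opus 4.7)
The plan is to mirror the argument of Lemma \ref{lem:claim}, replacing the regularity setting with the dual Dirichlet setting. The key geometric fact from the energy theory is that, on $\dot\mH^{-1/2,+}_{D\wt B}$, the Dirichlet--to--Neumann map $\Gamma_{DN}^{L^*}\colon \dot\mH^{-1/2}_{\ta}\to \dot\mH^{-1/2}_{\no}$ associated to $L^*$ is an isomorphism, and the two identities
\[
N_{\ta}\circ(\Gamma_{DN}^{L^*},I) = I_{\dot\mH^{-1/2}_{\ta}}, \qquad (\Gamma_{DN}^{L^*},I)\circ N_{\ta}=I_{\dot\mH^{-1/2,+}_{D\wt B}}
\]
hold, as recalled in \cite{AMcM}. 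Using the \textit{a priori} equivalences from Section~\ref{sec:comp} (in particular \eqref{eq:Dir1} and \eqref{eq:Dir1'}), the fact that $(D)^{L^*}_{Y}$ is solvable for the energy class rephrases exactly as the estimate $\|\Gamma_{DN}^{L^*}\nabla f\|_{\dot Y^{-1}_{\no}}\lesssim \|\nabla f\|_{\dot Y^{-1}_{\ta}}$ for every $f\in Y\cap \dot\mH^{1/2}_{\no}$, i.e.~on a subspace of $\dot Y^{-1}_{\ta}\cap \dot\mH^{-1/2}_{\ta}$.

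The next step is to upgrade this bound to a bounded extension of $(\Gamma_{DN}^{L^*},I)$ from $\dot Y^{-1}_{\ta}$ into $\dot Y^{-1,+}_{D\wt B}$ that still coincides with the inverse of $N_{\ta}$. For this, I would establish the density statement: $\dot Y^{-1,+}_{D\wt B}\cap \dot\mH^{-1/2,+}_{D\wt B}$ is dense in $\dot Y^{-1,+}_{D\wt B}$, in the norm topology if $q>1$ and in the weak-$*$ topology (through the predual $\dot H^{1,q}_{D}$ identified in Section~\ref{sec:apriori}) if $q\leq 1$. As in the proof of Lemma~\ref{lem:claim}, one applies $\chi^+(D\wt B)$ to reduce the question to density of $\dot Y^{-1}_{D}\cap \dot\mH^{-1/2}_{D}$ in $\dot Y^{-1}_{D}$, then uses a Calder\'on reproducing formula $h=\int_0^\infty \psi(t D\wt B)h\,\tfrac{dt}{t}$ with a suitable $\psi\in\Psi(S_\mu)$ vanishing at $0$ and $\infty$ to any prescribed order: a truncation in $t$ produces approximants lying in all $\dot\mH^s_{D\wt B}$ simultaneously, converging in $\dot Y^{-1}$ (resp.\ weakly-$*$). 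Once this density is in hand, $(\Gamma_{DN}^{L^*},I)$ extends by continuity (resp.\ by weak-$*$ continuity and the uniqueness of the functional-calculus definition), and the two operator identities pass to the completion, showing that $N_{\ta}\colon \dot Y^{-1,+}_{D\wt B}\to \dot Y^{-1}_{\ta}$ is an isomorphism.

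For the converse direction, assume $N_{\ta}$ is surjective. Since $\dot Y^{-1,+}_{D\wt B}$ and $\dot Y^{-1}_{\ta}$ are Banach spaces (both when $Y=L^p$ and when $Y=\dot\Lambda^\alpha$), the open mapping theorem yields a bounded right inverse: for every $f\in Y$, writing $\nabla f\in\dot Y^{-1}_{\ta}$, pick $h\in\dot Y^{-1,+}_{D\wt B}$ with $h_{\ta}=\nabla f$ and $\|h\|_{\dot Y^{-1}}\lesssim \|\nabla f\|_{\dot Y^{-1}}\sim \|f\|_{Y}$. Feed this $h$ into Lemma \ref{lem:soldir}: the function $(t,x)\mapsto \wt S^+_{Y^{-1}}(t)h(x)$ is the conormal gradient of a weak solution $u$ of $L^* u=0$ satisfying $\|t\nabla_{A^*}u\|_{\mT}\lesssim \|f\|_{Y}$ and converging at $t=0$ to $h$ in the topology of $\dot Y^{-1}$ (strongly if $q>1$, weakly-$*$ if $q\leq 1$). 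Boundary convergence of $u$ itself to $f$ follows from Lemma~\ref{lem:soldir}: when $q>1$, the layer potential representation there provides strong $L^p$ convergence $u(t,\cdot)\to f$ after normalising the additive constant by $u|_{t=0}\in L^p$; when $q\leq 1$, the tangential identity $\nabla u(t,\cdot)|_{\R^n}\to \nabla f$ in $\dot\Lambda^{\alpha-1}_{\ta}$ integrates, up to an additive constant, to the weak-$*$ convergence of $u(t,\cdot)$ to $f$ in $\dot\Lambda^\alpha$.

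The main obstacle I anticipate is the Hölder/BMO endpoint $q\leq 1$: density fails in the norm topology there, so the extension of $(\Gamma_{DN}^{L^*},I)$ must be argued via weak-$*$ continuity against the predual identified through the pairing of Section~\ref{sec:apriori}, and one must check that the abstract functional-calculus definition of $(\Gamma_{DN}^{L^*},I)$ on $\dot Y^{-1,+}_{D\wt B}$ (coming from the spectral projection $\chi^+(D\wt B)$) is compatible with the concrete Dirichlet-to-Neumann map on the dense subset; this compatibility is what makes the identity $(\Gamma_{DN}^{L^*},I)\circ N_{\ta}=I$ survive passage to the completion.
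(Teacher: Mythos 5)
Your proposal follows essentially the same route as the paper: the first part mirrors Lemma \ref{lem:claim} with the Dirichlet-to-Neumann map $\Gamma_{DN}$, the reformulation of energy-class solvability as a $\dot Y^{-1}$ bound on $\Gamma_{DN}$, and a density/extension argument showing $(\Gamma_{DN},I)$ extends to the inverse of $N_{\ta}$; the second part is the open mapping theorem combined with the solution construction of Lemma \ref{lem:soldir}, exactly as in the paper. Your extra care at the $q\le 1$ endpoint (weak-$*$ continuity via the predual) is a reasonable fleshing-out of what the paper leaves implicit in "similar density arguments".
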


\begin{proof} The first part of the proof proceeds as the one of Lemma \ref{lem:claim} with the  Dirichlet to Neumann map $\Gamma_{DN}: \dot \mH^{-1/2}_{\ta} \to \dot \mH^{-1/2}_{\no}$.  We have that   $ (D)^{L^*}_{Y}$  is solvable for the energy class if and only if there exists $C>0$ such that 
$\|\Gamma_{DN} g\|_{\dot Y^{-1}}\lesssim \|g\|_{\dot Y_{\ta}^{-1}}$ for all $g\in \dot Y^{-1}_{\ta}\cap \dot \mH^{-1/2}_{\ta}$. This is a reformulation of  \cite{AM}, Corollary 11.3. Then similar density arguments show that the extension of the map $(\Gamma_{DN},I)$ is the desired inverse of $N_{\ta}$.  The second part is an application of the open mapping theorem again. 
\end{proof}

The proof of Theorem \ref{thm:extra} is now  done as the one for the regularity problem.

\

We finish with the Neumann problem on negative Sobolev/H\"older spaces. Again $\dot Y^{-1}=\dot W^{-1,p}$ or $\dot \Lambda^{\alpha-1}$. First,  $(N)_{Y^{-1}}^{L^*}$ is solvable for the energy class  if  there exists $C_{Y}<\infty$ such that
for any $f\in \dot Y^{-1}\cap \dot \mH^{-1/2}$ 
the energy solution $u$ of $\divv A^* \nabla u=0$ on $\reu$ with Neumann data $\pd_{\nu_{A^*}}u|_{t=0}=f $ satisfies 
$$
\|t\nabla_{A^*}u\|_{\mT}\le C_{Y}\| f\|_{\dot Y^{-1}}.
$$

We say that $(N)_{Y^{-1}}^{L^*}$ is solvable if for any $f\in \dot Y^{-1}$  there exists a weak solution 
 $u$ of $\divv A^* \nabla u=0$ in $\reu$ with Neumann data $\pd_{\nu_{A^*}}u|_{t=0}=f $ (in the prescribed sense below) and 
$$
\|t\nabla_{A^*}u\|_{\mT}\le  C_{Y}\| f\|_{\dot Y^{-1}}.
$$
This means that solvability is existence of a solution with prescribed boundary trace and interior estimate.

The  proof of Theorem \ref{thm:extra} for the Neumann problem on  a negative Sobolev/H\"older space is the same as for the  Dirichlet problem with same range for $q$:  we know how to construct solutions by Lemma \ref{lem:soldir}. Next, changing  $N_{\ta}$ to  $N_{\no}: \dot Y^{-1,+}_{D\wt B}\to \dot Y^{-1}_{\no}=\dot Y^{-1},  N_{\no}h=h_{\no}$,    we use the following lemma, the proof of which is entirely analogous to the previous one with the Neumann to Dirichlet $\Gamma_{ND}: \dot \mH^{-1/2}_{\no}\to \dot \mH^{-1/2}_{\ta}$ map replacing the Dirichlet to Neumann map $\Gamma_{DN}$ (one being the inverse of the other). 

\begin{lem}\label{lem:claimneu} Let $q\in I_{L}$ and $Y$ be as above. 
If $(N)_{Y^{-1}}^{L^*}$ is  solvable for the energy class then $N_{\no}: \dot Y^{-1,+}_{D\wt B}\to \dot Y^{-1}$ is an isomorphism. If $N_{\no}$ is surjective onto $\dot Y^{-1}$ then $(N)_{Y^{-1}}^{L^*}$ is  solvable with  limit as $t\to 0$ for $\pd_{\nu_{A^*}}u(t,.)$ in  strong  topology of $\dot W^{-1,p}$ if $q>1$ and $p=q'$ or with  limit as $t\to 0$ for $\pd_{\nu_{A^*}}u(t,.)$ in  weak-$*$ topology on $\dot \Lambda^{\alpha-1}$  if $q\le1$ and $\alpha=n(\frac{1}{q}-1)$.
\end{lem}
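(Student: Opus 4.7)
The plan is to mimic the proof of Lemma \ref{lem:claimdir}, with the Neumann--to--Dirichlet map $\Gamma_{ND}\colon \dot\mH^{-1/2}_{\no}\to \dot \mH^{-1/2}_{\ta}$ playing the role that $\Gamma_{DN}$ played there. Recall that on the energy class one has the  bounded bijection $\Gamma_{ND}$ given by $\Gamma_{ND}(\pd_{\nu_{A^*}}u|_{t=0})=\nabla_{x} u|_{t=0}$ for the unique energy solution of $\divv A^* \nabla u=0$. In the notation of the pre-Hardy decomposition, this is reflected by the fact that $N_{\no}\colon \dot\mH^{-1/2,+}_{D\wt B}\to \dot \mH^{-1/2}_{\no}$ is an isomorphism, whose inverse is precisely the map $g\mapsto (g,\Gamma_{ND}g)$.

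First I would prove the forward implication. By (the negative Sobolev/H\"older analogue of) \cite{AM}, solvability of $(N)_{Y^{-1}}^{L^*}$ for the energy class is equivalent to the existence of $C>0$ with $\|\Gamma_{ND}g\|_{\dot Y^{-1}_{\ta}}\lesssim \|g\|_{\dot Y^{-1}}$ for every $g\in \dot Y^{-1}\cap \dot \mH^{-1/2}$. Using that $\dot Y^{-1}\cap \dot \mH^{-1/2}$ is dense in $\dot Y^{-1}$ (for $q>1$ this is by density of Schwartz, for $q\le 1$ through the atomic/molecular density $\IP b(\wt BD) h$ arguments used already in the proof of Lemma \ref{lem:claim}, adapted via the $D$-isomorphism to $\dot\Lambda^{\alpha-1,+}_{D\wt B}$), this estimate extends $\Gamma_{ND}$ to a bounded operator $\dot Y^{-1}\to \dot Y^{-1}_{\ta}$, and correspondingly $(I,\Gamma_{ND})$ extends to a bounded operator $\dot Y^{-1}\to \dot Y^{-1,+}_{D\wt B}$. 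The operator identities $N_{\no}\circ (I,\Gamma_{ND})=I_{\dot\mH^{-1/2}_{\no}}$ and $(I,\Gamma_{ND})\circ N_{\no}=I_{\dot\mH^{-1/2,+}_{D\wt B}}$, valid on the energy scale, pass to the extensions by density, showing that $N_{\no}\colon \dot Y^{-1,+}_{D\wt B}\to \dot Y^{-1}$ is an isomorphism.

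For the converse, assume $N_{\no}$ is surjective. The open mapping theorem in the Banach case, or its quasi-Banach version  \cite{KalMit} in the H\"older case, produces a constant $C$ so that every $f\in \dot Y^{-1}$ lifts to some $h\in \dot Y^{-1,+}_{D\wt B}$ with $N_{\no}h=f$ and $\|h\|_{\dot Y^{-1}}\lesssim C\|f\|_{\dot Y^{-1}}$. Lemma \ref{lem:soldir} then gives the desired weak solution $u$ of $\divv A^*\nabla u=0$ whose conormal gradient equals the extended semigroup orbit $\wt S_{Y^{-1}}^+(t)h$; by construction $\pd_{\nu_{A^*}}u|_{t=0}=h_{\no}=f$, and
\[
\|t\nabla_{A^*}u\|_{\mT}\lesssim \|h\|_{\dot Y^{-1}}\lesssim \|f\|_{\dot Y^{-1}},
\]
which is the required solvability estimate.

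The convergence at the boundary is inherited directly from the continuity properties of the semigroup $\wt S_{Y^{-1}}^+(t)$ on $\dot Y^{-1,+}_{D\wt B}$ stated before Lemma \ref{lem:soldir}: it is strongly continuous when $q>1$ (so $\pd_{\nu_{A^*}}u(t,\cdot)\to f$ in $\dot W^{-1,p}$) and weakly-$*$ continuous when $q\le 1$ (so $\pd_{\nu_{A^*}}u(t,\cdot)\to f$ weakly-$*$ in $\dot\Lambda^{\alpha-1}$). The main obstacle, as was already the case for the Dirichlet problem in Lemma \ref{lem:claimdir}, is the density statement for $\dot Y^{-1}\cap \dot \mH^{-1/2}$ in $\dot Y^{-1}$ in the H\"older range $q\le 1$: one has to produce approximations that are simultaneously energy-class data and smooth enough to fall in $\dot\Lambda^{\alpha-1}$, which is carried out by applying a Calder\'on reproducing formula at the level of $\wt BD$ and transporting through the isomorphism $D\colon H^{q}_{\wt BD}\to \dot\Lambda^{\alpha-1}_{D}$ coming from Theorem \ref{thm:duality} and Corollary \ref{cor:hpbd0}.
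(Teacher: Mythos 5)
Your proof is correct and follows essentially the same route as the paper: the paper itself only sketches this lemma, declaring it "entirely analogous" to Lemma \ref{lem:claimdir} with the Neumann-to-Dirichlet map $\Gamma_{ND}$ replacing $\Gamma_{DN}$, solutions built by Lemma \ref{lem:soldir}, and the open mapping theorem for the converse — exactly the steps you carry out. Your explicit attention to the density/extension issue in the H\"older range (handled via the Calder\'on reproducing formula and the $D$-isomorphism, with weak-$*$ convergence at the boundary) matches the "similar density arguments" the paper invokes for the Dirichlet case.
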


\begin{rem} Concerning the Dirichlet problem under De Giorgi type condition on $L_{\ta}$, this theorem covers the case of $\BMO$  data. In this case, this shows that if the Dirichlet problem  for $L$ is solvable for the energy class with $\BMO$ data, then it is solvable (may be not for the energy class) with $L^p$ data for unspecified large $p$'s.  This result for real, non-necessarily $t$-independent equations, is in \cite{DKP}  and we extend it here to more general systems  when the coefficients are $t$-independent. In case of real equations, solvability for the energy class is reached due to the harmonic measure techniques used.  
\end{rem}

\subsection{Stability in the coefficients}
We now establish stability under perturbation of the coefficients in the $t$-independent coefficients class. We do this for the regularity problem. For each of the other 3 boundary value problems, there will be  similar statement and proof which we shall not include and leave to the reader. This can be compared to  prior results established in the literature  for systems in the upper half-space with $t$-independent coefficients (\cite{DaK, Br, KP, KalMit, B, HKMP2}, etc)  or bi-lipschitz diffeomorphic images of this situation. The only point is that we do not know how to obtain solvability in the energy class in the conclusion but only prove solvability. 

\begin{thm}\label{thm:stability}
Let $I_{L}$ be the interval   $(p_{-}(DB))_{*}, p_{+}(DB))$ of Theorem \ref{thm:hpdb} or  $(p_{\ta}, p_{+}(DB))$ of Corollary \ref{cor:DGN}   on which $\IH^q_{DB}=\IH^q_{D}$ with equivalence of norms and set $X=H^q$. If $ (R)^L_{X}$ is solvable for the energy class then  
$(R)^{L'}_{X}$ is solvable where $L'=-\divv A'\nabla $ has $t$-independent coefficients with $\|A-A'\|_{\infty}$ small enough depending on $X$. 
\end{thm}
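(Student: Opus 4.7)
The plan is to reformulate energy solvability of $(R)^L_X$ as surjectivity (with bounded right inverse) of a specific operator from $H^q_D$ to $H^q_\ta$ that depends Lipschitz-continuously on $B$, and then perturb.

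By Lemma \ref{lem:claim}, the hypothesis that $(R)^L_X$ is solvable for the energy class is equivalent to $N_\ta : H^{q,+}_{DB} \to H^q_\ta$ being an isomorphism. First I would equivalently recast this as: the bounded map
$$T_B : H^q_D \to H^q_\ta, \qquad T_B h := (\chi^+(DB)h)_\ta,$$
is surjective with a bounded right inverse $S_B : H^q_\ta \to H^q_D$, using that $H^{q,+}_{DB}$ is the image of the bounded projection $\chi^+(DB)$ on $H^q_{DB}=H^q_D$ (the latter equality holding since $q\in I_L$). Next I would invoke the fact that $\chi^+\in H^\infty(S_\mu)$ and apply Proposition \ref{prop:open} (when $I_L=((p_-(DB))_*,p_+(DB))$) or Proposition \ref{prop:open2} (when $I_L=(p_\ta,p_+(DB))$), getting
$$\|\chi^+(DB)-\chi^+(DB')\|_{\mL(\IH^q_D)} \lesssim \|B-B'\|_\infty$$
for $\|B-B'\|_\infty$ small enough. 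By density of $\IH^q_D$ in $H^q_D$ (Proposition \ref{cor:hpdb}), this extends to the completion and, taking tangential parts, yields $\|T_B-T_{B'}\|_{\mL(H^q_D,H^q_\ta)} \lesssim \|B-B'\|_\infty$.

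The final step is a perturbation argument: setting $U:=(T_{B'}-T_B)S_B \in \mL(H^q_\ta)$ one has $T_{B'}S_B = I+U$ with $\|U\|\lesssim \|S_B\|\|B-B'\|_\infty$. In the Banach range $q>1$, a standard Neumann series makes $I+U$ invertible on $H^q_\ta$; in the quasi-Banach range $q\le 1$ I would instead use an Aoki-Rolewicz equivalent $r$-norm (some $0<r\le 1$) on $H^q_\ta$, in which $\sum_{k\ge 0}(-U)^k$ converges absolutely once $\|U\|$ is small enough. Either way $T_{B'}$ is surjective, i.e.\ $N_\ta : H^{q,+}_{DB'}\to H^q_\ta$ is surjective, and the second part of Lemma \ref{lem:claim} (applied to $L'$) then yields solvability of $(R)^{L'}_X$. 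The main obstacle will be the uniform extension of the pre-Hardy space estimate from Propositions \ref{prop:open}/\ref{prop:open2} to a bound on the completion $H^q_D$ with constants independent of $B'$ in an $L^\infty$-neighborhood of $B$; this rests on the fact that, for $q\in I_L$, the identifications $\IH^q_D = \IH^q_{DB} = \IH^q_{DB'}$ (the last one for $B'$ close to $B$, again by the openness propositions) produce equivalent (quasi-)norms with uniformly bounded ratios, so that the extensions of $\chi^+(DB)$ and $\chi^+(DB')$ to $H^q_D$ differ in operator norm by $\lesssim \|B-B'\|_\infty$.
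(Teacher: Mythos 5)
Your proposal is correct and follows essentially the same route as the paper: both reduce to Lemma \ref{lem:claim}, use the Lipschitz dependence $\|\chi^+(DB)-\chi^+(DB')\|\lesssim\|A-A'\|_\infty$ from Propositions \ref{prop:open}/\ref{prop:open2} extended to $H^q_D$, perturb to transfer the mapping property of $N_\ta$ from $H^{q,+}_{DB}$ to $H^{q,+}_{DB'}$, and conclude with the second part of Lemma \ref{lem:claim} for $L'$. The only (harmless) variation is bookkeeping: the paper factors the perturbation through the isomorphism $\chi^+(DB'):H^{q,+}_{DB}\to H^{q,+}_{DB'}$ and gets invertibility of $N_\ta$, while you run a one-sided Neumann/Aoki--Rolewicz argument on $T_{B'}S_B=I+U$ and settle for surjectivity, which indeed suffices.
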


\begin{proof} The assumption allows us to apply Proposition \ref{prop:open} or Proposition \ref{prop:open2}.  In both situations, if $q\in I_{L}$ we have, 
$\|\chi^+(DB')-\chi^+(DB)\|_{\mL(H^q_{D})}\le C \|A-A'\|_{\infty}$ for small enough $  \|A-A'\|_{\infty}$, where $B=\widehat A, B'=\widehat A'$. As $\chi^+(DB')$ is a  projector, it implies that it is an isomorphism from $H^{q,+}_{DB}$ onto $H^{q,+}_{DB'}$ with uniform bounds for small enough $  \|A-A'\|_{\infty}$.  
Next, $N_{\ta}\chi^+(DB'): H^{q,+}_{DB} \to H^q_{\ta}$ is a perturbation of $N_{\ta}=N_{\ta}\chi^+(DB): H^{q,+}_{DB} \to H^q_{\ta}$ in operator norm. As solvability of $ (R)^L_{X}$  for the energy class implies $N_{\ta}: H^{q,+}_{DB} \to H^q_{\ta}$ is invertible by Lemma \ref{lem:claim}, it follows that $N_{\ta}\chi^+(DB'): H^{q,+}_{DB} \to H^q_{\ta}$ is invertible for small enough $  \|A-A'\|_{\infty}$. Combining these two informations, we obtain that $N_{\ta}: H^{q,+}_{DB'} \to H^q_{\ta}$ is invertible  uniformly for $\|A-A'\|_{\infty}$ small enough. This implies solvability of $ (R)^{L'}_{X}$ by Lemma \ref{lem:claim}. 
\end{proof}

\begin{rem}  Although it seems natural to expect it, we are not able to remove the assumption on $I_{L}$ at this time. 
\end{rem}

\bibliographystyle{acm}

\end{document}